\documentclass[12pt,twoside,a4paper]{amsart}
\usepackage{amssymb}
\usepackage{latexsym}
\usepackage{amsmath}
\usepackage{euscript}
\usepackage{graphics}
\usepackage{enumitem}
\usepackage{imakeidx}


\newcommand{\ba}{\begin{array}}
\newcommand{\ea}{\end{array}}
\newcommand{\bea}{\begin{eqnarray}}
\newcommand{\eea}{\end{eqnarray}}
\newcommand{\bead}{\begin{eqnarray*}}
\newcommand{\eead}{\end{eqnarray*}}
\newcommand{\be}{\begin{equation}}
\newcommand{\ee}{\end{equation}}
\newcommand{\bed}{\begin{displaymath}}
\newcommand{\eed}{\end{displaymath}}
\newcommand{\bl}{\begin{lemma}}
\newcommand{\el}{\end{lemma}}
\newcommand{\bp}{\begin{proposition}}
\newcommand{\ep}{\end{propostion}}
\newcommand{\bt}{\begin{theorem}}
\newcommand{\et}{\end{theorem}}
\newcommand{\Label}{\label}
\newcommand{\bc}{\begin{corollary}}
\newcommand{\ec}{\end{corollary}}
\newcommand{\la}{\Label}

\newcommand{\br}{\begin{remark}}
\newcommand{\er}{\end{remark}}
\newcommand{\bd}{\begin{definition}}
\newcommand{\ed}{\end{definition}}
\newcommand{\bex}{\begin{example}}
\newcommand{\eex}{\end{example}}

\def\senki{{\lbrack\negthinspace [\bot ]\negthinspace\rbrack}}
\def\senki+{{\lbrack\negthinspace [+] \negthinspace\rbrack}}



 \headheight=14pt
 \headsep=22pt
 \textwidth=17cm
 \textheight=24cm
 \hoffset=-20mm
 \voffset=-2cm

\newcommand\R{{\mathbb{R}}}
\newcommand\C{{\mathbb{C}}}

\newcommand\N{{\mathbb{N}}}

\def\sh{{\mathfrak h}}
\def\sfl{{\mathfrak l}}
\def\st{{\mathfrak t}}

\def\sD{{\mathfrak D}}      
   \def\sH{{\mathfrak H}}   
      
   \def\sN{{\mathfrak N}}

      \def\dC{{\mathbb C}}
\def\dD{{\mathbb D}}

   \def\dN{{\mathbb N}}   
      \def\dR{{\mathbb R}}

   \def\cB{{\mathcal B}}   \def\cC{{\mathcal C}}
      
   \def\cH{{\mathcal H}}   
\def\cJ{{\mathcal J}}      \def\cL{{\mathcal L}}
      
\def\cP{{\mathcal P}}      \def\cR{{\mathcal R}}
      \def\cU{{\mathcal U}}

\newcommand\ZA{{A}}

\newcommand\gH{{\mathfrak{H}}}

\newcommand\rD{{\rm{d}}}
\newcommand\rH{{\rm{H}}}
\newcommand{\gA}{{\alpha}}

\newcommand\ctg{\cot}

\def\h#1{{{\hat #1} }}
\def\wt#1{{{\widetilde #1} }}
\def\wh#1{{{\widehat #1} }}

\def\bm\chi{\mbox{\boldmath$\chi$}}
\def\half{{\frac{1}{2}}}

\def\RE{{\rm Re\,}}
\def\IM{{\rm Im\,}}
\def\Ext{{\rm Ext\,}}
\def\ran{{\rm ran\,}}
\def\cran{{\rm \overline{ran}\,}}
\def\dom{{\rm dom\,}}
\def\mul{{\rm mul\,}}

\def\cdom{{\rm \overline{dom}\,}}
\def\clos{{\rm clos\,}}

\def\dim{{\rm dim\,}}
\def\diag{{\rm diag\,}}

\let\xker=\ker \def\ker{{\xker\,}}

\def\cspan{{\rm \overline{span}\, }}

\def\supp{{\rm supp\,}}

\def\cmr{{\dC \setminus \dR}}
\def\uphar{{\upharpoonright\,}}

\DeclareMathOperator{\hplus}{\, \widehat + \,}

\newtheorem{theorem}{Theorem}[section]
\newtheorem{proposition}[theorem]{Proposition}
\newtheorem{corollary}[theorem]{Corollary}
\newtheorem{lemma}[theorem]{Lemma}

\theoremstyle{definition}
\newtheorem{assumption}[theorem]{Assumption}
\newtheorem{example}[theorem]{Example}
\newtheorem{remark}[theorem]{Remark}
\newtheorem{definition}[theorem]{Definition}

\numberwithin{equation}{section}

\makeindex

\begin{document}

\title[Boundary triples, Weyl functions and inverse problems]
{Generalized boundary triples, Weyl functions\\ and inverse
problems}

\author{Vladimir Derkach}
\author{Seppo Hassi}
\author{Mark Malamud}

\address{Department of Mathematics, National Pedagogical Dragomanov University,
Kiev, Pirogova 9, 01601, Ukraine} \email{derkach.v@gmail.com}

\address{Department of Mathematics and Statistics \\
University of Vaasa \\
P.O. Box 700, 65101 Vaasa \\
Finland} \email{sha@uwasa.fi}

\address{Institute of Applied Mathematics and Mechanics \\
National Academy of Science of Ukraine \\
Slovyansk \\
Ukraine}
\email{malamud3m@gmail.com}

\thanks{The research was partially supported by a grant from the Vilho,
Yrj\"o and Kalle V\"ais\"al\"a Foundation of the Finnish Academy of
Science and Letters. V.~D. and M.~M. gratefully also acknowledge
financial support by the University of Vaasa.
The research of V.~D. was also supported by a Volkswagen Stiftung grant} 

\keywords{Symmetric operator, selfadjoint extension, resolvent,
boundary value problem, boundary triple, trace operator, Green's
identities, Dirichlet-to-Neumann type map,
Weyl function, Weyl family.} %

\subjclass[2010]{Primary 34B08, 35J25, 47A05, 47A10, 47A20, 47B15,
47B25; Secondary 34B40, 34L40, 35J05, 35J10, 46C20, 47A48, 47A57,
47B36.}

\begin{abstract}
With a closed symmetric operator $\ZA$ in a Hilbert space $\sH$ a
triple $\Pi=\{\cH,\Gamma_0,\Gamma_1\}$ of a Hilbert space $\cH$ and
two abstract trace operators $\Gamma_0$ and $\Gamma_1$ from $\ZA^*$
to $\cH$ is called a generalized boundary triple for $\ZA^*$ if an
abstract analogue of the second Green's formula holds. Various
classes of generalized boundary triples are introduced and
corresponding Weyl functions $M(\cdot)$ are investigated. The most
important ones for applications are specific classes of
(essentially) unitary boundary triples for which Green's second
identity admits a certain maximality property which guarantees that
the Weyl functions of (the closures of the regularized versions of)
boundary triples are Nevanlinna functions on $\cH$, i.e.
$M(\cdot)\in \cR(\cH)$, or at least they belong to the class $\wt
\cR(\cH)$ of Nevanlinna families on $\cH$. The boundary condition
$\Gamma_0f=0$ determines a reference operator $A_0\,(=\ker
\Gamma_0)$. The case where $A_0$ is selfadjoint implies a relatively
simple analysis, as the joint domain of the trace mappings
$\Gamma_0$ and $\Gamma_1$ admits a von Neumann type decomposition
via $A_0$ and the defect subspaces of $A$. The case where $A_0$ is
only essentially selfadjoint is more involved, but appears to be of
great importance, for instance, in applications to boundary value
problems e.g. in PDE setting or when modeling differential operators
with point interactions. Wide classes of generalized boundary
triples will be characterized in purely analytic terms via the Weyl
function $M(\cdot)$ and close interconnections between different
classes of boundary triples and the corresponding
transformed/renormalized Weyl functions are investigated. These
characterizations involve solving direct and inverse problems for
specific classes of (unbounded) operator functions $M(\cdot)$. Most
involved ones concern operator functions $M(\cdot)\in \cR(\cH)$ for
which
\[
 \tau_{M(z)}(f,g)=(2i\,\IM z)^{-1}[(M(z)f,g)-(f,M(z)g)], \quad f,g\in\dom
 M(z),
\]
defines a closable nonnegative form on $\cH$. It appears that the
closability of $\tau_{M(z)}(f,g)$ does not depend on $z\in\dC_{\pm}$
and, moreover, that the closure then is a form domain invariant
holomorphic function on $\dC_{\pm}$, while $\tau_{M(z)}(f,g)$ itself
is possibly defined only at the point $z$ due to the fact that the
equality $\dom M(z)\cap \dom M(\lambda)=\{0\}$ can occur for all
points $z\neq \lambda$ from the same halfplane. One of the main
results connects these delicate properties of $M(\cdot)$ to the
simple geometric condition that $A_0$ is essentially selfadjoint. In
this study we also derive several additional new results, for
instance, Kre\u{\i}n-type resolvent formulas are extended to the
most general setting of unitary and isometric boundary triples
appearing in the present work. All the main results are shown to
have applications in the study of (ordinary and partial)
differential operators. More specifically we treat Laplacian
operator on bounded domains with smooth, Lipschitz, or even rough
boundary, a mixed boundary value problem for the Laplacian as well
as momentum, Schr\"odinger, and Dirac operators with infinite
sequences of local point interactions.
\end{abstract}

\maketitle



\section{A description of key concepts and an outline of main
results}\label{sec1}

\subsection{Ordinary boundary triples and Weyl functions}
Let $\sH$ be a separable Hilbert space, let $A$ be a not necessarily
densely defined closed symmetric operator in $\sH$ with equal \index{Operator}
deficiency indices $n_+(A)=n_-(A) \le \infty$. The adjoint $A^*$ of
the operator $\ZA$ is a linear relation, i.e., a subspace of vectors
$\wh g=\binom{g}{g'} \in  \sH^2$ such that \index{Linear relation!adjoint}
\[
(Af, g)-(f, g') =0 \quad\textup{ for all }\quad f\in\dom A,
\]
see~\cite{Ben}, \cite{Arens}. In what follows the operator $\ZA$ will
be identified with its graph, so that the set
$\mathcal{C}(\mathcal{H})$ of closed linear operators will be
considered as a subset of $\widetilde{\mathcal{C}}(\mathcal{H})$ of
closed linear relations in $\cH$. Then $\ZA$ is symmetric precisely\index{Operator!symmetric}
when $A\subseteq A^*$. The defect subspaces  $\frak N_z$ of $\ZA$ are
related to $A^*$ by the equality
 $\frak N_z := \ker(A^*-z)$, $z\in\dC\setminus\dR$ and $n_\pm(A):=\dim\sN_{\pm i}$.


In the beginning of thirties J. von Neumann~\cite{Neum32} created
the  extension theory of symmetric operators in Hilbert spaces. His
approach relies on two fundamental formulas and allowed a
description of all selfadjoint ($m$-dissipative) extensions by means \index{Operator!selfadjoint}\index{Extensions!$m$-dissipative}
of isometric (contractive) operators \index{Operator!isometric}\index{Operator!contractive} from $\frak N_i$ onto $\frak
N_{-i}$ (see in this connection monographs \cite{AG,Co,Alb_Kur_00}).
Later on it has been realized that this approach has some drawbacks
and was not convenient when, for instance, treating applications to
boundary value problems (BVP's) for ordinary and especially to
partial differential equations (ODE and PDE).

During four last decades a new approach to the extension theory has
been developed (\cite{Vi52}, \cite{Gr68}, \cite{RB69}, \cite{Gor71},
\cite{GG}, see also~\cite{DM91}, \cite{DM95}, \cite{BrGeiPan08}).
This approach relies on  concepts of abstract boundary mappings and
abstract Green's identity and was introduced independently in
\cite{Koc75,Bruk76}; in this connection it is also necessary to
point out the early paper by J.W. Calkin~\cite{Cal39}. Some further
discussion on Calkin's paper is given below.
%
%
  \begin{definition}\label{een}
A collection  ${\Pi}=\{ \cH, \Gamma_0, \Gamma_1\}$  consisting of a
Hilbert space $\cH$
and two linear mappings $\Gamma_0$ and $\Gamma_1$ from $A^*$ to
$\cH$, is said to be an {\it ordinary boundary triple}\index{Boundary triple!ordinary} for $A^*$ if:
\begin{enumerate}[label=\textbf{\thedefinition.\arabic*}]
\item \label{1_def_een}
The abstract Green's identity  \begin{equation}\label{Greendef1}
  (f', g)-(f, g')
  = (\Gamma_1\wh f, \Gamma_0\wh g)_{\cH}- (\Gamma_0\wh f, \Gamma_1\wh g)_{\cH}
  \end{equation}
holds for all $\wh f=\begin{pmatrix}f\\f'\end{pmatrix},\wh
g=\begin{pmatrix}g\\g'\end{pmatrix} \in A^*$;
\item \label{2_def_een}
The mapping $\Gamma:=
                       \begin{pmatrix}
                         \Gamma_0 \\
                         \Gamma_1
                       \end{pmatrix}
:\,A^*\to \cH^2$ is surjective.
\end{enumerate}
\end{definition}

Note that   in the ODE setting formula \eqref{Greendef1}  turns into
the classical Lagrange identity being a key  tool in treatment of
BVP's. Advantage of this approach becomes obvious in applications to
BVP's for elliptic equations where formula  \eqref{Greendef1}
becomes a second Green's identity. However, in this case the second
assumption~\ref{2_def_een} is violated and this circumstance was
overcome in the classical papers by M. Visik \cite{Vi52} and G.
Grubb \cite{Gr68} (see also \cite{Gr09}). Namely, relying on the
Lions-Magenes trace theory (\cite{LionsMag72}, \cite{Agran2015},
\cite{Gr09}) they regularized the classical Dirichlet and Neumann
trace mappings to get a proper version of Definition~\ref{een}.

The operator $\Gamma$ in Definition~\ref{een} is called the {\it reduction operator}
\index{Reduction operator}
(in the terminology of~\cite{Cal39}).
Definition~\ref{een}  immediately yields a  parametrization   of
the set of all selfadjoint
extensions $\wt A$ of $\ZA$  by means of abstract boundary conditions via 
\begin{equation}\label{eq:SA_ext}
 \wt A  =A_{\Theta} :=\{\wh f\in A^*: \,\Gamma\wh f\in\Theta\},
\end{equation}
where $A_\Theta$ ranges over the set of all selfadjoint extensions\index{Extensions!selfadjoint}
of $\ZA$ when $\Theta$ ranges over the set of all selfadjoint
relations in $\cH$. This correspondence is bijective and in this
case
\begin{equation}\label{eq:SA_ext2}
  \Theta := \Gamma (\wt A) . 
\end{equation}
 Two following selfadjoint extensions of $\ZA$  are of
particular interest:
\begin{equation}\label{eq:A01}
    A_0:=\ker\Gamma_0 =  A_{\Theta_\infty} \quad \text{and} \quad  A_1:=\ker\Gamma_1 =
    A_{\Theta_1};
\end{equation}
here $\Theta_\infty = \{0\}\times \cH$ and ${\Theta_1} = \Bbb O$.
These extensions  are {\it  disjoint}\index{Extensions!disjoint}, i.e. $A_0\cap A_1=A$, and
{\it transversal}\index{Extensions!transversal}, i.e. $A_0\wh + A_1=A^*$.    In what follows $A_0$
is considered as a reference extension of $\ZA$.

It is emphasized that the parametrization \eqref{eq:SA_ext} is
convenient in applications to concrete BVP's as it provides an
explicit description of boundary conditions associated with the
extension $A_\Theta$; this natural connection cannot be obtained via
the second J. von Neumann formula,~\cite{AG}.

The main analytical tool in description of spectral properties of
selfadjoint extensions of $\ZA$ is the abstract Weyl function,
introduced and investigated in~\cite{DM85,DM87, DM91}. Let $A_0$ be
a reference operator given by~\eqref{eq:A01}, let $\rho(A_0)$ be the resolvent set of $A_0$,
let $\sN_{\lambda}:=\ker(\ZA^*-\lambda)$, $\lambda\in\dC\setminus\dR$, be the
{\it defect subspace} \index{Defect subspace}of $\ZA$ and let
\begin{equation}\label{eq:whf_l}
    \wh\sN_{\lambda}:=\left\{\wh f_\lambda=\begin{pmatrix}f_{\lambda}\\
\lambda f_{\lambda}\end{pmatrix}:\,f_{\lambda}\in
\sN_{\lambda}\right\}.
\end{equation}

\begin{definition}[\cite{DM85,DM87, DM91}] \label{W00}
The abstract \textit{Weyl function} \index{Weyl function} and the \textit{$\gamma$-field}
\index{Gamma-field}
of $\ZA$, corresponding to an ordinary boundary triple
$\Pi=\{\cH,\Gamma_0,\Gamma_1\}$ are defined by
\begin{equation}\label{0.2}
M(\lambda)\Gamma_0\wh f_{\lambda}= \Gamma_1\wh f_{\lambda},
 \quad
  \gamma(\lambda)\Gamma_0\wh f_{\lambda}=f_{\lambda}, \quad
 \wh f_\lambda\in \wh\sN_{\lambda},\quad  \lambda\in\rho(A_0),
\end{equation}
where $\wh f_\lambda$ is given by~\eqref{eq:whf_l}.
  \end{definition}
Notice that when the symmetric operator $\ZA$ is densely defined its
adjoint is a single-valued operator and Definitions~\ref{een}
and~\ref{W00} can be used in a simpler form by treating $\Gamma_0$
and $\Gamma_1$ as operators from $\dom A^*$ to $\cH$,
see~\cite{Koc75}, \cite{GG}, \cite{DM91}. In what follows this
convention will be tacitly used in most of our examples.

\begin{example}
Let $\ZA$ be a minimal symmetric operator associated in $L^2(\Bbb
R_+)$ with Sturm-Liouville differential expression
\[
 \cL := -\frac{d^2}{dx^2} + q(x), \qquad  q = \overline{q} \in L^1_{loc}([0,\infty)).
\]
Assume the limit-point case at infinity, i.e. assume that
$n_{\pm}(\ZA) = 1.$ The defect subspace $\frak N_{\lambda}$  is
spanned by the Weyl solution $\psi(\cdot,\lambda)$ of the equation
$\cL f = \lambda f$ which is given by
\[
 \psi(x,\lambda) = c(x,\lambda) + m(\lambda)s(x,\lambda) \in L^2(\Bbb R_+),
\]
where $c(\cdot,\lambda)$ and $s(\cdot,\lambda)$ are cosine and sine
type solutions of the equation $\cL f = \lambda f$ subject to the
initial conditions
\[
 c(0,\lambda)=1,\quad c'(0,\lambda)=0;
 \qquad s(0,\lambda)=0,\quad s'(0,\lambda)=1.
\]
The function  $m(\cdot)$ is called the Titchmarsh-Weyl coefficient\index{Titchmarsh-Weyl coefficient}
of $\cL$.

In this case a boundary triple $\Pi=\{\Bbb C,\Gamma_0,\Gamma_1\}$
can be defined as $\Gamma_0f = f(0),$  $\Gamma_1f = f'(0).$ The
corresponding Weyl function $M(\lambda)$ coincides with the classical
Titchmarsh-Weyl coefficient, $M(\lambda) = m(\lambda)$.
   \end{example}
In this connection let us mention that the role of the Weyl function
$M(\lambda)$ in the extension theory of symmetric operators is
similar to that of the classical Titchmarsh-Weyl coefficient
$m(\lambda)$ in the spectral theory of Sturm-Liouville operators\index{Sturm-Liouville operator}.
For instance, it is known (see \cite{KL73}, \cite{DM91}) that if
$\ZA$ is simple, i.e. $\ZA$ does not admit orthogonal decompositions
with a selfadjoint summand, then the Weyl function $M(\lambda)$
determines the boundary triple $\Pi$, in particular, the pair
$\{\ZA, A_0\}$, uniquely up to unitary equivalence. Besides, when
$\ZA$ is simple, the spectrum of $A_\Theta$ coincides with the
singularities of the operator function $(\Theta - M(z))^{-1}$; see
\cite{DM91}.

As was shown in~\cite{DM91, DM95} and \cite{MM2}  the  Weyl function
$M(\cdot)$ and the $\gamma$-field $\gamma(\cdot)$ both  are well defined
and  holomorphic on the resolvent set $\rho(A_0)$ of the  operator $A_0$.
Moreover,  the $\gamma$-field $\gamma(\cdot)$ and the  Weyl
function $M(\cdot)$  satisfy the identities
    \begin{equation}\label{eq:gamma}
 \gamma(\lambda)=[I+(\lambda-\mu)(A_0-\lambda)^{-1}]\gamma(\mu),
 \quad
 \lambda,\mu\in\rho(A_0),
   \end{equation}
  \begin{equation}\label{eq:Q_fun}
{M(\lambda)-M(\mu)}^*=(\lambda-\bar\mu)\gamma(\mu)^*\gamma(\lambda),
\qquad
    \lambda,\mu\in\rho(A_0).
\end{equation}
This means that $M(\cdot)$ is a $Q$-function of the operator $\ZA$
in the sense of Kre\u{\i}n and Langer~\cite{KL71}.
%

Denote by $\cB(\cH)$ the set of bounded linear operators in $\cH$
and denote by $\cR[\cH]$ the class of Herglotz-Nevanlinna functions\index{Herglotz-Nevanlinna function},
i.e., operator valued functions $F(\lambda)$ with values in
$\cB(\cH)$, which are holomorphic on $\dC\setminus\dR$ and satisfy
the conditions
   \begin{equation}\label{eq:Weyl-sym}
F(\lambda) = F(\bar\lambda)^*\quad \mbox{and}\quad \IM F(\lambda)\ge
0\quad\mbox{for all}\quad\lambda\in\dC\setminus\dR,
   \end{equation}
see \cite{KacK}. It follows from \eqref{eq:gamma}
and~\eqref{eq:Q_fun} that $M$ belongs to the Herglotz-Nevanlinna
class $\cR[\cH]$.

Furthermore, since $\gamma(\lambda)$  isomorphically maps $\cH$
onto $\sN_{\lambda}$, the relation~\eqref{eq:Q_fun} ensures  that
the imaginary part $\IM M (z)$ of  $M(z)$ is positively definite,
i.e. $M(\cdot)$ belongs to the subclass $\cR^u[\cH]$ of  {\it
uniformly strict} Herglotz-Nevanlinna  functions\index{Herglotz-Nevanlinna function!uniformly strict},:
\[
 \cR^u[\cH]:= \left\{F(\cdot)\in \cR[\cH]:\, 0\in\rho(\IM  F(i))\right\}.
\]
The converse is also true.

 \begin{theorem}[\cite{KL73,DM95}]\label{thm:WF_Ord_BT}
The set of Weyl functions corresponding to  ordinary boundary
triples coincides with the class $\cR^u[\cH]$ of  uniformly strict
Herglotz-Nevanlinna functions.
   \end{theorem}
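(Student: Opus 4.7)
The plan is to prove the two inclusions separately. The forward inclusion $\{\text{Weyl functions of ordinary boundary triples}\}\subseteq\cR^u[\cH]$ is essentially already established in the text preceding the theorem. Setting $\mu=\lambda$ in~\eqref{eq:Q_fun} and dividing by $2i\,\IM\lambda$ yields
\[
 \IM M(\lambda) = (\IM\lambda)\,\gamma(\lambda)^*\gamma(\lambda), \qquad \lambda\in\dC\setminus\dR.
\]
Surjectivity of the reduction operator $\Gamma$ (Definition~\ref{een}) together with the closed-graph theorem implies that $\Gamma_0\upharpoonright\wh\sN_\lambda\colon\wh\sN_\lambda\to\cH$ is a bounded bijection, so $\gamma(\lambda)\colon\cH\to\sN_\lambda$ is a topological isomorphism. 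Therefore $\gamma(\lambda)^*\gamma(\lambda)$ is boundedly invertible, giving $0\in\rho(\IM M(i))$ and $M\in\cR^u[\cH]$.

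For the converse I would employ the standard model construction. Given $F\in\cR^u[\cH]$, the Nevanlinna kernel
\[
 K_F(\lambda,\mu) := \frac{F(\lambda)-F(\mu)^*}{\lambda-\bar\mu}, \qquad \lambda,\mu\in\dC\setminus\dR,
\]
is positive because $F\in\cR[\cH]$, so it generates a reproducing kernel Hilbert space $\sH_F$ whose elements are (completions of) finite sums $\sum_j K_F(\cdot,\mu_j)h_j$. On $\sH_F$ one defines $A_0$ as the selfadjoint operator of multiplication by the independent variable, and takes $A$ to be the minimal closed symmetric restriction whose defect subspaces are $\sN_\lambda=\{K_F(\cdot,\bar\lambda)h:h\in\cH\}$. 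The uniform strictness $0\in\rho(\IM F(i))$ ensures that $h\mapsto K_F(\cdot,\bar\lambda)h$ has bounded inverse from $\sN_\lambda$ to $\cH$, so $\dim\sN_\lambda=\dim\cH$ on each halfplane. Boundary mappings are then defined on the defect parts by
\[
 \Gamma_0\wh f_\lambda = h, \qquad \Gamma_1\wh f_\lambda = F(\lambda)h, \qquad f_\lambda = K_F(\cdot,\bar\lambda)h,
\]
and extended by zero on $A_0$ via the von Neumann-type decomposition $A^*=A_0\hplus\wh\sN_\lambda\hplus\wh\sN_{\bar\lambda}$.

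The main obstacle is the verification that the data $\Pi=\{\cH,\Gamma_0,\Gamma_1\}$ so defined really is an ordinary boundary triple, i.e., that Green's identity~\eqref{Greendef1} holds on all of $A^*$ and that $\Gamma\colon A^*\to\cH^2$ is surjective. Green's identity reduces, via the decomposition, to the kernel identity
\[
 (\lambda-\bar\mu)\langle K_F(\cdot,\bar\lambda)h,K_F(\cdot,\bar\mu)k\rangle_{\sH_F} = \langle(F(\lambda)-F(\mu)^*)h,k\rangle_{\cH},
\]
which is exactly the reproducing property of $\sH_F$, combined with the symmetry $F(\bar\lambda)=F(\lambda)^*$ and the vanishing of cross terms between the $A_0$-part and the defect parts (each $\sN_\lambda$ being orthogonal to $\ran(A_0-\bar\lambda)$). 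Surjectivity of $\Gamma$ is precisely where the uniform strictness is essential: given prescribed $(h_0,h_1)\in\cH^2$ one must solve $\Gamma\wh f=(h_0,h_1)^\top$ for $\wh f\in\wh\sN_\lambda\hplus\wh\sN_{\bar\lambda}$, and the associated $2\times 2$ block system has Schur complement proportional to $\IM F(\lambda)$; bounded invertibility of the latter for $\lambda\in\dC_+$ delivers the required $\wh f$.

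Once $\Pi$ is confirmed to be an ordinary boundary triple, the identification $M\equiv F$ is immediate from the construction: by definition $\Gamma_1\wh f_\lambda = F(\lambda)\Gamma_0\wh f_\lambda$ on $\wh\sN_\lambda$, which is precisely the defining property~\eqref{0.2} of the Weyl function. The anticipated hard point of the argument is therefore the simultaneous verification of the Green identity and the surjectivity of $\Gamma$ on the model, both of which rely crucially on the assumption $F\in\cR^u[\cH]$ rather than merely $F\in\cR[\cH]$.
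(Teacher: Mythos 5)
Your forward inclusion is correct and is essentially the argument the paper itself sketches immediately before the statement (the theorem is only cited there, to Kre\u{\i}n--Langer and Derkach--Malamud, not proved in the text): surjectivity plus Green's identity make $\Gamma$ a Kre\u{\i}n-space unitary, hence closed, hence bounded, so $\gamma(\lambda)$ is a topological isomorphism and $\IM M(\lambda)=\IM\lambda\,\gamma(\lambda)^*\gamma(\lambda)$ is boundedly invertible. Your converse also follows the classical route --- the reproducing-kernel ($Q$-function) model --- so the overall strategy is the right one. There is, however, a concrete error in how you set up the boundary mappings on the model.

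The decomposition you invoke, $A^*=A_0\hplus\wh\sN_\lambda\hplus\wh\sN_{\bar\lambda}$, is not direct: since $A_0$ is selfadjoint one already has $A^*=A_0\hplus\wh\sN_\lambda$, so the third summand is redundant and the ``extension by zero on $A_0$'' is not well defined. Nor can it be repaired: if $\Gamma_1$ vanished on all of $A_0$ then $A_0\subseteq\ker\Gamma_1=A_1$, hence $A_0=A_1$ and $A=A_0\cap A_1=A_0$ would be selfadjoint, which fails whenever the defect numbers are nonzero. Likewise your justification of Green's identity via ``each $\sN_\lambda$ being orthogonal to $\ran(A_0-\bar\lambda)$'' is false: $\ran(A_0-\bar\lambda)=\sH$ because $A_0$ is selfadjoint and $\bar\lambda$ is nonreal, and indeed for $\wh f_0\in A_0$ and $\wh g_\mu\in\wh\sN_\mu$ the left-hand side of~\eqref{Greendef1} equals $((A_0-\bar\mu)f_0,g_\mu)$, which is generically nonzero, whereas your right-hand side would vanish. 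The correct construction (as in \cite{DM95}) uses the first von Neumann formula with the \emph{minimal} operator, $A^*=A\hplus\wh\sN_\lambda\hplus\wh\sN_{\bar\lambda}$, defines $\Gamma_0\wh f_\nu=h$ and $\Gamma_1\wh f_\nu=F(\nu)h$ on both defect components $\nu\in\{\lambda,\bar\lambda\}$, and extends by zero on $A$ --- which is consistent because $A=\ker\Gamma$ for an ordinary boundary triple and $\sN_\nu\perp\ran(A-\bar\nu)$ does hold. One must then verify (rather than build in) that $\ker\Gamma_0$ coincides with the multiplication operator $A_0$ of the model. With that decomposition your Schur-complement computation of surjectivity, $2i\,\IM F(\lambda)\,u=h_1-F(\lambda)^*h_0$, is exactly right and is precisely where the hypothesis $F\in\cR^u[\cH]$, as opposed to $F\in\cR[\cH]$, enters.
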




\subsection{$B$-generalized  boundary triples}
In BVP's for Sturm-Liouville operator with operator potential, for
partial differential operators~\cite{DHMS09}, and in point
interaction theory it seems natural to consider more general
boundary triples by weakening the surjectivity assumption
\ref{2_def_een} in Definition~\ref{een}. The following notion of
$B$-\emph{generalized boundary triple} was introduced
in~\cite{DM95}.
  \begin{definition}\label{genBT}
Let $\ZA$ be a closed symmetric operator in a Hilbert space $\sH$ with
equal deficiency indices and let $\ZA_*$ be a linear relation in $\sH$
such that $\ZA\subset \ZA_*\subset \overline{ \ZA_*} = A^*$. Then the
collection $\Pi = \{\cH,\Gamma_0,\Gamma_1\}$, where $\cH$ is a
Hilbert space and $\Gamma = \{\Gamma_0,\Gamma_1\}$ is a
single-valued linear mapping from $\ZA_*$ into ${\cH}^2$, is said to
be a \emph{$B$-generalized  boundary triple} for $A^*$, \index{Boundary triple!B-generalized} if:
   \begin{enumerate}[label=\textbf{\thedefinition.\arabic*}]
\item \label{1_def_genBT}
the abstract Green's identity~\eqref{Greendef1} holds
            for all $\wh f=\begin{pmatrix}f\\f'\end{pmatrix},\wh g=\begin{pmatrix}g\\g'\end{pmatrix} \in  \ZA_*$;
\item \label{2_def_genBT}
$\ran\Gamma_0=\cH$;
\item \label{3_def_genBT}
$A_0:=\ker\Gamma_0$ is a selfadjoint relation in $\sH$.
\end{enumerate}
  \end{definition}
The Weyl function $M(\lambda)$ corresponding to a $B$-generalized
boundary triple is defined by the same equality~\eqref{0.2} where
$\wh f_\lambda$ runs through   $\wh\sN_\lambda\cap \ZA_*$, a proper
subset of $\wh\sN_\lambda$. For every $\lambda\in\rho(\ZA)$ the Weyl
function $M(\lambda)$ takes values in $\cB(\cH)$ and this justifies
the present usage of the term $B$-generalized boundary triple, where
``$B$'' stands for a \emph{bounded Weyl function}, i.e., a function
whose values are bounded operators.

\begin{example}\label{example1.6}
Let $\Omega$ be a bounded domain in $\Bbb R^n$ with smooth boundary
$\partial\Omega$. Consider the Laplace operator $-\Delta$ in
$L^2(\Omega)$. Let $\gamma_D$ and $\gamma_N$ be the Dirichlet and
Neumann trace mappings. It is well known (see \cite{Agran2015,
Gr09,LionsMag72,Maurin65,Tri} and Section \ref{sec7.3} for detail
and further references) that the mappings $\gamma_D: H^{3/2}(\Omega)
\to H^{1}(\partial\Omega)$ and $\gamma_N: H^{3/2}(\Omega) \to
H^{0}(\partial\Omega)=L^2(\partial\Omega)$ are well defined and
surjective.

Consider a pre-maximal operator $A_*$ as the restriction of the maximal Laplace operator $\ZA_{\max}$ to
the domain
%
  \begin{equation}\label{dom-Lap0}
\dom \ZA_* = H_\Delta^{3/2}(\Omega):=  H^{3/2}(\Omega)\cap \dom
\ZA_{\max} = \left\{f\in H^{3/2}(\Omega):\Delta f\in
L^2(\Omega)\right\}.
   \end{equation}
Using the key mapping properties of $\gamma_D$ and $\gamma_N$ one
can extend the classical Green's formula to the domain $\dom \ZA_*$.
Notice that the condition $\gamma_N f=0$, $f\in \dom A_*$,
determines the Neumann realization $\Delta_N$ of the Laplace
operator. Since $\Delta_N$ is selfadjoint and
$\gamma_N(\dom \ZA_*)=H^{0}(\partial\Omega)$, the triple $\Pi =
\{L^2(\partial\Omega), \Gamma_0, \Gamma_1\}$ with
\[
\Gamma_0 =
\gamma_N\uphar{\dom \ZA_*}\quad\textup{and}\quad
\Gamma_1 = \gamma_D\uphar{\dom
\ZA_*}
\]
is a $B$-generalized boundary triple for $A^*$ with $\dom\Gamma =
\dom \ZA_*$. Besides, the corresponding Weyl function $M(\cdot)$
coincides with the inverse of the classical Dirichlet-to-Neumann map
$\Lambda(\cdot)$, i.e. $M(\cdot) = \Lambda(\cdot)^{-1}$ (see
Chapter~\ref{sec7.3} for details).
\end{example}
It should be noted that the Weyl function $M(\cdot)$ corresponding
to a $B$-generalized boundary triple, satisfies the properties
\eqref{eq:gamma}--\,\eqref{eq:Weyl-sym}. However, instead of the
property $0\in\rho(\IM M(i))$ one has a weaker condition
$0\not\in\sigma_p(\IM M(i))$. This motivates the following
definition.

Denote by $\cR^s[\cH]$ the class of strict Nevanlinna functions,\index{Herglotz-Nevanlinna function!strict}
that is
\[
 \cR^s[\cH]:=\left\{ F(\cdot)\in \cR[\cH]:\, 0\not\in\sigma_p(\IM
 F(i))\right\}.
\]
In fact, it was also shown in \cite[Chapter 5]{DM95}  that every
$M(\cdot)\in \cR^s[\cH]$ can be realized as the Weyl function of a
certain $B$-generalized boundary triple and hence the following
statement holds.
 \begin{theorem}[\cite{DM95}]\label{thm:WF_BG_BT}
The set of Weyl functions corresponding to $B$-generalized  boundary
triples coincides with the class $\cR^s[\cH]$ of  strict
Herglotz-Nevanlinna functions.
   \end{theorem}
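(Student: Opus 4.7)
The plan is to prove the two inclusions separately. For the necessity direction, starting from a $B$-generalized boundary triple $\Pi = \{\cH, \Gamma_0, \Gamma_1\}$, I would first establish the von Neumann-type decomposition
\[
A_* = A_0 \hplus (\wh\sN_\lambda \cap A_*), \qquad \lambda \in \rho(A_0),
\]
which reduces to the standard decomposition $A^* = A_0 \hplus \wh\sN_\lambda$ (available because $A_0$ is selfadjoint) combined with the observation that if $\wh f \in A_*$ splits in $A^*$ as $\wh f_0 + \wh f_\lambda$, then $\wh f_\lambda = \wh f - \wh f_0 \in A_*$ since $A_0 \subset A_*$. Together with $\ker\Gamma_0 = A_0$ and $\ran\Gamma_0 = \cH$, this forces $\Gamma_0\uphar(\wh\sN_\lambda \cap A_*)$ to be a bijection onto $\cH$, so that both $\gamma(\lambda)$ and $M(\lambda)$ are everywhere defined on $\cH$.

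Next, substituting two elements $\wh f_\lambda \in \wh\sN_\lambda \cap A_*$ and $\wh g_\mu \in \wh\sN_\mu \cap A_*$ into Green's identity~\eqref{Greendef1} produces the sesquilinear relation
\[
(\lambda - \bar\mu)\bigl(\gamma(\lambda) h, \gamma(\mu) k\bigr) = \bigl(M(\lambda) h, k\bigr) - \bigl(h, M(\mu) k\bigr), \qquad h,k \in \cH.
\]
Specializing $\mu = \bar\lambda$ collapses this to the Hermitian symmetry $(M(\lambda) h, k) = (h, M(\bar\lambda) k)$, which exhibits $M(\bar\lambda) \subset M(\lambda)^*$; since $\dom M(\bar\lambda) = \cH$, this gives $\dom M(\lambda)^* = \cH$. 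As $M(\lambda)^*$ is automatically closed, the closed graph theorem makes it bounded, and hence so are $M(\bar\lambda) = M(\lambda)^*$ and (by the same argument with $\lambda,\bar\lambda$ swapped) $M(\lambda)$ itself. Specializing $\mu = \lambda$ in the sesquilinear identity then gives $\IM\lambda \cdot \|\gamma(\lambda) h\|^2 = \IM (M(\lambda) h, h)$, which simultaneously shows that $\gamma(\lambda)$ is bounded and that $\IM M(\lambda) = \IM\lambda \cdot \gamma(\lambda)^*\gamma(\lambda) \geq 0$. Strictness is then immediate: $h \in \ker\IM M(i)$ forces $\gamma(i) h = 0$, and bijectivity of $\Gamma_0\uphar(\wh\sN_i \cap A_*)$ forces $h = 0$; hence $M \in \cR^s[\cH]$.

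For the converse direction, given $M \in \cR^s[\cH]$ I would invoke the Kre\u{\i}n--Langer reproducing kernel Hilbert space model built from the positive kernel $\sK_M(\lambda,\mu) = (M(\lambda) - M(\mu)^*)/(\lambda - \bar\mu)$: in the resulting RKHS $\sL_M$, multiplication by the independent variable gives a closed simple symmetric operator $A$ with a natural selfadjoint reference extension $A_0$, and one can single out a pre-maximal linear relation $A_* \subset A^*$ carrying boundary mappings $\Gamma_0, \Gamma_1$ that verify the three conditions of Definition~\ref{genBT} and that reproduce $M$ as their Weyl function. The strictness hypothesis $0 \notin \sigma_p(\IM M(i))$ is exactly what guarantees simplicity of $A$, equivalently injectivity of $\Gamma_0\uphar(\wh\sN_\lambda \cap A_*)$, and it is strictly weaker than $0 \in \rho(\IM M(i))$, which corresponds to the full surjectivity of $\Gamma$ governing Theorem~\ref{thm:WF_Ord_BT}. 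I expect the main obstacle to lie in the sufficiency direction: one must construct $A_*$ as a proper subset of $A^*$ large enough to make $\ran\Gamma_0 = \cH$ yet small enough to accommodate the failure of surjectivity of $\Gamma_1$, and this discrepancy is exactly what distinguishes strict from uniformly strict Nevanlinna functions. The necessity direction, by contrast, reduces cleanly to the von Neumann decomposition and two applications of the closed graph theorem, once Green's identity is unfolded.
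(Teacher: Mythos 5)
The paper itself does not prove this theorem; it is quoted from \cite{DM95}, and the closest the present text comes to a proof is the machinery of Section~\ref{sec4} (Proposition~\ref{A0graph}, Theorem~\ref{ABGthm}, Lemma~\ref{Weylmul}, Corollary~\ref{ABGgraph}), of which the $B$-generalized case is the specialization $\ran\Gamma_0=\cH$. Measured against that machinery, your necessity direction is correct and follows essentially the same route: the decomposition $A_*=A_0\hplus\wh\sN_\lambda(A_*)$ is Proposition~\ref{A0graph}~(i), the sesquilinear identity is~\eqref{Green3}, and the two closed-graph arguments plus the identity $\IM M(\lambda)=\IM\lambda\cdot\gamma(\lambda)^*\gamma(\lambda)$ reproduce items (iii)--(vi) of Theorem~\ref{ABGthm}. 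The one thing you leave out is holomorphy of $M(\cdot)$, which is part of membership in $\cR[\cH]$; it follows from the resolvent identity for the $\gamma$-field and the resulting explicit formula of type~\eqref{DefM0}, and should at least be stated.

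The genuine problem is in your account of the sufficiency direction, specifically your claim that the hypothesis $0\notin\sigma_p(\IM M(i))$ ``is exactly what guarantees simplicity of $A$, equivalently injectivity of $\Gamma_0\uphar(\wh\sN_\lambda\cap A_*)$.'' Neither half of this is right. The minimal Kre\u{\i}n--Langer model produces a simple symmetric operator for \emph{every} $M\in\cR[\cH]$, strict or not, so simplicity cannot be what strictness buys; and the injectivity of $\Gamma_0\uphar\wh\sN_\lambda(A_*)$ is automatic from Green's identity for any isometric boundary pair, see~\eqref{Green2B}. What strictness actually controls is the \emph{single-valuedness} of the boundary mapping: for a $B$-generalized boundary pair one has $\mul\Gamma_0=\ker\IM M(\lambda)$ (Lemma~\ref{Weylmul}~(iv)), and $\Gamma$ is an operator, i.e.\ the pair is a triple, if and only if $\ker\IM M(\lambda)=\{0\}$ (Corollary~\ref{ABGgraph}~(iii)). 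As written, your argument only delivers a $B$-generalized boundary \emph{pair} for an arbitrary $M\in\cR[\cH]$ and never explains why strictness upgrades it to a triple, which is precisely the content separating this theorem from the realization statement for $\cR[\cH]$ after Definition~\ref{Btriple}. Replacing the simplicity claim by the multivaluedness criterion closes the gap.
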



This realization result as well as the technique of $B$-generalized
boundary triples has recently been applied also e.g. to problems in
scattering theory (see~\cite{BMN16}), in analysis of discrete and
continuous time system theory, and in boundary control theory (for
some recent papers, see e.g. \cite{BKSZ15}, \cite{HdSSz2012},
\cite{MalStaf2006}).


\subsection{Unitary boundary triples}
A general class of boundary triples, to be called here unitary
boundary triples, was introduced in~\cite{DHMS06}. In fact, the
appearance of this concept was motivated  by the inverse problem for
the most general class of Nevanlinna functions: realize each
Nevanlinna function as the Weyl function of an appropriate type
generalized boundary triple.

To this end denote by $\cR(\cH)$ the Nevanlinna class of all
operator valued holomorphic functions on $\dC_+$ (in the resolvent
sense) with values in the set of maximal dissipative (not
necessarily bounded) linear operators in $\cH$. Each $M(\cdot) \in
\cR(\cH)$ is extended to $\dC_-$ by symmetry with respect to the
real line $M(\lambda) = M(\bar\lambda)^*$; see~\cite{KL71},
\cite{DHMS06}. Analogous to the subclass $\cR^s[\cH]$ of bounded
Nevanlinna functions $\cR[\cH]$, the class $\cR(\cH)$ contains a
subclass $\cR^s(\cH)$ of strict (in general unbounded) Nevanlinna
functions which satisfy the condition
\begin{equation}\label{stric-unb}
\cR^s(\cH) := \left\{F(\cdot)\in \cR(\cH):\, \IM (F(i)h,h) =0
\Longrightarrow h=0, \quad h\in \dom F(i)\right\}.
\end{equation}

%
%
In order to present the definition of a unitary boundary triple,
introduce the fundamental symmetries
\begin{equation}\label{jh}
 J_\sH:=\begin{pmatrix} 0 & -iI_\sH \\ i I_\sH & 0\end{pmatrix},\quad
  J_\cH:=\begin{pmatrix} 0 & -iI_\cH \\ i I_\cH & 0\end{pmatrix},
\end{equation}
and the associated Kre\u{\i}n spaces $(\sH^2,J_\sH)$ and
$(\cH^2,J_\cH)$ (see~\cite{AI86}, \cite{Bog68}) obtained by endowing
the Hilbert spaces $\sH^2$ and $\cH^2$ with the following indefinite
inner products
\begin{equation}\label{eq:leftik_sH}
    [ \wh f,\wh g]_{\sH^2}=
\bigl(J_\sH \wh f,\wh g\bigr)_{\sH^2},\quad[ \wh h,\wh
k]_{\cH^2}= \bigl(J_\cH \wh h,\wh k\bigr)_{\cH^2},\quad \wh
f, \wh g\in \sH^2,\quad \wh h, \wh k \in \cH^2.
\end{equation}
%
%
This allows to rewrite the Green's identity~\eqref{Greendef1} in the
form
\begin{equation}\label{eq:J_isom}
 [ \wh f,\wh g]_{\sH^2}=[ \Gamma\wh f,\Gamma \wh
 g]_{\cH^2},
\end{equation}
which expresses the fact that a mapping $\Gamma$ which satisfies the
Green's identity~\eqref{Greendef1} is in fact a
$(J_\sH,J_\cH)$-isometric mapping from the Kre\u{\i}n space
$(\sH^2,J_\sH)$ to the Kre\u{\i}n space $(\cH^2,J_\cH)$. If
$\Gamma^{[*]}$ denotes the Kre\u{\i}n space adjoint of the operator
$\Gamma$ (see definition~\eqref{def:J_unit}), then~\eqref{eq:J_isom}
can be simply rewritten as $\Gamma^{-1}\subset \Gamma^{[*]}$. The
surjectivity of $\Gamma$ implies that
 $   \Gamma^{-1}= \Gamma^{[*]}$.
Following Yu.L. Shmuljan~\cite{Sh0} a linear operator
$\Gamma:(\sH^2,J_\sH)\to(\cH^2,J_\sH)$ will be called
$(J_\sH,J_\cH)$-unitary, if $\Gamma^{-1}= \Gamma^{[*]}$.
   \begin{definition}[\cite{DHMS06}]\label{unitBT}\index{Boundary triple!unitary}
   \index{Boundary triple!isometric}
A collection $\{\cH,\Gamma_0,\Gamma_1\}$ is called a \textit{unitary
(resp. isometric) boundary triple} for $A^*$,
if $\cH$ is a Hilbert space and $\Gamma=\begin{pmatrix}\Gamma_0\\
\Gamma_1\end{pmatrix}$ is a linear operator from $\sH^2$ to $\cH^2$
such that:
\begin{enumerate}[label=\textbf{\thedefinition.\arabic*}]
\item\label{1_def_unitBT}
$\ZA_*:=\dom\Gamma$ is dense in $A^*$ with respect to the topology on $\sH^2$;
  \item \label{2_def_unitBT} 
  The operator $\Gamma$ is $(J_\sH,J_\cH)$-unitary (resp. isometric).
\end{enumerate}
\end{definition}
The Weyl function $M(\lambda)$ corresponding to a unitary  boundary triple $\Pi$ is
defined again by the same formula~\eqref{0.2}.
The {\it transposed boundary triple}\index{Boundary triple!transposed}
$\Pi^\top:=\{\cH,\Gamma_1,-\Gamma_0\}$ to a unitary boundary triple
$\Pi$ is also a unitary  boundary triple, the corresponding Weyl
function takes the form $M^\top(\lambda)=-M(\lambda)^{-1}$.

The main realization theorem in \cite{DHMS06} gave a solution to the
inverse problem mentioned above.
 \begin{theorem}[\cite{DHMS06}]\label{TAMS_Th}
The class of Weyl functions corresponding to unitary boundary
triples coincides with the class $\cR^s(\cH)$ of (in general
unbounded) strict  Nevanlinna functions.
   \end{theorem}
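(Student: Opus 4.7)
The plan is to prove both inclusions separately.

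\textbf{Necessity.} Let $\Pi=\{\cH,\Gamma_0,\Gamma_1\}$ be a unitary boundary triple with Weyl function $M$. I would first apply Green's identity $[\wh f,\wh g]_{\sH^2}=[\Gamma\wh f,\Gamma\wh g]_{\cH^2}$ to pairs $\wh f,\wh g\in A_0:=\ker\Gamma_0$; this gives $[\wh f,\wh g]_{\sH^2}=0$, so $A_0$ is a symmetric linear relation in $\sH$ and $\ker(A_0-\lambda)=\{0\}$ for $\lambda\in\dC\setminus\dR$, which ensures that $M(\lambda)$ in \eqref{0.2} is a single-valued operator on $\wh\sN_\lambda\cap A_*$. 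Specializing the identity to $\wh f=\wh g=\wh f_\lambda\in\wh\sN_\lambda\cap A_*$ with $\Gamma_0\wh f_\lambda=h$ yields
\[
  \IM(M(\lambda)h,h)=(\IM\lambda)\,\|\gamma(\lambda)h\|^2\ge 0,\qquad \lambda\in\dC_+,
\]
so $M(\lambda)$ is dissipative in $\dC_+$; at $\lambda=i$ vanishing of the left side forces $\gamma(i)h=0$, hence $\wh f_i=0$ and $h=\Gamma_0\wh f_i=0$, which is precisely the strictness condition \eqref{stric-unb}. Pairing $\wh f_\lambda$ with $\wh g_{\bar\lambda}$ in Green's identity produces the formal symmetry $M(\bar\lambda)\subset M(\lambda)^*$. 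The crucial remaining point is \emph{maximality} of $M(\lambda)$ as a dissipative operator, equivalently $\ran(M(i)+iI)=\cH$; this is where the full unitarity $\Gamma^{-1}=\Gamma^{[*]}$, not mere Krein-space isometry, is needed. I would establish it by showing $(\Gamma_1+i\Gamma_0)(\wh\sN_i\cap A_*)=\cH$: given $w\in\cH$, choose an auxiliary element of $\cH^2$ whose Krein-space pairing with the image of $\Gamma$ reproduces the functional $w$, invoke the surjectivity of $\Gamma^{[*]}$ to obtain a corresponding $\wh f\in A_*$, and project onto the defect component $\wh\sN_i$ via the von Neumann-type decomposition (available because essential selfadjointness of $A_0$ follows from $\Gamma^{-1}=\Gamma^{[*]}$). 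Once maximality is in place, the inclusion $M(\bar\lambda)\subset M(\lambda)^*$ becomes equality, and holomorphy on $\rho(A_0)\supset\dC\setminus\dR$ follows from the resolvent-type identity \eqref{eq:Q_fun}.

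\textbf{Sufficiency.} Conversely, given $M\in\cR^s(\cH)$, I would construct a concrete model unitary triple with Weyl function $M$. Following the Kre\u{\i}n--Langer dilation strategy that underlies Theorem \ref{thm:WF_BG_BT}, form a model Hilbert space $\sH$ as the Hilbert completion of the linear span of formal symbols $h_\lambda$ with $h\in\dom M(\lambda)$, $\lambda\in\dC\setminus\dR$, under the Nevanlinna kernel
\[
  (h_\lambda,k_\mu)_\sH=\frac{\bigl((M(\lambda)-M(\mu)^*)h,k\bigr)_\cH}{\lambda-\bar\mu},
\]
which is positive semidefinite by $\IM M\ge 0$ and non-degenerate exactly because of the strictness \eqref{stric-unb} (at $\lambda=\mu=i$ the kernel reduces to $\IM(M(i)h,h)$). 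Next define a selfadjoint linear relation $A_0$ on $\sH$ by prescribing its resolvent via $(A_0-\lambda)^{-1}h_\mu=(h_\mu-h_\lambda)/(\lambda-\mu)$, let $A$ be the associated minimal symmetric restriction, set $A_*:=A\hplus\wh\sN$ where $\wh\sN$ is spanned by the vectors $\wh h_\lambda=(h_\lambda,\lambda h_\lambda)$, and define $\Gamma_0\wh h_\lambda=h$, $\Gamma_1\wh h_\lambda=M(\lambda)h$. Green's identity is a direct computation from the defining kernel, and the unitarity $\Gamma^{-1}=\Gamma^{[*]}$ is then verified by a two-inclusion argument, using additionally that the transposed triple should have Weyl function $-M^{-1}$, which is again in $\cR^s(\cH)$.

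The main obstacle is the sufficiency direction, and within it the verification that $\Gamma$ is genuinely $(J_\sH,J_\cH)$-unitary and not merely isometric. For unbounded $M\in\cR^s(\cH)$ the intersections $\dom M(\lambda)\cap\dom M(\mu)$ can degenerate to $\{0\}$ when $\lambda\neq\mu$, so the natural initial domain $A\hplus\wh\sN$ is in general a proper subset of the desired $A_*\subset A^*$. Extending the boundary mappings past this linear span so that $\dom\Gamma$ becomes dense in $A^*$ while preserving both Green's identity and the unitarity $\Gamma^{-1}=\Gamma^{[*]}$ requires delicate closure arguments that make essential use of the strictness characterizing $\cR^s(\cH)$ inside the larger class $\cR(\cH)$.
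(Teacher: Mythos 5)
Your overall architecture (isometry $\Rightarrow$ dissipativity, strictness and $M(\bar\lambda)\subset M(\lambda)^*$; unitarity $\Rightarrow$ maximality; converse by a Kre\u{\i}n--Langer model) matches the shape of the argument in \cite{DHMS06}, but the necessity step as you propose to execute it contains a false premise. You assert that ``essential selfadjointness of $A_0$ follows from $\Gamma^{-1}=\Gamma^{[*]}$'' and then use a von Neumann type decomposition of $A_*$ through $A_0$ and $\wh\sN_i$ to produce preimages. For a unitary boundary triple $A_0=\ker\Gamma_0$ is in general only symmetric and can even coincide with the minimal operator $A$ (see the discussion opening Section~\ref{sec5.2} and Example~\ref{example6.5}); by Theorem~\ref{essThm1} essential selfadjointness of $A_0$ is \emph{equivalent} to form domain invariance of $M$, a genuinely smaller class than $\cR^s(\cH)$. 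So the decomposition $A_*=A_0\hplus\wh\sN_i(A_*)$ you want to project onto is simply unavailable (it characterizes the $S$-generalized case, Theorem~\ref{prop:C6B}(ii)), and your route to $\ran(M(i)+i)=\cH$ collapses. The mechanism that actually delivers maximality is the main transform $\wt A=\cJ(\Gamma)$ of \eqref{awig}, which is selfadjoint in $\sH\oplus\cH$ precisely when $\Gamma$ is unitary; equivalently, via the Potapov--Ginzburg transform \eqref{eq:Col}--\eqref{eq:Col_U} one gets $M(\lambda)=i(I+\theta(\zeta))(I-\theta(\zeta))^{-1}$ with $\theta(\zeta)$ a contraction (see \eqref{eq:M_theta}), and maximal dissipativity is automatic from that representation. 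No statement about $A_0$ is needed.

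The sufficiency half is not actually proved: you correctly identify that $\dom M(\lambda)\cap\dom M(\mu)$ may be trivial and that extending $\Gamma$ from the span of the $\wh h_\lambda$ to a dense, still unitary relation is the hard point, but you leave it at ``requires delicate closure arguments.'' Note also that your kernel $\bigl((M(\lambda)-M(\mu)^*)h,k\bigr)/(\lambda-\bar\mu)$ is ill-formed for non-domain-invariant $M$ (it needs $h\in\dom M(\mu)^*$); the correct sesquilinear form is $\bigl[(M(\lambda)h,k)-(h,M(\mu)k)\bigr]/(\lambda-\bar\mu)$ as in \eqref{Green3}. The standard resolution again goes through the main transform in the opposite direction: one first realizes $M$ (even any $M\in\wt\cR(\cH)$) by building a selfadjoint relation $\wt A$ in $\sH\oplus\cH$ from a model for the Nevanlinna family and sets $\Gamma:=\cJ^{-1}(\wt A)$, which is unitary by construction, so no extension step is needed; the residual work is checking $\cdom\Gamma=A^*$ and minimality. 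This yields Theorem~\ref{GBTNP}, and the theorem you are proving then follows by intersecting with the single-valuedness criterion for $\Gamma$ in terms of $M$ (Lemma~\ref{Weylmul}: $\mul\Gamma=M(\lambda)\cap M(\lambda)^*$, and strictness \eqref{stric-unb} is exactly what forces this to vanish) --- which is precisely how the paper derives the statement in Corollary~\ref{cor:TAMS_Th}.
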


In fact, in \cite[Theorem 3.9]{DHMS06} a stronger result is stated
showing that the class $\cR^s(\cH)$ can be replaced by the class
$\cR(\cH)$ or even by the class $\widetilde \cR(\cH)$ of Nevanlinna
pairs when one allows multivalued linear mappings $\Gamma$ in
Definition \ref{unitBT}; see Theorem~\ref{GBTNP} in Section
\ref{sec3.2}. Theorem \ref{TAMS_Th} plays a key role in the
construction of generalized resolvents in the framework of coupling
method that was originally introduced in \cite{DHMS1} and developed
in its full generality in~\cite{DHMS09}.

In connection with Definition \ref{unitBT} we wish to make some
comments on a seminal paper \cite{Cal39} by J.W. Calkin, where a
concept of the \emph{reduction operator} is introduced and
investigated. Although no proper geometric machinery appears in the
definition of Calkin's reduction operator this notion in the case of
a densely defined operator $\ZA$ essentially coincides with concept
of a unitary operator between Kre\u{\i}n spaces as in
Definition~\ref{unitBT}. An overview on the early work of Calkin and
some connections to later developments can be found from the papers
in the monograph \cite{HdSSz2012}; for a further discussion see also
Section \ref{sec3.5}.

Despite of the complete solution to the realization problem given in
Theorem \ref{TAMS_Th}, more specifically determined subclasses of
unitary boundary triples and closely connected isometric triples
together with the associated subclasses of Weyl functions are both of
theoretical and practical interest since they naturally appear, for
instance, in various problems of mathematical physics. Such
questions, sometimes of rather delicate nature, lead to inverse as
well as direct problems for $R$-functions in the classes $\cR(\cH)$
and $\wt\cR(\cH)$, where precisely prescribed analytic properties of
operator functions have to be connected to appropriately determined
geometric properties of associated boundary triples and vice versa.
These problems have motivated the present work and resulted in
several further subclasses of holomorphic operator functions,
included or at least closely related to the class of $\cR(\cH)$ of
Hergloz-Nevanlinna functions, all of them occurring in boundary
value problems in the ODE and PDE settings.

The notions of ordinary boundary triples and $B$-generalized
boundary triples turned out to be unitary boundary triples;
see~\cite{DHMS06}. In particular, a unitary boundary triple $\Pi$ is
an ordinary ($B$-generalized) boundary triple if and only if the
corresponding Weyl function $M(\cdot)$ belongs to the class
$\cR^u[\cH]$ (resp. $M(\cdot)\in \cR^s[\cH]$). In Section~\ref{sec5}
we consider two further subclasses of unitary boundary triples:
$S$-generalized and $ES$-generalized boundary triples. For deriving
some of the main results on unitary boundary triples we have
established some new facts on the interaction between
$(J_\sH,J_\cH)$-unitary relations and unitary colligations appearing
e.g. in system theory and in the analysis of Schur functions, see
Section \ref{sec5.1}; a background for this connection can be found
from~\cite{BHS09}. On the other hand, in Section~\ref{sec5.2} we
extend Kre\u{\i}n's resolvent formula to the general setting of
unitary boundary triples $\{\cH,\Gamma_0,\Gamma_1\}$. Namely, for
any proper extension $A_\Theta\in {\Ext}_S$ satisfying
$A_\Theta\subset \dom \Gamma$ the following Kre\u{\i}n-type formula
holds:
\begin{equation}\label{resol_Intro}
(A_\Theta-\lambda)^{-1}-(A_0-\lambda)^{-1}
=\gamma(\lambda)\bigl(\Theta-M(\lambda)\bigr)^{-1}\gamma(\bar\lambda)^*,\qquad
\lambda\in\cmr.
\end{equation}
It is emphasized that in this formula $A_\Theta$ \emph{is not
necessarily closed and it is not assumed that
$\lambda\in\rho(A_\Theta)$}, in particular, here the inverses
$(A_\Theta-\lambda)^{-1}$ and $(\Theta-M(\lambda))^{-1}$ are
understood in the sense of relations, see
Theorem~\ref{Kreinformula2}; for an analogous formula see also
Theorem~\ref{Kreinformula}.

\subsection{$S$-generalized boundary triples}
Following~\cite{DHMS06}  we consider a special class of unitary
boundary triples singled out by the condition that
$A_0:=\ker\Gamma_0$ is a selfadjoint extension of $\ZA$.
 \begin{definition}[\cite{DHMS06}]\label{SgenBT}\index{Boundary triple!S-generalized}
A unitary boundary triple $\Pi = \{\cH,\Gamma_0,\Gamma_1\}$ is said
to be an \emph{$S$-generalized  boundary triple} 
for $A^*$ if the assumption~\ref{3_def_genBT} holds, i.e. $A_0 := \ker
\Gamma_0$  is a selfadjoint extension of $\ZA$.
\end{definition}

Next following~\cite[Theorem~7.39]{DHMS12} and
\cite[Theorem~4.13]{DHMS06} (see also an extension given in
Theorem~\ref{prop:C6} below) we present a complete characterization
of the Weyl functions $M(\cdot)$ corresponding to $S$-generalized
boundary triples.

 \begin{theorem}\label{prop:C6B}{\rm (\cite{DHMS06,DHMS12})}
Let  $\Pi=\{\cH,\Gamma_0,\Gamma_1\}$ be a unitary boundary triple
for $A^*$ and let $M(\cdot)$ and $\gamma(\cdot)$ be the
corresponding Weyl function and $\gamma$-field, respectively. Then
the following statements are equivalent:
\begin{enumerate}[label={\normalfont (\roman*)}]
\item  $A_0 = \ker \Gamma_0$ is selfadjoint, i.e. $\Pi$ is an $S$-generalized boundary triple;
\item $A_*=A_0\hplus \wh\sN_{\lambda}$ and $A_*=A_0\hplus \wh\sN_{\mu}$ for some (equivalently for all) $\lambda\in\dC_+$ and $\mu\in\dC_-$;
\item $\ran\Gamma_0 = \dom M(\lambda)= \dom M(\mu)$ for some (equivalently for all) $\lambda\in\dC_+$ and $\mu\in\dC_-$;
\item $\gamma(\lambda)$ and $\gamma(\mu)$ are bounded and densely defined in $\cH$
    for some (equivalently for all) $\lambda\in\dC_+$ and $\mu\in\dC_-$;
\item $\IM M(\lambda)$ is bounded and densely defined for some (equivalently for all)
    $\lambda\in\dC\setminus\dR$;
\item the Weyl function $M(\cdot)$ belongs to $\cR^s(\cH)$ and it admits a  representation
\begin{equation}\label{eq:DomInv}
 M(\lambda) = E + M_0(\lambda), \quad M_0(\cdot)\in \cR[\cH], \quad
  \lambda\in\dC\setminus\dR,
\end{equation}
where $E=E^*$ is a selfadjoint (in general unbounded)  operator in
$\cH$.
\end{enumerate}
\end{theorem}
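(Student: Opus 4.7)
The plan is to close the cycle (i)$\Rightarrow$(ii)$\Rightarrow$(iii)$\Rightarrow$(iv)$\Rightarrow$(v)$\Rightarrow$(vi)$\Rightarrow$(i). The first several implications rest on the von Neumann decomposition combined with the $(J_\sH,J_\cH)$-unitarity of $\Gamma$ from Definition~\ref{unitBT}, while the closing implication relies on the realization Theorem~\ref{thm:WF_BG_BT} for $B$-generalized boundary triples.

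For (i)$\Rightarrow$(ii), selfadjointness of $A_0$ gives $\dC\setminus\dR\subseteq\rho(A_0)$, so $A^*=A_0\hplus\wh\sN_\lambda$ holds for every $\lambda\in\dC\setminus\dR$. Intersecting with $A_*=\dom\Gamma$ yields $A_*=A_0\hplus(\wh\sN_\lambda\cap A_*)$, and the $(J_\sH,J_\cH)$-unitarity of $\Gamma$ together with the density of $A_*$ in $A^*$ forces $\wh\sN_\lambda\subseteq A_*$. Conversely, (ii) applied at both $\lambda\in\dC_+$ and $\mu\in\dC_-$ implies surjectivity of $(A_0-\lambda)$ and $(A_0-\mu)$ on $\sH$, giving (i). The equivalence with (iii) and (iv) is then a short computation: any $\wh f\in A_*$ decomposes as $\wh f_0+\wh f_\lambda$ with $\wh f_0\in A_0=\ker\Gamma_0$, whence $\Gamma_0\wh f=\Gamma_0\wh f_\lambda$ and $\ran\Gamma_0=\Gamma_0(\wh\sN_\lambda)=\dom M(\lambda)$; the map $\Gamma_0\upharpoonright_{\wh\sN_\lambda}$ is bijective onto $\dom M(\lambda)$ by injectivity (from $\lambda\in\rho(A_0)$), and its inverse followed by projection onto the first component is $\gamma(\lambda)$, which is then densely defined and bounded by the closed graph theorem applied to the closed operator $\Gamma$. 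Implication (iv)$\Rightarrow$(v) is immediate from~\eqref{eq:Q_fun} at $\mu=\lambda$, which reads $\IM M(\lambda)=(\IM\lambda)\,\gamma(\lambda)^*\gamma(\lambda)$. For (v)$\Rightarrow$(vi), I would invoke the Nevanlinna integral representation: boundedness of $\IM M(\lambda_0)$ for a single $\lambda_0$ forces the associated operator-valued spectral measure to be bounded, so only the linear part of the representation is potentially unbounded, yielding $M(\lambda)=E+M_0(\lambda)$ with $E=E^*$ possibly unbounded and $M_0\in\cR[\cH]$.

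The closing implication (vi)$\Rightarrow$(i) is the main obstacle. The natural approach is a renormalization: introduce $\wt\Gamma_1=\Gamma_1-E\Gamma_0$ (on a dense subspace where this makes sense) so that the Weyl function of $\{\cH,\Gamma_0,\wt\Gamma_1\}$ equals $M_0\in\cR^s[\cH]$. By Theorem~\ref{thm:WF_BG_BT} this renormalized triple is a $B$-generalized boundary triple, whose reference operator $\ker\wt\Gamma_0=\ker\Gamma_0=A_0$ is therefore selfadjoint by part~\ref{3_def_genBT} of Definition~\ref{genBT}. The technical difficulty lies in arranging the renormalization when $E$ is unbounded: one must match $\dom E$ with $\ran\Gamma_0$, verify that the shifted mapping remains a $(J_\sH,J_\cH)$-isometry, and control the closure so that the outcome is an honest unitary (in fact $B$-generalized) boundary triple with $\ker\Gamma_0$ unchanged, thereby transferring the selfadjointness of the reference operator of the auxiliary triple back to the original one.
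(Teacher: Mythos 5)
Your outline reproduces the easy directions, but the two steps that carry the real weight of the theorem are either wrong or explicitly deferred. First, the boundedness of $\gamma(\lambda)$ cannot be obtained from ``the closed graph theorem applied to the closed operator $\Gamma$'': a unitary relation between Kre\u{\i}n spaces may be unbounded, and $\gamma(\lambda)$ is an operator whose domain $\dom M(\lambda)$ is in general only dense, so even a closed graph (which is not automatic --- $\Gamma_0$ need not even be closable, cf.\ Example~\ref{ex:G0notclosable}) yields no norm bound. The mechanism the paper uses is dual: by Lemma~\ref{isoHlem} and Proposition~\ref{prop:C3} one has $\gamma(\bar\lambda)^*\supseteq\Gamma_1H(\lambda)$ with $\dom\Gamma_1H(\lambda)=\ran(A_0-\lambda)$, so once $\ran(A_0-\lambda)=\sH$ the adjoint $\gamma(\bar\lambda)^*$ is closed and everywhere defined, hence bounded, and so is $\gamma(\bar\lambda)$. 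For the same reason your (ii)$\Rightarrow$(i) is too quick: $A_*=A_0\hplus\wh\sN_\lambda(A_*)$ only gives $\ran(A_0-\lambda)=\ran(A_*-\lambda)$, which is dense but not a priori all of $\sH$, since $A_0=\ker\Gamma_0$ need not be closed; and the auxiliary claim that unitarity forces $\wh\sN_\lambda\subseteq A_*$ is false --- already for the Laplace triple of Example~\ref{example1.6} the defect elements lying in $A_*$ form a proper dense subset of $\wh\sN_\lambda(A^*)$, and statement (ii) must be read with $\wh\sN_\lambda(A_*)=\wh\sN_\lambda\cap A_*$.

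Second, and decisively, (vi)$\Rightarrow$(i) is exactly where you stop: renormalizing by an unbounded $E$ produces only an isometric triple on $\{\wh f:\,\Gamma_0\wh f\in\dom E\}$, and upgrading its closure to a unitary ($B$-generalized) boundary pair with the same kernel is the whole difficulty (Theorem~\ref{QBTthm} and Corollary~\ref{QBRcor} run this machine in the opposite direction). The paper never performs this renormalization; it closes the cycle via (vi)$\Rightarrow$(v)$\Rightarrow$(iv)$\Rightarrow$(i). From $M=E+M_0$ with $E=E^*$ one reads off that $\IM M(\lambda)=\IM M_0(\lambda)$ is bounded; the identity~\eqref{Green3} at $\mu=\lambda$, namely $\IM\lambda\,\|\gamma(\lambda)h\|^2=\IM(M(\lambda)h,h)$, then bounds $\gamma(\lambda)$ on its dense domain in both half-planes; and boundedness of $\gamma(\lambda)$ and $\gamma(\mu)$ gives $\dom\gamma(\bar\lambda)^*=\dom\gamma(\bar\mu)^*=\sH$, i.e.\ $\ran(A_0-\lambda)=\ran(A_0-\mu)=\sH$, whence $A_0=A_0^*$. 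Your (v)$\Rightarrow$(vi) also needs repair: there is no off-the-shelf integral representation for Nevanlinna functions with unbounded operator values to invoke; the paper instead defines $E:=\RE M(\lambda_0)$ directly, proves $E=E^*$ on $\ran\Gamma_0$ using the domain invariance from (iii) and $M(\bar\lambda)=M(\lambda)^*$, and checks that $M(\cdot)-E$ extends to a function in $\cR[\cH]$ --- so (v) and (vi) are derived from (iv) rather than in your strict linear chain, where at the step (v)$\Rightarrow$(vi) none of that information is yet available.
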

Here the symbol $\hplus $ means the componentwise sum of two linear relations, see~\eqref{eq:hsum}.
Notice that, for instance, the implications (i)$\Rightarrow$ (ii),
(iii) are immediate from the following decomposition of
$A_*:=\dom\Gamma$:
\begin{equation}\label{eq:A_*}
  A_*=A_0\hplus\wh\sN_\lambda(A_*),\quad \lambda\in\rho(A_0).
\end{equation}
In accordance with \eqref{eq:DomInv}  the Weyl function
corresponding to an $S$-generalized boundary triple is an operator
valued Herglotz-Nevanlinna function with domain invariance property:
$\dom M(\lambda) = \dom E = \ran\Gamma_0$,  \  $\lambda \in \Bbb
C_{\pm}$. It takes values in the set $\cC(\cH)$ of closed (in general  unbounded) operators while values of the imaginary parts $\IM
M(\lambda)$ are bounded operators.

As an example we mention that the transposed boundary triple
$\Pi^\top = \{L^2(\partial\Omega), \Gamma_1, -\Gamma_0\}$ from the
PDE Example~\ref{example1.6} is an $S$-generalized boundary triple.
The corresponding Weyl function coincides (up to sign change) with
the Dirichlet-to-Neumann map $\Lambda(\cdot)$, i.e. $M(\cdot)^{\top}
= -\Lambda(\cdot)$; see Proposition~\ref{prop:Laplacian}.



  \subsection{$ES$-generalized boundary triples and form domain invariance}

Next we discuss one of the main objects appearing in the present
work; it is easily introduced by the following definition.
   \begin{definition}\label{EgenBT}\index{Boundary triple!ES-generalized}
A \emph{unitary} boundary triple $\{\cH,\Gamma_0,\Gamma_1\}$ for
$\ZA^*$ is said to be an \emph{essentially selfadjoint generalized
boundary triple}, in short, \emph{$ES$-generalized boundary triple}
for $\ZA^*$, if:
  \begin{enumerate}[label=\textbf{\thedefinition.\arabic*}]
\item
\label{1_def_EgenBT} $A_0:=\ker\Gamma_0$ is an essentially selfadjoint linear relation in $\sH$.
  \end{enumerate}
  \end{definition}

To characterize the class of $ES$-generalized boundary triples in
terms of the corresponding Weyl functions we associate with each
${M(\cdot)}$ a family of nonnegative quadratic forms
$\st_{M(\lambda)}$ in $\cH$:
   \begin{equation}\label{eq:W-form}
\st_{M(\lambda)}[u,v] :=
\frac{1}{\lambda-\bar\lambda}\,[(M(\lambda)u,v)-(u,M(\lambda)v)],
\quad u,v\in\dom(M(\lambda)), \quad \lambda\in\cmr.
 \end{equation}
The forms $\st_{M(\lambda)}$  are not necessarily  closable.
However, it is shown that if $\st_{M(\lambda_0)}$ is  closable at
one point $\lambda_0\in \dC_{\pm}$, then $\st_{M(\lambda)}$ is
closable  for each $\lambda\in\dC_{\pm}$. In the latter case the
domain of the closure $\overline\st_{M(\lambda)}$  does not depend
on $\lambda \in \dC_{\pm}$, i.e. the  family  $\overline\st_{M(\cdot)}$ is domain invariant.

In what follows one of the main results established in this
connection reads as follows (cf. Theorem \ref{essThm1}).
  \begin{theorem}\label{main_theorem}
Let  $\Pi = \{\cH,\Gamma_0,\Gamma_1\}$ be a unitary boundary  triple
for $\ZA^*$. Let also $M(\cdot)$ and $\gamma(\cdot)$  be the
corresponding Weyl function and $\gamma$-field, respectively. Then
the following statements are equivalent:
\begin{enumerate}[label={\normalfont (\roman*)}]
\item $\Pi$ is  $ES$-generalized boundary triple for $\ZA^*$;
\item $\gamma(\pm i)$ is closable;
\item $\gamma(\lambda)$ is closable for  every $\lambda\in\dC_{\pm}$ and
$\dom\gamma(\lambda) = \dom\gamma(\pm i)$, $\lambda\in \Bbb
C_{\pm}$;
\item the form $\st_{M(\pm i)}$ is closable;
\item the form $\st_{M(\lambda)}$ is closable for each $\lambda\in\dC_\pm$ and
$\dom{\overline\st_{M(\lambda)}} = \dom {\overline\st_{M(\pm i)}}$,
$\lambda\in \Bbb C_{\pm}$;
\item the Weyl function $M(\cdot)$ belongs to $ \cR^s(\cH)$ and is form domain invariant in $\dC_\pm$.\index{Weyl function!form domain invariant}
%
\end{enumerate}
  \end{theorem}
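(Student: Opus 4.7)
The plan is to first reduce the six conditions to three via a form--operator identity, then prove the genuine equivalence (i)$\Leftrightarrow$(ii) and the upgrade from pointwise closability at $\pm i$ to domain-invariant closability throughout $\dC_\pm$. Setting $\mu=\lambda$ in the $Q$-function identity~\eqref{eq:Q_fun}, applied to $\wh f_\lambda,\wh g_\lambda\in\wh\sN_\lambda\cap A_*$, produces
\[
\st_{M(\lambda)}[u,v]=(\gamma(\lambda)u,\gamma(\lambda)v)_\sH,\qquad u,v\in\dom M(\lambda)=\dom\gamma(\lambda).
\]
Since a quadratic form of the shape $(T\,\cdot,T\,\cdot)$ is closable if and only if $T$ is closable, with matching domains of closure, (ii)$\Leftrightarrow$(iv) and (iii)$\Leftrightarrow$(v) become tautologies. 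Theorem~\ref{TAMS_Th} yields $M(\cdot)\in\cR^s(\cH)$ automatically from the unitarity of $\Pi$, so (v) and (vi) merely paraphrase the same statement.

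For (i)$\Rightarrow$(ii), assume $\overline{A_0}$ is selfadjoint and take $u_n\in\dom M(i)$ with $u_n\to 0$ and $\gamma(i)u_n=:f_n\to v$. Applying the Green identity~\eqref{Greendef1} to $\wh f_n=\binom{f_n}{if_n}\in A_*$ and an arbitrary $\wh g_0=\binom{g}{g'}\in A_0$, using $\Gamma_0\wh g_0=0$, gives
\[
(if_n,g)_\sH-(f_n,g')_\sH=-(u_n,\Gamma_1\wh g_0)_\cH\longrightarrow 0,
\]
so in the limit $\binom{v}{iv}\in A_0^*=(\overline{A_0})^*=\overline{A_0}$. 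Selfadjoint relations have empty non-real point spectrum, forcing $v=0$; the analogous computation at $-i$ handles $\gamma(-i)$.

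The main obstacle is the converse (ii)$\Rightarrow$(i). I would argue by contradiction: if $\gamma(\pm i)$ are closable but $\overline{A_0}$ is not selfadjoint, then $\ker(A_0^*-i)\ne\{0\}$ (or the analogous kernel at $-i$), producing a nonzero $v$ with $\binom{v}{iv}\in A_0^*\subset A^*$. The goal is to exhibit $u_n\in\dom M(i)$ with $u_n\to 0$ and $\gamma(i)u_n\to v$, which would contradict closability. Since $A_*$ is dense in $A^*$ by Definition~\ref{unitBT}.\ref{1_def_unitBT}, the element $\binom{v}{iv}$ can be approximated by $\wh f^{(n)}\in A_*$; the delicate point is to subtract a component $\wh g^{(n)}\in A_0$ so that $\wh h^{(n)}:=\wh f^{(n)}-\wh g^{(n)}\in A_*$ lies asymptotically in $\wh\sN_i\cap A_*$, with first component tending to $v$ and $\Gamma_0\wh h^{(n)}\to 0$. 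The hypothesis $\binom{v}{iv}\in A_0^*$ is precisely what allows this splitting to be carried out via the Kre\u{\i}n-space identity $\Gamma^{-1}=\Gamma^{[*]}$, compensating for the absence of the direct decomposition $A^*=\overline{A_0}\hplus\wh\sN_i$ which would be available only if $\overline{A_0}$ were already known to be selfadjoint. Replacing this missing decomposition by a Kre\u{\i}n-space projection argument is the essential difficulty of the proof.

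Finally, (i)$\Rightarrow$(iii) follows by lifting the resolvent identity~\eqref{eq:gamma} to the closures: with $\overline{A_0}$ selfadjoint, the operator $I+(\lambda-i)(\overline{A_0}-\lambda)^{-1}$ is a bounded bijection of $\sH$ that intertwines $\overline{\gamma(i)}$ and $\overline{\gamma(\lambda)}$, yielding $\dom\overline{\gamma(\lambda)}=\dom\overline{\gamma(i)}$ for every $\lambda\in\dC_+$, and symmetrically on $\dC_-$. Transporting this back through the form--operator identity of the first paragraph delivers the form domain invariance required in (v)/(vi), closing the equivalence cycle.
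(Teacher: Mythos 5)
Your reductions among (ii)--(vi) via the identity $\st_{M(\lambda)}[u,v]=(\gamma(\lambda)u,\gamma(\lambda)v)_\sH$ are correct and match the paper (Lemma~\ref{gg01}), and your direct argument for (i)$\Rightarrow$(ii) -- passing to the limit in Green's identity against $\wh g_0\in A_0$ to conclude $\binom{v}{iv}\in A_0^*=\overline{A_0}$ and hence $v=0$ -- is clean and even somewhat more elementary than the paper's route. The problem is that the converse implication (ii)$\Rightarrow$(i), which is the actual content of the theorem, is not proved: you describe a contradiction strategy and then explicitly defer its key step (``the essential difficulty of the proof''), so what remains is a plan, not an argument.

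Moreover, the plan as described has an internal obstruction. To contradict closability of $\gamma(i)$ you must produce $u_n\in\dom\gamma(i)$, i.e.\ elements $\wh h^{(n)}$ lying \emph{exactly} in $\wh\sN_i(\ZA_*)$ with $\Gamma_0\wh h^{(n)}\to 0$ and first components converging to $v\neq 0$; vectors that are only ``asymptotically'' in $\wh\sN_i$ are not in $\dom\gamma(i)$ and give nothing. The splitting $\wh f^{(n)}=\wh g^{(n)}+\wh h^{(n)}$ with $\wh g^{(n)}\in A_0$ and $\wh h^{(n)}\in\wh\sN_i(\ZA_*)$ is precisely the von Neumann--type decomposition $\ZA_*=A_0\hplus\wh\sN_i(\ZA_*)$, which by Theorem~\ref{prop:C6B} is \emph{equivalent} to $A_0$ being selfadjoint and is exactly what fails in the situation you are trying to rule out (cf.~\eqref{eq:A_*A_0}); invoking ``a Kre\u{\i}n-space projection argument'' does not supply it. The paper avoids this entirely: it proves the exact operator identity $\Gamma_1H(\lambda)=\gamma(\bar\lambda)^{*}$ for unitary boundary triples (Proposition~\ref{prop:C3}, established through the Potapov--Ginzburg transform of $\Gamma$ into a unitary colligation), whence $\dom\gamma(\bar\lambda)^{*}=\dom\Gamma_1H(\lambda)=\ran(A_0-\lambda)$, so that closability of $\gamma(\bar\lambda)$ is \emph{literally} denseness of $\ran(A_0-\lambda)$ (Lemma~\ref{prop:C11}); applying this at $\lambda=\pm i$ gives (ii)$\Leftrightarrow$(i) in both directions at once. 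Only the easy inclusion $\Gamma_1H(\lambda)\subset\gamma(\bar\lambda)^{*}$ follows from Green's identity alone (Lemma~\ref{isoHlem}); the reverse inclusion is where unitarity of $\Gamma$ enters, and it is the ingredient your proposal is missing. A secondary, lesser point: your (i)$\Rightarrow$(iii) asserts that $I+(\lambda-i)(\overline{A_0}-\lambda)^{-1}$ intertwines the closures $\overline{\gamma(i)}$ and $\overline{\gamma(\lambda)}$ without justification; the paper needs the identification $\overline{\wh\gamma(\lambda)}=(\overline{\Gamma}_0\uphar\wh\sN_\lambda(\ZA^*))^{-1}$ and the decomposition~\eqref{ceq1a} of $\dom\overline{\Gamma}_0$ to make this rigorous (Proposition~\ref{gammaclos} and Lemma~\ref{prop:C111}).
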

If $\{\cH,\Gamma_0,\Gamma_1\}$ is an $ES$-generalized, but not
$S$-generalized, boundary triple for $A^*$, then the
equality~\eqref{eq:A_*} fails to hold
and turns out to be an inclusion
\begin{equation}\label{eq:A_*A_0}
  A_0\hplus\wh\sN_\lambda(A_*)\subsetneq A_* \subset A^*=\overline{A_0} \hplus\wh\sN_\lambda(A^*),\quad \lambda\in\rho(A_0).
\end{equation}
Indeed, since $A_0$ is not selfadjoint (while it is essentially
selfadjoint), the decomposition $A_*=A_0\hplus\wh\sN_\lambda(A_*)$
doesn't hold; see \cite[Theorem~4.13]{DHMS06}.
Then there
clearly exist $\wh f\in A_*$ which does not belong to
$A_0\hplus\wh\sN_\lambda(A_*)$, so that $\Gamma_0 \wh f \neq 0$ as
well as $\Gamma_0 \wh f\not\in \Gamma_0(\wh\sN_\lambda(A_*))=\dom
M(\lambda)$. In particular, in this case a strict inclusion $\dom
M(\lambda)\subsetneq \ran \Gamma_0$ holds and, consequently, the
Weyl function $M(\lambda)$ can loose e.g. the domain invariance
property. However, the domain of the closure $\overline{\Gamma}_0$
contains the selfadjoint relation $\overline{A_0}$ and admits the
decomposition
\begin{equation}\label{eq:clos_Gamma_0}
  \dom\overline{\Gamma}_0 =\overline{A_0}\hplus(\dom(\overline{\Gamma}_0)\cap\wh\sN_\lambda(A^*)),
  \quad \lambda\in\rho(\overline{A_0}).
\end{equation}
This yields the equality
\[
\dom\overline{\gamma(\lambda)}=\overline{\Gamma}_0(\dom(\overline{\Gamma}_0)\cap\wh\sN_\lambda(A^*))=\ran
\overline{\Gamma}_0,
 \]
which in combination with the equality
$\dom{\overline\st_{M(\lambda)}}=\dom{\overline{\gamma(\lambda)}}$
yields the form domain invariance property for $M$:
\begin{equation}\label{eq:dom_t_M}
 \dom{\overline\st_{M(\lambda)}}=\ran \overline{\Gamma}_0.
\end{equation}

Passing from the case of an $S$-generalized boundary triple to the
case of an $ES$-generalized triple (which is not $S$-generalized) means that $A_0 \not = A_0^*$.
Then, in particular, conditions (ii) and (iii) in
Theorem~\ref{prop:C6B} are necessary violated. We split the
situation into two different cases:
\begin{assumption}\label{ass:ii_1}
$M(\lambda)$ is not domain invariant, i.e. $\dom M(\lambda_1)\not =
\dom M(\lambda_2)$ at least for two points $\lambda_1,  \lambda_2\in
\Bbb C_+$,  $\lambda_1\not =\lambda_2$, while it is form domain
invariant, i.e. $\dom{\overline\st_{M(\lambda)}} = \dom
{\overline\st_{M(\pm i)}}$, $\lambda\in \Bbb C_{\pm}$.
\end{assumption}
\begin{assumption}\label{ass:ii_2}
$\dom M(\lambda) =  \dom M(\pm i)$, $ \lambda\in\Bbb C_{\pm}$,  while  $\dom M(\pm i) \subsetneqq  \ran\Gamma_0$.
\end{assumption}

In the next two examples we demonstrate that both possibilities
appear in the spectral theory. It is first shown that $R$-functions
satisfying Assumption~\ref{ass:ii_1} naturally arise in the theory
of differential operators with boundary conditions involving
$\lambda$-depending spectral parameters.

     \begin{example}
Let $\varphi(\cdot)$  be  a scalar $R$-function and $\cH =L^2(0,
\infty).$
 Define  an operator valued function $G(\cdot) = G_\varphi(\cdot)$  by setting
\[
G_\varphi(z) f = -i\frac{d^2u}{dx^2},\qquad  \dom (G_\varphi(z)) =
\{u\in W^2_2(\mathbb R_+) :\ u'(0) = \varphi(z)u(0)\}, \quad z \in
\C_+.
\]
Clearly, $G_\varphi(z)$ is densely defined,
$\rho(G_\varphi(z))\not = \emptyset$ for each $z \in \C_+$ and the
family $G_\varphi(\cdot)$ is holomorphic in $\C_+$ in the resolvent
sense.
%
%
%
%
Integrating by parts one obtains
$$
{\mathfrak t}_{G(z)}[u] := \IM (G_\varphi(z)u,u) =
\int_{\R_+}|u'(x)|^2\,dx + \IM \varphi(z)|u(0)|^2, \quad u\in \dom
{\mathfrak t}_{G(z)} = \dom(G_\varphi(z)).
$$
Hence  the form ${\mathfrak t}_{G(z)}$  is nonnegative  and
$G_\varphi(z)$ is $m$-dissipative for each $z \in \C_+$. Moreover,
$G(\cdot)\in R^s(\cH)$ since $\ker{\mathfrak t}_{G(z)} = \{0\}$.
Therefore, by Theorem  \ref{TAMS_Th}, there exists a certain unitary
boundary triple such that  the corresponding Weyl function coincides
with $G(\cdot)$.

The form  ${\mathfrak t}_{G(z)}$ is   closable   and its  closure
is  given by
  \begin{equation}\label{eq:Form-clos}
{\overline {\mathfrak t}}_{G(z)}[u]  = \int_{\R_+}|u'(x)|^2dx + \IM
\varphi(z)|u(0)|^2, \qquad \dom {\overline {\mathfrak t}}_{G(z)} =
W^1_2(\mathbb R_+),  \  z \in \mathbb C_+.
   \end{equation}
Thus, form domain  $\dom ({\overline {\mathfrak t}}_{G(z)})=
W^1_2(\mathbb R_+)$  does not depend on $z \in \mathbb C_+$ while
the domain  $\dom G(z)$ does, i.e. $G$ satisfies the Assumption~\ref{ass:ii_1}.
%
%
%
%
The operator associated with the form ${\overline {\mathfrak
t}}_{\varphi(z)}$ (the imaginary part of  $G_{\varphi}(z)$) is given
by
%
%
\[
G_{\varphi,I}(z) u = - \frac{d^2u}{dx^2},\quad  \dom
(G_{\varphi,I}(z) = \{u\in W^2_2(\mathbb R_+): u'(0) =  (\IM
\varphi(z)) u(0)\}, \  z \in \C_+.
\]
   \end{example}
Next we present an example of the Weyl function satisfying
Assumption~\ref{ass:ii_2}. Such $R$-functions arise in the theory of
Schr\"odinger operators with local point interactions.
     \begin{example}\label{Ex:Schrod} 
Let $X=\{x_n\}^{\infty}_1$ be a strictly increasing sequence of
positive numbers 
satisfying $\lim_{n\to\infty}x_n = \infty.$
Denote $x_0=0$,
\begin{equation}\label{dn**}
 d_n:=x_{n}-x_{n-1}>0,\qquad 0\leq d_*:= \inf_{n\in\N}d_n, \qquad d^*:= \sup_{n\in\N}d_n \leq \infty.
\end{equation}

Let also $\rH_n$  be a minimal operator associated with the
expression $-\frac{\rD^2}{\rD x^2}$ in $L^{2}_0[x_{n-1},x_{n}]$.
Then $\rH_n$ is a symmetric operator  with deficiency indices
$n_{\pm}(\rH_n)=2$ and domain $\dom(\rH_{n})=
W^{2,2}_0[x_{n-1},x_{n}].$ Consider in $L^2(\dR_+)$  the direct sum
of symmetric  operators $\rH_n$,
      \[
\rH := {\rm H}_{\min}  =\bigoplus_{n=1}^{\infty} \rH_n,  
\qquad  \dom(\rH_{\min})=W^{2,2}_0(\dR_+\setminus X) =
\bigoplus_{n=1}^{\infty} W^{2,2}_0[x_{n-1},x_{n}].
       \]
It is easily seen that a boundary  triple  $\Pi_n=\{\Bbb C^2,
\Gamma_0^{(n)},\Gamma_1^{(n)}\}$  for  $\rH_n^*$ can be chosen  as
      \begin{equation}\label{IV.1.1_05}
\Gamma_0^{(n)}f:= \left(\begin{array}{c}
    f'(x_{n-1}+)\\
    f'(x_{n}-)
\end{array}\right),
\qquad  \Gamma_1^{(n)}f:= \left(\begin{array}{c}
    -f(x_{n-1}+)\\
    f(x_{n}-)
\end{array}\right), \qquad
f\in W_2^2[x_{n-1},x_n].
\end{equation}
The corresponding Weyl  function $M_n$ is given by
\begin{equation}\label{eq:M_n}
    M_n(z) = \dfrac{-1}{\sqrt{z}}
    \begin{pmatrix}
        \cot(\sqrt{z}d_n) & -\frac{1}{\sin(\sqrt{z}d_n)}\\
        -\frac{1}{\sin(\sqrt{z}d_n)} & \cot(\sqrt{z}d_n)
    \end{pmatrix}.
\end{equation}
Clearly, $\rH = \rH_{\min}$ is a  closed symmetric operator in $L^2(\dR_+)$.  Next we put
\begin{equation}\label{eq:Gamma_Point_Int}
\cH={l}^2(\dN)\otimes \dC^2, \qquad
  \Gamma=\left(
           \begin{array}{c}
             \Gamma_0 \\
             \Gamma_1 \\
           \end{array}
         \right)  := \bigoplus_{n=1}^{\infty} \left(
           \begin{array}{c}
             \Gamma_0^{(n)} \\
             \Gamma_1^{(n)} \\
           \end{array}
         \right)
\end{equation}
and note that in accordance with the definition of the direct sum of
linear mappings
   \begin{equation*}
 \dom \Gamma := \bigl\{f =
\oplus^{\infty}_{n=1} f_n \in\dom \ZA^*: \ \sum_{n\in
\N}\|\Gamma^{(n)}_j f_n\|^2_{\cH_n} <\infty, \, j\in\{0,1\}\bigr\}.
   \end{equation*}
We also put $\overline \Gamma_j := \oplus_{n=1}^{\infty}
\Gamma_j^{(n)}$ and note that it is a closure of
$\Gamma_j=\overline\Gamma_j\uphar \dom \Gamma$, $j=1,2$.
As stated in the next theorem the orthogonal sum $\Pi :=
\oplus_{n=1}^{\infty} \Pi_n$ of the boundary triples $\Pi_n$
determines an $ES$-generalized boundary triple of desired type.
\end{example}

Notice that the minimal operator $\rH$ as well as the corresponding
triple ${\Pi}$ for $\rH^*$ naturally arise when treating the
Hamiltonian  $\rH_{X,\gA}$ with $\delta$-interactions in the
framework of extension theory. The latter have appeared in various
physical problems as exactly solvable models that describe
complicated physical phenomena (see e.g. \cite{Alb_Ges_88,
Alb_Kur_00, Exn_04, KosMMM_2013} for  details).

The next theorem completes the results from~\cite{KosMMM} regarding
the non-regularized boundary triple ${\Pi}=\oplus_{n\in\dN}{\Pi}_n$
in the case $d_*=0$.
The proof is postponed until Section \ref{sec8.3}.

  \begin{theorem}\label{th_criterion(bt)Ex}
Let $\Pi := \oplus_{n=1}^{\infty} \Pi_n = \{\cH, \Gamma_0,\Gamma_1
\}$ be the direct sum of boundary triples $\Pi_n$ defined
by~\eqref{IV.1.1_05}, \eqref{eq:Gamma_Point_Int}, let $M(\cdot)$ be the corresponding Weyl
function, and let $d_*=0$  and   $d^* < \infty$. Then the following
statements hold:
\begin{enumerate}[label={\normalfont (\roman*)}]
\item  The triple $\Pi$ is an ES-generalized boundary triple for $H_{\min}^*$ such that $A_0\not =A_0^*$.
\item 
The Weyl function $M(\cdot)$ is domain invariant and its domain is
given by
\[
\dom M(z)= \left\{ \left\{\binom{a_n}{b_n}\right\}_{n=1}^\infty \in
{l}^2(\dN)\otimes \dC^2:\, \{a_n- b_n\}_{n=1}^\infty\in
{l}^2(\dN;\{d_n^{-2}\})\right\}, \quad z\in \C_{\pm}.
\]
\item
The  range of $\Gamma_0$ is given
by
\[
 \ran\Gamma_0
 = \left\{ \binom{a_n}{b_n}_{n=1}^\infty \in {l}^2(\dN) \otimes
\dC^2:\, \{a_n - b_n\}_{n=1}^\infty\in
 {l}^2(\dN;\{d_n^{-1}\})\right\} \supsetneqq \dom M(\pm i).
\]
\item The domain of the form $\st_{M(z)}$ generated by the imaginary part $\IM M(z)$ is given by
\[
 \dom \st_{M(z)} = \ran\Gamma_0, \quad z\in \C_{\pm}.
\]
\item  The  transposed triple $\Pi^\top$  is an S-generalized boundary triple for $H_{\min}^*$, i.e. $A_1 =A_1^*$. However, it is not
      a B-generalized boundary triple for $H_{\min}^*$.
  \end{enumerate}
%
  \end{theorem}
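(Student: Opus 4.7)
The plan is to reduce everything to the componentwise analysis of the orthogonal sum $\Pi=\oplus_n\Pi_n$. Each $\Pi_n$ is an ordinary (hence unitary) boundary triple, so the orthogonal sum is automatically isometric, and its unitarity in the sense of Definition~\ref{unitBT} follows from general results on orthogonal sums of unitary boundary triples developed earlier in the paper. The Weyl function and $\gamma$-field factor diagonally as $M(z)=\oplus_n M_n(z)$ and $\gamma(z)=\oplus_n\gamma_n(z)$. For (ii), the Taylor expansions $\cot t=t^{-1}-t/3+O(t^3)$ and $\csc t=t^{-1}+t/6+O(t^3)$ with $t=\sqrt z\,d_n$ show that the $t^{-1}$-singularities cancel when $M_n(z)$ acts on $u_n=(a_n,b_n)^\top$, leaving
\[
M_n(z)u_n=-\frac{a_n-b_n}{z\,d_n}\binom{1}{-1}+R_n(z)u_n,
\]
with $\sup_n\|R_n(z)\|_{\cB(\dC^2)}<\infty$ thanks to $d^*<\infty$. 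Hence $M(z)u\in\cH$ iff $\{a_n-b_n\}\in l^2(\dN;\{d_n^{-2}\})$, and this domain is manifestly independent of $z\in\dC_{\pm}$.

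For (i), I would show $\overline{A_0}=\oplus_n A_0^{(n)}$, where $A_0^{(n)}$ is the Neumann realization on $[x_{n-1},x_n]$; since this is an orthogonal sum of selfadjoint operators it is selfadjoint, so $A_0$ is essentially selfadjoint. Strictness $A_0\subsetneq\overline{A_0}$ follows by choosing $f_n\equiv c_n$ with $\sum d_n|c_n|^2<\infty$ and $\sum|c_n|^2=\infty$ (possible since $d_n\to 0$): then $f\in\overline{A_0}$ while $\Gamma_1 f=\{(-c_n,c_n)\}_n\notin\cH$. For (iii), the sharp inequality $\|f_n''\|_{L^2}^2\ge(\beta_n-\alpha_n)^2/d_n$ (the minimum over $W^{2,2}[x_{n-1},x_n]$ with prescribed $f_n'$-values at the endpoints, attained by an affine $f_n''$) combined with summability of $\|f''\|_{L^2}$ forces $\{\alpha_n-\beta_n\}\in l^2(\dN;\{d_n^{-1}\})$, while Hermite-cubic interpolation supplies the converse. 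For (iv), I would use the Nevanlinna identity $(\IM M_n(z)u_n,u_n)=(\IM z)\|\gamma_n(z)u_n\|^2$ together with the explicit form $\gamma_n(z)u_n(x)=c_1\cos(\sqrt z(x-x_{n-1}))+c_2\sin(\sqrt z(x-x_{n-1}))/\sqrt z$ with $c_2=a_n$ and $c_1=(a_n\cos(\sqrt z d_n)-b_n)/(\sqrt z\sin(\sqrt z d_n))\sim(a_n-b_n)/(z\,d_n)$, to conclude $\|\gamma_n(z)u_n\|_{L^2}^2\sim|a_n-b_n|^2/(|z|^2 d_n)$. Thus $\dom\overline{\st_{M(z)}}=\ran\Gamma_0$ and is $z$-independent; through Theorem~\ref{main_theorem} this also yields an alternative route to (i).

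For (v), $A_1=\ker\Gamma_1=\oplus_n A_1^{(n)}$ with $A_1^{(n)}$ the Dirichlet Laplacian on $[x_{n-1},x_n]$. Starting from the identity $f_n'(x_{n-1}+)=-d_n^{-1}\int_{x_{n-1}}^{x_n}(x_n-s)f_n''(s)\,ds$ (valid when $f_n$ vanishes at the endpoints), Cauchy--Schwarz yields $|f_n'(x_{n-1}+)|^2\le(d_n/3)\|f_n''\|_{L^2}^2$ and likewise at $x_n-$; summing with $d^*<\infty$ makes $\Gamma_0 f\in\cH$ automatic for $f\in\oplus\dom A_1^{(n)}$, so $A_1=A_1^*$ and $\Pi^\top$ is $S$-generalized. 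That $\Pi^\top$ is not $B$-generalized is equivalent to $\ran\Gamma_1=\dom M^\top(\pm i)\ne\cH$; inverting \eqref{eq:M_n} gives $M^\top_n(z)=-M_n(z)^{-1}$, and redoing the Taylor analysis---now with the $t^{-1}$-singularities adding rather than cancelling---yields $M^\top_n(z)(u_n,v_n)^\top=-((u_n+v_n)/d_n)(1,1)^\top+$ bounded remainder, whence $\dom M^\top(z)=\{(u_n,v_n)\in\cH:(u_n+v_n)\in l^2(\dN;\{d_n^{-2}\})\}\subsetneq\cH$. The main obstacle throughout is keeping all estimates uniform in $n$ under the single hypothesis $d^*<\infty$: the Taylor remainders of $\cot$ and $\csc$, the energy minimizers in $W^{2,2}$, and the Dirichlet--Poincar\'e constants all depend on $d_n$, and $d^*<\infty$ is precisely what absorbs the $d_n\to 0$ behavior into bounded corrections at every turn.
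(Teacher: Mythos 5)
Your overall strategy coincides with the paper's: everything is reduced to the uniform asymptotics of $d_nM_n(z)$ as $d_n\to 0$ (the paper's key identity is $\lim_{d_n\to 0}\bigl(M_n(z)+\tfrac{1}{d_nz}K\bigr)=0$ with $K=\bigl(\begin{smallmatrix}1&-1\\-1&1\end{smallmatrix}\bigr)$, which is exactly your cancellation of the $t^{-1}$-singularities), and parts (i), (ii), (iv), (v) are sound. Your direct arguments for $A_0\neq A_0^*$ (piecewise constants with $\sum d_n|c_n|^2<\infty$, $\sum|c_n|^2=\infty$) and for $A_1=A_1^*$ (the Taylor-remainder bound $|f_n'(x_{n-1}+)|^2\le\tfrac{d_n}{3}\|f_n''\|^2$ on the Dirichlet domain) are valid alternatives to the paper's route through the criteria $\sup_n\|\IM M_n(i)\|=\infty$ and $\sup_n\|\IM M_n^\top(i)\|<\infty$; they amount to verifying the boundedness criterion for $\Gamma_{j'}\uphar A_j$ by hand.

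The genuine gap is in the converse inclusion of (iii). First, to put $\{(a_n,b_n)\}$ into $\ran\Gamma_0$ you must produce $f$ with $f,f''\in L^2$ \emph{and} $\Gamma_1 f\in l^2(\dN)\otimes\dC^2$, since $\dom\Gamma_0=\dom\Gamma$ is the intersection $\dom\Gamma_0'\cap\dom\Gamma_1'$; your proposal never checks the second condition, and it is precisely there that $d^*<\infty$ enters. Second, ``Hermite-cubic interpolation'' is underspecified: the construction hinges on which endpoint \emph{values} you prescribe, and the natural choice $f_n(x_{n-1}+)=f_n(x_n-)=0$ (which would make the $\Gamma_1$-condition trivial) fails. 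Indeed, for the cubic $p_n$ on $[0,d_n]$ with zero endpoint values and $p_n'(0)=a_n$, $p_n'(d_n)=b_n$ one computes $p_n''(x)=\tfrac{b_n-a_n}{d_n}+\tfrac{6(a_n+b_n)}{d_n^2}\bigl(x-\tfrac{d_n}{2}\bigr)$, hence
\[
 \|p_n''\|_{L^2}^2=\frac{|b_n-a_n|^2}{d_n}+\frac{3|a_n+b_n|^2}{d_n},
\]
and the second term is not summable for general $l^2$ data when $d_n\to 0$ (take $a_n=b_n=1/n$, $d_n=1/n^2$: each term equals $12$, yet this sequence \emph{does} lie in $\ran\Gamma_0$). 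The correct choice is the interpolant that prescribes only the derivative data and one function value, i.e. the quadratic $g_n(x)=a_n(x-x_{n-1})+\tfrac{1}{2}d_n^{-1}(x-x_{n-1})^2(b_n-a_n)$ used in the paper: then $\|g_n''\|^2=|b_n-a_n|^2/d_n$ is summable exactly under the hypothesis $\{a_n-b_n\}\in l^2(\dN;\{d_n^{-1}\})$, and $\Gamma_1^{(n)}g_n=\bigl(0,\tfrac{1}{2}(a_n+b_n)d_n\bigr)$ lies in $l^2$ because $d^*<\infty$. With this replacement (and a one-line remark that $l^2(\{d_n^{-2}\})\subsetneq l^2(\{d_n^{-1}\})$ on the relevant sequences because $d_*=0$, giving the strictness of the inclusion over $\dom M(\pm i)$), your argument for (iii) closes.
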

Combining statements (ii) and (iii) of Theorem
\ref{th_criterion(bt)Ex} yields that in the case $d_* =0$ \emph{the
Weyl function  $M(\cdot)$ corresponding to the triple $\Pi =
\oplus_{n=1}^{\infty} \Pi_n$ satisfies Assumption~\ref{ass:ii_2},
i.e. it is domain invariant,\index{Weyl function!domain invariant} $\dom M(z) = \dom M(i), \ z\in \Bbb
C_{\pm}$,  while   $\dom M(i) \subsetneqq \ran\Gamma_0$}.

Hence,  by Theorem~\ref{prop:C6B},  $A_0\ne A_0^*$ and  $\Pi = \oplus_{n=1}^{\infty}
\Pi_n$ being $ES$-generalized, is not an $S$-generalized boundary triple for $\rH^*$.

It is emphasized that in this case we compute  $\dom M(z)$,  $\dom
\st_{M(z)}$, and $\ran\Gamma_0$  \textit{explicitly}. Notice also
that in this case  the Weyl function $M(\cdot)$ as well as its
imaginary part $\IM M(\cdot)$ take values in the set of unbounded
operators. In addition to the Schr\"odinger operators introduced in
Example~\ref{Ex:Schrod} analogous results for \emph{moment and Dirac
operators with local point interactions} are established in
Section~\ref{sec8}.


Before closing this subsection we wish to mention that other type of
examples for $ES$-generalized boundary triples are the
\emph{Kre\u{\i}n - von Neumann Laplacian} appearing in Sections
\ref{sec7.1}, \ref{sec7.3} and the \emph{Zaremba Laplacian} for
mixed boundary value problem treated in Section~\ref{ex:Mixed}.

\subsection{$AB$-generalized boundary triples and quasi boundary triples}
The following definition of an \emph{$AB$-generalized boundary
triple} is a weakening of the notions of $B$-generalized and
$S$-generalized boundary triples as well as of the class of quasi
boundary triples.
\begin{definition}\label{ABGtriple00}\index{Boundary triple!AB-generalized}
A collection $\{\cH,\Gamma_0,\Gamma_1\}$ is said to be an
\emph{almost $B$-generalized boundary triple}, or briefly, an
\emph{AB--generalized boundary triple} for $\ZA^*$, if $\ZA_*=\dom
\Gamma$ is dense in $\ZA^*$ and the following conditions  are
satisfied:
\begin{enumerate}[label=\textbf{\thedefinition.\arabic*}]
\item\label{1_def_ABGtriple00} the abstract Green's identity~\eqref{Greendef1} holds
            for all $\wh f,  \wh g \in  \ZA_*$;
\item \label{2_def_ABGtriple00} $\ran \Gamma_0$ is dense in $\cH$;
\item\label{3_def_ABGtriple00} $A_0:=\ker\Gamma_0$ is a selfadjoint relation in $\sH$.
   \end{enumerate}
\end{definition}

It is shown in Proposition \ref{ABGcor2} that the Weyl function
$M(\cdot)$ of an $AB$-generalized boundary triple admits a similar
characterization as the Weyl function corresponding to an
$S$-generalized boundary triple.
%

   \begin{proposition}\label{ABG_Intro}
The class of $AB$-generalized boundary triples coincides with the class of
isometric boundary triples such that the corresponding  Weyl
functions satisfy the condition \eqref{stric-unb} and they are of
the form
  \begin{equation}\label{WeylABG2_Intro}
 M(\lambda)=E+M_0(\lambda),\quad M_0(\cdot)\in \cR[\cH],  \quad \lambda\in\cmr,
  \end{equation}
with $E$ a symmetric densely defined operator in $\cH$. In
particular, every function $M(\cdot)$ of the form
\eqref{WeylABG2_Intro} such that $\ker \IM M_0(\lambda)\cap \dom
E=\{0\}$ is a Weyl function of a certain $AB$-generalized boundary triple.

Moreover,  $E = E^*$ if and only if  $\Gamma$ is
$(J_\sH,J_\cH)$-unitary (see Definition~\ref{unitBT}), i.e. if and
only if the $AB$-generalized boundary triple
$\{\cH,\Gamma_0,\Gamma_1\}$ is also $S$-generalized.
  \end{proposition}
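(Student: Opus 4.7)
Let $\Pi=\{\cH,\Gamma_0,\Gamma_1\}$ be AB-generalized. The Green's identity in \ref{1_def_ABGtriple00} is exactly the $(J_\sH,J_\cH)$-isometry \eqref{eq:J_isom}, so $\Pi$ is isometric. Since $A_0=\ker\Gamma_0$ is selfadjoint, $\cmr\subset\rho(A_0)$, and because $A_0\subset A_*$ the componentwise decomposition
\[
 A_*=A_0\hplus\wh\sN_\lambda(A_*),\qquad \lambda\in\cmr,
\]
holds. Applying $\Gamma_0$ yields $\dom M(\lambda)=\Gamma_0(\wh\sN_\lambda(A_*))=\ran\Gamma_0$, a subspace which by \ref{2_def_ABGtriple00} is dense in $\cH$ and independent of $\lambda$. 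From \eqref{eq:Q_fun} with $\mu=\lambda$ one has $\IM M(\lambda)=\IM\lambda\cdot\gamma(\lambda)^*\gamma(\lambda)$; selfadjointness of $A_0$ together with \eqref{eq:gamma} renders $\gamma(\lambda)$ closable and bounded on $\dom M(\lambda)$, so that $\IM M(\lambda)$ extends to a bounded everywhere-defined operator on $\cH$. Injectivity of $\gamma(i)$ gives \eqref{stric-unb}.

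To produce the decomposition $M=E+M_0$, the plan is to set
\[
 E:=M(i)-i\,\gamma(i)^*\gamma(i),\qquad \dom E=\ran\Gamma_0,
\]
and $M_0(\lambda):=M(\lambda)-E$. Green's identity applied to $\wh f,\wh g\in\wh\sN_i(A_*)$ translates, via \eqref{0.2}, into $(M(i)h,k)-(h,M(i)k)=2i(\gamma(i)h,\gamma(i)k)$ for $h,k\in\ran\Gamma_0$, from which a direct computation gives $(Eh,k)=(h,Ek)$, so $E$ is symmetric. Using \eqref{eq:gamma}--\eqref{eq:Q_fun} the difference $M(\lambda)-M(i)$ is expressible through $\gamma(i)^*\gamma(i)$ and $\gamma(i)^*(A_0-\lambda)^{-1}\gamma(i)$, both bounded and everywhere defined; hence $M_0(\lambda)$ extends to a bounded holomorphic operator function satisfying $\IM M_0(\lambda)\ge0$ on $\dC_+$ and $M_0(\bar\lambda)=M_0(\lambda)^*$, so $M_0\in\cR[\cH]$.

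Conversely, if $\Pi$ is isometric with Weyl function $M=E+M_0$, $E$ symmetric densely defined, $M_0\in\cR[\cH]$, and \eqref{stric-unb} holds, then $\dom M(\lambda)\equiv\dom E$ is dense, so $\ran\Gamma_0\supset\dom E$ is dense. A von Neumann-type decomposition $A_*=A_0\hplus\wh\sN_\lambda(A_*)$ together with the boundedness of $\IM M(\lambda)=\IM M_0(\lambda)$ and the definition of $\gamma(\lambda)$ then forces $\cmr\subset\rho(A_0)$ and hence $A_0=A_0^*$. The realization assertion follows by invoking Theorem \ref{TAMS_Th} to realize $\wt E+M_0$ (for some selfadjoint extension $\wt E$ of $E$) as the Weyl function of a unitary triple, and then restricting its reduction operator to a subdomain adapted to $E$ rather than $\wt E$. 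Finally, $\Gamma$ being $(J_\sH,J_\cH)$-unitary is equivalent, by definition, to $\Pi$ being $S$-generalized, which by Theorem \ref{prop:C6B}(vi) corresponds to $E=E^*$; conversely, $E=E^*$ puts $M$ in the class characterized by Theorem \ref{prop:C6B}, forcing unitarity of $\Gamma$. The main technical obstacle is the sufficiency direction, specifically deducing selfadjointness of $A_0$ purely from isometry of $\Gamma$ and the prescribed analytic form of $M$, since Theorem \ref{prop:C6B} presupposes the triple to be unitary and thus cannot be invoked directly.
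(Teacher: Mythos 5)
Your direct direction (AB-generalized $\Rightarrow$ $M=E+M_0$ with $E=\RE M(i)$ symmetric and $M_0\in\cR[\cH]$) is essentially the paper's argument in Theorem~\ref{ABGthm}\,(vi), and it is sound, modulo the fact that you quote \eqref{eq:gamma} and \eqref{eq:Q_fun}, which are stated for ordinary boundary triples; in the isometric setting you need their analogues \eqref{Green3} and \eqref{gWeyl02}, together with Lemma~\ref{isoHlem}\,(iii) to get boundedness of $\gamma(\lambda)$ from $A_0=A_0^*$. These are cosmetic repairs.

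The converse direction contains two genuine gaps. First, you try to show that \emph{every} isometric boundary triple whose Weyl function has the form \eqref{WeylABG2_Intro} has $A_0=A_0^*$, and to do so you invoke the decomposition $A_*=A_0\hplus\wh\sN_\lambda(A_*)$. But by Theorem~\ref{prop:C6B}\,(ii) (equivalently Proposition~\ref{A0graph}\,(i)) that decomposition is \emph{equivalent} to selfadjointness of $A_0$, so the argument is circular; you acknowledge the obstacle yourself but do not close it. The paper does not prove (and does not need) this strong implication: the coincidence of classes in Proposition~\ref{ABG_Intro} is established in Corollary~\ref{ABGcor2} as a \emph{realization} statement --- every function of the form \eqref{WeylABG2} is the Weyl function of \emph{some} AB-generalized boundary pair, constructed explicitly. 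Second, your construction of that realization via a selfadjoint extension $\wt E$ of $E$ fails in general: a densely defined symmetric $E$ need not admit any selfadjoint extension in $\cH$, since its defect numbers may be unequal --- the paper makes exactly this point in Corollary~\ref{QBRcor}\,(ii) and in the discussion following Definition~\ref{def-Jussi}. The paper's route avoids both problems: realize $M_0\in\cR[\cH]$ as the Weyl function of a $B$-generalized boundary pair $\{\cH,\Gamma\}$ (\cite[Proposition~3.16]{DHMS09}), then apply the triangular isometric transform $\wt\Gamma=\bigl(\begin{smallmatrix} I & 0\\ E & I\end{smallmatrix}\bigr)\Gamma$ of Theorem~\ref{QBTthm}; one checks directly that $\ker\wt\Gamma_0=\ker\Gamma_0=A_0$ stays selfadjoint, that $\ran\wt\Gamma_0\supset\dom E$ is dense, and that the Weyl function becomes $E+M_0$. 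Finally, in the last equivalence your use of Theorem~\ref{prop:C6B}\,(vi) to pass from $E=E^*$ to unitarity of $\Gamma$ suffers the same circularity (that theorem presupposes a unitary triple); the paper instead argues via Theorem~\ref{QBTthm} that $\wt\Gamma$ is closed iff $E$ is closed, and then applies the criterion that a closed isometric boundary pair with dense domain whose Weyl function is a Nevanlinna function is unitary (\cite[Proposition~3.6]{DHMS06}).
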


Further properties of $AB$-generalized boundary triples are studied
in Section~\ref{sec4}. 

A connection between $ES$-~and $AB$-generalized boundary triples is
established in Theorem~\ref{essThm2}. More precisely, it is shown
that for every strict form domain invariant operator valued
Nevanlinna function $M\in \cR^s(\cH)$ there exist a bounded operator
$G\in[\cH]$ with $\ker G=\ker G^*=\{0\}$, a closed symmetric densely
defined operator $E$ in $\cH$, and a bounded Nevanlinna function
$M_0\in \cR[\cH]$, with the property
\begin{equation}\label{MclH_Intro}
 \cH=\clos\sD_\lambda:=\clos \{ h\in\cH:\, (\IM M_0(\lambda))^{\half}h\in \ran G^*\}, \quad
 \lambda\in\cmr,
\end{equation}
such that
\begin{equation}\label{Mess_Intro}
  M(\lambda)= G^{-*}(E+M_0(\lambda))G^{-1}, \quad \lambda\in\cmr.
\end{equation}
Conversely, every Nevanlinna function $M(\cdot)$ of the form
\eqref{Mess_Intro} is form domain invariant on $\cmr$, whenever
$E\subset E^*$, $G\in\cB(\cH)$, $\ker G=\ker G^*=\{0\}$, and $M_0\in
\cR[\cH]$ satisfies~\eqref{MclH_Intro}.

Theorem~\ref{essThm2} offers a \emph{renormalization procedure}
which produces from a form domain invariant Weyl function a domain
invariant Weyl function, whose imaginary part in standard operator
sense becomes a well-defined and bounded operator function on
$\cmr$, i.e., the renormalized boundary triple is $AB$-generalized.
Some related results, showing how $B$-generalized boundary triples
give rise to $ES$-generalized boundary triples, are established in
Section \ref{sec6.1}. These results are applied in the analysis of
\emph{regularized trace operators} for Laplacians.

On the other hand, since every $AB$-generalized boundary triple can
be regularized to produce a $B$-generalized boundary triple, see
Theorem \ref{QBTthm}, and every $B$-generalized boundary triple can
be regularized to produce an ordinary boundary triple, see
Theorem~\ref{thm:2.1}, there is a controlled connection from
$ES$-generalized boundary triples to ordinary boundary triples. In
this way these abstract \emph{regularization procedures} open an
avenue e.g. to complete spectral analysis and related
well-established investigations in extension theory of symmetric
operators and its various applications. It should be pointed out
that, only for ordinary boundary triples, the pair of boundary
mappings $\Gamma=\{\Gamma_0,\Gamma_1\}$ provides a topological
isomorphism between the set of all linear relations in the parameter
space $\cH$ and the complete class of intermediate extensions $\wt
A$ lying between $A$ and its adjoint $A^*$.

The class of $AB$-generalized boundary triples contains the class of
so-called quasi boundary triples, which has been studied in J.
Behrndt and M. Langer \cite{BeLa07}.

  \begin{definition}[\cite{BeLa07}]\label{def-Jussi}\index{Boundary triple!quasi}
Let $\ZA$ be a densely defined symmetric operator in $\frak H.$ A
triple $\Pi = \{\cH,\Gamma_0,\Gamma_1\}$ is said to be a \emph{quasi
boundary triple} for $\ZA^*$, if the following conditions are
satisfied:
\begin{enumerate}[label=\textbf{\thedefinition.\arabic*}]
  \item \label{1_def_def-Jussi}$\ZA_*:=\dom\Gamma$ is dense in $\ZA^*$ with respect to the topology on $\sH^2$ and
  the Green's identity~\eqref{Greendef1} holds  for all
  $\wh f, \wh g  \in  \ZA_*$;
  \item \label{2_def_def-Jussi}the  range of
$\Gamma$ 
is dense in $\cH\times\cH$;
    \item\label{3_def_def-Jussi}   $A_0 = \ker \Gamma_0$ is a selfadjoint operator  in $\sH$.
\end{enumerate}
    \end{definition}

In the definition of a quasi boundary triple Assumption~\ref{2_def_ABGtriple00} is replaced by the stronger
assumption that the joint range of $\Gamma=\{\Gamma_0,\Gamma_1\}$ is
dense in $\cH\times\cH$. The Weyl function corresponding to a quasi
boundary triple is again defined by the same formula~\eqref{0.2}.
The notion of quasi boundary triple proved to be useful in elliptic
theory~\cite{BeLa07}, see also~\cite{BeMi14,Post16}. A connection
between quasi boundary triples and $AB$-generalized boundary triples
is given in Corollary~\ref{quasicor}. A joint feature in
$AB$-generalized boundary triples and quasi boundary triples is that
without additional assumptions on the mapping
$\Gamma=\{\Gamma_0,\Gamma_1\}$ these boundary triples are not
unitary. Consequently, their Weyl functions need not belong to the
class of Nevanlinna functions; i.e. the values $M(\lambda)$ need not
be maximal dissipative (accumulative) in $\dC_+$ ($\dC_-$). More
explicitly, the defect numbers of the operator $E$ in
\eqref{WeylABG2_Intro} need not be equal; in which case even after
taking closures of $\Gamma$ and $M(\lambda)$ this situation is not
changed.
A complete characterization (a realization result) of the set of
Weyl functions corresponding to $AB$-generalized and quasi boundary
triples is given by formula \eqref{eq:DomInv} with $M_0(\cdot)$
belonging to the class $\cR[\cH]$ while $E$ is \emph{a symmetric,
but not necessarily selfadjoint, operator}, such that
\[
 \dom E^*\cap \ker \IM {M_0(\lambda)}=\{0\};
\]
the role of this last condition is connected with the assumption~\ref{2_def_def-Jussi}.

Different applications of quasi boundary triples in boundary value
problems including applications to  elliptic  theory and  trace
formulas can be found e.g. in~\cite{BeLa07},~\cite{BeMi14},
\cite{BeGeMM15}, \cite{HdSSz2012}, and~\cite{Post16}.

\begin{remark}\label{quasiremark}
A connection between $B$-generalized boundary triples and quasi
boundary triples appears in \cite[Theorem~7.57]{DHMS12} and in a
more precise form is given in
\cite[Propositions~5.1,~5.3]{Wietsma13}; see also
\cite{WietsmaThesis2012}. A slightly more general result can be
found in Corollary~\ref{quasicor}. Further results and a more
detailed discussion on these connections are given in
Sections~\ref{sec4} and~\ref{sec5}.
\end{remark}

\subsection{Preparatory results for applications}
Section \ref{sec6} is devoted to the study of certain specific types
of boundary triples offering also applicable abstract frameworks for
including trace operators in a boundary triple environment. In
Section \ref{sec6.1} it is shown how certain simple transforms of
$B$-generalized boundary generate $ES$-generalized boundary triples;
in Section \ref{sec7} such transforms are identified and made
explicit in the Laplace setting.

In typical applications to elliptic PDE's the minimal operator $A$
is nonnegative and the Dirichlet-to-Neumann map $\Lambda(\cdot)$ is
constructed at the origin $z=0$ or at some point $x<0$ on the real
line in such a way that $\Lambda(x)$ is a nonnegative selfadjoint
operator in the boundary space $L^2(\partial\Omega)$. Under weak
additional assumptions this implies that the corresponding boundary
triple is not only isometric but, in fact, unitary. In this
connection we offer the following~\emph{analytic extrapolation
principle for Weyl functions} initially defined only at one real
regular point and then extended in an appropriate manner to the
complex half-planes. In particular, this result can be used to check
whether a pair of boundary mappings $\{\Gamma_0,\Gamma_1\}$
satisfying Green's identity~\eqref{Greendef1} determines a unitary
boundary triple.

\begin{proposition}\label{Mrealreg0}
Let $\Pi = \{\cH,\Gamma_0,\Gamma_1\}$ be an isometric boundary
triple for $A^*$, let $H$ with $\dom H\subset \dom A_*(=\dom
\Gamma)$ be a selfadjoint extension of $A$ admitting a real regular
point $x\in\rho(H)$, and let the mapping $M(x)$ be defined by the
formula~\eqref{0.2} for all $\wh f_x\in \wh\sN_{x}(A_*)$.

If $M(x)$ satisfies the conditions
\begin{equation}\label{Mext}
 M(x)=M(x)^* \quad\text{and}\quad 0 \in \rho(M(x)+x),
\end{equation}
then $M(x)$ admits an analytic extrapolation $M(z)$ to the
half-planes $z\in\dC_\pm$ defining a function in the class
$\cR(\cH)$ of Nevanlinna functions. Furthermore, under conditions
~\eqref{Mext} the boundary triple $\Pi$ is unitary and $M(\cdot)$ is
the Weyl function of $\Pi$.
\end{proposition}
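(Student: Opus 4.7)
The plan is to construct the extrapolation $M(z)$ using the reference extension $H$ at the real regular point $x$, then verify that $\Pi$ is unitary and that $M(\cdot)$ is its Weyl function.

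First, I would establish the direct sum decomposition
\begin{equation*}
 A_* = H \hplus \wh\sN_x(A_*).
\end{equation*}
This follows from $x \in \rho(H)$, which gives $A^* = H \hplus \wh\sN_x(A^*)$, combined with the assumption $H \subset A_*$: for $\wh f \in A_*$, the decomposition $\wh f = \wh h + \wh f_x$ with $\wh h \in H \subset A_*$ forces $\wh f_x \in A_* \cap \wh\sN_x(A^*) = \wh\sN_x(A_*)$. Next, I would show $\Gamma_0$ is injective on $\wh\sN_x(A_*)$, so that $\gamma(x)$ is a genuine single-valued operator from $\dom M(x) \subset \cH$ into $\sN_x$. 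Take $\wh f_x = (f_x, x f_x) \in \wh\sN_x(A_*)$ with $\Gamma_0 \wh f_x = 0$. Green's identity restricted to $\wh\sN_x(A_*)$ yields $(\Gamma_1 \wh f_x, \Gamma_0 \wh g_x) = 0$ for all $\wh g_x \in \wh\sN_x(A_*)$, so $\Gamma_1 \wh f_x \perp \dom M(x)$. Selfadjointness of $M(x)$ forces $\dom M(x)$ to be dense, hence $\Gamma_1 \wh f_x = 0$; Green's identity with arbitrary $\wh g \in A_*$ then gives $f_x \perp \ran(A_* - x) \supseteq \ran(H - x) = \sH$, so $f_x = 0$.

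With $\gamma(x)$ well defined, I would extrapolate to $\dC \setminus \dR$ via
\begin{align*}
 \gamma(z) &:= \bigl(I + (z-x)(H - z)^{-1}\bigr) \gamma(x), \\
 M(z) &:= M(x) + (z-x)\, \gamma(x)^* \bigl(I + (z-x)(H - z)^{-1}\bigr) \gamma(x),
\end{align*}
with $\dom \gamma(z) = \dom M(z) = \dom M(x)$. The identity $(A^* - z)(H - z)^{-1} f_x = f_x$ for $f_x \in \sN_x$ yields $\ran \gamma(z) \subset \sN_z$, and selfadjointness of $H$ and of $M(x)$ give $M(z)^* = M(\bar z)$ and $\IM M(z) = (\IM z)\, \gamma(z)^* \gamma(z)$, so $M(\cdot) \in \cR(\cH)$.

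Finally, to conclude that $\Pi$ itself is unitary with $M(\cdot)$ as its Weyl function, I would use the invertibility assumption $0 \in \rho(M(x) + x)$. By analytic continuation, $M(z) + z$ remains invertible in a complex neighborhood of $x$, and via the Krein-type correspondence this produces a second selfadjoint extension of $A$ --- the one associated with the abstract boundary condition $\Gamma_1 + x \Gamma_0 = 0$ --- transversal to $H$. The existence of two transversal selfadjoint extensions together with the well-defined $\gamma(z)$ delivers the range conditions needed to promote the $(J_\sH, J_\cH)$-isometry of $\Gamma$ to $(J_\sH, J_\cH)$-unitarity in the sense of Shmuljan. Identification of the extrapolated $M(z)$ with the Weyl function of $\Pi$ in the sense of~\eqref{0.2} then follows from the uniqueness of Weyl functions once the reference extension and its $\gamma$-field are fixed. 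The main obstacle is precisely this upgrade step: an isometric triple whose $\gamma$-field extrapolates to a Nevanlinna function is not automatically unitary, and showing that $0 \in \rho(M(x) + x)$ provides exactly the missing range of $\Gamma$ needed to meet Shmuljan's criterion is the most delicate point of the proof.
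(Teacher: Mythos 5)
There is a genuine gap, and it sits exactly where you flag it yourself: the final paragraph does not prove that $\Gamma$ is unitary, it only describes the difficulty. Two specific steps are missing. First, your extrapolation formula
$M(z)=M(x)+(z-x)\gamma(x)^*\bigl(I+(z-x)(H-z)^{-1}\bigr)\gamma(x)$ produces, at best, a holomorphic family of \emph{dissipative} operators on the fixed dense domain $\dom M(x)$; membership in $\cR(\cH)$ requires \emph{maximal} dissipativity of each $M(z)$, and nothing in your argument delivers surjectivity of $M(z)+\mu$ for $\mu\in\dC_-$ (with $\gamma(x)$ possibly unbounded, $\IM M(z)=(\IM z)\gamma(z)^*\gamma(z)$ is only a lower bound on a core). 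Second, even granting that, the extrapolated function is built from $H$ and need not coincide with the Weyl function of $\Pi$: the vectors $\wh\gamma(z)h=\bigl(I+(z-x)(H-z)^{-1}\bigr)\gamma(x)h$ land in $\wh\sN_z(A^*)$ but there is no reason they lie in $\dom\Gamma$, so the identity $M(z)\Gamma_0\wh f_z=\Gamma_1\wh f_z$ on $\wh\sN_z(A_*)$ is not established. The appeal to ``two transversal selfadjoint extensions'' is also circular: for a merely isometric triple, $\ker(\Gamma_1+x\Gamma_0)$ is only symmetric, and proving it selfadjoint from $0\in\rho(M(x)+x)$ requires a Kre\u{\i}n-type formula at the real point $x$ that is itself only available once $\Gamma$ is known to be unitary.

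The paper (Theorem~\ref{MRealreg}) closes both gaps simultaneously with one device you do not use: the main transform $\wt A=\cJ(\Gamma)$, a symmetric relation in $\sH\oplus\cH$ because $\Gamma$ is isometric. The hypothesis $x\in\rho(H)$ with $H\subset\dom\Gamma$ gives $\ran(H-x)=\sH$, and the two conditions in~\eqref{Mext} give $\ran(-M(x)-x)=\cH$; together these yield $\ran(\wt A-x)=\sH\oplus\cH$, so the symmetric relation $\wt A$ with a real point of full range is selfadjoint. Selfadjointness of $\wt A$ is \emph{equivalent} to $(J_\sH,J_\cH)$-unitarity of $\Gamma$, and then Theorem~\ref{GBTNP} immediately places the Weyl family (defined intrinsically via $\Gamma$, so no separate identification step is needed) in $\wt\cR(\cH)$. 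If you want to salvage your direct approach, you would have to replace the last paragraph by an honest verification of Shmuljan's criterion $\Gamma^{-1}=\Gamma^{[*]}$, which in practice amounts to reproving the surjectivity statement above; the main transform is the clean way to package it.
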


This result is contained in a somewhat more general statement proved
in Theorem~\ref{MRealreg}. For partial differential operator
\emph{the first Green's formula} typically implies the conditions
appearing in Proposition \ref{Mrealreg0}. As to applications of
Proposition \ref{Mrealreg0} let us mention that in
Section~\ref{example8.3} we construct a unitary boundary pair for
Laplacians on rough domains, see Proposition \ref{ABGrough}. In
Section \ref{sec6.4} the result is applied to associate unitary
boundary triples with the concepts of \emph{boundary pairs for
nonnegative forms} appearing in \cite{Arlin96,ArHa15} and in the
most general form in the paper \cite{Post16} of O. Post. The
connection of various classes of boundary pairs for nonnegative
forms as defined in \cite{Post16} to the present subclasses of
unitary boundary triples is established in Theorem~\ref{ThmFpair}.

\subsection{A short description of the contents}
In Section~\ref{sec2} we recall basic concepts of linear relations
(sums of relations, componentwise sums, defect subspaces, etc.) as
well as unitary and isometric relations in Kre\u{\i}n space. We also
introduce the concepts of Nevanlinna functions and families.

In Section~\ref{sec3} we discuss unitary and isometric boundary
pairs and triples. We also introduce the notions of Weyl functions
and families and discuss their properties. A general version of the
main realization result, Theorem~\ref{GBTNP}, is presented therein,
too. It completes and improves Theorem~\ref{TAMS_Th}. Besides
certain isometric transforms of boundary triples are discussed.

In Section~\ref{sec4} we investigate $AB$-generalized boundary
triples. In particular, we  present the proof of
Proposition~\ref{ABG_Intro} and discuss its generalizations and
consequences.
In Theorem~\ref{QBTthm}  we find a connection between
$B$-generalized  and $AB$-generalized boundary triples by means of
triangular isometric transformations. In Theorem~\ref{Kreinformula}
it is shown that every $AB$-generalized boundary triple admits a
Kre\u{\i}n type resolvent formula.
%
%


In Section \ref{sec5} we prove Theorem~\ref{main_theorem} (see
Theorem \ref{essThm1}). Here the connection between unitary boundary
triples and unitary colligations is systematically used. In
particular, this connection is applied to extend
Theorem~\ref{prop:C6B} to the case of  $S$-generalized boundary
pairs (see Theorem~\ref{prop:C6}). In this case
representation~\eqref{eq:DomInv}  for the Weyl function remains
valid with $M_0\in \cR[\cH_0]$ and $\cH_0 \subseteq \cH$ instead of
$M_0\in \cR^s[\cH]$.
Besides, in Theorem~\ref{essThm2} a connection between
$ES$-generalized boundary triples and$AB$-generalized boundary
triples is established via an isometric transform introduced in
Lemma~\ref{isomtrans} (see formula~\eqref{isom-transf}).

Section~\ref{sec6} contains a couple of further useful results which
are of preparatory nature for applications of unitary and, in
particular, $ES$-generalized boundary triples. Sections \ref{sec7}
and \ref{sec8} are devoted to applications of the general results in
the PDE setting by treating Laplace operators and in the ODE setting
by studying differential operators with local point interactions, it
is these applications that initially acted as a motivation for the
present work.

\section{Preliminary concepts}\label{sec2}

\subsection{Linear relations in Hilbert spaces}
\index{Linear relation}
A linear relation $T$ from $\sH$ to $\sH'$ is a linear subspace of
$\sH \times \sH'$. Systematically a linear operator $T$ will be
identified with its graph. It is convenient to write $T:\sH\to\sH'$
and interpret the linear relation $T$ as a multivalued linear
mapping from $\sH$ into $\sH'$. If $\sH'=\sH$ one speaks of a linear
relation $T$ in $\sH$. Many basic definitions and properties
associated with linear relations can be found
in~\cite{Arens,Ben,Co}.

The following notions appear throughout this paper. For a linear
relation $T:\sH \to \sH'$ the symbols $\dom T$, $\ker T$, $\ran T$,
$\mul T$ and $\overline{T}$ stand for the domain, kernel, range,
multivalued part, and  closure, respectively. The inverse $T^{-1}$
is a relation from $\sH'$ to $\sH$ defined by
$\{\,\{f',f\}:\,\{f,f'\}\in T\,\}$. The adjoint $T^*$ is the closed
linear relation from $\sH'$ to $\sH$ defined by 
\[
 T^*=\left\{\,\begin{pmatrix}h\\k\end{pmatrix} \in \sH' \oplus \sH :\,
     (k,f)_{\sH}=(h,g)_{\sH'}, \, \begin{pmatrix}f\\g\end{pmatrix}\in T \,\right\}.
\]
The sum $T_1+T_2$ and the componentwise sum $T_1 \wh + T_2$ of two
linear relations $T_1$ and $T_2$ are defined by
\begin{equation}
\label{eq:sum}
  T_1+T_2=\left\{\,\begin{pmatrix}f\\g+k\end{pmatrix} :\, \begin{pmatrix}f\\g\end{pmatrix} \in T_1, \begin{pmatrix}f\\k\end{pmatrix} \in T_2\,\right\},
\end{equation}
\begin{equation}
\label{eq:hsum}
  T_1 \hplus T_2=\left\{\, \begin{pmatrix}f+h\\g+k\end{pmatrix} :\, \begin{pmatrix}f\\g\end{pmatrix} \in T_1, \begin{pmatrix}h\\k\end{pmatrix}\in
T_2\,\right\}.
\end{equation}
If the componentwise sum is orthogonal it will be denoted by $T_1
\oplus T_2$. If  $T$ is closed, then the null spaces of $T-\lambda$,
$\lambda \in \dC$, defined by
\begin{equation}
\label{defect0} \sN_\lambda(T)=\ker (T-\lambda), \quad \wh
\sN_\lambda(T) =\left\{\, \begin{pmatrix}f\\\lambda f\end{pmatrix}
\in T :\, f\in\sN_\lambda(T)\,\right\},
\end{equation}
are also closed. Moreover, $\rho(T)$ ($\hat\rho(T)$) stands for the
set of regular (regular type) points of $T$.

\index{Linear relation!symmetric}\index{Linear relation!dissipative}\index{Linear relation!accumulative}\index{Linear relation!selfadjoint}
 Recall that a linear relation $T$ in $\sH$ is called
\textit{symmetric}, \textit{dissipative}, or \textit{accumulative}
if $\mbox{Im }(h',h)=0$, $\ge 0$, or $\le 0$, respectively, holds
for all $\{h,h'\}\in T$. These properties remain invariant under
closures. By polarization it follows that a linear relation $T$ in
$\sH$ is symmetric if and only if $T \subset T^*$. A linear relation
$T$ in $\sH$ is called \textit{selfadjoint} if $T=T^*$, and it is
called \textit{essentially selfadjoint} \index{Linear relation!essentially selfadjoint}if $\overline{T}=T^*$. A
dissipative (accumulative) linear relation $T$ in $\sH$ is called
\emph{maximal dissipative} \index{Linear relation!maximal dissipative}(\emph{maximal accumulative}) if it has
no proper dissipative (accumulative) extensions.

If the relation $T$ is maximal dissipative (accumulative), then
$\mul T=\mul T^*$ and  the orthogonal decomposition $\sH=(\mul
T)^\perp \oplus \mul T$ induces an orthogonal decomposition of $T$
as
\begin{equation}
\label{eq0}
 T=\textup{gr }T_{\rm op} \oplus (\{0\} \times \cH_\infty), \quad \cH_\infty= \mul T, \quad
 \textup{gr }T_{\rm op}=\left\{\, \begin{pmatrix}f\\ g\end{pmatrix}\in T:\, g\in \cH\ominus \cH_\infty\,\right\},
\end{equation}
where $T_\infty:=\{0\} \times \cH_\infty$ is a purely multivalued selfadjoint relation in $\cH_\infty$ and $T_{\rm
op}$ is a densely defined maximal dissipative (resp. accumulative)
operator in $\cH\ominus \cH_\infty$. In particular, if $T$ is a selfadjoint
relation, then there is such a decomposition, where $T_{\rm op}$ is a
densely defined selfadjoint operator in $\cH\ominus \cH_\infty$.

A family of linear relations $M(\lambda)$, $\lambda \in \cmr$, in a
Hilbert space $\cH$ is called a \textit{Nevanlinna family}\index{Nevanlinna family} if:
\begin{enumerate}
\def\labelenumi{\textit{(\roman{enumi})}}
\item for every $\lambda \in \dC_+ (\dC_-)$
      the relation $M(\lambda)$ is maximal dissipative
      (resp. accumulative);
\item $M(\lambda)^*=M(\bar \lambda)$, $\lambda \in \cmr$;
\item for some, and hence for all, $\mu \in \dC_+ (\dC_-)$ the
      operator family $(M(\lambda)+\mu)^{-1} (\in [\cH])$ is
      holomorphic for all $\lambda \in \dC_+ (\dC_-)$.
\end{enumerate}
By the maximality condition, each relation $M(\lambda)$, $\lambda
\in \cmr$, is necessarily closed. The \textit{class of all
Nevanlinna families} in a Hilbert space is denoted by $\wt
\cR(\cH)$. If the multivalued part $\mul M(\lambda)$ of $M \in \wt
\cR(\cH)$ is nontrivial, then it is independent of $\lambda \in
\cmr$, so that
\begin{equation} \label{ml}
 M(\lambda)=\textup{gr }M_{\rm op}(\lambda) \oplus M_\infty
   \quad \cH_\infty=\mul M(\lambda), \quad \lambda\in\dC\setminus\dR,
\end{equation}
where $M_\infty=\{0\} \times \cH_\infty$ is a purely multivalued linear relation in $\cH_\infty:=\mul M(\lambda)$ and $M_{\rm op}(\cdot)\in\cR(\cH\ominus\cH_\infty)$, cf.
\cite{KL71,KL73,LT77}. Identifying operators in $\cH$ with their
graphs one can consider classes
\[
\cR^u[\cH]\subset \cR^s[\cH]\subset 
\cR^s(\cH)\subset \cR(\cH)
\]
introduced in Section~\ref{sec1} as subclasses of $\wt \cR(\cH)$.

In addition, a Nevanlinna family $M(\lambda)$, $\lambda\in\cmr$,
which admits a holomorphic extrapolation to the negative real line
$(-\infty,0)$ (in the resolvent sense as in item (iii) of the above
definition) and whose values $M(x)$ are nonnegative (nonpositive)
selfadjoint relations for all $x<0$ is called a \textit{Stieltjes
family}\index{Stieltjes family} (an \textit{inverse Stieltjes family}\index{Stieltjes family!inverse}, respectively).

%
%

\begin{definition}
A symmetric linear relation $\ZA$ in $\sH$ is called \textit{simple}
if there is no nontrivial orthogonal decomposition of $\sH=\sH_1
\oplus\sH_2$ and no corresponding orthogonal decomposition $\ZA=\ZA_1
\oplus \ZA_2$ with $\ZA_1$ a symmetric relation in $\sH_1$ and $\ZA_2$ a
selfadjoint relation in $\sH_2$.
\end{definition}

The decomposition~\eqref{eq0} for $\ZA=\ZA_{\rm op}\oplus \ZA_\infty$
shows that a simple closed symmetric relation is necessarily an
operator. Recall that (cf. e.g. \cite{LT77}) a closed symmetric
linear relation $\ZA$ in a Hilbert space $\sH$ is simple if and only
if
\[
 \sH=\cspan \{\, \sN_\lambda(\ZA^*) :\, \lambda \in \cmr\,\}.
\]

\subsection{Unitary and isometric  relations in Kre\u{\i}n spaces}
Let $\sH$ and $\cH$ be Hilbert spaces and let $(\sH^2,J_{\sH})$ and
$(\cH^2,J_{\cH})$ be Kre\u{\i}n spaces with fundamental symmetries
$J_{\sH}$, $J_{\cH}$ and indefinite inner products $[\cdot,\cdot]_\sH$,
$[\cdot,\cdot]_\cH$ defined in~\eqref{jh} and~\eqref{eq:leftik_sH},
respectively.

If  $\Gamma$ is a linear relation from the Kre\u{\i}n space
$(\sH^2,J_\sH)$ to the Kre\u{\i}n space $(\cH^2,J_\cH)$, then the
adjoint linear relation $\Gamma^{[*]}$ is defined by\index{Linear relation!adjoint}
\begin{equation}\label{def:J_unit}
  \Gamma^{[*]}=\left\{\left(
                        \begin{array}{c}
                          \wh k \\
                          \wh{g} \\
                        \end{array}
                      \right)\in
                      \left(
                        \begin{array}{c}
                          \cH^2 \\
                          \sH^2 \\
                        \end{array}
                      \right):
     [ \wh f,\wh g]_{\sH^2}=[ \wh h, \wh
 k]_{\cH^2}\, \text{ for all }
 \left(
                        \begin{array}{c}
                          \wh f \\
                          \wh{h} \\
                        \end{array}
                      \right)\in\Gamma
  \right\}.
\end{equation}

\begin{definition}\label{def2.5}{(\rm \cite{Sh0})}
\ A linear relation $\Gamma$ from the Kre\u{\i}n space
$(\sH^2,J_\sH)$ to the Kre\u{\i}n space $(\cH^2,J_\cH)$ is said to
be \textit{$(J_\sH,J_\cH)$--isometric} if $\Gamma^{-1}\subset \Gamma^{[*]}$
and \textit{$(J_\sH,J_\cH)$--unitary}, if $\Gamma^{-1}=\Gamma^{[*]}$.
\end{definition}

The following two statements are due to Yu.L.~Shmul'jan \cite{Sh0};
see also~\cite{DHMS06}.

\begin{proposition}\label{UNIT}
Let $\Gamma$ be a $(J_\sH,J_\cH)$-unitary relation from the Kre\u{\i}n
space $(\sH^2,J_\sH)$ to the Kre\u{\i}n space $(\cH^2,J_\cH)$. Then:
\begin{enumerate}
\def\labelenumi{\textit{(\roman{enumi})}}
\item $\dom \Gamma$ is closed if and only if $\ran \Gamma$ is closed; \item
the following equalities hold:
\[
\ker \Gamma=(\dom \Gamma)^{[\perp]}, \quad \mul \Gamma=(\ran
\Gamma)^{[\perp]}.
\]
\end{enumerate}
\end{proposition}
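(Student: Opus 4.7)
The plan is to establish (ii) first by a direct calculation with the Kre\u{\i}n space adjoint, and then to deduce (i) by combining (ii) with a reduction to the classical closed range theorem for closed linear relations between Hilbert spaces.

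For (ii), I would read off the identities straight from the definition \eqref{def:J_unit}. Testing $(0,\wh g)\in \Gamma^{[*]}$ against all $(\wh f,\wh h)\in\Gamma$ yields $[\wh f,\wh g]_{\sH^2}=0$ for every $\wh f\in\dom \Gamma$, so $\mul \Gamma^{[*]}=(\dom \Gamma)^{[\perp]}$; the analogous test with $(\wh k,0)$ gives $\ker \Gamma^{[*]}=(\ran \Gamma)^{[\perp]}$. Substituting the unitarity identity $\Gamma^{[*]}=\Gamma^{-1}$ and recalling the obvious equalities $\mul \Gamma^{-1}=\ker \Gamma$ and $\ker \Gamma^{-1}=\mul \Gamma$ then produces the two equalities claimed in (ii).

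For (i), I would first note that $\Gamma$ is automatically closed, since $\Gamma=(\Gamma^{[*]})^{-1}$ and every $J$-adjoint is closed. The next step is to translate between the Kre\u{\i}n adjoint and the usual Hilbert space adjoint: unfolding $[\cdot,\cdot]_{\sH^2}=(J_\sH\cdot,\cdot)$ and $[\cdot,\cdot]_{\cH^2}=(J_\cH\cdot,\cdot)$ in \eqref{def:J_unit} shows that $(\wh k,\wh g)\in\Gamma^{[*]}$ exactly when $(J_\cH \wh k,J_\sH \wh g)\in\Gamma^*$; in particular $\ran \Gamma^{[*]}=J_\sH(\ran \Gamma^*)$, and since $J_\sH$ is a bounded involution on $\sH^2$ this subspace is closed if and only if $\ran \Gamma^*$ is closed.

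The conclusion of (i) now follows from the closed range theorem for closed linear relations between Hilbert spaces, which yields $\ran \Gamma$ closed $\Longleftrightarrow$ $\ran \Gamma^{*}$ closed. Chaining the equivalences and using once more $\Gamma^{[*]}=\Gamma^{-1}$ gives
\[
 \ran \Gamma \text{ closed}\;\Longleftrightarrow\;\ran \Gamma^{*} \text{ closed}\;\Longleftrightarrow\;\ran \Gamma^{[*]}=\ran \Gamma^{-1}=\dom \Gamma \text{ closed,}
\]
which is exactly (i). The main point requiring care is invoking the closed range theorem in the form appropriate to (not necessarily densely defined, possibly multivalued) closed linear relations; the remainder is pure bookkeeping with the two adjoints.
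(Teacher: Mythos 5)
The paper does not actually prove Proposition \ref{UNIT}; it is stated with a citation to Shmul'jan \cite{Sh0} and to \cite{DHMS06}, so there is no in-text argument to compare yours against. On its own merits your proof is correct. Part (ii) is exactly the right computation: the identities $\mul \Gamma^{[*]}=(\dom \Gamma)^{[\perp]}$ and $\ker \Gamma^{[*]}=(\ran \Gamma)^{[\perp]}$ hold for an arbitrary relation straight from \eqref{def:J_unit}, and unitarity $\Gamma^{[*]}=\Gamma^{-1}$ together with $\mul\Gamma^{-1}=\ker\Gamma$, $\ker\Gamma^{-1}=\mul\Gamma$ converts them into the claimed equalities (note that for a merely isometric $\Gamma$ one only gets inclusions, consistent with Section \ref{sec3.3} of the paper). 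For part (i) you correctly identify and discharge the two points that need care: first, $\Gamma=(\Gamma^{[*]})^{-1}$ is closed because the $J$-adjoint is an intersection of kernels of continuous functionals and inversion preserves closedness; second, the relation $\Gamma^{[*]}=J_\sH\,\Gamma^{*}J_\cH$ (which follows from unwinding $[\cdot,\cdot]=(J\cdot,\cdot)$ and the self-adjointness and involutivity of $J_\sH$, $J_\cH$) reduces the claim to the equivalence $\ran\Gamma$ closed $\Leftrightarrow$ $\ran\Gamma^{*}$ closed. That closed range theorem does hold for closed linear relations between Hilbert spaces — one splits off the closed subspace $\mul\Gamma$ and restricts to $\overline{\dom\Gamma}$ to reduce to the classical densely defined operator case — so the chain $\ran\Gamma$ closed $\Leftrightarrow$ $\ran\Gamma^{*}$ closed $\Leftrightarrow$ $\ran\Gamma^{[*]}=\dom\Gamma$ closed is valid. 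If you wanted a fully self-contained write-up you would spell out that reduction, but as an argument the proposal has no gaps.
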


%
%

A {$(J_\sH,J_\cH)$-unitary relation
$\Gamma:(\sH^2,J_\sH)\to(\cH^2,J_\cH)$ may be multivalued,
nondensely defined, and unbounded. It is the graph of an operator if
and only if its range is dense. In this case it need not be densely
defined or bounded; and even if it is bounded it need not be densely
defined.
%

\section{Unitary and isometric boundary pairs and associated Weyl families}
\label{sec3}

\subsection{Definitions and basic properties}

Let $\ZA$ be a closed symmetric linear relation in the Hilbert space
$\sH$. It is not assumed that the defect numbers of $\ZA$ are equal or
finite. Following \cite{DHMS06,DHMS12} a unitary/isometric boundary pair for $\ZA^*$ is defined as follows.

\begin{definition}
\label{GBT} Let $\ZA$ be a closed symmetric linear relation in a
Hilbert space $\sH$, let $\cH$ be an auxiliary Hilbert space and let $\Gamma$ be a linear relation from the Kre\u{\i}n space
$(\sH^2,J_\sH)$
to the Kre\u{\i}n space $(\cH^2,J_\cH)$.
Then\index{Boundary pair!unitary} \index{Boundary pair!isometric}
$\{\cH,\Gamma\}$ is called a \textit{unitary/isometric boundary pair for $\ZA^*$}, if:
\begin{enumerate}[label=\textbf{\thedefinition.\arabic*}]
  \item\label{1_def_GBT}$\ZA_*:=\dom\Gamma$ is dense in $\ZA^*$ with respect to the topology on $\sH^2$;
  \item \label{2_def_GBT} the linear relation $\Gamma$ is $(J_\sH,J_\cH)$-unitary/isometric.
\end{enumerate}
\end{definition}
In particular, it follows from this definition that for all vectors
$\{\wh f,\wh h\},\, \{\wh g,\wh k\}\in\Gamma$ of the
form~\eqref{eq:leftik_sH} the abstract \textit{Green's identity}index{Green's identity}
(cf. Definition \ref{een}) holds
\begin{equation}
\label{Green1}
 (f',g)_\sH-(f,g')_\sH=({h'},k)_{\cH}-(h,k')_{\cH}.
\end{equation}

Let $\{\cH,\Gamma\}$ be a unitary boundary pair for $\ZA^*$ and let
$\ZA_*=\dom\Gamma$. According to~\cite[Proposition~2.12]{DHMS06} the
domain ${\ZA_*}$ of $\Gamma$ is a linear relation in $\sH$, such that
\[
 \ZA\subset {\ZA_*}\subset \ZA^*,
 \quad
 \overline{\ZA_*}=\ZA^*.
\]
The eigenspaces ${\sN}_{\lambda}({\ZA_*})$ and
$\wh{\sN}_{\lambda}({\ZA_*})$ of ${\ZA_*}$ are defined as in
~\eqref{defect0},
\[
 \sN_\lambda({\ZA_*})=\ker ({\ZA_*}-\lambda),
\quad
 \wh \sN_\lambda({\ZA_*})
  =\left\{\,  \begin{pmatrix}f_\lambda\\ \lambda f_\lambda\end{pmatrix} \in {\ZA_*} :\, f_\lambda\in\sN_\lambda({\ZA_*})\,\right\}.
\]

\begin{definition}
\label{Weylfam} The \textit{Weyl family} \index{Weyl family} $M$ of $\ZA$ corresponding to
the unitary/isometric boundary pair $\{\cH,\Gamma\}$ is defined by
$M(\lambda):=\Gamma(\wh{\sN}_\lambda(\ZA_*))$, i.e.,
\begin{equation}
\label{Weylf}
 M(\lambda) :=
  \left\{\,\wh h\in\cH^2:\,\{\wh f_\lambda,\wh h\}\in\Gamma \mbox{
for some } \wh f_\lambda= \begin{pmatrix}f_\lambda\\ \lambda f_\lambda\end{pmatrix}\in\sH^2\,\right\}\quad (\lambda \in \cmr),
\end{equation}
In the case where $M$ is single-valued it is called the \textit{Weyl
function} of $\ZA$ corresponding to $\{\cH,\Gamma\}$. The
\textit{$\gamma$-field} of $\ZA$ corresponding to the
unitary/isometric boundary pair $\{\cH,\Gamma\}$ is defined by
\begin{equation}
\label{gfield}
 \gamma(\lambda) :=
  \left\{\, \{h,f_\lambda\}\in \cH\times\sH :\,\left\{\begin{pmatrix}f_\lambda\\ \lambda f_\lambda\end{pmatrix},
  \begin{pmatrix}h\\  h'\end{pmatrix}\right\}\in\Gamma \mbox{
for some } h'\in\cH
\right\},
\end{equation}
where $\lambda \in \cmr$. Moreover, $\wh\gamma(\lambda)$ stands for
\begin{equation}
\label{ghatfield}
 \wh\gamma(\lambda):
  =\left\{\, \{h,\wh f_\lambda\}\in \cH\times\sH^2:\,\left\{\wh f_\lambda,
  \begin{pmatrix}h\\  h'\end{pmatrix}\right\}\in\Gamma \mbox{
for some } h'\in\cH
\right\}.
\end{equation}
\end{definition}

With $\gamma(\lambda)$ the relation $\Gamma\uphar\wh\sN_\lambda(\ZA_*)$
can be rewritten as follows
\begin{equation}
\label{ggam1}
 \Gamma\uphar\wh\sN_\lambda(\ZA_*)
 :=\left\{\,\left\{\begin{pmatrix}\gamma(\lambda)h\\
 \lambda\gamma(\lambda)h\end{pmatrix},\begin{pmatrix}h\\h'\end{pmatrix}\right\}:\,
     \begin{pmatrix}h\\h'\end{pmatrix}\in M(\lambda)\,\right\},
\quad \lambda\in\cmr.
\end{equation}

Associate with $\Gamma$ the following two linear relations which are not
necessarily closed:
\begin{equation}
\label{g01}
 \Gamma_0 =\left\{ \, \{\wh f,h\} :\, \{\wh f, \wh h\} \in \Gamma,
\,
           \wh h=\begin{pmatrix}h\\h'\end{pmatrix}\,\right\}, \quad
           \Gamma_1 =\left\{ \, \{\wh f,h'\} :\, \{\wh f, \wh h\} \in \Gamma,
\,
             \wh h=\begin{pmatrix}h\\h'\end{pmatrix}\,\right\}.
\end{equation}

The $\gamma$-field $\gamma(\cdot)$ associated with $\{\cH,\Gamma\}$
is the first component of the mapping $\wh\gamma(\lambda)$ in~\eqref{ghatfield}. Observe, that
\[
 \wh\gamma(\lambda):=(\Gamma_0\uphar\wh\sN_\lambda({\ZA_*}))^{-1},
 \quad \lambda\in\cmr,
\]
is a linear mapping from $\Gamma_0(\wh\sN_\lambda({\ZA_*}))=\dom
M(\lambda)$ onto $\wh\sN_\lambda({\ZA_*})$: it is single-valued in view
of~\eqref{Green1}; cf.~\eqref{Green2},~\eqref{Green2B}.
Consequently, the $\gamma$-field is a
single-valued mapping from $\dom M(\lambda)$ onto $\sN_\lambda({\ZA_*})$
and it satisfies $\gamma(\lambda)\Gamma_0\wh f_\lambda=f_\lambda$ for
all $\wh f_\lambda\in \wh\sN_\lambda({\ZA_*})$.

If $\Gamma$ is single-valued then these component mappings decompose $\Gamma$,
$\Gamma=\Gamma_0\times\Gamma_1$, and the triple $\{\cH,\Gamma_0,\Gamma_1\}$
will be called a \textit{unitary/isometric boundary triple} for $\ZA^*$.
In this case the Weyl function corresponding to the unitary/isometric boundary triple
$\{\cH,\Gamma_0,\Gamma_1\}$ can be also defined via
\begin{equation}\label{eq:Mgen}
     M(\lambda)\Gamma_0\wh f_\lambda=\Gamma_1\wh f_\lambda, \quad \wh f_\lambda\in \wh\sN_\lambda(\ZA_*).
\end{equation}
When $A$ admits real regular type points it is useful to extend
Definition~\ref{Weylfam} of the Weyl family to the points on the
real line by setting $M(x):=\Gamma(\wh{\sN}_x(\ZA_*))$ or, more
precisely,
\begin{equation}
\label{Weylreal}
 M(x) :=
  \left\{\,\wh h\in\cH^2:\,\{\wh f_x,\wh h\}\in\Gamma\,\, \text{ for some } \wh f_x = \begin{pmatrix}f_x\\ x
f_x\end{pmatrix}\in\sH^2,\; x \in \dR\,\right\}.
\end{equation}

\subsection{Unitary boundary pairs and unitary boundary
triples}\label{sec3.2} The following theorem shows that the set of
all Weyl families of unitary boundary pairs coincides with $\wt
\cR(\cH)$ (see~\cite[Theorem~3.9]{DHMS06}).
Recall that a unitary boundary pair $\{\cH,\Gamma\}$ for $\ZA^*$ is said to be \textit{minimal}, if
\[
 \sH=\sH_{\min}:=\cspan\{\,\sN_\lambda({\ZA_*}):\,\lambda\in\dC_+\cup\dC_-\,\}.
\]

\begin{theorem}\label{GBTNP}
Let $\{\cH,\Gamma\}$ be a unitary boundary pair for $\ZA^*$. Then
the corresponding Weyl family $M$ belongs to the class of Nevanlinna
families $\wt \cR(\cH)$.

Conversely, if $M$ belongs to the class $\wt \cR(\cH)$, then there
exists a unique (up to a unitary equivalence) minimal unitary
boundary pair $\{\cH,\Gamma\}$ whose Weyl function coincides with
$M$.
\end{theorem}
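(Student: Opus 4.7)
The plan is to split the argument into necessity, sufficiency, and uniqueness, all resting on Green's identity~\eqref{Green1} together with the structural information supplied by $(J_\sH,J_\cH)$-unitarity (as opposed to mere isometry) of $\Gamma$.

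\textbf{Necessity.} Fix $\lambda\in\dC_+$ and let $\wh f_\lambda=\begin{pmatrix}f_\lambda\\ \lambda f_\lambda\end{pmatrix}\in\wh\sN_\lambda(\ZA_*)$ be paired with some $\wh h=\begin{pmatrix}h\\h'\end{pmatrix}\in M(\lambda)$ via~\eqref{Weylf}. Taking $\wh g=\wh f_\lambda$ and $\wh k=\wh h$ in~\eqref{Green1} I get
\[
 2i(\IM\lambda)\|f_\lambda\|_\sH^2=(\lambda-\bar\lambda)(f_\lambda,f_\lambda)=(h',h)_\cH-(h,h')_\cH=2i\IM(h',h)_\cH,
\]
so $M(\lambda)$ is dissipative (and accumulative for $\lambda\in\dC_-$). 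Applying~\eqref{Green1} instead to a pair $\wh f_\lambda\in\wh\sN_\lambda(\ZA_*)$ and $\wh g_{\bar\lambda}\in\wh\sN_{\bar\lambda}(\ZA_*)$ gives $(h',k)_\cH=(h,k')_\cH$, which is the inclusion $M(\bar\lambda)\subset M(\lambda)^*$. The missing ingredient for both maximality of $M(\lambda)$ and the reversed inclusion $M(\lambda)^*\subset M(\bar\lambda)$ is the surjectivity statement encoded in $\Gamma^{-1}=\Gamma^{[*]}$: via Proposition~\ref{UNIT} this forces, for any $\mu\in\dC_-$, existence of an element in $\ZA_*$ solving the boundary problem $\Gamma_1\wh f-\mu\Gamma_0\wh f=h$ with $\wh f-\wh f_\lambda\in A_0$ for a suitable selfadjoint reference relation, which yields that $M(\lambda)+\mu$ is surjective and hence $(M(\lambda)+\mu)^{-1}\in\cB(\cH)$. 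Finally, holomorphy in the resolvent sense is obtained from the standard identity
\[
 M(\lambda)-M(\mu)^*=(\lambda-\bar\mu)\gamma(\mu)^*\gamma(\lambda),
\]
derived from~\eqref{Green1} applied to $\wh f_\lambda$ and $\wh g_\mu$, once holomorphy of $\gamma(\cdot)$ is known on $\dC_\pm$ (a direct consequence of $(A_0-\lambda)^{-1}$ being holomorphic together with $\gamma(\lambda)=[I+(\lambda-\mu)(A_0-\lambda)^{-1}]\gamma(\mu)$).

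\textbf{Sufficiency.} Given $M\in\wt\cR(\cH)$, I would first split off the multivalued part using~\eqref{ml} so that $M=\textup{gr }M_{\textup{op}}\oplus M_\infty$ with $M_\textup{op}\in\cR(\cH\ominus\cH_\infty)$, reducing to the operator case modulo a trivial summand. Next I would construct a minimal model via the reproducing kernel Hilbert space $\sH$ generated by the nonnegative Nevanlinna kernel
\[
 K_M(\lambda,\mu)=\frac{M(\lambda)-M(\mu)^*}{\lambda-\bar\mu},\qquad \lambda,\mu\in\cmr,
\]
which is positive definite exactly because $M$ is a Nevanlinna family. Let $A$ be the minimal symmetric operator in $\sH$ generated by multiplication by the independent variable on the span of the reproducing kernels, and let $A_0$ be a selfadjoint reference extension obtained by the Štraus/Naĭmark-type dilation accounting for the possible non-strictness of $M$. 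On each defect subspace $\wh\sN_\lambda(\ZA_*)$ define $\Gamma$ by prescribing, in accordance with~\eqref{ggam1},
\[
 \Gamma:\begin{pmatrix}\gamma(\lambda)h\\ \lambda\gamma(\lambda)h\end{pmatrix}\mapsto\begin{pmatrix}h\\ h'\end{pmatrix}\in M(\lambda),\qquad \lambda\in\cmr,
\]
and extend across the decomposition $\ZA_*=A_0\hplus\wh\sN_\lambda(\ZA_*)$ (enlarged by the defect of $A_0$ when $M$ is not strict). Green's identity holds by construction from the kernel identity; the isometry $\Gamma^{-1}\subset\Gamma^{[*]}$ is immediate, and the reverse inclusion needed for $(J_\sH,J_\cH)$-unitarity is exactly the surjectivity of $M(\lambda)+\mu$ on $\cH$ guaranteed by the maximality axiom of Nevanlinna families.

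\textbf{Uniqueness and main obstacle.} For two minimal unitary boundary pairs $\{\cH,\Gamma^{(j)}\}$ in $\sH^{(j)}$, $j=1,2$, sharing the same Weyl family $M$, the identity~\eqref{ggam1} shows that the $\gamma$-fields $\gamma^{(j)}(\lambda):\dom M(\lambda)\to\sN_\lambda(\ZA_*^{(j)})$ are determined by $M$ up to the choice of the model; the map $U:\gamma^{(1)}(\lambda)h\mapsto\gamma^{(2)}(\lambda)h$ is well defined and isometric thanks to~\eqref{eq:Q_fun}, and extends by minimality to a unitary $\sH^{(1)}\to\sH^{(2)}$ intertwining the boundary pairs. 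The main obstacle is unambiguously the sufficiency step: realising a possibly unbounded, possibly non-densely defined, possibly multivalued Nevanlinna family $M(\lambda)$ forces one to track the non-strict directions (those $h\in\cH$ with $(\IM M(\lambda))^{1/2}h=0$) and the multivalued part $\cH_\infty=\mul M(\lambda)$ simultaneously, and to verify maximality (not just isometry) of $\Gamma$ globally on $\sH^2$; this is precisely the point at which the full content of the Nevanlinna family axioms, rather than the weaker symmetry plus non-negativity of $\IM M$, is used.
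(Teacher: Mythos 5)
There is a genuine gap, and it sits exactly where you flagged the ``main obstacle.'' Your necessity argument correctly extracts dissipativity and the inclusion $M(\bar\lambda)\subset M(\lambda)^*$ from Green's identity, but the step that upgrades these to maximality of $M(\lambda)$ and to holomorphy of $(M(\lambda)+\mu)^{-1}$ is not available as you describe it: you appeal to ``a suitable selfadjoint reference relation'' $A_0$ inside $\dom\Gamma$, to the decomposition $\ZA_*=A_0\hplus\wh\sN_\lambda(\ZA_*)$, and to the identity $\gamma(\lambda)=[I+(\lambda-\mu)(A_0-\lambda)^{-1}]\gamma(\mu)$. By the paper's own Theorem~\ref{prop:C6B} (equivalence of (i), (ii), (vi)) the existence of such a selfadjoint $A_0=\ker\Gamma_0$ with that decomposition characterizes the $S$-generalized boundary pairs, whose Weyl functions are exactly those of the special form \eqref{eq:DomInv}; for a general unitary boundary pair $A_0$ need not even be essentially selfadjoint, $\Gamma$ need not be surjective (Proposition~\ref{UNIT} gives only $\ker\Gamma=(\dom\Gamma)^{[\perp]}$ and $\mul\Gamma=(\ran\Gamma)^{[\perp]}$, not surjectivity), and one can have $\dom M(\lambda)\cap\dom M(\mu)=\{0\}$, so the $\gamma$-field identity cannot hold as stated. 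The same defect undermines your sufficiency step: the reproducing kernel skeleton is reasonable, but ``extend across $\ZA_*=A_0\hplus\wh\sN_\lambda(\ZA_*)$'' again presupposes an $S$-generalized structure that a general $M\in\wt\cR(\cH)$ does not admit.

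The device that closes both directions is the main transform \eqref{awig}: $\Gamma$ is $(J_\sH,J_\cH)$-unitary if and only if $\wt A=\cJ(\Gamma)$ is a selfadjoint relation in $\sH\oplus\cH$. Selfadjointness gives $\ran(\wt A-\lambda)=\sH\oplus\cH$ for $\lambda\in\cmr$; intersecting the range description with $\{0\}\oplus\cH$ yields $\ran(M(\lambda)+\lambda)=\cH$, hence maximal dissipativity/accumulativity, while the compression formula
\[
 P_\cH(\wt A-\lambda)^{-1}\uphar\cH=-(M(\lambda)+\lambda)^{-1}
\]
delivers holomorphy in the resolvent sense and the symmetry $M(\lambda)^*=M(\bar\lambda)$ with no reference extension at all. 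Conversely, realizing a given $M\in\wt\cR(\cH)$ amounts to realizing $-(M(\lambda)+\lambda)^{-1}$ as the compressed resolvent of a selfadjoint relation $\wt A$ in $\sH\oplus\cH$ (a Na\u{\i}mark-dilation step applied to the operator part in \eqref{ml}), after which $\Gamma=\cJ^{-1}(\wt A)$ is the desired unitary boundary pair; minimality and uniqueness then reduce to uniqueness of the minimal selfadjoint dilation, and this is the one place where your intertwining argument via the $\gamma$-field and the kernel identity is essentially sound. If you prefer to avoid the main transform, the Potapov--Ginzburg route of Section~\ref{sec5.1} (Cayley transform to a unitary colligation with transfer function $\theta$, cf.\ \eqref{eq:M_theta}) accomplishes the same thing, with the Schur class playing the role of $\wt\cR(\cH)$.
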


Notice that Theorem~\ref{TAMS_Th} contains a general analytic
criterion for an isometric boundary triple to be unitary; the Weyl
function should be a Nevanlinna function, cf. Theorem~\ref{TAMS_Th}.

 \begin{corollary}\label{cor:TAMS_Th}
The class of Weyl functions corresponding to unitary boundary
triples coincides with the class $\cR^s(\cH)$ of (in general
unbounded) strict  Nevanlinna functions.
   \end{corollary}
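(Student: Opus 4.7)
The statement is an equivalence, so the plan is to prove the two inclusions separately, relying throughout on the realization result Theorem~\ref{GBTNP} for unitary boundary \emph{pairs}. The key observation is that a unitary boundary triple $\{\cH,\Gamma_0,\Gamma_1\}$ is precisely a unitary boundary pair $\{\cH,\Gamma\}$ with $\Gamma=\binom{\Gamma_0}{\Gamma_1}$ single-valued, so that the task reduces to characterizing single-valuedness of $\Gamma$ and single-valuedness of $M$ in analytic terms.

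For the forward direction, let $\{\cH,\Gamma_0,\Gamma_1\}$ be a unitary boundary triple and put $\Gamma=\binom{\Gamma_0}{\Gamma_1}$. Then $\{\cH,\Gamma\}$ is a unitary boundary pair, so by Theorem~\ref{GBTNP} the Weyl family $M$ belongs to $\wt\cR(\cH)$. To upgrade this to $M\in\cR^s(\cH)$, I will first show that each $M(\lambda)$, $\lambda\in\cmr$, is single-valued: if $\wh f_\lambda\in\wh\sN_\lambda(A_*)$ with $\Gamma_0\wh f_\lambda=0$, then applying Green's identity \eqref{Green1} to $\wh f_\lambda$ with itself gives $2i(\IM\lambda)\|f_\lambda\|^2=0$, so $\wh f_\lambda=0$ and hence $\Gamma_1\wh f_\lambda=0$. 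This proves $M\in\cR(\cH)$. Strictness is then immediate: if $h\in\dom M(i)$ and $\IM(M(i)h,h)=0$, choose $\wh f_i\in\wh\sN_i(A_*)$ with $\Gamma_0\wh f_i=h$; Green's identity yields $\|f_i\|^2=\IM(M(i)h,h)=0$, so $\wh f_i=0$ and therefore $h=\Gamma_0\wh f_i=0$.

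For the converse, let $M\in\cR^s(\cH)\subset\wt\cR(\cH)$. By Theorem~\ref{GBTNP} there exists a (minimal) unitary boundary pair $\{\cH,\Gamma\}$ whose Weyl family is $M$, and the task is to show that the $(J_\sH,J_\cH)$-unitary relation $\Gamma$ is single-valued, i.e.\ that $\mul\Gamma=\{0\}$. This is the main obstacle, and I plan to attack it via Proposition~\ref{UNIT}, according to which $\mul\Gamma=(\ran\Gamma)^{[\perp]}$. So fix $\wh h=\binom{h}{h'}\in\mul\Gamma$; for every $\lambda\in\cmr$ and every $k\in\dom M(\lambda)$, the element $\binom{k}{M(\lambda)k}$ lies in $\ran\Gamma$, and the Krein-orthogonality $[\wh h,\binom{k}{M(\lambda)k}]_{\cH^2}=0$ translates into $(h,M(\lambda)k)=(h',k)$. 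This says $h\in\dom M(\lambda)^*=\dom M(\bar\lambda)$ with $M(\bar\lambda)h=h'$, so in particular $M(i)h=M(-i)h=h'$.

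To finish, I combine the identities just obtained with the symmetry $M(-i)=M(i)^*$: both $(M(i)h,h)$ and $(M(-i)h,h)=\overline{(M(i)h,h)}$ equal $(h',h)$, forcing $\IM(M(i)h,h)=0$. The strictness hypothesis on $M$ then gives $h=0$, and since $M(i)$ is an operator $h'=M(i)\cdot 0=0$, so $\wh h=0$. Hence $\mul\Gamma=\{0\}$, so $\Gamma$ is an operator and decomposes as $\Gamma=\binom{\Gamma_0}{\Gamma_1}$, providing the desired unitary boundary triple. Once assembled this way, the two implications yield the claimed equality of classes.
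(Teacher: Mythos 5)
Your proof is correct and follows the same route as the paper, which simply combines Theorem~\ref{GBTNP} with the citation of \cite[Proposition~4.5]{DHMS06}; your argument amounts to supplying a self-contained proof of that cited proposition. In particular, your Kre\u{\i}n-orthogonality computation in the converse direction re-derives the identity $M(\lambda)\cap M(\lambda)^*=\mul\Gamma$ of Lemma~\ref{Weylmul}\,(i), so that strictness of $M$ forces $\mul\Gamma=\{0\}$ exactly as needed.
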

\begin{proof}
The statement is immediate when combining  Theorem~\ref{GBTNP} with
Proposition~4.5 from \cite{DHMS06}.
\end{proof}

As a consequence of~\eqref{Green1} and~\eqref{ggam1} the following
identity holds (cf.~\eqref{ml})
\begin{equation}\label{Green3U}
 (\lambda-\bar\mu)(\gamma(\lambda)h,\gamma(\mu)k)_\sH
 =(M_{\rm op}(\lambda)h,k)_\cH - (h,M_{\rm op}(\mu)k)_\cH,
\end{equation}
where $h\in\dom M(\lambda)$ and $k\in \dom M(\mu)$,
$\lambda,\mu\in\cmr$.

As was already mentioned in Section~\ref{sec1} every operator valued
function $M$ from $\cR^u[\cH]$ ($\cR^s[\cH]$) can be realized as a
Weyl function of some ordinary boundary triple ($B$-generalized
boundary triple, respectively).

The multivalued analog for the notion of $B$-generalized boundary
triple was introduced in \cite[Section~5.3]{DHMS06}, a formal
definition reads as follows.

\begin{definition}\label{Btriple}
Let $\ZA$ be a symmetric operator (or relation) in the Hilbert space
$\sH$ and let $\cH$ be another Hilbert space. Then a linear relation
$\Gamma:\ZA^*\to \cH\oplus\cH$ with dense domain in $\ZA^*$ is said
to be a \emph{$B$-generalized boundary pair} for $\ZA^*$,\index{Boundary pair!B-generalized} if the
following three conditions are satisfied:
\begin{enumerate}[label=\textbf{\thedefinition.\arabic*}]
\item \label{1_def_Btriple} the abstract Green's identity~\eqref{Green1} holds;
\item \label{2_def_Btriple} $\ran \Gamma_0=\cH$;
\item  \label{3_def_Btriple} $A_0=\ker \Gamma_0$ is selfadjoint,
\end{enumerate}
where $\Gamma_0$ stands for the first component of $\Gamma$; see~\eqref{g01}.
\end{definition}
As was shown in~\cite[Proposition~5.9]{DHMS06} every Weyl function
of a $B$-generalized boundary pair belongs to the class $\cR[\cH]$
and, conversely, every operator valued function $M\in \cR[\cH]$ can
be realized as the Weyl function of  a $B$-generalized boundary
pair.

\subsection{Isometric boundary pairs and isometric boundary triples}\label{sec3.3}


Let $\Gamma$ be a $(J_\sH,J_\cH)$-isometric relation from the
Kre\u{\i}n space $(\sH^2,J_\sH)$ to the Kre\u{\i}n space
$(\cH^2,J_\cH)$. In view of~\eqref{jh}-\eqref{eq:J_isom} this just
means that the abstract Green's identity~\eqref{Green1} holds. It
follows from~\eqref{Green1} that
\[
 \ker \Gamma \subset (\dom \Gamma)^{[\perp]}, \quad \mul \Gamma \subset (\ran \Gamma)^{[\perp]},
\]
compare Proposition~\ref{UNIT}. Let  $\Gamma_0$  and $\Gamma_1$ be
the linear relations determined by~\eqref{g01}. The kernels
$A_0:=\ker \Gamma_0$ and $A_1:=\ker \Gamma_1$ need not be closed,
but they are symmetric extensions of $\ker \Gamma$ which are
contained in the domain $\ZA_*=\dom\Gamma$ of $\Gamma$; cf.
\cite[Proposition~2.13]{DHMS06}. If $\ZA_*=\dom \Gamma$ is dense in
$\ZA^*$ then the pair  $\{\cH,\Gamma\}$ is viewed as an isometric
boundary pair for  $\ZA^*$; cf. Definition~\ref{GBT}. In general
$\ZA:=(\ZA_*)^*=(\dom \Gamma)^{[\perp]}$ is an extension of $\ker
\Gamma$ which need not belong to $\dom \Gamma$; for some sufficient
conditions for the equality $\ZA=\ker \Gamma$, see
\cite[Section~2.3]{DHMS09} and \cite[Section~7.8]{DHMS12}.

With $\{\wh f_\lambda,\wh h\},\{\wh g_\mu,\wh k\}\in\Gamma$,
$\lambda,\mu\in\dC$, the Green's identity~\eqref{Green1} gives, cf.
~\eqref{Green3U},
\begin{equation}\label{Green2}
 (h',k)_\cH - (h,k')_\cH=
 (\lambda-\bar\mu)(f_\lambda,g_\mu)_\sH.
\end{equation}
In particular, with $\mu=\lambda$~\eqref{Green2} implies that $\IM (h',h)_\cH= \IM \lambda\|f_\lambda\|^2$.
Hence, for all $\lambda\in\cmr$
\begin{equation}\label{Green2B}
\ker (\Gamma\upharpoonright\wh\sN_\lambda(\ZA_*))=\{0\}
\quad\mbox{and }\quad
\ker(\Gamma_j\upharpoonright\wh\sN_\lambda(\ZA_*))=\{0\}
\quad(j=0,1).
\end{equation}
Moreover, with $\mu=\bar\lambda$~\eqref{Green2} implies that
\begin{equation}\label{GreenM}
 M(\bar\lambda)\subseteq M(\lambda)^*, \quad \lambda\in\cmr.
\end{equation}
Here equality does not hold if $\Gamma$ is not unitary.
%
However, with the Weyl family the multivalued part of $\Gamma$ can
be described explicitly; see \cite[Lemma~7.57]{DHMS12}, cf. also
\cite[Lemma~4.1]{DHMS06}.

\begin{lemma}\label{Weylmul}
Let $\{\cH,\Gamma\}$ be an isometric boundary pair with the Weyl
family $M$. Then the following equalities hold for all
$\lambda\in\cmr$:
\begin{enumerate}\def\labelenumi{\textit{(\roman{enumi})}}
\item $M(\lambda)\cap M(\lambda)^*=\mul \Gamma$;
\item $\ker M(\lambda)\times\{0\}=\mul \Gamma\cap (\cH\times \{0\})$;
\item  $\{0\}\times\mul M(\lambda)=\mul \Gamma\cap(\{0\}\times\cH)$;
\item $\ker (M(\lambda)-M(\lambda)^*)=\mul \Gamma_0$;
\item $\ker (M(\lambda)^{-1}-M(\lambda)^{-*})=\mul \Gamma_1$.
\end{enumerate}
\end{lemma}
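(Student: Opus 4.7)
The plan is to establish all five equalities by exploiting Green's identity~\eqref{Green2} together with the single-valuedness property~\eqref{Green2B}. The principal technical tool is the identity
\[
 (h',u)_\cH-(h,v)_\cH=(\lambda-\bar\lambda)(f_\lambda,g_\lambda)_\sH,
\]
which results from~\eqref{Green2} taken with $\mu=\lambda$ for any two pairs $\{\wh f_\lambda,\wh h\},\{\wh g_\lambda,\wh k\}\in \Gamma\uphar \wh\sN_\lambda(\ZA_*)$, where $\wh h=\binom{h}{h'}$ and $\wh k=\binom{u}{v}$.

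For statement (i), one inclusion is immediate: every $\wh h\in \mul \Gamma$ is the image under $\Gamma$ of $\wh 0\in \wh\sN_\lambda(\ZA_*)$, so $\wh h\in M(\lambda)$, and by the same reasoning $\wh h\in M(\bar\lambda)\subseteq M(\lambda)^*$ (recall~\eqref{GreenM}). For the reverse inclusion, given $\wh h\in M(\lambda)\cap M(\lambda)^*$ pick $\wh f_\lambda\in \wh\sN_\lambda(\ZA_*)$ with $\{\wh f_\lambda,\wh h\}\in\Gamma$, uniquely determined by~\eqref{Green2B}. For every $\{\wh g_\lambda,\wh k\}\in\Gamma\uphar \wh\sN_\lambda(\ZA_*)$ the condition $\wh h\in M(\lambda)^*$ yields $(h',u)_\cH=(h,v)_\cH$, so the displayed identity gives $(f_\lambda,g_\lambda)_\sH=0$; since $\gamma(\lambda)$ maps $\dom M(\lambda)$ onto $\sN_\lambda(\ZA_*)$, choosing $g_\lambda=f_\lambda$ forces $f_\lambda=0$ and hence $\wh h\in \mul \Gamma$. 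This is the main step; the remaining four items follow by specialization.

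For (ii) and (iii), the inclusions from the right side to the left are trivial applications of (i) (any element of $\mul \Gamma$ lies in $M(\lambda)$); the reverse inclusions are obtained by running the same argument as in (i), but directly: if $\binom{h}{0}\in M(\lambda)$ with $\{\wh f_\lambda,\binom{h}{0}\}\in\Gamma$, taking $\{\wh g_\lambda,\wh k\}=\{\wh f_\lambda,\binom{h}{0}\}$ in the displayed identity gives $(\lambda-\bar\lambda)\|f_\lambda\|^2=0$, whence $\wh f_\lambda=0$ and $\binom{h}{0}\in \mul \Gamma$; an entirely analogous diagonal computation proves (iii).

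For (iv) and (v), the strategy is to rewrite the kernels in terms of~(i). From the definitions of the sum~\eqref{eq:sum} and adjoint of a linear relation,
\[
 \ker(M(\lambda)-M(\lambda)^*)=\{\,h\in\cH:\,\exists\,h'\in\cH \text{ with } \tbinom{h}{h'}\in M(\lambda)\cap M(\lambda)^*\,\},
\]
and analogously $\ker(M(\lambda)^{-1}-M(\lambda)^{-*})$ equals the set of all $h'$ for which some $h$ exists with $\binom{h}{h'}\in M(\lambda)\cap M(\lambda)^*$, using $M(\lambda)^{-*}=(M(\lambda)^*)^{-1}$. Combining these descriptions with (i) and the definitions of $\Gamma_0,\Gamma_1$ in~\eqref{g01} (so that $h\in\mul \Gamma_0$ precisely when $\binom{h}{h'}\in\mul \Gamma$ for some $h'$, and dually for $\Gamma_1$) yields the desired equalities. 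The only delicate point throughout is the single-valuedness of $\wh\gamma(\lambda)$ guaranteed by~\eqref{Green2B}, which is what allows the argument in~(i) to extract $\wh f_\lambda=0$ from an a~priori multivalued $\Gamma$; once this is in place the remaining assertions are essentially bookkeeping.
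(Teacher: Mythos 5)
Your proof is correct. Note that the paper itself gives no proof of Lemma~\ref{Weylmul} --- it only cites \cite[Lemma~4.1]{DHMS06} and \cite[Lemma~7.57]{DHMS12} --- so there is no in-paper argument to compare against; your argument (Green's identity~\eqref{Green2} at coinciding spectral parameters, forcing $f_\lambda=0$ via the injectivity~\eqref{Green2B}, with the remaining items reduced to (i) by the componentwise descriptions of $\mul\Gamma_0$, $\mul\Gamma_1$ and of $\ker(M-M^*)$) is exactly the standard one used in those references, and all the bookkeeping in (ii)--(v) checks out against the paper's conventions for $T^*$, $T_1+T_2$ and~\eqref{g01}.
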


If $\Gamma$ itself is single-valued, then the Weyl family $M$ is an
operator valued function, i.e. $\mul (M(\lambda)=0$, in the class
$\cR^s(\cH)$, see \cite[Proposition~4.5]{DHMS06}. Moreover, $\ker
\IM (M(\lambda))=\{0\}$ and $\ker \IM (M(\lambda)^{-1})=\{0\}$, in
particular, $\ker M(\lambda)=0$. Recall that when $\Gamma$ is
single-valued $M(\lambda)$ can equivalently be defined by the
equality~\eqref{eq:Mgen}. Hence, if $h\in\cH$ is given and $h\in
\Gamma_0(\wh\sN_\lambda(\ZA_*))$, then $\gamma(\lambda)h$ solves a
boundary eigenvalue problem, i.e., $\gamma(\lambda)h\in \ker
(\ZA^*-\lambda)$ and $\Gamma_0\wh\gamma(\lambda)h=h$, while
$\Gamma_1\wh\gamma(\lambda)h=M(\lambda)h$.
Also for an operator valued $M(\cdot)$ the identity~\eqref{Green2}
can be rewritten in the form
\begin{equation}\label{Green3}
 (\lambda-\bar\mu)(\gamma(\lambda)h,\gamma(\mu)k)_\sH
 =(M(\lambda)h,k)_\cH - (h,M(\mu)k)_\cH,
\end{equation}
where $h\in\dom M(\lambda)$ and $k\in \dom M(\mu)$,
$\lambda,\mu\in\cmr$. This is an analog of~\eqref{Green3U} for an
isometric boundary triple.

Let $\Gamma$ be an isometric relation and let $A_0=\ker \Gamma_0$.
Then $A_0$ is a symmetric, not necessarily closed, relation and one
can write for every $\lambda\in\cmr$,
\[
 A_0=\left\{\begin{pmatrix}(A_0-\lambda)^{-1}h\\ h+\lambda(A_0-\lambda)^{-1}h\end{pmatrix}:\,
 h\in \ran(A_0-\lambda)  \right\}.
\]
The linear mapping
\begin{equation}
\label{Hlambda}
 H(\lambda): h\to \left\{\begin{pmatrix}(A_0-\lambda)^{-1}h\\ h+\lambda(A_0-\lambda)^{-1}h\end{pmatrix}\right\}
\end{equation}
from $\ran(A_0-\lambda)$ onto $A_0$ is clearly bounded with bounded
inverse.

\begin{lemma}\label{cor:GH}
Let $\{\cH,\Gamma\}$ be an isometric boundary pair and let $A_0=\ker \Gamma_0$.
Then the following assertions hold:
\begin{enumerate}
\def\labelenumi{\textit{(\roman{enumi})}}

\item $\Gamma_1H(\lambda)$ is closable for one (equivalently for all) $\lambda\in\cmr$ if and only if $\Gamma_1\uphar
A_0$ is closable;

\item $\Gamma_1H(\lambda)$ is closed for one (equivalently for all) $\lambda\in\cmr$ if and only if $\Gamma_1\uphar
A_0$ is closed;

\item $\Gamma_1H(\lambda)$ is bounded operator for one (equivalently for all) $\lambda\in\cmr$ if and only if $\Gamma_1\uphar
A_0$ is a bounded operator;

\item $\dom\Gamma_1H(\lambda)$ is dense in $\sH$ for some (equivalently for all) $\lambda,\bar\lambda\in\cmr$ if and only if $A_0$ is essentially
selfadjoint;

\item $\dom\Gamma_1H(\lambda)=\sH$ for some (equivalently for all) $\lambda,\bar\lambda\in\cmr$ if and only if $A_0$ is selfadjoint.
\end{enumerate}
\end{lemma}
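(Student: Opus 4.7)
The plan is to reduce all five statements to properties of the $\lambda$-independent restriction $\Gamma_1\uphar A_0$, exploiting the fact that the map $H(\lambda)$ in~\eqref{Hlambda} is a bounded bijection from $\ran(A_0-\lambda)\subset\sH$ onto $A_0\subset\sH^2$ with bounded inverse. The verification of this bicontinuous bijection is the essential input; everything else is a routine transfer of properties along a topological isomorphism.

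First, applying Green's identity~\eqref{Green1} to $\wh f,\wh g\in A_0=\ker\Gamma_0$ (where the $\cH$-components vanish) shows that $A_0$ is a symmetric linear relation in $\sH$. Therefore $\ker(A_0-\lambda)=\{0\}$ and $\|(A_0-\lambda)^{-1}\|\le 1/|\IM\lambda|$ on $\ran(A_0-\lambda)$ for $\lambda\in\cmr$, so $H(\lambda)$ is a bounded operator from $\ran(A_0-\lambda)\subset\sH$ into $\sH^2$ with range $A_0$, while its inverse $(f,f')\mapsto f'-\lambda f$ is clearly bounded from $A_0$ (with the $\sH^2$-graph topology) into $\sH$. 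Since $A_0\subset\dom\Gamma=\dom\Gamma_1$ by~\eqref{g01}, we obtain
$$
\dom\Gamma_1 H(\lambda)=\ran(A_0-\lambda),
$$
together with the factorization $\Gamma_1 H(\lambda)=(\Gamma_1\uphar A_0)\circ H(\lambda)$.

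For items (i)--(iii), this factorization and the bicontinuity of $H(\lambda)$ immediately give that $\Gamma_1 H(\lambda)$ is closable (respectively closed, bounded) iff $\Gamma_1\uphar A_0$ is closable (respectively closed, bounded) as an operator from $A_0$ (with the induced $\sH^2$-topology) into $\cH$. For example, for closability: if $h_n\to 0$ in $\sH$ with $\Gamma_1 H(\lambda)h_n\to k$, then $H(\lambda)h_n\to 0$ in $\sH^2$, so closability of $\Gamma_1\uphar A_0$ yields $k=0$; the reverse implication applies the same argument through $H(\lambda)^{-1}$. Since $\Gamma_1\uphar A_0$ is intrinsic to the pair $(A_0,\Gamma_1)$ and does not involve $\lambda$, the equivalence ``for one $\lambda\,\Llr\,$for all $\lambda$'' in (i)--(iii) follows automatically.

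For (iv) and (v), the identity $\dom\Gamma_1 H(\lambda)=\ran(A_0-\lambda)$ reduces the statements to classical facts about the symmetric relation $A_0$: the closure of $\ran(A_0-\lambda)$ coincides with $\ran(\overline{A_0}-\lambda)$, and the deficiency numbers $n_\pm(\overline{A_0})=\dim\ran(\overline{A_0}-\lambda)^\perp$ are constant on each half-plane $\dC_\pm$. Hence $\ran(A_0-\lambda)$ and $\ran(A_0-\bar\lambda)$ are both dense in $\sH$ for some (equivalently, for all) $\lambda\in\cmr$ iff $n_\pm(\overline{A_0})=0$, i.e.\ iff $A_0$ is essentially selfadjoint, proving (iv). For (v), $\ran(A_0-\lambda)=\ran(A_0-\bar\lambda)=\sH$ forces $A_0-\lambda$ and $A_0-\bar\lambda$ to be everywhere-defined bijections with bounded inverses, so $A_0$ is closed and $\lambda,\bar\lambda\in\rho(A_0)$; combined with (iv) this means $A_0$ is selfadjoint. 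The only mildly delicate point throughout is interpreting $(A_0-\lambda)^{-1}$ in the relation sense when $A_0$ has a nontrivial multivalued part, but once the bicontinuous bijection property of $H(\lambda)$ is established at the level of relations, the arguments above transfer verbatim.
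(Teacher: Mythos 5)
Your proof is correct and follows essentially the same route as the paper: the paper also reduces everything to the factorization $\Gamma_1\uphar A_0=(\Gamma_1H(\lambda))H(\lambda)^{-1}$ together with the fact that $H(\lambda):\ran(A_0-\lambda)\to A_0$ is bounded with bounded inverse and $\dom\Gamma_1H(\lambda)=\ran(A_0-\lambda)$. You merely spell out the details (symmetry of $A_0$, the range criteria for (essential) selfadjointness in (iv)--(v)) that the paper leaves implicit.
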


\begin{proof}
By definition $A_0=\ker\Gamma_0 \subset \dom \Gamma_1$, so that
$\dom\Gamma_1H(\lambda)=\ran(A_0-\lambda)$, $\lambda\in\cmr$. Since
$H(\lambda):\ran(A_0-\lambda)\to A_0$ is bounded with bounded
inverse, all the statements are easily obtained by means of the
equality $\Gamma_1\uphar A_0=(\Gamma_1H(\lambda))H(\lambda)^{-1}$.
\end{proof}

Similar facts can be stated for the restriction $\Gamma_0\uphar A_1$,
where $A_1=\ker \Gamma_1$.

The inclusion~\eqref{Iso1} in the next proposition was
stated for a single-valued $\Gamma$ with dense range in
\cite[Proposition~7.59]{DHMS12}; here a direct proof
for this inclusion is given in the general case.

\begin{lemma}\label{isoHlem}
Let $\{\cH,\Gamma\}$ be an isometric boundary pair, let
$\gamma(\lambda)$ be its $\gamma$-field, and let $H(\lambda)$ be as
defined in~\eqref{Hlambda}. Then
\begin{equation}
\label{Iso}
 \Gamma H(\lambda)\subset\binom{0}{\gamma(\bar\lambda)^*} \hplus (\{0\}\times\mul \Gamma), \quad \lambda\in\cmr,
\end{equation}
where the adjoint $\gamma(\bar\lambda)^*$ of $\gamma(\bar\lambda)$ is in general a linear relation.
In particular,
\begin{equation}
\label{Iso1}
\Gamma_1 H(\lambda)\subset
 \gamma(\bar\lambda)^* \hplus (\{0\}\times\mul \Gamma_1), \quad \lambda\in\cmr,
\end{equation}
and if, in addition, $\mul \Gamma_1=\{0\}$, then
\begin{equation}
\label{Iso1B}
 \Gamma_1 H(\lambda)\subset \gamma(\bar\lambda)^*, \quad \lambda\in\cmr.
\end{equation}
Furthermore, the following statements hold:
\begin{enumerate}\def\labelenumi{\textit{(\roman{enumi})}}
\item if $\gamma(\bar\lambda)$ is densely defined for some
$\bar\lambda\in\cmr$,
then $\gamma(\bar\lambda)^*$ is a closed operator and if, in
addition, $\mul \Gamma_1=\{0\}$, then $\Gamma_1 H(\lambda)$ is a
closable operator;
\item if $A_0=\ker\Gamma_0$ is essentially selfadjoint, then $\gamma(\bar\lambda)$ is closable for all $\lambda\in\cmr$;
\item  if $A_0=\ker\Gamma_0$ is selfadjoint, then $\dom \gamma(\bar\lambda)^*=\sH$ and $\gamma(\bar\lambda)$
is a bounded operator for all $\lambda\in\cmr$.
\end{enumerate}
\end{lemma}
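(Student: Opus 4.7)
The plan is to prove the main inclusion~\eqref{Iso} directly from the abstract Green's identity~\eqref{Green1}, to read off~\eqref{Iso1} and~\eqref{Iso1B} by projecting onto the second coordinate in $\cH^2$, and then to deduce items (i)--(iii) from these inclusions together with Lemma~\ref{cor:GH} and standard facts about adjoints of closed linear relations.

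For~\eqref{Iso}, I fix $\lambda\in\cmr$ and $\phi\in\ran(A_0-\lambda)$ and set $\wh f:=H(\lambda)\phi\in A_0=\ker\Gamma_0$, so $\wh f=\binom{f}{f'}$ with $f'-\lambda f=\phi$. Because $\wh f\in\ker\Gamma_0$ there is some representative $\wh\eta_0=\binom{0}{\tilde\eta_1}\in\Gamma\wh f$ with vanishing first coordinate, and every $\wh\eta\in\Gamma\wh f$ then has the form $\wh\eta=\wh\eta_0+\wh\nu$ with $\wh\nu\in\mul\Gamma$. Pairing $\wh\eta_0$ via Green's identity~\eqref{Green1} with an arbitrary $\wh g_{\bar\lambda}\in\wh\sN_{\bar\lambda}(\ZA_*)$ and $\wh\kappa=\binom{\kappa_0}{\kappa_1}\in\Gamma\wh g_{\bar\lambda}$ yields
\[
 (\phi,g_{\bar\lambda}) \;=\; (f',g_{\bar\lambda})-(f,\bar\lambda g_{\bar\lambda}) \;=\; (\tilde\eta_1,\kappa_0).
\]
By~\eqref{Green2B} the $\gamma$-field is single-valued with $g_{\bar\lambda}=\gamma(\bar\lambda)\kappa_0$, and as the pair $(\wh g_{\bar\lambda},\wh\kappa)$ ranges over $\Gamma\uphar\wh\sN_{\bar\lambda}(\ZA_*)$ the first coordinate $\kappa_0$ exhausts $\Gamma_0(\wh\sN_{\bar\lambda}(\ZA_*))=\dom\gamma(\bar\lambda)$. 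Therefore $(\phi,\gamma(\bar\lambda)\kappa_0)=(\tilde\eta_1,\kappa_0)$ for every $\kappa_0\in\dom\gamma(\bar\lambda)$, which by the definition of the adjoint of a relation is exactly $\{\phi,\tilde\eta_1\}\in\gamma(\bar\lambda)^*$. Consequently $\{\phi,\wh\eta_0\}$ belongs to the relation $\binom{0}{\gamma(\bar\lambda)^*}$, and $\{\phi,\wh\eta\}=\{\phi,\wh\eta_0\}+\{0,\wh\nu\}$ lies in the componentwise sum on the right-hand side of~\eqref{Iso}.

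Projecting~\eqref{Iso} onto the second coordinate in $\cH^2$ yields~\eqref{Iso1}, because the second-coordinate projection of $\mul\Gamma$ is exactly $\mul\Gamma_1$; the additional assumption $\mul\Gamma_1=\{0\}$ then removes that term and produces~\eqref{Iso1B}. Item~(i) is immediate from~\eqref{Iso1B}: density of $\dom\gamma(\bar\lambda)$ turns $\gamma(\bar\lambda)^*$ into a closed operator, and $\Gamma_1H(\lambda)\subset\gamma(\bar\lambda)^*$ exhibits $\Gamma_1H(\lambda)$ as a closable operator. For (ii) and (iii), \eqref{Iso1} gives $\ran(A_0-\lambda)=\dom\Gamma_1H(\lambda)\subset\dom\gamma(\bar\lambda)^*$. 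Essential selfadjointness of $A_0$ forces $\ran(A_0-\lambda)$ to be dense in $\sH$ for all $\lambda\in\cmr$ by Lemma~\ref{cor:GH}(iv), hence $\gamma(\bar\lambda)^*$ is densely defined and $\gamma(\bar\lambda)$ is closable, proving (ii); selfadjointness of $A_0$ makes this domain equal to all of $\sH$ by Lemma~\ref{cor:GH}(v), and the closed graph theorem applied to the single-valued operator part of the closed relation $\gamma(\bar\lambda)^*$ gives its boundedness, whence $\gamma(\bar\lambda)\subset\gamma(\bar\lambda)^{**}$ is also bounded on its domain, proving (iii).

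The main obstacle is the careful bookkeeping of the possible multivaluedness of $\Gamma$: the representative $\wh\eta_0\in\Gamma\wh f$ with vanishing first coordinate is unique only modulo $\mul\Gamma$, and this non-uniqueness is exactly the source of the $\{0\}\times\mul\Gamma$ summand on the right-hand side of~\eqref{Iso}. A parallel subtlety in (iii) is that without a density assumption on $\dom\gamma(\bar\lambda)$ in $\cH$, the adjoint $\gamma(\bar\lambda)^*$ may itself be multivalued; the closed graph theorem must therefore be applied to its single-valued operator part, and the boundedness of $\gamma(\bar\lambda)$ is then recovered from the inclusion $\gamma(\bar\lambda)\subset\gamma(\bar\lambda)^{**}$.
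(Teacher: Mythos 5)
Your proof is correct and follows essentially the same route as the paper: the key step is the same Green's-identity computation pairing $H(\lambda)\phi$ (as an element of $A_0=\ker\Gamma_0$) against elements of $\wh\sN_{\bar\lambda}(\ZA_*)$ to identify $\{\phi,\tilde\eta_1\}\in\gamma(\bar\lambda)^*$, with the $\{0\}\times\mul\Gamma$ summand accounting for the non-uniqueness of the $\Gamma$-image, and items (i)--(iii) are then read off from Lemma~\ref{cor:GH} exactly as in the paper. The only cosmetic difference is that you fix the canonical representative $\wh\eta_0$ at the outset rather than performing the decomposition $\{k_\lambda,\{k,k'\}\}=\{k_\lambda,\{0,k''\}\}+\{0,\{k,k'-k''\}\}$ at the end, and in (iii) you spell out the double-adjoint/operator-part argument that the paper leaves implicit.
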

\begin{proof}
Let $h\in \dom \gamma(\bar\lambda)=\dom M(\bar\lambda)$ and
$k_\lambda\in \ran(A_0-\lambda)$. Then $\{\wh
\gamma(\bar\lambda)h,\{h,h'\}\}\in \Gamma$ and, since
$H(\lambda)k_\lambda\in A_0=\ker \Gamma_0$, one has
$\{H(\lambda)k_\lambda,\{0,k''\}\}\in\Gamma$ for some $k''\in\cH$.
On the other hand, $\{k_\lambda,k'\}\in\Gamma_1 H(\lambda)$ means
that $\{k_\lambda,\{k,k'\}\}\in\Gamma H(\lambda)$ for some $k\in\cH$
which combined with $\{H(\lambda)k_\lambda,\{0,k''\}\}\in\Gamma$
implies that $\{\{0,0\},\{k,k'-k''\}\}\in\Gamma$.

Now applying Green's identity~\eqref{Green1} shows that
\[
 (\bar\lambda\gamma(\bar\lambda)h,(A_0-\lambda)^{-1}k_\lambda)
 -(\gamma(\bar\lambda)h,(I+\lambda(A_0-\lambda)^{-1})k_\lambda)
 =0-(h,k'')_\cH.
\]
This identity can be rewritten equivalently in the form
\[
 (\gamma(\bar\lambda)h,k_\lambda)=(h,k'')_\cH
\]
for all $h\in\dom \gamma(\bar\lambda)$ and $k_\lambda\in
\ran(A_0-\lambda)$. This proves that $\{k_\lambda,k''\}\in
\gamma(\bar\lambda)^*$. Hence, if $\left\{k_\lambda,\begin{pmatrix}k\\k'\end{pmatrix}\right\}\in\Gamma
H(\lambda)$ then
\begin{equation}
\label{G1H}
 \left\{k_\lambda,\begin{pmatrix}k\\k'\end{pmatrix}\right\}=
 \left\{k_\lambda,\begin{pmatrix}0\\k''\end{pmatrix}\right\}+
 \left\{0,\begin{pmatrix}k\\k'-k''\end{pmatrix}\right\},\,\,
  \{k_\lambda,k''\}\in \gamma(\bar\lambda)^*,\,\,
  \begin{pmatrix}k\\k'-k''\end{pmatrix}\in\mul \Gamma,
\end{equation}
from which the formulas~\eqref{Iso} and~\eqref{Iso1} follow.
If $\mul \Gamma_1=\{0\}$, then $\begin{pmatrix}k\\k'-k''\end{pmatrix}\in\mul \Gamma$ implies that $k'=k''$
and therefore the above argument shows that $\{k_\lambda,k'\}\in
\gamma(\bar\lambda)^*$ for all $\{k_\lambda,k'\}\in\Gamma_1
H(\lambda)$; i.e.~\eqref{Iso1B} is satisfied.

It remains to prove the statements (i)--(iii).

(i) If $\gamma(\bar\lambda)$ is densely defined then clearly
$\gamma(\bar\lambda)^*$ is a closed operator and if $\Gamma_1$ is
single-valued then~\eqref{Iso1B} shows that $\Gamma_1 H(\lambda)$ is
closable as a restriction of $\gamma(\bar\lambda)^*$.

(ii) By Lemma~\ref{cor:GH} $A_0$ is essentially selfadjoint if and only
if $\Gamma_1 H(\lambda)$ is densely defined, in which case also
$\gamma(\bar\lambda)^*$ is densely defined, so that
$\gamma(\bar\lambda)$ is closable.

(iii) If $A_0$ is selfadjoint, then $\dom \Gamma_1 H(\lambda)=\sH$
and, therefore, also $\dom\gamma(\bar\lambda)^*=\sH$. In addition
$\gamma(\bar\lambda)$ is closable, thus $\clos\gamma(\bar\lambda)$
and $\gamma(\bar\lambda)$ are bounded operators.
\end{proof}

\begin{proposition}\label{A0graph}
Let $\ZA$ be a closed symmetric relation in the Hilbert space $\sH$
and let $\{\cH,\Gamma\}$ be an isometric boundary pair whose domain
$\ZA_*$ is dense in $\ZA^*$, let $M(\cdot)$ and $\gamma(\cdot)$ be
the corresponding Weyl function and the $\gamma$-field and, in
addition, assume that $A_0=\ker \Gamma_0$ is selfadjoint. Then:
\begin{enumerate}\def\labelenumi{\textit{(\roman{enumi})}}
\item $\ZA_*:=\dom \Gamma$ admits the decomposition $\ZA_*=A_0 \wh{+} \wh\sN_\lambda(\ZA_*)$
and $\wh\sN_\lambda(\ZA_*)$ is dense in $\wh\sN_\lambda(\ZA^*)$ for all $\lambda\in\cmr$;

\item with a fixed $\lambda\in\cmr$ the graph of $\Gamma$ admits the following representation:
\[
 \Gamma=  \Gamma A_0 \hplus \left\{\left\{ \binom{\gamma(\lambda)h}{\lambda\gamma(\lambda)h},\binom{h}{h'} \right\}:
  \, \binom{h}{h'}\in M(\lambda)\right\};
\]

\item if $\wt\Gamma:(\sH^2,J_\sH)\to(\cH^2,J_\cH)$ is an isometric extension of $\Gamma$
with the Weyl function $\wt M$ and the $\gamma$-field $\wt
\gamma(\cdot)$ such that $\wt \ZA_*:=\dom \wt\Gamma\subset \ZA^*$, then
with a fixed $\lambda\in\cmr$ the following equivalence holds:
\[
 \wt \Gamma=\Gamma \quad\Leftrightarrow\quad \wt M(\lambda)=M(\lambda).
\]
\end{enumerate}
\end{proposition}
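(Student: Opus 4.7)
The plan for part (i) is to push the standard direct sum decomposition from $A^*$ down to $A_*$. Since $A_0$ is selfadjoint, $\cmr\subset\rho(A_0)$ and the componentwise decomposition $A^*=A_0\hplus\wh\sN_\lambda(A^*)$ holds with $A_0$ and $\wh\sN_\lambda(A^*)$ closed subspaces of $\sH^2$ and associated bounded projections. Because $A_0\subset A_*$, every $\wh f\in A_*\subset A^*$ decomposes as $\wh f=\wh f_0+\wh f_\lambda$ with $\wh f_0\in A_0$ and $\wh f_\lambda=\wh f-\wh f_0\in A_*\cap\wh\sN_\lambda(A^*)=\wh\sN_\lambda(A_*)$, yielding $A_*=A_0\hplus\wh\sN_\lambda(A_*)$. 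Density of $\wh\sN_\lambda(A_*)$ in $\wh\sN_\lambda(A^*)$ then follows by applying the bounded projection along $A_0$ to the dense inclusion $A_*\subset A^*$.

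For part (ii), I would fix $\lambda\in\cmr$ and apply (i). Linearity of the relation $\Gamma$ then yields $\Gamma=(\Gamma\uphar A_0)\hplus(\Gamma\uphar\wh\sN_\lambda(A_*))$: each $\{\wh f,\wh k\}\in\Gamma$ splits in accord with $\wh f=\wh f_0+\wh f_\lambda$, and conversely such componentwise sums lie in $\Gamma$. Formula~\eqref{ggam1} rewrites the restriction $\Gamma\uphar\wh\sN_\lambda(A_*)$ in precisely the form claimed.

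The nontrivial direction of (iii) requires two ingredients. First, I would show that $\ker\wt\Gamma_0=A_0$: Green's identity for $\wt\Gamma$ implies that $\ker\wt\Gamma_0$ is a symmetric relation contained in $\wt A_*\subset A^*$; since $A_0\subset\ker\wt\Gamma_0$, taking adjoints gives $\ker\wt\Gamma_0\subset(\ker\wt\Gamma_0)^*\subset A_0^*=A_0$, hence equality. Thus (i) and (ii) apply to $\wt\Gamma$ with the same $A_0$ and $\lambda$. Since $\Gamma\subset\wt\Gamma$, one has $\gamma(\lambda)\subset\wt\gamma(\lambda)$ and $M(\lambda)\subset\wt M(\lambda)$, and the hypothesis $\wt M(\lambda)=M(\lambda)$ forces $\dom\wt\gamma(\lambda)=\dom\gamma(\lambda)$ and so $\wt\gamma(\lambda)=\gamma(\lambda)$; the defect summands in the two decompositions therefore coincide. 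For the $A_0$-summands, compare $\{\wh f_0,\{0,k_1\}\}\in\wt\Gamma\uphar A_0$ with any $\{\wh f_0,\{0,k_0\}\}\in\Gamma\uphar A_0$: linearity of $\wt\Gamma$ gives $\{0,k_1-k_0\}\in\mul\wt\Gamma\cap(\{0\}\times\cH)$, which by Lemma~\ref{Weylmul}(iii) equals $\{0\}\times\mul\wt M(\lambda)=\{0\}\times\mul M(\lambda)=\mul\Gamma\cap(\{0\}\times\cH)$, so $\{0,k_1\}\in\Gamma\wh f_0$. Hence $\wt\Gamma\uphar A_0=\Gamma\uphar A_0$ and $\wt\Gamma=\Gamma$.

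The main obstacle is the multivalued-part bookkeeping in (iii): equality of Weyl families at a single $\lambda$ leaves room in principle for a mismatch of $\mul\wt\Gamma$ versus $\mul\Gamma$, and the crucial identification that closes this gap is Lemma~\ref{Weylmul}(iii), which pins down $\mul\Gamma\cap(\{0\}\times\cH)$ as precisely $\{0\}\times\mul M(\lambda)$. A secondary but essential technical point is the reduction $\ker\wt\Gamma_0=A_0$, where selfadjointness (not merely symmetry) of $A_0$ is what makes the adjoint sandwich collapse.
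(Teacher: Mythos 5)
Your parts (i) and (ii) coincide with the paper's argument: von Neumann's formula $\ZA^*=A_0\hplus\wh\sN_\lambda(\ZA^*)$, the inclusion $A_0\subset \ZA_*$, and the bounded projection along $A_0$ give (i), and (ii) is then immediate from~\eqref{ggam1}. The reduction $\ker\wt\Gamma_0=A_0$ in (iii) is also exactly the paper's step.

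For the remainder of (iii) you diverge from the paper: it invokes the criterion that for nested relations $H_1\subset H_2$ equality is equivalent to $\dom H_1=\dom H_2$ and $\mul H_1=\mul H_2$, checking the domains via the $\gamma$-fields and the multivalued parts via Lemma~\ref{Weylmul}(i), i.e.\ $\mul\Gamma=M(\lambda)\cap M(\lambda)^*$. Your summand-by-summand matching is a legitimate alternative, and your treatment of the defect summand is complete. There is, however, a small hole in your treatment of the $A_0$-summand: you tacitly assume every element of $\wt\Gamma\uphar A_0$ has the form $\{\wh f_0,\{0,k_1\}\}$, i.e.\ that the first boundary component vanishes. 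For a genuinely multivalued boundary pair $\mul\wt\Gamma_0$ may be nontrivial (cf.\ Lemma~\ref{Weylmul}(iv)), so $\wh f_0\in\ker\wt\Gamma_0$ only guarantees that \emph{some} image pair has first component $0$; a general pair is $\{\wh f_0,\{h,h'\}\}$ with $\{h,h'-k'\}\in\mul\wt\Gamma$ for the distinguished $\{\wh f_0,\{0,k'\}\}$. Lemma~\ref{Weylmul}(iii) only identifies $\mul\Gamma\cap(\{0\}\times\cH)$ and so does not reach these elements. The fix is one line and uses the item the paper actually relies on: by Lemma~\ref{Weylmul}(i), $\mul\wt\Gamma=\wt M(\lambda)\cap\wt M(\lambda)^*=M(\lambda)\cap M(\lambda)^*=\mul\Gamma$, so the residual piece $\{0,\{h,h'-k'\}\}$ already lies in $\Gamma$ and the general element of $\wt\Gamma\uphar A_0$ is a sum of two elements of $\Gamma$. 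With that supplement your argument is complete.
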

\begin{proof}
(i) By von Neumann's formula $\ZA^*=A_0\hplus \wh\sN_\lambda(\ZA^*)$. Since $\ZA_*:=\dom\Gamma$ is dense in
$\ZA^*$ and $A_0\subset \ZA_*$, it follows that $\ZA_*=A_0\hplus \wh\sN_\lambda(\ZA_*)$ and that $\wh\sN_\lambda(\ZA_*)$
is dense in $\wh\sN_\lambda(\ZA^*)$ for every $\lambda\in\cmr$.

(ii) In view of (i) for every $\{\wh f,\wh k\}\in\Gamma$
there exist unique elements $\wh f_0\in A_0$ and $\wh f_\lambda\in\wh\sN_\lambda(\ZA_*)$,
$\lambda\in\cmr$, such that $\wh f=\wh f_0+\wh f_\lambda$.
Moreover, if $\{\wh f_\lambda,\wh h\}\in \Gamma$ then $\wh h=\{h,h'\}\in M(\lambda)$
and one can write (uniquely) $\wh f_\lambda=\wh\gamma(\lambda)h$; see~\eqref{ggam1}.
The stated representation for $\Gamma$ is now clear.

(iii) It follows from $\Gamma\subset\wt\Gamma$ that $A_0\subset \ker\wt\Gamma_0$. Since
$\ker\wt\Gamma_0$ is symmetric and $A_0$ is selfadjoint, the equality $A_0=\ker\wt\Gamma_0$ holds.
Now recall that two linear relations with $H_1\subset H_2$ are equal precisely when the equalities
$\dom H_1=\dom H_2$ and $\mul H_1=\mul H_2$ hold; see \cite{Arens}. By Lemma~\ref{Weylmul} (i)
$\mul \Gamma=M(\lambda)\cap M(\lambda)^*$. Therefore, $\wt M(\lambda)=M(\lambda)$ implies
that $\mul\wt\Gamma=\mul\Gamma$.
Moreover, we have $\dom\wt M(\lambda)=\dom M(\lambda)$
and, since $\widehat{\wt\gamma}(\lambda)$ maps
$\dom \wt M(\lambda)$ onto $\wh\sN_\lambda(\wt \ZA_*)$ and $\wh \gamma(\lambda)$ maps
$\dom M(\lambda)$ onto $\wh\sN_\lambda(\ZA_*)$, we conclude from (i) that $\dom \wt \Gamma=\dom\Gamma$.
Therefore, $\wt M(\lambda)=M(\lambda)$ implies $\wt\Gamma=\Gamma$.
The reverse implication is clear.
\end{proof}

The Weyl function of an isometric or unitary boundary pair is in
general unbounded and multivalued operator. In what follows Weyl
functions $M(\lambda)$, whose domain (or form domain) does not
dependent on $\lambda\in\cmr$ are of special interest. Here a
characterization for domain invariant Weyl families will be
established. We start with the next lemma concerning the domain
inclusion $\dom M(\lambda)\subset \dom M(\mu)$.

\begin{lemma}\label{domM}
Let $\{\cH,\Gamma\}$ be an isometric boundary pair with $\ZA_*=\dom
\Gamma$, let $M$ and $\gamma(\cdot)$ be the corresponding Weyl
family and $\gamma$-field, and let $A_0=\ker\Gamma_0$. Then for each
fixed $\lambda,\mu\in\cmr$ with $\lambda\neq \mu$ the inclusion
\begin{equation}\label{domM1}
 \dom M(\mu)\subset \dom M(\lambda),
\end{equation}
is equivalent to the inclusion
\begin{equation}\label{domM2}
 \ran \gamma(\mu)\subset \ran(A_0-\lambda).
\end{equation}
If one of these conditions is satisfied, then the $\gamma$-field
$\gamma(\cdot)$ satisfies the identity
\begin{equation}
\label{ggam3}
 \gamma(\lambda)h=[I+(\lambda-\mu)(A_0-\lambda)^{-1}]\gamma(\mu)h, \quad h\in\dom \gamma(\mu).
\end{equation}
\end{lemma}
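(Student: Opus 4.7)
\medskip
\noindent\textbf{Proof plan for Lemma \ref{domM}.}
The plan is to prove both implications simultaneously by exhibiting an explicit linear combination of $\wh\gamma(\lambda)h$ and $\wh\gamma(\mu)h$ that either lies in $A_0$ (to produce a preimage under $A_0-\lambda$) or that lies in $\wh\sN_\lambda(\ZA_*)$ with the correct boundary value $h$. The key identity behind both directions is
\[
 \wh\gamma(\mu)h \;=\; \wh f_\lambda \;-\; (\lambda-\mu)\,\wh f_0, \qquad
  \wh f_0:=\frac{1}{\lambda-\mu}\bigl(\wh f_\lambda - \wh\gamma(\mu)h\bigr),
\]
together with the crucial observation that since $A_*$ is a linear relation, any linear combination of the vectors $\wh\gamma(\lambda)h$ and $\wh\gamma(\mu)h$ again lies in $A_*$, while by linearity of the relation $\Gamma_0$ the value $0$ belongs to $\Gamma_0\bigl(\wh\gamma(\lambda)h-\wh\gamma(\mu)h\bigr)$ whenever both $\wh\gamma$-images sit above the same $h\in\cH$.

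\medskip
\noindent\emph{Direction $\Rightarrow$.} Fix $h\in\dom\gamma(\mu)$ and, assuming $h\in\dom\gamma(\lambda)$ as well, set
\[
 \wh f_0 := \frac{1}{\lambda-\mu}\bigl(\wh\gamma(\lambda)h-\wh\gamma(\mu)h\bigr)
 \;=\;
 \left(\frac{\gamma(\lambda)h-\gamma(\mu)h}{\lambda-\mu},\,
       \frac{\lambda\gamma(\lambda)h-\mu\gamma(\mu)h}{\lambda-\mu}\right).
\]
Then $\wh f_0\in \ZA_*$ as a linear combination of elements of $\ZA_*$, and since $0\in\Gamma_0\wh f_0$ by linearity of $\Gamma_0$, one has $\wh f_0\in A_0=\ker\Gamma_0$. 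A direct computation of the second component minus $\lambda$ times the first gives exactly $\gamma(\mu)h$, so $\gamma(\mu)h\in (A_0-\lambda)f_0\subset\ran(A_0-\lambda)$, proving \eqref{domM2}.

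\medskip
\noindent\emph{Direction $\Leftarrow$.} Conversely, for $h\in\dom\gamma(\mu)$ and $g:=\gamma(\mu)h\in\ran(A_0-\lambda)$, use symmetry of $A_0$ (so that $(A_0-\lambda)^{-1}$ is single-valued for $\lambda\in\cmr$) to select the unique $f_0\in\sH$ with $\wh f_0:=(f_0,\lambda f_0+g)\in A_0$. Now define
\[
 \wh f_\lambda := \wh\gamma(\mu)h + (\lambda-\mu)\wh f_0.
\]
A direct calculation shows $\wh f_\lambda=(f_\lambda,\lambda f_\lambda)$ with $f_\lambda=g+(\lambda-\mu)f_0=[I+(\lambda-\mu)(A_0-\lambda)^{-1}]\gamma(\mu)h$, so $\wh f_\lambda\in\wh\sN_\lambda(\ZA_*)$. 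Since $\wh f_0\in A_0$ contributes $0$ to $\Gamma_0$, linearity gives $h\in\Gamma_0\wh f_\lambda$, i.e.\ $h\in\dom M(\lambda)$. The single-valuedness of $\gamma(\lambda)$ recorded after \eqref{Green2B} then forces $\gamma(\lambda)h=f_\lambda$, which is precisely the identity \eqref{ggam3}.

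\medskip
The main obstacle, and the point requiring some care, is the multivalued nature of the objects involved: $\Gamma$ may be multivalued so $\Gamma_0$ is only a relation; $A_0$ may have nontrivial multivalued part. The simultaneous bookkeeping is handled by working throughout with the graph picture (pairs $\wh f=(f,f')$) and using the linearity of $\Gamma$ as a linear relation, together with the fact that $A_0$ symmetric forces $(A_0-\lambda)^{-1}$ to be an \emph{operator} on $\ran(A_0-\lambda)$ for $\lambda\in\cmr$, which is what makes the choice of $f_0$ in the reverse direction unambiguous.
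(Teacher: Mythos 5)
Your proof is correct and follows essentially the same route as the paper's: both directions rest on forming linear combinations of graph elements of $\Gamma$ to land in $A_0=\ker\Gamma_0$ (forward) or in $\wh\sN_\lambda(\ZA_*)$ with boundary value $h$ (reverse), with the symmetry of $A_0$ supplying single-valuedness of $(A_0-\lambda)^{-1}$ for \eqref{ggam3}. The only cosmetic differences are that you normalize by $\lambda-\mu$ at the outset and extract \eqref{ggam3} from the reverse implication rather than from the forward one.
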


\begin{proof}
By Definitions~\ref{Weylfam} $\dom M(\lambda)=\dom
\gamma(\lambda)=\Gamma_0(\wh{\sN}_\lambda(\ZA_*))$ and, moreover,
$\ran \gamma(\lambda)=\sN_\lambda(\ZA_*)$, $\lambda\in\cmr$.

Now assume that~\eqref{domM1} holds and let $h\in\dom
M(\mu)\subset\dom M(\lambda)$. It follows from~\eqref{ghatfield} that
there exist $h',h''\in\cH$ such that
\[
\left\{\binom{\gamma(\lambda)h}{\lambda\gamma(\lambda)h},\binom{h}{h'}\right\}
     \in\Gamma\uphar \wh\sN_\lambda(T)\subset \Gamma, \quad
\left\{\binom{\gamma(\mu)h}{\mu\gamma(\mu)h},\binom{h}{h''}\right\}
     \in\Gamma\uphar \wh\sN_\mu(T)\subset \Gamma.
\]
This implies
\[
 \left\{\binom{(\gamma(\lambda)-\gamma(\mu))h}{(\lambda\gamma(\lambda)-\mu\gamma(\mu))h},
 \binom{0}{h'-h''}\right\} \in\Gamma,
\]
and hence
\begin{equation}
\label{ii1}
 \binom{(\gamma(\lambda)-\gamma(\mu))h}{(\lambda\gamma(\lambda)-\mu\gamma(\mu))h}\in A_0
 \,\text{ and }\,
 \binom{(\gamma(\lambda)-\gamma(\mu))h}{(\lambda-\mu)\gamma(\mu)h} \in A_0-\lambda.
\end{equation}
Therefore, $\gamma(\mu)h \in \ran (A_0-\lambda)$ for every $h \in\dom
M(\mu)$ and thus~\eqref{domM2} follows.

Conversely, assume that~\eqref{domM2} holds and let $h \in \dom
M(\mu)=\dom \gamma(\mu)$. This implies that
\begin{equation}
\label{kaks}
 \left\{\binom{\gamma(\mu)h}{\mu \gamma(\mu)h}, \binom{h}{h'}\right\} \in \Gamma
\end{equation}
for some $h'\in \cH$. Moreover, since $\gamma(\mu)h \in \ran
(A_0-\lambda)$, there exists an element $k \in \sH$ such that $\{k,
\gamma(\mu)h+\lambda k\} \in A_0=\ker\Gamma_0$. Consequently, there exists
$\varphi \in \cH$ such that
\begin{equation}
\label{uks}
 \left\{\binom{(\lambda-\mu) k}{(\lambda-\mu)\gamma(\mu)h+\lambda (\lambda-\mu) k},
 \binom{0}{\varphi} \right\} \in \Gamma.
\end{equation}
It follows from~\eqref{kaks} and~\eqref{uks} that
\[
 \left\{\binom{\gamma(\mu)h+(\lambda-\mu) k}{\lambda(\gamma(\mu)h+(\lambda-\mu) k)},
\binom{h}{h'+\varphi}\right\} \in \Gamma.
\]
Therefore, $h \in \Gamma_0(\wh \sN_\lambda(\ZA_*))=\dom
M(\lambda)$. This proves the inclusion~\eqref{domM1}.

Finally, observe that the assumption~\eqref{domM1} implies~\eqref{ii1}.
Since $A_0$ is symmetric, $(A_0-\lambda)^{-1}$ is a bounded operator
on $\ran (A_0-\lambda)$ and, thus,~\eqref{ii1} leads to~\eqref{ggam3}.
\end{proof}

The next result characterizes domain invariance of the Weyl family
corresponding to an arbitrary isometric boundary pair
$\{\cH,\Gamma\}$. In the special case of a unitary boundary
pair $\{\cH,\Gamma\}$ 
items (i) and
(iii) contain \cite[Proposition~4.11, Corollary~4.12]{DHMS06}.

\begin{proposition}\label{domMchar}
Let the assumptions and notations be as in Lemma~\ref{domM}. Then
the following statements hold:
\begin{enumerate}
\def\labelenumi{\textit{(\roman{enumi})}}
\item $\dom M(\lambda)$ is independent from
      $\lambda \in \dC_+$ (resp. from $\lambda \in\dC_-$) if and only if
\[
 \sN_\mu(\ZA_*) \subset \ran (A_0-\lambda) \quad \text{for all } \lambda, \mu \in \dC_+
      \quad (\text{resp. for all } \lambda, \mu \in \dC_-), \quad \lambda \ne
      \mu,
\]
in this case the $\gamma$-field $\gamma(\cdot)$ satisfies
\[
 \gamma(\lambda)=[I+(\lambda-\mu)(A_0-\lambda)^{-1}]\gamma(\mu),
 \quad
 \lambda,\mu\in\dC_+ (\dC_-);
\]

\item if $A_0$ is selfadjoint, then $\dom M(\lambda)$ does not dependent on $\lambda \in
\cmr$;

\item if $\dom M(\lambda)$ does not dependent on $\lambda \in \cmr$,
then $A_0$ is essentially selfadjoint.
\end{enumerate}
\end{proposition}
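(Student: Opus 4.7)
The plan is to reduce parts (i) and (ii) directly to Lemma~\ref{domM}, and to prove (iii) by combining the conclusion of Lemma~\ref{domM} (now applied throughout $\cmr$) with Lemma~\ref{cor:GH}(iv); the main obstacle will be a density step inside the argument for (iii).

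For (i), fix $\lambda_0\in\dC_+$ and apply Lemma~\ref{domM} in both directions: for $\lambda,\mu\in\dC_+$ with $\lambda\neq\mu$, the inclusion $\dom M(\mu)\subset \dom M(\lambda)$ is equivalent to $\ran\gamma(\mu)=\sN_\mu(\ZA_*)\subset \ran(A_0-\lambda)$, and swapping $\lambda$ and $\mu$ gives the reverse inclusion. Hence $\dom M(\lambda)$ is constant on $\dC_+$ if and only if $\sN_\mu(\ZA_*)\subset \ran(A_0-\lambda)$ for all pairs $\lambda,\mu\in\dC_+$ with $\lambda\neq\mu$; the stated $\gamma$-field identity is precisely~\eqref{ggam3}, and the half-plane $\dC_-$ is handled identically. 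For (ii), when $A_0$ is selfadjoint $\ran(A_0-\lambda)=\sH$ for every $\lambda\in\cmr$, so the condition in (i) is trivially satisfied for all pairs $\lambda,\mu\in\cmr$ with $\lambda\neq\mu$ and invariance holds throughout $\cmr$.

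For (iii), Lemma~\ref{cor:GH}(iv) reduces essential selfadjointness of $A_0$ to the claim that $\ran(A_0-\lambda)$ is dense in $\sH$ for some $\lambda\in\dC_+$ and some $\lambda\in\dC_-$. Under the hypothesis of full invariance of $\dom M(\cdot)$ on $\cmr$, Lemma~\ref{domM} applies to every pair $\lambda,\mu\in\cmr$ with $\lambda\neq\mu$; taking $\mu=\bar\lambda$ in particular yields $\sN_{\bar\lambda}(\ZA_*)\subset \ran(A_0-\lambda)$. Now fix $\lambda\in\dC_+$ and let $g\in\sH$ satisfy $g\perp \ran(A_0-\lambda)$. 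Then $g\in \ker(A_0^*-\bar\lambda)\subset \sN_{\bar\lambda}(A^*)$ (since $A\subset A_0$ forces $A_0^*\subset A^*$), and the previous inclusion forces $g\perp \sN_{\bar\lambda}(\ZA_*)$. If $\sN_{\bar\lambda}(\ZA_*)$ is dense in $\sN_{\bar\lambda}(A^*)$, then $g\perp g$, so $g=0$; running the same argument with $\bar\lambda\in\dC_-$ then delivers density of $\ran(A_0-\lambda)$ in both half-planes, and Lemma~\ref{cor:GH}(iv) completes the proof.

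The main obstacle is thus to show that $\sN_\mu(\ZA_*)$ is dense in $\sN_\mu(A^*)$ for $\mu\in\cmr$: density of $\ZA_*$ in $A^*$ alone does not transfer to the defect eigenspaces, so the domain invariance of $M(\cdot)$ has to be used essentially. For single-valued $\Gamma$ the common domain $\dom M(\mu)$ is dense in $\cH$ (since $M(\mu)$ is maximal dissipative), so $\gamma(\mu)$ is densely defined and closable by Lemma~\ref{isoHlem}(i); combined with the resolvent-like identity~\eqref{ggam3} and the von Neumann type decomposition $A^*=\overline{A}\hplus \wh\sN_i(A^*)\hplus \wh\sN_{-i}(A^*)$, this should allow one to approximate any $\wh f\in \wh\sN_\mu(A^*)$ by elements of $\wh\sN_\mu(\ZA_*)$ via Green's identity~\eqref{Green1} applied to a suitable sequence $\wh f_n\in \ZA_*$, which would deliver the required density and close the argument for (iii).
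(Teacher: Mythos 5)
Parts (i) and (ii) of your proposal are correct and coincide with the paper's treatment: both are read off directly from Lemma~\ref{domM}, exactly as you do.

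Part (iii) contains the genuine gap, and you have in effect flagged it yourself without closing it. The reduction via Lemma~\ref{cor:GH}(iv) to the density of $\ran(A_0-\lambda)$ is fine, as is the observation that any $g\perp\ran(A_0-\lambda)$ satisfies $g\perp\sN_{\bar\lambda}(\ZA_*)$. But to conclude $g=0$ you need two further facts. First, that $g\in\sN_{\bar\lambda}(\ZA^*)$: you justify this by ``$A\subset A_0$'', an inclusion which holds for \emph{unitary} boundary pairs (where $\ker\Gamma=(\dom\Gamma)^{[\perp]}=\ZA$) but is not part of the hypotheses of Lemma~\ref{domM}, which only assumes $\Gamma$ isometric; the paper explicitly notes in Section~\ref{sec3.3} that $\ZA=(\ZA_*)^*$ need not even belong to $\dom\Gamma$. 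Second, and crucially, that $\sN_{\bar\lambda}(\ZA_*)$ is dense in $\sN_{\bar\lambda}(\ZA^*)$. This is the real content of the statement, and your sketch for it does not go through: the claim that $\dom M(\mu)$ is dense ``since $M(\mu)$ is maximal dissipative'' is not available here (maximality of the values of the Weyl family is a consequence of unitarity, not of isometry), and the proposed approximation of $\wh f\in\wh\sN_\mu(\ZA^*)$ by elements of $\wh\sN_\mu(\ZA_*)$ would require projecting a sequence $\wh f_n\in\ZA_*$ onto $\wh\sN_\mu(\ZA_*)$ along $A_0$, i.e.\ the decomposition $\ZA_*=A_0\hplus\wh\sN_\mu(\ZA_*)$, which by Theorem~\ref{prop:C6} is equivalent to the selfadjointness of $A_0$ — essentially the conclusion you are after. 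Indeed, in the paper the density of $\wh\sN_\lambda(\ZA_*)$ in $\wh\sN_\lambda(\ZA^*)$ is \emph{derived from} selfadjointness of $A_0$ (Proposition~\ref{A0graph}(i)), not used to prove it. The paper's own proof of (iii) does not attempt this route at all: it invokes the argument of \cite[Corollary~4.12]{DHMS06}, so a self-contained replacement would need a genuinely different idea rather than the density claim you leave open.
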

\begin{proof}
The assertions (i) and (ii) follow directly from Lemma~\ref{domM}.

To see (iii) one can use the same argument that is presented in
\cite[Corollary~4.12]{DHMS06}.
\end{proof}

\subsection{Some transforms of boundary triples}\label{sec3.4}

In this subsection a specific transform of isometric boundary
triples is treated. In what follows such transforms are used
repeatedly and, in fact, they appear also in concrete boundary value
problems in ODE and PDE settings. To formulate a general result in
the abstract setting consider in the Kre\u{\i}n space
$(\cH^2,J_\cH)$ the transformation operator $V$ whose action is
determined by the triangular operator
\begin{equation}\label{Vtrans}
 V=\begin{pmatrix} G^{-1} & 0 \\ EG^{-1} & G^*\end{pmatrix}, \quad E\subset E^*,\,\cdom E=\cdom G=\cran G=\cH,\,  \ker G=\{0\}.
\end{equation}
By assumptions on $G$ one has $\ker G^*=\mul G^*=\{0\}$, so that the adjoint $G^*$ is an injective operator in $\cH$.
To keep a wider generality, $G$ is not assumed to be a closed operator, while in applications
that will often be the case. In particular, it is possible that $G^*$ is not densely defined and also its range need not be dense.
Since $E$ is a densely defined symmetric operator, it is closable and its closure $\overline{E}\subset E^*$
is also symmetric.
With the assumptions on $V$ in~\eqref{Vtrans} a direct calculation shows that
\[
 (J_\cH Vf,Vg)_{\cH^{2}}=(J_\cH f,g)_{\cH^{2}}, \quad f,g\in \dom V.
\]
Hence, $V$ is an isometric operator in the Kre\u{\i}n space $(\cH^2,J_\cH)$. Moreover, $V$ is injective.
These observations lead to the following (unbounded) extension of \cite[Proposition~3.18]{DHMS09}.

\begin{lemma}\label{isomtrans}
Let $\{\cH,\Gamma_0,\Gamma_1\}$ be an isometric boundary triple for
$\ZA^*$ such that $\ker \Gamma=A$, let $\gamma(\lambda)$ and
$M(\lambda)$ be the corresponding $\gamma$-field and the Weyl
function, and let $V$ be as defined in~\eqref{Vtrans}. Then $V$ is
isometric in the Kre\u{\i}n space $(\cH^2,J_\cH)$ and moreover:
\begin{enumerate}
\def\labelenumi{\textit{(\roman{enumi})}}

\item the transform $\wt \Gamma=V\circ\Gamma$
\begin{equation}\label{isom-transf}
 \begin{pmatrix} \wt\Gamma_0\wh f \\ \wt\Gamma_1\wh f \end{pmatrix}
 =\begin{pmatrix} G^{-1} \Gamma_0\wh f \\ EG^{-1}\Gamma_0\wh f+G^*\Gamma_1\wh f \end{pmatrix},\quad
 \wh f\in\dom \Gamma,
\end{equation}
defines an isometric boundary triple with domain $\wt \ZA_*:=\dom
\wt \Gamma$ and kernel $\ker\wt\Gamma=A$;

\item the $\gamma$-field and the Weyl function of $\wt\Gamma$ are in general unbounded nondensely defined operators
given by
\[
 \wt\gamma(\lambda)k=\gamma(\lambda)Gk,
 \quad \wt M(\lambda)k=E k+G^*M(\lambda)Gk, \quad k\in \dom \wt M(\lambda),
  \quad \lambda\in\cmr.
\]
\end{enumerate}
\end{lemma}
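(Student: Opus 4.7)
The plan is to reduce the claim to a Kre\u{\i}n-space isometry property of the triangular operator $V$ and then pass to the composition $\tilde\Gamma := V \circ \Gamma$. First I would verify that $V$ is injective and $J_\cH$-isometric on $\dom V \subset \cH^2$. Once that is established, since $\Gamma$ is by hypothesis a single-valued $(J_\sH,J_\cH)$-isometric operator, the composition $\tilde\Gamma = V\circ\Gamma$ is automatically a single-valued $(J_\sH,J_\cH)$-isometric operator, and unpacking it via the matrix form of $V$ immediately yields the component formulas~\eqref{isom-transf}.

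The isometry of $V$ is a direct computation. For $\wh h = \binom{h_0}{h_1}, \wh k = \binom{k_0}{k_1} \in \dom V$, membership means $h_0,k_0 \in \ran G$ with $G^{-1}h_0, G^{-1}k_0 \in \dom E$ and $h_1,k_1 \in \dom G^*$. Expanding
\[
 [V\wh h, V\wh k]_{\cH^2} = (J_\cH V\wh h, V\wh k)_{\cH^2}
\]
produces four scalar products; the two involving $E$ cancel by the symmetry $E \subset E^*$, while the remaining two combine via the adjoint identities $(G^* h_1, G^{-1} k_0) = (h_1, G G^{-1} k_0) = (h_1, k_0)$ and $(G^{-1} h_0, G^* k_1) = (h_0, k_1)$ to reproduce $[\wh h, \wh k]_{\cH^2}$. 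Injectivity of $V$ follows because $V\wh h = 0$ forces $G^{-1}h_0 = 0$ and hence $h_0 = G G^{-1} h_0 = 0$ by $\ker G = \{0\}$, and then $G^* h_1 = 0$ yields $h_1 = 0$ via $\ker G^* = (\cran G)^\perp = \{0\}$.

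With these in hand, part (i) reduces to the kernel identity and the density of $\tilde A_* := \dom\tilde\Gamma$ in $A^*$. The kernel identity is immediate: $\tilde\Gamma \wh f = 0$ iff $V(\Gamma \wh f) = 0$ iff $\Gamma \wh f = 0$ iff $\wh f \in \ker\Gamma = A$. For part (ii), fix $\lambda \in \cmr$ and $\wh f_\lambda \in \wh\sN_\lambda(\tilde A_*)$ and set $k := \tilde\Gamma_0 \wh f_\lambda$. By~\eqref{isom-transf} this means $G^{-1}\Gamma_0 \wh f_\lambda = k$, hence $\Gamma_0 \wh f_\lambda = G k$. Substituting this into the defining identities $\gamma(\lambda)\Gamma_0\wh f_\lambda = f_\lambda$ and $M(\lambda)\Gamma_0\wh f_\lambda = \Gamma_1\wh f_\lambda$ from~\eqref{0.2} and then applying the second component of~\eqref{isom-transf} to $\tilde\Gamma_1\wh f_\lambda$ yields $\tilde\gamma(\lambda)k = \gamma(\lambda) G k$ and $\tilde M(\lambda) k = E k + G^* M(\lambda) G k$ respectively, as claimed; one also reads off $\dom\tilde M(\lambda) = \{k\in\dom E : Gk \in \dom M(\lambda),\ M(\lambda) Gk \in \dom G^*\}$, which in general is not dense, confirming the unbounded and nondensely defined nature asserted in (ii).

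The hard part is the density of $\tilde A_*$ in $A^*$ required by Definition~\ref{GBT}: $\tilde A_*$ is carved out of $A_* = \dom\Gamma$ by the pointwise membership conditions defining $\dom V$, so the standing assumptions $\cdom E = \cdom G = \cran G = \cH$ (which ensure that $\dom V$ is dense in $\cH^2$) must be leveraged together with the density of $\dom\Gamma$ in $A^*$ to approximate elements of $A^*$ by elements of $\tilde A_*$; everything else is essentially bookkeeping once the Kre\u{\i}n-space isometry of $V$ is in place.
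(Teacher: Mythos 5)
Your verification that $V$ is $J_\cH$-isometric and injective, the observation that the composition of isometric operators is isometric, the kernel identity $\ker\wt\Gamma=\ker\Gamma=A$, and the computation of $\wt\gamma(\lambda)$ and $\wt M(\lambda)$ in part (ii) all match the paper's argument and are correct.

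The genuine problem is your last paragraph. You declare that "the hard part is the density of $\tilde A_*$ in $A^*$" and suggest that the standing assumptions $\cdom E=\cdom G=\cran G=\cH$ can be "leveraged" to approximate elements of $A^*$ by elements of $\wt A_*$. This step would fail: density of $\dom V$ in $\cH^2$ does not transfer through the (generally unbounded, non-continuous) map $\Gamma$ to density of $\Gamma^{-1}(\dom V)$ in $A^*$, and the paper's own proof states explicitly that $\wt A_*$ "is typically a proper linear subset of $A_*=\dom\Gamma$ which is not necessarily dense in $A^*$." The example immediately following the lemma (with $G$ a singular operator, so $\dom G^*=\ran G^*=\{0\}$, applied to an ordinary boundary triple) produces $\wt A_*=\ker\Gamma_1=A_1$, a closed proper subspace of $A^*$ — so the density you are trying to prove is simply false under the stated hypotheses. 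The conclusion of the lemma must therefore be read as asserting only isometry (i.e.\ validity of Green's identity on $\dom\wt\Gamma$) together with the kernel identity, not the density condition of Definition~\ref{GBT}; the paper proves exactly that and nothing more. Dropping your final paragraph and noting that no density claim is made (or can be made) would repair the proof.
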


\begin{proof}
(i) By the assumptions in~\eqref{Vtrans}  $V$ is an isometric
operator in the Kre\u{\i}n space $(\cH^2,J_\cH)$ and since $\Gamma$
is an isometric operator from $(\sH^2,J_\sH)$ to $(\cH^2,J_\cH)$ the
composition operator $V\circ\Gamma$ is also an isometric operator
from $(\sH^2,J_\sH)$ to $(\cH^2,J_\cH)$. Since $V$ is injective, one
has $\ker \wt\Gamma=\ker \Gamma=A$. In general $V$ is not everywhere
defined, so that $\wt \ZA_*$ is typically a proper linear subset of
$\ZA_*=\dom \Gamma$ which is not necessarily dense in $\ZA^*$.

(ii) By Definition \ref{Weylfam} the Weyl function $\wt M(\lambda)$ of $\wt\Gamma$ is given by $\wt M(\lambda)=V\circ M(\lambda)$
or, more explicitly, by
\[
\begin{split}
 \wt M(\lambda)
 &=\left\{\,\begin{pmatrix} G^{-1}h\\EG^{-1}h+G^*M(\lambda)h \end{pmatrix}
  :\, h\in \dom EG^{-1}\cap \dom G^*M(\lambda) \,\right\} \\
 &=\left\{\, \begin{pmatrix} k\\Ek+G^*M(\lambda)Gk \end{pmatrix}
 :\, \begin{array}{c}
        h=Gk\in \dom G^*M(\lambda), \\
       k\in\dom G\cap \dom E
     \end{array}
\,\right\} =E+G^*M(\lambda)G.
\end{split}
\]
Similarly,  $(G^{-1}\Gamma_0\uphar\wh\sN_\lambda({\wt \ZA_*}))^{-1}=(\Gamma_0\uphar\wh\sN_\lambda({\wt \ZA_*}))^{-1}G$
implies that $\wt\gamma(\lambda)=\gamma(\lambda)G$ with $\dom \wt\gamma(\lambda)=\dom \wt M(\lambda)$.
\end{proof}

\begin{example}
(i) If $G=I_\cH$ then the condition $\wt \Gamma_1\wh f=0$ reads as $\Gamma_1\wh f+E\Gamma_0\wh f=0$.
In applications such conditions are called Robin type boundary conditions. This corresponds to the transposed boundary triple
$\{\cH,\Gamma_1+E\Gamma_0,-\Gamma_0\}$ which is also isometric and has $-(M(\lambda)+E)^{-1}$ as its Weyl function.

(ii) As indicated $G$ need not be closable.
An extreme situation appears when $G$ is a \textit{singular operator}; cf.~\cite{Kosh99}.
By definition this means that $\dom G\subset \ker \overline{G}$ or, equivalently, that
$\ran G\subset \mul \overline{G}$. Thus, in this case $\dom G^*=\ran G^*=\{0\}$.
If, for instance, $\Gamma$ is an ordinary boundary triple for $\ZA^*$ then
$A_0=\ker \Gamma_0$ and $A_1=\ker \Gamma_1$ are selfadjoint.
It is easy to check that
\[
  \wt \ZA_*=\bigl\{\,\wh f\in \ZA^*:\, \Gamma_1\wh f =0\,\bigr\}
  =\ker\Gamma_1=A_1, \quad \ker \wt \Gamma_0=A_0\cap A_1=A.
\]
Moreover, $\ran \wt\Gamma=E\uphar\dom G$ is a symmetric operator in
$\cH$ and $\dom \wt M(\lambda)=\dom \wt\gamma(\lambda)$ is trivial.
\end{example}

\subsection{Some additional remarks}\label{sec3.5}
Despite of the fact that the paper~\cite{Cal39} has been quoted by
M.G.~Kre\u{\i}n~\cite{Kr47} and a discussion on \cite{Cal39} appears in
the monograph~\cite{DanSch71} the actual results of Calkin on
reduction operators remained widely unknown among experts in
extension theory. Apparently this was caused by the fact that the
paper~\cite{Cal39} was ahead of time -- it was using the new
language of binary linear relations with hidden ideas on geometry of
indefinite inner product spaces, concepts which were not well
developed at that time. The concept of a bounded reduction operator
investigated therein (see \cite[Chapter~IV]{Cal39}) essentially
covers the notion of an ordinary boundary triple in
Definition~\ref{een} as well as the notion of $D$-boundary triple
introduced in \cite{Mog06} for symmetric operators with unequal
defect numbers. An overview on the early work of Calkin and more
detailed description on its connections to boundary triples and
unitary boundary pairs (boundary relations) can be found from the
monograph \cite{HdSSz2012}. In fact, \cite{HdSSz2012} contains a
collection of articles reflecting various recent activities in
different fields of applications with related realization results
for Weyl functions, including analysis of differential operators,
continuous time state/signal systems and boundary control theory
with interconnection analysis of port-Hamiltonian systems involving
Dirac and Tellegen structures etc.

\section{$AB$-generalized boundary pairs and $AB$-generalized boundary triples}\label{sec4}

In this section  present a new generalization of the class of
$B$-generalized boundary triples from \cite{DM95} (see
Definition~\ref{genBT}).
  \begin{definition}\label{ABGtriple}
Let $\ZA$ be a symmetric operator (or relation) in the Hilbert space
$\sH$ and let $\cH$ be another Hilbert space. Then a linear relation
$\Gamma: \ZA^*\to \cH\oplus\cH$ with domain dense in $\ZA^*$ is said
to be an \emph{almost $B$-generalized boundary pair}, in short,
\emph{$AB$-generalized boundary pair} \index{Boundary pair!AB-generalized}  for $\ZA^*$, if the following
three conditions are satisfied:
  \begin{enumerate}
[label=\textbf{\thedefinition.\arabic*}]
\item \label{1_def_ABGtriple} the abstract Green's identity~\eqref{Green1} holds;
\item \label{2_def_ABGtriple} $\ran \Gamma_0$ is dense in $\cH$;
\item \label{3_def_ABGtriple} $A_0=\ker \Gamma_0$ is selfadjoint,
\end{enumerate}
where $\Gamma_0 = \pi_0\Gamma$ stands for the first component mapping of $\Gamma$; see
~\eqref{g01}.

A \emph{single-valued $AB$-generalized boundary pair} is also said
to be an \emph{almost $B$-generalized boundary triple}, shortly, an
\emph{$AB$-generalized boundary triple} for $\ZA^*$.
\end{definition}


If $\Gamma$ is an $AB$-generalized boundary pair for $\ZA^*$, then
the same is true for its closure. Indeed, since $\overline \Gamma$
is an extension of $\Gamma$, it is clear that $\dom
\overline{\Gamma}$ is dense in $\ZA^*$ and $\ran
(\overline{\Gamma})_0$ is dense in $\cH$. By Assumption
\ref{1_def_ABGtriple} $\Gamma$ is isometric (in the Kre\u{\i}n space
sense), i.e. $\Gamma^{-1}\subset \Gamma^{[*]}$. Thus, clearly
$\overline{\Gamma}^{\,-1}\subset
\Gamma^{[*]}=\overline{\Gamma}^{[*]}$. Hence, the closure satisfies
the Green's identity~\eqref{Green1} and this implies that the
corresponding kernels $\ker (\overline{\Gamma})_0\supset \ker
\Gamma_0=A_0$ and $\ker (\overline{\Gamma})_1\supset\ker
\Gamma_1=A_1$ are symmetric. Therefore, $\ker
(\overline{\Gamma})_0=A_0$ must be selfadjoint.

\subsection{Characteristic properties of $AB$-generalized boundary pairs and triples}

The next theorem describes some central properties of an
$AB$-generalized boundary pair.

\begin{theorem}\label{ABGthm}
Let $\ZA$ be a closed symmetric relation in  $\sH$, let $\{\cH,\Gamma\}$ be
an $AB$-generalized boundary pair for $\ZA^*$, and let $\Gamma_0$
and $\Gamma_1$ be the corresponding component mappings from $\dom
\Gamma$ into $\cH$. Moreover, let $\gamma(\cdot)$ and $M(\cdot)$ be
the corresponding  $\gamma$-field and the Weyl function.  Then:
\begin{enumerate}
\def\labelenumi{\textit{(\roman{enumi})}}

\item $\ker \Gamma=\ZA$;

\item $\ZA_*:=\dom \Gamma$ admits the decomposition $\ZA_*=A_0 \wh{+} \wh\sN_\lambda(\ZA_*)$
and $\wh\sN_\lambda(\ZA_*)$ is dense in $\wh\sN_\lambda(\ZA^*)$ for all $\lambda\in\cmr$;

\item the $\gamma$-field $\gamma(\lambda)$ is a densely defined bounded operator from $\ran\Gamma_0$ onto
$\wh\sN_\lambda(\ZA_*)$. It is domain invariant and one has
$$
\dom\gamma(\lambda)=\ran\Gamma_0,\quad  \ker\gamma(\lambda) = \mul\Gamma_0;
$$
\item
the adjoint $\gamma(\lambda)^*$ is a bounded everywhere defined operator and, moreover,
equalities hold in~\eqref{Iso},~\eqref{Iso1},
   \begin{equation}
\label{ABGG1}
\Gamma H(\lambda)=\binom{0}{\gamma(\bar\lambda)^*} \hplus (\{0\}\times\mul \Gamma),\quad
 \Gamma_1 H(\lambda)=
 \gamma(\bar\lambda)^* \hplus (\{0\}\times\mul \Gamma_1), \quad \lambda\in\cmr;
\end{equation}

\item the closure of the $\gamma$-field $\overline{\gamma(\lambda)}$ is a bounded operator from $\cH$ into $\wh\sN_\lambda(\ZA^*)$
satisfying  the identity
\[
 \overline{\gamma(\lambda)}=[I+(\lambda-\mu)(A_0-\lambda)^{-1}]\overline{\gamma(\mu)},
 \quad \lambda, \mu \in \cmr;
\]

\item the Weyl function $M$ is a densely defined operator valued function which is domain invariant,
$\dom M(\lambda)=\ran \Gamma_0$, $M(\lambda)\subset M(\bar\lambda)^*$, and the imaginary part
$\IM M(\lambda)=(M(\lambda)-M(\lambda)^*)/2i$ is bounded with $\dom \IM M(\lambda)=\ran \Gamma_0$ and
$\ker \IM M(\lambda)=\mul\Gamma_0$. Furthermore, $M(\lambda)$ admits the
following representation
\begin{equation}\label{WeylABG}
 M(\lambda)=E+M_0(\lambda), \quad \lambda\in\cmr,
\end{equation}
where $E=\RE M(\mu)$ is a symmetric densely defined operator in $\cH$ and $M_0(\cdot)$ is a bounded
Nevanlinna function (defined on $\dom E$), i.e., $\overline{ M_0(\cdot)}\in \cR[\cH]$.
\end{enumerate}
\end{theorem}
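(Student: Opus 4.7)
The plan is to prove the six items essentially in the listed order, leveraging throughout the selfadjointness of $A_0=\ker\Gamma_0$ together with Lemmas~\ref{cor:GH}, \ref{isoHlem}, \ref{domM}, and Proposition~\ref{domMchar}, which have been tailored for precisely this situation.

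First I would dispatch (i) and (ii) quickly. For (i), Green's identity~\eqref{Green1} gives $\ker\Gamma\subset(\dom\Gamma)^*=(A^*)^*=A$, using that $\dom\Gamma$ is dense in $A^*$; the reverse inclusion is automatic since $A\subset A_*$ and Green's form vanishes on $A$. For (ii), von Neumann's decomposition $A^*=A_0\hplus\wh\sN_\lambda(A^*)$ applies since $A_0$ is selfadjoint, and intersecting with $A_*$ (which contains $A_0$) yields $A_*=A_0\hplus\wh\sN_\lambda(A_*)$; density of $\wh\sN_\lambda(A_*)$ in $\wh\sN_\lambda(A^*)$ then follows from topological complementability of this sum, which is exactly the argument in Proposition~\ref{A0graph}(i).

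For (iii) and (iv), the decisive input is that $A_0$ selfadjoint implies $\ran(A_0-\lambda)=\sH$, hence $\dom\Gamma_1 H(\lambda)=\sH$ by Lemma~\ref{cor:GH}(v); then Lemma~\ref{isoHlem}(iii) delivers boundedness of $\gamma(\bar\lambda)$ and $\dom\gamma(\bar\lambda)^*=\sH$. Combined with (ii) this gives $\dom\gamma(\lambda)=\Gamma_0(\wh\sN_\lambda(A_*))=\Gamma_0(A_*)=\ran\Gamma_0$, proving domain invariance. The kernel identity $\ker\gamma(\lambda)=\mul\Gamma_0$ comes from the definitions: $\gamma(\lambda)h=0$ forces $\wh f_\lambda=(0,0)$ with $\Gamma_0(0,0)\ni h$, i.e.\ $h\in\mul\Gamma_0$, and conversely. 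For (iv) the inclusions~\eqref{Iso},~\eqref{Iso1} are upgraded to equalities by comparing domains (both equal $\sH$) and multivalued parts ($\mul(\Gamma H(\lambda))=\mul\Gamma$ since $H(\lambda)$ is a bijection onto $A_0$). Item (v) is then essentially formal: boundedness plus density of $\ran\Gamma_0$ yields a bounded everywhere-defined closure $\overline{\gamma(\lambda)}$, and the identity follows from~\eqref{ggam3} of Lemma~\ref{domM} extended by continuity to all of $\cH$, using holomorphy of $(A_0-\lambda)^{-1}$ in $\rho(A_0)\supset\cmr$.

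The substantive work lies in (vi) and will be the main obstacle, because $M(\lambda)^*$ may be a proper multivalued extension of $M(\bar\lambda)$, so one must carefully separate operator and multivalued parts when defining the ``real part.'' The plan is to fix $\mu_0\in\cmr$, observe that by (iii) both $M(\mu_0)$ and $M(\bar\mu_0)$ are densely defined operators on $\ran\Gamma_0$ with $M(\bar\mu_0)\subset M(\mu_0)^*$, and set
\[
 E:=\tfrac{1}{2}\bigl(M(\mu_0)+M(\bar\mu_0)\bigr)\quad\text{on }\ran\Gamma_0,
\]
which is then symmetric and densely defined. From Green's identity~\eqref{Green3} with $\mu=\mu_0$ and $v\in\ran\Gamma_0$ I extract the Krein-type identity
\[
 \bigl(M(\lambda)-M(\bar\mu_0)\bigr)u=(\lambda-\mu_0)\,\gamma(\mu_0)^*\gamma(\lambda)u,\qquad u\in\ran\Gamma_0,
\]
(and the analogous one with $\mu_0\leftrightarrow\bar\mu_0$). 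Since $\gamma(\mu_0)^*$ is bounded everywhere defined by (iv) and $\gamma(\lambda)$ is bounded by (iii), the right-hand side is bounded; averaging the two identities shows $M_0(\lambda):=M(\lambda)-E$ is bounded on $\ran\Gamma_0$ and extends by continuity to a bounded everywhere defined operator $\overline{M_0(\lambda)}\in\cB(\cH)$. The Nevanlinna properties are then verified: symmetry of $E$ gives $\IM\overline{M_0(\lambda)}=\IM M(\lambda)=\IM\lambda\cdot\overline{\gamma(\lambda)}^*\overline{\gamma(\lambda)}\geq 0$ for $\lambda\in\dC_+$; the symmetry relation $\overline{M_0(\bar\lambda)}=\overline{M_0(\lambda)}^*$ follows from $M(\bar\lambda)\subset M(\lambda)^*$ combined with boundedness; and holomorphy of $\overline{M_0(\cdot)}$ is inherited from that of $\overline{\gamma(\lambda)}$ established in (v). The final identification $\ker\IM M(\lambda)=\mul\Gamma_0$ follows from the formula $\IM M(\lambda)=\IM\lambda\cdot\gamma(\lambda)^*\gamma(\lambda)$ together with $\ker\gamma(\lambda)=\mul\Gamma_0$ obtained in (iii).
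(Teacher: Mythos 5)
Your treatment of items (ii)--(vi) is essentially the paper's own argument: von Neumann decomposition through the selfadjoint $A_0$ for (ii), Lemmas~\ref{cor:GH} and~\ref{isoHlem} plus $\ran(A_0-\lambda)=\sH$ for (iii)--(iv), continuity for (v), and for (vi) the choice $E=\tfrac12(M(\mu_0)+M(\bar\mu_0))$ agrees with the paper's $E=\RE M(\mu)$ (since $M(\mu)^*$ is an operator restricting to $M(\bar\mu)$ on $\ran\Gamma_0$), with the boundedness of $M(\lambda)-E$ extracted from the same Kre\u{\i}n-type identity~\eqref{Green3}. These parts are fine.

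The genuine gap is in (i). You assert that the inclusion $\ZA\subset\ker\Gamma$ is ``automatic since $A\subset A_*$ and Green's form vanishes on $A$.'' It is not. For $\wh f\in \ZA=(\dom\Gamma)^{[\perp]}$, Green's identity only yields $[\Gamma\wh f,\Gamma\wh g]_{\cH^2}=0$ for all $\wh g\in\dom\Gamma$, i.e.\ $\Gamma\wh f\subset(\ran\Gamma)^{[\perp]}$; for a merely isometric (non-unitary) $\Gamma$ this orthogonal companion is in general strictly larger than $\mul\Gamma$ --- indeed Corollary~\ref{ABGgraph}(ii) computes $(\ran\Gamma)^{[\perp]}=E^*\uphar\ker\overline{\gamma(\lambda)}$, which is typically nontrivial --- so one cannot conclude $\Gamma\wh f\ni\{0,0\}$. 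The paper's proof of this inclusion is the one place where Assumption~\ref{2_def_ABGtriple} (density of $\ran\Gamma_0$) enters item (i): from $A_0=A_0^*$ one first gets $\ZA\subset A_0\subset\dom\Gamma$, then for $k_\lambda\in\ran(\ZA-\lambda)$ the element $\gamma(\bar\lambda)^*k_\lambda$ produced in~\eqref{G1H} is orthogonal to the dense set $\dom\gamma(\bar\lambda)=\ran\Gamma_0$, hence vanishes, and only then does~\eqref{G1H} force $H(\lambda)k_\lambda\in\ker\Gamma$, whence $\ZA=H(\lambda)(\ran(\ZA-\lambda))\subset\ker\Gamma$. Your proof of (i) never invokes the density of $\ran\Gamma_0$, which is a warning sign, since without that hypothesis the equality $\ker\Gamma=\ZA$ can fail for isometric boundary pairs. (Note also that this argument uses the density of $\dom\gamma(\bar\lambda)$, which is established only in (iii); the paper handles this by an explicit forward reference, and your write-up should do the same once (i) is repaired.)
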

\begin{proof}
(i) It is clear from the Green's identity that $\ker\Gamma\subset
(\dom \Gamma)^*=(\ZA_*)^*=\ZA$; cf. \cite[Lemma~7.3]{DHMS12}. To
prove the reverse inclusion, the property that $\gamma(\lambda)$,
$\lambda\in\cmr$, is densely defined will be used (and this is
independently proved in (iii) below). Assumption
\ref{3_def_ABGtriple} implies that $\ZA=(\ZA_*)^*\subset
A_0^*=A_0=\ker \Gamma_0\subset\dom \Gamma$. On the other hand, if
$k_\lambda\in \ran(A-\lambda)$ then by Lemma~\ref{isoHlem}
$\{k_\lambda,k''\}\in \gamma(\bar\lambda)^*$ for some $k''$ and thus
for all $h\in\dom\gamma(\bar\lambda)$ one has
\[
  (k'',h)_\cH=(k_\lambda,\gamma(\bar\lambda)h)=0.
\]
Assumption \ref{2_def_ABGtriple} combined with $\dom
\gamma(\bar\lambda)=\ran \Gamma_0$ (see proof of (iii) below) shows
that $\gamma(\bar\lambda)$ is densely defined and, hence,
$\gamma(\bar\lambda)^*$ is an operator and
$k''=\gamma(\bar\lambda)^*k_\lambda=0$. Now apply the formula
~\eqref{G1H} in the proof of Lemma~\ref{isoHlem} to $k_\lambda\in\ran
(A-\lambda)$: therein $\{k_\lambda,\{k,k'\}\}\in\Gamma H(\lambda)$
and $k''=0$ so that~\eqref{G1H} reads as
\[
  \left\{k_\lambda,\begin{pmatrix}k\\k'\end{pmatrix}\right\}
  =\left\{k_\lambda,\begin{pmatrix}0\\0\end{pmatrix}\right\}
  +\left\{0,\begin{pmatrix}k\\k'\end{pmatrix}\right\},\quad \begin{pmatrix}k\\k'\end{pmatrix}\in\mul \Gamma.
\]
Hence, $H(\lambda)k_\lambda\in\ker\Gamma$ and $\ZA=H(\lambda)(\ran (\ZA-\lambda))\subset \ker \Gamma$.
Therefore, $\ker\Gamma=\ZA$.

(ii) This holds by Proposition~\ref{A0graph} (i).

(iii) \& (iv) The decomposition of $\ZA_*$ in (ii) combined with
$A_0=\ker \Gamma_0$ implies that
\[
 \Gamma_0(\ZA_*)=\Gamma_0(\wh\sN_\lambda(\ZA_*))=\dom M(\lambda)=\dom \gamma(\lambda), \quad \lambda\in\cmr.
\]
Hence, $\dom M(\lambda)=\dom \gamma(\lambda)=\ran \Gamma_0$ does not
depend on $\lambda\in\cmr$. Now Assumption \ref{2_def_ABGtriple}
shows that $\gamma(\lambda)$ and $M(\lambda)$ are densely defined
for all $\lambda\in\cmr$. Moreover, according to Lemma
\ref{isoHlem}~(iii) $\gamma(\lambda)$ is a bounded operator and the
equality $\dom \gamma(\lambda)^*=\sH$ holds for all
$\lambda\in\cmr$. Since $\gamma(\lambda)$ is densely defined in
$\cH$, the adjoint $\gamma(\lambda)^*$ is a bounded everywhere
defined operator from $\sH$ into $\Gamma_1(A_0)$. Since
$M(\bar\lambda)\subset M(\lambda)^*$, see~\eqref{GreenM}, the
adjoint $M(\lambda)^*$ and the closure of $M(\bar\lambda)$ are also
densely defined operators. In view of~\eqref{Green3} one has
\begin{equation}\label{gWeyl01}
 (\lambda-\bar\mu)(\gamma(\lambda)h,\gamma(\mu)k)_\sH = ((M(\lambda)-M(\mu)^*)h,k)_\cH,
 \quad \lambda,\mu\in\cmr,
\end{equation}
for all $h,k\in \dom \gamma(\lambda)=\ran\Gamma_0$.
In particular, $2i\IM \lambda\|\gamma(\lambda)h\|_\sH^2=(
(M(\lambda)-M(\lambda)^*)h,h )_\cH$ holds for all $h\in
\dom\gamma(\lambda)= \dom M(\lambda)$, $\lambda\in\cmr$. By
Lemma~\ref{Weylmul}~\eqref{gWeyl01} implies that
\[
 \ker\gamma(\lambda)=\ker(M(\lambda)-M(\lambda)^*)=\mul \Gamma_0, \quad \lambda\in\cmr.
\]
It remains to prove~\eqref{ABGG1}. Observe, that
$\dom\Gamma_1H(\lambda)=\dom \gamma(\bar\lambda)^*=\sH$ and clearly
the multivalued parts on both sides of the inclusion in~\eqref{Iso},~\eqref{Iso1}
are equal. Hence, the inclusions~\eqref{Iso},~\eqref{Iso1} must prevail
actually as equalities (by the criterion from \cite{Arens}).

(v) Since $\dom M(\lambda)=\dom\gamma(\lambda)=\ran\Gamma_0$ does
not depend on $\lambda\in\cmr$, the equality
\begin{equation}\label{gWeyl02}
 (I+(\lambda-\mu)(A_0-\lambda)^{-1})\gamma(\mu)h=\gamma(\lambda)h
\end{equation}
holds for all $\lambda,\mu\in\cmr$ and
$h\in\ran\Gamma_0$ by Proposition~\ref{domMchar}.
According to (iii) $\gamma(\lambda)$ is bounded and densely defined, so that its
closure $\overline{\gamma(\lambda)}$ is bounded and defined
everywhere on $\cH$. The formula in (iv) is obtained by taking
closures in~\eqref{gWeyl02}.

(vi) It suffices to prove the representation~\eqref{WeylABG} for
$M(\lambda)$, since all the other assertions were already shown
above when proving (iii) \& (iv). It follows from~\eqref{gWeyl01}
and~\eqref{gWeyl02} that
\[
\begin{split}
 (M(\lambda)h,k)
 & =(M(\mu)^*h,k)+(\lambda-\bar\mu)
  ((I+(\lambda-\mu)(A_0-\lambda)^{-1})\gamma(\mu)h,\gamma(\mu)k) \\
 & =(\RE M(\mu)h,k)+(((\lambda-\RE \mu)+
  (\lambda-\mu)(\lambda-\bar\mu)(A_0-\lambda)^{-1})\gamma(\mu)h,\gamma(\mu)k),
\end{split}
\]
$h,k\in\dom \gamma(\lambda)=\dom M(\mu)=\ran \Gamma_0$,
$\lambda,\mu\in \cmr$. Here $2\RE M(\mu)=M(\mu)+M(\mu)^*$ and hence
$2(\RE M(\mu))^*\supset M(\mu)^*+\overline{M(\mu)}\supset 2\RE M(\mu)$, so
that $E:=\RE M(\mu)$ is a symmetric operator with $\dom E=\dom
M(\mu)=\ran \Gamma_0$. On the other hand, since
$\overline{\gamma(\lambda)}$ and its adjoint $\gamma(\lambda)^*$ are
bounded everywhere defined operators, it follows that the closure of
\begin{equation}
\label{DefM0}
 M_0(\lambda):
 =\gamma(\mu)^*((\lambda-\RE \mu)+(\lambda-\mu)(\lambda-\bar\mu)(A_0-\lambda)^{-1}){\gamma(\mu)}
\end{equation}
is a bounded holomorphic operator valued Nevanlinna function acting on
$\cH$, such that $M(\lambda)=E+M_0(\lambda)$. This completes the
proof.
\end{proof}

For an $AB$-generalized boundary pair it is possible to describe the
graph of $\Gamma$, $(\ran\Gamma)^{[\perp]}$, and the closure of
$\ran \Gamma$ explicitly.

\begin{corollary}\label{ABGgraph}
Let $\Gamma$ be an $AB$-generalized boundary pair for $\ZA^*$ and
let $\gamma(\cdot)$ and $M(\cdot)=E+M_0(\cdot)$ be the corresponding
$\gamma$-field and Weyl function as in Theorem~\ref{ABGthm} with
$E=\RE M(\mu)$ for some fixed $\mu\in\cmr$. Then:
\begin{enumerate}\def\labelenumi {\textit{(\roman{enumi})}}
\item with a fixed $\lambda\in\cmr$ the graph of $\Gamma$ admits the following representation:
\[
 \Gamma=  \left\{\left\{ H(\lambda)k_\lambda, \binom{0}{\gamma(\bar{\lambda})^*k_\lambda} \right\}
  + \left\{ \binom{\gamma(\lambda)h}{\lambda\gamma(\lambda)h},\binom{h}{M(\lambda)h} \right\};
  \, \begin{array}{c} k_\lambda\in\ran(A_0-\lambda) \\ h\in \dom M(\lambda)\end{array} \right\};
\]
\item the range of $\Gamma$ satisfies
\[
 (\ran\Gamma)^{[\perp]}=E^*\upharpoonright\ker\overline{\gamma(\lambda)}
 \quad \text{and}\quad \cran\Gamma=(E^*\upharpoonright\ker\overline{\gamma(\lambda)})^*,
\]
and here $\ker\overline{\gamma(\lambda)}=\ker(\overline{ M_0(\lambda)})$ does not depend on $\lambda\in\cmr$.
In particular, $\ran\Gamma$ is dense in $\cH$ if and only if $\dom E^*\cap\ker\overline{\gamma(\lambda)}=\{0\}$
for some or, equivalently, for every $\lambda\in\cmr$.

\item  $\Gamma$ is a single-valued mapping if and only if $\mul \Gamma_0=\{0\}$ or, equivalently,
if and only if $\ker \IM M(\lambda)\,(=\ker \gamma(\lambda))=0$,
$\lambda\in\cmr$.
\end{enumerate}
\end{corollary}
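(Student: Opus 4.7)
The plan is to treat the three items in sequence, using Theorem~\ref{ABGthm} and Proposition~\ref{A0graph} as the main inputs.

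For item (i), I would apply Proposition~\ref{A0graph}(ii) directly: by Theorem~\ref{ABGthm}(i)--(ii) the domain $\ZA_*=\dom \Gamma$ is dense in $\ZA^*$, $\ker \Gamma=A$, and $A_0=\ker \Gamma_0$ is selfadjoint, so each $\wh f\in \ZA_*$ decomposes uniquely as $\wh f=H(\lambda) k_\lambda+\wh\gamma(\lambda) h$ with $k_\lambda\in \ran(A_0-\lambda)$ and $h\in \dom M(\lambda)$. The image of the first summand under $\Gamma$ is supplied by Theorem~\ref{ABGthm}(iv), which gives $\Gamma H(\lambda)=\binom{0}{\gamma(\bar\lambda)^*}\hplus(\{0\}\times \mul \Gamma)$, providing the term $\binom{0}{\gamma(\bar\lambda)^* k_\lambda}$; the image of the second summand is $\binom{h}{M(\lambda)h}$ by the very definition of $M(\lambda)$ on $\wh\sN_\lambda(\ZA_*)$. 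The multivalued part $\mul \Gamma$ is automatically absorbed in the parametrisation when $h$ runs over $\mul \Gamma_0\subset \dom M(\lambda)$, since by Theorem~\ref{ABGthm}(iii) one then has $\gamma(\lambda)h=0$ and $\binom{h}{M(\lambda)h}\in M(\lambda)\cap M(\lambda)^*=\mul \Gamma$ by Lemma~\ref{Weylmul}(i).

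For item (ii), I would characterise $\wh k=\binom{k}{k'}\in (\ran \Gamma)^{[\perp]}$ by testing the Kre\u{\i}n-space form $[\wh h,\wh k]_{\cH^2}=i[(h,k')_\cH-(h',k)_\cH]$ against the two building blocks produced by (i). The condition coming from $\binom{0}{\gamma(\bar\lambda)^* k_\lambda}$, as $k_\lambda$ ranges over $\ran(A_0-\lambda)=\sH$ (by selfadjointness of $A_0$), amounts to $k\in (\ran \gamma(\bar\lambda)^*)^\perp=\ker \overline{\gamma(\bar\lambda)}$; independence of this kernel from $\lambda\in\cmr$ then follows from the resolvent formula in Theorem~\ref{ABGthm}(v), since the operator $(A_0-\mu)(A_0-\lambda)^{-1}$ is injective on $\cmr$. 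The condition coming from $\binom{h}{M(\lambda) h}$, $h\in \dom M(\lambda)$, rewrites as $\{k,k'\}\in M(\lambda)^*$; substituting $M(\lambda)=E+M_0(\lambda)$, using the Nevanlinna symmetry $\overline{M_0(\bar\lambda)}=\overline{M_0(\lambda)}^*$, and invoking the kernel equality $\ker \overline{\gamma(\lambda)}=\ker \overline{M_0(\lambda)}$ forces $\overline{M_0(\bar\lambda)}k=0$ and yields $k\in \dom E^*$ with $k'=E^*k$. This kernel equality is the main delicate step: the inclusion $\subset$ is immediate from the factorisation~\eqref{DefM0}, while for the reverse I would derive, by specialising~\eqref{gWeyl01} to $\mu=\lambda$, $k=h$ and then closing up, the identity $\IM \lambda\,\|\overline{\gamma(\lambda)}h\|^2=(\IM \overline{M_0(\lambda)}h,h)$ valid for every $h\in\cH$, which forces $\overline{\gamma(\lambda)}h=0$ as soon as $\overline{M_0(\lambda)}h=0$. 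Having obtained $(\ran \Gamma)^{[\perp]}=E^*\uphar \ker \overline{\gamma(\lambda)}$, the formula for $\cran \Gamma$ follows from the identity $\cran \Gamma=((\ran \Gamma)^{[\perp]})^{[\perp]}$ combined with the short observation that, under the identification of a subspace of $\cH^2$ with a linear relation in $\cH$, the Kre\u{\i}n-space orthogonal complement coincides with the Hilbert-space adjoint, which is a direct computation from the definition of $J_\cH$. The density criterion then reads $(\ran \Gamma)^{[\perp]}=\{0\}\Longleftrightarrow \dom E^*\cap \ker \overline{\gamma(\lambda)}=\{0\}$.

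For item (iii), single-valuedness of $\Gamma$ is the condition $\mul \Gamma=\{0\}$, which obviously forces $\mul \Gamma_0=\{0\}$; conversely, if $\mul \Gamma_0=\{0\}$ then $\mul \Gamma\subset \{0\}\times \cH$, and by Lemma~\ref{Weylmul}(iii) this equals $\{0\}\times \mul M(\lambda)$, which is trivial since $M(\lambda)$ is operator valued by Theorem~\ref{ABGthm}(vi). The remaining equivalence is Lemma~\ref{Weylmul}(iv): specialising~\eqref{gWeyl01} to $\mu=\lambda$ yields $\IM M(\lambda)=\IM \lambda\cdot \gamma(\lambda)^*\gamma(\lambda)$ on $\ran \Gamma_0$, whence $\ker \IM M(\lambda)=\ker \gamma(\lambda)=\mul \Gamma_0$ for every $\lambda\in\cmr$.
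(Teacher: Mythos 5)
Your proof is correct and follows essentially the same route as the paper: item (i) via Proposition~\ref{A0graph}(ii) combined with the formula for $\Gamma H(\lambda)$ in Theorem~\ref{ABGthm}(iv), item (ii) by computing the Kre\u{\i}n-space orthogonal complement of $\ran\Gamma=\Gamma(A_0)\hplus M(\lambda)$ and identifying it with $M(\lambda)^*\uphar\ker\overline{\gamma(\bar\lambda)}=E^*\uphar\ker\overline{\gamma(\lambda)}$, and item (iii) from Lemma~\ref{Weylmul}. The only (immaterial) deviation is that you obtain the $\lambda$-independence of $\ker\overline{\gamma(\lambda)}$ from the resolvent identity in Theorem~\ref{ABGthm}(v), whereas the paper deduces it from the constancy of $\ker\overline{M_0(\lambda)}$ for the bounded Nevanlinna function $\overline{M_0(\cdot)}\in\cR[\cH]$; both arguments are valid.
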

\begin{proof}
(i) Using the representation of $\Gamma H(\lambda)$ in~\eqref{ABGG1}, the inclusion
$\mul\Gamma\subset M(\lambda)$ in Lemma \ref{Weylmul}, and the fact that by Theorem~\ref{ABGthm}
$M(\lambda)$ is an operator, one concludes that the representation of $\Gamma$ given in
Proposition~\ref{A0graph}~(ii) can be rewritten in the form as stated in (i).

(ii) The description in (i) shows that
\begin{equation}
\label{rangeG}
 \ran\Gamma=\Gamma(A_0)\hplus M(\lambda) =\binom{0}{\ran\gamma(\bar\lambda)^*}\hplus M(\lambda),
\end{equation}
for all $\lambda\in\cmr$.
Therefore, $(\ran\Gamma)^{[\perp]}=(\{0\}\times \ran\gamma(\bar\lambda)^*)^{[\perp]}\cap M(\lambda)^*$.
Hence  $\wh k=\{k,k'\}\in (\ran\Gamma)^{[\perp]}$ if and only if $\wh k\in M(\lambda)^*$
and $k\in(\ran\gamma(\bar\lambda)^*)^\perp=\ker\overline{\gamma(\bar\lambda)}$.
Since $E=\RE M(\mu)$, one has $\RE M_0(\mu)=0$ and hence $\ker\overline{\gamma(\mu)}=\ker\overline{(\IM M_0(\mu))}=\ker(\overline{ M_0(\mu)})$
and this kernel does not depend on $\mu\in\cmr$ due to $\overline{M_0(\cdot)}\in \cR[\cH]$; cf. Theorem~\ref{ABGthm} (v).
This proves that
\[
 (\ran\Gamma)^{[\perp]}=M(\lambda)^*\uphar \ker \overline{\gamma(\lambda)}
 =(E^*+M_0(\lambda)^*)\uphar \ker \overline{\gamma(\lambda)}=E^*\uphar \ker \overline{\gamma(\lambda)}.
\]
As to the closure observe that
\[
 \cran\Gamma=((\ran\Gamma)^{[\perp]})^{[\perp]}=((\ran\Gamma)^*)^*=(E^*\uphar \ker \overline{\gamma(\lambda)})^*.
\]
Thus, $\cran\Gamma=\cH\times\cH$ if and only if $E^*\uphar \ker \overline{\gamma(\lambda)}=\{0,0\}$
or, equivalently, $\dom E^*\cap\ker\overline{\gamma(\lambda)}=\{0\}$, since $E^*$ together with $E\,(\subset E^*)$
is a densely defined operator in $\cH$.

(iii)
In view of (i) this follows from $\mul\Gamma_0=\ker \IM
M(\lambda)=\ker \gamma(\lambda)$; see Lemma~\ref{Weylmul}.
\end{proof}

Corollary \ref{ABGgraph} shows that for an $AB$-generalized boundary
pair the inclusion $\mul\Gamma\subset (\ran \Gamma)^{[\perp]}$ is in
general strict. In particular, the range of $\Gamma$ for a single-valued
$AB$-generalized boundary pair, i.e., an $AB$-generalized boundary
triple, need not be dense in $\cH\times\cH$. Notice that an
$AB$-generalized boundary pair with the surjectivity condition $\ran
\Gamma_0=\cH$ is called a $B$-generalized boundary pair for $\ZA^*$;
see Definition~\ref{Btriple}. The next result gives a connection
between $AB$-generalized boundary pairs and $B$-generalized boundary
pairs.

\begin{theorem}\label{QBTthm}
Let $\{\cH,\Gamma\}$ be a $B$-generalized boundary pair for $\ZA^*$,
and let $M(\cdot)$ and $\gamma(\cdot)$ be the corresponding Weyl
function and $\gamma$-field. Let also  $E$ be a symmetric densely
defined operator in $\cH$ and let $\Gamma=\{\Gamma_0,\Gamma_1\}$
where $\Gamma_i=\pi_i\Gamma$, $i=0,1$, be the corresponding
components of $\Gamma$ as in~\eqref{g01}. Then the transform
\begin{equation}\label{Qeq:1}
   \begin{pmatrix}
   \wt\Gamma_0 \\ \wt\Gamma_1
   \end{pmatrix}=
   \begin{pmatrix}
   I & 0 \\
   E & I
   \end{pmatrix}\begin{pmatrix}
   \Gamma_0 \\ \Gamma_1
   \end{pmatrix}
\end{equation}
defines an $AB$-generalized boundary pair for $\ZA^*$. The
corresponding Weyl function $\wt M(\cdot)$ and
$\wt\gamma(\cdot)$-field are connected by
\[
 \wt M(\lambda)=E+M(\lambda), \quad \wt\gamma(\lambda)=\gamma(\lambda)\uphar\dom E,\quad \lambda\in\cmr.
\]
Furthermore, $\wt\Gamma:=\{\wt\Gamma_0,\wt\Gamma_1\}$ in~\eqref{Qeq:1} is closed if and
only if $E$ is a closed symmetric operator in $\cH$, in particular, the closure of
$\wt\Gamma$ is given by~\eqref{Qeq:1} with $E$ replaced by its closure $\overline{E}$.

Conversely, if $\{\cH,\wt\Gamma\}$ is an $AB$-generalized boundary
pair for $\ZA^*$ then there exists a $B$-generalized boundary pair
$\{\cH,\Gamma\}$  for $\ZA^*$ and a densely defined symmetric
operator $E$ in $\cH$ such that $\wt\Gamma$ is given by
~\eqref{Qeq:1}.
\end{theorem}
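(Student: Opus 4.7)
My plan is to derive both directions of Theorem~\ref{QBTthm} from the isometric transform of Lemma~\ref{isomtrans} specialized to $G=I_\cH$.

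\textbf{Direct part.} Take $G=I_\cH$ and the given symmetric densely defined $E$ in~\eqref{Vtrans}. Then $V=\left(\begin{smallmatrix}I & 0 \\ E & I\end{smallmatrix}\right)$ is injective and $(J_\cH,J_\cH)$-isometric on $\dom V=\dom E\times\cH$, so Lemma~\ref{isomtrans} produces an isometric relation $\wt\Gamma:=V\circ\Gamma$ whose components are given by~\eqref{Qeq:1} on $\dom\wt\Gamma=\{\wh f\in\dom\Gamma:\Gamma_0\wh f\in\dom E\}$, with $\wt\gamma(\lambda)=\gamma(\lambda)\uphar\dom E$ and $\wt M(\lambda)=E+M(\lambda)$. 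To verify the three conditions of Definition~\ref{ABGtriple} I would check that the Green identity comes from the isometry, that $\ran\wt\Gamma_0=\dom E$ is dense in $\cH$, and that $\ker\wt\Gamma_0=\ker\Gamma_0=A_0$ is selfadjoint (since $0\in\dom E$). Density of $\dom\wt\Gamma$ in $A^*$ follows from the decomposition $\dom\wt\Gamma=A_0\hplus\wh\gamma(\lambda)(\dom E)$, boundedness of $\gamma(\lambda)$, and density of $\dom E$. The closedness assertion then follows by a direct approximation using $\ran\Gamma_0=\cH$: replacing $E$ by $\overline{E}$ enlarges $\dom\wt\Gamma$ precisely to the closure of the graph of $\wt\Gamma$.

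\textbf{Converse part.} Given an $AB$-generalized pair $\{\cH,\wt\Gamma\}$, Theorem~\ref{ABGthm}(vi) supplies the decomposition $\wt M(\lambda)=E+M_0(\lambda)$ with $E:=\RE\wt M(\mu)$ symmetric densely defined in $\cH$ (for a fixed $\mu\in\cmr$), $\dom E=\ran\wt\Gamma_0$, and $\overline{M_0(\cdot)}\in\cR[\cH]$. I would define $\Gamma^\circ:=V^{-1}\wt\Gamma$ with $V^{-1}=\left(\begin{smallmatrix}I & 0 \\ -E & I\end{smallmatrix}\right)$; this is well defined on $\dom\wt\Gamma$ since $\wt\Gamma_0\wh f\in\dom E$, and $\Gamma^\circ$ has Weyl function $M_0(\cdot)$ with $\dom M_0(\lambda)=\dom E$. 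Set $\Gamma:=\overline{\Gamma^\circ}$. Using Theorem~\ref{ABGthm}(v), the closure $\overline{\wt\gamma(\lambda)}$ is a bounded everywhere defined operator on $\cH$, and the values of $\overline{M_0(\cdot)}\in\cR[\cH]$ are bounded everywhere defined operators. An approximation based on the decomposition $\dom\wt\Gamma=A_0\hplus\widehat{\wt\gamma}(\lambda)(\dom E)$ (Theorem~\ref{ABGthm}(ii)) then shows that $\Gamma$ has Weyl function $\overline{M_0(\cdot)}$, domain $A_0\hplus\overline{\widehat{\wt\gamma}(\lambda)}(\cH)$, and range $\ran\Gamma_0=\cH$, while $\ker\Gamma_0=A_0$ remains selfadjoint; thus $\{\cH,\Gamma\}$ is $B$-generalized. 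The same decomposition gives $\{\wh f\in\dom\Gamma:\Gamma_0\wh f\in\dom E\}=\dom\wt\Gamma$, so applying~\eqref{Qeq:1} to $\Gamma$ reproduces $\wt\Gamma$ exactly.

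The main obstacle is the converse: one must ensure that $\Gamma:=\overline{\Gamma^\circ}$ is a genuine $B$-generalized pair (so $\ran\Gamma_0=\cH$ rather than merely dense) and that the forward transform~\eqref{Qeq:1} applied to $\Gamma$ recaptures $\wt\Gamma$ exactly rather than a proper restriction or extension. The essential ingredients are the boundedness and domain invariance of the $\wt\gamma$-field from Theorem~\ref{ABGthm}(iii)--(v), which allow closure to supply the missing vectors in $\wh\sN_\lambda(A^*)\setminus\widehat{\wt\gamma}(\lambda)(\dom E)$ without disturbing either the selfadjointness of $A_0$ or the representation $\wt M(\lambda)=E+M_0(\lambda)$.
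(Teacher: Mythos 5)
Your proposal is correct and follows essentially the same route as the paper: the direct part is the $G=I_\cH$ specialization of Lemma~\ref{isomtrans} together with the verification of the three conditions of Definition~\ref{ABGtriple}, and the converse inverts the triangular transform using the representation $\wt M=E+M_0$ from Theorem~\ref{ABGthm} and passes to the closure to recover a $B$-generalized pair. The only (harmless) deviations are that where the paper invokes the unitarity criterion for closed isometric pairs to conclude $\ran\Gamma_0=\cH$ for the closure, you obtain the same conclusion by a direct approximation exploiting the boundedness and everywhere-defined closures of $\wt\gamma(\lambda)$ and $M_0(\lambda)$, and where the paper uses Proposition~\ref{A0graph}(iii) to identify $\wt\Gamma$ with the forward transform of $\Gamma$, you argue directly from the domain decomposition.
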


\begin{proof}
($\Rightarrow$) By Lemma~\ref{isomtrans} the block
triangular transformation $V$ in~\eqref{Qeq:1} acting on $\cH\times\cH$ is an isometric
operator. Consequently, $\wt \Gamma=V\circ\Gamma$ is isometric. It is clear from
~\eqref{Qeq:1} 
that $A_0:=\ker \Gamma_0\subset \ker\wt\Gamma_0$, which by symmetry
of $\ker\wt\Gamma_0$ implies that $\ker\wt\Gamma_0=A_0$. Clearly
$\ran \wt\Gamma_0$ is dense in $\cH$, since $\ran\Gamma_0=\cH$ and
$E$ is densely defined. Thus $\wt\Gamma$ admits all the properties
in Definition~\ref{ABGtriple}. Since in addition $\ker
\wt\Gamma=\ker \Gamma$, it follows from Theorem \ref{ABGthm}~(i)
that $\wt \ZA_*=\dom \wt\Gamma$ is dense in $\ZA^*$. Therefore,
$\{\cH,\wt\Gamma\}$ is an $AB$-generalized boundary pair for
$\ZA^*$. The connections between the Weyl functions and
$\gamma$-fields are clear from the definitions; cf.
Lemma~\ref{isomtrans}.

To treat the closedness properties of $\wt\Gamma$ consider the
representation of $\wt\Gamma$ in Corollary~\ref{ABGgraph}. Let
$\lambda\in\cmr$ be fixed and assume that the sequence $\{\wh
f_n,\wh k_n\}\in\wt\Gamma$ converges to $\{\wh f,\wh k\}$. Then $\wh
f_n=H(\lambda)k_{n,\lambda}+\wh \gamma(\lambda)h_n$ with unique
$k_{n,\lambda}\in\ran(A_0-\lambda)$ and $h_n\in\dom \wt
M(\lambda)=\dom E$ and, since the angle between the graphs of $A_0$
and $\wh\sN_\lambda(\wt \ZA_*)$ is positive, it follows that
$k_{n,\lambda}\to k_{\lambda}\in\ran(A_0-\lambda)$. Moreover, the
representation of $\{\wh f_n,\wh k_n\}\in \wt\Gamma$ in
Corollary~\ref{ABGgraph} shows that $h_n\to h\in\cH$. According to
Theorem~\ref{ABGthm} $\gamma(\lambda)$ and $\gamma(\bar\lambda)^*$
are bounded operators and, since  $\wt M(\lambda)=E+M(\lambda)$,
where $M(\lambda)$ is bounded (see \cite[Proposition~3.16]{DHMS09}),
it follows from Corollary~\ref{ABGgraph} that 
\[
 \left\{ H(\lambda)k_{n,\lambda}, \binom{0}{\gamma(\bar{\lambda})^*k_{n,\lambda}} \right\}
  + \left\{ \binom{\gamma(\lambda)h_n}{\lambda\gamma(\lambda)h_n},\binom{h_n}{Eh_n+M(\lambda)h_n} \right\}\in \wt\Gamma
\]
converges to
\begin{equation}\label{elem1}
 \left\{ H(\lambda)k_\lambda, \binom{0}{\gamma(\bar{\lambda})^*k_\lambda} \right\}
  + \left\{ \binom{\gamma(\lambda)h}{\lambda\gamma(\lambda)h},\binom{h}{h''+M(\lambda)h} \right\}\in \clos\wt\Gamma,
\end{equation}
where $\{h,h''\}\in\overline{E}$. It is also clear that the limit
element in~\eqref{elem1} belongs to $\wt\Gamma$ if and only if
$\lim_{n\to\infty}\{h_n,Eh_n\}=\{h,h''\}\in E$. Therefore,
$\wt\Gamma$ is closed if and only if $E$ is closed and, moreover,
the closure of $\wt\Gamma$, which is also an $AB$-generalized
boundary pair for $\ZA^*$ (as stated after
Definition~\ref{ABGtriple}), is given by~\eqref{Qeq:1} with $E$
replaced by its closure $\overline{E}$.

($\Leftarrow$) Let $\{\cH,\wt\Gamma\}$ be an $AB$-generalized
boundary pair. According to Theorem~\ref{ABGthm} the corresponding
Weyl function $\wt M$ is of the form $\wt M=E+M$, where $\overline{
M}$ is a bounded Nevanlinna function and $E\,(=\RE \wt M(\mu))$ is a
symmetric densely defined operator in $\cH$.

To construct $\wt\Gamma$ directly from an associated $B$-generalized
boundary pair, define
\begin{equation}\label{Qeq:6}
   \begin{pmatrix}
   \wh\Gamma_0 \\ \wh\Gamma_1
   \end{pmatrix}:=
   \begin{pmatrix}
   I & 0 \\
   -E & I
   \end{pmatrix}\begin{pmatrix}
   \wt\Gamma_0 \\ \wt\Gamma_1
   \end{pmatrix}.
\end{equation}
Since $\wt M(\lambda)=\wt\Gamma(\wh\sN_\lambda(\wt \ZA_*))\subset\ran\wt\Gamma$,
where $\wt \ZA_*=\dom\wt\Gamma$, and $\dom \wt M(\lambda)=\dom E$, it
follows that the graph of $\wt M(\lambda)$ belongs to the domain of
the block operator $$\begin{pmatrix}
   I & 0 \\
   -E & I
\end{pmatrix},$$
i.e., $\wh\sN_\lambda(\wt \ZA_*)\subset\dom\wh\Gamma$ for all
$\lambda\in\cmr$. Moreover,
\[
\wh\Gamma(\wh\sN_\lambda(\wt \ZA_*))=-E+\wt
M(\lambda)=M(\lambda)\upharpoonright \dom E\subset\ran\wh\Gamma\]
for all $\lambda\in\cmr$. Since $\overline{M}\in \cR[\cH]$ this
implies that $\ran\wh\Gamma_0$ is dense in $\cH$. Clearly,
$\ker\wh\Gamma_0=\ker\wt\Gamma_0=A_0$ and since $\wt
\ZA_*=A_0\wh{+}\wh\sN_\lambda(\wt \ZA_*)$ one concludes that $\wt
\ZA_*=\dom\wh\Gamma=\dom\wt\Gamma$ is dense in $\ZA^*$. Thus,
$\wh\Gamma$ is also an $AB$-generalized boundary pair for $\ZA^*$
and, consequently, also its closure is an $AB$-generalized boundary
triple for $\ZA^*$, too. Denote the closure of $\wh\Gamma$ by
$\Gamma^{(0)}$. Then the corresponding Weyl function
$M^{(0)}(\cdot)$ is an extension of $M$ and its closure is equal to
$\overline{M}$. Since $\Gamma^{(0)}$ is closed, it must be unitary
by \cite[Theorem~7.51]{DHMS12} (cf. \cite[Proposition~3.6]{DHMS06}).
In particular, $M^{(0)}(\cdot)$ is also closed, i.e.,
$M^{(0)}(\cdot)=\overline{M}\in \cR[\cH]$. Thus,
$\ran\Gamma_0^{(0)}=\dom M^{(0)}(\cdot)=\cH$ and hence
$\Gamma^{(0)}$ is a $B$-generalized boundary pair for $\ZA^*$; see
Definition~\ref{Btriple}. Finally, in view of~\eqref{Qeq:6} one has
\[
   \begin{pmatrix}
   \wt\Gamma_0 \\ \wt\Gamma_1
   \end{pmatrix}=
   \begin{pmatrix}
   I & 0 \\
   E & I
   \end{pmatrix}\begin{pmatrix}
   \wh\Gamma_0 \\ \wh\Gamma_1
   \end{pmatrix}
   \subset    \begin{pmatrix}
   I & 0 \\
   E & I
   \end{pmatrix}
   \Gamma^{(0)}=:\wt \Gamma^{(0)}.
\]
Here equality $\wt\Gamma=\wt\Gamma^{(0)}$ holds by Proposition
\ref{A0graph} (iii), since $\wt
M^{(0)}(\cdot)=E+\overline{M}(\cdot)=\wt M(\cdot)$.
\end{proof}

The proof of Theorems \ref{ABGthm} contains also the following result.

\begin{corollary}\label{ABGcor1}
If $\{\cH,\wt\Gamma\}$ 
is an $AB$-generalized boundary pair for $\ZA^*$ with the Weyl
function $\wt M(\cdot)$ and $E=\RE \wt M(\mu)$ for some $\mu\in
\rho(M)$, then the closure of $\Gamma=\begin{pmatrix}
   I & 0 \\
   -E & I
   \end{pmatrix}\wt\Gamma$
defines a $B$-generalized boundary pair for $\ZA^*$ with the bounded
Weyl function $M(\cdot)=\clos(\wt M(\cdot)-E)$.
\end{corollary}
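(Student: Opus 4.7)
The plan is to reproduce, in a slightly streamlined form, the construction already carried out in the converse direction of Theorem \ref{QBTthm}. First I would invoke Theorem \ref{ABGthm}(vi) to decompose $\wt M(\lambda) = E + M_0(\lambda)$, where $E = \RE \wt M(\mu)$ is a symmetric densely defined operator in $\cH$ with $\dom E = \ran \wt \Gamma_0$, and $M_0(\cdot)$ is an operator valued function whose closure $\overline{M_0}$ belongs to $\cR[\cH]$.

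Next, I set $\Gamma := V \circ \wt \Gamma$ with $V := \begin{pmatrix} I & 0 \\ -E & I \end{pmatrix}$ acting on $(\cH^2, J_\cH)$. By Lemma \ref{isomtrans} the block triangular operator $V$ is $J_\cH$-isometric, so $\Gamma$ is $(J_\sH, J_\cH)$-isometric. Since $V$ preserves the first component, one has $\ker \Gamma_0 = \ker \wt \Gamma_0 = A_0$ (selfadjoint), $\ran \Gamma_0 = \ran \wt \Gamma_0$ (dense in $\cH$), and $\dom \Gamma = \dom \wt \Gamma$ (dense in $\ZA^*$). Hence $\{\cH,\Gamma\}$ is an $AB$-generalized boundary pair for $\ZA^*$ with Weyl function $M_0(\cdot) = \wt M(\cdot) - E$.

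Then I would pass to the closure. As noted right after Definition \ref{ABGtriple}, the closure $\overline{\Gamma}$ is again an $AB$-generalized boundary pair for $\ZA^*$, whose associated Weyl function extends $M_0$. Being closed and $(J_\sH,J_\cH)$-isometric, $\overline{\Gamma}$ is in fact $(J_\sH,J_\cH)$-unitary by \cite[Theorem~7.51]{DHMS12} (cf.\ \cite[Proposition~3.6]{DHMS06}); by Theorem \ref{GBTNP} this forces the Weyl family of $\overline{\Gamma}$ to be closed and to belong to $\wt \cR(\cH)$. Combined with the fact that it extends $M_0$, it must coincide with $\clos M_0 = \clos(\wt M(\cdot) - E) \in \cR[\cH]$. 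Since this limit is a bounded everywhere defined operator on $\cH$, Theorem \ref{ABGthm}(iii) yields $\ran \overline{\Gamma}_0 = \dom \overline{M_0} = \cH$, so $\overline{\Gamma}$ satisfies Definition \ref{Btriple} and is a $B$-generalized boundary pair for $\ZA^*$ with the asserted Weyl function.

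The subtle point -- and the only one requiring care -- is guaranteeing that the Weyl function of $\overline{\Gamma}$ is genuinely the closure $\overline{M_0}$ rather than some strictly larger closed extension. This is precisely where the upgrade from closed isometric to unitary (via \cite[Theorem~7.51]{DHMS12}) is essential, as it forces the Weyl family to be closed and thereby pins it down to $\overline{M_0}$; everything else in the argument is a bookkeeping application of Lemma \ref{isomtrans} and Theorem \ref{ABGthm}.
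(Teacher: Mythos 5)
Your proof is correct and follows essentially the same route as the paper: the corollary is stated there as being contained in the converse part of the proof of Theorem \ref{QBTthm}, which performs exactly your construction — apply the lower-triangular isometric transform with $-E$, verify the result is an $AB$-generalized boundary pair, take the closure, upgrade to unitary via \cite[Theorem~7.51]{DHMS12}, and conclude that the Weyl function is $\clos(\wt M(\cdot)-E)\in\cR[\cH]$ so that $\ran\overline{\Gamma}_0=\cH$. Your explicit flagging of why the Weyl family of the closure is pinned down to $\overline{M_0}$ (maximality of a bounded Nevanlinna value as a dissipative relation) matches, and slightly sharpens, the paper's own remark that $M^{(0)}(\cdot)$ is closed and hence equals $\overline{M}$.
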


Theorems \ref{ABGthm} and~\ref{QBTthm} imply the following
characterization for the Weyl functions corresponding to
$AB$-generalized boundary pairs.
  \begin{corollary}\label{ABGcor2}
The class of $AB$-generalized boundary pairs coincides with the
class of isometric boundary pairs whose Weyl function is of the form
\begin{equation}\label{WeylABG2}
 M(\lambda)=E+M_0(\lambda),\quad \lambda\in\cmr,
\end{equation}
with $E$ a symmetric densely defined operator in $\cH$ and
$M_0(\cdot)\in \cR[\cH]$. In particular, every function $M$ of the
form~\eqref{WeylABG2} is a Weyl function of some $AB$-generalized
boundary pair.
  \end{corollary}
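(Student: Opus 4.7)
\medskip

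\noindent\textbf{Plan.} The corollary will be obtained by assembling three previously established results: Theorem~\ref{ABGthm}, Theorem~\ref{QBTthm}, and the realization theorem for bounded Nevanlinna functions (Theorem~\ref{thm:WF_BG_BT}). No new computations are required; the work is essentially bookkeeping of what has already been set up.

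For the forward inclusion I note that any $AB$-generalized boundary pair $\{\cH,\Gamma\}$ is automatically $(J_\sH,J_\cH)$-isometric, since Assumption~\ref{1_def_ABGtriple} is exactly the Green's identity encoding this isometry. The decomposition $M(\lambda)=E+M_0(\lambda)$ with $E=\RE M(\mu)$ a symmetric densely defined operator in $\cH$ and $\overline{M_0(\cdot)}\in \cR[\cH]$ is precisely item~(vi) of Theorem~\ref{ABGthm}. Hence nothing further is needed in this direction.

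For the realization --- equivalently, the reverse inclusion of Weyl function classes --- given a symmetric densely defined $E$ in $\cH$ and $M_0\in \cR[\cH]$, the plan is to first invoke the realization part of Theorem~\ref{thm:WF_BG_BT} (i.e.\ \cite[Proposition~5.9]{DHMS06}) to obtain a $B$-generalized boundary pair $\{\cH,\Gamma^{(0)}\}$ whose Weyl function equals $M_0$. Then Theorem~\ref{QBTthm} applied to the pair $(\Gamma^{(0)},E)$ asserts that the triangular transform
\[
 \wt\Gamma := \begin{pmatrix} I & 0 \\ E & I \end{pmatrix}\Gamma^{(0)}
\]
is an $AB$-generalized boundary pair with Weyl function $\wt M(\lambda)=E+M_0(\lambda)$, which coincides with the prescribed $M$. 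This supplies the realization statement at once.

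To upgrade this to the full set-theoretic coincidence of the two classes, suppose $\{\cH,\Gamma\}$ is an isometric boundary pair whose Weyl function already has the form $E+M_0$. The $AB$-generalized $\wt\Gamma$ just constructed has the same Weyl function, and a uniqueness argument --- via Proposition~\ref{A0graph}(iii) (applied to the closures and after reducing to the simple part of the underlying symmetric operator, where Theorem~\ref{GBTNP} gives unitary equivalence of minimal pairs with a common Weyl family) --- transfers the two defining features of an $AB$-generalized boundary pair, namely selfadjointness of $A_0=\ker\Gamma_0$ and density of $\ran\Gamma_0$, from $\wt\Gamma$ to $\Gamma$. The hard part is precisely this last uniqueness step, since Proposition~\ref{A0graph}(iii) is stated for an isometric extension, so one must first match $\Gamma$ and $\wt\Gamma$ modulo the simple part of $A$ before reading off the structural properties; the remainder of the argument is an immediate consequence of the results already at hand.
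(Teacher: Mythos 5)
Your first two steps are exactly the paper's proof. The forward inclusion is read off from Theorem~\ref{ABGthm}\,(vi), and the realization of $M=E+M_0(\cdot)$ is obtained by first realizing $M_0\in\cR[\cH]$ as the Weyl function of a $B$-generalized boundary pair and then applying the triangular transform of Theorem~\ref{QBTthm}; this is precisely what the paper does (it cites \cite[Proposition~3.16]{DHMS09} for the realization of $M_0$ --- note that Theorem~\ref{thm:WF_BG_BT} as stated concerns $B$-generalized boundary \emph{triples} and the strict class $\cR^s[\cH]$, so for a general $M_0\in\cR[\cH]$ you really do need the boundary-\emph{pair} version that you mention only parenthetically).

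The third step is where you part company with the paper, and it does not work as written. The paper's proof stops after the two directions above: it reads the corollary as a characterization of the class of \emph{Weyl functions} of $AB$-generalized boundary pairs (this is also what the ``in particular'' clause isolates) and makes no attempt to show that every isometric boundary pair whose Weyl function happens to have the form \eqref{WeylABG2} is itself $AB$-generalized. Your proposed route to that stronger statement fails at both places you lean on. Proposition~\ref{A0graph}\,(iii) compares a pair $\Gamma$ with \emph{selfadjoint} $A_0=\ker\Gamma_0$ against an isometric \emph{extension} $\wt\Gamma\supset\Gamma$; here selfadjointness of $\ker\Gamma_0$ is exactly what you would need to prove for the given pair, and there is no inclusion between the given $\Gamma$ and the $\wt\Gamma$ you construct via Theorem~\ref{QBTthm}. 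Theorem~\ref{GBTNP} gives uniqueness up to unitary equivalence only for minimal \emph{unitary} boundary pairs; a merely isometric boundary pair is not determined by its Weyl family. Indeed one can shrink $\dom\Gamma$ inside $\ker\Gamma_0$ without touching the defect subspaces $\wh\sN_\lambda(\ZA_*)$, which leaves the Weyl family intact but can turn $\ker\Gamma_0$ into a proper, merely essentially selfadjoint, restriction of $A_0$, so Assumption~\ref{3_def_ABGtriple} can be lost while the Weyl function keeps the form $E+M_0$. Consequently the structural properties of $\wt\Gamma$ cannot be ``transferred'' to $\Gamma$ in the way you sketch, and the literal set-theoretic coincidence is at best unproved. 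Drop the third step and state and prove the corollary, as the paper does, at the level of Weyl functions.
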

\begin{proof}
By Theorem \ref{ABGthm} the Weyl function $M$ of an $AB$-generalized
boundary pair $\{\cH,\wt\Gamma\}$ is of the form~\eqref{WeylABG2},
where $E\subset E^*$ is densely defined and $M_0(\cdot)\in
\cR[\cH]$.

Conversely, if $M$ is given by~\eqref{WeylABG2} with $M_0(\cdot)\in
\cR[\cH]$, then by \cite[Proposition~3.16]{DHMS09} $M_0(\cdot)$ is
the Weyl function of a $B$-generalized boundary pair
$\{\cH,\Gamma\}$ for $\ZA^*$. Now according to the first part of
Theorem \ref{QBTthm} the transform $\wt\Gamma$ of $\Gamma$ defined
in~\eqref{Qeq:1} is an $AB$-generalized boundary pair for $\ZA^*$
such that the corresponding Weyl function is equal to
~\eqref{WeylABG2}.
\end{proof}

By Definition~\ref{Btriple} every $B$-generalized boundary pair is
also an $AB$-generalized boundary pair. Hence, the notions of
$AB$-generalized boundary pairs and $AB$-generalized boundary
triples generalize the earlier notions of ($B$-)generalized boundary
triples as introduced in \cite{DM95} and boundary triples of bounded
type as defined in \cite[Section~5.3]{DHMS06}. It is emphasized that
$B$-generalized boundary pairs are not only isometric: they are also
unitary in the Kre\u{\i}n space sense, see Definition \ref{GBT}. The
characteristic properties of the classes of $B$-generalized boundary
triples and pairs can be found in
\cite[Propositions~5.7,~5.9]{DHMS06} and
\cite[Proposition~3.16]{DHMS09}. In particular, recall that the
class of functions $M\in \cR[\cH]$ coincides with the class of Weyl
functions of $B$-generalized boundary pairs and the class of
functions $M\in \cR^s[\cH]$ coincides with the class of
$B$-generalized boundary triples. Some further characterizations
connected with $AB$-generalized boundary pairs are given in the next
corollary.

\begin{corollary}\label{QBRcor}
Let $\{\cH,\wt\Gamma\}$ be an $AB$-generalized boundary pair for
$\ZA^*$ as in Theorem~\ref{QBTthm} and let $E$ be a symmetric
densely defined operator in $\cH$ as in~\eqref{Qeq:1}. Then:
\begin{enumerate}\def\labelenumi {\textit{(\roman{enumi})}}
\item $\{\cH,\wt\Gamma\}$ is a unitary boundary pair (boundary relation) for $\ZA^*$ if and only if
the operator $E$ is selfadjoint;

\item $\{\cH,\wt\Gamma\}$ has an extension to a unitary boundary pair for $\ZA^*$
if and only if the operator $E$ has equal defect numbers and in this
case the formula~\eqref{Qeq:1} defines a unitary extension of
$\wt\Gamma$ when $E$ is replaced by some selfadjoint extension $E_0$ of $E$;

\item $\{\cH,\wt\Gamma\}$ is a $B$-generalized boundary pair for $\ZA^*$ if and only if
the operator $E$ is bounded and everywhere defined (hence selfadjoint);

\item $\{\cH,\wt\Gamma\}$ is an ordinary boundary triple for $\ZA^*$ if and only if $\ran\Gamma=\cH\oplus\cH$, or equivalently,
if and only if $\ran \Gamma$ is closed, $E$ is bounded, and $\ker\IM M(\lambda)=0$, $\lambda\in\cmr$.
\end{enumerate}
\end{corollary}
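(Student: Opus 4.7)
The plan is to extract all four statements from the representation of $\wt\Gamma$ as $V\circ\Gamma$ provided by Theorem~\ref{QBTthm}, combined with the representation $\wt M(\lambda)=E+M_0(\lambda)$ from Corollary~\ref{ABGcor2}. A direct calculation shows that the Kre\u{\i}n-space adjoint of $V=\bigl(\begin{smallmatrix} I & 0\\ E & I\end{smallmatrix}\bigr)$ with respect to $J_\cH$ is
\[
 V^{[*]}=\begin{pmatrix} I & 0\\ -E^* & I\end{pmatrix},\qquad \dom V^{[*]}=\dom E^*\oplus \cH,
\]
so that $V$ is $(J_\cH,J_\cH)$-unitary precisely when $E=E^*$. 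Since the underlying $B$-generalized pair $\Gamma$ is already unitary (cf.\ the remark after Definition~\ref{Btriple}), this single observation will carry most of the work.

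For (i), I would deduce the implication ($\Leftarrow$) as a composition of two $(J,J)$-unitary maps, while for ($\Rightarrow$) I would invoke Theorem~\ref{GBTNP} to get $\wt M\in\wt\cR(\cH)$ and in particular $\wt M(\bar\lambda)=\wt M(\lambda)^*$; combined with the same identity for $M_0\in\cR[\cH]$, this forces $E=E^*$. For (ii), I would substitute a selfadjoint extension $E_0\supset E$ into~\eqref{Qeq:1} to obtain by (i) a unitary pair $\wt\Gamma^{(0)}\supset \wt\Gamma$. Conversely, any unitary extension $\wt\Gamma'$ of $\wt\Gamma$ is again $AB$-generalized, because $\ker \wt\Gamma'_0$ is symmetric and contains the selfadjoint relation $A_0=\ker\wt\Gamma_0$ (forcing equality), while $\ran \wt\Gamma'_0$ contains the dense $\ran\wt\Gamma_0$. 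Applying Theorem~\ref{QBTthm} to $\wt\Gamma'$ will produce a selfadjoint $E'$, and observing that the two boundary pairs share the same reference relation $A_0$ (hence the same $\gamma$-field and the same bounded Nevanlinna part $M_0$) will force $E\subset E'$ from $\wt M\subset \wt M'$.

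For (iii), the transform $V\circ\Gamma$ is defined precisely on $\{\wh f\in\dom\Gamma:\Gamma_0\wh f\in\dom E\}$, so $\ran\wt\Gamma_0=\dom E$; thus $\wt\Gamma$ is $B$-generalized iff $\dom E=\cH$, which by Hellinger--Toeplitz makes $E$ bounded and selfadjoint. For (iv), the definition of an ordinary triple demands $\ran\wt\Gamma=\cH\oplus\cH$, i.e.\ density plus closedness of $\ran\wt\Gamma$. By Corollary~\ref{ABGgraph}(ii), density amounts to $\dom E^*\cap\ker\overline{\gamma(\lambda)}=\{0\}$; under (iii) $E=E^*$ with $\dom E^*=\cH$, so density reduces to $\ker\overline{\gamma(\lambda)}=\{0\}$, which by Lemma~\ref{Weylmul}(iv) (equivalently via $\gamma(\lambda)^*\gamma(\lambda)=(\IM\lambda)^{-1}\IM M_0(\lambda)$) is precisely the condition $\ker\IM M(\lambda)=\{0\}$.

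The principal obstacle will be the bookkeeping in the converse of (ii): one must verify that the operator $E'$ produced by Theorem~\ref{QBTthm} applied to $\wt\Gamma'$ actually extends the original $E$, which rests on identifying the bounded parts $M_0=M_0'$ through the shared selfadjoint $A_0$. A secondary delicate point arises in (i) when $E=E^*$ is unbounded, so that $V$ is only densely defined; here either Theorem~\ref{GBTNP} (via the Nevanlinna Weyl function) or a direct manipulation of Kre\u{\i}n adjoints is needed to confirm $\wt\Gamma^{-1}=\wt\Gamma^{[*]}$.
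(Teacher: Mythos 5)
Your overall route coincides with the paper's: parts (iii) and (iv) are argued exactly as in the text (via $\ran\wt\Gamma_0=\dom\wt M=\dom E$ and the range decomposition of Corollary~\ref{ABGgraph}), and your converse for (ii) -- deducing $E\subset E'$ from $\wt M\subset\wt M'$ and $E=\RE\wt M(\mu)$ -- is a correct and more explicit version of what the paper leaves as ``clear from (i) and Theorem~\ref{QBTthm}''.

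The one genuine gap is in your primary argument for the implication ($\Leftarrow$) of (i). When $E=E^*$ is unbounded, $V$ is an unbounded, nondensely defined $(J_\cH,J_\cH)$-unitary relation, and the composition of two Kre\u{\i}n-space unitary relations is \emph{not} automatically unitary; the paper only ever invokes ``unitary $\circ$ unitary $=$ unitary'' for \emph{standard} (bounded, everywhere defined) transforms $V$ (cf.\ the citations of \cite[Theorem~2.10(ii)]{DHMS09} in Example~\ref{example6.1} and \cite[Proposition~3.11]{DHMS09} in Theorem~\ref{thmBtoE}). So ``a composition of two $(J,J)$-unitary maps'' does not settle the unbounded case. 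Your fallback is the right idea but is miscited and missing an ingredient: Theorem~\ref{GBTNP} is the realization theorem and does not by itself show that \emph{this particular} $\wt\Gamma$ is unitary. What is needed is the criterion that a \emph{closed} isometric boundary pair whose Weyl family is a Nevanlinna family is unitary (\cite[Proposition~3.6]{DHMS06}, \cite[Theorem~7.51]{DHMS12}), and hence one must first record that $\wt\Gamma$ is closed -- which is exactly the closedness statement of Theorem~\ref{QBTthm} ($\wt\Gamma$ is closed iff $E$ is closed, and $E=E^*$ is closed). With that supplement, your argument for (i) (including the adjoint computation $V^{[*]}=\bigl(\begin{smallmatrix} I & 0\\ -E^* & I\end{smallmatrix}\bigr)$ and the ($\Rightarrow$) direction via $\wt M(\bar\lambda)=\wt M(\lambda)^*$ forcing $E=E^*$) is sound and matches the paper's.
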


\begin{proof}
(i) By Theorem~\ref{QBTthm} $\wt\Gamma$ is closed if and only if $E$
is closed. Moreover, $E=E^*$ if and only if $M$ is a Nevanlinna
function. Now the statement follows from
\cite[Proposition~3.6]{DHMS06} (or \cite[Theorem~7.51]{DHMS12}).

(ii) This is clear from part (i) and Theorem~\ref{QBTthm}.

(iii) This follows from Theorem \ref{ABGthm}~(v) by the equalities
$\ran \wt\Gamma_0= \dom \wt M=\dom E\,(=\cH)$.

(iv) The first equivalence is contained in \cite[Proposition~5.3]{DHMS06}.
To prove the second criterion, we apply Corollary~\ref{ABGgraph}, in particular,
the representation of $\ran \Gamma$ in~\eqref{rangeG}:
\begin{equation}\label{rangeG2}
 \ran\Gamma=\Gamma(A_0)\hplus M(\lambda) =(\{0\}\times \ran\gamma(\bar\lambda)^*)\hplus M(\lambda).
\end{equation}
Clearly, $E$ is bounded precisely when $M(\lambda)$,
$\lambda\in\cmr$, is bounded. In this case the angle between the
last two subspaces in~\eqref{rangeG2} is positive and then $\ran
\Gamma$ is closed if and only if $\ran\gamma(\bar\lambda)^*$ and
$M(\lambda)$ both are closed. By Theorem~\ref{ABGthm}
$\gamma(\lambda)$ is bounded and $\dom \gamma(\lambda)=\dom M(\lambda)=\cH$,
when $M(\lambda)$ is closed and bounded. Then $\gamma(\lambda)$ is
closed and $(\ran\gamma(\bar\lambda)^*)^\perp=\ker
\gamma(\lambda)=\ker \IM M(\lambda)$. Therefore, the conditions
$\ran\Gamma$ is closed, $E$ is bounded, and $\ker\IM M(\lambda)=0$
imply that $\ran\Gamma$ is also dense in $\cH\times\cH$ and, thus,
$\Gamma$ is surjective. The converse is clear.
\end{proof}

The notions of $AB$-generalized boundary pairs and $AB$-generalized
boundary triples generalize also the class of so-called
\textit{quasi boundary triples}, which has been studied in J.
Behrndt and M. Langer \cite{BeLa07}. In the definition of a quasi
boundary triple it is assumed that $\Gamma=\begin{pmatrix}
   \Gamma_0 \\ \Gamma_1
   \end{pmatrix}$ is single-valued and Assumption \ref{2_def_ABGtriple}
in Definition \ref{ABGtriple} is replaced by the stronger condition
that the joint range of $\Gamma=\begin{pmatrix}
   \Gamma_0 \\ \Gamma_1
   \end{pmatrix}$ is dense in $\cH\times\cH$.

Corollary \ref{ABGgraph} gives the following characterization for
quasi boundary triples.

\begin{corollary}\label{quasicor}
An $AB$-generalized boundary triple $\{\cH,\Gamma_0,\Gamma_1\}$ for $\ZA^*$ with
the Weyl function $M=E+M_0(\cdot)$ represented in the form
~\eqref{WeylABG} is a quasi boundary triple (with single-valued
$\Gamma$) 
for $\ZA^*$ if and only if $\ran\Gamma$ is dense in $\cH\oplus\cH$,
or equivalently,
\begin{equation}\label{quasichar}
 \dom E^*\cap \ker \overline{\IM M(\lambda)} \,\left(=\dom E^*\cap \ker \IM \overline{
 M_0(\lambda)}\right)
 =\{0\},
\end{equation}
for some or, equivalently, for every $\lambda\in\cmr$.
\end{corollary}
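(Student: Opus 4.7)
The plan is to reduce the corollary to Corollary \ref{ABGgraph}(ii), which already computes $(\ran \Gamma)^{[\perp]}$ and $\cran \Gamma$ for an arbitrary $AB$-generalized boundary pair. The first equivalence is essentially definitional: comparing Definitions \ref{ABGtriple} and \ref{def-Jussi}, a quasi boundary triple is precisely a single-valued $AB$-generalized boundary pair (i.e.\ an $AB$-generalized boundary triple) whose range is \emph{additionally} dense in $\cH \oplus \cH$, rather than merely having dense first component $\ran \Gamma_0$. Hence the first ``iff'' in the statement is immediate, and everything reduces to translating the density of $\ran \Gamma$ in $\cH \oplus \cH$ into the analytic form \eqref{quasichar}.

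For the analytic reformulation I would invoke Corollary \ref{ABGgraph}(ii), which asserts that
\[
 (\ran \Gamma)^{[\perp]} = E^* \upharpoonright \ker \overline{\gamma(\lambda)}, \quad \lambda \in \cmr,
\]
where $E = \RE M(\mu)$ for some fixed $\mu \in \cmr$ as in Theorem \ref{ABGthm}(vi). Therefore $\ran \Gamma$ is dense in $\cH \oplus \cH$ exactly when the relation $E^* \upharpoonright \ker \overline{\gamma(\lambda)}$ is trivial, i.e.\ equals $\{(0,0)\}$. Since $E$ is densely defined by Theorem \ref{ABGthm}(vi), $E^*$ is a closed operator, so this is in turn equivalent to $\dom E^* \cap \ker \overline{\gamma(\lambda)} = \{0\}$.

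The final step is to identify $\ker \overline{\gamma(\lambda)}$ with $\ker \overline{\IM M(\lambda)}$ and with $\ker \IM \overline{M_0(\lambda)}$. The identity \eqref{gWeyl01} specialized to $\mu = \lambda$ gives
\[
 2\IM \lambda\,\|\gamma(\lambda) h\|_\sH^2 = 2(\IM M(\lambda) h, h)_\cH, \quad h \in \ran \Gamma_0,
\]
so that $\ker \gamma(\lambda) = \ker \IM M(\lambda)$ on the common dense domain $\ran \Gamma_0$; since both operators are bounded by Theorem \ref{ABGthm}(iii),(vi), their closures are everywhere defined bounded operators with the same kernel, and hence $\ker \overline{\gamma(\lambda)} = \ker \overline{\IM M(\lambda)}$. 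For the bracketed equality in \eqref{quasichar}, symmetry of $E$ gives $E^* h = E h$ for every $h \in \dom E$, and therefore $(M(\lambda) - M(\lambda)^*) h = (M_0(\lambda) - M_0(\lambda)^*) h$ on $\dom E = \dom M$; passing to closures and using $\overline{M_0(\lambda)} \in \cB(\cH)$ yields $\overline{\IM M(\lambda)} = \IM \overline{M_0(\lambda)}$. Independence of $\lambda \in \cmr$ then follows from $\overline{M_0(\cdot)} \in \cR[\cH]$, as already recorded in the proof of Corollary \ref{ABGgraph}(ii).

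There is no real structural obstacle — the heavy lifting is packaged in Corollary \ref{ABGgraph}(ii). The one point requiring care is the domain bookkeeping when passing to closures of $\gamma(\lambda)$, $\IM M(\lambda)$, and $M_0(\lambda)$, together with the observation that densely defined symmetry of $E$ (as opposed to selfadjointness) already suffices to replace $\IM M(\lambda)$ by $\IM M_0(\lambda)$ on $\dom E$.
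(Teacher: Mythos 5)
Your proposal is correct and follows essentially the same route as the paper: the paper derives Corollary \ref{quasicor} directly from Corollary \ref{ABGgraph}(ii) (which already records that $\ran\Gamma$ is dense in $\cH\times\cH$ iff $\dom E^*\cap\ker\overline{\gamma(\lambda)}=\{0\}$, with $\ker\overline{\gamma(\lambda)}=\ker\overline{\IM M_0(\lambda)}$ independent of $\lambda$), and your argument is exactly this reduction with the kernel identifications $\ker\overline{\gamma(\lambda)}=\ker\overline{\IM M(\lambda)}=\ker\IM\overline{M_0(\lambda)}$ spelled out via \eqref{gWeyl01} and the symmetry of $E$.
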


\begin{remark}\label{quasiremark2}
A connection between
$B$-generalized boundary triples and quasi boundary triples can be
found from \cite[Theorem~7.57]{DHMS12} and
\cite[Propositions~5.1,~5.3]{Wietsma13}. In fact, each of them is
special case of Theorem~\ref{QBTthm}. Moreover, it should be noted
that in the formulation of the converse part in
\cite[Theorem~7.57]{DHMS12} one should use a $B$-generalized
boundary pair $\{\cH,\Gamma\}$, instead of a $B$-generalized
boundary triple $\{\cH,\Gamma_0,\Gamma_1\}$, since $\ker
\gamma(\lambda)=\ker \IM M(\lambda)=0$ ($M$ is strict) does not
imply in general that $\ker\overline{\gamma(\lambda)}=\ker
\overline{\IM M(\lambda)}=\ker \IM \overline{M_0(\lambda)}=0$, i.e.
$\overline{M_0}\in \cR[\cH]$ as in the proof of Theorem~\ref{QBTthm}
above: only the factor mapping $\Gamma/\mul\Gamma$ (see \cite{AI86},
\cite[eq.~(2.15)]{HaWi12}) becomes single-valued (equivalently the
corresponding Weyl function is strict, cf.
\cite[Proposition~4.7]{DHMS06}). It should be also noted that a
condition which is equivalent to~\eqref{quasichar} appears in
\cite[Section 5.1]{Wietsma13}; see also \cite{WietsmaThesis2012}.
For some further related facts, see Corollary~\ref{Scor} and
Remark~\ref{rem:Error_Scrit} in Section~\ref{sec5}.
\end{remark}

The next result describes a connection between $B$-generalized
boundary pairs and ordinary boundary triples. In the special case of
$B$-generalized boundary triples the corresponding result was
established in \cite[Theorem~7.24]{DHMS12}.

\begin{theorem} \label{thm:2.1}
Let $\{\cH,\Gamma\}$ be a $B$-generalized boundary pair for $\ZA^*$
and let $M(\cdot)$ be the corresponding Weyl function. Then there
exists an ordinary boundary triple
$\{\cH_{s},\Gamma_0^{0},\Gamma_1^{0}\}$ with $\cH_s=\cran\IM
M(\lambda)$, $\lambda\in\cmr$, and operators $E=E^*\in\cB(\cH)$  and
$G\in\cB(\cH,\cH_s)$ with $\ker G=\cH\ominus\cH_s$ such that
\begin{equation}\label{eq:3}
    \begin{pmatrix}
    \Gamma_0 \\ \Gamma_1
    \end{pmatrix}=
    \begin{pmatrix}
    G^{-1} & 0 \\
    E G^{-1}      & G^*
    \end{pmatrix}\begin{pmatrix}
    \Gamma_0^{0} \\ \Gamma_1^{0}
    \end{pmatrix},
\end{equation}
where $G^{-1}$ stands for the inverse of $G$ as a linear relation. If $M_0(\cdot)$ is
the Weyl function corresponding to the ordinary boundary triple
$\{\cH_s,\Gamma_0^{0},\Gamma_1^{0}\}$, then
  \begin{equation}\label{eq:MM0}
M(\lambda)=G^*M_0(\lambda)G+E,\quad\lambda\in\rho(A_0).
  \end{equation}
\end{theorem}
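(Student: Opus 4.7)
The plan is to invert the triangular transform of Lemma~\ref{isomtrans}, by extracting canonical data $E$, $G$, $M_0$ from the Weyl function $M(\cdot)\in \cR[\cH]$ and then pulling an ordinary boundary triple forward through the transform. First, I would set $E:=\RE M(i)\in\cB(\cH)$ and $K:=\IM M(i)\geq 0$, both bounded on $\cH$, and take $G:=K^{1/2}$, $\cH_s:=\cran K$; viewed as an element of $\cB(\cH,\cH_s)$ one has $\ker G=\cH\ominus\cH_s$ and $\ran G$ dense in $\cH_s$. The $Q$-function identity~\eqref{eq:Q_fun} evaluated at $\lambda=\mu=i$ gives $K=\gamma(i)^*\gamma(i)$, so the polar decomposition of $\gamma(i)$ produces an isometry $\gamma_0(i):\cH_s\to\sH$ with $\gamma(i)=\gamma_0(i)G$.

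Next I would extend $\gamma_0$ by $\gamma_0(\lambda):=[I+(\lambda-i)(A_0-\lambda)^{-1}]\gamma_0(i)$ for $\lambda\in\rho(A_0)$, which combined with~\eqref{eq:gamma} gives the factorization $\gamma(\lambda)=\gamma_0(\lambda)G$, and then define
\[
 M_0(\lambda):=-iI_{\cH_s}+(\lambda+i)\gamma_0(i)^*\gamma_0(\lambda),\qquad \lambda\in\rho(A_0).
\]
A direct computation using $\gamma_0(i)^*\gamma_0(i)=I_{\cH_s}$ shows that $M_0(i)=iI_{\cH_s}$ and that $M_0(\cdot)$ obeys~\eqref{eq:Q_fun} relative to $\gamma_0(\cdot)$; hence $M_0\in\cR[\cH_s]$ with $0\in\rho(\IM M_0(i))$, so in fact $M_0\in\cR^u[\cH_s]$. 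A parallel calculation yields $M(\lambda)-E=-iK+(\lambda+i)\gamma(i)^*\gamma(\lambda)=G^*M_0(\lambda)G$, which is~\eqref{eq:MM0}. Theorem~\ref{thm:WF_Ord_BT} then furnishes an ordinary boundary triple $\{\cH_s,\Gamma_0^0,\Gamma_1^0\}$ realizing $M_0$, and arguing with the simple part of $A$ together with the factorization $\gamma=\gamma_0 G$ the underlying Hilbert space and reference operator can be identified with $\sH$ and $A_0=\ker\Gamma_0$ respectively.

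Finally, I would apply Lemma~\ref{isomtrans} to $\Gamma^0=\{\Gamma_0^0,\Gamma_1^0\}$ with the transform $V=\begin{pmatrix} G^{-1} & 0\\ EG^{-1} & G^*\end{pmatrix}$; the result $V\circ\Gamma^0$ is an isometric boundary pair whose Weyl function equals $E+G^*M_0(\cdot)G=M(\cdot)$, and Proposition~\ref{A0graph}~(iii), applicable because $A_0$ is selfadjoint, gives the equality $\Gamma=V\circ\Gamma^0$, which unpacks to~\eqref{eq:3}. The hard step will be the identification of the abstract ordinary boundary triple furnished by Theorem~\ref{thm:WF_Ord_BT} with one realized on the same $\sH$ and attached to the same $A_0$ as the given $B$-generalized boundary pair: when $A$ is not simple its selfadjoint summand must be transported unchanged so that $V\circ\Gamma^0$ really reproduces $\Gamma$ rather than just a unitarily equivalent object.
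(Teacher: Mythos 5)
Your analytic core is correct and in fact produces exactly the same data as the paper: $E=\RE M(i)$, $G$ a square root of $\IM M(i)$ regarded as a map onto $\cH_s=\cran\IM M(i)$, and (expanding your formula via \eqref{eq:gamma}) $M_0(\lambda)=\lambda I+(1+\lambda^2)\gamma_0(i)^*(A_0-\lambda)^{-1}\gamma_0(i)$ with $M_0(i)=iI_{\cH_s}$, so that $M(\lambda)-E=G^*M_0(\lambda)G$ follows from \eqref{eq:Q_fun}. Where you differ is in how the ordinary boundary triple is produced. The paper works \emph{forward}: it transforms the given $\Gamma$ step by step --- the shift $\Gamma_1\mapsto\Gamma_1-E\Gamma_0$ (cf.\ Lemma~\ref{isomtrans}), the compression to $\cH_s$, and finally the renormalization of the resulting strict $B$-generalized triple by $K=(\IM M^{(s)}(i))^{1/2}$ to an ordinary one. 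Because each step is an explicit transform of $\Gamma$ itself, the resulting $\{\cH_s,\Gamma_0^{0},\Gamma_1^{0}\}$ automatically lives in the same $\sH$, has $\ker\Gamma^0=\ZA$, $\ker\Gamma_0^0=A_0$ and the $\gamma$-field $\gamma_0(\cdot)$, and \eqref{eq:3} is read off by composing the inverse transforms. You work \emph{backward}: you construct $M_0$ analytically and invoke Theorem~\ref{thm:WF_Ord_BT}, which only delivers a \emph{minimal} model, unique up to unitary equivalence, of a \emph{simple} symmetric operator in some auxiliary space.

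The step you flag as hard is therefore exactly where the substance of the theorem lies, and as written it is asserted rather than proved. Two points in particular. First, you need not merely a unitarily equivalent copy but an ordinary boundary triple for the given $\ZA^*$ in the given $\sH$ whose $\gamma$-field is precisely your $\gamma_0(\cdot)$ and whose kernel $\ker\Gamma_0^0$ is precisely $A_0$; when $\ZA$ is not simple the selfadjoint summand has to be handled separately, and the cleanest way to obtain such a triple is to define $\Gamma^0:=V^{-1}\circ\Gamma$ with $V^{-1}=\begin{pmatrix} G & 0\\ -G^{-*}E & G^{-*}\end{pmatrix}$ and prove surjectivity of the result --- which is in essence the paper's forward construction. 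Second, Proposition~\ref{A0graph}(iii) is stated for an isometric \emph{extension} $\wt\Gamma\supset\Gamma$, so before using it to conclude $\Gamma=V\circ\Gamma^0$ from equality of the Weyl functions you must first establish one of the inclusions $\Gamma\subset V\circ\Gamma^0$ or $V\circ\Gamma^0\subset\Gamma$; this can be done by comparing the graph decompositions of Proposition~\ref{A0graph}(ii), using that $\Gamma_1 H(\lambda)=\gamma(\bar\lambda)^*\hplus(\{0\}\times\mul\Gamma_1)$ forces the restrictions to $A_0$ to agree once the $\gamma$-fields are matched, but it does not come for free. With these two points supplied your argument closes and gives a legitimate alternative proof; without them it is an outline whose missing part coincides with the paper's actual proof.
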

  \begin{proof}
The proof is based on \cite[Theorem~7.24]{DHMS12} and
\cite[Propositions~3.18,~4.1]{DHMS09}.

Let $E=\RE M(i)$. Then by \cite[Propositions~3.18]{DHMS09} (cf. Lemma~\ref{isomtrans})
the transform
\begin{equation}\label{trans01}
    \wt\Gamma=\left\{\left\{\wh f,\begin{pmatrix} h \\ -Eh + h' \end{pmatrix}\right\}:\, \{\wh f,\wh h\}\in \Gamma\,\right\}
\end{equation}
defines a new $B$-generalized boundary pair for $\ZA^*$ with the
Weyl function $M(\cdot)-E$ and the original $\gamma$-field
$\gamma(\cdot)$ of $\{\cH,\Gamma\}$.

Let $P_s$ be the orthogonal projection onto $\cH_s:=\cran\IM
M(\lambda)$. Then according to \cite[Proposition~4.1]{DHMS09} the
transform $\Gamma^{(s)}:\sH^2\to (\cH_s)^2$ given by
\begin{equation}\label{trans02}
    \Gamma^{(s)}=\left\{\left\{\wh f,\begin{pmatrix} k \\ P_s k' \end{pmatrix}\right\}:\, \{\wh f,\wh k\}\in \wt\Gamma,\, (I-P_s)k=0\, \right\}
\end{equation}
determines a $B$-generalized boundary pair $\{\cH_s,\Gamma^{(s)}\}$
for $(A^{(s)})*$, where $A^{(s)}$ is defined by
\begin{equation}\label{trans03}
 A^{(s)}:=\ker \Gamma^{(s)}.
\end{equation}
The corresponding Weyl function and $\gamma$-field are given by
\[
 M^{(s)}(\lambda)=P_s(M(\lambda)-E)\uphar \cH_s, \quad \gamma^{(s)}(\lambda)=\gamma(\lambda)\uphar\cH_s.
\]
Recall that $\ker (M(\lambda)-E)=\ker \IM M(\lambda)$ does not
depend on $\lambda\in\cmr$. Consequently,
$M(\lambda)-E=M^{(s)}(\lambda)\oplus 0_{\cH\ominus\cH_s}$. Since
$\ker \gamma(\bar\lambda)=\ker (M(\bar\lambda)-E)=\ker P_s$ one has
$\ran \gamma(\bar\lambda)^*\subset \cH_s$ and it follows from
Corollary~\ref{ABGgraph} that $\ran \wt\Gamma_1\subset \cH_s$.
Therefore,~\eqref{trans02} implies that $A^{(s)}$ defined in
~\eqref{trans03} coincides with $\ZA$: $\ker \Gamma^{(s)}=\ker
\Gamma=\ZA$. By construction $M^{(s)}(\cdot)\in \cR^s[\cH_s]$ and
hence $\Gamma^{(s)}$ is single-valued; i.e.
$\{\cH_s,\Gamma^{(s)}_0,\Gamma^{(s)}_1\}$ is in fact a
$B$-generalized boundary triple for $\ZA^*$; cf.
\cite[Proposition~4.7]{DHMS06}.

One can now apply \cite[Theorem~7.24]{DHMS12} with $R=\RE
M^{(s)}(i)=0$ and $K=(\IM M^{(s)}(i))^{1/2}$ to conclude that there
exists an ordinary boundary triple $\{\cH_s,\Gamma_0^0,\Gamma_1^0\}$
with the Weyl function $M_0(\cdot)$ such that
$\Gamma^{(s)}_0=K^{-1}\Gamma_0^0$, $\Gamma^{(s)}_1=K\Gamma_1^0$, and
\[
 M^{(s)}(\lambda)=KM_0(\lambda)K, \quad \lambda\in\cmr.
\]
In particular, $M(i)=E+i\,K^2P_s$ and
$M(\lambda)=E+P_sKM_0(\lambda)KP_s$. The statement follows by taking
$G=KP_s$. Indeed, since $\ran \wt\Gamma_1\subset \cH_s$ and $\mul
\wt\Gamma_0=\ker\IM M(\lambda)=\ker P_s$ (see Lemma~\ref{Weylmul})
~\eqref{trans02} shows that $\dom \Gamma^{(s)}=\dom \wt\Gamma$ and
\[
 \wt \Gamma=\Gamma^{(s)}\oplus \left\{\left\{\wh 0,\begin{pmatrix} k \\ 0 \end{pmatrix}\right\}:\, P_sk=0\, \right\}
          = \left\{\left\{\wh k,\begin{pmatrix}  P_s^{-1}\Gamma_0^{(s)}\wh k\\ P_s\Gamma_1^{(s)}\wh k\end{pmatrix} \right\}:\, \wh k\in\dom \Gamma^{(s)} \,\right\}.
\]
Finally, using $G^{-1}=P_s^{-1}K^{-1}=K^{-1}\oplus (\{0\}\times \ker
P_s)$ and~\eqref{trans01} yields the formulas~\eqref{eq:3} and
~\eqref{eq:MM0}.
\end{proof}

The notion of an $AB$-generalized boundary pair introduced in
Definition~\ref{ABGtriple} appears to be useful in characterizing
the class of unbounded Nevanlinna functions (and multivalued
Nevanlinna families) whose imaginary parts generate closable forms
$\tau_{M(\lambda)}=[(M(\lambda)\cdot,\cdot)-(\cdot,M(\lambda)\cdot)]/2i$
via~\eqref{Green3U} and whose closures are domain invariant. All
such functions, after renormalization by a bounded operator
$G\in[\cH]$, turn out to be Weyl functions of $AB$-generalized
boundary triples, i.e., for a suitable choice of $G$, $G^*MG$ is a
function of the form~\eqref{WeylABG}: see Theorem~\ref{essThm2} and
Corollary~\ref{essCor1} in Section~\ref{sec5}.

\subsection{A Kre\u{\i}n type formula for $AB$-generalized boundary triples}
\label{sec4.2}

In this section a Kre\u{\i}n type (resolvent) formula for
$AB$-generalized boundary triples will be established. We refer to
\cite[Proposition~7.27]{DHMS12} where the case of $B$-generalized
boundary triples was treated, and \cite{BeLa07,BeLa2012} for the
case of quasi boundary triples.

If $A_0=\ker\Gamma_0$ is selfadjoint, then it follows from the first von Neumann's formula
that for each $\lambda\in\rho(A_0)$ the domain of $\Gamma$ can be decomposed as follows:
\begin{equation}\label{domGamma2}
 \dom \Gamma=A_0 \hplus (\dom \Gamma\cap \wh \sN_{\lambda}(\ZA^*)).
\end{equation}
Now let $\Gamma$ be a single-valued and let $\Gamma$ be decomposed
as $\Gamma=\{\Gamma_0,\Gamma_1\}$. Let $\wt A$ be an extension of
$\ZA$ which belongs to the domain of $\Gamma$ and let $\Theta$ be a
linear relation in $\cH$ corresponding to $\ZA$:
\begin{equation}\label{ATheta}
 \Theta=\Gamma(\wt A), \quad \wt A\subset \dom \Gamma
 \quad \Leftrightarrow \quad \wt A=A_\Theta:=\Gamma^{-1}(\Theta), \quad \Theta\subset \ran\Gamma.
\end{equation}

\begin{theorem}\label{Kreinformula}
Let $\ZA$ be a closed symmetric relation, let $\Pi =
\{\cH,\Gamma_0,\Gamma_1\}$ be an $AB$-generalized boundary triple
for $\ZA^*$ with $A_0=\ker\Gamma_0$, and let  $M(\cdot)$ and
$\gamma(\cdot)$ be the corresponding  Weyl function and
$\gamma$-field, respectively. Then for any extension $A_\Theta\in
\Ext_A$ satisfying $A_\Theta\subset \dom \Gamma$ the following
Kre\u{\i}n-type formula\index{Kre\u{\i}n-type formula} holds
\begin{equation}\label{resol}
(A_\Theta-\lambda)^{-1}=(A_0-\lambda)^{-1}
+\gamma(\lambda)\bigl(\Theta-M(\lambda)\bigr)^{-1}\gamma(\bar\lambda)^*,\qquad
\lambda\in\rho(A_0).
\end{equation}
Here the inverses in the first and last terms are taken in the sense of linear
relations.
\end{theorem}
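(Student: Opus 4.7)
The plan is to verify the resolvent identity directly from the decomposition
\[
 \dom\Gamma=A_0\hplus(\dom\Gamma\cap\wh\sN_\lambda(\ZA^*)),\qquad \lambda\in\rho(A_0),
\]
established in \eqref{domGamma2}, together with two basic identities supplied by Theorem~\ref{ABGthm}(iv): the equality $\Gamma_1H(\lambda)=\gamma(\bar\lambda)^*$ from \eqref{ABGG1} (which holds since a single-valued $\Gamma$ forces $\mul\Gamma_1=\{0\}$), and the defining relations $\Gamma_0\wh\gamma(\lambda)h=h$ and $\Gamma_1\wh\gamma(\lambda)h=M(\lambda)h$ for $h\in\dom M(\lambda)=\ran\Gamma_0$. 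Recall also that $A_0$ is selfadjoint by Assumption~\ref{3_def_ABGtriple}, so $\ran(A_0-\lambda)=\sH$ for every $\lambda\in\rho(A_0)$, and the mapping $H(\lambda)$ of \eqref{Hlambda} is a bounded bijection from $\sH$ onto $A_0$.

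First I would prove the inclusion $(A_\Theta-\lambda)^{-1}\subset(A_0-\lambda)^{-1}+\gamma(\lambda)(\Theta-M(\lambda))^{-1}\gamma(\bar\lambda)^*$. Fix $\{k,f\}\in(A_\Theta-\lambda)^{-1}$, so that $\{f,\lambda f+k\}\in A_\Theta$ and, by \eqref{ATheta}, $\Gamma\{f,\lambda f+k\}=\{h,\theta\}\in\Theta$. Using \eqref{domGamma2} I would write this uniquely as
\[
 \{f,\lambda f+k\}=H(\lambda)k+\{f_\lambda,\lambda f_\lambda\},\qquad \{f_\lambda,\lambda f_\lambda\}\in\wh\sN_\lambda(\ZA_*),
\]
so that $f_0:=(A_0-\lambda)^{-1}k$ gives the $A_0$-component. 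Applying $\Gamma_0$ yields $h=\Gamma_0\{f_\lambda,\lambda f_\lambda\}$, whence $f_\lambda=\gamma(\lambda)h$; applying $\Gamma_1$ and using $\Gamma_1 H(\lambda)=\gamma(\bar\lambda)^*$ yields $\theta=\gamma(\bar\lambda)^*k+M(\lambda)h$. The latter equation rewrites as $\gamma(\bar\lambda)^*k\in(\Theta-M(\lambda))h$, i.e.\ $h\in(\Theta-M(\lambda))^{-1}\gamma(\bar\lambda)^*k$, and consequently
\[
 f=f_0+f_\lambda\in(A_0-\lambda)^{-1}k+\gamma(\lambda)(\Theta-M(\lambda))^{-1}\gamma(\bar\lambda)^*k,
\]
which is the desired inclusion.

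The reverse inclusion is obtained by running the argument backwards. Given $k\in\sH$ and $h\in(\Theta-M(\lambda))^{-1}\gamma(\bar\lambda)^*k$, pick $\theta$ with $\{h,\theta\}\in\Theta$ and $\theta-M(\lambda)h=\gamma(\bar\lambda)^*k$, and set $f_0=(A_0-\lambda)^{-1}k$, $f_\lambda=\gamma(\lambda)h$, $f=f_0+f_\lambda$. Then $\{f_0,\lambda f_0+k\}=H(\lambda)k\in A_0$ and $\{f_\lambda,\lambda f_\lambda\}\in\wh\sN_\lambda(\ZA_*)\subset\dom\Gamma$, so their sum $\{f,\lambda f+k\}$ lies in $\dom\Gamma$; applying $\Gamma_0$ and $\Gamma_1$ gives $h$ and $\gamma(\bar\lambda)^*k+M(\lambda)h=\theta$ respectively, hence $\Gamma\{f,\lambda f+k\}=\{h,\theta\}\in\Theta$ and $\{f,\lambda f+k\}\in A_\Theta$.

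The only delicate point will be the bookkeeping of relation-theoretic inverses: $(\Theta-M(\lambda))^{-1}$ and $(A_\Theta-\lambda)^{-1}$ must be read in the graph sense, and one must check that the decomposition of $\{f,\lambda f+k\}$ in \eqref{domGamma2} assigns the full "inhomogeneity" $k$ to the $A_0$-component. Both issues are handled automatically once one observes that the second components of $H(\lambda)k$ and $\{f_\lambda,\lambda f_\lambda\}$ contribute $k$ and $0$ to the constant term, respectively, so no further argument beyond the algebraic identities listed above is required.
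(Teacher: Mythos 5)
Your proof is correct and follows essentially the same route as the paper's: both directions rest on the decomposition \eqref{domGamma2} of $\dom\Gamma$ along $A_0$ and $\wh\sN_\lambda(\ZA_*)$, the identity $\Gamma_1H(\lambda)=\gamma(\bar\lambda)^*$ from Theorem~\ref{ABGthm}, and the defining relations for $\gamma(\lambda)$ and $M(\lambda)$. The only (cosmetic) difference is in the reverse inclusion, where you invoke $A_\Theta=\Gamma^{-1}(\Theta)$ directly to place the constructed element in $A_\Theta$, while the paper reaches the same conclusion by showing that this element differs from the given $\wh g_\Theta$ by an element of $\ker\Gamma=\ZA\subset A_\Theta$ --- the same fact in a different wrapping.
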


\begin{proof}
Let $A_\Theta\subset \dom \Gamma$ be an extension of $\ZA$ with $\Theta$ as in~\eqref{ATheta}
and let $\lambda\in\rho(A_0)$. Assume that $\{g,g'\} \in (A_\Theta-\lambda)^{-1}$ or, equivalently, that
$\wh g_\Theta:=\{g',g+\lambda g'\}\in A_\Theta$. Then
\begin{equation}\label{graph0}
 \wh g_0:= 
 \{(A_0-\lambda)^{-1}g,(I+\lambda(A_0-\lambda)^{-1})g\} \in A_{0} \subset \dom\Gamma,
\end{equation}
and $\wh g_\Theta-\wh g_0\in \dom\Gamma$. Furthermore,
\[
\wh g_\Theta-\wh g_0 
 =\{g'-(A_0-\lambda)^{-1}g,\lambda (g'-(A_0-\lambda)^{-1}g)\},
\]
so that $\wh g_\Theta-\wh g_0\in \wh \sN_{\lambda}(\ZA_*)$. Since
$A_0$ is selfadjoint, $\dom\wh\gamma(\lambda)=\dom
M(\lambda)=\ran\Gamma_0$ and $\wh \gamma(\lambda)$ maps its domain
onto $\wh \sN_{\lambda}(\ZA_*)\subset \dom\Gamma$. In particular,
one can write
\begin{equation}\label{graph}
\wh g_\Theta= \wh g_0 + \wh \gamma(\lambda) \varphi
\end{equation}
for some $\varphi \in \ran\Gamma_0$. By Definition~\ref{Weylfam}
$\Gamma \wh \gamma(\lambda)\varphi=\{\varphi, M(\lambda)\varphi\}$
and it follows from Theorem \ref{ABGthm} that $\Gamma \wh g_{0}=
\{0,\gamma(\bar\lambda)^*g\}$, since $\mul\Gamma=\{0\}$. Now
an application of $\Gamma$ to~\eqref{graph} shows
that
\begin{equation}\label{graphG1}
 \{0,\gamma(\bar\lambda)^*g\}+\{\varphi,M(\lambda)\varphi\}
 =\Gamma \wh g_0 +\Gamma  \wh \gamma(\lambda)\varphi
 =\Gamma \wh g_{\Theta} \in \Theta,
\end{equation}
see~\eqref{ATheta}.
Thus $\{\varphi,\gamma(\bar\lambda)^*g+M(\lambda)\varphi \}\in
\Theta$ and $\{\varphi,\gamma(\bar\lambda)^*g\}\in \Theta-
M(\lambda)$ or, equivalently, $\{g,\varphi\}\in(\Theta-
M(\lambda))^{-1}\gamma(\bar\lambda)^*$. Substituting this into
~\eqref{graph} and taking the first components leads to the inclusion
\[
(A_\Theta-\lambda)^{-1} \subset (A_0-\lambda)^{-1}+
  \gamma(\lambda)\bigl(\Theta-M(\lambda)\bigr)^{-1}\gamma(\bar\lambda)^*.
\]

To prove the reverse inclusion, let $g\in\sH$ such that
$\{\gamma(\bar\lambda)^*g,\varphi\}\in(\Theta-M(\lambda))^{-1}$.
Equivalently, this means that
\begin{equation}\label{graphG2}
 \{g,\gamma(\lambda)\varphi\}\in\gamma(\lambda)(\Theta-M(\lambda))^{-1}\gamma(\bar\lambda)^*,
\end{equation}
since $\dom(\Theta-M(\lambda))\subset \ran\Gamma_0=\dom \gamma(\lambda)$ with $\lambda\in\rho(A_0)$;
see~\eqref{ATheta}. It follows from $\{\varphi,\gamma(\bar\lambda)^*g+M(\lambda)\varphi \}\in
\Theta$ and~\eqref{ATheta} that $\Gamma \wh g_{\Theta}=\{\varphi,\gamma(\bar\lambda)^*g+M(\lambda)\varphi \}$
for some $\wh g_{\Theta}\in A_\Theta$. The decomposition~\eqref{domGamma2} implies that
$\wh g_{\Theta}=\wh v_0+\wh \gamma(\lambda)\psi$, where $\wh v_0\in A_0$ and $\psi\in\dom \gamma(\lambda)$ are
unique since the decomposition in~\eqref{domGamma2} is direct and $\ker \gamma(\lambda)=\{0\}$ due to $\mul\Gamma_0=\{0\}$;
see Theorem~\ref{ABGthm}. Associate with $g$ the element $\wh g_0\in A_0$ as in~\eqref{graph0}.
Then by applying $\Gamma$ to $\wh v_0+\wh \gamma(\lambda)\psi$
we conclude that necessarily (cf.~\eqref{graphG1})
\[
 \psi=\varphi, \quad \wh v_0-\wh g_0\in \ZA.
\]
Since $A\subset A_\Theta$, this implies that for all
$\{\gamma(\bar\lambda)^*g,\varphi\}\in(\Theta-M(\lambda))^{-1}$ one
has $\wh g_0+\wh \gamma(\lambda)\varphi\in A_\Theta$ or,
equivalently, that
\[
 \{g,(A_0-\lambda)^{-1}g+\gamma(\lambda)\varphi\}\in (A_\Theta-\lambda)^{-1}.
\]
In view of~\eqref{graphG2} this proves the reverse inclusion ``$\supset$'' in~\eqref{resol}.
\end{proof}
  \begin{remark}
We emphasize that in the Kre\u{\i}n-type formula~\eqref{resol} it is not
assumed that $\lambda\in\rho(A_\Theta)$. In particular,
$A_\Theta-\lambda$ need not be invertible; $A_\Theta$ and $\Theta$
need not even be closed.
  \end{remark}
The following  statement  is an immediate consequence of Theorem \ref{Kreinformula}.

\begin{corollary}
Let the assumptions be as in Theorem \ref{Kreinformula} and let $\lambda\in\rho(A_0)$.
Then:
\begin{enumerate}
\def\labelenumi{\textit{(\roman{enumi})}}
\item $\ker(A_\Theta-\lambda)=\gamma(\lambda)\ker(\Theta-M(\lambda))$;

\item if $(\Theta-M(\lambda))^{-1}$ is a bounded operator, then the same is true for
$(A_\Theta-\lambda)^{-1}$;

\item if $0\in \rho(\Theta-M(\lambda))$ then $\lambda\in\rho(A_\Theta)$.
\end{enumerate}
\end{corollary}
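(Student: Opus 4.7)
The plan is to derive all three items as immediate consequences of the Kre\u{\i}n-type formula \eqref{resol}, exploiting the fact that, by Theorem~\ref{ABGthm}, for $\lambda\in\rho(A_0)$ the resolvent $(A_0-\lambda)^{-1}$ is bounded and everywhere defined on $\sH$, $\gamma(\bar\lambda)^*\in\cB(\sH,\cH)$ is a bounded everywhere defined operator, and $\gamma(\lambda)$ is a bounded operator with $\dom\gamma(\lambda)=\ran\Gamma_0=\dom M(\lambda)$. Since $\Pi$ is an $AB$-generalized boundary \emph{triple}, $\Gamma$ is single-valued and $\mul\Gamma_0=\{0\}$, so also $\ker\gamma(\lambda)=\{0\}$.

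For (i), I would use the identity $\ker(A_\Theta-\lambda)=\mul[(A_\Theta-\lambda)^{-1}]$ and apply this to the right-hand side of \eqref{resol}. The first term $(A_0-\lambda)^{-1}$ contributes nothing to $\mul$, so
\[
 \mul[(A_\Theta-\lambda)^{-1}]=\mul\bigl[\gamma(\lambda)(\Theta-M(\lambda))^{-1}\gamma(\bar\lambda)^*\bigr].
\]
Since $\gamma(\bar\lambda)^*$ is a (single-valued, everywhere defined) operator, taking multivalued parts commutes with pre-composition by $\gamma(\bar\lambda)^*$, and $\mul[(\Theta-M(\lambda))^{-1}]=\ker(\Theta-M(\lambda))\subset\ran\Gamma_0=\dom\gamma(\lambda)$; hence the right-hand side equals $\gamma(\lambda)\ker(\Theta-M(\lambda))$, which gives (i). For (ii), if $(\Theta-M(\lambda))^{-1}\in\cB(\cH)$, then its range lies in $\dom(\Theta-M(\lambda))\subset\dom M(\lambda)=\dom\gamma(\lambda)$, so $\gamma(\lambda)(\Theta-M(\lambda))^{-1}\gamma(\bar\lambda)^*$ is a well-defined composition of three bounded operators and is therefore a bounded operator from $\sH$ into $\sH$; adding the bounded $(A_0-\lambda)^{-1}$ gives the conclusion. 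For (iii), $0\in\rho(\Theta-M(\lambda))$ additionally forces $\dom(\Theta-M(\lambda))^{-1}=\cH$, so the right-hand side of \eqref{resol} has domain $\sH$; combined with (ii) this shows $(A_\Theta-\lambda)^{-1}\in\cB(\sH)$ is defined on all of $\sH$, i.e. $\lambda\in\rho(A_\Theta)$.

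The one point requiring a bit of care, rather than being truly difficult, is the multivalued-part bookkeeping in (i): one must verify that $\mul(TSR)=T(\mul S)$ when $R$, $T$ are single-valued operators and the composition is formed as linear relations, and that no contribution is lost from $(A_0-\lambda)^{-1}$ in the sum, which is clear since its domain is all of $\sH$ so the componentwise sum formula applies. In (iii) it should also be noted that the conclusion $\lambda\in\rho(A_\Theta)$ tacitly yields closedness of $A_\Theta-\lambda$ a posteriori (via the closed graph of its bounded everywhere defined inverse), even though closedness of $A_\Theta$ was not assumed in Theorem~\ref{Kreinformula}.
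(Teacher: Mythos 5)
Your proof is correct and follows exactly the route the paper intends: the paper gives no explicit argument, merely declaring the corollary an immediate consequence of the Kre\u{\i}n-type formula \eqref{resol} together with the boundedness properties of $(A_0-\lambda)^{-1}$, $\gamma(\lambda)$ and $\gamma(\bar\lambda)^*$ from Theorem~\ref{ABGthm}. Your careful bookkeeping of multivalued parts in (i) and the closedness remark in (iii) simply make explicit what the authors leave to the reader.
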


\section{Some classes of unitary boundary triples and their Weyl functions}
\label{sec5}

\subsection{Unitary boundary pairs and unitary colligations}\label{sec5.1}
Some formulas from Section~\ref{sec3} can be essentially improved
when using the interrelations between unitary relations and unitary
colligations, see~\cite{BHS09}. Let $\{\cH,\Gamma\}$ be a unitary
boundary pair corresponding to a single-valued unitary relation
$\Gamma:(\sH^2,J_\sH)\to(\cH^2,J_\cH)$. The Kre\u{\i}n spaces
$(\sH^2,J_\sH)$ and $(\cH^2,J_\cH)$ admit fundamental decompositions
\[
\sH^2=P_+\sH^2[+]P_-\sH^2,\quad \cH^2=P_+\cH^2[+]P_-\cH^2
\]
with respect to the fundamental symmetries $J_\sH$ and $J_\cH$, where $P_+$ and $P_-$ are the orthogonal projections
\[
P_+=\frac12\left(
      \begin{array}{cc}
        I & -iI \\
        iI & I \\
      \end{array}
    \right),\quad
P_-=\frac12\left(
      \begin{array}{cc}
        I & iI \\
        -iI & I \\
      \end{array}
    \right)
\]
each acting on its own space. The Potapov-Ginzburg transform $\omega'(\Gamma)$
\[
\omega'(\Gamma)=\left\{\left\{\left(
                                \begin{array}{c}
                                  P_-\wh f \\
                                  P_+\wh h \\
                                \end{array}
                              \right),
                              \left(
                                \begin{array}{c}
                                  P_+\wh f \\
                                  P_-\wh h \\
                                \end{array}
                              \right)
\right\}:                \left\{\left(
                   \begin{array}{c}
                     f \\
                     f' \\
                   \end{array}
                 \right),
\left(
                   \begin{array}{c}
                     h \\
                     h' \\
                   \end{array}
                 \right)\right\}\in\Gamma \right\}
\]
of $\Gamma$ (see~\cite{AI86}) is unitarily equivalent to the transform $\omega(\Gamma)$
\begin{equation}\label{eq:Col}
\omega(\Gamma):=\left\{\left\{\left(
                   \begin{array}{c}
                     f+if' \\
                     h-ih' \\
                   \end{array}
                 \right),
\left(
                   \begin{array}{c}
                     -f+if' \\
                     -h-ih' \\
                   \end{array}
                 \right)\right\}:\,
                 \left\{\left(
                   \begin{array}{c}
                     f \\
                     f' \\
                   \end{array}
                 \right),
\left(
                   \begin{array}{c}
                     h \\
                     h' \\
                   \end{array}
                 \right)\right\}\in\Gamma \right\},
\end{equation}
which  is the graph of a unitary operator
$\cU:\left(
                                            \begin{array}{c}
                                              \sH \\
                                              \cH \\
                                            \end{array}
                                          \right)
\to
\left(
                                            \begin{array}{c}
                                              \sH \\
                                              \cH \\
                                            \end{array}
                                          \right)$.
This operator will be also called the Potapov-Ginzburg transform \index{Potapov-Ginzburg transform}of $\Gamma$.
The transform $\omega:\Gamma\mapsto\cU$ establishes a one-to-one correspondence between
the set of unitary boundary pairs and the set of unitary colligations. The inverse transform
$\Gamma=\omega^{-1}(\cU)$ takes the form
\begin{equation}\label{eq:8.1}
    \Gamma=\left\{\left\{\left(
                   \begin{array}{c}
                     g'-g \\
                     i(g'+g) \\
                   \end{array}
                 \right),
\left(
                   \begin{array}{c}
                     u'-u \\
                     -i(u'+u) \\
                   \end{array}
                 \right)\right\}:\,
                 \left\{\left(
                   \begin{array}{c}
                     g \\
                     u \\
                   \end{array}
                 \right),
\left(
                   \begin{array}{c}
                     g' \\
                     u' \\
                   \end{array}
                 \right)\right\}\in\mbox{gr }\cU \right\}.
\end{equation}
Let us consider the unitary operator $\cU$ and the pair of Hilbert spaces $\sH$ and
$\cH$ as a unitary colligation \index{Unitary colligation} written in the block form (see~\cite{Br78})
\begin{equation}\label{eq:Col_U}
\cU=\left(
      \begin{array}{cc}
        T & F \\
        G & H \\
      \end{array}
    \right)\in\cB(\sH\oplus\cH), \quad\mbox{where}\quad T\in\cB(\sH),\quad H\in\cB(\cH).
\end{equation}
Then the representation~\eqref{eq:8.1} for $\Gamma$ takes the form
\begin{equation}\label{eq:8.2}
    \Gamma=
    \left\{\left\{\left(
                   \begin{array}{c}
                     (T-I)g+Fu \\
                     i(T+I)g+iFu \\
                   \end{array}
                 \right),
\left(
                   \begin{array}{c}
                     Gg+(H-I)u \\
                     -iGg-i(H+I)u\\
                   \end{array}
                 \right)\right\}:\,
                 g\in\sH,\,u\in\cH \right\}.
\end{equation}
Since $\cU=(\cU^*)^{-1}$, then
\begin{equation}\label{eq:8.5}
    \cU= \left\{\left\{\left(
                   \begin{array}{c}
                     T^*g'+G^*u' \\
                     F^*g'+H^*u'\\
                   \end{array}
                 \right),
                 \left(
                   \begin{array}{c}
                     g' \\
                     u' \\
                   \end{array}
                 \right)\right\}:\,
                 g'\in\sH,\,u'\in\cH\right\}
    \end{equation}
and hence $\Gamma$ admits a dual representation
\begin{equation}\label{eq:8.2A}
    \Gamma=
    \left\{\left\{\left(
                   \begin{array}{c}
                     (I-T^*)g'-G^*u' \\
                     i(I+T^*)g'+iG^*u' \\
                   \end{array}
                 \right),
\left(
                   \begin{array}{c}
                     -F^*g'+(I-H^*)u' \\
                     -iF^*g'-i(I+H^*)u'\\
                   \end{array}
                 \right)\right\}:\,
                 g'\in\sH,\,u'\in\cH \right\}.
\end{equation}
Let us collect some formulas concerning $\Gamma$ and $\cU$ which are immediate from~\eqref{eq:8.2} and~\eqref{eq:8.2A} (see also~\cite{BHS09}).
\begin{proposition}\label{Wlemma}
Let  $\{\cH,\Gamma\}$ be a unitary boundary pair for $\ZA^*$ with $\Gamma$ given by~\eqref{eq:8.2}, and let ${\ZA_*}=\dom\Gamma$, $A_0=\ker\Gamma_0$.
Then:
\begin{equation}\label{eq:\ZA_*}
    {\ZA_*}=\ran\left(
                   \begin{array}{cc}
                     T-I &F \\
                     i(T+I) & iF \\
                   \end{array}
                 \right)=
                  \ran\left(
                   \begin{array}{cc}
                     (I-T)^* &-G^* \\
                     i(I+T)^* & iG^* \\
                   \end{array}
                 \right),
\end{equation}
\begin{equation}\label{eq:8.8}
    \mul {\ZA_*}=
    (I-T)^{-1}\ran F=(I-T^*)^{-1}\ran G^*;
\end{equation}
\begin{equation}\label{eq:8.8S}
    \mul \ZA=\ker(I-T)=\ker(I-T^*);
\end{equation}

\begin{equation}\label{eq:8.9}
\begin{split}
A_0&=
    \left\{\left(
                   \begin{array}{c}
                     (T-I)g+Fu \\
                     i(T+I)g+iFu \\
                   \end{array}
                 \right):\,
                     Gg+(H-I)u =0,\,
                 g\in\sH,\,u\in\cH \right\}\\
&= \left\{\left(
                   \begin{array}{c}
                     (I-T^*)g'-G^*u' \\
                     i(I+T^*)g'+iG^*u' \\
                   \end{array}
                 \right):\,
                     F^*g'+(H^*-I)u'=0,\,
                 g'\in\sH,\,u'\in\cH \right\}
\end{split}
\end{equation}
\begin{equation}\label{eq:8.11}
    \ran\Gamma_0=\ran(I-H)+\ran G=\ran(I-H^*)+\ran F^*;
\end{equation}
\[
    \mul\Gamma=\left\{\left(
                   \begin{array}{c}
                     (H-I)u \\
                     -i(H+I)u \\
                   \end{array}
                 \right):\,  u\in\ker F\right\}
                 =\left\{\left(
                   \begin{array}{c}
                     (I-H^*)u' \\
                     -i(I+H^*)u' \\
                   \end{array}
                 \right):\,  u'\in\ker G^*\right\},
\]
in particular,
\begin{equation}\label{eq:mul_Gamma}
    \mul\Gamma=\{0\}\Longleftrightarrow\ker F=\{0\}\Longleftrightarrow\ker G^*=\{0\}.
\end{equation}
\end{proposition}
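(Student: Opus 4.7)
The plan is to exploit the two dual parametrizations of $\Gamma$ in~\eqref{eq:8.2} and~\eqref{eq:8.2A} together with the block relations coming from $\cU^*\cU = I_{\sH\oplus\cH}$ and $\cU\cU^* = I_{\sH\oplus\cH}$, namely $T^*T+G^*G = I_\sH$, $F^*F+H^*H = I_\cH$, $TT^*+FF^* = I_\sH$, $GG^*+HH^* = I_\cH$. Every formula in the proposition will be obtained by directly reading off the appropriate first or second component in one of the parametrizations and then simplifying with one of these identities.

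First I would obtain~\eqref{eq:\ZA_*} by simply collecting the $\wh f$-components in~\eqref{eq:8.2} and~\eqref{eq:8.2A}. For~\eqref{eq:8.8}, imposing $(T-I)g+Fu = 0$ forces $Fu = (I-T)g$, and the second component of $\wh f$ then simplifies to $i(T+I)g + i(I-T)g = 2ig$, so $\mul\ZA_*$ is precisely the set of $g$ with $(I-T)g\in \ran F$, that is $(I-T)^{-1}\ran F$; the dual parametrization yields $(I-T^*)^{-1}\ran G^*$. For~\eqref{eq:8.8S}, since $\overline{\ZA_*} = \ZA^*$ one has $\ZA = (\ZA_*)^*$ and hence $\mul\ZA = (\dom\ZA_*)^\perp$; taking the adjoint of the first matrix in~\eqref{eq:\ZA_*} gives $\mul\ZA = \ker(T^*-I)\cap\ker F^*$, and $TT^*+FF^* = I$ forces $\ker(T^*-I)\subset \ker F^*$, so $\mul\ZA = \ker(I-T^*)$; the dual representation combined with $T^*T+G^*G = I$ likewise produces $\ker(I-T)$. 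The equality $\ker(I-T) = \ker(I-T^*)$ is the standard Cauchy--Schwarz fact for contractions (which $T$ is, being a block of a unitary operator): $Tg = g$ yields $\langle Tg,g\rangle = \|g\|^2 = \langle g,T^*g\rangle \le \|g\|\,\|T^*g\|\le \|g\|^2$, forcing $T^*g = g$, and symmetrically. For~\eqref{eq:8.9}, the first component of $\Gamma\wh f$ is $Gg+(H-I)u$ in~\eqref{eq:8.2} and $-F^*g'+(I-H^*)u'$ in~\eqref{eq:8.2A}; setting each equal to zero and combining with the corresponding $\wh f$-expression delivers the two descriptions of $A_0$, and~\eqref{eq:8.11} is read off immediately from these same expressions. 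Finally, for $\mul\Gamma$, setting both components of $\wh f$ in~\eqref{eq:8.2} equal to zero gives $(T-I)g+Fu = 0$ and $(T+I)g+Fu = 0$; subtracting yields $g = 0$ and hence $Fu = 0$, and then the prescribed form of $\wh h$ produces the first description of $\mul\Gamma$; the dual argument with $g' = 0$, $u'\in\ker G^*$ gives the second. The equivalence~\eqref{eq:mul_Gamma} follows because for $u\in\ker F$ the condition $\wh h = 0$ reads $(H-I)u = (H+I)u = 0$, i.e.\ $Hu = u = -Hu$, which forces $u = 0$; thus $\mul\Gamma = \{0\}\Leftrightarrow\ker F = \{0\}$, and symmetrically $\mul\Gamma = \{0\}\Leftrightarrow\ker G^* = \{0\}$.

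This is essentially a bookkeeping exercise, and there is no serious obstacle. The only slightly non-routine points are the observation that the inclusions $\ker(T^*-I)\subset\ker F^*$ and $\ker(I-T)\subset\ker G$ (which collapse the intersections appearing in $(\dom\ZA_*)^\perp$ to a single kernel) are immediate consequences of the colligation identities $TT^*+FF^* = I$ and $T^*T+G^*G = I$, and the contraction fact $\ker(I-T) = \ker(I-T^*)$ used to reconcile the primal and dual computations of $\mul\ZA$.
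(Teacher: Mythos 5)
Your proof is correct and follows exactly the route the paper intends: the paper gives no detailed argument, declaring all the formulas ``immediate from~\eqref{eq:8.2} and~\eqref{eq:8.2A}'' (with a pointer to \cite{BHS09}), and your bookkeeping via the two dual parametrizations together with the colligation identities $TT^*+FF^*=I$, $T^*T+G^*G=I$, etc., is precisely the verification being suppressed. The only cosmetic remark is that in computing $\mul \ZA=(\dom \ZA_*)^{\perp}$ you should refer to the adjoint of the first \emph{row} of the block matrix in~\eqref{eq:\ZA_*} (since $\dom\ZA_*$ is the range of that row), but the resulting identification $\ker(T^*-I)\cap\ker F^*=\ker(I-T^*)$ and the rest of the argument are exactly right.
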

The characteristic function \index{Characteristic function} (or
transfer function\index{Transfer function}) of the unitary
colligation $\cU$ (see~\cite{Br78})
\[
\theta(\zeta)=H+\zeta G(I-\zeta T)^{-1}F\quad (\zeta\in\dD)
\]
is holomorphic in $\dD$ and takes values in the set of contractive operators in $\cH$.
\begin{proposition}\label{prop:C2}
Let  $\{\cH,\Gamma\}$ be a unitary boundary pair for $\ZA^*$ with $\Gamma$ given by~\eqref{eq:8.2},
let $\lambda\in\dC_+$ and let
$\zeta=\frac{\lambda-i}{\lambda+i}$. Then:
\begin{equation}\label{eq:gamma+}
\gamma(\lambda)=\{\{ (\theta(\zeta)-I)u,(1-\zeta)(I-\zeta
T)^{-1}Fu\}:\,u\in\cH\},
\end{equation}
\begin{equation}\label{eq:gamma-}
\gamma(\bar{\lambda})=\{\{ (\theta(\zeta)^*-I)u,(1-\bar{\zeta})(I-\bar{\zeta}
T^*)^{-1}G^*u\}:\,u\in\cH\}.
\end{equation}
In particular,
\begin{equation}\label{eq:gamma_pmi}
\gamma(i)=\{\{ (H-I)u,Fu\}:\,u\in\cH\},\quad
\gamma(-i)=\{\{ (H^*-I)u,G^*u\}:\,u\in\cH\}.
\end{equation}
The Weyl function $M$ corresponding to the boundary pair
$\{\cH,\Gamma\}$ and the characteristic function $\theta$ are
connected by
\begin{equation}\label{eq:M_theta}
    M(\lambda)=i(I+\theta(\zeta))(I-\theta(\zeta))^{-1},\quad
    M(\bar{\lambda})=-i(I+\theta(\zeta)^*)(I-\theta(\zeta)^*)^{-1}
\end{equation}
If $\Gamma$ is single-valued then $\dom\gamma(\lambda)$ and $\dom\gamma(\bar\lambda)$ are dense in $\cH$.
\end{proposition}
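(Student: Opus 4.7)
The plan is to identify $\wh\sN_\lambda(\ZA_*)$ directly from the parametrization~\eqref{eq:8.2} of $\Gamma$ and then read off the $\gamma$-field and the Weyl function. Writing $\{\wh f,\wh h\}\in\Gamma$ in terms of $(g,u)\in\sH\oplus\cH$, the eigenvalue condition $f'=\lambda f$ becomes
\[
 i(T+I)g+iFu=\lambda[(T-I)g+Fu]\Longleftrightarrow [(\lambda+i)I-(\lambda-i)T]g=(\lambda-i)Fu.
\]
For $\lambda\in\dC_+$ one has $\lambda+i\ne 0$ and $|\zeta|<1$; since $T$ is a contraction, $\|\zeta T\|<1$, and hence $I-\zeta T$ is boundedly invertible, so the eigenvalue equation reduces to $g=\zeta(I-\zeta T)^{-1}Fu$ with $u\in\cH$ free. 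Substituting back and using the algebraic identity $\zeta(T-I)+(I-\zeta T)=(1-\zeta)I$ collapses $f_\lambda$ to $(1-\zeta)(I-\zeta T)^{-1}Fu$, while the boundary components become $h=Gg+(H-I)u=(\theta(\zeta)-I)u$ and $h'=-iGg-i(H+I)u=-i(\theta(\zeta)+I)u$ by the definition of $\theta$. This proves~\eqref{eq:gamma+}, and reading $h=-(I-\theta(\zeta))u$, $h'=-i(I+\theta(\zeta))u$ as linear relations yields the first formula in~\eqref{eq:M_theta}.

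For $\gamma(\bar\lambda)$ I would repeat the computation using the dual parametrization~\eqref{eq:8.2A} of $\Gamma$, this time in terms of $(g',u')$. The condition $f'=\bar\lambda f$ now leads to $(I-\bar\zeta T^*)g'=\bar\zeta G^*u'$, where $\bar\zeta=(\bar\lambda+i)/(\bar\lambda-i)$ because $\overline{\lambda-i}=\bar\lambda+i$ and $\overline{\lambda+i}=\bar\lambda-i$; again $I-\bar\zeta T^*$ is boundedly invertible, and after relabelling the free parameter $u'=-u$ to line up signs one obtains
\[
 f_{\bar\lambda}=(1-\bar\zeta)(I-\bar\zeta T^*)^{-1}G^*u,\quad h=(\theta(\zeta)^*-I)u,\quad h'=i(\theta(\zeta)^*+I)u,
\]
using the adjoint formula $\theta(\zeta)^*=H^*+\bar\zeta F^*(I-\bar\zeta T^*)^{-1}G^*$. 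These give~\eqref{eq:gamma-} and the second formula in~\eqref{eq:M_theta}. Specialising to $\lambda=i$ (so $\zeta=0$) delivers~\eqref{eq:gamma_pmi} at once.

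For the final density assertion, observe from~\eqref{eq:gamma+} that $\dom\gamma(\lambda)=\ran(\theta(\zeta)-I)$, which is dense in $\cH$ iff $\ker(I-\theta(\zeta)^*)=\{0\}$, and symmetrically $\dom\gamma(\bar\lambda)=\ran(\theta(\zeta)^*-I)$ is dense iff $\ker(I-\theta(\zeta))=\{0\}$. If $\Gamma$ is single-valued then $\mul\Gamma=\{0\}$, and~\eqref{eq:mul_Gamma} gives $\ker F=\ker G^*=\{0\}$. Supposing $(I-\theta(\zeta))u=0$, the parametrization of~\eqref{eq:gamma+} forces $h=0$; but by Green's identity the $\gamma$-field is injective on $\wh\sN_\lambda(\ZA_*)$ (cf.~\eqref{Green2B}), so $f_\lambda=(1-\zeta)(I-\zeta T)^{-1}Fu=0$, and since $1-\zeta\ne 0$ and $F$ is injective we conclude $u=0$. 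The symmetric argument, using the dual representation and $\ker G^*=\{0\}$, disposes of $\ker(I-\theta(\zeta)^*)$.

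The main obstacle is the bookkeeping in the dual computation for $\bar\lambda$, where the asymmetry between~\eqref{eq:8.2} and~\eqref{eq:8.2A} and the sign convention chosen in~\eqref{eq:gamma-} require careful tracking; conceptually everything else is driven by the contraction bound $\|\theta(\zeta)\|\le 1$ for $|\zeta|<1$ (equivalently the isometry of $\cU$), which provides the invertibility of $I-\zeta T$ and $I-\bar\zeta T^*$ needed to solve the eigenvalue equation as well as the identity for $\theta(\zeta)^*$.
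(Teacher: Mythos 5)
Your proposal is correct and follows essentially the same route as the paper: both solve the eigenvalue condition $f'=\lambda f$ inside the colligation parametrization~\eqref{eq:8.2} (resp. the dual form~\eqref{eq:8.2A} for $\bar\lambda$), using $|\zeta|<1$ and $\|T\|\le 1$ to invert $I-\zeta T$ and $I-\bar\zeta T^*$, and then read off $\gamma$, $M$ and $\theta$. Your density argument via single-valuedness of the $\gamma$-field together with $\ker F=\ker G^*=\{0\}$ from~\eqref{eq:mul_Gamma} is the same mechanism the paper uses (the paper runs it at $\lambda=\pm i$ and notes the general case is equivalent, while you run it at general $\zeta$ — an immaterial difference).
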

\begin{proof}
Since 
$\zeta=\frac{\lambda-i}{\lambda+i}\in\dD$  the operator $(I-\zeta
T)$ has a bounded inverse. Using substitution $g=f+\zeta(I-\zeta
T)^{-1}Fu$ one can rewrite~\eqref{eq:8.2} in the form
\begin{equation}\label{eq:8.2_lambda}
    \Gamma=
    \left\{\left\{\left(
                   \begin{array}{c}
                     (T-I)f+(1-\zeta)(I-\zeta T)^{-1}Fu \\
                     i(T+I)f+i(1+\zeta)(I-\zeta T)^{-1}Fu \\
                   \end{array}
                 \right),
\left(
                   \begin{array}{c}
                     Gf+(\theta(\zeta)-I)u \\
                     -iGf-i(\theta(\zeta)+I)u\\
                   \end{array}
                 \right)\right\}:
                 \begin{array}{c}
                 f\in\sH\\
                 u\in\cH
                 \end{array}\right\}.
\end{equation}
Since $\lambda=i\frac{1+\zeta}{1-\zeta}$ then setting in~\eqref{eq:8.2_lambda} $f=0$ one obtains
\begin{equation}\label{eq:8.12}
\Gamma\uphar\wh\sN_\lambda=
    \left\{\left\{\left(
                   \begin{array}{c}
                     (1-\zeta)(I-\zeta T)^{-1}Fu \\
                     \lambda (1-\zeta)(I-\zeta T)^{-1}Fu \\
                   \end{array}
                 \right),
                 \left(
                   \begin{array}{c}
                   (\theta(\zeta)-I)u\\
                   -i(\theta(\zeta)+I)u
                   \end{array}
                 \right)\right\}:\,u\in\cH
                 \right\}
\end{equation}
and hence~\eqref{eq:gamma+} and the first equalities in
~\eqref{eq:gamma_pmi} and~\eqref{eq:M_theta} follow.

Similarly, substitution $g'=f'+\bar{\zeta}(I-\bar{\zeta}
T^*)^{-1}G^*u'$ in~\eqref{eq:8.2} shows that the linear relation
$\Gamma$ coincides with the set of vectors
\begin{equation}\label{eq:8.2A_lambda}
 \left\{\left(
                   \begin{array}{c}
                     (I-T^*)f'+(\bar{\zeta}-1)(I-\bar{\zeta} T^*)^{-1}G^*u' \\
                     i(I+T^*)f'+i(\bar{\zeta}+1)(I-\bar{\zeta} T^*)^{-1}G^*u' \\
                   \end{array}
                 \right),
\left(
                   \begin{array}{c}
                     -F^*f'+(I-\theta(\zeta)^*)u' \\
                     -iF^*f'-i(I+\theta(\zeta)^*)u'\\
                   \end{array}
                 \right)\right\},
\end{equation}
where $ f'\in\sH$,   $u'\in\cH$. Hence with $f'=0$ one obtains from~\eqref{eq:8.2A_lambda}
\begin{equation}\label{eq:8.12A}
\Gamma\uphar\wh\sN_{\bar\lambda}=
    \left\{  \left\{\left(
                   \begin{array}{c}
                     (\bar{\zeta}-1)(I-\bar{\zeta} T^*)^{-1}G^*u' \\
                     \bar\lambda(\bar{\zeta}-1)(I-\bar{\zeta} T^*)^{-1}G^*u' \\
                   \end{array}
                 \right),
\left(
                   \begin{array}{c}
                     (I-\theta(\zeta)^*)u' \\
                     i(I+\theta(\zeta)^*)u'\\
                   \end{array}
                 \right)\right\}:\,u'\in\cH
                 \right\}               .
\end{equation}
Now the formula~\eqref{eq:gamma-} and the second equalities in
~\eqref{eq:gamma_pmi} and~\eqref{eq:M_theta} are implied
by~\eqref{eq:8.12A}.

If $\mul \Gamma=\{0\}$ then using the fact that $\gamma(\pm i)$ is
single-valued, i.e., $\ker(H-I)\subset \ker F$ and
$\ker(H^*-I)\subset \ker G^*$, it follows from Proposition
\ref{Wlemma} that $\ker(I-H)=\ker (I-H^*)=\{0\}$ and hence
$\dom\gamma(-i)=\ran(I-H^*)$ and $\dom\gamma(i)=\ran(I-H)$ are dense
in $\cH$. Equivalently, $\dom \gamma(\lambda)$ is dense in $\cH$ for
all $\lambda\in\cmr$.
\end{proof}

\begin{proposition}\label{gammaclos}
Let $\{\cH,\Gamma_0,\Gamma_1\}$ be a unitary
boundary triple for $\ZA^*$. Then the closure of the $\gamma$-field
is given by
\begin{equation}
\label{ceq0}
 \overline{\wh\gamma(\lambda)}=(\overline \Gamma_0\uphar{\wh\sN_\lambda(\ZA^*)})^{-1},
 \quad \lambda \in \cmr.
\end{equation}
In particular,
\[
  \ker\overline{\wh\gamma(\lambda)}=\mul \overline \Gamma_0,
  \quad
  \mul \overline{\wh\gamma(\lambda)}=(\ker \overline \Gamma_0)\cap \wh \sN_\lambda(\ZA^*),
\]
and
\[
  \ran \overline{\wh\gamma(\lambda)}=(\dom \overline \Gamma_0)\cap \wh \sN_\lambda(\ZA^*),
\quad
  \dom\overline{\wh\gamma(\lambda)}=\overline \Gamma_0((\dom \overline \Gamma_0)\cap \wh \sN_\lambda(\ZA^*)).
\]
\end{proposition}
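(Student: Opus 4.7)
The plan is to establish the single key identity~\eqref{ceq0} first, and then read off the four supplementary equalities as purely formal consequences. Since taking inverses of linear relations commutes with taking closures, \eqref{ceq0} is equivalent to
\[
 \overline{\Gamma_0 \uphar \wh\sN_\lambda(A_*)} = \overline{\Gamma}_0 \uphar \wh\sN_\lambda(A^*),
\]
and I would prove this via two inclusions. The inclusion ``$\subseteq$'' is immediate: if $\{\wh f_n, h_n\} \in \Gamma_0 \uphar \wh\sN_\lambda(A_*)$ converges to $\{\wh f, h\}$, then $\wh f_n \in \wh\sN_\lambda(A_*) \subseteq \wh\sN_\lambda(A^*)$, and the latter is closed in $\sH^2$, so $\wh f \in \wh\sN_\lambda(A^*)$; the relation $\{\wh f_n, h_n\} \in \Gamma_0$ with both sides converging yields $\{\wh f, h\} \in \overline{\Gamma}_0$.

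For the reverse inclusion I would invoke the Potapov--Ginzburg/colligation description in Proposition~\ref{prop:C2}. Write $\Gamma$ as in~\eqref{eq:8.2} via the unitary colligation~\eqref{eq:Col_U}, fix $\lambda \in \dC_\pm$ and set $\zeta = (\lambda - i)/(\lambda + i) \in \dD$. Given $\wh f = (f, \lambda f) \in \wh\sN_\lambda(A^*)$ with $\{\wh f, h\} \in \overline{\Gamma}_0$, select a sequence $\wh g_n \in A_* = \dom \Gamma$ with $\wh g_n \to \wh f$ in $\sH^2$ and $\Gamma_0 \wh g_n \to h$ in $\cH$, parameterised by $(g_n, u_n) \in \sH \oplus \cH$ through~\eqref{eq:8.2}. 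The substitution $g_n = f_n + \zeta(I - \zeta T)^{-1} F u_n$ used in the proof of Proposition~\ref{prop:C2} rewrites $\wh g_n$ in the form~\eqref{eq:8.2_lambda}. Combining the convergence of the two components of $\wh g_n$ with the algebraic identity
\[
 \lambda(T - I) - i(T + I) = -(\lambda + i)(I - \zeta T),
\]
the ``$(1-\zeta)(I-\zeta T)^{-1} F u_n$''-terms cancel, and one is left with $(I - \zeta T) f_n \to 0$; since $|\zeta|<1$ and $\|T\|\le 1$, the operator $(I - \zeta T)^{-1}$ is bounded, hence $f_n \to 0$. Consequently the eigenvector part $\wh f_{\lambda,n} := \big((1 - \zeta)(I - \zeta T)^{-1} F u_n,\; \lambda(1 - \zeta)(I - \zeta T)^{-1} F u_n\big) \in \wh\sN_\lambda(A_*)$ satisfies $\wh f_{\lambda,n} \to \wh f$, and simultaneously $\Gamma_0 \wh f_{\lambda,n} = (\theta(\zeta) - I) u_n = \Gamma_0 \wh g_n - G f_n \to h$ because $G f_n \to 0$. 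This exhibits $\{\wh f, h\}$ as a limit in $\Gamma_0 \uphar \wh\sN_\lambda(A_*)$, completing the reverse inclusion.

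Once \eqref{ceq0} is in hand, the four remaining identities are formal: $h \in \ker \overline{\wh\gamma(\lambda)}$ iff $\{\wh 0, h\} \in \overline{\Gamma}_0 \uphar \wh\sN_\lambda(A^*)$ iff $h \in \mul \overline{\Gamma}_0$; $\mul \overline{\wh\gamma(\lambda)} = \ker(\overline{\Gamma}_0 \uphar \wh\sN_\lambda(A^*)) = \ker \overline{\Gamma}_0 \cap \wh\sN_\lambda(A^*)$; and analogously for $\ran$ and $\dom$ using $\ran R^{-1} = \dom R$ applied to the restricted relation. The main obstacle is precisely the reverse inclusion treated above: mere closability of $\Gamma_0$ does not allow one to approximate an element of $\wh\sN_\lambda(A^*) \cap \dom \overline{\Gamma}_0$ by elements of $\wh\sN_\lambda(A_*)$ while simultaneously controlling $\Gamma_0$, and it is the explicit colligation decomposition~\eqref{eq:8.2_lambda} together with the algebraic identity above that delivers the required splitting and forces the non-eigenspace component $f_n$ to vanish.
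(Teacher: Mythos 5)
Your proposal is correct and follows essentially the same route as the paper's own proof: the easy inclusion $\overline{\wh\gamma(\lambda)}^{\,-1}\subset \overline\Gamma_0\uphar\wh\sN_\lambda(\ZA^*)$ from closedness of $\wh\sN_\lambda(\ZA^*)$, and for the reverse inclusion the colligation parametrization~\eqref{eq:8.2_lambda} to split off the $f_n$-component and force $(I-\zeta T)f_n\to 0$, hence $f_n\to 0$. Your explicit identity $\lambda(T-I)-i(T+I)=-(\lambda+i)(I-\zeta T)$ together with the cancellation $\lambda(1-\zeta)=i(1+\zeta)$ is exactly the computation the paper leaves implicit, so the argument matches step for step.
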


\begin{proof}
By definition $\wh\gamma(\lambda)=(\Gamma_0\uphar
\wh\sN_\lambda({\ZA_*}))^{-1}=(\Gamma_0\cap (\wh\sN_\lambda({\ZA_*})\times
\cH))^{-1}$, which implies that
\[
 \overline{\wh\gamma(\lambda)}^{\,-1}\subset \overline\Gamma_0\uphar{\wh\sN_\lambda(\ZA^*)}, \quad \lambda\in\cmr.
\]
To prove the reverse inclusion, assume that $\{\wh f_\lambda,h\}\in
\overline\Gamma_0\cap (\wh\sN_\lambda(\ZA^*)\times \cH)$. With
$\lambda\in\dC_+$ it follows from~\eqref{eq:8.2_lambda} that there
are sequences $f_n\in\sH$ and $u_n\in\cH$, such that
\begin{equation}\label{eq:8.2B}
    \left\{\left(
                   \begin{array}{c}
                     (T-I)f_n+(1-\zeta)(1-\zeta T)^{-1}Fu_n \\
                     i(T+I)f_n+i(1+\zeta)(1-\zeta T)^{-1}Fu_n \\
                   \end{array}
                 \right),
                     Gf_n+(\theta(\zeta)-I)u_n
                 \right\}
    \to
    \left\{\left(
                   \begin{array}{c}
                     f_\lambda \\
                     \lambda f_\lambda \\
                   \end{array}
                 \right), h
    \right\}.
\end{equation}
This implies that $(I-\zeta T)f_n\to 0$ and hence $f_n\to 0$, since
$\lambda\in\dC_+$ or, equivalently, $\zeta\in \dD$. Thus
\[
    \left\{\left(
                   \begin{array}{c}
                     (1-\zeta)(1-\zeta T)^{-1}Fu_n \\
                     \lambda(1-\zeta)(1-\zeta T)^{-1}Fu_n \\
                   \end{array}
                 \right),
                     (\theta(\zeta)-I)u_n
                 \right\}
    \to
    \left\{\left(
                   \begin{array}{c}
                     f_\lambda \\
                     \lambda f_\lambda \\
                   \end{array}
                 \right), h
    \right\},
\]
which by~\eqref{eq:gamma+} in Proposition~\ref{prop:C2} means that
$\{\wh f_\lambda,h\}\in \overline{\wh\gamma(\lambda)}^{\,-1}$.

Similarly, with $\bar{\lambda}\in\dC_-$ it follows from
~\eqref{eq:8.2A_lambda} that for every $\{\wh f_{\bar{\lambda}},h\}\in
\overline\Gamma_0\cap (\wh\sN_{\bar{\lambda}}(\ZA^*)\times \cH)$ there
exists a sequence $u'_n\in\cH$ such that
\[
    \left\{\left(
                   \begin{array}{c}
                     (\bar{\zeta}-1)(1-\bar{\zeta} T)^{-1}G^*u'_n \\
                     \bar{\lambda}(\bar{\zeta}-1)(1-\bar{\zeta} T)^{-1}G^*u'_n \\
                   \end{array}
                 \right),
                     (I-\theta(\zeta)^*)u'_n
                 \right\}
    \to
    \left\{\left(
                   \begin{array}{c}
                     f_{\bar{\lambda}} \\
                     \bar{\lambda} f_{\bar{\lambda}} \\
                   \end{array}
                 \right), h
    \right\},
\]
which by~\eqref{eq:gamma-} in Proposition~\ref{prop:C2} means that
$\{\wh f_{\bar{\lambda}},h\}\in \overline{\wh\gamma(\bar{\lambda})}^{\,-1}$. This
completes the proof of~\eqref{ceq0} and the remaining statements
follow easily from this identity.
\end{proof}

\begin{corollary}\label{cor:G0closable}
Let $\{\cH,\Gamma_0,\Gamma_1\}$ be a unitary boundary triple for
$\ZA^*$ and let $M(\cdot)$ be the corresponding Weyl function. Then
the mapping $\Gamma_0$ is closable if and only if for some,
equivalently for every, $\lambda\in\cmr$ the Weyl function satisfies
the following condition:
\begin{equation}\label{clIM}
 h_n\to h \text{ in } \cH \quad \text{and } \quad \IM (M(\lambda)h_n,h_n) \to 0 \quad
 (n\to\infty)
 \quad \Longrightarrow \quad h=0.
\end{equation}
\end{corollary}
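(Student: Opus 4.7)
The plan is to reduce the closability of $\Gamma_0$ to a statement about the kernel of the closure of the $\gamma$-field, and then to translate that statement into the form-theoretic condition on $M(\lambda)$ using the fundamental identity coming from Green's formula.

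\textbf{Step 1 (reduction to $\gamma$-field).} Since $\Gamma_0$ is a single-valued mapping, its closability is equivalent to $\mul\overline{\Gamma_0}=\{0\}$. By Proposition~\ref{gammaclos} applied to the unitary boundary triple,
\[
 \ker\overline{\wh\gamma(\lambda)}=\mul\overline{\Gamma_0},\qquad \lambda\in\cmr.
\]
Because $\wh\gamma(\lambda)h=\binom{\gamma(\lambda)h}{\lambda\gamma(\lambda)h}$, the norm of $\wh\gamma(\lambda)h$ in $\sH^2$ is equivalent to $\|\gamma(\lambda)h\|_\sH$, so $\gamma(\lambda)$ is closable iff $\wh\gamma(\lambda)$ is, and $\ker\overline{\wh\gamma(\lambda)}=\ker\overline{\gamma(\lambda)}$. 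Hence closability of $\Gamma_0$ is equivalent to $\ker\overline{\gamma(\lambda)}=\{0\}$ for one (equivalently every) $\lambda\in\cmr$.

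\textbf{Step 2 (form identity).} Specializing the Green-type identity \eqref{Green3} to $\mu=\lambda$, for every $h\in\dom M(\lambda)=\dom\gamma(\lambda)$,
\[
 \IM(M(\lambda)h,h)_\cH=(\IM\lambda)\,\|\gamma(\lambda)h\|_\sH^2.
\]
Since $\IM\lambda\neq 0$, for any sequence $h_n\in\dom M(\lambda)$ with $h_n\to h$ in $\cH$, the convergence $\gamma(\lambda)h_n\to 0$ in $\sH$ is equivalent to $\IM(M(\lambda)h_n,h_n)\to 0$. By definition of the closure, $h\in\ker\overline{\gamma(\lambda)}$ exactly means that such a sequence exists with $\gamma(\lambda)h_n\to 0$.

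\textbf{Step 3 (conclusion).} Combining Steps 1 and 2, the identity
\[
 \ker\overline{\gamma(\lambda)}=\bigl\{\,h\in\cH:\,\exists\,h_n\in\dom M(\lambda),\;h_n\to h,\;\IM(M(\lambda)h_n,h_n)\to 0\,\bigr\}
\]
shows that $\ker\overline{\gamma(\lambda)}=\{0\}$ is precisely the implication \eqref{clIM}. This proves the equivalence of closability of $\Gamma_0$ with \eqref{clIM}, at any (hence every, by $\lambda$-independence of closability of $\Gamma_0$) point $\lambda\in\cmr$.

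No step is really hard here: the main conceptual ingredient is Proposition~\ref{gammaclos}, which is stated earlier, and the only calculation is the form identity derived from Green's formula. The only point requiring a little care is the passage from $\wh\gamma(\lambda)$ to $\gamma(\lambda)$ when identifying kernels of closures, which is handled by the norm equivalence noted in Step~1.
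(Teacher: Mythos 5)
Your proof is correct and follows essentially the same route as the paper's: both reduce \eqref{clIM} to $\ker\overline{\gamma(\lambda)}=\{0\}$ via the Green-identity formula $\IM(M(\lambda)h,h)=(\IM\lambda)\|\gamma(\lambda)h\|^2$ and then invoke Proposition~\ref{gammaclos} to identify this kernel with $\mul\overline{\Gamma_0}$. You merely spell out the intermediate steps (closability of $\Gamma_0$ as $\mul\overline{\Gamma_0}=\{0\}$, and $\ker\overline{\wh\gamma(\lambda)}=\ker\overline{\gamma(\lambda)}$) that the paper leaves implicit.
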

\begin{proof}
By Lemma \ref{Weylmul} $M(\cdot)$ is an operator valued function
with $\ker (M(\lambda)-M(\lambda)^*)=\{0\}$. In this case
~\eqref{Green3} implies that
\[
 (\lambda-\bar\lambda)\|\gamma(\lambda)h\|^2_\sH
 =2i\, \IM (M(\lambda)h,h)_\cH,
\]
$h\in\dom M(\lambda)$, $\lambda,\mu\in\cmr$. From this formula it is
clear that the condition~\eqref{clIM} is equivalent to $\ker
\overline{\gamma(\lambda)}=\{0\}$. Therefore, the result follows
from Proposition \ref{gammaclos}.
\end{proof}

Clearly, the condition~\eqref{clIM} is stronger than the condition
~\eqref{stric-unb} appearing in the definition of strict Nevanlinna
functions. If $M(\cdot)\in \cR[\cH]$ then the condition~\eqref{clIM}
simplifies to $\ker {\IM M(\lambda)}=\{0\}$, i.e., for bounded
Nevanlinna functions the conditions~\eqref{clIM} and
~\eqref{stric-unb} are equivalent. Hence, if
$\{\cH,\Gamma_0,\Gamma_1\}$ is a $B$-generalized boundary triple
then $\Gamma_0$ is closable. However, when $M(\cdot)$ is an
unbounded Nevanlinna function, the condition in Corollary
\ref{cor:G0closable} need not be satisfied. Example
\ref{ex:G0notclosable} shows that already for $S$-generalized
boundary triples $\{\cH,\Gamma_0,\Gamma_1\}$ the mapping $\Gamma_0$
need not be closable.


\begin{proposition}\label{prop:Gamma_1H}
Let  $\{\cH,\Gamma_0,\Gamma_1\}$ be a unitary
boundary triple, let
$\cU=\omega(\Gamma)$ be its Potapov-Ginzburg transform,
let $\lambda\in\dC_+$, $\zeta=\frac{\lambda-i}{\lambda+i}$, and let $H(\lambda)$ be defined by~\eqref{Hlambda}.
Then:
\begin{equation}\label{eq:GammaH+}
\Gamma_1H(\lambda)=\{\{
g,v\}:\,(\theta(\zeta)-I)v+(\zeta-1)G(I-\zeta T)^{-1}g=0,\,\,
g\in\sH,\,v\in\cH\},
\end{equation}
\begin{equation}\label{eq:GammaH-}
\Gamma_1H(\bar{\lambda})=\{\{g',v'\}:
(\theta(\zeta)^*-I)v'+(\bar{\zeta}-1)F^*(I-\bar{\zeta}
T^*)^{-1}g'=0,\,\,g'\in\sH,\,v'\in\cH\}.
\end{equation}
In particular, 
the linear operators $\Gamma_1H(\lambda)$
have constant ranges for all $\lambda\in\dC_\pm$:
\begin{equation}\label{eq:ranGammaHi+}
 \ran(\Gamma_1H(\lambda))=(H-I)^{-1}\ran G=(H^*-I)^{-1}\ran F^*=\ran(\Gamma_1H(\bar\lambda)).
\end{equation}
\end{proposition}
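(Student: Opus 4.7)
The plan is to prove both displayed formulas by working directly with the parametrization of $\Gamma$ given in \eqref{eq:8.2_lambda} of Proposition~\ref{prop:C2} and then deduce the range formulas from the observation that $\Gamma_1H(\lambda)=(\Gamma_1\uphar A_0)\circ H(\lambda)$. The central simplification is that elements $\wh f_0\in A_0=\ker\Gamma_0$, when written in the form \eqref{eq:8.2_lambda}, are characterized by $Gf+(\theta(\zeta)-I)u=0$; substituting this back into the expression for $\Gamma_1\wh f_0$ causes the $Gf$ and $(\theta(\zeta)+I)u$ terms to collapse, yielding the clean identity $\Gamma_1\wh f_0=-2iu$. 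This is the identity that produces the affine constraint appearing in \eqref{eq:GammaH+}.

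For the formula \eqref{eq:GammaH+} I would fix $\lambda\in\dC_+$ with $\zeta=(\lambda-i)/(\lambda+i)\in\dD$ and, for $g\in\ran(A_0-\lambda)$, set $f_0=(A_0-\lambda)^{-1}g$ so that $H(\lambda)g=\{f_0,g+\lambda f_0\}\in A_0$. Parametrizing $\wh f_0$ as in \eqref{eq:8.2_lambda} with parameters $f\in\sH$, $u\in\cH$ subject to $Gf+(\theta(\zeta)-I)u=0$, a direct computation using $\lambda=i(1+\zeta)/(1-\zeta)$ shows that the coefficients of $(I-\zeta T)^{-1}Fu$ cancel in the expression for $g=\lambda f_0+g-\lambda f_0$, leaving
\[
  g=\frac{i}{1-\zeta}\bigl[(1-\zeta)(T+I)-(1+\zeta)(T-I)\bigr]f=\frac{2i}{1-\zeta}(I-\zeta T)f,
\]
hence $f=\frac{1-\zeta}{2i}(I-\zeta T)^{-1}g$. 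Writing $v:=\Gamma_1H(\lambda)g=-2iu$ and substituting into $Gf+(\theta(\zeta)-I)u=0$ produces exactly \eqref{eq:GammaH+}. Conversely, any pair $\{g,v\}$ satisfying that equation determines $f$ and $u=iv/2$ giving an element of $A_0$ through $H(\lambda)$ with $\Gamma_1$-image equal to $v$.

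The dual formula \eqref{eq:GammaH-} is obtained by exactly the same scheme but starting from the second representation of $\Gamma$ in \eqref{eq:8.2A_lambda} (the one coming from $\cU=(\cU^*)^{-1}$), with $T^*$, $F^*$, $G^*$, $H^*$, $\theta(\zeta)^*$ and $\bar\zeta$ replacing their non-starred counterparts; here the analogous cancellation produces $\Gamma_1\wh f_0=-2iu'$ and the constraint $-F^*g'+(I-H^*)u'=0$ gets translated, under $g'=\frac{\bar\zeta-1}{-2i}(I-\bar\zeta T^*)^{-1}g$, into \eqref{eq:GammaH-}.

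For the range statement \eqref{eq:ranGammaHi+}, since $H(\lambda):\ran(A_0-\lambda)\to A_0$ is a bijection, one has $\ran(\Gamma_1H(\lambda))=\Gamma_1(A_0)$, which manifestly does not depend on $\lambda$; the same holds for $\bar\lambda$ via \eqref{eq:GammaH-}. To identify $\Gamma_1(A_0)$ explicitly I would evaluate at $\lambda=i$ (so $\zeta=0$ and $\theta(0)=H$), using \eqref{eq:8.9}: every $\wh f_0\in A_0$ is determined by $g\in\sH$, $u\in\cH$ with $Gg+(H-I)u=0$, and the calculation above gives $\Gamma_1\wh f_0=-2iu$; thus as $u$ ranges over the subspace $\{u:(H-I)u\in\ran G\}=(H-I)^{-1}\ran G$ the image $\Gamma_1\wh f_0$ sweeps out exactly this same subspace. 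Using instead the second description of $A_0$ in \eqref{eq:8.9} with parameters $g',u'$ subject to $F^*g'+(H^*-I)u'=0$, the analogous computation gives $\Gamma_1\wh f_0=-2iu'$ and identifies the range with $(H^*-I)^{-1}\ran F^*$. No essential difficulty arises beyond the straightforward but slightly delicate bookkeeping in verifying $h=\frac{2i}{1-\zeta}(I-\zeta T)f$; this is the only step that genuinely uses the precise Cayley correspondence between $\lambda$ and $\zeta$, and it is the pivot on which all four displayed identities rest.
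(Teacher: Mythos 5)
Your proof is correct and follows essentially the same route as the paper's: both displayed formulas are obtained from the parametrizations \eqref{eq:8.2_lambda} and \eqref{eq:8.2A_lambda}, the cancellation giving $g=\tfrac{2i}{1-\zeta}(I-\zeta T)f$, and the identity $\Gamma_1\wh f_0=-2iu$ on $\ker\Gamma_0$. The only (harmless) deviation is in \eqref{eq:ranGammaHi+}, where you get the $\lambda$-independence for free from $\ran(\Gamma_1H(\lambda))=\Gamma_1(A_0)$ and then evaluate at $\zeta=0$ via \eqref{eq:8.9}, whereas the paper verifies $\ran(\Gamma_1H(\lambda))=(H-I)^{-1}\ran G$ separately for each $\lambda$ using the expansion $\theta(\zeta)-I=(H-I)+\zeta G(I-\zeta T)^{-1}F$; both arguments work.
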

\begin{proof}
It follows from~\eqref{eq:8.2_lambda} and~\eqref{eq:8.2A_lambda}
that
\begin{equation}\label{eq:A0_lambda}
    A_0-\lambda=
    \left\{\left(
                   \begin{array}{c}
                     (T-I)f+(1-\zeta)(I-\zeta T)^{-1}Fu \\
                     \frac{2i}{1-\zeta}(I-\zeta T)f \\
                   \end{array}
                 \right):\,
                   \begin{array}{c}
                   f\in\sH,\,\, u\in\cH\\
                     Gf+(\theta(\zeta)-I)u=0
                   \end{array}
                 \right\}.
\end{equation}
\begin{equation}\label{eq:A0_lambda*}
 A_0-\bar{\lambda}=\left\{\left(
                   \begin{array}{c}
                     (I-T^*)f'+(\bar{\zeta}-1)(I-\bar{\zeta} T^*)^{-1}G^*u' \\
                     \frac{-2i}{1-\bar{\zeta}}(I-\bar{\zeta} T^*)f' \\
                   \end{array}
                 \right):
                   \begin{array}{c}
                   f'\in\sH,\,\, u'\in\cH\\
                     -F^*g'+(I-\theta(\zeta)^*)u'
                   \end{array}
                 \right\}                 .
\end{equation}
In particular, using~\eqref{eq:A0_lambda} and the equality
$g=2i(1-\zeta T)(1-\zeta)^{-1}$ one obtains
\[
h:=(A_0-\lambda)^{-1}g=\frac{1-\zeta}{2i}(T-I)(I-\zeta
T)^{-1}g+(1-\zeta)(I-\zeta T)^{-1}Fu,
\]
where $u\in\cH$ satisfies the equality
\[
\frac{1-\zeta}{2i}G(I-\zeta T)^{-1}g+(\theta(\zeta)-I)u=0,
\]
or, equivalently,
\[
(\theta(\zeta)-I)(-2iu)+(\zeta-1)G(I-\zeta T)^{-1}g=0.
\]

 On the other hand, if
$
\left(
\begin{array}{c}
h \\
 h'
  \end{array}
  \right)=H(\lambda)g
$
then by~\eqref{eq:8.2_lambda}
\begin{equation}\label{eq:8.12B}
\left\{ \left(
\begin{array}{c}
h \\
 h'
  \end{array}
  \right),-2iu\right\}=
  \left\{ \left(
\begin{array}{c}
h \\
 h'
  \end{array}
  \right),-iGf-i(\theta(\zeta)+I)u\right\}\in\Gamma_1
\end{equation}
and hence $\{g,-2iu\}\in \Gamma_1H(\lambda)$. This
proves~\eqref{eq:GammaH+}.

Similarly, the formula~\eqref{eq:GammaH-} is based
on~\eqref{eq:8.2A} and~\eqref{eq:A0_lambda*}.

Let $u\in (H-I)^{-1}(\ran G\cap\ran(H-I))$ and $\lambda\in\dC_+$. Then $(H-I)u\in\ran
G$ and hence
\begin{equation}\label{eq:I-H}
    (\theta(\zeta)-I)u=(H-I)u+\zeta G(I-\zeta T)^{-1}Fu\in\ran G.
\end{equation}
In view of~\eqref{eq:GammaH+} this proves that
$u\in\ran(\Gamma_1H(\lambda))$.

Conversely, if $u\in\ran(\Gamma_1H(\lambda))$ then in view of~\eqref{eq:GammaH+} and~\eqref{eq:I-H}  $(H-I)u\in\ran
G$.

Similarly, for $\lambda\in\dC_-$ the equality~\eqref{eq:ranGammaHi+} is implied by~\eqref{eq:GammaH-}.
\end{proof}
Notice that for a single-valued $\Gamma$ one has $\ker(I-H)=\{0\}$, since otherwise $\ker F$ and $\ker G^*$ are nontrivial, which by~\eqref{eq:8.11} contradicts  the assumption that $\mul\Gamma=\{0\}$.
\begin{proposition}\label{prop:C3}
Let  $\{\cH,\Gamma_0,\Gamma_1\}$ be a unitary
boundary triple,  let
$\cU=\omega(\Gamma)$ be its Potapov-Ginzburg transform given
by~\eqref{eq:Col} and~\eqref{eq:Col_U}, and let $H(\lambda)$ be defined by~\eqref{Hlambda}. Then:
\begin{enumerate}\def\labelenumi{\textit{(\roman{enumi})}}
\item  $\Gamma_1H(\lambda)=\gamma(\bar\lambda)^*$ 
for all
$\lambda\in\dC\setminus\dR$;

\item
The range of linear relation
 $\gamma({\lambda})^*$
does not depend on $\lambda\in\dC_+$, $\bar\lambda\in\dC_-$:
\begin{equation}\label{eq:ranGammaHi+2}
\ran\gamma(\bar\lambda)^*=(I-H)^{-1}(\ran G),\quad
\ran\gamma(\lambda)^*=(I-H^*)^{-1}(\ran F^*).
\end{equation}
\end{enumerate}
\end{proposition}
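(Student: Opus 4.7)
The plan is to exploit the explicit Potapov-Ginzburg parametrizations of $\gamma(\bar\lambda)$ and $\Gamma_1 H(\lambda)$ established in Propositions \ref{prop:C2} and \ref{prop:Gamma_1H}, combining them with the general inclusion already provided by Lemma \ref{isoHlem}.

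For part (i), the inclusion $\Gamma_1 H(\lambda) \subset \gamma(\bar\lambda)^*$ is already furnished by Lemma \ref{isoHlem}~\eqref{Iso1B}: since the boundary triple is single-valued, $\mul \Gamma = \{0\}$ and in particular $\mul \Gamma_1 = \{0\}$, so the hypothesis of \eqref{Iso1B} is met. The task then reduces to proving the reverse inclusion $\gamma(\bar\lambda)^* \subset \Gamma_1 H(\lambda)$. For $\lambda \in \dC_+$ and a fixed $\{k,k'\} \in \gamma(\bar\lambda)^*$, I would substitute the parametrization $\{(\theta(\zeta)^*-I)u,\,(1-\bar\zeta)(I-\bar\zeta T^*)^{-1}G^*u\}$, $u \in \cH$, from \eqref{eq:gamma-} into the defining identity $(\gamma(\bar\lambda)h,k)_\sH = (h,k')_\cH$ of the adjoint. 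A short duality computation, moving the bounded operators across via their adjoints, yields
\[
  (u,\, (1-\zeta)G(I-\zeta T)^{-1}k - (\theta(\zeta)-I)k')_\cH = 0, \quad u \in \cH,
\]
so that $(\theta(\zeta)-I)k' = (1-\zeta)G(I-\zeta T)^{-1}k$. This is precisely the condition from \eqref{eq:GammaH+} characterizing membership in $\Gamma_1 H(\lambda)$; it also forces $(1-\zeta)G(I-\zeta T)^{-1}k \in \ran(\theta(\zeta)-I)$, which via \eqref{eq:A0_lambda} places $k$ in $\ran(A_0-\lambda) = \dom(\Gamma_1 H(\lambda))$. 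The case $\bar\lambda \in \dC_-$ is completely symmetric, invoking \eqref{eq:gamma+} and \eqref{eq:GammaH-} instead.

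Part (ii) then follows at once from (i) combined with the range identities \eqref{eq:ranGammaHi+} of Proposition \ref{prop:Gamma_1H}. Observing that $(H-I)^{-1}\ran G$ and $(I-H)^{-1}\ran G$ denote the same subset of $\cH$ (because $\ran G$ is a linear subspace), the first equality in (ii) is obtained as $\ran \gamma(\bar\lambda)^* = \ran \Gamma_1 H(\lambda) = (I-H)^{-1}\ran G$, and the dual formula $\ran \gamma(\lambda)^* = (I-H^*)^{-1}\ran F^*$ arises in exactly the same way from $\ran \Gamma_1 H(\bar\lambda) = (H^*-I)^{-1}\ran F^*$. The only step requiring genuine care is the duality calculation in (i); the remainder is bookkeeping with the Potapov-Ginzburg parametrizations, and the principal subtlety to watch is the verification that the identity emerging for $k$ automatically places it in $\ran(A_0-\lambda)$, which is, however, transparent from the structure of \eqref{eq:A0_lambda} once \eqref{eq:GammaH+} is in hand.
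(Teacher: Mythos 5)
Your proposal is correct and follows essentially the same route as the paper's own proof: the inclusion $\Gamma_1H(\lambda)\subset\gamma(\bar\lambda)^*$ from Lemma~\ref{isoHlem}, the reverse inclusion by feeding the parametrization \eqref{eq:gamma-} (resp.\ \eqref{eq:gamma+}) into the defining identity of the adjoint and matching the resulting equation with \eqref{eq:GammaH+} (resp.\ \eqref{eq:GammaH-}), and then (ii) from (i) together with \eqref{eq:ranGammaHi+}. The only cosmetic difference is that the paper first records single-valuedness of $\gamma(\bar\lambda)^*$ via the density of $\dom\gamma(\bar\lambda)$ from Proposition~\ref{prop:C2}, which your argument does not need but which does no harm.
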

\begin{proof}
(i) By Proposition~\ref{prop:C2} $\dom \gamma(\bar{\lambda})$ is
dense in $\cH$ and hence $\gamma(\bar{\lambda})^*$ is a
single-valued operator from $\sH$ to $\cH$  for all
$\lambda\in\dC\setminus\dR$. By Lemma~\ref{isoHlem} (cf. \cite[Lemma
7.38]{DHMS12}) one has
$\Gamma_1H(\lambda)\subseteq \gamma(\bar\lambda)^* $. 
Now let $\lambda\in\dC_+$ and assume that
$v=\gamma(\bar{\lambda})^*g$ for some $g\in\sH$. Then
by~\eqref{eq:gamma-}
\[
((1-\bar{\zeta})(I-\bar{\zeta} T^*)^{-1}G^*u,g)=((\theta(\zeta)^*-I)u,v)_\cH
\]
for all $u\in\cH$, and hence
\[
(\theta(\zeta)-I)v+(\zeta-1)G(I-\zeta T)^{-1}g=0.
\]
In view of~\eqref{eq:GammaH+} one obtains
$\{g,v\}\in\Gamma_1H(\lambda)$, so that $\gamma(\bar\lambda)^*
\subset \Gamma_1H(\lambda)$. This proves (i) for $\lambda\in\dC_+$.
Similarly, the assumption $\{g',v'\}\in\gamma(\lambda)^*$
$(\lambda\in\dC_+)$ yields in view of~\eqref{eq:gamma-}
\[
(\theta(\zeta)^*-I)v+(\bar{\zeta}-1)F^*(I-\bar{\zeta} T^*)^{-1}g'=0.
\]
By~\eqref{eq:GammaH-} this proves that
$\{g',v'\}\in\Gamma_1H(\bar{\lambda})$ and hence (i) is in force
also for $\lambda\in\dC_-$.

(ii) The formulas~\eqref{eq:GammaH+} and~\eqref{eq:GammaH-}
for $\lambda=i$ take the form
\begin{equation}\label{eq:GammaHi+}
\Gamma_1H(i)=\{\{ g,v\}:\,(H-I)v+Gg=0, \,\,g\in\sH,\,v\in\cH\},
\end{equation}
\begin{equation}\label{eq:GammaHi-}
\Gamma_1H(-i)=\{\{g',v'\}: (H^*-I)v'+F^*g'=0,\,\,g'\in\sH,\,v'\in\cH\}.
\end{equation}
Then~\eqref{eq:ranGammaHi+2} is immediate by item (i) using the
formulas~\eqref{eq:GammaHi+},~\eqref{eq:GammaHi-} and
Proposition~\ref{prop:Gamma_1H}.
\end{proof}

\begin{corollary}\label{cor:C3}{\rm (\cite{DHMS12})}
Let the assumptions of Proposition~\ref{prop:C3} be in force. 
Then the operator $\Gamma_1H(\lambda)$ is bounded or, equivalently, $\Gamma_1\uphar
A_0$ is bounded if and only if $A_0$ is closed.
\end{corollary}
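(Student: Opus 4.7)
\medskip
\noindent\textbf{Proof plan.} The strategy is to reduce the bounded\-ness of $\Gamma_1 H(\lambda)$ to a statement about the domain of an adjoint, and then invoke the closed graph theorem. The whole argument rests on two already-established facts: Proposition~\ref{prop:C3}(i), which identifies $\Gamma_1 H(\lambda)$ with the (always closed) adjoint $\gamma(\bar\lambda)^*$, and the observation from the proof of Lemma~\ref{cor:GH} that
\[
\dom\bigl(\Gamma_1 H(\lambda)\bigr)=\ran(A_0-\lambda), \qquad \lambda\in\cmr.
\]
The equivalence of boundedness of $\Gamma_1 H(\lambda)$ and boundedness of $\Gamma_1\uphar A_0$ is already contained in Lemma~\ref{cor:GH}(iii), so what remains is to connect boundedness of $\Gamma_1 H(\lambda)$ with closedness of $A_0$.

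First I would treat the implication ``$A_0$ closed $\Rightarrow$ $\Gamma_1H(\lambda)$ bounded''. If $A_0$ is closed then, being symmetric, $A_0-\lambda$ is injective with bounded inverse on its range for every $\lambda\in\cmr$; in particular $\ran(A_0-\lambda)$ is a closed subspace of $\sH$. By Proposition~\ref{prop:C3}(i) the operator $\Gamma_1 H(\lambda)=\gamma(\bar\lambda)^*$ is closed (adjoint of a linear relation). A closed linear operator with closed domain is bounded by the closed graph theorem, which yields the desired conclusion.

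For the converse, assume $\Gamma_1H(\lambda)$ is bounded. Again by Proposition~\ref{prop:C3}(i) this operator equals $\gamma(\bar\lambda)^*$, so it is simultaneously closed and bounded. A closed bounded operator has a closed domain: if $x_n\in\dom\gamma(\bar\lambda)^*$ converges to $x$, then $\gamma(\bar\lambda)^*x_n$ is Cauchy by boundedness, so it converges to some $y$, and closedness forces $x\in\dom\gamma(\bar\lambda)^*$. Hence $\ran(A_0-\lambda)$ is closed. Consequently the bounded operator $(A_0-\lambda)^{-1}$ is defined on a closed subspace, therefore closed as a relation, and inverting back shows $A_0-\lambda$ (equivalently $A_0$) is closed.

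There is no real obstacle here; the only point requiring care is not to conflate closedness as an operator with selfadjointness of $A_0$, since in a general unitary boundary triple $A_0=\ker\Gamma_0$ need not even be closed, let alone selfadjoint (cf.\ the $ES$-generalized case discussed around \eqref{eq:A_*A_0}). Combining the two implications above with Lemma~\ref{cor:GH}(iii) completes the proof.
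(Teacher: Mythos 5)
Your proof is correct and follows essentially the same route as the paper: both arguments rest on Proposition~\ref{prop:C3}(i) (so that $\Gamma_1H(\lambda)=\gamma(\bar\lambda)^*$ is closed) together with the closed graph theorem. The only cosmetic difference is that the paper applies the closed graph theorem directly to $\Gamma_1\uphar A_0$, whose domain is $A_0$ itself, whereas you apply it to $\Gamma_1H(\lambda)$ and then translate closedness of $\ran(A_0-\lambda)$ back to closedness of $A_0$ --- a step the paper absorbs into the identity $\Gamma_1\uphar A_0=(\Gamma_1H(\lambda))H(\lambda)^{-1}$ from Lemma~\ref{cor:GH}.
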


\begin{proof}
By Lemma~\ref{cor:GH} the operator $\Gamma_1H(\lambda)$
is bounded if and only if the restriction $\Gamma_1\uphar A_0$ is
bounded. Since $\Gamma_1H(\lambda)=\gamma(\bar{\lambda})^*$ by Proposition~\ref{prop:C3}, this mapping
is closed and it follows from the closed graph theorem that $\Gamma_1\uphar A_0$ is bounded
if and only if its domain $\dom (\Gamma_1\uphar A_0)=A_0$ is closed.
\end{proof}
\begin{remark}\label{rem:C3}
If  $\{\cH,\Gamma\}$ is a unitary boundary pair,  then it also
admits the representations~\eqref{eq:8.2} and~\eqref{eq:8.2A} in
terms of its Potapov-Ginzburg transform $\cU=\omega(\Gamma)$. Then,
for instance,
\begin{equation}\label{eq:ranGammaHi+1}
\mul ( \Gamma_1H(\lambda))=(I+H)\ker F=(I+H^*)\ker G^*,
\end{equation}
and
the statements (i) and (ii) in Proposition~\ref{prop:C3} take the form:
\begin{equation}\label{eq:G1H}
    \Gamma_1H(\lambda)=\gamma(\bar\lambda)^* \hplus (\{0\}\times\mul \Gamma_1)\quad\mbox{ for all}\quad  \lambda\in\dC\setminus\dR
\end{equation}
\begin{equation}\label{eq:ranGammaHi+3}
\ran\gamma(\lambda)^*=(I-H)^{-1}(\ran G\cap\ran(I-H)),
\end{equation}
and e.g. the adjoint of the $\gamma$-field for $\lambda\in\dC_+$
admits the formula
\[
 \gamma(\bar\lambda)^*=\{ \{f,f'\}:\, (I-H)f'=Gf \}.
\]
\end{remark}


\subsection{A Kre\u{\i}n type formula for unitary boundary
triples}\label{sec5.2}

In this section Kre\u{\i}n's resolvent formula is extended to the
setting of general unitary boundary triples. It is analogous to the
formula established in Section \ref{sec4.2}. Recall from
\cite{DHMS06} that for a unitary boundary triple the kernel
$A_0=\ker\Gamma_0$ need not be selfadjoint, it is in general only a
symmetric extension of $A$ which can even coincide with $A$; see
e.g. the transposed boundary triple treated in Example
\ref{example6.5} below. For simplicity the next result is formulated
for nonreal points $\lambda\in\cmr$; these points are regular type
points for $A_0$.

As in Section \ref{sec4.2}, let $\wt\ZA$ be an extension of $\ZA$
which belongs to the domain of $\Gamma$ and let $\Theta$ be a linear
relation in $\cH$ corresponding to $\wt\ZA$:
\begin{equation}\label{ATheta2}
 \Theta=\Gamma(\wt A), \quad \wt A\subset \dom \Gamma
 \quad \Leftrightarrow \quad \wt A=A_\Theta:=\Gamma^{-1}(\Theta), \quad \Theta\subset \ran\Gamma.
\end{equation}

\begin{theorem}\label{Kreinformula2}
Let $\ZA$ be a closed symmetric relation, let $\Pi =
\{\cH,\Gamma_0,\Gamma_1\}$ be a unitary boundary triple for $\ZA^*$
with $A_0=\ker\Gamma_0$, and let  $M(\cdot)$ and $\gamma(\cdot)$ be
the corresponding  Weyl function and $\gamma$-field, respectively.
Then for any extension $A_\Theta\in \Ext_A$ satisfying
$A_\Theta\subset \dom \Gamma$ the following equality holds for every
$\lambda\in\cmr$,
\begin{equation}\label{resol2}
(A_\Theta-\lambda)^{-1}-(A_0-\lambda)^{-1} =
 \gamma(\lambda)\bigl(\Theta-M(\lambda)\bigr)^{-1}\gamma(\bar\lambda)^*,
\end{equation}
where the inverses in the first and last term are taken in the sense
of linear relations.
\end{theorem}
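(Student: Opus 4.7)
The plan is to follow the scheme used for Theorem~\ref{Kreinformula}, but interpret both sides of~\eqref{resol2} as linear relations in $\sH$, since here $A_0=\ker\Gamma_0$ is only symmetric and $(A_0-\lambda)^{-1}$ is a bounded operator defined on the (possibly proper) subspace $\ran(A_0-\lambda)$. The two tools from Section~\ref{sec5.1} that substitute for the selfadjointness of $A_0$ are the bounded injection $H(\lambda):\ran(A_0-\lambda)\to A_0$ introduced in~\eqref{Hlambda} and the identity $\Gamma_1 H(\lambda)=\gamma(\bar\lambda)^*$ from Proposition~\ref{prop:C3}(i), valid for every unitary boundary triple. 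Together these yield $\Gamma(H(\lambda)g)=\{0,\gamma(\bar\lambda)^*g\}$ for every $g\in\ran(A_0-\lambda)$, which is the only algebraic identity about $A_0$ needed throughout the argument.

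For the inclusion $\subset$ take $\{g,g'\}$ in the right-hand side of~\eqref{resol2}. By~\eqref{eq:sum} this means $g\in\ran(A_\Theta-\lambda)\cap\ran(A_0-\lambda)$, and there is $\wh g_\Theta=\{g_\Theta,g+\lambda g_\Theta\}\in A_\Theta$ with $g'=g_\Theta-(A_0-\lambda)^{-1}g$. Setting $\wh g_0:=H(\lambda)g\in A_0\subset\dom\Gamma$ gives $\wh g_\Theta-\wh g_0=\{g',\lambda g'\}\in\wh\sN_\lambda(A_*)$, so with $\varphi:=\Gamma_0\wh g_\Theta\in\Gamma_0(\wh\sN_\lambda(A_*))=\dom M(\lambda)$ one obtains $\wh g_\Theta-\wh g_0=\wh\gamma(\lambda)\varphi$ and, in particular, $g'=\gamma(\lambda)\varphi$. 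Applying $\Gamma$ and using $\Gamma\wh g_0=\{0,\gamma(\bar\lambda)^*g\}$ together with $\Gamma\wh\gamma(\lambda)\varphi=\{\varphi,M(\lambda)\varphi\}$ yields $\Gamma\wh g_\Theta=\{\varphi,\gamma(\bar\lambda)^*g+M(\lambda)\varphi\}\in\Theta$, i.e., $\{\gamma(\bar\lambda)^*g,\varphi\}\in(\Theta-M(\lambda))^{-1}$; combined with $g'=\gamma(\lambda)\varphi$ this puts $\{g,g'\}$ in the left-hand side.

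For the reverse inclusion, unwinding the composition on the right of~\eqref{resol2} produces $h$ and $\varphi$ with $\{g,h\}\in\gamma(\bar\lambda)^*$, $\{\varphi,h+M(\lambda)\varphi\}\in\Theta$ and $g'=\gamma(\lambda)\varphi$. By Proposition~\ref{prop:C3}(i) the first condition translates to $g\in\ran(A_0-\lambda)$ with $h=\Gamma_1 H(\lambda)g$, hence $\wh g_0:=H(\lambda)g\in A_0$ satisfies $\Gamma\wh g_0=\{0,h\}$. Then $\wh g_\Theta:=\wh g_0+\wh\gamma(\lambda)\varphi\in\dom\Gamma$ satisfies $\Gamma\wh g_\Theta=\{\varphi,h+M(\lambda)\varphi\}\in\Theta$, so $\wh g_\Theta\in A_\Theta$. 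Reading off its components yields $\{g,(A_0-\lambda)^{-1}g+g'\}\in(A_\Theta-\lambda)^{-1}$ and $\{g,(A_0-\lambda)^{-1}g\}\in(A_0-\lambda)^{-1}$; their difference is exactly $\{g,g'\}$, as required.

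The main obstacle compared with Theorem~\ref{Kreinformula} is that the von Neumann type decomposition $A_*=A_0\hplus\wh\sN_\lambda(A_*)$ is no longer available, so the splitting $\wh g_\Theta=\wh g_0+\wh\gamma(\lambda)\varphi$ must be produced by hand via $H(\lambda)$ together with the elementary observation that $\wh g_\Theta-\wh g_0\in\wh\sN_\lambda(A_*)$. Equally, one must verify that the boundary data of $\wh g_0$ are still encoded by $\gamma(\bar\lambda)^*$ even though $\dom\gamma(\bar\lambda)^*$ may fail to be all of $\sH$; this is precisely Proposition~\ref{prop:C3}(i), and once it is invoked the algebraic manipulations from the proof of Theorem~\ref{Kreinformula} transfer verbatim, with the bookkeeping of domains and ranges replaced by the relational interpretation of $(A_\Theta-\lambda)^{-1}-(A_0-\lambda)^{-1}$.
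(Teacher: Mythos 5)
Your argument is correct and coincides with the paper's own proof: both handle the merely symmetric $A_0$ via the operator $H(\lambda)$ from~\eqref{Hlambda}, invoke $\Gamma_1H(\lambda)=\gamma(\bar\lambda)^*$ from Proposition~\ref{prop:C3}(i) to identify $\Gamma \wh g_0=\{0,\gamma(\bar\lambda)^*g\}$, and produce the splitting $\wh g_\Theta=\wh g_0+\wh\gamma(\lambda)\varphi$ by hand from $\wh g_\Theta-\wh g_0\in\wh\sN_\lambda(A_*)$ (your direct use of $A_\Theta=\Gamma^{-1}(\Theta)$ in the converse step is a harmless shortcut past the paper's detour through $\ker\Gamma=A$). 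The only blemish is that in your second paragraph the words ``right-hand side'' and ``left-hand side'' of~\eqref{resol2} are interchanged --- you start from the difference of resolvents, which is the left-hand side, and conclude membership in the composition, which is the right --- but the mathematics is unaffected.
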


\begin{proof}
We first prove the inclusion ``$\subset$'' in~\eqref{resol2}. Let
$A_\Theta\subset \dom \Gamma$ be an extension of $\ZA$ with $\Theta$
as in~\eqref{ATheta2}. Since $A_0$ is symmetric,
$(A_0-\lambda)^{-1}$ is a bounded, in general nondensely defined,
operator for every fixed $\lambda\in\cmr$. Now assume that
$\{g,g''\}\in (A_\Theta-\lambda)^{-1}-(A_0-\lambda)^{-1}$. Then
$g\in \dom (A_\Theta-\lambda)^{-1} \cap \dom (A_0-\lambda)^{-1}$ and
$\{g,g'\} \in (A_\Theta-\lambda)^{-1}$ for some $g'\in\sH$, so that
$g''=g'-(A_0-\lambda)^{-1}g$. Hence $\wh g_\Theta:=\{g',g+\lambda
g'\}\in A_\Theta\subset \dom \Gamma$,
\begin{equation}\label{A0g0}
 \wh g_0:= 
 \{(A_0-\lambda)^{-1}g,(I+\lambda(A_0-\lambda)^{-1})g\} \in A_{0} \subset \dom\Gamma,
\end{equation}
and
\[
\wh g_\Theta-\wh g_0 
 =\{g'-(A_0-\lambda)^{-1}g,\lambda (g'-(A_0-\lambda)^{-1}g)\},
\]
so that $\wh g_\Theta-\wh g_0\in \wh \sN_{\lambda}(\ZA_*)$. Recall
that $\wh \gamma(\lambda)$ maps $\dom\wh\gamma(\lambda)$ onto $\wh
\sN_{\lambda}(\ZA_*)\subset \dom\Gamma$ and hence there exists
$\varphi \in \dom\wh\gamma(\lambda)=\dom M(\lambda)$ such that
\begin{equation}\label{graph2g}
 \wh g_\Theta - \wh g_0 = \wh \gamma(\lambda) \varphi,
 \quad \Gamma \wh \gamma(\lambda)\varphi=\{\varphi,
M(\lambda)\varphi\},
\end{equation}
see~\eqref{ghatfield},~\eqref{ggam1}; notice that $M(\lambda)$ is an
operator, since $\mul\Gamma=\{0\}$. Clearly $\Gamma_0 \wh g_{0}=0$
and according to Proposition~\ref{prop:C3} one has $\Gamma_1\wh
g_0=\Gamma_1 H(\lambda)g=\gamma (\overline\lambda)^*g$, where $
H(\lambda)$ is defined by~\eqref{Hlambda}. Observe, that here
$\gamma(\bar\lambda)^*$ is an operator since $H(\lambda)$ and
$\Gamma_1$ are operators. Now it follows from~\eqref{graph2g} that
\begin{equation}\label{graph2gg}
 \{0,\gamma(\bar\lambda)^*g\}+\{\varphi,M(\lambda)\varphi\}
 =\Gamma \wh g_0 +\Gamma  \wh \gamma(\lambda)\varphi
 =\Gamma \wh g_{\Theta} \in \Theta,
\end{equation}
see~\eqref{ATheta2}. Consequently,
$\{\varphi,\gamma(\bar\lambda)^*g+M(\lambda)\varphi \}\in \Theta$
and $\{\varphi,\gamma(\bar\lambda)^*g\}\in \Theta- M(\lambda)$ or,
equivalently, $\{g,\varphi\}\in(\Theta-
M(\lambda))^{-1}\gamma(\bar\lambda)^*$ and hence~\eqref{graph2g}
shows that
\[
 \{g,g''\}=
 \{g,\gamma(\lambda)\varphi\}\in\gamma(\lambda)(\Theta-M(\lambda))^{-1}\gamma(\bar\lambda)^*,
\]
which proves the first inclusion in~\eqref{resol2}.

To prove the reverse inclusion ``$\supset$'' in~\eqref{resol2}
assume that $\{g,g''\}\in
\gamma(\lambda)(\Theta-M(\lambda))^{-1}\gamma(\bar\lambda)^*$. Since
$\dom(\Theta-M(\lambda))\subset \dom M(\lambda)=\dom\gamma(\lambda)$
the assumption on $\{g,g''\}$ means that for some $\varphi\in\cH$
one has $\{\gamma(\bar\lambda)^*g,\varphi\}\in
(\Theta-M(\lambda))^{-1}$ and
\[
 \{g,g''\}=\{g,\gamma(\lambda)\varphi\}\in\gamma(\lambda)(\Theta-M(\lambda))^{-1}\gamma(\bar\lambda)^*.
\]
It follows from $\{\varphi,\gamma(\bar\lambda)^*g+M(\lambda)\varphi
\}\in \Theta$ and~\eqref{ATheta2} that $\Gamma \wh
g_{\Theta}=\{\varphi,\gamma(\bar\lambda)^*g+M(\lambda)\varphi \}$
for some $\wh g_{\Theta}\in A_\Theta$. By Proposition~\ref{prop:C3}
$\Gamma_1H(\lambda)=\gamma(\bar\lambda)^*$, which shows that
$g\in\ran(A_0-\lambda)$, $\lambda\in\cmr$; see~\eqref{Hlambda}. Now
associate with $g$ the element $\wh g_0$ as in~\eqref{A0g0}. Since
$\Gamma \wh g_{0}=\{0,\gamma(\bar\lambda)^*g\}$ and $\Gamma\wh
\gamma(\lambda)\varphi=\{\varphi,M(\lambda)\varphi\}$ we conclude
that~\eqref{graph2gg} is satisfied. Therefore, $\wh g_0+\wh
\gamma(\lambda)g- \wh g_{\Theta}\in \ker \Gamma = A$ and thus $\wh
g_0+\wh \gamma(\lambda)\varphi\in A_\Theta$ or, equivalently,
\[
 \{g,(A_0-\lambda)^{-1}g+\gamma(\lambda)\varphi\}\in (A_\Theta-\lambda)^{-1}.
\]
Hence,
\[
\{g,g''\}= \{g,\gamma(\lambda)\varphi\} \in
(A_\Theta-\lambda)^{-1}-(A_0-\lambda)^{-1}.
\]
This proves the reverse inclusion in~\eqref{resol2} and completes
the proof.
\end{proof}
  \begin{remark}
Again notice the generality of the formulas in~\eqref{resol2}, in
particular, that $\lambda$ need not belong to $\rho(A_\Theta)$.
Observe also that in the formula~\eqref{resol2} the operator
$(A_0-\lambda)^{-1}$ cannot be shifted to the righthand side without
loosing the stated equality. Indeed, in that case only the following
inclusion remains valid:
\[
(A_\Theta-\lambda)^{-1}\supset (A_0-\lambda)^{-1} -
 \gamma(\lambda)\bigl(\Theta-M(\lambda)\bigr)^{-1}\gamma(\bar\lambda)^*.
\]
  \end{remark}

By considering the multivalued parts we obtain the following
statement for the point spectrum of $A_\Theta$ from
Theorem~\ref{Kreinformula2}.

\begin{corollary}
With the assumptions in Theorem \ref{Kreinformula2} one has
$\lambda\in\sigma_p(A_\Theta)$ if and only if
$0\in\sigma_p\bigl(\Theta-M(\lambda)\bigr)$, in which case
\[
 \ker(A_\Theta-\lambda)=\gamma(\lambda)\ker(\Theta-M(\lambda)),\quad
 \lambda\in\cmr.
\]
\end{corollary}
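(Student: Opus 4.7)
The plan is to derive the claim directly from the Kre\u{\i}n-type formula \eqref{resol2} by taking the multivalued parts of both sides and invoking the standard identity $\mul B^{-1}=\ker B$ valid for any linear relation $B$.

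First, I would analyse the left-hand side of \eqref{resol2}. Since $A_0$ is symmetric and $\lambda\in\cmr$, the relation $A_0-\lambda$ is injective with closed range, so $(A_0-\lambda)^{-1}$ is a (bounded, in general non-densely defined) operator; in particular $\mul(A_0-\lambda)^{-1}=\ker(A_0-\lambda)=\{0\}$. Using that for relations $T_1,T_2$ with a common domain one has $\mul(T_1-T_2)=\mul T_1+\mul T_2$, this yields
\[
 \mul\bigl[(A_\Theta-\lambda)^{-1}-(A_0-\lambda)^{-1}\bigr]
 =\mul(A_\Theta-\lambda)^{-1}=\ker(A_\Theta-\lambda).
\]

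Second, I would analyse the right-hand side $\gamma(\lambda)(\Theta-M(\lambda))^{-1}\gamma(\bar\lambda)^*$. Since $\Pi=\{\cH,\Gamma_0,\Gamma_1\}$ is a unitary boundary triple (in particular $\mul\Gamma=\{0\}$), Proposition~\ref{prop:C2} ensures that $\dom\gamma(\bar\lambda)$ is dense in $\cH$, so $\gamma(\bar\lambda)^*$ is a closed operator and $\mul\gamma(\bar\lambda)^*=\{0\}$. Likewise $M(\lambda)$ is an operator and $\gamma(\lambda)$ is an operator. Tracing $\{0,y\}$ through the composition $\gamma(\lambda)\circ(\Theta-M(\lambda))^{-1}\circ\gamma(\bar\lambda)^*$, only the component $\{0,0\}\in\gamma(\bar\lambda)^*$ contributes, whence
\[
 \mul\bigl[\gamma(\lambda)(\Theta-M(\lambda))^{-1}\gamma(\bar\lambda)^*\bigr]
 =\gamma(\lambda)\,\mul(\Theta-M(\lambda))^{-1}
 =\gamma(\lambda)\ker\bigl(\Theta-M(\lambda)\bigr).
\]
Combining these two computations with \eqref{resol2} gives the asserted identity
\[
 \ker(A_\Theta-\lambda)=\gamma(\lambda)\ker\bigl(\Theta-M(\lambda)\bigr).
\]

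Finally, the equivalence $\lambda\in\sigma_p(A_\Theta)\Leftrightarrow 0\in\sigma_p(\Theta-M(\lambda))$ follows once I observe that the map $\gamma(\lambda)\colon \dom M(\lambda)\to\sN_\lambda(\ZA_*)$ is a bijection: by \eqref{Green2B} the restriction $\Gamma_0\upharpoonright\wh\sN_\lambda(\ZA_*)$ has trivial kernel, and by Definition~\ref{Weylfam} its inverse (followed by the first projection) is exactly $\gamma(\lambda)$. Since $\ker(\Theta-M(\lambda))\subset\dom M(\lambda)$, the restriction $\gamma(\lambda)\upharpoonright\ker(\Theta-M(\lambda))$ is injective, so $\gamma(\lambda)\ker(\Theta-M(\lambda))=\{0\}$ if and only if $\ker(\Theta-M(\lambda))=\{0\}$. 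The main (mild) obstacle is a careful bookkeeping of the multivalued parts in the threefold composition on the right-hand side; the key input that makes this routine is the density of $\dom\gamma(\bar\lambda)$ supplied by Proposition~\ref{prop:C2}, which ensures $\gamma(\bar\lambda)^*$ has no multivalued part.
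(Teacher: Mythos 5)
Your proof is correct and is precisely the argument the paper intends: the corollary is presented as an immediate consequence of Theorem~\ref{Kreinformula2} ``by considering the multivalued parts'', which is exactly your computation of $\mul$ on both sides of \eqref{resol2}. One small citation slip: the injectivity of $\gamma(\lambda)$ on $\dom M(\lambda)$, needed for the final equivalence, follows from $\ker\gamma(\lambda)=\mul\Gamma_0=\{0\}$ (Lemma~\ref{gg01}(ii), i.e.\ single-valuedness of $\Gamma$) rather than from \eqref{Green2B}, which only shows that $\wh\gamma(\lambda)$ is single-valued.
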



\subsection{$S$-generalized boundary triples}\label{sec5.3}
Here we extend Definition~\ref{SgenBT} to the case of boundary
pairs.
 \begin{definition}\label{SgenBP}\index{Boundary pair!S-generalized }
A unitary boundary pair  $\{\cH,\Gamma\}$ is  said to be an
\emph{$S$-generalized boundary pair}, if $A_0$ is a selfadjoint
linear relation in $\sH$.
\end{definition}
In the following proposition some special boundary triples/pairs
are characterized in terms of  its Potapov-Ginzburg transform.
\begin{proposition}\label{Wlemma2}
Let  $\{\cH,\Gamma\}$ be a unitary boundary pair,  let
$\cU=\omega(\Gamma)$ be its Potapov-Ginzburg transform given
by~\eqref{eq:Col} and~\eqref{eq:Col_U}, and let ${\ZA_*}=\dom\Gamma$, $A_0=\ker\Gamma_0$.
Then:
\begin{enumerate}\def\labelenumi {\textit{(\roman{enumi})}}
\item $\{\cH,\Gamma_0,\Gamma_1\}$ is an ordinary boundary triple if and only if
\[
 \ran G=\cH\quad\Longleftrightarrow\quad \ran F^*=\cH;
\]
\item $\{\cH,\Gamma_0,\Gamma_1\}$ is a $B$-generalized boundary triple if and
only if
  \[\left\{\begin{array}{c}
         \ker F=\{0\} \\
         \ran (I-H)=\cH
       \end{array}\right.
       \Longleftrightarrow\quad \left\{\begin{array}{c}
         \ker G^*=\{0\} \\
         \ran (I-H^*)=\cH
       \end{array}\right.
;\]
\item  $\{\cH,\Gamma\}$ is a $B$-generalized
boundary pair if and only if
  \[
  \Gamma_0|\wh\sN_i=\cH
  \quad\Longleftrightarrow\quad \ran (I-H)=\cH
   \quad\Longleftrightarrow\quad \ran (I-H^*)=\cH;
\]
\item  $\Gamma_0$ is surjective if and only if
  \[
  \ran(I-H)+\ran G=\cH\quad\Longleftrightarrow\quad
  \ran (I-H^*)+\ran F^*=\cH;
  \]
\item $\{\cH,\Gamma\}$ is a $S$-generalized
boundary pair if and only if
\[
\ran G\subset\ran(I-H)\quad\mbox{ and }\quad\ran F^*\subset\ran (I-H^*).
\]
\end{enumerate}
\end{proposition}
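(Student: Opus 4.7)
The strategy is a systematic application of the formulas in Propositions~\ref{Wlemma} and~\ref{prop:C2} that describe the graph of $\Gamma$, its associated images, and the $\gamma$-field at $\pm i$ in terms of the block entries of $\cU=\omega(\Gamma)=\begin{pmatrix}T&F\\G&H\end{pmatrix}$. A uniform feature is that each of the two equivalent conditions in items (i)--(v) originates from using either the representation~\eqref{eq:8.2} via $\cU$ or the dual representation~\eqref{eq:8.2A} via $\cU^{*}=\cU^{-1}$; since both descriptions of $\Gamma$ hold simultaneously, the equivalence between the two conditions in each item will be automatic once either direction is established.

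Item (iv) is immediate from the identity~\eqref{eq:8.11}, which states $\ran\Gamma_0=\ran(I-H)+\ran G=\ran(I-H^*)+\ran F^*$. For item (iii), formula~\eqref{eq:gamma_pmi} identifies $\dom\gamma(i)=\ran(I-H)$; since $\dom\gamma(i)=\dom M(i)=\Gamma_0(\wh\sN_i(A_*))$, the condition $\Gamma_0|\wh\sN_i=\cH$ is equivalent to $\ran(I-H)=\cH$. If $\{\cH,\Gamma\}$ is a $B$-generalized boundary pair, then Theorem~\ref{prop:C6B}(iii) together with $\ran\Gamma_0=\cH$ gives $\dom M(i)=\cH$, hence $\ran(I-H)=\cH$. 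Conversely, $\ran(I-H)=\cH$ yields $\dom M(i)=\cH$; contractivity of $H$ first forces $\ker(I-H)=\{0\}$, so $M(i)$ is single-valued, and the closed graph theorem applied to the closed maximal dissipative operator $M(i)$ on $\cH$ yields $M(i)\in[\cH]$. Thus $M\in\cR[\cH]$, and the realization result~\cite[Proposition~5.9]{DHMS06} (cf.\ Definition~\ref{Btriple}) identifies $\{\cH,\Gamma\}$ as a $B$-generalized boundary pair.

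Item (v) follows from Theorem~\ref{prop:C6B}: selfadjointness of $A_0$ is equivalent to $\ran\Gamma_0=\dom M(\lambda)=\dom M(\mu)$ for some (equivalently all) $\lambda\in\dC_+$ and $\mu\in\dC_-$. Choosing $\lambda=i$ and invoking $\ran\Gamma_0=\ran(I-H)+\ran G$ together with $\dom M(i)=\ran(I-H)$, the equality $\ran\Gamma_0=\dom M(i)$ reduces to $\ran G\subseteq\ran(I-H)$; the dual choice $\mu=-i$ produces $\ran F^*\subseteq\ran(I-H^*)$. Item (ii) is then immediate from (iii) combined with the single-valuedness criterion~\eqref{eq:mul_Gamma} ($\mul\Gamma=\{0\}\Longleftrightarrow\ker F=\{0\}\Longleftrightarrow\ker G^*=\{0\}$), since by definition a $B$-generalized boundary triple is precisely a single-valued $B$-generalized boundary pair.

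Finally, for item (i), an ordinary boundary triple is a unitary boundary triple with surjective $\Gamma:A^*\to\cH^2$. From~\eqref{eq:8.2}, a pair $(h,h')\in\cH^2$ lies in $\ran\Gamma$ iff there exist $g\in\sH$ and $u\in\cH$ with $h=Gg+(H-I)u$ and $h'=-iGg-i(H+I)u$; forming $h-ih'=-2u$ and $h+ih'=2Gg+2Hu$ determines $u$ uniquely and reduces membership to the solvability of $Gg=\tfrac12(h+ih')+\tfrac12 H(h-ih')$ in $g\in\sH$. Specializing to $h'=-ih$ shows that $\ran\Gamma=\cH^2$ forces $\ran G=\cH$, while conversely $\ran G=\cH$ is clearly sufficient; the dual representation~\eqref{eq:8.2A} produces $\ran F^*=\cH$ by an analogous computation. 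The sharpest step in the whole argument lies in item (iii), where one must simultaneously rule out multivaluedness of $M(i)$ (using contractivity of $H$) and promote densely-definedness to boundedness (using the closed graph theorem) in order to land in $\cR[\cH]$, rather than in a strictly larger class of Nevanlinna families.
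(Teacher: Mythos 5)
Your proposal is essentially correct and covers all five items, but it follows a noticeably different route from the paper for (i)--(iii) and (v). The paper simply cites \cite[Proposition~5.9, Corollaries~5.11,~5.12]{BHS09} for (i)--(iii), reads (iv) off~\eqref{eq:8.11} exactly as you do, and proves (v) by the most elementary possible computation: writing out $A_0\mp i$ from~\eqref{eq:8.9} (this is~\eqref{eq:8.9i}) one sees directly that $\ran(A_0-i)=\sH$ iff $Gg\in\ran(I-H)$ for every $g\in\sH$, i.e.\ iff $\ran G\subset\ran(I-H)$, and dually for $A_0+i$; selfadjointness of the symmetric relation $A_0$ is then just surjectivity of $A_0\mp i$. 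Your route for (v) instead goes through the Weyl-function characterization $A_0=A_0^*\Leftrightarrow\ran\Gamma_0=\dom M(\pm i)$ together with $\dom M(i)=\ran(I-H)$ and~\eqref{eq:8.11}. That is a valid argument, but note that the multivalued version of this characterization is Theorem~\ref{prop:C6}, which in this paper is proved \emph{after} Proposition~\ref{Wlemma2} and \emph{uses} its item (v); to avoid circularity you must cite the external result \cite[Theorem~4.13]{DHMS06} (which does cover boundary relations), not Theorem~\ref{prop:C6B} as literally stated for single-valued triples. The paper's direct range computation buys self-containedness and avoids this dependency entirely. Your reductions (ii)$=$(iii)$+$\eqref{eq:mul_Gamma} and the explicit solvability analysis of~\eqref{eq:8.2} for (i) are both fine and match what the cited source does.

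The one genuine misstep is the last sentence of your argument for (iii). The realization result \cite[Proposition~5.9]{DHMS06} produces, from a given $M\in\cR[\cH]$, \emph{some} $B$-generalized boundary pair with that Weyl family; it does not classify a \emph{given} unitary boundary pair whose Weyl family happens to lie in $\cR[\cH]$ (uniqueness up to unitary equivalence in Theorem~\ref{GBTNP} requires minimality, which you have not assumed). The conclusion you need follows more directly and should replace that step: from $\ran(I-H)=\cH$ you get $\ran\Gamma_0\supset\dom M(i)=\ran(I-H)=\cH$ by~\eqref{eq:8.11}, and $1\in\rho(H)$ by the numerical-range argument of Lemma~\ref{lem:ranGamma} (or by $\ker(I-H^*)=(\ran(I-H))^\perp=\{0\}$ plus $\ker(I-H)=\ker(I-H^*)$ for contractions), whence $\ran(I-H^*)=\cH$ as well and both inclusions $\ran G\subset\ran(I-H)$, $\ran F^*\subset\ran(I-H^*)$ hold trivially; item (v) then gives $A_0=A_0^*$, and the two defining properties of a $B$-generalized boundary pair are verified without any appeal to realization or to the closed graph theorem. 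With that repair the whole argument is sound.
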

\begin{proof}
The statements (i)--(iii) can be found in \cite[Proposition~5.9, Corollaries 5.11 and 5.12]{BHS09}.

(iv) This is implied by~\eqref{eq:8.11}.

(v) This statement follows from the equalities
\begin{equation}\label{eq:8.9i}
\begin{split}
A_0-i&=
    \left\{\left(
                   \begin{array}{c}
                     (T-I)g+Fu \\
                     2ig \\
                   \end{array}
                 \right):\,
                     Gg+(H-I)u =0,\,
                 g\in\sH,\,u\in\cH \right\}\\
A_0+i&= \left\{\left(
                   \begin{array}{c}
                     (I-T^*)g'-G^*u' \\
                     2ig' \\
                   \end{array}
                 \right):\,
                     F^*g'+(H^*-I)u'=0,\,
                 g'\in\sH,\,u'\in\cH \right\}
\end{split}
\end{equation}
which, in turn, are implied by~\eqref{eq:8.9}.
\end{proof}

\begin{remark}
Observe also that $A_0$ is a maximal symmetric operator if at least one of the conditions
$\ran G\subset\ran(I-H)$ or $\ran F^*\subset\ran (I-H^*)$
is satisfied. The statement (v) in Proposition \ref{Wlemma2} is contained in \cite[Prop]{Cal39};
see also~\cite[Prop]{HaWi12}, where it is formulated in terms of angular representation of $A_0$.
An example of a unitary boundary triple $\{\cH,\Gamma\}$, such that $A_0$ is selfadjoint and
$\Gamma_0$ is not surjective is presented in~\cite[Example~6.6]{DHMS06}.
\end{remark}

The following lemma shows that the conditions (iv) and (v) in Proposition~\ref{Wlemma2} are not unrelated.

\begin{lemma}\label{lem:ranGamma}
Let $\cU$ be a unitary colligation of the form~\eqref{eq:Col_U}.
Then the following conditions are equivalent:
\begin{enumerate}\def\labelenumi {\textit{(\roman{enumi})}}
\item  $\ran(I-H)+\ran G=\cH$;
\item   $\ran(I-H^*)+\ran F^*=\cH$;
\item   $\ran(I-H)=\cH$;
\item   $\ran(I-H^*)=\cH$.
\end{enumerate}
\end{lemma}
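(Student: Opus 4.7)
The plan is to prove the equivalences by reducing via easy implications and a symmetry argument, and then concentrating on the single nontrivial implication (i) $\Rightarrow$ (iv). The implications (iii) $\Rightarrow$ (i) and (iv) $\Rightarrow$ (ii) are immediate. The equivalence (iii) $\Leftrightarrow$ (iv) follows from a standard fact for Hilbert-space contractions: since $\|H\|, \|H^*\| \le 1$, one has $\ker(I-H) = \ker(I-H^*)$, because $Hu = u$ forces $\|u\|^2 = \RE(u, Hu) \le \|u\|\,\|Hu\| \le \|u\|^2$, and so $Hu = u$ together with $H^*u = u$. Hence either of (iii), (iv) turns $I - H$ into a bounded-below bijection of $\cH$ and so does $I - H^*$. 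Moreover, since $\cU^*$ is again a unitary colligation in which the roles of $(T, F, G, H)$ are replaced by $(T^*, G^*, F^*, H^*)$, the implication (ii) $\Rightarrow$ (iv) will follow from (i) $\Rightarrow$ (iii) applied to $\cU^*$. It therefore suffices to establish (i) $\Rightarrow$ (iv).

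To prove (i) $\Rightarrow$ (iv), my first step would be to verify that $\ker(I-H^*) = \{0\}$, equivalently that $\ran(I-H)$ is dense in $\cH$. The unitarity relation $GG^* + HH^* = I_\cH$ shows that every $u \in \ker(I-H^*)$ satisfies $\|G^*u\|^2 = \|u\|^2 - \|H^*u\|^2 = 0$, so the unconditional inclusion $\ker(I-H^*) \subseteq \ker G^*$ holds. On the other hand, taking orthogonal complements in (i) yields $\ker(I-H^*) \cap \ker G^* = \{0\}$, and combining the two gives $\ker(I-H^*) = \{0\}$.

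For closedness of $\ran(I-H^*)$ I would apply the open mapping theorem to the bounded surjection $\Phi : \cH \oplus \sH \to \cH$, $\Phi(h, g) = (I - H)h + G g$, whose surjectivity is precisely assumption (i). This yields $c\|u\|^2 \le \|\Phi^*u\|^2 = \|(I-H^*)u\|^2 + \|G^*u\|^2$ for some $c > 0$. The decisive ingredient, coming from $GG^* = I - HH^*$ together with $H^*u = u - (I-H^*)u$, is the pointwise estimate
\[
 \|G^*u\|^2 = \|u\|^2 - \|H^*u\|^2 = 2\,\RE\bigl(u,\,(I-H^*)u\bigr) - \|(I-H^*)u\|^2 \le 2\,\|u\|\,\|(I-H^*)u\|,
\]
which by Young's inequality yields $\|G^*u\|^2 \le \alpha\|u\|^2 + \alpha^{-1}\|(I-H^*)u\|^2$ for every $\alpha > 0$. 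Choosing $\alpha = c/2$ in the open mapping bound produces $\|(I-H^*)u\|^2 \ge c'\|u\|^2$ for some $c' > 0$, so $I-H^*$ is bounded below; combined with density this gives $\ran(I-H^*) = \cH$, i.e.\ (iv). The main obstacle is precisely this last step, namely the extraction of a one-sided lower bound on $\|(I-H^*)u\|$ from the two-term open mapping estimate; and it is the structural identity $GG^* = I - HH^*$ forced by the unitarity of $\cU$ that makes this balancing argument work.
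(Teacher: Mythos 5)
Your proof is correct, but it takes a genuinely different route from the paper's. The paper proves (i) $\Leftrightarrow$ (ii) by simply quoting the identity $\ran\Gamma_0=\ran(I-H)+\ran G=\ran(I-H^*)+\ran F^*$ from~\eqref{eq:8.11}, and then establishes (i) $\Rightarrow$ (iii) via the Fillmore--Williams operator-range addition formula $\ran A+\ran B=\ran\bigl((AA^*+BB^*)^{1/2}\bigr)$: together with $HH^*+GG^*=I$ this identifies $\ran(I-H)+\ran G=\ran\bigl((I-\RE H)^{1/2}\bigr)$, whence $I-\RE H$ is boundedly invertible, the numerical range of $H$ lies in $\{\RE z\le q\}$ with $q<1$, and so $1\in\rho(H)$. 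Your argument replaces both ingredients: instead of the operator-range formula you apply the open mapping theorem to $\Phi(h,g)=(I-H)h+Gg$ to get the coercivity estimate $c\|u\|^2\le\|(I-H^*)u\|^2+\|G^*u\|^2$, and then use $GG^*=I-HH^*$ together with Young's inequality to absorb the $\|G^*u\|^2$ term and extract a lower bound for $I-H^*$ alone; and instead of~\eqref{eq:8.11} you dispose of the (ii)-side by passing to the adjoint colligation $\cU^*$ and of (iii) $\Leftrightarrow$ (iv) by the fixed-point identity $\ker(I-H)=\ker(I-H^*)$ for contractions. The paper's route is shorter once the Fillmore--Williams result is granted and exhibits the sum explicitly as an operator range; yours is more elementary and self-contained, relying only on the open mapping theorem and the unitarity identity. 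One small presentational point: in the last step the density you need is that of $\ran(I-H^*)$, which comes from $\ker(I-H)=\ker(I-H^*)=\{0\}$ rather than directly from your Step A (which gives density of $\ran(I-H)$); the bridge is exactly the contraction fixed-point fact you already stated, so nothing is missing, but it is worth making the reference explicit.
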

\begin{proof}
The equivalence of (i) and (ii) is implied by~\eqref{eq:8.11}.

Since $\ran(I-H)\subseteq\ran(I-H)+\ran G$ and
$\ran(I-H^*)\subseteq\ran(I-H^*)+\ran F^*$ it remains to prove the
implications (i) $\Rightarrow$ (iii) and (ii) $\Rightarrow$ (iv).

Assume that $\ran(I-H)+\ran G=\cH$. Then using \cite{FW71} and the identity $HH^*+GG^*=I$ one obtains
\[
\begin{split}
\ran(I-H)+\ran
G&=\ran(((I-H)(I-H^*))^{1/2})+\ran((GG^*)^{1/2})\\
&=\ran(((I-H)(I-H^*)+GG^*)^{1/2})
\\
&=\ran((I-2\RE H+HH^*+GG^*)^{1/2})=\ran((I-\RE H)^{1/2}).
\end{split}
\]
This implies the equality $\ran(I-\RE H)=\cH$ and hence $-I\le\RE
H\le qI$ for some $q<1$. Therefore, the numerical range of $H$ is
contained in the half-plane $\RE z\le q$ and hence $1\in\rho(H)$.
This proves (iii). The implication (ii) $\Rightarrow$ (iv) is proved
similarly.
\end{proof}

\begin{corollary}\label{cor:C5}
If $\Pi=\{\cH,\Gamma_0,\Gamma_1\}$ is a unitary boundary triple with
$\ran\Gamma_0=\cH$, then $A_0=A_0^*$ and $\Pi$ is necessarily a
$B$-generalized boundary triple.
\end{corollary}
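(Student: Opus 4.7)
The plan is to translate the surjectivity hypothesis $\ran\Gamma_0=\cH$ into a condition on the Potapov–Ginzburg transform $\cU=\omega(\Gamma)$ written in the block form~\eqref{eq:Col_U}, and then read off the conclusion from the characterizations assembled in Proposition~\ref{Wlemma2} together with Lemma~\ref{lem:ranGamma}.

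First, because $\Pi$ is a unitary boundary triple, $\Gamma$ is single-valued, i.e. $\mul\Gamma=\{0\}$. By the criterion~\eqref{eq:mul_Gamma} this is equivalent to
\[
 \ker F=\{0\}\quad\text{and}\quad\ker G^*=\{0\}.
\]
Next, the hypothesis $\ran\Gamma_0=\cH$ combined with Proposition~\ref{Wlemma2}\,(iv) gives
\[
 \ran(I-H)+\ran G=\cH,\qquad\text{equivalently}\qquad \ran(I-H^*)+\ran F^*=\cH.
\]
Applying Lemma~\ref{lem:ranGamma} I would then upgrade these identities to the strictly stronger statements
\[
 \ran(I-H)=\cH\quad\text{and}\quad\ran(I-H^*)=\cH.
\]

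With $\ker F=\{0\}$ and $\ran(I-H)=\cH$ in hand, Proposition~\ref{Wlemma2}\,(ii) immediately states that $\{\cH,\Gamma_0,\Gamma_1\}$ is a $B$-generalized boundary triple. Since by Definition~\ref{genBT} (part \ref{3_def_genBT}) every $B$-generalized boundary triple satisfies $A_0:=\ker\Gamma_0=A_0^*$, the selfadjointness of $A_0$ follows at once. The only nontrivial ingredient is the passage from $\ran(I-H)+\ran G=\cH$ to $\ran(I-H)=\cH$; all the remaining implications are direct quotations of earlier results in this section, so I do not expect any additional obstacle in this argument.
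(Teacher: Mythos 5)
Your proof is correct and follows exactly the route the paper intends: Corollary~\ref{cor:C5} is stated immediately after Lemma~\ref{lem:ranGamma} precisely so that one combines Proposition~\ref{Wlemma2}\,(iv) with that lemma to upgrade $\ran(I-H)+\ran G=\cH$ to $\ran(I-H)=\cH$ (and dually), and then reads off the $B$-generalized property from Proposition~\ref{Wlemma2}\,(ii) using $\ker F=\{0\}$ from~\eqref{eq:mul_Gamma}. Nothing is missing.
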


\begin{remark}\label{rem:OBT}
If $\{\cH,\Gamma_0,\Gamma_1\}$ is an ordinary boundary triple, then
$\Gamma$ and, consequently, $\Gamma_0$ and $\Gamma_1$ are
surjective. Hence, $A_0=A_0^*$ and $A_1=A_1^*$. This conclusion can
be made directly also from Proposition \ref{Wlemma2}. Indeed, the
assumption $\ran G=\cH$ implies $0\in\rho(GG^*)$. In view of the
identity $GG^*=I-HH^*$ this implies $1\in\rho(HH^*)$ and hence
$1\in\rho(H)$. By Proposition \ref{Wlemma2} (v) this condition yields
$A_0=A_0^*$.
\end{remark}

%

We are now ready to prove Theorem~\ref{prop:C6B} in a more general
setting, where $\{\cH,\Gamma\}$ is an arbitrary unitary boundary
pair. It gives a complete characterization of the Weyl functions
$M(\cdot)$ of $S$-generalized boundary pairs. In its present general
form it completes and extends \cite[Theorem~4.13]{DHMS06} and
\cite[Theorem~7.39]{DHMS12}.


\begin{theorem}\label{prop:C6}
Let  $\Pi=\{\cH,\Gamma\}$ be a unitary boundary pair and let $M(\cdot)$ and
$\gamma(\cdot)$ be the corresponding Weyl family and the $\gamma$-field. Then the
following statements are equivalent:
\begin{enumerate}\def\labelenumi {\textit{(\roman{enumi})}}
\item $A_0$ is selfadjoint, i.e. $\Pi$ is an $S$-generalized boundary pair;
\item $A_*=A_0\hplus \wh\sN_{\lambda}$ and $A_*=A_0\hplus \wh\sN_{\mu}$ for some (equivalently for all) $\lambda\in\dC_+$ and $\mu\in\dC_-$;
\item $\ran\Gamma_0 = \dom M(\lambda)= \dom M(\mu)$ for some (equivalently for all) $\lambda\in\dC_+$ and $\mu\in\dC_-$;
\item $\gamma(\lambda)$ and $\gamma(\mu)$ are bounded for some (equivalently for all) $\lambda\in\dC_+$ and $\mu\in\dC_-$;
\item $\IM M_{\rm op}(\lambda)$ is bounded with dense domain in $\cdom M(\lambda)$ for some (equivalently for all) $\lambda\in\dC_+$ and $\mu\in\dC_-$;
\item The Weyl family $M(\lambda)$, $\lambda\in\cmr$, admits the representation
\begin{equation}\label{eq:M_S_gen}
   M(\lambda)=E+M_0(\lambda),
\end{equation}
where $E=E^*$ is a selfadjoint relation in $\cH$  and $M_0\in
\cR[\cH_0]$, with $\cH_0={\cdom E}$.
\end{enumerate}
\end{theorem}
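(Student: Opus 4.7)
The plan is to establish the cyclic implication (i) $\Rightarrow$ (ii) $\Rightarrow$ (iii) $\Rightarrow$ (iv) $\Rightarrow$ (v) $\Rightarrow$ (vi) $\Rightarrow$ (i), using the Potapov-Ginzburg transform $\cU=\omega(\Gamma)$ from Section~\ref{sec5.1} as the main analytic bridge, and generalizing the single-valued argument behind Theorem~\ref{prop:C6B} to the boundary-pair setting. First, (i) $\Rightarrow$ (ii) follows from von Neumann's first formula applied to $A_0 \subset A_* \subset A^*$: selfadjointness of $A_0$ yields $A^* = A_0 \hplus \wh\sN_\lambda(A^*)$ for every $\lambda\in\cmr$, and since $A_0\subset A_*$ with $\overline{A_*}=A^*$, this decomposition descends to $A_*=A_0\hplus\wh\sN_\lambda(A_*)$ (cf.\ Proposition~\ref{A0graph}(i)). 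For (ii) $\Rightarrow$ (iii), applying $\Gamma_0$ to the decomposition and using $A_0=\ker\Gamma_0$ and $\Gamma_0(\wh\sN_\lambda(A_*))=\dom M(\lambda)$ gives $\ran\Gamma_0=\dom M(\lambda)$.

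For (iii) $\Rightarrow$ (iv) I would use Proposition~\ref{prop:C2}: with $\cU=\begin{pmatrix} T & F \\ G & H\end{pmatrix}$ and $\zeta=(\lambda-i)/(\lambda+i)$, one has $\dom\gamma(\lambda)=\dom M(\lambda)=\ran(\theta(\zeta)-I)$, while by (\ref{eq:8.11}) $\ran\Gamma_0=\ran(I-H)+\ran G$. Taking $\lambda=i$ (so $\theta(0)=H$), the coincidence $\ran\Gamma_0=\dom M(i)$ forces $\ran G\subset\ran(I-H)$; symmetrically at $\lambda=-i$ one gets $\ran F^*\subset\ran(I-H^*)$. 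By Proposition~\ref{Wlemma2}(v) this already implies $A_0=A_0^*$, and Lemma~\ref{lem:ranGamma} then upgrades the inclusions to $\ran(I-H)=\ran(I-H^*)=\cH$, making $(I-H)^{-1}$ bounded on $\cH$ by the closed graph theorem. Returning to the formula $\gamma(i)=\{\{(H-I)u,Fu\}:u\in\cH\}$ from (\ref{eq:gamma_pmi}), the operator part $\gamma(i)_{\rm op}=-F(I-H)^{-1}$ is then bounded; the same argument at $-i$ completes (iv). The equivalence (iv) $\Leftrightarrow$ (v) is immediate from Green's identity (\ref{Green3U}) with $\mu=\lambda$,
\[
 2\,\IM\lambda\, \|\gamma(\lambda)h\|_\sH^2 = 2\,\IM(M_{\rm op}(\lambda)h,h)_\cH, \qquad h\in\dom M(\lambda),
\]
which relates the quadratic form of $\IM M_{\rm op}(\lambda)$ to the squared norm of $\gamma(\lambda)$; in particular $\dom\IM M_{\rm op}(\lambda)=\dom\gamma(\lambda)$ is dense in $\cH_0:=\cdom M(\lambda)$.

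For (v) $\Rightarrow$ (vi) I would fix $\mu\in\cmr$, set $E:=\RE M(\mu)$, and use the resolvent-type identity for $\gamma(\cdot)$ (applicable because domain invariance holds via the already proved (iii)), together with Green's identity (\ref{Green3U}), to obtain the explicit representation
\[
 M_{\rm op}(\lambda)=E_{\rm op}+\gamma(\mu)^*\!\left[(\lambda-\RE\mu)+(\lambda-\mu)(\lambda-\bar\mu)(A_0-\lambda)^{-1}\right]\!\gamma(\mu),
\]
as in the proof of Theorem~\ref{ABGthm}(vi); boundedness of $\gamma(\mu)$ from (iv) then shows that $M_0(\cdot)$ defined by the second summand belongs to $\cR[\cH_0]$, where $\cH_0=\cdom E=\cH\ominus\mul M(\lambda)$ is well defined by Lemma~\ref{Weylmul} (since $\mul M(\lambda)=\mul\Gamma$ is $\lambda$-independent). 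Selfadjointness of $E$ is then inherited from the maximality of $M(\lambda)$ as a maximal dissipative/accumulative relation combined with $\IM M_0(\mu)\geq 0$. Finally, (vi) $\Rightarrow$ (i) translates back through the Potapov-Ginzburg transform: the representation (\ref{eq:M_S_gen}) with $E=E^*$ forces $\dom M(\lambda)=\dom E$ to be $\lambda$-independent, hence $\dom M(\pm i)=\ran\Gamma_0$, so by the equivalence (iii) $\Leftrightarrow$ (i) already obtained via Proposition~\ref{Wlemma2}(v) and Lemma~\ref{lem:ranGamma} we conclude $A_0=A_0^*$.

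The main obstacle is the careful handling of the multivalued parts of $\Gamma$ and $M$: in contrast to Theorem~\ref{prop:C6B}, here one must decompose $\cH=\cH_0\oplus\cH_\infty$ with $\cH_\infty=\mul\Gamma=\mul M(\lambda)$ (constant in $\lambda$ by Lemma~\ref{Weylmul}), identify $\cH_0$ with $\cdom E$, and verify that the operator-part Weyl function $M_{\rm op}$ on $\cH_0$ behaves exactly as the Weyl function of a $B$-generalized boundary pair on $\cH_0$, so that the $\cR[\cH]$-representation of Theorem~\ref{prop:C6B} (item~(vi) there) upgrades to the $\cR[\cH_0]$-representation in (\ref{eq:M_S_gen}) without losing the characterization of selfadjointness of $A_0$.
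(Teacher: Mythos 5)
Your overall architecture (cycle of implications routed through the Potapov--Ginzburg transform) matches the paper's proof, but two of your steps contain genuine errors.

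In (iii) $\Rightarrow$ (iv) you correctly extract $\ran G\subset\ran(I-H)$ and $\ran F^*\subset\ran(I-H^*)$ and conclude $A_0=A_0^*$ from Proposition~\ref{Wlemma2}(v); but the next sentence, ``Lemma~\ref{lem:ranGamma} then upgrades the inclusions to $\ran(I-H)=\ran(I-H^*)=\cH$,'' is not available. Lemma~\ref{lem:ranGamma} requires the hypothesis $\ran(I-H)+\ran G=\cH$, i.e.\ $\ran\Gamma_0=\cH$, which is exactly the $B$-generalized case (Proposition~\ref{Wlemma2}(iii)--(iv)) and fails for a general $S$-generalized pair: if $\ran(I-H)=\cH$ then $M(\cdot)\in\cR[\cH]$ would be bounded, contradicting the fact that $E$ in \eqref{eq:M_S_gen} may be unbounded. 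The inclusion $\ran G\subset\ran(I-H)$ does not imply $\ran(I-H)+\ran G=\cH$. The correct route, which is what the paper does, is to pass through (i): once $A_0=A_0^*$ is known, Lemma~\ref{isoHlem}(iii) (equivalently Theorem~\ref{ABGthm}(iv)--(v)) gives $\dom\gamma(\bar\lambda)^*=\sH$ and hence boundedness of $\gamma(\lambda)$ for all $\lambda\in\cmr$; the converse (iv) $\Rightarrow$ (i) uses $\ran(A_0-\lambda)=\dom\Gamma_1H(\lambda)=\dom\gamma(\bar\lambda)^*=\sH$ via \eqref{eq:G1H}.

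In (vi) $\Rightarrow$ (i) the inference ``$\dom M(\lambda)=\dom E$ is $\lambda$-independent, hence $\dom M(\pm i)=\ran\Gamma_0$'' is a non sequitur: domain invariance of $M$ does \emph{not} force $\dom M(\lambda)=\ran\Gamma_0$. The paper's own Theorem~\ref{th_criterion(bt)Ex} exhibits a unitary boundary triple whose Weyl function is domain invariant while $\dom M(\pm i)\subsetneqq\ran\Gamma_0$ and $A_0\neq A_0^*$ (see also Proposition~\ref{domMchar}(iii), which yields only essential selfadjointness of $A_0$ from domain invariance). What actually makes (vi) work is that $E=E^*$ and $M_0\in\cR[\cH_0]$ force $\IM M_{\rm op}(\lambda)=\IM M_0(\lambda)$ to be bounded with dense domain, i.e.\ (vi) $\Rightarrow$ (v); then (v) $\Rightarrow$ (iv) via the Green identity \eqref{Green3U} with $\mu=\bar\lambda$, and (iv) $\Rightarrow$ (i) as above. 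Your remaining steps --- (i) $\Rightarrow$ (ii) $\Rightarrow$ (iii) via the von Neumann decomposition, the coupling of (iv) and (v) through \eqref{Green3U}, and the construction of $E=\RE M_{\rm op}(\mu)$ with the operator-part decomposition $\cH=\cH_0\oplus\mul\Gamma$ for (v) $\Rightarrow$ (vi) --- are sound and essentially coincide with the paper's argument.
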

\begin{proof}
(i) $\Leftrightarrow$ (ii) This equivalence and the independence
from $\lambda\in\dC_+$ and $\mu\in\dC_-$ is proved in
\cite[Theorem~4.13]{DHMS06}.

(i) $\Leftrightarrow$ (iii) This can also be obtained from
\cite[Theorem~4.13]{DHMS06}, but we present here a different proof.
Indeed, it follows from~\eqref{eq:8.2_lambda} that for all
$\lambda\in\dC_+ $ and $\zeta=\frac{\lambda-i}{\lambda+i}$
\begin{equation}\label{eq:A0sa}
    \ran \Gamma_0=\ran G+\ran (\theta(\zeta)-I).
\end{equation}
If $A_0=A_0^*$ then by Proposition~\ref{Wlemma2} $\ran
G\subset\ran(I-H)$ and~\eqref{eq:A0_lambda} yields $\ran G\subset
\ran (\theta(\zeta)-I)$. By~\eqref{eq:A0sa},~\eqref{eq:gamma+}, and
$\dom\gamma(\lambda)=\dom M(\lambda)$ one obtains
\[
\ran \Gamma_0=\ran (\theta(\zeta)-I)=\dom M(\lambda)\quad\mbox{for
all}\quad \lambda\in\dC_+.
\]

Similarly, it follows from~\eqref{eq:A0sa} and~\eqref{eq:A0_lambda*}
that
\[
\ran \Gamma_0=\ran F^*+\ran (\theta(\zeta)^*-I)=\dom
M(\bar{\lambda}) \quad\mbox{for all}\quad \lambda\in\dC_+.
\]

Conversely, if for some $\lambda\in\dC_+$ one has $\ran
\Gamma_0=\dom M(\lambda)=\dom\gamma(\lambda)$, then~\eqref{eq:8.11}
implies, in particular, that $\ran G\subset \ran (\theta(\zeta)-I)$.
Hence, it follows from~\eqref{eq:A0_lambda} that $\ran
(A_0-\lambda)= \sH$. Similarly the identities $\ran \Gamma_0=\dom
M(\bar{\lambda})=\dom\gamma(\bar{\lambda})$ imply that $\ran
(A_0-\bar\lambda)= \sH$ and, thus, $A_0=A_0^*$.

(i) $\Rightarrow$ (iv) This implication was proved in
Theorem~\ref{ABGthm}~(iv), (v).

(iv) $\Rightarrow$ (i) If some
$\gamma(\lambda):{\cdom\gamma(\lambda)}\to\sH$ is bounded then
$\dom\gamma(\lambda)^*=\sH$. Then by~\eqref{eq:G1H}
\[
 \ran(A_0-\lambda)=\dom\Gamma_1 H(\lambda)=\dom\gamma(\lambda)^*=\sH.
\]
Similarly if $\gamma(\mu)$ is bounded then $\ran(A_0-\mu)=\sH$.
Thus, $A_0$ is a selfadjoint relation in $\sH$.

(iv) $\Rightarrow$ (v),~(vi) Consider the decomposition~\eqref{ml}
$M(\lambda)=\textup{gr }M_{\rm op}(\lambda)\oplus M_\infty$ of the Weyl family
$M(\lambda)$ with the operator part $M_{\rm op}\in\cR(\cH_0)$, where
$\cH_0={\cdom M(\lambda)}$. As was already shown, now $A_0=A_0^*$
and $\dom M_{\rm op}(\lambda)=\ran\Gamma_0$ for all
$\lambda\in\dC\setminus\dR$. It follows from the equality $M_{\rm
op}(\lambda)^*=M_{\rm op}(\bar\lambda)$ that the operator $E_0=\RE
M_{\rm op}(\lambda_0)$ ($\lambda_0\in\dC_+$) is selfadjoint with the
domain $\dom E_0=\ran\Gamma_0$. Moreover, since the operator
$\gamma(\lambda)$ is bounded for all $\lambda\in\dC\setminus\dR$ it
follows from the equality~\eqref{Green3U} that the operator
\begin{equation}\label{eq:Im_M_0}
    \IM M_{\rm op}(\lambda_0)=\IM \lambda_0 \gamma(\lambda_0)^*\gamma(\lambda_0)
\end{equation}
is also bounded in $\cH_0$ and hence the operator $M_{\rm
op}(\lambda)-E_0$ is bounded in $\cH_0$ at $\lambda_0$. Therefore,
its closure, denoted now by $M_0(\lambda)$, is bounded in $\cH_0$ at
$\lambda_0$ and then also for all $\lambda\in\dC\setminus\dR$; see
e.g. \cite[Proposition~4.18]{DHMS09}, \cite[Theorem~3.9]{DHM15}.
Finally, by setting $E=E_0\oplus M_\infty$ one arrives
at~\eqref{eq:M_S_gen}.

Finally, the implication (vi) $\Rightarrow$ (v) is clear and (v)
$\Rightarrow$ (iv) (for $\mu=\bar\lambda)$ follows easily
from~\eqref{Green3U}.
\end{proof}

Theorem~\ref{prop:C6} implies Theorem~\ref{prop:C6B}. In the case
that $\Gamma$ is single-valued $M(\lambda)$ is an operator valued
Nevanlinna function with $\ker \IM M(\lambda)=\ker
(M(\lambda)-M(\lambda)^*)=\{0\}$, i.e., $M(\cdot)\in \cR^s(\cH)$;
see~\eqref{stric-unb} and Lemma~\ref{Weylmul}.

\begin{corollary}\label{Scor}
Let $\{\cH,\Gamma\}$ be an $S$-generalized boundary pair with the
Weyl family $M(\cdot)=E+M_0(\cdot)$ as in Theorem~\ref{prop:C6}.
Then $\ran\Gamma$ is dense in $\cH\times\cH$, i.e., $\Gamma$ defines
an $S$-generalized boundary triple if and only if $E\,(=\RE M(\mu))$
is a selfadjoint operator and
\begin{equation}\label{Soper}
\dom
E\cap\ker\overline{\gamma(\lambda)}=E\cap\ker\IM{M_0(\lambda)}=\{0\},
\quad \lambda\in\cmr.
\end{equation}
\end{corollary}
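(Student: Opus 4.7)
The plan is to reduce the statement to Corollary~\ref{ABGgraph} by recognising that, under either side of the asserted equivalence, the $S$-generalized boundary pair $\{\cH,\Gamma\}$ is simultaneously an $AB$-generalized boundary pair in the sense of Definition~\ref{ABGtriple}. The bridge is Theorem~\ref{prop:C6}(iii), which gives $\ran\Gamma_0 = \dom M(\lambda) = \dom E$; density of $\ran\Gamma_0$ is thus synonymous with density of $\dom E$, i.e.\ with $E$ being an operator rather than a genuine relation.

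For the direction ($\Rightarrow$), assume $\ran \Gamma$ is dense in $\cH\times\cH$. Projecting onto the first coordinate shows $\ran\Gamma_0$ is dense in $\cH$, and together with Green's identity (automatic for the isometric $\Gamma$) and the selfadjointness of $A_0$ this verifies all three axioms of Definition~\ref{ABGtriple}, so $\{\cH,\Gamma\}$ is $AB$-generalized. Theorem~\ref{ABGthm}(vi) then yields that $E=\RE M(\mu)$ is a densely defined symmetric operator, and combining this with $E=E^*$ (as selfadjoint relation, Theorem~\ref{prop:C6}(vi)) forces $\mul E=\{0\}$, so $E$ is a densely defined selfadjoint operator. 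Corollary~\ref{ABGgraph}(ii) now gives $(\ran\Gamma)^{[\perp]} = E^*\uphar\ker\overline{\gamma(\lambda)}$; density of $\ran\Gamma$ forces this set to be trivial, and with $E=E^*$ an operator this reads $\dom E\cap\ker\overline{\gamma(\lambda)}=\{0\}$.

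For the direction ($\Leftarrow$), assume $E$ is a selfadjoint operator. Then $\dom E$ is dense and Theorem~\ref{prop:C6}(iii) gives $\ran\Gamma_0=\dom E$ dense, so $\{\cH,\Gamma\}$ is again $AB$-generalized; Corollary~\ref{ABGgraph}(ii) applied in the opposite direction shows that $\dom E\cap\ker\overline{\gamma(\lambda)}=\{0\}$ forces $(\ran\Gamma)^{[\perp]}=\{0\}$, hence density of $\ran\Gamma$, and single-valuedness of $\Gamma$ (so that one really obtains an $S$-generalized boundary \emph{triple}) follows from $\mul\Gamma=(\ran\Gamma)^{[\perp]}=\{0\}$ via Proposition~\ref{UNIT}. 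The middle equality in~\eqref{Soper} is then obtained by noting that in this setting $\gamma(\lambda)$ is a bounded operator with $\dom\gamma(\lambda)=\dom E$, so the specialisation $\IM\lambda\,\|\gamma(\lambda)h\|^2=(\IM M_0(\lambda)h,h)$ of~\eqref{Green3} yields $\dom E\cap\ker\IM M_0(\lambda)=\ker\gamma(\lambda)$, which coincides with $\dom E\cap\ker\overline{\gamma(\lambda)}$ because $\gamma(\lambda)=\overline{\gamma(\lambda)}\uphar\dom E$. No step is a serious obstacle; the only subtle point is the promotion of $E$ from a selfadjoint relation to a selfadjoint operator, which relies on $\mul E\perp\dom E$ combined with the density of $\dom E$ supplied by the $AB$-generalized reduction.
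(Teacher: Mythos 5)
Your argument is correct and runs along the same lines as the paper, whose proof simply cites Lemma~\ref{Weylmul} and Corollary~\ref{ABGgraph}: the substance in both cases is the identity $(\ran\Gamma)^{[\perp]}=E^*\uphar\ker\overline{\gamma(\lambda)}$ from Corollary~\ref{ABGgraph}(ii), combined with $\ran\Gamma_0=\dom E$ from Theorem~\ref{prop:C6}(iii) to pass between density of $\ran\Gamma$ and $E$ being a (densely defined, hence genuinely operator-valued) selfadjoint relation. Your handling of the two subtle points — promoting $E=E^*$ from relation to operator via $\mul E^*=(\dom E)^\perp=\{0\}$, and identifying $\dom E\cap\ker\overline{\gamma(\lambda)}$ with $\ker\gamma(\lambda)=\dom E\cap\ker\IM M_0(\lambda)$ through boundedness of $\gamma(\lambda)$ — is sound and in fact spells out details the paper leaves implicit.
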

\begin{proof}
This follows from Lemma~\ref{Weylmul} and Corollary~\ref{ABGgraph}.
\end{proof}

Corollary~\ref{Scor} can be used to give an example of an
$S$-generalized boundary triple $\{\cH,\Gamma_0,\Gamma_1\}$ such
that the mapping $\Gamma_0$ is not closable; cf.
Corollary~\ref{cor:G0closable}.

\begin{example}\label{ex:G0notclosable}
Let $M_0(\cdot)\in \cR[\cH]$ be a bounded Nevanlinna function such
that $\ker \IM M_0(\lambda)$ is nontrivial and let $E$ be an
unbounded selfadjoint operator in $\cH$ with $\dom E\cap \ker \IM
M_0(\lambda)=\{0\}$. Then the function
\[
 M(\lambda)=E+M_0(\cdot), \quad \lambda\in\cmr,
\]
is a domain invariant Nevanlinna function. Moreover, it follows from
Corollary~\ref{Scor} and Theorems~\ref{TAMS_Th},~\ref{prop:C6B} that
$M(\cdot)$ can be realized as the Weyl function of some
$S$-generalized boundary triple $\{\cH,\Gamma_0,\Gamma_1\}$.
However, $\IM (M(\lambda)h,h)=\IM (M_0(\lambda)h,h)$, $h\in \dom
M(\lambda)$, does not satisfy the condition~\eqref{clIM} in
Corollary~\ref{cor:G0closable}, since $\ker \overline{\IM
M(\lambda)}=\ker \IM M_0(\lambda)$ is nontrivial by construction.
\end{example}

\begin{remark}\label{rem:Error_Scrit}
Observe that in Theorems~\ref{prop:C6B} and~\ref{prop:C6} the
function $M_0(\cdot)$ can be considered as the closure of
$M(\cdot)-E$. In Theorem \ref{prop:C6} $M(\cdot)$ is an operator
valued function if and only if $E$ is an operator. By
Corollary~\ref{Scor} even in this case $\Gamma$ can still be
multivalued if the kernel $\ker M_0(\lambda)$ or $\ker \IM
M_0(\lambda)=\ker\overline{\gamma(\lambda)}$ is nontrivial and the
condition~\eqref{Soper} is violated. In fact, any bounded Nevanlinna
function in $\cH$ with $\ker \IM M_0(\lambda)\neq \{0\}$ combined
with an unbounded selfadjoint operator $E$ in $\cH$ satisfying the
condition~\eqref{Soper} is associated with an $S$-generalized
boundary triple $\{\cH,\Gamma_0,\Gamma_1\}$ with the Weyl function
$M=E+M_0(\cdot)$. If such a function $M$ is regularized by
subtracting the unbounded constant operator $E$, the function
$M_0(\cdot)=M(\cdot)-E$ corresponds to an $AB$-generalized boundary
triple $\{\cH,\wt\Gamma_0,\wt\Gamma_1\}$ whose range $\ran\wt\Gamma$
is not dense in $\cH^2$. In particular, $\wt\Gamma$ whose Weyl
function is the regularized function $M(\cdot)-E$ is not a quasi
boundary triple. The closure $M_0(\cdot)$ of $M(\cdot)-E$ is the
Weyl function of the closure of $\wt \Gamma$ which in this case is
always a (multivalued) $B$-generalized boundary pair. An example of
an $S$-generalized boundary triple with $\ker \IM M_0(\lambda)\neq
\{0\}$ satisfying the property~\eqref{Soper} appears in
\cite[Proposition~2.17]{BeMi14}.
\end{remark}

\subsection{$ES$-generalized
boundary triples and form domain invariant Nevanlinna functions}
Recall, see~Definition~\ref{EgenBT}, that a unitary boundary  triple
$\{\cH,\Gamma_0,\Gamma_1\}$  for $\ZA^*$ is called
\emph{$ES$-generalized}, if the extension $A_0$ is essentially
selfadjoint in $\sH$.

As the main result of this section it will be shown that the class
of Weyl functions of $ES$-generalized boundary  triples coincides
with the class of form domain invariant Nevanlinna functions.

\begin{definition}\label{DefFormdomain}
A Nevanlinna function $M\in \cR(\cH)$ is said to be form domain
invariant in $\dC_+(\dC_-)$, if
the quadratic form $\st_{M(\lambda)}$ in $\cH$ generated by the imaginary part of $M(\lambda)$ via
\begin{equation}\label{eq:5.18D}
 \st_{M(\lambda)}(u,v)=\frac{1}{\lambda-\bar\lambda}\,[(M(\lambda)u,v)-(u,M(\lambda)v)], 
\end{equation}
is closable for all $\lambda\in\dC_+(\dC_-)$ and the closure of the
form $\st_{M(\lambda)}$ has a constant domain. A Nevanlinna family
$M\in \wt \cR(\cH)$ is said to be form domain invariant in
$\dC_+(\dC_-)$, if its operator part $M_{\rm op}(\cdot)$ in the
decomposition~\eqref{ml} is form domain invariant in $\dC_+(\dC_-)$.
\end{definition}
The following three lemmas are preparatory for the main result.

\begin{lemma}\label{prop:C11}
Let $\{\cH,\Gamma_0, \Gamma_1\}$ be a unitary boundary triple.
Then the following statements are equivalent:
\begin{enumerate}
    \item [(i)] $\ran(A_0-\lambda)$ is dense in $\sH$ for some or, equivalently, for every $\lambda\in\dC_+(\dC_-)$;
    \item [(ii)] $\gamma(\bar{\lambda})$ admits a single-valued closure
    $\overline{\gamma(\bar{\lambda})}$ for some or, equivalently, for every $\lambda\in\dC_+(\dC_-)$.
\end{enumerate}
\end{lemma}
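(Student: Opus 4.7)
The proof is short and relies essentially on the identification of the adjoint of the $\gamma$-field with the mapping $\Gamma_1 H(\lambda)$ that was already established in Proposition~\ref{prop:C3}. Since $\{\cH,\Gamma_0,\Gamma_1\}$ is a unitary boundary triple, $\Gamma$ is single-valued, hence so is $\gamma(\bar\lambda)$, which is an operator from $\dom M(\bar\lambda)\subset\cH$ into $\sH$. The closure $\overline{\gamma(\bar\lambda)}$ is single-valued precisely when $\gamma(\bar\lambda)$ is closable as an operator, and a standard result on linear relations identifies this with the density of $\dom \gamma(\bar\lambda)^*$ in $\sH$, via the identity $\mul\overline{\gamma(\bar\lambda)}=(\dom \gamma(\bar\lambda)^*)^\perp$.

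First, fix $\lambda\in\dC_+$ (the case $\dC_-$ being symmetric). By Proposition~\ref{prop:C3}(i), one has $\gamma(\bar\lambda)^*=\Gamma_1H(\lambda)$, where $H(\lambda)$ defined in~\eqref{Hlambda} is a bijection from $\ran(A_0-\lambda)$ onto $A_0$. Since $A_0=\ker\Gamma_0\subset\dom\Gamma$, it follows that
\[
 \dom\gamma(\bar\lambda)^*=\dom(\Gamma_1H(\lambda))=\ran(A_0-\lambda).
\]
Combining these facts yields at the fixed $\lambda$ the equivalence (i)$\Leftrightarrow$(ii).

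It remains to establish the independence from the particular choice of $\lambda\in\dC_+$ (resp. $\lambda\in\dC_-$). For this observe that $A_0$ is a symmetric, not necessarily closed, relation in $\sH$, so its closure $\overline{A_0}$ is a closed symmetric relation with $\overline{\ran(A_0-\lambda)}=\ran(\overline{A_0}-\lambda)$ for $\lambda\in\cmr$. Hence density of $\ran(A_0-\lambda)$ in $\sH$ is equivalent to $\ran(\overline{A_0}-\lambda)=\sH$, i.e., to $\ker(A_0^*-\bar\lambda)=\{0\}$. By the constancy of the dimension of the defect subspaces $\ker(A_0^*-z)$ on each connected component of $\hat\rho(\overline{A_0})\supset\cmr$, this condition depends only on the half-plane in which $\lambda$ lies. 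This gives the independence of (i), and therefore also of (ii), from the particular choice of $\lambda$ within $\dC_+$ (resp. $\dC_-$), completing the proof.

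\textbf{Main obstacle.} The argument is essentially a formal consequence of Proposition~\ref{prop:C3}, so no serious obstacle arises; the only delicate point is to note that, although Proposition~\ref{prop:C3} is stated for single-valued $\Gamma$, no further assumption on $A_0$ is needed and the formula $\gamma(\bar\lambda)^*=\Gamma_1H(\lambda)$ together with $\dom \Gamma_1H(\lambda)=\ran(A_0-\lambda)$ directly converts an operator-theoretic closability property of $\gamma(\bar\lambda)$ into a range-density property of $A_0-\lambda$.
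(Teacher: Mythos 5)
Your proof is correct and follows essentially the same route as the paper: the paper also invokes the identity $\gamma(\bar\lambda)^*=\Gamma_1H(\lambda)$ (Proposition~\ref{prop:C3}, or its multivalued form~\eqref{eq:G1H}) to get $\dom\gamma(\bar\lambda)^*=\ran(A_0-\lambda)$ and then uses the standard equivalence between single-valued closability and density of the domain of the adjoint. Your closing paragraph on the constancy of defect numbers makes the ``for some/for every'' claim explicit, which the paper leaves implicit; this is a harmless and correct addition.
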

\begin{proof}
(i)$\Leftrightarrow$(ii)
In view of~\eqref{eq:G1H} in Remark~\ref{rem:C3} for every
$\lambda\in\dC_+(\dC_-)$
\[
\dom\gamma(\bar{\lambda})^*=\dom(\Gamma_1H(\lambda))=\ran(A_0-\lambda).
\]
Therefore, $\gamma(\bar{\lambda})$ admits a single-valued closure
for $\lambda\in\dC_+\,(\dC_-)$ if and only if $\ran(A_0-\lambda)$ is
dense in $\sH$.
\end{proof}

%

\begin{lemma}\label{prop:C111}
Let $\{\cH,\Gamma_0, \Gamma_1\}$ be an $ES$-generalized boundary
triple. Then:
\begin{enumerate}
        \item [(i)] $\ker \overline{\Gamma}_0=\overline{A_0}$ is selfadjoint and
the domain of $\overline{\Gamma}_0$ admits the decomposition
\begin{equation}
\label{ceq1a}
 \dom \overline{\Gamma}_0=\overline{A_0}\dot{+} (\dom \overline{\Gamma}_0\cap \wh\sN_\lambda(\ZA^*))
 =\overline{A_0}\dot{+} \ran\overline{\wh\gamma(\lambda)},
 \quad \lambda\in\cmr;
\end{equation}
    \item [(ii)] $\gamma({\lambda})$ admits a single-valued closure
    $\overline{\gamma({\lambda})}$ for every $\lambda\in\dC\setminus\dR$;
    \item [(iii)]
    the closure of the $\gamma$-field satisfies
\begin{equation}
\label{ceq1b}
 \ran \overline{\Gamma}_0= \dom\overline{\gamma(\lambda)}=\dom\overline{\gamma(\mu)}, \quad
 \lambda,\mu\in\cmr;
\end{equation}
\item [(iv)] $\overline{\gamma(\lambda)}$ and $\overline{\gamma(\mu)}$ are connected by
\begin{equation}
\label{ceq1}
 \overline{\gamma(\lambda)}=[I+(\lambda-\mu)(\overline{A_0}-\lambda)^{-1}]\overline{\gamma(\mu)},
 \quad \lambda, \mu \in \cmr;
\end{equation}
\end{enumerate}
\end{lemma}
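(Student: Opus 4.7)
The plan is to derive the four statements as direct consequences of Lemma~\ref{isoHlem}~(ii), Proposition~\ref{gammaclos}, and von~Neumann's first formula applied to the selfadjoint relation $\overline{A_0}$. Statement (ii) is essentially the content of Lemma~\ref{isoHlem}~(ii): since $A_0=\ker\Gamma_0$ is essentially selfadjoint, $\gamma(\bar\lambda)$ is closable for every $\lambda\in\cmr$, and taking closures termwise in the defect space representation $\wh\gamma(\lambda)h = \binom{\gamma(\lambda)h}{\lambda\gamma(\lambda)h}$ shows that $\overline{\wh\gamma(\lambda)}$ is then single-valued as well, so $\mul \overline{\wh\gamma(\lambda)}=\{0\}$.

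For statement (i), the inclusion $\overline{A_0}\subset\ker\overline{\Gamma}_0$ is immediate from $A_0\subset\ker\Gamma_0$ by closedness of $\overline{\Gamma}_0$. For the reverse inclusion, Proposition~\ref{gammaclos} combined with (ii) yields
\[
 \mul \overline{\wh\gamma(\lambda)} = \ker\overline{\Gamma}_0\cap\wh\sN_\lambda(\ZA^*)=\{0\},\quad \lambda\in\cmr.
\]
Since $\overline{A_0}$ is selfadjoint, von~Neumann's first formula provides the decomposition $\ZA^*=\overline{A_0}\hplus\wh\sN_\lambda(\ZA^*)$ for every $\lambda\in\cmr$. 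As $\dom\overline{\Gamma}_0\subset\ZA^*$, every $\wh f\in\ker\overline{\Gamma}_0$ splits as $\wh f=\wh f_0+\wh f_\lambda$ with $\wh f_0\in\overline{A_0}\subset\ker\overline{\Gamma}_0$, forcing $\wh f_\lambda\in\ker\overline{\Gamma}_0\cap\wh\sN_\lambda(\ZA^*)=\{0\}$; thus $\wh f\in\overline{A_0}$. The same splitting applied to arbitrary $\wh f\in\dom\overline{\Gamma}_0$ gives the first equality in~\eqref{ceq1a}, while the identification $\dom\overline{\Gamma}_0\cap\wh\sN_\lambda(\ZA^*)=\ran\overline{\wh\gamma(\lambda)}$ is exactly Proposition~\ref{gammaclos}.

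For statement (iii), apply $\overline{\Gamma}_0$ to both summands in~\eqref{ceq1a}. Using $\overline{A_0}=\ker\overline{\Gamma}_0$, the image of the first summand equals $\mul\overline{\Gamma}_0$, while Proposition~\ref{gammaclos} identifies the image of the second summand as $\dom\overline{\wh\gamma(\lambda)}=\dom\overline{\gamma(\lambda)}$. Since $0\in\wh\sN_\lambda(\ZA^*)\cap\dom\overline{\Gamma}_0$, the same formula from Proposition~\ref{gammaclos} gives $\mul\overline{\Gamma}_0\subset\dom\overline{\gamma(\lambda)}$, and therefore $\ran\overline{\Gamma}_0=\dom\overline{\gamma(\lambda)}$ for every $\lambda\in\cmr$; in particular, $\lambda$-independence is automatic.

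For statement (iv), fix $h\in\dom\overline{\gamma(\mu)}=\ran\overline{\Gamma}_0$, set $f:=\overline{\gamma(\mu)}h$, and decompose $\overline{\wh\gamma(\mu)}h=\binom{f}{\mu f}\in\wh\sN_\mu(\ZA^*)\subset\ZA^*$ via von~Neumann as $\wh g_0+\wh g_\lambda$ with $\wh g_0\in\overline{A_0}$ and $\wh g_\lambda\in\wh\sN_\lambda(\ZA^*)$. Solving the componentwise equations gives $g_0=(\mu-\lambda)(\overline{A_0}-\lambda)^{-1}f$ and $g_\lambda=[I+(\lambda-\mu)(\overline{A_0}-\lambda)^{-1}]f$. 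Since $\wh g_0\in\ker\overline{\Gamma}_0$, one has $(\wh g_0,0)\in\overline{\Gamma}_0$; combined with $(\overline{\wh\gamma(\mu)}h,h)\in\overline{\Gamma}_0$ and the linearity of the relation $\overline{\Gamma}_0$, this yields $(\wh g_\lambda,h)\in\overline{\Gamma}_0$ with $\wh g_\lambda\in\wh\sN_\lambda(\ZA^*)$. The single-valuedness of $\overline{\wh\gamma(\lambda)}$ then forces $\wh g_\lambda=\overline{\wh\gamma(\lambda)}h$, and taking first components delivers~\eqref{ceq1}. The main subtlety throughout will be the bookkeeping: although each $\overline{\gamma(\lambda)}$ is a single-valued operator, the closure $\overline{\Gamma}_0$ may itself be a genuine multivalued linear relation, so identities such as $\overline{\Gamma}_0(\ker\overline{\Gamma}_0)=\mul\overline{\Gamma}_0$ and the linearity step in (iv) must be read strictly in the sense of relations rather than operators.
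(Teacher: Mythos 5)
Your proof is correct and follows essentially the same route as the paper's: both arguments rest on Proposition~\ref{gammaclos}, the von~Neumann decomposition $\ZA^*=\overline{A_0}\hplus\wh\sN_\lambda(\ZA^*)$ for the selfadjoint relation $\overline{A_0}$, and careful bookkeeping with $\overline{\Gamma}_0$ as a (possibly multivalued) linear relation. The only difference is organizational: the paper first sets up the correspondence $\wh f_\lambda=\wh f_\mu+(\lambda-\mu)\overline{H(\lambda)}f_\mu$ inside the proof of (iii) and reads off (iv) from it, whereas you prove (iii) by the direct range computation $\ran\overline{\Gamma}_0=\mul\overline{\Gamma}_0+\dom\overline{\gamma(\lambda)}=\dom\overline{\gamma(\lambda)}$ and then re-derive the same correspondence in (iv) -- the mathematical content is identical.
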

\begin{proof}
(i) As a closed linear relation $\overline{\Gamma}_0$ has a closed
kernel, which implies that $\overline A_0\subset \ker
\overline{\Gamma}_0$. Since $\overline{A_0}$ is selfadjoint, the first von
Neumann's formula shows that $\ZA^*=\overline{A_0}\dot{+}
\wh\sN_\lambda(\ZA^*)$ for all $\lambda\in\cmr$. Consequently,
\[
 \overline{A_0}\subset \dom \overline{\Gamma}_0 \subset
  \overline{A_0}\dot{+} \wh\sN_\lambda(\ZA^*),
 \quad \lambda\in\cmr,
\]
and this implies the first equality in~\eqref{ceq1a}. The second
equality in~\eqref{ceq1a} holds by Proposition~\ref{gammaclos}.
Finally, according to Proposition~\ref{gammaclos} $\ker
\overline{\Gamma}_0\cap \wh\sN_\lambda(\ZA^*)=\mul
\overline{\wh\gamma(\lambda)}=\{0\}$, since $\gamma(\lambda)$ or,
equivalently, $\wh\gamma(\lambda)$ is closable by Lemma~\ref{prop:C11}. Since
$\overline{A_0}\subset \ker \overline{\Gamma}_0$, the identity $\ker
\overline{\Gamma}_0\cap \wh\sN_\lambda(\ZA^*)=\{0\}$ combined with the
first equality in~\eqref{ceq1a} implies the equality $\overline{A_0}=\ker
\overline{\Gamma}_0$.

(ii) The statement (ii) is implied by Lemma~\ref{prop:C11}.

(iii) Since $\overline{A_0}$ is selfadjoint, the defect subspaces of $\ZA$ are
connected by
\[
 \sN_\lambda(\ZA^*)=[I+(\lambda-\mu)(\overline{A_0}-\lambda)^{-1}]\sN_\mu(\ZA^*),
 \quad \lambda,\mu \in \cmr.
\]
Hence, if $f_\lambda=[I+(\lambda-\mu)(\overline{A_0}-\lambda)^{-1}]f_\mu$,
then $\wh f_\mu=\{f_\mu,\mu f_\mu\}\in\wh\sN_\mu(\ZA^*)$ precisely
when
\begin{equation}
\label{ceq4}
 \wh f_\lambda= \{f_\lambda,\lambda f_\lambda\}
 =\wh f_\mu+(\lambda-\mu)\overline{H(\lambda)}f_\mu \in \wh
 \sN_\lambda(\ZA^*),
\end{equation}
where
$\overline{H(\lambda)}f_\mu=\{(\overline{A_0}-\lambda)^{-1}f_\mu,(I+\lambda(\overline{A_0}-\lambda)^{-1})f_\mu\}\in
\overline{A_0}$. Since $\overline{A_0}\subset \dom \overline{\Gamma}_0$, it
follows from~\eqref{ceq4} that $\wh f_\mu\in\dom(\overline{\Gamma}_0)\cap
\wh\sN_\mu(\ZA^*)$ if and only if $\wh f_\lambda \in
\dom(\overline{\Gamma}_0)\cap \wh\sN_\lambda(\ZA^*)$ and
\[
 \{\wh f_\mu,h\} \in \overline{\Gamma}_0\cap (\wh\sN_\mu(\ZA^*)\oplus \cH)
 \, \Leftrightarrow \,
 \{\wh f_\lambda,h\} \in \overline{\Gamma}_0\cap (\wh\sN_\lambda(\ZA^*)\oplus\cH)
\]
for some $h\in\cH$.
Now, using (i) and Proposition~\ref{gammaclos} one gets
\[
 \dom\overline{\wh\gamma(\lambda)}=\overline{\Gamma}_0(\dom \overline{\Gamma}_0\cap
 \wh\sN_\lambda(\ZA^*)) = \ran \overline{\Gamma}_0=
 \overline{\Gamma}_0(\dom \overline{\Gamma}_0\cap
 \wh\sN_\mu(\ZA^*))=\dom\overline{\wh\gamma(\mu)},
\]
Clearly
$\dom\overline{\wh\gamma(\lambda)}=\dom\overline{\gamma(\lambda)}$,
$\lambda\in\cmr$, and hence (iii) is proved.

(iv) The proof of (iii) shows that $\{h,\wh f_\mu\}\in
\overline{\wh\gamma(\mu)}$ if and only if $\{h,\wh f_\lambda\}\in
\overline{\wh\gamma(\lambda)}$.
Consequently,
$\{h,f_\mu\}\in \overline{\gamma(\mu)}$ if and only if
$\{h,f_\lambda\}=\{h,
[I+(\lambda-\mu)(\overline{A_0}-\lambda)^{-1}]f_\mu\}\in
\overline{\gamma(\lambda)}$ and, since $\overline{\gamma(\mu)}$ and
$\overline{\gamma(\lambda)}$ are operators, this means that
~\eqref{ceq1} is satisfied.
\end{proof}

\begin{lemma}\label{gg01}
Let $M$ be the Weyl family of some
unitary boundary triple $\{\cH,\Gamma_0,\Gamma_1\}$ of $\ZA^*$ and let
$\gamma(\cdot)$ be the corresponding $\gamma$-field. Then:
\begin{enumerate}\def\labelenumi{\textit{(\roman{enumi})}}
\item for all $h\in\dom M(\lambda)$, $k\in\dom M(\mu)$, and $\lambda,\mu\in\cmr$
one has
\[
 \frac{(M(\lambda)u,v)_\cH-(u,M(\mu)v)_\cH}{\lambda-\bar\mu}
   =(\gamma(\lambda)u,\gamma(\mu)v)_\sH;
\]
\item for all $\lambda\in\cmr$ one has $\ker\gamma(\lambda)=\{0\}$;
\item the form $\st_{M(\lambda)}$, $\lambda\in\cmr$, is closable if and only if
$\ran(A_0-\bar{\lambda})$ is dense in $\sH$, hence closability of $\st_{M(\lambda)}$ does not depend
on the representing unitary relation $\Gamma$ used for a realization of $M$ as its Weyl family.
\end{enumerate}
\end{lemma}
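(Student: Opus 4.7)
The plan is to derive (i) from Green's identity applied to pairs of eigenvector graphs, obtain (ii) from single-valuedness of $\Gamma$, and reduce (iii) to Lemma \ref{prop:C11} by identifying the form $\st_{M(\lambda)}$ with the quadratic form of the operator $\gamma(\lambda)$.

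For (i), fix $u\in\dom M(\lambda)=\dom\gamma(\lambda)$ and $v\in\dom M(\mu)=\dom\gamma(\mu)$. By Definition \ref{Weylfam} and the representation~\eqref{ggam1}, the pairs
\[
 \left\{\begin{pmatrix}\gamma(\lambda)u\\ \lambda\gamma(\lambda)u\end{pmatrix},\begin{pmatrix}u\\ M(\lambda)u\end{pmatrix}\right\}, \quad
 \left\{\begin{pmatrix}\gamma(\mu)v\\ \mu\gamma(\mu)v\end{pmatrix},\begin{pmatrix}v\\ M(\mu)v\end{pmatrix}\right\}
\]
belong to $\Gamma$. Substituting these into the Green's identity~\eqref{Green1} gives
\[
 (\lambda\gamma(\lambda)u,\gamma(\mu)v)_\sH-(\gamma(\lambda)u,\mu\gamma(\mu)v)_\sH=(M(\lambda)u,v)_\cH-(u,M(\mu)v)_\cH,
\]
and the left-hand side equals $(\lambda-\bar\mu)(\gamma(\lambda)u,\gamma(\mu)v)_\sH$; dividing by $\lambda-\bar\mu$ yields the identity in (i).

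For (ii), suppose $u\in\ker\gamma(\lambda)$. Then the eigenvector graph $\wh\gamma(\lambda)u=\binom{0}{0}$, so $\left\{\binom{0}{0},\binom{u}{M(\lambda)u}\right\}\in\Gamma$, which places $\binom{u}{M(\lambda)u}$ in $\mul\Gamma$. Since $\{\cH,\Gamma_0,\Gamma_1\}$ is by assumption a boundary triple (i.e.\ $\Gamma$ is single-valued), $\mul\Gamma=\{0\}$, forcing $u=0$.

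For (iii), specializing (i) to $\mu=\lambda$ rewrites the form in~\eqref{eq:W-form} as
\[
 \st_{M(\lambda)}(u,v)=(\gamma(\lambda)u,\gamma(\lambda)v)_\sH, \quad u,v\in\dom M(\lambda),
\]
so $\st_{M(\lambda)}[u]=\|\gamma(\lambda)u\|^2_\sH$ is the quadratic form associated with the linear operator $\gamma(\lambda)$ from $\cH$ to $\sH$. A standard elementary argument shows that the form $u\mapsto\|Tu\|^2$ of a linear operator $T$ is closable if and only if $T$ is a closable operator: indeed, if $u_n\to 0$ with $\st_{M(\lambda)}[u_n-u_m]\to 0$, then $(\gamma(\lambda)u_n)$ is Cauchy in $\sH$ with some limit $f$, and closability of $\gamma(\lambda)$ is precisely the requirement that $f=0$, equivalently $\st_{M(\lambda)}[u_n]\to 0$; the converse is similar. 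Applying Lemma \ref{prop:C11} with $\bar\lambda$ in place of $\lambda$, $\gamma(\lambda)$ is closable if and only if $\ran(A_0-\bar\lambda)$ is dense in $\sH$. Since the form $\st_{M(\lambda)}$ is defined solely in terms of $M$, this final characterization makes clear that its closability is a property of $M$ alone and is therefore independent of the particular unitary triple $\Gamma$ chosen to represent $M$.

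The only mildly nontrivial step is the equivalence between closability of the quadratic form $\|\gamma(\lambda)\cdot\|^2$ and closability of the operator $\gamma(\lambda)$; the remaining steps are direct substitutions into previously established identities.
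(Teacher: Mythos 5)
Your proof is correct and takes essentially the same route as the paper's: (i) is Green's identity \eqref{Green1} applied to the graph elements $\wh\gamma(\lambda)u$ and $\wh\gamma(\mu)v$, (ii) follows from single-valuedness of $\Gamma$ (the paper phrases it as $\ker\gamma(\lambda)=\mul\Gamma_0=\{0\}$), and (iii) identifies $\st_{M(\lambda)}[u,v]=(\gamma(\lambda)u,\gamma(\lambda)v)_\sH$ and reduces to Lemma \ref{prop:C11}. The only (harmless) difference is that you spell out the elementary equivalence between closability of the form $\|\gamma(\lambda)\cdot\|^2$ and closability of the operator $\gamma(\lambda)$, whereas the paper simply cites this standard fact from Kato.
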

\begin{proof}
(i) This is a direct consequence of the Green's identity when
applied to the elements in $\wh \sN_\lambda({\ZA_*})$; see
~\eqref{Green2} and~\eqref{Green3}; cf. also
\cite[Proposition~4.8]{DHMS06}.

(ii) is implied by the definition of $\gamma(\lambda)$
in~\eqref{gfield}
and the equality $\ker\gamma(\lambda)=\mul\Gamma_0=\{0\}$.

(iii) Part (i) gives the following representation for $\st_{M(\lambda)}$:
\[
 \st_{M(\lambda)}[u,v]=(\gamma(\lambda)u,\gamma(\lambda)v)_\sH.
\]
It is well-known (see e.g. \cite[Chapter~VI]{Kato}) that the form $(\gamma(\lambda)u,\gamma(\mu)v)_\sH$
is closable precisely when the operator $\gamma(\lambda)$ is closable. Now the statement is obtained from
Lemma~\ref{prop:C11}.
\end{proof}

\begin{theorem}\label{essThm1}
Let $\Pi=\{\cH,\Gamma_0,\Gamma_1\}$  be a unitary boundary triple
for $\ZA^*$  and let $M$ and $\gamma(\cdot)$ be the corresponding Weyl
function and the $\gamma$-field. Then the following statements are
equivalent:
\begin{enumerate}
    \item [(i)] $\ran(A_0-\lambda)$ is dense in $\sH$ for some or, equivalently, for every $\lambda\in\dC_+(\dC_-)$;
    \item [(ii)]   $\gamma(\lambda)$ admits a single-valued closure     $\overline{\gamma(\lambda)}$ for one $\lambda\in\dC_+(\dC_-)$ with a domain dense in $\cH$;
    \item [(iii)]   $\gamma(\lambda)$ admits a single-valued closure
    $\overline{\gamma(\lambda)}$ for every $\lambda\in\dC_+(\dC_-)$ is domain invariant with a constant domain dense in $\cH$;
    \item [(iv)] the form $\st_{M(\lambda)}$ is closable for one $\lambda\in\dC_+(\dC_-)$;
    \item [(v)]  the Weyl function $M$ belongs to $ \cR^s(\cH)$ and is form domain invariant in $\dC_+(\dC_-)$.
\end{enumerate}
In particular, if statements {\rm(i)--(v)} are satisfied both in
$\dC_+$ and $\dC_-$ then $\Pi$ is an $ES$-generalized boundary
triple and the Weyl function $M$ is form domain invariant with
\begin{equation}\label{EssformDom}
 \dom \overline{\st_{M(\lambda)}}=\dom \overline{\gamma(\lambda)}=\ran \overline{\Gamma}_0,
 \quad \lambda\in \dC\setminus\dR.
\end{equation}
\end{theorem}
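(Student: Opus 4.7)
The plan is to derive the chain of equivalences (i) $\Leftrightarrow$ (ii) $\Leftrightarrow$ (iv) first, then upgrade these pointwise statements to the domain-invariant versions (iii), (v), and finally deduce the ``in particular'' assertion from Lemma~\ref{prop:C111}. A useful preliminary observation, taken from the last assertion of Proposition~\ref{prop:C2}, is that since $\Pi$ is a unitary boundary \emph{triple} (not just a pair), the $\gamma$-field $\gamma(\lambda)$ is single-valued with $\dom \gamma(\lambda)=\dom M(\lambda)$ dense in $\cH$ for every $\lambda\in\cmr$; hence whenever $\gamma(\lambda)$ is closable its closure is automatically a densely defined closed operator on $\cH$, so the density condition appearing in (ii) comes for free. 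Another free input is that $M\in \cR^s(\cH)$ by Corollary~\ref{cor:TAMS_Th}.

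For the core chain (i) $\Leftrightarrow$ (ii) $\Leftrightarrow$ (iv), Lemma~\ref{gg01}(iii), based on the Green identity in the form $\st_{M(\lambda)}[u,v]=(\gamma(\lambda)u,\gamma(\lambda)v)_\sH$, gives (ii) $\Leftrightarrow$ (iv) directly. For (i) $\Leftrightarrow$ (ii) I would invoke Proposition~\ref{prop:C3}(i), which identifies $\gamma(\bar\lambda)^*=\Gamma_1H(\lambda)$ with $\dom\gamma(\bar\lambda)^*=\ran(A_0-\lambda)$, so that $\ran(A_0-\lambda)$ is dense in $\sH$ precisely when $\gamma(\bar\lambda)$ is closable; Lemma~\ref{prop:C11} then collapses the distinction between ``some'' and ``every'' within the designated half-plane, so (i) and (ii) refer to the same condition. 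The lift (ii) $\Rightarrow$ (iii) exploits the density of $\ran(A_0-\lambda)$, which yields a bounded, everywhere-defined extension of $(A_0-\lambda)^{-1}$ on $\sH$ for each $\lambda$ in the chosen half-plane; applying this to the resolvent identity~\eqref{ggam3} of Lemma~\ref{domM} and passing to closures yields $\overline{\gamma(\lambda)}=[I+(\lambda-\mu)\overline{(A_0-\lambda)^{-1}}]\overline{\gamma(\mu)}$, from which $\dom\overline{\gamma(\lambda)}=\dom\overline{\gamma(\mu)}$ for all $\lambda,\mu$ in that half-plane. Form-domain invariance (v) then follows from (iii) by again invoking $\overline{\st_{M(\lambda)}}[u,v]=(\overline{\gamma(\lambda)}u,\overline{\gamma(\lambda)}v)_\sH$.

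For the ``in particular'' assertion assume (i)--(v) hold in both $\dC_+$ and $\dC_-$: then $\ran(A_0-\lambda)$ is dense in $\sH$ for some $\lambda\in\dC_+$ and some $\mu\in\dC_-$, and since $A_0$ is symmetric its closure $\overline{A_0}$ has defect numbers $(0,0)$, i.e., $A_0$ is essentially selfadjoint, so $\Pi$ is an $ES$-generalized boundary triple. Lemma~\ref{prop:C111} then delivers $\dom\overline{\gamma(\lambda)}=\ran\overline{\Gamma}_0$ independent of $\lambda\in\cmr$, and combined with $\dom\overline{\st_{M(\lambda)}}=\dom\overline{\gamma(\lambda)}$ this produces the identity~\eqref{EssformDom}. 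The main obstacle is the lift (ii) $\Rightarrow$ (iii): without the selfadjointness of $\overline{A_0}$ one cannot directly apply Lemma~\ref{prop:C111}(iv), and the resolvent identity must be extended to closures relying only on the density of $\ran(A_0-\lambda)$ within the chosen half-plane together with the graph-continuity arguments for $\gamma(\lambda)$ coming from Proposition~\ref{prop:C3}.
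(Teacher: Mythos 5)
Most of your architecture coincides with the paper's own route: (i) $\Leftrightarrow$ (ii) via the identification $\gamma(\bar\lambda)^*=\Gamma_1H(\lambda)$ with $\dom\gamma(\bar\lambda)^*=\ran(A_0-\lambda)$ (Proposition~\ref{prop:C3}, Lemma~\ref{prop:C11}), (ii) $\Leftrightarrow$ (iv) via $\st_{M(\lambda)}[u,v]=(\gamma(\lambda)u,\gamma(\lambda)v)_\sH$ (Lemma~\ref{gg01}), density of the domains from Proposition~\ref{prop:C2}, and the final assertion from Lemma~\ref{gg01}(iii) together with Lemma~\ref{prop:C111}. Those parts are fine.

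There is, however, a genuine gap at (ii) $\Rightarrow$ (iii). The identity~\eqref{ggam3} that you propose to ``pass to closures'' is established in Lemma~\ref{domM} only under the hypothesis~\eqref{domM1}/\eqref{domM2}, i.e. $\dom M(\mu)\subset\dom M(\lambda)$, equivalently $\ran\gamma(\mu)\subset\ran(A_0-\lambda)$ for the \emph{non-closed} kernel $A_0=\ker\Gamma_0$. Density of $\ran(A_0-\lambda)$ does not give you this inclusion, and in precisely the situations the theorem is meant to cover it fails badly: the paper exhibits form domain invariant Weyl functions with $\dom M(\lambda)\cap\dom M(\mu)=\{0\}$ for distinct $\lambda,\mu$ in the same half-plane. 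For $h\in\dom\gamma(\mu)$ with $h\notin\dom\gamma(\lambda)$ the left-hand side of~\eqref{ggam3} is simply undefined, so there is no valid identity on $\dom\gamma(\mu)$ whose closure you can take; the correct statement is only that the pair $\bigl\{h,\,[I+(\lambda-\mu)(\overline{A_0}-\lambda)^{-1}]\gamma(\mu)h\bigr\}$ belongs to the closure $\overline{\gamma(\lambda)}$.

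Proving that membership is where the real work lies, and it must be done at the level of the closed mapping $\overline{\Gamma}_0$ rather than by closing a pointwise formula. One needs $\overline{\wh\gamma(\lambda)}=(\overline{\Gamma}_0\uphar\wh\sN_\lambda(\ZA^*))^{-1}$ (Proposition~\ref{gammaclos}, itself proved via the Potapov--Ginzburg transform), and then the observation that $\wh f_\lambda-\wh f_\mu=(\lambda-\mu)\overline{H(\lambda)}f_\mu$ lies in $\overline{A_0}\subset\ker\overline{\Gamma}_0$, so that $\{\wh f_\mu,h\}\in\overline{\Gamma}_0$ forces $\{\wh f_\lambda,h\}\in\overline{\Gamma}_0$. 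This is exactly the argument of Lemma~\ref{prop:C111}(iii)--(iv), and --- contrary to your closing worry --- it does go through half-plane by half-plane: once $\ran(A_0-\lambda_0)$ is dense for one $\lambda_0\in\dC_+$, the closed symmetric operator $\overline{A_0}$ satisfies $\ran(\overline{A_0}-\lambda)=\sH$ for every $\lambda\in\dC_+$, which yields $\dom\overline{\Gamma}_0=\overline{A_0}\,\dot{+}\,(\dom\overline{\Gamma}_0\cap\wh\sN_\lambda(\ZA^*))$ and the resolvent identity for $\overline{\gamma(\cdot)}$ for all $\lambda,\mu\in\dC_+$; full selfadjointness of $\overline{A_0}$ is needed only for the ``in particular'' statement and~\eqref{EssformDom}, which you handle correctly.
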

\begin{proof}
The equivalence (i) $\Leftrightarrow$ (ii) is obtained from
Lemma~\ref{prop:C11}. The fact that the domain of $\overline{\gamma(\lambda)}$ is dense in $\cH$ follows from Proposition~\ref{prop:C2}.

The equivalences (i) $\Leftrightarrow$ (iv), (v) and (ii)
$\Leftrightarrow$ (iii) follow from Lemma~\ref{prop:C111}.

In particular, part (iii) of Lemma~\ref{gg01} shows that the form
$\st_{M(\lambda)}$ is closable for some (and then for every)
$\lambda\in\dC_+$ and for some (and then for every) $\mu\in \dC_-$
if and only if $A_0$ is essentially selfadjoint. In this case the
closure of the form $\st_{M(\lambda)}$ is given by
\begin{equation}
\label{ceq2a}
 \overline{\st_{M(\lambda)}}[u,v]=(\overline{\gamma(\lambda)}u,\overline{\gamma(\lambda)}v)_\sH,
\end{equation}
in particular, $\dom \overline{\st_{M(\lambda)}}=\dom
\overline{\gamma(\lambda)}$. According to Lemma~\ref{prop:C111} this
domain does not depend on $\lambda\in\cmr$ when $A_0$ is essentially
selfadjoint. The last equality in~\eqref{EssformDom} is obtained
from~\eqref{ceq1b}. This completes the proof.
\end{proof}

%
%
\begin{remark}\label{rem:Mderiv}
    Let  $\{\cH,\Gamma_0, \Gamma_1\}$ be an $ES$-generalized boundary triple, and assume that $(\alpha,\beta)\subset\rho(\overline{A}_0)$. Then:
    \begin{enumerate}
      \item [(i)] for every $\mu\in(\alpha,\beta)$ $\gamma({\mu})$ admits a single-valued closure $\overline{\gamma({\mu})}$ such that
~\eqref{ceq1b}     and~\eqref{ceq1} hold for all $\lambda,\,\mu\in(\dC\setminus\dR)\cup(\alpha,\beta)$;
     \item [(ii)] for every $\mu\in(\alpha,\beta)$ and $u,v\in\cH$ there exists a limit
\begin{equation}
\label{ceq2b}
      \overline{\st_{M(\mu)}}[u,v]=\lim_{\nu\downarrow 0}\overline{\st_{M(\mu+i\nu)}}[u,v]=(\overline{\gamma(\mu)}u,\overline{\gamma(\mu)}v)_\cH.
\end{equation}
    \end{enumerate}
    The proof of the first statement is precisely the same as the proof of Lemma~\ref{prop:C111}. The statement (ii) is implied by the equality
    ~\eqref{ceq2a}, and the continuity of $\overline{\gamma(\mu)}u$ with respect to
    $\mu\in(\alpha,\beta)$; see~\eqref{ceq1}.
\end{remark}

In general, the closure of the mapping $\gamma(\lambda)$ is not
single-valued, see e.g \cite[Example~6.7]{DHMS06}. A simple example
of a unitary boundary triple whose Weyl function is form domain
invariant and $\gamma$-field is unbounded can be obtained as follows
(see also \cite[Example~6.5]{DHMS06}).

\begin{example}\label{example5.1}
Let $H$ be a nonnegative selfadjoint operator in the Hilbert space
$\sH$ with $\ker H=\{0\}$. Let $\ZA_*=\ran H^{1/2}\times \dom
H^{1/2}$, so that $\ZA:=(\ZA_*)^*=\{0,0\}$ and $(\ZA)^{*}=\sH^2$,
and define
\[
 \Gamma_0 \wh f = H^{-1/2}f,\quad \Gamma_1 \wh f = H^{1/2}f', \quad
 \text{where } \wh f=\{f,f'\}\,  \text{ with }  f\in\ran H^{1/2}, \,\, f'\in \dom H^{1/2}.
\]
Then $\{\sH,\Gamma_0,\Gamma_1\}$ is a unitary boundary triple for
$\ZA^*=\overline{ \ZA_*}$. Indeed, Green's
identity~\eqref{Greendef1} is satisfied,
and $\ran \Gamma$ is dense in $\sH^2$. Moreover, it is
straightforward to check that $\Gamma$ is closed, since $H^{1/2}$ is
selfadjoint and, in particular, closed. Observe, that $\wh
f_\lambda:=\{f_\lambda,\lambda f_\lambda\}\in \ZA_*$ if and only if
$f_\lambda=H^{1/2}k$ and $\lambda f_\lambda=H^{-1/2}g$, with
$k\in\dom H^{1/2}$ and $g\in\ran H^{1/2}$, are connected by
$H^{-1/2}g=\lambda H^{1/2}k$. Then $k\in\dom H$ and
\[
 \Gamma_0 \wh f_\lambda = k, \quad \Gamma_1 \wh f_\lambda = \lambda Hk.
\]
These formulas imply that $\gamma(\lambda)=H^{1/2}$ and
$M(\lambda)=\lambda H$, $\lambda\in\dC$. In particular, the Weyl
function is a Nevanlinna function. According to
\cite[Proposition~3.6]{DHMS06} this implies that $\Gamma$ is in fact
$J_\sH$-unitary.

Note that $M(\lambda)$ and its inverse are domain invariant, but in
general unbounded Nevanlinna functions with unbounded imaginary
parts. Clearly, $A_0=\ker \Gamma_0=\{0\}\times \dom H^{1/2}$ and
$A_1=\ker \Gamma_1=\ran H^{1/2}\times \{0\}$ are essentially
selfadjoint and $A_0$ is selfadjoint ($A_1$ selfadjoint) if and only
if $M(\lambda)=\lambda H$ ($-M(\lambda)^{-1}=-\lambda^{-1} H^{-1}$,
respectively) is a bounded Nevanlinna function (cf.
Theorem~\ref{prop:C6B}).
\end{example}

In this example the Weyl function is also domain invariant. In fact,
domain invariance of a Nevanlinna function $M$ implies its form
domain invariance.

\begin{proposition}
Let $M$ be a Nevanlinna function in the Hilbert space $\cH$. If the
equality $\dom M(\lambda)=\dom M(\bar\lambda)$ holds for some
$\lambda\in\cmr$, then $M$ is form domain invariant.

In particular, if $M$ is domain invariant, then it is also form
domain invariant.
\end{proposition}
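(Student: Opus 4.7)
The plan is to reduce the statement to the criterion for essential selfadjointness of $A_0$ via the realization of $M$ through a unitary boundary pair, where the tools developed earlier in Section~\ref{sec5} are directly applicable. By Theorem~\ref{GBTNP} there exists a minimal unitary boundary pair $\{\cH,\Gamma\}$ for $A^*$ whose Weyl family coincides with $M$ (with $\ZA_*=\dom\Gamma$). Since $M$ is an operator, $\mul M(\lambda)=\{0\}$, so Lemma~\ref{Weylmul}(iii) yields $\mul\Gamma_0=\{0\}$; consequently the $\gamma$-field $\gamma(\lambda):\dom M(\lambda)\to\sN_\lambda({\ZA_*})$ is a single-valued operator for every $\lambda\in\cmr$, and for $u,v\in\dom M(\lambda)$ the identity
\[
 \st_{M(\lambda)}[u,v]=(\gamma(\lambda)u,\gamma(\lambda)v)_\sH
\]
from Lemma~\ref{gg01}(i) holds.

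The central step is to show that the hypothesis $\dom M(\lambda_0)=\dom M(\bar\lambda_0)$ forces $A_0=\ker\Gamma_0$ to be essentially selfadjoint. Applying Lemma~\ref{domM} to the two inclusions comprising this equality yields
\[
 \sN_{\bar\lambda_0}({\ZA_*})\subset \ran(A_0-\lambda_0),\qquad \sN_{\lambda_0}({\ZA_*})\subset \ran(A_0-\bar\lambda_0).
\]
Combining the first inclusion with $A\subset A_0$ gives
\[
 \ran(A_0-\lambda_0)\supset \ran(A-\lambda_0)+\sN_{\bar\lambda_0}({\ZA_*}).
\]
Here $\ran(A-\lambda_0)$ is closed with orthogonal complement $\sN_{\bar\lambda_0}(A^*)$, and the density of $\ZA_*$ in $\ZA^*$ implies $\sN_{\bar\lambda_0}({\ZA_*})$ is dense in $\sN_{\bar\lambda_0}(A^*)$. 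Hence $\ran(A_0-\lambda_0)$ is dense in $\sH$, and symmetrically the same holds at $\bar\lambda_0$. Since $A_0$ is symmetric with dense range at a conjugate pair of nonreal points, $\overline{A_0}$ is selfadjoint, i.e., $A_0$ is essentially selfadjoint.

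To finish, observe that essential selfadjointness of $A_0$ together with $\mul\Gamma_0=\{0\}$ allows the arguments of Lemmas~\ref{prop:C11} and~\ref{prop:C111} to apply verbatim (relying on equation~\eqref{eq:G1H} in Remark~\ref{rem:C3}, which is valid for pairs): $\dom\gamma(\bar\lambda)^*=\ran(A_0-\lambda)$ is dense for every $\lambda\in\cmr$, so the single-valued operator $\gamma(\lambda)$ is closable for every $\lambda\in\cmr$; moreover the identity $\overline{\gamma(\lambda)}=[I+(\lambda-\mu)(\overline{A_0}-\lambda)^{-1}]\overline{\gamma(\mu)}$ together with $\dom\overline{\gamma(\lambda)}=\ran\overline{\Gamma}_0$ yields $\dom\overline{\gamma(\lambda)}=\dom\overline{\gamma(\mu)}$ for all $\lambda,\mu\in\cmr$. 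Via $\overline{\st_{M(\lambda)}}[u,v]=(\overline{\gamma(\lambda)}u,\overline{\gamma(\lambda)}v)_\sH$ the form $\st_{M(\lambda)}$ is closable for every $\lambda$ with $\lambda$-independent closure domain, which is exactly form domain invariance as in Definition~\ref{DefFormdomain}. The ``in particular'' assertion is immediate: if $\dom M(\lambda)$ is constant on $\cmr$, then in particular $\dom M(\lambda_0)=\dom M(\bar\lambda_0)$ for any $\lambda_0\in\cmr$, so the main statement applies.

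The main technical obstacle is that $M\in\cR(\cH)$ need not be strict, so the realization may give only a unitary boundary pair rather than a triple; one must therefore verify that the machinery of Section~\ref{sec5} used to prove Theorem~\ref{essThm1} (Lemmas~\ref{prop:C11} and~\ref{prop:C111}) continues to function in the pair setting. This is precisely where the automatic single-valuedness of $\gamma(\lambda)$ (ensured by $M$ being operator-valued) rescues the argument, reducing the proof entirely to the chain ``$\dom M(\lambda_0)=\dom M(\bar\lambda_0)\Rightarrow A_0$ essentially selfadjoint $\Rightarrow \gamma(\lambda)$ closable with constant closure domain $\Rightarrow$ $M$ form domain invariant.''
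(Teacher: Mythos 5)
Your overall strategy is sound and lands on the same pivot as the paper's proof -- namely that the hypothesis forces $A_0=\ker\Gamma_0$ to be essentially selfadjoint, after which Theorem~\ref{essThm1} delivers form domain invariance -- but you reach that pivot by a genuinely different route. The paper argues analytically: since $M(\bar\lambda)\subset M(\lambda)^*$ and $\dom M(\bar\lambda)=\dom M(\lambda)$, the operator $N(\lambda)=(M(\lambda)-M(\lambda)^*)/(\lambda-\bar\lambda)$ is a densely defined nonnegative operator on $\cH\ominus\mul M(\lambda)$, hence the form $\st_{M(\lambda)}=(N(\lambda)\cdot,\cdot)$ is closable by Kato's criterion; repeating this at $\bar\lambda$ and invoking Lemma~\ref{gg01}~(iii) then gives essential selfadjointness of $A_0$. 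You instead work geometrically, applying Lemma~\ref{domM} to both inclusions $\dom M(\lambda_0)\subset\dom M(\bar\lambda_0)$ and $\dom M(\bar\lambda_0)\subset\dom M(\lambda_0)$ to obtain $\sN_{\bar\lambda_0}(\ZA_*)\subset\ran(A_0-\lambda_0)$ and its conjugate, and then arguing density of the ranges directly. The paper's route is shorter and needs no realization-dependent facts at the crucial step; yours makes the geometric mechanism more visible but pays for it in the closing density argument.

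Two points in your write-up need repair. First, the claim that ``the density of $\ZA_*$ in $\ZA^*$ implies $\sN_{\bar\lambda_0}(\ZA_*)$ is dense in $\sN_{\bar\lambda_0}(\ZA^*)$'' is a non sequitur as stated: density of a subspace of $\ZA^*$ does not pass to its intersection with a defect subspace (the paper only proves such a density statement in Proposition~\ref{A0graph}~(i) under the extra hypothesis that $A_0$ is selfadjoint, which is what you are trying to establish). The conclusion is nevertheless true for \emph{unitary} boundary pairs -- one sees from~\eqref{eq:8.12} that $\sN_\lambda(\ZA_*)=(1-\zeta)(I-\zeta T)^{-1}\ran F$ while $\sN_\lambda(\ZA^*)=(I-\zeta T)^{-1}\overline{\ran F}$ -- but this uses unitarity of $\Gamma$, not mere density of its domain, so you must supply that argument (or a citation to it). Second, Lemma~\ref{Weylmul}~(iii) with $\mul M(\lambda)=\{0\}$ only gives $\mul\Gamma\cap(\{0\}\times\cH)=\{0\}$; it does not give $\mul\Gamma_0=\{0\}$, which by Lemma~\ref{Weylmul}~(iv) equals $\ker(M(\lambda)-M(\lambda)^*)$ and may well be nontrivial for a non-strict Nevanlinna function. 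Fortunately this misstep is harmless: $\wh\gamma(\lambda)$ is single-valued for any isometric pair by~\eqref{Green2B}, the identity $\st_{M(\lambda)}[u,v]=(\gamma(\lambda)u,\gamma(\lambda)v)_\sH$ holds via~\eqref{Green3}, and nothing in the closability argument requires $\ker\gamma(\lambda)=\{0\}$. With these two repairs your proof is correct.
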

\begin{proof}
If $\dom M(\lambda)=\dom M(\bar\lambda)$ for some $\lambda\in\cmr$, then one can write
\[
 \st_{M(\lambda)}[u,v]
 =\left( \frac{M(\lambda)-M(\lambda)^*}{\lambda-\bar\lambda}u,v \right)_\cH
 =(\gamma(\lambda)u,\gamma(\lambda)v)_\sH, \quad u,v\in\dom M(\lambda).
\]
Hence, the operator
\[
 N(\lambda):= \frac{M(\lambda)-M(\lambda)^*}{\lambda-\bar\lambda}
\]
is nonnegative and densely defined in $\cH \ominus \mul M(\lambda)$.
Therefore, the form $\st_{M(\lambda)}$ is closable for
$\lambda\in\cmr$; see~\cite{Kato}. By applying the same reasoning to
$\bar\lambda$ it is seen that also the form $\st_{M(\bar\lambda)}$
is closable. Now by applying Lemma~\ref{gg01} it is seen that $A_0$
is essentially selfadjoint (by item (iii)) and hence by
Theorem~\ref{essThm1}  $M$ is form domain invariant.
\end{proof}


The converse statement does not hold. In fact, in~\cite{DHM15} an
example  of a form domain invariant Nevanlinna function is
constructed, such that the domains of $M(\lambda)$ and $M(\mu)$ have
a zero intersection:
\[
 \dom M(\lambda)\cap \dom M(\mu)=\{0\} \text{ for all }
 \lambda,\mu \in \dC_+.
\]
\begin{remark}\label{essrem1}\index{Boundary pair!ES-generalized}
A unitary boundary pair  $\{\cH,\Gamma\}$ for $\ZA^*$  is said to be
$ES$-generalized if $\overline{A_0}=A_0^*$. $ES$-generalized
boundary pairs can be characterized by the following equivalent
conditions:
\begin{enumerate}
    \item [(i)]  for every $\lambda\in\cmr$, $\gamma(\lambda)$ admits a single-valued closure
    $\overline{\gamma(\lambda)}$ with a constant domain;
    \item [(ii)] the Weyl family $M\in \cR(\cH)$ is form domain invariant,
    i.e. its operator part $M_{\rm op}(\cdot)$ in the decomposition~\eqref{ml} is form domain invariant.
\end{enumerate}
Notice, that in the case when (i)-(ii) are in force and $\mul\Gamma$
is nontrivial it may happen that the domain of the form
$\overline{\st_{M_{\rm
op}(\lambda)}}[u,v]=(\overline{\gamma(\lambda)}u,
\overline{\gamma(\lambda)}v)_\sH$ is not dense in $\cH$,
$\lambda\in\dC\setminus \dR$; for an example involving differential
operators; see Example~\ref{Example_third}.
\end{remark}

\subsection{Renormalizations of form domain invariant Nevanlinna functions}
\index{Renormalization of form domain invariant functions}
The next theorem shows that form domain invariant Nevanlinna
functions $M$ in $\cH$ can be renormalized with a bounded operator
$G$ such that the renormalized function $G^*MG$ becomes domain
invariant.

\begin{theorem}\label{essThm2}
Let $\{\cH,\Gamma_0,\Gamma_1\}$ be a unitary boundary triple for
$\ZA^*$ with the $\gamma$-field $\gamma(\cdot)$ and the Weyl function
$M$, and assume that $A_0=\ker \Gamma_0$ is essentially selfadjoint.
Then:
\begin{enumerate}
  \item [(1)] There exists a bounded operator $G$ in $\cH$ with $\ran G=\dom
\overline{\gamma(\lambda)}$, $\lambda\in\cmr$, and $\ker G=\{0\}$,
such that
\begin{equation}\label{ess01}
 \begin{pmatrix}\wt \Gamma_0 \\ \wt \Gamma_1 \end{pmatrix}=\clos
 \begin{pmatrix} G^{-1} & 0 \\ 0 & G^* \end{pmatrix}
 \begin{pmatrix}\Gamma_0 \\ \Gamma_1 \end{pmatrix}
\end{equation}
defines an $AB$-generalized boundary pair $\{\cH,\wt\Gamma\}$ for
$\ZA^*$.
  \item [(2)] The corresponding Weyl function $\wt M$ is
domain invariant and it is given by
\[
 \wt M(\lambda)=E+ M_0(\lambda),
\]
where $E$ is a closed densely defined symmetric operator in $\cH$
and $M_0(\cdot)$ is a bounded Nevanlinna function (defined on $\dom
E$).
  \item [(3)] Furthermore, $\overline{G^*M(\lambda)G}$ is also a Weyl
function of a closed $AB$-generalized boundary pair and it satisfies
\[
 \overline{G^*M(\lambda)G}= E_0+ M_0(\lambda)\subset \wt M(\lambda),\quad \lambda\in\cmr,
\]
where $E_0\subset E$ is a closed densely defined symmetric
restriction of $E$.
\end{enumerate}
\end{theorem}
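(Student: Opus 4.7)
The plan is to build $G$ from the graph structure of the closed $\gamma$-field, apply the isometric transform from Lemma~\ref{isomtrans} with $E=0$, and then analyse the closure of the resulting isometric boundary triple using the characterization of $AB$-generalized boundary pairs in Theorem~\ref{ABGthm} and Corollary~\ref{ABGcor2}.

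First I would exploit the hypothesis that $A_0$ is essentially selfadjoint: by Theorem~\ref{essThm1}, the closure $\overline{\gamma(\lambda)}$ is single-valued for every $\lambda\in\cmr$ with $\cD:=\dom\overline{\gamma(\lambda)}=\ran\overline{\Gamma}_0$ independent of $\lambda$. Equipping $\cD$ with the graph norm of $\overline{\gamma(i)}$ turns it into a Hilbert space $\cH_\gamma$ that is continuously embedded in $\cH$ with dense range. Choosing a unitary $U:\cH\to\cH_\gamma$ and composing with the inclusion $\iota:\cH_\gamma\hookrightarrow\cH$ yields a bounded operator $G=\iota U\in\cB(\cH)$ with $\ran G=\cD$ and $\ker G=\{0\}$; a direct estimate using the graph norm gives $\|\overline{\gamma(\lambda)}Gh\|_\sH\le \|h\|_\cH$, so $\overline{\gamma(\lambda)}G$ is a bounded operator defined on all of $\cH$.

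Next, with this $G$ and $E=0$ the block operator $V=\diag(G^{-1},G^*)$ satisfies the assumptions of Lemma~\ref{isomtrans}, so $\wt\Gamma':=V\circ\Gamma$ is an isometric boundary triple for $\ZA^*$ with $\ker\wt\Gamma'=A$, $\wt\gamma'(\lambda)=\gamma(\lambda)G$, and $\wt M'(\lambda)=G^*M(\lambda)G$. I then define $\wt\Gamma$ to be the closure of $\wt\Gamma'$ appearing in~\eqref{ess01}; as the closure of a Kre\u{\i}n space isometric relation it is again isometric and satisfies Green's identity. The main step is to verify the three axioms of Definition~\ref{ABGtriple}. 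For condition~\ref{3_def_ABGtriple}, since the kernel of a component of an isometric relation is symmetric and $\wt\Gamma'\supset A_0$ forces $\overline{A_0}\subset\ker\wt\Gamma_0$, the inclusions $\overline{A_0}\subset\ker\wt\Gamma_0\subset(\ker\wt\Gamma_0)^*\subset(\overline{A_0})^*=\overline{A_0}$ yield $\ker\wt\Gamma_0=\overline{A_0}=A_0^*$, which is selfadjoint. Condition~\ref{2_def_ABGtriple} follows because $\overline{\wt\gamma'(\lambda)}=\overline{\gamma(\lambda)}G$ is bounded and everywhere defined, which via the Green-type identity~\eqref{Green3} forces $\ran\wt\Gamma_0$ to be dense in $\cH$; density of the domain in $\ZA^*$ is a consequence of the von Neumann decomposition $\ZA^*=\overline{A_0}\hplus\wh\sN_\lambda(\ZA^*)$ combined with the fact that $\wh\sN_\lambda(\dom\wt\Gamma)$ is dense in $\wh\sN_\lambda(\ZA^*)$.

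Having established that $\{\cH,\wt\Gamma\}$ is an $AB$-generalized boundary pair, Theorem~\ref{ABGthm}(vi) gives the desired representation $\wt M(\lambda)=E+M_0(\lambda)$ with $E=\RE\wt M(\mu)$ symmetric densely defined on $\dom E=\ran\wt\Gamma_0$ and $M_0\in\cR[\cH]$; the closedness of $E$ follows from the fact that $\wt\Gamma$ is itself closed, via the closure analysis in the proof of Theorem~\ref{QBTthm}. Finally, the inclusion $\wt\Gamma'\subset\wt\Gamma$ together with Proposition~\ref{A0graph}(iii) gives $\overline{G^*M(\lambda)G}=\overline{\wt M'(\lambda)}\subset\wt M(\lambda)$, so writing $\overline{G^*M(\lambda)G}=E_0+M_0(\lambda)$ with $E_0:=\RE\overline{G^*M(\mu)G}$ produces a closed densely defined symmetric restriction $E_0\subset E$, completing all three claims. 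The main obstacle I anticipate is the verification that $\ker\wt\Gamma_0$ equals exactly $\overline{A_0}$ rather than merely containing it and that $\ran\wt\Gamma_0$ is dense, since both properties depend delicately on how the closure of $V\circ\Gamma$ enlarges $\wt\Gamma'$; the key reduction in both cases is to exploit boundedness of $\overline{\gamma(\lambda)}G$ together with the symmetry/isometry constraints inherited from $\Gamma$.
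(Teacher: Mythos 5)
Your route is essentially the paper's: your $G=\iota U$ is the paper's $G=(\gamma(\mu)^*\overline{\gamma(\mu)}+I)^{-1/2}$ up to the choice of the unitary $U:\cH\to\cH_\gamma$, and the remaining steps (the transform of Lemma~\ref{isomtrans} with $E=0$, taking the closure, and reading off parts (2) and (3) from Theorem~\ref{ABGthm}, Theorem~\ref{QBTthm} and Corollary~\ref{ABGcor2}) coincide with the published argument. Two of your justifications, however, do not carry the weight you put on them, and these are exactly the two points you flag as "the main obstacle".

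First, the step ``$\wt\Gamma'\supset A_0$ forces $\overline{A_0}\subset\ker\wt\Gamma_0$'' is not valid as written: $\ker\wt\Gamma_0$ is the kernel of the \emph{first component} of the closed relation $\wt\Gamma$, and such kernels are not automatically closed (if $\{\wh f_n,\{0,h_n'\}\}\in\wt\Gamma$ and $\wh f_n\to\wh f$, nothing follows unless the $h_n'$ converge). The paper closes this by an explicit approximation: for $\wh h\in\overline{A_0}$ write $\wh h=\lim H(\lambda)k_n$ with $k_n\in\ran(A_0-\lambda)$ and use that $\wt\Gamma'_1H(\lambda)=G^*\gamma(\bar\lambda)^*\subset(\gamma(\bar\lambda)G)^*$ is a bounded operator (Proposition~\ref{prop:C3} plus boundedness and dense definedness of $\gamma(\bar\lambda)G$), so the second components $G^*\gamma(\bar\lambda)^*k_n$ converge and $\{\wh h,\{0,g\}\}\in\wt\Gamma$; only then does your symmetry chain $\overline{A_0}\subset\ker\wt\Gamma_0\subset(\ker\wt\Gamma_0)^*\subset\overline{A_0}$ apply. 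Second, the density of $\ran\wt\Gamma_0$ does not come from~\eqref{Green3}; what is actually needed is that $\dom(\gamma(\mu)G)=G^{-1}(\dom\gamma(\mu))$ is dense in $\cH$. Your $G$ does deliver this, but for a reason you should state: $\dom\gamma(\mu)$ is a core for $\overline{\gamma(\mu)}$, hence dense in $\cH_\gamma$ in the graph norm, hence $U^{-1}$ maps it onto a dense subspace of $\cH$ (this same density is what makes $(\gamma(\bar\lambda)G)^*$ single-valued, so it also feeds the first point). With these two repairs, and with Corollary~\ref{ABGcor2}/Theorem~\ref{QBTthm} invoked in part (3) to see that $\overline{G^*M(\lambda)G}=E_0+M_0(\lambda)$ is indeed the Weyl function of a \emph{closed} $AB$-generalized boundary pair (the paper exhibits that pair explicitly), your proof matches the paper's.
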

\begin{proof}
The proof is divided into steps.
\bigskip

{\bf 1.} {\it Construction of a bounded operator $G$ with the properties}
\begin{equation}\label{eq:Prop_G}
  \ker G=\{0\},\quad \ran G=\dom
\overline{\gamma(\mu)} \quad\textup{and}\quad \overline{\dom
\gamma(\mu)G}=\cH, \quad\textup{for some}\quad\mu\in\cmr.
\end{equation}
Since $A_0$ is essentially selfadjoint, $\gamma(\lambda)$ is
closable and the dense subspace
$\cH_0=\dom\overline{\gamma(\lambda)}$ of $\cH$ does not depend on
$\lambda\in\cmr$; see Theorem~\ref{essThm1}. Since $\cH_0$ is an
operator range there exists a bounded selfadjoint operator $G=G^*$
with $\ran G=\cH_0$ and $\ker G=\{0\}$; for instance, one can fix
$\mu \in \cmr$ and then take
$G=(\gamma(\mu)^*\overline{\gamma(\mu)}+I)^{-1/2}$. Namely, $\dom
\gamma(\mu)=\dom M(\mu)$ is dense in $\cH$, since $\mul\Gamma=\{0\}$
by assumption, and hence $\gamma(\mu)^*\overline{\gamma(\mu)}$ is a
selfadjoint operator satisfying $\dom \overline{\gamma(\mu)}=\dom
(\gamma(\mu)^*\overline{\gamma(\mu)})^{1/2}=\dom(\gamma(\mu)^*\overline{\gamma(\mu)}+I)^{1/2}$.
With this choice of $G$ the domain of $\gamma(\mu)G$ is dense in
$\cH$ since $\dom \gamma(\mu)$ is a core for the form $\st_{M(\mu)}$
and due to $\dom (\st_{M(\mu)}+I)=\ran G$ one concludes that $\dom
\gamma(\mu)$ is also a core for the operator
$G^{-1}=(\gamma(\mu)^*\overline{\gamma(\mu)}+I)^{1/2}$.

{\bf 2.} {\it Construction of an isometric boundary triple $\{\cH,\Gamma^G_0,\Gamma^G_1\}$  such that
the corresponding $\gamma$-field $ \gamma^G(\lambda) $ is  a bounded densely defined operator}.


Introduce the transform $\{\cH,\Gamma^G_0,\Gamma^G_1\}$ of the
boundary triple $\{\cH,\Gamma_0,\Gamma_1\}$ by setting
\begin{equation}\label{ess02}
 \begin{pmatrix}\Gamma^G_0 \\ \Gamma^G_1 \end{pmatrix}=
  \begin{pmatrix} G^{-1} & 0 \\ 0 & G^* \end{pmatrix}
 \begin{pmatrix}\Gamma_0 \\ \Gamma_1 \end{pmatrix},
\end{equation}
where $G$ has the properties stated above. The block operator is
isometric (in the Kre\u{\i}n space $(\cH^2,J_\cH)$) and hence
$\Gamma^G$ is isometric as a composition of isometric mappings; i.e.
$\Gamma^G$ satisfies the Green's identity~\eqref{Green1} ({\it
Assumption~\ref{2_def_GBT}}). Since $(\Gamma_0\uphar
\wh\sN_\lambda({\ZA_*}))=\wh\gamma(\lambda)^{-1}$ one has
\[
 \Gamma_0\wh\sN_\lambda({\ZA_*})=\dom\gamma(\lambda)\subset
 \dom\overline{\gamma(\lambda)}=\ran G,
\]
which implies that $\ran\wh\gamma(\lambda)=
\wh\sN_\lambda({\ZA_*})=\wh\sN_\lambda(\ZA^*)\cap \dom \Gamma\subset\dom
\Gamma^G$ (here ${\ZA_*}=\dom \Gamma$), and hence
$\wh\sN_\lambda({\ZA_*}^G)=\wh\sN_\lambda({\ZA_*})$. Moreover, it is clear
that $\ker\Gamma^G_0=\ker \Gamma_0=A_0$ is essentially selfadjoint.
Since the closure of $A_0\hplus \wh\sN_\lambda({\ZA_*})$ is
$\overline{A_0}\hplus \wh\sN_\lambda(\ZA^*)=\ZA^*$ one gets $\cdom \Gamma^G=\ZA^*$ ({\it Assumption~\ref{1_def_GBT}}). The
corresponding $\gamma$-field is given by
\[
 \gamma^G(\lambda)=(\Gamma^G_0\uphar \wh\sN_\lambda({\ZA_*}^G))^{-1}=\gamma(\lambda)G,
\quad \lambda\in\cmr;
\]
see Lemma~\ref{isomtrans}. Since $\gamma(\lambda)$ is closable and
$\gamma(\lambda)G\subset \overline{\gamma(\lambda)}G$, it follows
from $\ran G=\dom \overline{\gamma(\lambda)}$ that the closed
operator $\overline{\gamma(\lambda)}G$ is everywhere defined and,
hence, bounded by the closed graph theorem. Thus also
$\gamma(\lambda)G$ is a bounded operator with bounded closure
$\overline{\gamma(\lambda)G}\subset \overline{\gamma(\lambda)}G$.

Next recall the operator $H(\lambda)$, $\lambda\in\cmr$, from
~\eqref{Hlambda}; see also Lemma~\ref{cor:GH}.
Since $A_0$ is essentially selfadjoint,
$\dom (\Gamma_1 H(\lambda))=\dom H(\lambda)=\ran (A_0-\lambda)$ is dense in $\sH$.
Since $\ker\Gamma^G_0=A_0\subset\dom
\Gamma^G_1$ and $\mul \Gamma^G_1=\{0\}$, it follows from Proposition~\ref{prop:C3} that
\begin{equation}\label{ess03}
 \Gamma^G_1 H(\lambda)=G^*\Gamma_1 H(\lambda)
 =G^*\gamma(\bar\lambda)^*\subset (\gamma(\bar\lambda)G)^*,
 \quad \lambda\in \cmr.
\end{equation}
By the construction of $G$ the domain of $\gamma(\mu)G$ is dense in
$\cH$ for some $\mu\in\cmr$. Therefore,~\eqref{ess03} implies that
$\Gamma^G_1 H(\bar\mu)$ is a bounded densely defined operator for
some $\mu\in\cmr$ and, since $A_0$ is essentially selfadjoint,
Lemma~\ref{cor:GH} shows that $\Gamma^G_1 H(\lambda)$ is bounded and
densely defined for all $\lambda\in\cmr$.

{\bf 3.} {\it Verification of (1):}
Now consider the closure $\wt\Gamma$ of $\Gamma^G$ in~\eqref{ess02}.
It is shown below that $\ker \wt\Gamma_0=\overline{A_0}$, which means that
$\ker \wt\Gamma_0$ is selfadjoint ({\it Assumption~\ref{1_def_EgenBT}}), since $A_0$ is essentially
selfadjoint by assumption. By construction $\Gamma^G$ is defined via
the transform $\Gamma^G=\{G^{-1}\Gamma_0,G^*\Gamma_1\}$ of
$\{\Gamma_0,\Gamma_1\}$. It follows from Lemma~\ref{isoHlem} (see
also Remark~\ref{rem:C3}) that the graph of $\Gamma^G$ contains all
elements of the form
\begin{equation}\label{ess04}
 \{\wh f,\wh k\}
 = \left\{ H(\lambda)k_\lambda, \binom{0}{G^*\gamma(\bar{\lambda})^*k_\lambda} \right\}
  + \left\{ \binom{\gamma(\lambda)Gh}{\lambda\gamma(\lambda)Gh},\binom{h}{G^*M(\lambda)Gh} \right\},
\end{equation}
where $k_\lambda\in\ran(A_0-\lambda)$ and $h\in \dom G^*
M(\lambda)G=\dom\gamma(\lambda)G$, $\lambda\in\cmr$. Let $\wh
h=\{h,h'\}\in \overline{A_0}$ and let $k\in\ran
(\overline{A_0}-\lambda)$ be such that $\wh h=
\overline{H(\lambda)}k$, where $\overline{H(\lambda)}$ corresponds
to the graph of $\overline{A_0}$; see~\eqref{Hlambda}. Moreover, let
$k_n\in \ran (A_0-\lambda)$ be a sequence such that $k_n\to k$ as
$n\to \infty$. Then $H(\lambda)k_n\to \wh h \in \overline{A_0}$,
since $H(\lambda)$ is bounded. Moreover, by boundedness of
$\Gamma^G_1 H(\lambda)=G^*\gamma(\bar\lambda)^*$
\[
 \wh h_n=\Gamma^G H(\lambda)k_n=\{ 0,G^*\gamma(\bar\lambda)^*k_n\} \to \{0,g\},\quad g\in \cH.
\]
Since $\wt\Gamma$ is closed, it follows that $\{\wh h;\{0,g\}\}\in
\wt\Gamma$ which shows that $\wh h\in \ker \wt\Gamma_0$. Hence,
$\overline{A_0}\subset \ker \wt\Gamma_0$ and since $\ker\wt\Gamma_0$
is symmetric this implies that $\ker
\wt\Gamma_0=\overline{A_0}=A_0^*$.

Since $\cdom \Gamma^G=\ZA^*$, the closure $\wt \Gamma$ has dense
domain in $\ZA^*$ ({\it Assumption~\ref{1_def_unitBT}}). Clearly,
$\dom G^*M(\mu)G=\dom \gamma(\mu)G\subset \ran \Gamma^G_0$ and hence
the ranges of $\Gamma^G_0$ and $\wt\Gamma_0$ are dense in $\cH$
({\it Assumption~\ref{2_def_ABGtriple00}}). Furthermore, $\wt\Gamma$
as the closure of $\Gamma^G$ is also isometric, i.e., Green's
formula~\eqref{Green1} holds for $\wt\Gamma$ ({\it
Assumption~~\ref{1_def_ABGtriple00}}). According to
Definition~\ref{ABGtriple} this means that $\wt \Gamma$ is an
$AB$-generalized boundary pair for $\ZA^*$.

{\bf 4.} {\it Verification of (2):}
The form of the Weyl function $\wt M(\lambda)=E+ M_0(\lambda)$ of
$\wt \Gamma$ is obtained from Theorem \ref{ABGthm}. Furthermore, by
Theorem~\ref{QBTthm} $\wt\Gamma$ is closed if and only if $E$ is
closed or, equivalently, $\wt M(\lambda)$, $\lambda\in\cmr$, is
closed.

{\bf 5.} {\it Verification of (3):}
Since $\Gamma^G_1 H(\lambda)=G^*\gamma(\bar\lambda)^*$ and
$\gamma_G(\lambda):=\gamma(\lambda)G$ are bounded and densely
defined for each $\lambda\in\cmr$, if follows from~\eqref{ess04}
that
\[
 \wh\Gamma^G:=
 \left\{\left\{ \overline{H(\lambda)}k_\lambda, \binom{0}{\overline{G^*\gamma(\bar{\lambda})^*} k_\lambda} \right\}
  + \left\{ \binom{\overline{\gamma_G(\lambda)}h}{\lambda\overline{\gamma_G(\lambda)}h},\binom{h}{\overline{M_G(\lambda)}h} \right\};
   \begin{array}{c} k_\lambda\in\ran(\overline{A}_0-\lambda), \\ h\in \dom \overline{M_G(\lambda)} \end{array} \right\}
  \subset \wt\Gamma,
\]
where $\gamma_G(\lambda)$ and $M_G(\lambda):=G^*M(\lambda)G$ are the
$\gamma$-field and the Weyl function of $\Gamma^G$. Notice that
$\overline{A_0}\subset\dom\wh\Gamma^G$ and, as shown above, $\ran
\wh\Gamma^G_0\supset \dom \gamma(\mu)G$ is dense in $\cH$ ({\it
Assumption~\ref{2_def_ABGtriple00}-\ref{3_def_ABGtriple00}}). Due to
$\wh\Gamma^G\subset \wt\Gamma$ also Green's identity~\eqref{Green1}
is satisfied ({\it Assumption~\ref{1_def_ABGtriple00}}). Therefore,
$\wh\Gamma^G$ is also an $AB$-generalized boundary pair whose Weyl
function is clearly $\overline{M_G(\lambda)}$, which is closed. Now
by Theorem~\ref{QBTthm} the $AB$-generalized boundary pair
$\wh\Gamma^G$ is also closed and, since $\wh\Gamma^G\subset
\wt\Gamma$, one has
\[
 \overline{G^*M(\lambda)G}\subset\wt M(\lambda)=E+
 M_0(\lambda),\quad \lambda\in\cmr.
\]
Now $\overline{G^*M(\lambda)G}$ as a closed restriction of $E+
 M_0(\lambda)$ is of the form $\overline{G^*M(\lambda)G}=E_0+
 M_0(\lambda)$, $\lambda\in\cmr$, where $E_0$ is a closed densely defined restriction of
$E$; cf. Theorem~\ref{ABGthm}. This proves the last statement.
\end{proof}


Theorem \ref{essThm2} remains valid for all form domain invariant
Nevanlinna functions $M(\cdot)\in \cR(\cH)$ that need not be strict.
The only essential difference appearing in the proof of Theorem
\ref{essThm2} in this case is that $\ker \gamma(\lambda)=\mul
\Gamma_0$ is nontrivial, and then also, $\ker \gamma_G(\lambda)=\ker
\gamma(\lambda)G$ is nontrivial. Notice that even if $\ker
\gamma(\lambda)=\{0\}$ (i.e. $M(\cdot)\in \cR^s(\cH)$) then the
$\gamma$-field $\wt\gamma(\lambda)$ as well as its closure
$\overline{\wt\gamma(\lambda)}=\overline{\gamma_G(\lambda)}$ can
have a nontrivial kernel. This explains why the constructed boundary
pair $\wt\Gamma$ can in general be multivalued even if the original
boundary triple $\Gamma=\{\Gamma_0,\Gamma_1\}$ is single-valued.

Theorem \ref{essThm2} combined with the next lemma yields an
explicit representation for the class of form domain invariant
Nevanlinna functions as well as form domain invariant Nevanlinna
families.


\begin{lemma}\label{essLemma}
Let $G$ be a bounded operator in the Hilbert space $\cH$ with $\ker
G=\ker G^*=\{0\}$, let $H$ be a closed symmetric densely defined
operator on $\cH$ and let $M_0(\cdot)\in \cR[\cH]$. Then the function
\[
 M(\lambda)= G^{-*}(H+M_0(\lambda))G^{-1}, \quad \lambda\in\cmr,
\]
is form domain invariant if and only if for some, equivalently for
every, $\lambda\in\cmr$ the set
\[
 \sD_\lambda:=\{ h\in\cH:\, (\IM M_0(\lambda))^{\half}h\in \ran G^*\}
\]
is dense in $\cH$.
\end{lemma}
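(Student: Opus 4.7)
My plan is to reduce the form $\st_{M(\lambda)}$ to a quadratic expression in an auxiliary operator whose closability is exactly the density condition on $\sD_\lambda$, and then propagate everything in $\lambda$ via the resolvent of the selfadjoint operator realizing $M_0$.

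First I would unpack the form. For $u,v\in\dom M(\lambda)$ set $k=G^{-1}u$, $k'=G^{-1}v$; then $k,k'\in\dom H$ and $(H+M_0(\lambda))k,(H+M_0(\lambda))k'\in\ran G^*$. Since $H\subset H^*$, the contribution of $H$ to the antisymmetric combination vanishes, leaving
\[
 \st_{M(\lambda)}[u,v]=\frac{1}{\IM\lambda}\bigl(\IM M_0(\lambda)\,G^{-1}u,G^{-1}v\bigr)=\frac{1}{\IM\lambda}\bigl(S_\lambda u,S_\lambda v\bigr),
\]
where $S_\lambda:=(\IM M_0(\lambda))^{1/2}G^{-1}$ has $\dom S_\lambda=\ran G\supset\dom M(\lambda)$. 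A direct computation gives $S_\lambda^*=(G^*)^{-1}(\IM M_0(\lambda))^{1/2}$ and $\dom S_\lambda^*=\sD_\lambda$, so $\sD_\lambda$ is dense if and only if $S_\lambda$ is closable.

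Next I would exploit the $B$-generalized realization of $M_0\in\cR[\cH]$ (Theorem~\ref{thm:WF_BG_BT}): let $\{\cH,\Gamma_0^0,\Gamma_1^0\}$ have selfadjoint $A_0^0=\ker\Gamma_0^0$ and bounded $\gamma$-field $\gamma_0$, so that $\IM M_0(\lambda)=\IM\lambda\cdot\gamma_0(\lambda)^*\gamma_0(\lambda)$. Polar decomposition shows closability of $S_\lambda$ is equivalent to closability of $T_\lambda:=\gamma_0(\lambda)G^{-1}$. Because $A_0^0$ is selfadjoint, $V(\lambda,\mu):=I+(\lambda-\mu)(A_0^0-\lambda)^{-1}$ is a bounded boundedly-invertible operator on $\sH$ with $\gamma_0(\lambda)=V(\lambda,\mu)\gamma_0(\mu)$, hence $T_\lambda=V(\lambda,\mu)T_\mu$. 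Consequently closability is $\lambda$-independent, and when closable the closure domain $\dom\overline{T_\lambda}\subset\cH$ is also $\lambda$-independent. This proves that density of $\sD_\lambda$ does not depend on $\lambda\in\cmr$.

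For the direction ($\Leftarrow$), density of $\sD_\lambda$ gives closability of $S_\lambda$, hence of $S_\lambda|_{\dom M(\lambda)}$, so $\st_{M(\lambda)}$ is closable. To obtain the constancy of the closure's domain I would realize $M$ as the Weyl function of a boundary triple for $\ZA^*$: set
\[
 \wt\Gamma_0\wh f=G\,\Gamma_0^0\wh f,\qquad \wt\Gamma_1\wh f=G^{-*}\bigl(H\Gamma_0^0\wh f+\Gamma_1^0\wh f\bigr)
\]
on the maximal domain in $\ZA^*$ where the right-hand side is defined. A direct computation shows the corresponding Weyl function is precisely $M(\lambda)=G^{-*}(H+M_0(\lambda))G^{-1}$, and density of $\sD_\lambda$ guarantees that the closure $\overline{\wt\Gamma}$ is a unitary boundary pair with $\ker\overline{\wt\Gamma}_0=\overline{A_0^0}$ still essentially selfadjoint. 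Theorem~\ref{essThm1} (together with Remark~\ref{essrem1}) then yields form domain invariance of $M$.

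For ($\Rightarrow$), if $M$ is form domain invariant then Theorem~\ref{essThm1} realizes $M$ as the Weyl function of an $ES$-generalized boundary triple with $\dom\overline{\st_{M(\lambda)}}=\dom\overline{\gamma(\lambda)}$ a $\lambda$-independent subspace of $\cH$. Identifying this closure with that of $T_\lambda|_{\dom M(\lambda)}$ from the transform above, and combining with the $V(\lambda,\mu)$-equivalence, forces $T_\lambda$ itself—and hence $S_\lambda$—to be closable on its full domain $\ran G$, which is exactly density of $\sD_\lambda$. The main obstacle is the last step: upgrading closability of the restricted operator on the $\lambda$-dependent set $\dom M(\lambda)$ to closability of $S_\lambda$ on all of $\ran G$. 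I expect to overcome this by exhibiting $\dom M(\lambda)$ as a core for $\overline{S_\lambda}$ in the graph norm, using the resolvent $(A_0^0-\lambda)^{-1}$ to approximate an arbitrary $k\in\cH$ by $k_n\in\dom H$ satisfying the range condition $Hk_n+M_0(\lambda)k_n\in\ran G^*$ while controlling $(\IM M_0(\lambda))^{1/2}(k_n-k)$ via the symmetry of $H$ and the fact that $M_0\in\cR[\cH]$.
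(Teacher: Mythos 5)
Your first two steps reproduce the paper's proof almost verbatim: the paper computes
$\st_{M(\lambda)}[u,v]=\frac{1}{\IM\lambda}\bigl((\IM M_0(\lambda))^{\half}G^{-1}u,(\IM M_0(\lambda))^{\half}G^{-1}v\bigr)$
using $H\subset H^*$, identifies closability of this form with closability of $S_\lambda=(\IM M_0(\lambda))^{\half}G^{-1}$, i.e.\ with dense definedness of $S_\lambda^*=G^{-*}(\IM M_0(\lambda))^{\half}$, whose domain is exactly $\sD_\lambda$; and then, just as you do, it rewrites the form as $\bigl(\gamma_0(\lambda)G^{-1}u,\gamma_0(\lambda)G^{-1}v\bigr)$ via a $B$-generalized realization of $M_0$ and uses $\gamma_0(\lambda)=\bigl[I+(\lambda-\mu)(A_0-\lambda)^{-1}\bigr]\gamma_0(\mu)$, with $A_0$ the selfadjoint kernel of that realization, to transport both closability and the domain of the closure from one $\lambda\in\cmr$ to every other. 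In the paper that is the entire proof: no boundary triple for $M$ itself is constructed inside this lemma. Your steps 3--4 (the transformed triple $\wt\Gamma$ and the appeal to Theorem~\ref{essThm1}) are therefore an unnecessary detour; they essentially reprove pieces of Theorem~\ref{essThm2} and Proposition~\ref{essCor1}, which sit downstream of this lemma.

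The one point where you genuinely depart from the paper is the ``main obstacle'' you flag at the end, and there your proposal is not finished: you promise, but do not supply, a core argument showing that closability of $S_\lambda\uphar\dom M(\lambda)$ forces closability of $S_\lambda$ on all of $\ran G$ (equivalently, density of $\sD_\lambda$). The paper sidesteps this entirely by reading ``$\st_{M(\lambda)}$ is closable'' as closability of the quadratic expression $u\mapsto\|S_\lambda u\|^2$ on the operator's natural domain $\dom S_\lambda=\ran G$, so that the equivalence with density of $\sD_\lambda$ is immediate and no core argument is needed. If one insists on the literal domain $\dom M(\lambda)$ from~\eqref{eq:W-form}, the implication is not automatic --- a restriction of a non-closable operator to a small subspace can be closable, and $\dom M(\lambda)$ is cut out by the extra range condition $(H+M_0(\lambda))G^{-1}u\in\ran G^*$, which can make it much smaller than $\ran G$ --- so you must either adopt the paper's convention on where the form lives or actually carry out the core argument you sketch. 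As written, the ``only if'' direction of your proof is incomplete.
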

\begin{proof}
To calculate the form $\st_{M(\lambda)}$ let $\lambda\in\cmr$ be
fixed and let $u,v\in \dom M(\lambda)$. Then $u,v\in \dom G^{-1}$
and hence
\[
\begin{split}
 \st_{M(\lambda)}[u,v] & =\frac{1}{\lambda-\bar\lambda}\,
 [\left((H+M_0(\lambda))G^{-1}u,G^{-1}v\right)-\left(G^{-1}u,(H+M_0(\lambda))G^{-1}v\right)]\\
  &=\frac{1}{\IM \lambda}\,\left((\IM M_0)(\lambda)G^{-1}u,G^{-1}v\right)
  =\frac{1}{\IM \lambda}\,\left((\IM M_0(\lambda))^{\half}G^{-1}u,(\IM M_0(\lambda))^{\half}G^{-1}v\right)
\end{split}
\]
where symmetry of $H$ has been used. This form is closable precisely
when the operator $(\IM M_0(\lambda))^{\half}G^{-1}$ is closable or,
equivalently, its adjoint $((\IM
M_0(\lambda))^{\half}G^{-1})^*=G^{-*}(\IM M_0(\lambda))^{\half}$ is
densely defined. Since $\sD_\lambda=\dom G^{-*}(\IM
M_0(\lambda))^{\half}$, the closability of $\st_{M(\lambda)}$ is
equivalent for $\sD_\lambda$ to be dense in $\cH$.

To prove that this criterion does not depend on $\lambda\in\cmr$
consider $M_0(\cdot)$ as the Weyl function of some $B$-generalized
boundary pair $(\cH,\Gamma)$. Let $\gamma_0(\cdot)$ be the
corresponding $\gamma$-field and let $A_0=\ker \Gamma_0$ be the
associated selfadjoint operator. Then the form
$\st_{M(\lambda)}[u,v]$ can be also rewritten in the form
\[
 \st_{M(\lambda)}[u,v]=\left(\gamma_0(\lambda)G^{-1}u,\gamma_0(\lambda)G^{-1}v\right)
\]
and hence the form $\st_{M(\lambda)}[u,v]$ is closable if and only
if $\gamma_0(\lambda)G^{-1}$ is a closable operator. Now for any
$\lambda,\mu\in\cmr$ one has
\[
  (I+(\lambda-\mu)(A_0-\lambda)^{-1})\gamma_0(\mu)G^{-1}=\gamma_0(\lambda)G^{-1},
\]
and since $I+(\lambda-\mu)(A_0-\lambda)^{-1}$ bounded with bounded
inverse, one concludes that $\gamma_0(\mu)G^{-1}$ is closable
exactly when $\gamma_0(\lambda)G^{-1}$ is closable and that the
closures are connected by
\[
  (I+(\lambda-\mu)(A_0-\lambda)^{-1})\overline{\gamma_0(\mu)G^{-1}}=\overline{\gamma_0(\lambda)G^{-1}}.
\]
Therefore, if $\st_{M(\mu)}[u,v]$ is closable for some $\mu \in
\cmr$ then $\st_{M(\lambda)}[u,v]$ is closable for all $\lambda \in
\cmr$ and the form domains of these closures coincide. This
completes the proof.
\end{proof}

\begin{proposition}\label{essCor1}
Let $M$ be a strict form domain invariant operator valued Nevanlinna
function in the Hilbert space $\cH$. Then there exist a bounded
operator $G\in[\cH]$ with $\ker G=\ker G^*=\{0\}$, a closed
symmetric densely defined operator $E$ in $\cH$, and a bounded
Nevanlinna function $M_0(\cdot)\in \cR[\cH]$ with the property
\begin{equation}\label{MclH}
 \cH=\clos\sD_\lambda:=\clos \{ h\in\cH:\, (\IM M_0(\lambda))^{\half}h\in \ran G^*\}, \quad
 \lambda\in\cmr,
\end{equation}
such that $M(\cdot)$ admits the representation
\begin{equation}\label{Mess}
 M(\lambda)= G^{-*}(E+M_0(\lambda))G^{-1}, \quad \lambda\in\cmr.
\end{equation}
Conversely, every Nevanlinna function $M(\cdot)$ of the form
~\eqref{Mess} is form domain invariant $\cmr$, whenever $E\subset
E^*$ and $G\in\cB(\cH)$, $\ker G=\ker G^*=\{0\}$, and $M_0(\cdot)\in
\cR[\cH]$ satisfy the condition~\eqref{MclH}.
\end{proposition}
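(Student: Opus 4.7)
The plan is to combine the realization theorem for strict Nevanlinna functions with the renormalization procedure of Theorem~\ref{essThm2}. First, by Corollary~\ref{cor:TAMS_Th} there exists a unitary boundary triple $\{\cH,\Gamma_0,\Gamma_1\}$ for some $A^*$ whose Weyl function equals $M$. Since $M$ is form domain invariant, Theorem~\ref{essThm1} implies that $A_0=\ker\Gamma_0$ is essentially selfadjoint, so this triple is in fact $ES$-generalized, and Lemma~\ref{prop:C111} applies: the closure $\overline{\gamma(\lambda)}$ is a single-valued operator whose domain is independent of $\lambda\in\cmr$.

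Next, I would apply Theorem~\ref{essThm2} to this triple: this produces a bounded operator $G\in[\cH]$ with $\ker G=\{0\}$ and $\ran G=\dom\overline{\gamma(\lambda)}$ dense in $\cH$ (the density gives $\ker G^*=\{0\}$), together with a bounded Nevanlinna function $M_0\in\cR[\cH]$ and a closed densely defined symmetric operator $E$ — for the purposes of the representation~\eqref{Mess} I would take $E$ to be the operator $E_0$ of Theorem~\ref{essThm2}(3), so that the sharp identity
\[
 \overline{G^*M(\lambda)G}=E+M_0(\lambda),\qquad \lambda\in\cmr,
\]
holds. The inclusion $M(\lambda)\subset G^{-*}(E+M_0(\lambda))G^{-1}$ then follows immediately: for $\{u,u'\}\in M(\lambda)$ one has $u\in\dom M(\lambda)\subset\dom\overline{\gamma(\lambda)}=\ran G$, whence $u=Gv$ uniquely; then $G^*u'=G^*M(\lambda)Gv\in \overline{G^*M(\lambda)G}=E+M_0(\lambda)$ applied at $v$, yielding the claim.

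The reverse inclusion, the main technical step, I would establish by returning to the boundary triple picture. If $\{u,u'\}\in G^{-*}(E+M_0(\lambda))G^{-1}$, then $u=Gv$ with $v\in\dom E=\ran\wt\Gamma_0$ and $G^*u'=\wt M(\lambda)v$, so there exists $\wh f_\lambda\in\wh\sN_\lambda(\wt\ZA_*)$ with $\wt\Gamma\wh f_\lambda=\{v,G^*u'\}$. By construction of $\wt\Gamma$ as the closure of $V\circ\Gamma$ (with $V=\diag(G^{-1},G^*)$), one can approximate $\wh f_\lambda$ by elements of $\wh\sN_\lambda(\ZA_*)$ on which $\Gamma$ and $V\circ\Gamma$ agree. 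Exploiting that $\overline{\gamma(\lambda)G}=\wt\gamma(\lambda)$ is a bounded everywhere defined operator on $\cH$ (item (2) of proof of Theorem~\ref{essThm2}), one gets $Gv_n\to u$; closedness of $M(\lambda)$ (as the Weyl function of a unitary boundary triple) combined with the representation $\wh f_\lambda=\wh\gamma(\lambda)v$ in the closed sense then yields $\{u,u'\}\in M(\lambda)$.

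The density property \eqref{MclH} is immediate from Lemma~\ref{essLemma}: closability of $\st_{M(\lambda)}$, guaranteed by form domain invariance of $M$, is equivalent to density of $\sD_\lambda$. The converse direction of the proposition is, by Lemma~\ref{essLemma}, literally its content: given $E\subset E^*$, $G\in\cB(\cH)$ with $\ker G=\ker G^*=\{0\}$, $M_0\in\cR[\cH]$ satisfying \eqref{MclH}, the form $\st_{M(\lambda)}$ attached to $M(\lambda)=G^{-*}(E+M_0(\lambda))G^{-1}$ is closable at every $\lambda\in\cmr$, and the independence from $\lambda$ of the form domain follows from the resolvent identity for any $B$-generalized realization of $M_0(\cdot)$ as in the proof of Lemma~\ref{essLemma}.

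The principal obstacle is the reverse inclusion $G^{-*}(E+M_0(\lambda))G^{-1}\subset M(\lambda)$: passing from the convergence $G^*M(\lambda)Gv_n\to G^*u'$ to $M(\lambda)Gv_n\to u'$ cannot be done by boundedness of $G^*$ alone (it is not bounded below). The resolution is to work at the level of the boundary triple and exploit that the $\gamma$-field applied to the approximating sequence produces the correct limit in $\wh\sN_\lambda(\ZA^*)$ by boundedness of $\overline{\gamma(\lambda)G}$ on $\dom E$, rather than trying to invert $G^*$ directly at the level of $\cH$.
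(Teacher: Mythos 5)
Your setup (realize $M$ as the Weyl function of a unitary boundary triple, invoke Theorem~\ref{essThm1} to get $A_0$ essentially selfadjoint, apply Theorem~\ref{essThm2} to produce $G$, $E$, $M_0$) and your forward inclusion $M(\lambda)\subset G^{-*}(E+M_0(\lambda))G^{-1}$ coincide with the paper's argument, as does the treatment of \eqref{MclH} and of the converse direction via Lemma~\ref{essLemma}. The problem is the reverse inclusion, which you correctly flag as the main technical step but do not actually close. You take $v_n\in\dom(G^*M(\lambda)G)$ with $\{v_n,G^*M(\lambda)Gv_n\}\to\{v,G^*u'\}$ and need $M(\lambda)Gv_n\to u'$. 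Boundedness of $\overline{\gamma(\lambda)G}$ only gives convergence of the \emph{first} boundary components, i.e.\ of $\wh f_{\lambda,n}=\wh\gamma(\lambda)Gv_n$ in $\sH^2$ and of $\Gamma_0\wh f_{\lambda,n}=Gv_n$; it says nothing about $\Gamma_1\wh f_{\lambda,n}=M(\lambda)Gv_n$. Since $G^*$ is injective but not bounded below, convergence of $G^*M(\lambda)Gv_n$ does not force $M(\lambda)Gv_n$ to converge at all, and closedness of $\Gamma$ or of $M(\lambda)$ cannot be applied to a sequence whose second component may diverge. So your "resolution" merely relocates the obstacle rather than removing it.

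The paper closes this gap with a one-line maximality argument that bypasses any approximation: since $E\subset E^*$ and $M_0(\cdot)\in\cR[\cH]$, one has
\[
 \IM\bigl(G^{-*}(E+M_0(\lambda))G^{-1}u,u\bigr)=\IM\bigl((E+M_0(\lambda))v,v\bigr)=\IM(M_0(\lambda)v,v)\ge 0,\qquad u=Gv,\ \lambda\in\dC_+,
\]
so $G^{-*}(E+M_0(\lambda))G^{-1}$ is dissipative in $\dC_+$ (accumulative in $\dC_-$), while $M(\lambda)$, being a value of a Nevanlinna function, is \emph{maximal} dissipative (accumulative). An inclusion of a maximal dissipative relation into a dissipative one is necessarily an equality, so \eqref{ess06} upgrades to \eqref{Mess} immediately. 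You should replace your approximation argument by this observation; everything else in your proposal then goes through.
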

\begin{proof}
Let the Nevanlinna function $M\in \cR(\cH)$ be realized as the Weyl
function of some boundary pair $\{\cH,\Gamma\}$ (see
Theorem~\ref{essThm1}, \cite[Theorem~3.9]{DHMS06}). Since $M$ is
form domain invariant, $A_0$ is essentially selfadjoint by
Theorem~\ref{essThm1}. Since $M$ is an operator valued Nevanlinna
function, one can apply Theorem \ref{essThm2} (see also the
discussion after Theorem \ref{essThm2}), which shows that the
inclusion ${G^*M(\lambda)G}\subset E+ M_0(\lambda)$ holds for every
$\lambda\in\cmr$. This implies that
\begin{equation}\label{ess06}
 M(\lambda)=  G^{-*}G^*M(\lambda)GG^{-1} \subset G^{-*}(E+M_0(\lambda))G^{-1},
\end{equation}
where $G$ is a bounded operator with $\ker G=\ker G^*=\{0\}$ (cf.
proof of Theorem \ref{essThm2} where $\cran G=\cH$ by construction).
Clearly, the function $G^{-*}(E+M_0(\lambda))G^{-1}$ is dissipative
for $\lambda\in\dC_+$ and accumulative for $\lambda\in\dC_-$. Since
$M$ is Nevanlinna function, it is maximal dissipative in $\dC_+$ and
maximal accumulative in $\dC_-$. Therefore, the inclusion in
~\eqref{ess06} prevails as an equality. Since $M(\cdot)$ is form
domain invariant Lemma \ref{essLemma} shows that the condition
~\eqref{MclH} holds for every $\lambda\in\cmr$.

Conversely, if $M(\cdot)$ is a Nevanlinna function of the form
~\eqref{Mess}, where $E$, $G$ and $M_0(\cdot)$ are as indicated and
the condition~\eqref{MclH} holds for some $\lambda\in\cmr$, then by
Lemma \ref{essLemma} $M(\cdot)$ is form domain invariant and the
condition holds for every $\lambda\in\cmr$.
\end{proof}

\begin{remark}
As to the renormalization in Theorem~\ref{essThm2} we do not know if
the renormalized function $\wt M(\cdot)=E+M_0(\cdot)$ belongs to the
class of Nevanlinna functions.

However, the representation of $M(\cdot)$ in
Proposition~\ref{essCor1} combined with $E\subset E^*$ leads to
\[
 M(\lambda)=M(\bar\lambda)^* \supset G^{-*}(E^*+M_0(\lambda))G^{-1}
  \supset M(\lambda), \quad \lambda\in\cmr.
\]
Hence, $M(\lambda)$ can also be represented with $E^*$ instead of
$E$ as follows:
\[
 M(\lambda)=G^{-*}(E^*+M_0(\lambda))G^{-1}, \quad \lambda\in\cmr.
\]
In particular, if $\wt E$ is any maximal symmetric extension of $E$
then one has also the representation
\[
 M(\lambda)=G^{-*}(\wt E+M_0(\lambda))G^{-1}, \quad \lambda\in\cmr.
\]
\end{remark}

\begin{remark}
The result in Proposition~\ref{essCor1} remains valid also for form
domain invariant Nevanlinna families. In this case there exist a
bounded operator $G\in[\cH]$ with $\ker G=\ker G^*=\mul M(\lambda)$,
$\lambda\in\cmr$, a closed symmetric densely defined operator $E$ in
$\cH$, and a bounded Nevanlinna function $M_0(\cdot)\in \cR[\cH]$
satisfying~\eqref{MclH}, such that
\[
 M(\lambda)= G^{-*}(E+M_0(\lambda))G^{-1}, \quad \lambda\in\cmr.
\]
To see this, decompose $M(\lambda)=\textup{gr }M_{\rm op}(\lambda)+M_\infty$,
where $M_\infty=\{0\}\times \mul M(\lambda)$, $\lambda\in\cmr$, see
~\eqref{ml}. Now as in the proof of Proposition \ref{essCor1} the
operator part $M_{\rm op}(\lambda)$ admits the representation
$M_{\rm op}(\lambda)=G_0^{-*}(E+M_0(\lambda))G_0^{-1}$ with some
operator $G_0\in[\cH_0]$ in $\cH_0=\cH \ominus \mul M(\lambda)$ with
$\ker G_0=\ker G_0^*=\{0\}$. The desired representation of $M$ is
obtained by letting $G$ to be the zero continuation of $G_0$ from
$\cH_0$ to $\cH=\cH_0\oplus \mul M(\lambda)$.
\end{remark}

The next example contains a wide class of $ES$-generalized boundary
triples and demonstrates the regularization procedure formulated in
Theorem~\ref{essThm2}.

\begin{example}\label{example6.1}
Let $\{\cH,\Gamma_0^{0},\Gamma_1^{0}\}$  be an ordinary boundary
triple $\Pi^0=\{\cH,\Gamma_0^{0},\Gamma_1^{0}\}$ for $\ZA^*$ with
$A_0^0=\ker \Gamma_0^0$, $A_1^0=\ker\Gamma_1^0$, let $M_0(\cdot)$
and $\gamma_0(\cdot)$ be the corresponding Weyl function and the
$\gamma$-field, and let $G\in \cB(\cH)$ with $\ker G=\ker
G^*=\{0\}$. Then the transform
\begin{equation}\label{eq:001}
    \begin{pmatrix}
    \Gamma_0 \\ \Gamma_1
    \end{pmatrix}=
    \begin{pmatrix}
    G & 0 \\
    0 & G^{-*}
    \end{pmatrix}\begin{pmatrix}
    \Gamma_0^{0} \\ \Gamma_1^{0}
    \end{pmatrix},
\end{equation}
where $G^{-*}$ stands for $(G^{-1})^*=(G^{*})^{-1}$, defines an
$ES$-generalized boundary triple $\Pi=\{\cH,\Gamma_0,\Gamma_1\}$ for
$\ZA^*$. Indeed, since $G\in\cB(\cH)$ the transform $V$ in
~\eqref{Vtrans} is unitary in the Kre\u{\i}n space $\{\cH^2,J_\cH\}$
and it follows from \cite[Theorem~2.10~(ii)]{DHMS09} that the
composition $\Gamma=V\circ \Gamma^0$ is unitary. By Lemma
\ref{isomtrans} one has $\ker \Gamma=\ZA$ and, since $\Gamma$ is
unitary, $\ZA_*:=\dom \Gamma$ is dense in $\ZA^*$. Since $\Pi^0$ is
an ordinary boundary triple, $\cH\times \{0\}\subset \ran \Gamma^0$
and hence one concludes from~\eqref{eq:001} that
\[
 \ran \Gamma_0=\ran G, \quad A_0:=\ker \Gamma_0=A_0^0\cap \ZA_*.
\]
Consequently, $\ran\Gamma_0$ is dense in $\cH$ and $A_0$ is
essentially selfadjoint. Moreover, $A_1:=\ker\Gamma_1=\ker
\Gamma_1^0=A_1^0$ and $\ran \Gamma_1=\dom G^*=\cH$: this means that
the transposed boundary triple $\{\cH,\Gamma_1,-\Gamma_0\}$ is
$B$-generalized. Observe, that $A_0$ is selfadjoint if and only if
$\ran G=\cH$ or, equivalently, when $\Pi$ is an ordinary boundary
triple for $\ZA^*$, too.

Next the form domain of the Weyl function $M$ is calculated. By
Lemma~\ref{isomtrans} $M(\cdot)=G^{-*}M_0(\cdot)G^{-1}$ and
$\gamma(\cdot)=\gamma_0(\cdot)G^{-1}$. Let $\lambda\in\cmr$ be fixed
and let $u,v\in\dom M(\lambda)$. Then
\[
\begin{array}{ll}
 \st_{M(\lambda)}[u,v]
 &=\frac{1}{\lambda-\bar\lambda}\, [(G^{-*}M_0(\lambda)G^{-1}u,v)-(u,G^{-*}M_0(\lambda)G^{-1}v)]\\
 &=\frac{1}{\lambda-\bar\lambda}\, [(M_0(\lambda)G^{-1}u,G^{-1}v)-(M_0(\lambda)^*G^{-1}u,G^{-1}v)]\\
 &=(\gamma_0(\lambda)G^{-1}u,\gamma_0(\lambda)G^{-1}v).
 \end{array}
\]
Since $\Pi^0$ is an ordinary boundary triple,
$\gamma_0(\lambda):\cH\to\ker(\ZA^*-\lambda)$ is bounded and
surjective, i.e., the inverse of this mapping is also bounded. Hence
$\gamma_0(\lambda)G$ is closed, when considered on its natural
domain $\dom \gamma_0(\lambda)G^{-1}=\ran G\,(\supset \dom
M(\lambda))$. Therefore, the closure of the form $\st_{M(\lambda)}$
is given by
\[
 \overline{\st_{M(\lambda)}}[u,v]=(\gamma(\lambda)G^{-1}u,\gamma(\lambda)G^{-1}v), \quad
 u,v\in\ran G.
\]
In particular, $M(\lambda)$ is a form domain invariant Nevanlinna
function whose form domain is equal to $\ran G$. Since $G$ is
bounded, one can use $G$ to produce a regularized function $\wt M$:
\[
 \wt
 M=G^*MG=G^{*}(G^{-*}M_0(\cdot)G^{-1})G=M_0(\lambda),
\]
so that $\wt M$ coincides with the Nevanlinna function $M_0(\cdot)$
which belongs to the class $\cR^u[\cH]$.

It is emphasized that when $G$ is not surjective, the form domain
invariant function $M(\cdot)=G^{-*}M_0(\cdot)G^{-1}$ need not be
domain invariant. In fact, in \cite{DHM15} an example of a form
domain invariant Nevanlinna function $M$ was given, such that
\[
 \dom M(\lambda)\cap \dom M(\lambda)=\{0\}, \quad \lambda\neq\mu \quad
 (\lambda,\mu\in\cmr),
\]
and the corresponding regularized function $\wt M$ therein still
belongs to the class $\cR^u[\cH]$.
\end{example}

In Example~\ref{example6.1} the boundary triple $\Pi$ is
$ES$-generalized and the transposed boundary triple
$\Pi^\top:=\{\cH,\Gamma_1,-\Gamma_0\}$ is $B$-generalized.
Therefore, according to \cite[Theorem~7.24]{DHMS12}, or
Theorem~\ref{thm:2.1}, there exist an ordinary boundary triple $\wt
\Pi^0$ and operators $R=R^*, K\in \cB(\cH)$, $\ker K=\ker
K^*=\{0\}$, such that $\Pi^\top$ is the transform~\eqref{eq:3} of
$\wt \Pi^0$. Recall that one can take e.g. $R=\RE(-M(i)^{-1})$,
$K=(\IM (-M(i)^{-1}))^{1/2}$. In particular, this yields the
following connections between the associated Weyl functions:
\[
 -M^{-1}(\cdot)=K^* \wt M_0(\cdot)K+R.
\]
In particular, if $R=0$ then one obtains $M(\cdot)=K^{-1}(-\wt
M_0(\cdot)^{-1})K^{-*}$, where $-\wt M_0(\cdot)^{-1}$ belongs to the
class $\cR^u[\cH]$.

This together with Example~\ref{example6.1} essentially
characterizes those $ES$-generalized boundary triples $\Pi$ for
$\ZA^*$ whose transposed boundary triple $\Pi^\top$ is
$B$-generalized. \\

Recall that Weyl functions of $S$-generalized boundary pairs are
domain invariant, but converse does not hold; for explicit examples
see Section~\ref{sec7}. As shown in the next proposition a domain
invariant Nevanlinna function can be always renormalized by means of
a fixed bounded operator to a bounded Nevanlinna function.

\begin{proposition}\label{Domrenorm}
Let $M(\cdot)$ be a domain invariant operator valued Nevanlinna
function in the Hilbert space $\cH$. Moreover, let $G$ with $\ker
G=\ker G^*=\{0\}$ be a bounded operator in $\cH$ such that $\ran
G=\dom M(\lambda)$, $\lambda\in\cmr$. Then the renormalized function
\begin{equation}\label{boundedM_G}
 M_G(\lambda)= G^{*}M(\lambda)G, \quad \lambda\in\cmr,
\end{equation}
is a bounded Nevanlinna function. Moreover, $M_G(\cdot)\in
\cR^s[\cH]$ precisely when $M(\cdot)\in \cR^s(\cH)$.
\end{proposition}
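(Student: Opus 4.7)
The plan is to reduce the claim to the $S$-generalized structure theorem and then handle the bounded transform $G^{*}(\cdot)G$ explicitly. First I would invoke Theorem~\ref{prop:C6B} (equivalently Theorem~\ref{prop:C6}): a domain invariant operator valued Nevanlinna function $M$ is the Weyl function of some $S$-generalized boundary pair $\{\cH,\Gamma\}$, so $A_{0}=\ker\Gamma_{0}$ is selfadjoint and $M$ admits the representation
\[
  M(\lambda)=E+M_{0}(\lambda),\qquad \lambda\in\cmr,
\]
with $E=E^{*}$ a selfadjoint operator and $\overline{M_{0}}\in\cR[\cH_{0}]$, where $\cH_{0}=\clos\dom E=\clos\dom M(\lambda)=\clos \ran G$. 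Since $\ker G^{*}=\{0\}$ forces $\ran G$ to be dense, we have $\cH_{0}=\cH$, so $M_{0}(\lambda)\in\cB(\cH)$.

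Next I would identify the domain and establish boundedness of $M_{G}(\lambda)$. The condition $\ran G=\dom M(\lambda)$ gives $\dom M_{G}(\lambda)=\{k\in\cH:\,Gk\in\dom M(\lambda)\}=\cH$, so $M_{G}(\lambda)$ is everywhere defined. Using the decomposition above,
\[
  M_{G}(\lambda)=G^{*}EG+G^{*}M_{0}(\lambda)G.
\]
The second summand is bounded because $M_{0}(\lambda)\in\cB(\cH)$ and $G\in\cB(\cH)$. For the first, observe that $EG$ is closed (composition of the bounded operator $G$ with the closed operator $E$), and $\dom(EG)=\{k:Gk\in\dom E\}=\cH$; the closed graph theorem thus forces $EG\in\cB(\cH)$ and hence $G^{*}EG\in\cB(\cH)$. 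Together these yield $M_{G}(\lambda)\in\cB(\cH)$.

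The Nevanlinna properties of $M_{G}$ are then immediate: holomorphy in $\dC_{\pm}$ is inherited from $M$, and for $\lambda\in\dC_{+}$,
\[
  M_{G}(\bar\lambda)=G^{*}M(\bar\lambda)G=G^{*}M(\lambda)^{*}G=M_{G}(\lambda)^{*},
  \quad
  \IM M_{G}(\lambda)=G^{*}\IM M(\lambda)\,G\ge 0,
\]
so $M_{G}\in\cR[\cH]$.

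Finally, for the equivalence of strictness, I would use the $\gamma$-field identity from Lemma~\ref{gg01}(i) (or~\eqref{Green3U}) which gives
\[
  (\IM M(\lambda)h,h)_{\cH}=\IM\lambda\,\|\gamma(\lambda)h\|_{\sH}^{2},\qquad h\in\dom M(\lambda)=\ran G,
\]
where by Theorem~\ref{prop:C6} $\gamma(\lambda)$ is bounded on $\ran G$. Substituting $h=Gk$ yields $(\IM M_{G}(\lambda)k,k)_{\cH}=\IM\lambda\,\|\gamma(\lambda)Gk\|_{\sH}^{2}$ for every $k\in\cH$, so $\ker \IM M_{G}(\lambda)=\{k\in\cH:\,Gk\in\ker\gamma(\lambda)\}$. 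Combining $\ker G=\{0\}$ with $\ker\gamma(\lambda)\subset\dom\gamma(\lambda)=\ran G$ gives the equivalence $\ker \IM M_{G}(\lambda)=\{0\}\Leftrightarrow \ker\gamma(\lambda)=\{0\}$, and by~\eqref{stric-unb} together with the identity above applied to $h\in\dom M(\lambda)$ the latter is in turn equivalent to $M\in\cR^{s}(\cH)$. The main obstacle is the boundedness of $G^{*}EG$ for a possibly unbounded selfadjoint $E$; this is resolved cleanly by the closed graph argument once the representation $M=E+M_{0}$ and the equality $\ran G=\dom E$ are in place.
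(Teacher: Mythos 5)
Your opening step is false, and it is false in a way the paper itself emphasizes. You claim that domain invariance of $M$ yields an $S$-generalized realization and hence the representation $M(\lambda)=E+M_0(\lambda)$ with $E=E^*$ selfadjoint and $M_0$ bounded. By Theorem~\ref{prop:C6B} that representation is equivalent to the \emph{stronger} condition $\ran\Gamma_0=\dom M(\lambda)=\dom M(\mu)$ (equivalently, to $\IM M(\lambda)$ being bounded); mere domain invariance only gives essential selfadjointness of $A_0$ (Proposition~\ref{domMchar}(iii)). Theorem~\ref{th_criterion(bt)Ex} and Remark~\ref{rem8.15}(i) exhibit a domain invariant Weyl function with $\dom M(z)\subsetneqq\ran\Gamma_0$ whose imaginary part takes values in unbounded operators, so no decomposition $E+M_0$ with $E=E^*$ and $M_0\in\cR[\cH]$ can exist for it. Everything downstream in your argument leans on this decomposition: the boundedness of $G^*M_0(\lambda)G$, the closed-graph argument for $G^*EG$ (which needs $\ran G=\dom E$), and the boundedness of $\gamma(\lambda)$ invoked in the strictness part all become unavailable.

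The repair is small, and your closed-graph instinct is the right one — you just have to apply it to $M(\lambda)G$ directly rather than to $EG$. Since $M(\lambda)$ is maximal dissipative (accumulative) for $\lambda\in\dC_+$ ($\dC_-$), it is closed; hence $M(\lambda)G$ is closed, and $\dom(M(\lambda)G)=\{k:Gk\in\dom M(\lambda)\}=\cH$ by the hypothesis $\ran G=\dom M(\lambda)$, so $M(\lambda)G$ and therefore $M_G(\lambda)$ are bounded by the closed graph theorem. The paper reaches the same conclusion through adjoints: $M_G(\lambda)^*\supset G^*M(\bar\lambda)G$ is everywhere defined, so the closed operator $M_G(\lambda)^*$ is bounded and $M_G(\lambda)^*=G^*M(\bar\lambda)G$, from which boundedness and the symmetry $M_G(\bar\lambda)=M_G(\lambda)^*$ follow at once. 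For the strictness equivalence no $\gamma$-field is needed: $\IM(M_G(\lambda)k,k)=\IM(M(\lambda)Gk,Gk)$ for all $k\in\cH$, and since $\ker G=\{0\}$ and $G$ maps $\cH$ onto $\dom M(\lambda)$, the condition~\eqref{stric-unb} for $M$ is equivalent to $\ker\IM M_G(\lambda)=\{0\}$.
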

\begin{proof}
By assumptions the equality $\dom G^{*}M(\lambda)G=\dom
M(\lambda)G=\cH$ holds for all $\lambda\in\cmr$. Consequently, the
adjoint $M_G(\lambda)^*$ is a closed operator and in view of
\[
 M_G(\lambda)^*=(G^{*}M(\lambda)G)^*\supset G^{*}M(\bar\lambda)G
\]
one has $\dom M_G(\lambda)^*=\cH$. Consequently, the equality
$M_G(\lambda)^*=G^{*}M(\bar\lambda)G$ holds for all
$\lambda\in\cmr$. Now clearly $\IM M_G(\lambda)=G^*\IM M(\lambda)G$,
which implies that $M_G(\cdot)\in \cR[\cH]$ and also shows the last
statement in the proposition.
\end{proof}

The assumption $\ran G=\dom M(\lambda)$ in
Proposition~\ref{Domrenorm} (or more generally the inclusion $\dom
M(\lambda)\subset \ran G$) guarantees that $M(\cdot)$ can be
recovered from $M_G(\cdot)$ in~\eqref{boundedM_G} similarly as was
done in Proposition~\ref{essCor1}:
\[
  M(\lambda)= G^{-*}G^*M(\lambda)GG^{-1}=G^{-*}M_G(\lambda)G^{-1}, \quad \lambda\in\cmr.
\]

\subsection{Examples on renormalization}

The following examples demonstrate different renormalizations of
some form domain invariant Nevanlinna functions. In the first
example the real part of $M(i)$ is strongly subordinated with
respect to its imaginary part. In this case the renormalized
function $\wt M(\cdot)$ is a bounded Nevanlinna function.

   \begin{example}\label{Example_first}
Let $S$ be a positively definite closed symmetric operator in $\cH,$
$S\ge \varepsilon I.$ Let
\begin{equation}\label{5.35_Nev_func}
M(z)= z S^*S + S, \quad  \dom M(z)=\dom S^*S, \quad z\in\Bbb C.
\end{equation}
Replacing if necessary $S$ by  $S +aI$ we can  assume that
$\varepsilon
>1.$

First notice that
    \begin{equation*}
  \|f\|^2\le\varepsilon^{-2}\|S f\|^2 = \varepsilon^{-2}(S^* Sf,f) \le
\varepsilon^{-2}\|S^* Sf\| \cdot \|f\|,\quad  f\in\dom S^*S,
    \end{equation*}
i.e. $\|S^* S f\|\ge\varepsilon^2\|f\|$. It follows that $S$ is
strongly subordinated with respect to $S^* S$, i.e.
   \begin{equation*}
\|S f\|^2 = (Sf,Sf) = (S^* Sf, f) \le \|S^* Sf\| \cdot \|f\| \le
\varepsilon^{-2}\|S^* Sf\|^2, \quad  f\in\dom S^*S.
   \end{equation*}
Since $\dom S^*S \subset\dom S\subset\dom S^*$, one easily proves
that $S^*$ is also strongly subordinated with respect to $S^* S.$
Now, these inequalities imply that  both operators $S/z$ and $S^*/z$
are also strongly subordinated to $S^* S$ for $|z|\ge 1.$ Therefore, 
%
%
    \begin{equation*}
M(\overline z)^* = (\overline z S^* S+S)^* = z S^*S + S^* = z S^*S +
S = M(z).
    \end{equation*}
Since $M(\cdot)$ is dissipative in $\Bbb C_+$, it follows that
$M(z)$ is maximal dissipative for $z\in \Bbb C_+, |z|\ge 1,$ and
maximal accumulative for $z\in \Bbb C_-,\ |z|\ge 1$. In turn, the
latter implies  that $M(z)$ being holomorphic and dissipative is
$m$-dissipative for each $z\in \Bbb C_+$. Summing up we conclude
that $M(\cdot)$ is an entire Nevanlinna function with values in
$\cC(\cH)$.

Furthermore,
  \begin{equation*}
    {\mathfrak t}_{M(z)}(f,g) = \frac{(M(z)f,g)-(f,M(z)g)}{z-\bar{z}}
    = (Sf, Sg),\quad
 f,g\in\dom S^*S, \quad z\in\dC.
\end{equation*}
The form is closable because so is the operator $S$. Taking the
closure we obtain the closed form  ${\overline {\mathfrak
t}}_{M(z)}(f,g)  = (Sf, Sg),\
 f,g\in\dom S, \ z\in\dC$, with constant domain.
In other words,  $M(\cdot)$ is  a form domain invariant Nevanlinna
function and the (selfadjoint) operator associated with ${\overline
{\mathfrak t}}_{M(z)}$ in accordance with the second representation
theorem is $(S^*S)^{1/2}$.

Now consider the renormalization of $M(\cdot)$ as in
Theorem~\ref{essThm2}. The operator $G=(S^*S)^{-\half}$ is bounded
and $\ran G=\dom {\overline{\mathfrak t}}_{M(z)}$. Moreover,
$G^*(S^*S)G=I\uphar{\dom (S^*S)^\half}$ and $G^*SG=G^*U$, where
$U:\cran (S^*S)^\half=\cH\to \cran S$ is the (partial) isometry from
the polar decomposition $S=U(S^*S)^{\half}$. Consequently,
$C:=G^*SG$ is a bounded selfadjoint operator in $\cH$. By
Theorem~\ref{essThm2} one has $\wt M(z)\supset\clos(G^*M(z)G)=zI+C$.
Thus, $\wt M(z)=zI+C$ is a bounded Nevanlinna function.
\end{example}

In the next example we change the roles of the real and imaginary
parts of $M(i)$ of the function treated in Example
\ref{Example_first}. This leads to a renormalized Nevanlinna
function which is unbounded.

\begin{example}\label{Example_second}
Consider the entire operator function
\[
 M_1(z)=S^*S+z S, \quad z\in \dC.
\]
Then $M_1(z)$ is a Nevanlinna function; cf.
Example~\ref{Example_first}. It is domain invariant and also form
domain invariant:
\[
  (M_1(z)f,g)-(f,M_1(z)g)=(z-\bar z)(Sf,g),
  \quad f,g\in \dom S^*S.
\]
The closure of this form is given by
\[
  \tau_{M_1(z)}(f,g)=(S_F^{1/2}f,S_F^{1/2}g), \quad f,g\in \dom (S_F)^{\half}.
\]
The operator $S_F^{-1/2}$ is bounded, injective, and clearly $\ran
S_F^{-1/2}=\dom \tau_{M_1(z)}$. Consider the renormalization of this
function determined by $G=S_F^{-1/2}$:
\[
  G^*M_1(z)G=S_F^{-1/2}S^*S S_F^{-1/2} +z\, S_F^{-1/2}S S_F^{-1/2}.
\]
Here
\[
 \dom (S_F^{-1/2}S^*S S_F^{-1/2})\subset \dom (S_F^{-1/2}S S_F^{-1/2})
  =S_F^{1/2}(\dom S).
\]
By the first representation theorem the operator $S_F^{-1/2}S
S_F^{-1/2}$ is densely defined and
\[
 S_F^{-1/2}S S_F^{-1/2}\subset S_F^{-1/2}S_F S_F^{-1/2}=I_{\dom
 S_F^{1/2}}.
\]
Hence $T_1:=S_F^{-1/2}S S_F^{-1/2}$ is bounded and its closure is
the identity operator on $\cH$. On the other hand, for
$T_0:=S_F^{-1/2}S^*S S_F^{-1/2}$ one has
\[
 T_0=S_F^{-1/2}S^*S S_F^{-1}S_F^{1/2}
 =S_F^{-1/2}S^*S S^{-1}S_F^{1/2}=S_F^{-1/2}(S^*\uphar
 \ran S)S_F^{1/2}.
\]
Here $H:=S^*\uphar \ran S$ is a closed restriction of $S^*$ with
nondense domain in $\cH$ and its adjoint is given by
\[
 H^*=(S^*\uphar \ran S)^*
 =S \hplus \left( \{0\}\times \ker S^* \right).
\]
Consequently,
\[
 T_0=S_F^{-1/2}H S_F^{1/2},\quad T_0^*=(S_F^{-1/2}H S_F^{1/2})^*=S_F^{1/2}H^* S_F^{-1/2}.
\]
One can rewrite $T_0^*$ as follows
\[
 T_0^*=S_F^{1/2}\left[S \hplus \left( \{0\}\times \ran S\right)\right]S_F^{-1/2}.
\]
Here $\ker S^* \cap \dom S_F^{1/2}\subset \dom S_F=\{0\}$, since
$0\in\rho(S_F)$. Hence $T_0^*$ is an operator and $T_0$ is a densely
defined nonnegative operator in $\cH$. Moreover, $\ran T_0^*=\cH$
and $\ran T_0=S_F^{-\half}(\ran H)=\dom S_F^{\half}$ is dense in
$\cH$, so that $\ran (T_0)^{**}=\cH$. Hence, $T_0$ is essentially
selfadjoint. From Theorem~\ref{essThm2} one gets
\[
 \wt M_1(z)\supset\overline{G^*M_1(z)G}=\clos(T_0+z\, T_1)=(T_0)^{**}+z\, I, \quad
 z\in\dC.
\]
Thus, $\wt M_1(z)$ is an unbounded domain invariant Nevanlinna
function, whose imaginary part is bounded.
\end{example}

As a comparison we consider another renormalization of the function
$M(z)= z S^*S + S$ from Example~\ref{Example_first}, which leads to
a renormalized function that is, in fact, a multivalued Nevanlinna
family. The situation is made more concrete by treating as a special
case the second order differential operator $S=-D^2$ on $L^2[0,1]$.

   \begin{example}\label{Example_third}
(i) Let $S$ and $M(z)= z S^*S + S$ be as in
Example~\ref{Example_first}. Consider another renormalization of
$M(\cdot)$ using the bounded operator $G_2:=(S_F)^{-1}$. For
simplicity we assume in addition that $\overline{\dom S^2}=\cH$,
which implies that $S^*S = (S^2)_F$. Since $S^2\subset (S_F)^2$ one
concludes that $S^*S\geq (S_F)^2$ and, in particular, $\dom S=\dom
(S^*S)^\half \subset \dom S_F$ which shows that $\ran G_2\supset\dom
{\overline{\mathfrak t}}_{M(z)}=\dom S$. It is easily seen that
$S(S_F)^{-1}=I\uphar \ran S$ and, since
$((S_F)^{-1}S^*)^*=S(S_F)^{-1}$, the operator
\[
 T_1 := (S_F)^{-1}S^* S(S_F)^{-1}
\]
is nonnegative, nondensely defined and nonclosable. The closure of
$T_1$ is a nonnegative selfadjoint relation given by
\[
 \clos T_1= I_{\ran S} \oplus (\{0\}\times \ker S^*).
\]
On the other hand, the operator $T_0$,
$$
T_0f := (S_F)^{-1}S(S_F)^{-1}f = S^{-1}f, \quad f\in \cH,
$$
is bounded and nondensely defined. Hence $M_2(z):= T_1 z +T_0$ as a
nondensely defined unbounded operator function, whose closure
\begin{equation}\label{renorm3}
 \clos (M_2(z))=z\, (T_1)^{**} + S^{-1} = (z I_{\ran S}+ P_{\ran S}S^{-1}) \oplus (\{0\}\times \ker S^*)
\end{equation}
is a Nevanlinna family in the Hilbert space $\cH$.

It should be noted that here the corresponding $\gamma$-fields would
be $\gamma(z)\equiv S$ and $\gamma_2(z)\equiv S(S_F)^{-1}=I\uphar
\ran S$, and this last one is closed but nondensely defined; cf.
Remark~\ref{essrem1}.

(ii)
As a special case consider $S=-D^2,\  \dom S=H^2_0[0,1]$. Then
\begin{eqnarray*}
S_F = -D^2,\quad  \dom S_F = H^2[0,1]\cap H^1_0[0,1],\\
S^* S = (S^2)_F = D^4,\quad \dom S^*S =  H^4[0,1]\cap H^2_0[0,1],
 \end{eqnarray*}
and the operator valued function $M(\cdot)$ given by
~\eqref{5.35_Nev_func} is a Nevanlinna function.

It is easily seen that
   \begin{equation*}
(S_F)^{-1}f = (x-1)\int^x_0 t f(t)dt + x\int^1_x(t-1)f(t)dt, \quad
f\in H^0[0,1] = L^2[0,1],
   \end{equation*}
and
   \begin{equation*}
S^* S(S_F)^{-1}f = f'', \quad \dom\bigl(S^*
S(S_F)^{-1}\bigr)=\left\{f\in H^2[0,1]:\ f\perp 1,f\perp t\right\}.
  \end{equation*}
Finally,
 \begin{equation*}
T_1f = (S_F)^{-1}S^* S(S_F)^{-1}f=f(x)-(1-x)f(0)-x f(1)
\end{equation*}
and $\dom T_1 = \dom\bigl(S^* S(S_F)^{-1}\bigr)= \left\{f\in
H^2[0,1]:\ f\perp 1,f\perp t\right\}$.

Clearly, $T_1$  is nondensely defined and nonclosable in $L^2[0,1]$.
On the other hand, the operator  $T_0$,
$$
T_0f := (S_F)^{-1}S(S_F)^{-1}f = S^{-1}f, \quad \dom T_0 = \{f\in
L^2[0,1]:\ f\perp 1,f\perp t\}.
$$
is nondensely defined while it is bounded.

Thus, $M_1(z) = z\, T_1 + T_0$ is not a Nevanlinna function.
However, its closure is a Nevanlinna family of the form
~\eqref{renorm3} whose multivalued part is spanned by the functions
$g_0(t)\equiv 1$ and $g_1(t)=t$ in $L^2[0,1]$.
     \end{example}
%
\begin{remark}
The situation treated in Example~\ref{Example_third} could be
recovered with a slightly more general variant of
Theorem~\ref{essThm2} that would result from the following relaxed
assumptions on the renormalizing operator: $G$ is bounded, its range
satisfies $\ran G\supset {\overline{\mathfrak t}}_{M(z)}$, and the
renormalization of the $\gamma$-field, i.e. $\gamma(z)G$, should be
bounded and densely defined.

Notice that the operator $G_2$ in Example~\ref{Example_third} admits
all these properties apart from the last one: $\gamma_2(z)\equiv
S(S_F)^{-1}=I\uphar \ran S$ is closed but nondensely defined. To
recover this it suffices to replace $G_2$ by some suitable bounded
operator $\wt G_2=G_2K$, where $K$ is e.g. the orthogonal projection
onto $\ran S$ or a restriction to $\ran S$: this would give a
renormalization as in~\eqref{renorm3} where the multivalued part is
projected away.
\end{remark}

\section{Some classes of $ES$-generalized boundary
triples}\label{sec6}

\subsection{Transforms of $B$-generalized boundary
triples}\label{sec6.1}
Let $\Gamma$ be an isometric relation from the Kre\u{\i}n space
$(\sH^2,J_\sH)$ to the Kre\u{\i}n space $(\cH^2,J_\cH)$ and
decompose $\Gamma=\{\Gamma_0,\Gamma_1\}$ according to the Cartesian
decomposition of its range space $\cH\times\cH$ as in~\eqref{g01}.
Then the \textit{transposed boundary pair} defined as a composition
of two isometric relations via
\[
 \Gamma^\top=\begin{pmatrix}0& 1\\-1 & 0 \end{pmatrix}\Gamma
 =\{\Gamma_1,-\Gamma_0\}
\]
is again an isometric relation from $(\sH^2,J_\sH)$ to
$(\cH^2,J_\cH)$ with $\dom \Gamma^\top=\dom \Gamma$. It is well
known that in the particular case of an ordinary boundary triple
$\{\cH,\Gamma_0,\Gamma_1\}$ for $\ZA^*$, also the transposed boundary
triple $\{\cH,\Gamma_1,-\Gamma_0\}$ is an ordinary boundary triple
for $\ZA^*$. Moreover, if $W$ is any bounded $J_\cH$-unitary operator
in the Kre\u{\i}n space $(\cH^2,J_\cH)$, then the composition
\[
 \binom{\Gamma_0^W}{\Gamma_1^W}=W\binom{\Gamma_0}{\Gamma_1}
\]
is also an ordinary boundary triple for $\ZA^*$ and, conversely, all
ordinary boundary triples of $\ZA^*$ are connected via some
$J_\cH$-unitary operator $W$ to each other in this way; see
\cite{DHMS1,DHMS09}.

The situation changes essentially when $\{\cH,\Gamma_0,\Gamma_1\}$
is not an ordinary boundary triple for $\ZA^*$. In this section we
treat the simplest case of a $B$-generalized boundary triple and
show that a simple $J_\cH$-unitary transform can produce an
$ES$-generalized boundary triple for $\ZA^*$ whose Weyl function
need not be domain invariant, however, according to
Theorem~\ref{essThm1} it is still form domain invariant.

The next result shows how an arbitrary $B$-generalized boundary
triple $\{\cH,\Gamma_0,\Gamma_1\}$ for $\ZA^*$, which is not an
ordinary boundary triple, can be transformed to an $ES$-generalized
boundary triple, whose $\gamma$-field becomes unbounded.

\begin{theorem} \label{thmBtoE}
Let $\{\cH,\Gamma_0,\Gamma_1\}$ be a $B$-generalized boundary triple
for $\ZA^*$ with $\ZA_*=\dom\Gamma\subset \ZA^*$, $\ZA_*\neq \ZA^*$,
let $M(\cdot)$ and $\gamma(\cdot)$ be the corresponding Weyl
function and $\gamma$-field, and let $A_0=\ker\Gamma_0$. Then:
\begin{enumerate}\def\labelenumi {\textit{(\roman{enumi})}}
\item for every fixed $\nu\in\cmr$ the transform
\begin{equation}\label{Etrans1}
    \begin{pmatrix}
    \Gamma^\nu_0 \\ \Gamma^\nu_1
    \end{pmatrix}=
    \begin{pmatrix}
    -\RE M(\nu)  & I \\ -I & 0
    \end{pmatrix}\begin{pmatrix}
    \Gamma_0 \\ \Gamma_1
    \end{pmatrix}
\end{equation}
defines a unitary boundary triple for $\ZA^*$ whose Weyl function and
$\gamma$-field  are given by
\begin{equation}\label{Mnu}
 M_\nu(\lambda)=-(M(\lambda)-\RE M(\nu))^{-1}, \quad
 \gamma_\nu(\lambda)=\gamma(\lambda)(M(\lambda)-\RE M(\nu))^{-1},
\end{equation}
and, moreover, $M_\nu(\lambda)$ and $\gamma_\nu(\lambda)$ are
unbounded operators for every $\lambda\in\cmr$;

\item $\{\cH,\Gamma^\nu_0,\Gamma^\nu_1\}$ is an $ES$-generalized boundary
triple for $\ZA^*$ with $\dom\Gamma^\nu=\ZA_*$ and, hence,
$M_\nu(\lambda)$ is form domain invariant and $\gamma_\nu(\lambda)$
is closable for every $\lambda\in \cmr$;

\item the Weyl function $M_\nu(\cdot)$ (equivalently the
$\gamma$-field $\gamma_\nu(\cdot)$) is domain invariant on $\cmr$ if
and only if
\begin{equation}\label{MnuDomInv}
 \sN_\mu(\ZA_*) \subset \ran (A_{0,\nu}-\lambda) \quad \text{for all } \lambda, \mu \in
 \cmr,
\end{equation}
where $A_{0,\nu}=\ker \Gamma^\nu_0$.
\end{enumerate}
\end{theorem}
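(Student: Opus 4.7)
The plan is to factor $V$ as a composition that exhibits $\Pi^\nu$ as the transposed of a shifted $B$-generalized boundary triple, and then to apply the form-domain criterion of Theorem~\ref{essThm1} at the specific point $\lambda = \nu$ where the computations become transparent. Concretely, I would write
\[
V := \begin{pmatrix}-\RE M(\nu) & I \\ -I & 0\end{pmatrix} = \begin{pmatrix}0 & I \\ -I & 0\end{pmatrix}\begin{pmatrix}I & 0 \\ -\RE M(\nu) & I\end{pmatrix},
\]
so that $\Pi^\nu$ is obtained from the shifted $B$-generalized triple $\wh\Pi=\{\cH,\Gamma_0,\Gamma_1-\RE M(\nu)\Gamma_0\}$, whose Weyl function $\wh M(\lambda)=M(\lambda)-\RE M(\nu)$ has purely imaginary value at $\nu$, by transposition.

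For (i), I would verify directly that $V^*J_\cH V=J_\cH$, using $\RE M(\nu)=\half(M(\nu)+M(\nu)^*)\in\cB(\cH)$ selfadjoint (which is legitimate because $M(\cdot)\in\cR^s[\cH]$ is bounded). Since $\Gamma$ is $(J_\sH,J_\cH)$-unitary (a $B$-generalized triple being unitary) and $V$ is bounded with bounded inverse, $\Gamma^\nu=V\circ\Gamma$ is single-valued, $(J_\sH,J_\cH)$-unitary, and $\dom\Gamma^\nu=A_*$. The formulas~\eqref{Mnu} would then follow by applying $\Gamma^\nu$ to $\wh f_\lambda\in\wh\sN_\lambda(A_*)$ with $\Gamma_0\wh f_\lambda=h$ and $\Gamma_1\wh f_\lambda=M(\lambda)h$. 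At $\lambda=\nu$ one gets $M_\nu(\nu)=i(\IM M(\nu))^{-1}$, so $\IM M_\nu(\nu)=(\IM M(\nu))^{-1}$; the hypothesis $A_*\neq A^*$ precludes $\Pi$ from being ordinary, so by Theorem~\ref{thm:WF_Ord_BT} the operator $\IM M(\nu)$ has nonclosed range and its inverse is unbounded, and unboundedness then propagates to every $\lambda\in\cmr$ via Theorem~\ref{prop:C6B}(v).

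Statement (ii) reduces to essential selfadjointness of $A_{0,\nu}=\ker\Gamma_0^\nu$. I would verify this by checking closability of $\st_{M_\nu(\lambda)}$ at the single points $\lambda=\nu$ and $\lambda=\bar\nu$ via Theorem~\ref{essThm1}(iv) combined with Lemma~\ref{gg01}(iii). Because $M_\nu(\nu)^*=-i(\IM M(\nu))^{-1}=-M_\nu(\nu)$ is skew-adjoint, a direct calculation yields
\[
\st_{M_\nu(\nu)}(u,v)=\frac{1}{\IM\nu}\bigl((\IM M(\nu))^{-1}u,v\bigr),\qquad u,v\in\dom M_\nu(\nu)=\ran\IM M(\nu),
\]
and the identical expression appears at $\bar\nu$ since $M(\bar\nu)-\RE M(\nu)=-i\IM M(\nu)$. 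Since $\IM M(\nu)$ is bounded selfadjoint with trivial kernel, $(\IM M(\nu))^{-1}$ is a densely defined positive selfadjoint operator whose associated quadratic form is closable. Theorem~\ref{essThm1} then delivers essential selfadjointness of $A_{0,\nu}$, form domain invariance of $M_\nu$, and closability of each $\gamma_\nu(\lambda)$. That $A_{0,\nu}$ is only essentially selfadjoint and not selfadjoint follows because selfadjointness would force $\IM M_\nu(\nu)$ to be bounded by Theorem~\ref{prop:C6B}, contradicting (i).

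Finally, for (iii), $\dom\Gamma^\nu=A_*$ implies $\sN_\mu((A_*)^\nu)=\sN_\mu(A_*)$ for every $\mu\in\cmr$, and hence the domain invariance criterion of Proposition~\ref{domMchar}(i) applied to the triple $\Pi^\nu$ with kernel $A_{0,\nu}$ specializes to exactly the condition~\eqref{MnuDomInv}. The main obstacle is the explicit identification of the form $\st_{M_\nu(\nu)}$; this hinges crucially on the fact that the shift by $\RE M(\nu)$ makes $\wh M(\nu)$ purely imaginary, which is precisely the reason the transform~\eqref{Etrans1} is built around $\RE M(\nu)$ rather than some arbitrary selfadjoint bounded operator.
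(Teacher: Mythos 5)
Your argument is correct and, in its overall architecture, coincides with the paper's proof: both treat $\Gamma^\nu=V\circ\Gamma$ as a bounded $J_\cH$-unitary transform of the unitary relation $\Gamma$ (the paper cites \cite[Proposition~3.11]{DHMS09} where you check $V^*J_\cH V=J_\cH$ by hand), both extract unboundedness from the identity $M_\nu(\nu)=i(\IM M(\nu))^{-1}$ together with the observation that $\ZA_*\neq \ZA^*$ forces $\ran\Gamma\neq\cH^2$, and both obtain (iii) directly from Proposition~\ref{domMchar} using $\dom\Gamma^\nu=\ZA_*$. The one place where you genuinely deviate is the verification of essential selfadjointness of $A_{0,\nu}$ in part (ii): the paper checks condition (ii) of Theorem~\ref{essThm1} by computing $\gamma_\nu(\nu)^*=i(\IM M(\nu))^{-1}\gamma(\nu)^*$ and using $\gamma(\nu)^*\gamma(\nu)=(\IM\nu)^{-1}\IM M(\nu)$ to conclude that $\dom\gamma_\nu(\nu)^*\supset \ran\gamma(\nu)\oplus\ker\gamma(\nu)^*$ is dense, whereas you check the equivalent condition (iv) by identifying $\st_{M_\nu(\nu)}$ and $\st_{M_\nu(\bar\nu)}$ with the form of the nonnegative selfadjoint operator $(\IM\nu)^{-1}(\IM M(\nu))^{-1}$, whose closability is immediate from Kato's representation theory. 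Both computations hinge on the same fact, namely that the shift by $\RE M(\nu)$ makes $M(\nu)-\RE M(\nu)$ skew-adjoint, so the two verifications are essentially dual; yours is arguably a touch more transparent. Two small points you should spell out: when propagating unboundedness from $\nu$ to all of $\cmr$ via Theorem~\ref{prop:C6B}(v), note that $\dom M_\nu(\lambda)=\dom\gamma_\nu(\lambda)$ is automatically dense (Proposition~\ref{prop:C2}), so the failure of (v) at every $\lambda$ really is unboundedness of $\IM M_\nu(\lambda)$ rather than a domain defect; and the unboundedness of $\gamma_\nu(\lambda)$ for \emph{every} $\lambda$ then follows from $(\IM\lambda)\,\|\gamma_\nu(\lambda)h\|^2=\IM(M_\nu(\lambda)h,h)$, whereas the paper obtains it from the resolvent identity~\eqref{ceq1} for the closed $\gamma$-fields.
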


\begin{proof}
(i) \& (ii) Since $\{\cH,\Gamma_0,\Gamma_1\}$ is a $B$-generalized
boundary triple for $\ZA^*$, we have $M\in \cR^s[\cH]$, see
\cite[Proposition~5.7]{DHMS06}, i.e., $M$ is a strict Nevanlinna
function whose values $M(\lambda)$ are bounded operators on $\cH$
with $\ker\IM M(\lambda)=0$ for every $\lambda\in\rho(A_0)$. In
particular, the real part $\RE M(\nu)$ is a bounded operator when
$\lambda\in\rho(A_0)$. Therefore, $\Gamma^\nu$ is a standard
$J_\cH$-unitary transform of $\Gamma$. According to
\cite[Proposition~3.11]{DHMS09} this implies that $\Gamma^\nu$ is a
unitary boundary triple (a boundary relation in the terminology of
\cite{DHMS09}) with $\dom \Gamma^\nu=\dom\Gamma$ whose Weyl function
and $\gamma$-field are given by~\eqref{Mnu}. The assumption
$\ZA_*\neq \ZA^*$ is equivalent to $\ran\Gamma\neq \cH^2$ and
therefore $0\not\in\rho(\IM M(\lambda))$, $\lambda\in\rho(A_0)$; see
\cite[Section~2]{DHMS06}. It follows from~\eqref{Mnu} that
\begin{equation}\label{Mnu02}
 M^\nu(\nu)=i(\IM M(\nu))^{-1}=-M^\nu(\nu)^*
\end{equation}
and then~\eqref{Green3} shows that for all $h,k\in \dom
M^\nu(\nu)=\dom\gamma^\nu(\nu)$,
\begin{equation}\label{Green3B}
 (\nu-\bar\nu)(\gamma^\nu(\nu)h,\gamma^\nu(\nu)k)_\sH
 =(M^\nu(\nu)h,k)_\cH-(h,M^\nu(\nu)k)_\cH=2i((\IM
 M(\nu))^{-1}h,k)_\cH.
\end{equation}
Hence, $M^\nu(\nu)$ and $\gamma^\nu(\nu)$ are unbounded operators at
the point $\nu\in\cmr$. In this case $M^\nu(\lambda)$ is an
unbounded operator for all $\nu\in\cmr$; see
\cite[Proposition~4.18]{DHMS06}.

Next consider the $\gamma$-field $\gamma_\nu(\cdot)$. Since
$M(\lambda)-\RE M(\nu)$ is bounded, it follows from~\eqref{Mnu} that
\[
  \gamma_\nu(\lambda)^*=(M(\bar\lambda)-\RE M(\nu))^{-1}\gamma(\lambda)^*,
  \quad \lambda\in\cmr.
\]
This combined with~\eqref{Mnu02} shows that
\begin{equation}\label{Mnu03}
  \gamma_\nu(\nu)^*=i(\IM M(\nu))^{-1}\gamma(\nu)^*,\quad
  \gamma_\nu(\bar\nu)^*=-i(\IM M(\nu))^{-1}\gamma(\bar\nu)^*.
\end{equation}
Since
\[
 \gamma(\nu)^*\gamma(\nu)=\gamma(\bar\nu)^*\gamma(\bar\nu)=(\IM \nu)^{-1}\IM
 M(\nu),
\]
it follows from~\eqref{Mnu03} that
\[
 \ran \gamma(\nu)\oplus \ker \gamma(\nu)^*\subset \dom
 \gamma_\nu(\nu)^*, \quad
 \ran \gamma(\bar\nu)\oplus \ker \gamma(\bar\nu)^*\subset \dom
 \gamma_\nu(\bar\nu)^*.
\]
Hence, $\gamma_\nu(\nu)^*$ and $\gamma_\nu(\bar\nu)^*$ are densely
defined operators, which means that $\gamma_\nu(\nu)$ and
$\gamma_\nu(\bar\nu)$ are closable operators. According to
Theorem~\ref{essThm1} $A_{0,\nu}=\ker \Gamma^\nu_0$ is essentially
selfadjoint and the assertions in (ii) hold. The fact that
$\gamma_\nu(\lambda)$ is an unbounded operator for every
$\lambda\in\cmr$ is seen e.g. from the identity~\eqref{ceq1} in
Lemma~\ref{prop:C111}. Thus, all the assertions in (i) are proven.

(iii) This assertion is obtained directly from
Proposition~\ref{domMchar}.
\end{proof}

Theorem~\ref{thmBtoE} will now be specialized to a situation that
appears often in system theory and in PDE setting where typically
the underlying minimal symmetric operator $\ZA$ is nonnegative; the
simplest situation occurs when the lower bound $\mu(\ZA)$ is
positive. The first part of the next result follows the general
formulation given in \cite[Proposition~7.41]{DHMS12} which was
motivated by the papers of V. Ryzhov; see \cite{Ryzhov2009} and the
references therein.

\begin{proposition} \label{propBtoE}
Let $A_0^{-1}$ and $E$ be selfadjoint operators in $\sH$ and $\cH$,
respectively, and let the operator $G:\cH\to \sH$ be bounded and
everywhere defined with $\ker G=\{0\}$. Moreover, let
\begin{equation}\label{TplusG}
  \ZA_*=\{\{A_0^{-1}f'+G\varphi,f'\}:\, f'\in \ran A_0,\, \varphi\in \dom E\}
\end{equation}
and define the operators $\Gamma_0,\Gamma_1:\ZA_*\to \cH$ by
\begin{equation}\label{eq:5.14}
    \Gamma_0 \widehat f=\varphi,\quad
 \Gamma_1 \widehat f=G^*f'+E \varphi; \quad
  \widehat f=\{A_0^{-1}f'+G\varphi,f'\}\in \ZA_*.
\end{equation}
Then:
\begin{enumerate}\def\labelenumi {\textit{(\roman{enumi})}}
\item $\{\cH,\Gamma_0,\Gamma_1\}$ is an $S$-generalized boundary triple
for $\ZA^*=\overline{\ZA_*}$ with $\ker\Gamma_0=A_0$. For
$\lambda\in\rho(A_0)$ and $\varphi\in \dom E$ the corresponding
$\gamma$-field and the Weyl function are given by
\begin{equation*}
 \gamma(\lambda)\varphi=(I-\lambda A_0^{-1})^{-1}G \varphi, \quad
 M(\lambda)\varphi=E \varphi+ \lambda G^*(I-\lambda A_0^{-1})^{-1}G \varphi;
\end{equation*}

\item $\{\cH,\Gamma_0,\Gamma_1\}$ is a $B$-generalized boundary triple
for $\ZA^*$ if and only if $E$ is a bounded selfadjoint operator;

\item $\{\cH,\Gamma_0,\Gamma_1\}$ is an ordinary boundary triple
for $\ZA^*$ if and only if $E$ is bounded and $G^*(\ran A_0)=\cH$, in
particular, then $\ran G$ must be closed;

\item the transform $\{\Gamma_1-E\Gamma_0,-\Gamma_0\}$
defines an isometric boundary triple for $\ZA^*$ whose closure
$\{\cH,\wt \Gamma_0,\wt \Gamma_1\}$ is a unitary boundary triple for
$\ZA^*$ which is defined by
\begin{equation}\label{Etrans}
    \begin{pmatrix}
    \wt\Gamma_0 \\ \wt\Gamma_1
    \end{pmatrix}\wh f
    =\begin{pmatrix}
     G^*f' \\ -\varphi
    \end{pmatrix},
\quad \wh f\in \dom \wt\Gamma=\{\{A_0^{-1}f'+G\varphi,f'\}:\, f'\in
\ran A_0,\, \varphi\in \cH\},
\end{equation}
and whose Weyl function and $\gamma$-field are given by
\begin{equation}\label{M0inv}
 \wt M(\lambda)=-(M_0(\lambda))^{-1}, \quad
 \wt\gamma(\lambda)=\overline{\gamma(\lambda)}(M_0(\lambda))^{-1},
\end{equation}
where $M_0(\lambda)=\overline{(M(\lambda)-E)}=\lambda G^*(I-\lambda
A_0^{-1})^{-1}G$ and $\overline{\gamma(\lambda)}=(I-\lambda
A_0^{-1})^{-1}G$, and the corresponding transposed boundary triple
is $B$-generalized with Weyl function $M_0(\cdot)$;

\item if $0\in\rho(A_0)$ then $\{\cH,\wt\Gamma_0,\wt\Gamma_1\}$
is an $ES$-generalized boundary triple for $\ZA^*$ and it is
$S$-generalized if and only if $\ran G$ is closed, or equivalently,
$\dom\wt\Gamma$ in~\eqref{Etrans}
is closed, i.e., if and only if
$\{\cH,\wt\Gamma_0,\wt\Gamma_1\}$ is an ordinary boundary triple for
$\ZA^*$.

\item the Weyl function $\wt M$ (equivalently the
$\gamma$-field $\wt\gamma(\cdot)$) is domain invariant on $\cmr$ if
and only if
\begin{equation}\label{invEtrans}
 \ran P_G(I-\mu A_0^{-1})^{-1}G=\ran P_G(I-\lambda A_0^{-1})^{-1}G \quad \text{for all } \lambda, \mu \in
 \cmr,
\end{equation}
where $P_G$ stands for the orthogonal projection onto $\cran G$.
\end{enumerate}
\end{proposition}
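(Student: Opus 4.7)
The plan is to verify the six assertions in order, exploiting at each step the explicit parameterization $\wh f=\{A_0^{-1}f'+G\varphi,f'\}$ given in~\eqref{TplusG}. For part (i), I would first verify Green's identity by a direct computation: the two cross-terms $(A_0^{-1}f',g')$ cancel by $A_0=A_0^{*}$, leaving the boundary pairing $(G^{*}f',\psi)-(\varphi,G^{*}g')+(E\varphi,\psi)-(\varphi,E\psi)$, where the last pair cancels by $E=E^{*}$. The identification $\ker\Gamma_0=A_0$ is immediate on setting $\varphi=0$, and $A_0$ is selfadjoint by hypothesis. For $\wh f_\lambda\in\wh\sN_\lambda(\ZA_*)$, the equation $\lambda f_\lambda=A_0^{-1}(\lambda f_\lambda)+G\varphi$ yields $f_\lambda=(I-\lambda A_0^{-1})^{-1}G\varphi$ for $\lambda\in\rho(A_0)$; substitution gives the stated $\gamma$-field and the decomposition $M(\lambda)=E+M_0(\lambda)$ with $M_0(\lambda):=\lambda G^{*}(I-\lambda A_0^{-1})^{-1}G$. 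A direct check shows $M_0\in\cR[\cH]$, since $\IM M_0(\lambda)=\IM\lambda\cdot G^{*}(I-\lambda A_0^{-1})^{-*}(I-\lambda A_0^{-1})^{-1}G\ge 0$ for $\lambda\in\dC_+$. By Corollary~\ref{ABGcor2}, $\Pi$ is then an $AB$-generalized boundary triple, and Corollary~\ref{QBRcor}~(i) upgrades it to a unitary, hence $S$-generalized, triple since $E=E^{*}$. Parts (ii) and (iii) now follow from Corollary~\ref{QBRcor}~(iii) and an inspection of $\ran\Gamma=\{(\varphi,G^{*}f'+E\varphi)\}$: surjectivity forces $\dom E=\cH$ (so $E$ bounded) and $G^{*}(\ran A_0)=\cH$.

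For part (iv), I would verify that the block matrix on $\cH^{2}$ implementing $\{\Gamma_1-E\Gamma_0,-\Gamma_0\}$ satisfies $V^{*}J_\cH V=J_\cH$ by using $E=E^{*}$; the composition is thus $J_\cH$-isometric, so the transformed triple is isometric, and its closure is $J_\cH$-unitary. Evaluating on $\wh f_\lambda$ gives $(\Gamma_1-E\Gamma_0)\wh f_\lambda=(M(\lambda)-E)\varphi=M_0(\lambda)\varphi$ and $-\Gamma_0\wh f_\lambda=-\varphi$, which yields the formulas $\wt M(\lambda)=-M_0(\lambda)^{-1}$ and $\wt\gamma(\lambda)=\overline{\gamma(\lambda)}\,M_0(\lambda)^{-1}$. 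Passing to the closure extends $\varphi$ from the dense subspace $\dom E$ to all of $\cH$, because $\Gamma_1-E\Gamma_0$ collapses to $G^{*}f'$ and the resulting formulas in~\eqref{Etrans} no longer involve $E$. The transposed triple of the closure has Weyl function $-\wt M^{-1}=M_0\in\cR[\cH]$, so by Corollary~\ref{ABGcor2} and Corollary~\ref{QBRcor}~(iii) it is $B$-generalized.

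Part (v) is the technical core. Under $0\in\rho(A_0)$, $A_0^{-1}$ is bounded, $\dom\wt\Gamma=\{\{A_0^{-1}f'+G\varphi,f'\}:f'\in\sH,\,\varphi\in\cH\}$, and $\wt A_0:=\ker\wt\Gamma_0=\{\{A_0^{-1}f'+G\varphi,f'\}:f'\in\ker G^{*},\,\varphi\in\cH\}$. A direct computation of the adjoint in $\sH^{2}$, using that $(\ker G^{*})^{\perp}=\cran G$ in $\sH$, gives
\[
 \wt A_0^{*}=\bigl\{\{A_0^{-1}g'+h,g'\}:g'\in\ker G^{*},\,h\in\cran G\bigr\},
\]
which also equals the graph closure $\overline{\wt A_0}$ (obtained by closing the $G\varphi$-component into $\cran G$). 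Hence $\wt A_0$ is always essentially selfadjoint, so $\{\cH,\wt\Gamma_0,\wt\Gamma_1\}$ is $ES$-generalized; furthermore $\wt A_0=\wt A_0^{*}$ iff $\ran G=\cran G$, equivalently (by the closed range theorem together with $\ker G=\{0\}$) iff $\ran G^{*}=\cH$, which combined with (iii) gives the ordinary-triple characterization. Part (vi) follows by applying Proposition~\ref{domMchar} to $\wt\Pi$: $\wt M$ is domain invariant iff $\dom\wt M(\lambda)=\ran M_0(\lambda)$ is independent of $\lambda\in\cmr$. Writing $M_0(\lambda)=\lambda G^{*}\overline{\gamma(\lambda)}=\lambda G^{*}P_G\overline{\gamma(\lambda)}$ and using that $G^{*}|_{\cran G}$ is injective (a consequence of $\ker G=\{0\}$), this independence is equivalent to $\ran P_G\overline{\gamma(\lambda)}=\ran P_G(I-\lambda A_0^{-1})^{-1}G$ being $\lambda$-independent, which is precisely~\eqref{invEtrans}.

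The main obstacle is the adjoint and closure computation in part (v): one must verify carefully that when the range of the parameter $\varphi\in\cH$ is pushed through $G$ and combined with the $\ker G^{*}$-constraint on $f'$, the $\sH$-closure of the first coordinate yields exactly $\cran G$, and that the corresponding adjoint involves the same closed subspace. Once this orthogonal decomposition in $\sH$ is in hand, the remaining equivalences and the passage to the $P_G$-formulation in (vi) follow routinely from the closed range theorem and the realization results of Section~\ref{sec5}.
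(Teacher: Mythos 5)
Your proposal is correct, and its overall architecture matches the paper's proof; the differences are worth recording. For part (i) the paper simply cites \cite[Proposition~7.41]{DHMS12}, whereas you rederive the statement internally: Green's identity by direct cancellation, the computation $M(\lambda)=E+M_0(\lambda)$ with $M_0(\lambda)=\lambda G^*(I-\lambda A_0^{-1})^{-1}G\in\cR[\cH]$, and then Corollary~\ref{ABGcor2} plus Corollary~\ref{QBRcor}~(i) to pass from $AB$-generalized to unitary via $E=E^*$. That is a legitimate, self-contained alternative. For part (vi) you compute $\dom\wt M(\lambda)=\ran M_0(\lambda)$ directly and factor $G^*$ through $P_G$ using injectivity of $G^*$ on $\cran G$; the paper's main proof instead runs Proposition~\ref{domMchar}~(i) through the explicit description of $\wt A_0$ and a resolvent identity, but it explicitly acknowledges your shortcut in Remark~\ref{RemBtoA}~(iii), so this is the sanctioned alternative route. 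Part (v) is essentially identical to the paper: the same computation $\wt A_0=(A_0\cap(\sH\times\ker G^*))\hplus(\ran G\times\{0\})$, the same adjoint, and the same use of $0\in\rho(A_0)$ to guarantee that $A_0\hplus(\cran G\times\{0\})$ is closed so that $\overline{\wt A_0}=\wt A_0^*$.

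One small point of logic in part (iv): the bare assertion that the closure of the isometric transform ``is $J_\cH$-unitary'' is not automatic, since the closure of an isometric relation is only isometric. The justification is the observation you make immediately afterwards (and which is exactly the paper's argument): the transposed closed triple $\{\wt\Gamma_1,-\wt\Gamma_0\}$ is of the form~\eqref{eq:5.14} with $E=0$, hence by parts (i)--(ii) it is a $B$-generalized, in particular unitary, boundary triple, and therefore so is $\{\wt\Gamma_0,\wt\Gamma_1\}$. As written, the unitarity claim precedes its proof; reordering those two sentences closes the gap. With that adjustment the argument is complete.
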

\begin{proof}
(i) It was proved in \cite[Proposition~7.41]{DHMS12} that
$(\cH,\Gamma_0,\Gamma_1)$ is a unitary boundary triple for
$\ZA^*=\overline{\ZA_*}$. It is clear from the definition of
$\Gamma_0$ that $\ker\Gamma_0=A_0$, which by assumption is a
selfadjoint relation as an inverse of a selfadjoint operator. Hence,
this boundary triple is $S$-generalized.

(ii) \& (iii) The formula for $\Gamma_0$ shows that
$\ran\Gamma_0=\cH$ precisely when $\dom E=\cH$ or equivalently, $E$
is bounded. Since
\begin{equation}\label{ranges}
    \begin{pmatrix}
    \Gamma_0 \\ \Gamma_1
    \end{pmatrix}\wh f=
    \begin{pmatrix}
    I  & 0 \\ E & G^*
    \end{pmatrix}\begin{pmatrix}
    \varphi \\ f'
    \end{pmatrix}
    =\begin{pmatrix}
    I  & 0 \\ E & I
    \end{pmatrix}
    \begin{pmatrix}
    I  & 0 \\ 0 & G^*
    \end{pmatrix}
    \begin{pmatrix}
    \varphi \\ f'
    \end{pmatrix}
\end{equation}
and in the last product the triangular operator is bounded with
bounded inverse when $E$ is bounded, we conclude that
$\ran\Gamma=\cH\times\cH$ if and only if $\dom E=\cH$ and the
diagonal operator in~\eqref{ranges} is surjective, i.e., $G^*(\ran
A_0)=\cH$; in this case $\ran G^*=\cH$ and $\ran G$ is closed.

(iv) It is clear from~\eqref{eq:5.14} that the transform
$\{\Gamma_1-E\Gamma_0,-\Gamma_0\}$ has the same domain $T$ as
$\Gamma$. Moreover, using~\eqref{eq:5.14} it is straightforward to
check that the closure
$\{\wt\Gamma_0,\wt\Gamma_1\}=\clos\{\Gamma_1-E\Gamma_0,-\Gamma_0\}$
is given by~\eqref{Etrans}. In fact, the transposed boundary triple
$\{\wt\Gamma_1,-\wt\Gamma_0\}$ is $S$-generalized and of the same
form as $\Gamma$ in~\eqref{eq:5.14} when $E=0$, i.e., in view of
(ii) it is even $B$-generalized. Applying (i) to this transposed
boundary triple one also concludes that the Weyl function and
$\gamma$-field of the boundary triple $\{\wt\Gamma_0,\wt\Gamma_1\}$
are given by~\eqref{M0inv}.

(v) It follows from~\eqref{Etrans} that
\begin{equation}\label{tildeA0}
 \wt A_0=\{\{A_0^{-1}f'+G\varphi,f'\}:\, f'\in \ran A_0,\, G^*f'=0,\,\varphi\in \cH\}.
\end{equation}
Using graph expressions one can write $\wt
A_0=A_0\cap(\cH\times \ker G^*)\wh{+} (\ran G\times\{0\})$ and now
using the properties of adjoints it is seen that
\[
 \wt A_0^*=\clos(A_0 \wh{+} \cran G\times \{0\}) \cap (\cH\times \ker
 G^*).
\]
Observe that $A_0\cap (\cran G\times \{0\})=0$, since $\ker
A_0=\{0\}$. If $0\in\rho(A_0)$ then $A_0 \wh{+} \cran G\times \{0\}$
is a closed subspace of $\sH^2$ and this implies that $\wt
A_0^*=\overline{\wt A_0}$. Hence, $\wt A_0$ is essentially
selfadjoint. Since $0\in\rho(A_0)$, it is clear from~\eqref{tildeA0}
that $\ran G$ is closed if and only if $\wt A_0=\ker \wt\Gamma_0$ is
closed, or equivalently, $\dom\wt\Gamma$ in~\eqref{Etrans}
is closed.

(vi) Using for $\wt A_0$ the formula in~\eqref{tildeA0} and the
equalities $\sN_\mu(\dom \wt \Gamma)=\ran \wt\gamma(\mu)=\ran
\overline{\gamma(\mu)}$ the domain invariance condition in
Proposition~\ref{domMchar}~(i) can be rewritten as follows: for
every $h\in\cH$ there exist $h_0\in\cH$ and $f'\in\ran A_0\cap\ker
G^*$ such that
\[
 (I-\mu A_0^{-1})^{-1}Gh=(I-\lambda A_0^{-1})f'+Gh_0
\]
or, equivalently,
\[
 (I-\lambda A_0^{-1})^{-1}(I-\mu A_0^{-1})^{-1}Gh=f'+(I-\lambda
 A_0^{-1})^{-1}Gh_0,
\]
$\mu,\lambda\in\cmr$. Applying resolvent identity to the product
term it is seen that the previous condition is equivalent to
\begin{equation}\label{invEtrans2}
 (I-\mu A_0^{-1})^{-1}Gh=f_1'+(I-\lambda
 A_0^{-1})^{-1}Gh_1,
\end{equation}
for some $h_1\in\cH$ and $f_1'\in\ran A_0\cap\ker G^*$. This
condition is equivalent to the inclusion
\[
 \ran P_G(I-\mu A_0^{-1})^{-1}G\subset \ran P_G(I-\lambda
 A_0^{-1})^{-1}G.
\]
Since $\lambda,\mu\in\cmr$ are arbitrary, this last condition
coincides with the condition~\eqref{invEtrans}.
\end{proof}

\begin{remark}\label{RemBtoA}
(i) The boundary triples $\Gamma$ and $\wt\Gamma$ are completely
determined by $A_0\,(=\ker \Gamma_0=\ker\wt\Gamma_1)$ and the
operators $G$ and $E=E^*$. If, in particular, $0\in\rho(A_0)$, then
the Weyl function $\wt M(\cdot)$ in~\eqref{M0inv} is form domain
invariant (see Theorem~\ref{essThm1}) and the $\gamma$-field
$\gamma(\cdot)$ and the Weyl function $M(\cdot)$ as well as $\wt
M(\cdot)$ (in the resolvent sense) admit holomorphic continuations
to the origin $\lambda=0$ with
\[
 \gamma(0)=G, \quad M(0)=E.
\]
If, in addition, $E$ is bounded and $G$ has closed range, then $\wt
\Gamma=\{\Gamma_1-E\Gamma_0,-\Gamma_0\}$ is an ordinary boundary
triple and the condition~\eqref{invEtrans} is satisfied. Indeed, in
this case $\dom M(\lambda)=\ran M_0(\lambda)=\cH$ for all
$\lambda\in\cmr$.

(ii) If $E$ is bounded, no closure is needed in part (iv), i.e.,
$\wt \Gamma=\{\Gamma_1-E\Gamma_0,-\Gamma_0\}$. In this case,
$\Gamma$ is a $B$-generalized boundary triple and Proposition
\ref{propBtoE} can be seen as an extension of Theorem \ref{thmBtoE}
to a point on the real line. Here the results are formulated for
$\nu=0$. They can easily be reformulated also for $\nu\in\dR$. In
addition, for $\nu=\infty$ the results in Proposition \ref{propBtoE}
can be translated to analogous results when treating range
perturbations (instead of the domain perturbations as in Proposition
\ref{propBtoE}); for general background see \cite[Section
7.5]{DHMS12}. For $\nu=\infty$ the operator $E$ appears as the limit
value $M(\infty)$, while $A_0$ and $\ZA_*$ should be replaced by
their inverses; see~\eqref{domPertur} below.

(iii) The criterion~\eqref{invEtrans} for domain invariance of $\wt
M$ can be derived also directly using $\dom \wt M(\lambda)=\ran
M_0(\lambda)$ and the explicit formula for $M_0(\lambda)$ given in
part (iv) of Proposition~\ref{propBtoE}; see also the equivalent
condition in~\eqref{invEtrans2}.
\end{remark}

If $\{\cH,\Gamma_0,\Gamma_1\}$ is not an ordinary boundary triple
for $\ZA^*$, then the condition~\eqref{invEtrans} fails to hold in
general.
In particular, if  $\ran A_0\cap\ker G^*=\{0\}$ (if e.g. $\ker
G^*=\{0\}$), then the condition~\eqref{invEtrans} is equivalent to
\begin{equation}\label{invEtrans1a}
 \ran (I-\mu A_0^{-1})^{-1}G=\ran (I-\lambda A_0^{-1})^{-1}G \quad \text{for all } \lambda, \mu \in
 \cmr.
\end{equation}
Multiplying this identity from the left by
$\frac{\mu}{\lambda}(I-\lambda A_0^{-1})$ it is seen that
~\eqref{invEtrans1a} implies
\begin{equation}\label{invEtrans1b}
 \ran (I-\mu A_0^{-1})^{-1}G\subset \ran G \quad \text{for all } \mu \in
 \cmr.
\end{equation}
Similarly it can be seen that~\eqref{invEtrans1b} implies
~\eqref{invEtrans1a}. Thus, if $\ran A_0\cap\ker G^*=\{0\}$ then $\wt
M$ is domain invariant if and only if the operator range $\ran G$ is
invariant under the resolvent $(I-\mu A_0^{-1})^{-1}$ for all
$\mu\in\cmr$.

\begin{corollary}\label{cor6.4}
Let $A_0$ in Proposition \ref{propBtoE} be a selfadjoint operator
with $\ker A_0=\{0\}$ and assume that $\ran A_0\cap\ker G^*=\{0\}$.
If $S=(\ZA_*)^*$ is densely defined, then the Weyl function $\wt
M(\cdot)$ defined in~\eqref{M0inv} is not domain invariant.
\end{corollary}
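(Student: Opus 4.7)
The plan is to argue by contradiction. Suppose $\wt M(\cdot)$ is domain invariant. By the discussion just preceding the corollary, the hypothesis $\ran A_0\cap\ker G^*=\{0\}$ reduces the general invariance criterion \eqref{invEtrans} to \eqref{invEtrans1a}, which in turn is equivalent to \eqref{invEtrans1b}. Using the resolvent identity $(I-\mu A_0^{-1})^{-1}=I+\mu(A_0-\mu)^{-1}$ (valid because $A_0$ is selfadjoint with $\ker A_0=\{0\}$ and $\mu\in\cmr\subset\rho(A_0)$), condition \eqref{invEtrans1b} is equivalent to
\[
 (A_0-\mu)^{-1}\ran G\subset \ran G \quad \text{for every } \mu\in\cmr.
\]

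The next step is to exploit this resolvent invariance of $\ran G$ to produce a nonzero element of $\mul\overline{\ZA_*}$. Fix any $\mu\in\cmr$ and any $\varphi\in\cH\setminus\{0\}$. By invariance there exists $\psi\in\cH$ with $G\psi=(A_0-\mu)^{-1}G\varphi$, and $\psi$ is unique because $\ker G=\{0\}$. Then $G\psi\in\dom A_0$ and $(A_0-\mu)G\psi=G\varphi$, so
\[
 A_0 G\psi=\mu G\psi + G\varphi.
\]
Since $\varphi\neq0$ and $\ker G=\{0\}$, one gets $G\varphi\neq0$, and hence $G\psi\neq0$ and $\psi\neq0$; since in addition $\ker A_0=\{0\}$, the element $f':=A_0G\psi$ is nonzero. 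Moreover $f'\in\ran A_0$ with $A_0^{-1}f'=G\psi$.

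Since $E=E^*$ is selfadjoint in $\cH$, its domain $\dom E$ is dense, so there is a sequence $\psi_n\in\dom E$ with $\psi_n\to\psi$. Set $\varphi_n:=-\psi_n\in\dom E$. Then, by \eqref{TplusG},
\[
 \wh h_n:=\bigl\{A_0^{-1}f'+G\varphi_n,\,f'\bigr\}=\bigl\{G(\psi-\psi_n),\,f'\bigr\}\in \ZA_*,
\]
and because $G\in\cB(\cH,\sH)$ is bounded, $\wh h_n\to\{0,f'\}$ as $n\to\infty$. Therefore $\{0,f'\}\in\overline{\ZA_*}=\ZA^*$ with $f'\neq0$, so $\mul \ZA^*\neq\{0\}$. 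Since $\dom \ZA=\dom(\ZA^*)^*=(\mul \ZA^*)^\perp$, this means $S=\ZA=(\ZA_*)^*$ is not densely defined, contradicting the hypothesis.

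The substantive content of the argument is the construction of the approximating sequence $\wh h_n$; the main thing to watch is the role of each hypothesis. The invariance $\ran A_0\cap\ker G^*=\{0\}$ is used only to pass from \eqref{invEtrans} to the clean form \eqref{invEtrans1b}, the selfadjointness of $E$ is used only to make $\dom E$ dense (so that $-\psi_n\in\dom E$ can approximate $-\psi$), and $\ker A_0=\ker G=\{0\}$ together guarantee both that $f'\neq0$ and that the unique solution $\psi$ of $G\psi=(A_0-\mu)^{-1}G\varphi$ exists. I do not foresee any technical obstacle beyond keeping these bookkeeping identifications straight.
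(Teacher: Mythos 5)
Your proof is correct and rests on the same key observation as the paper's: under domain invariance the element $(A_0-\mu)^{-1}G\varphi$ lies in $\dom A_0\cap\ran G$, which is incompatible with $\mul \ZA^*=\{0\}$, i.e.\ with $S$ being densely defined. You merely organize the argument contrapositively and spell out, via the approximating sequence $\psi_n\in\dom E$, the density step that the paper leaves implicit when it asserts from~\eqref{TplusG} that $\ZA_*$ being an operator is equivalent to $\dom A_0\cap\ran G=\{0\}$.
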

\begin{proof}
Since $\ran A_0\cap\ker G^*=\{0\}$, $\wt M(\cdot)$ is domain
invariant if and only if~\eqref{invEtrans1b} holds. In other words,
for every $\varphi\in\cH$ there exists $h\in\cH$ such that $(I-\mu
A_0^{-1})^{-1}G\varphi = Gh$, or, equivalently,
\begin{equation}\label{invEtrans1c}
 (I+\mu (A_0-\mu)^{-1})G\varphi=Gh \quad \Leftrightarrow\quad
 \mu (A_0-\mu)^{-1}G\varphi=G(h-\varphi).
\end{equation}
If $\ZA$ is densely defined, then  $\ZA^*\supset \ZA_*$ is an operator.
Since $\ker A_0=\{0\}$ one concludes from~\eqref{TplusG} that $\ZA_*$
is an operator if and only if $\dom A_0\cap \ran G=\{0\}$. This
condition applied to~\eqref{invEtrans1c} implies that $\varphi=0$
and $h-\varphi=0$, since $\ker G=\{0\}$. This proves the claim.
\end{proof}

In the case that $A_0$ in Proposition \ref{propBtoE} is nonnegative,
one can specify further the type of the Weyl function as follows.

\begin{corollary}\label{cor6.5}
Assume that in Proposition \ref{propBtoE} $A_0=A_0^*\geq 0$ and
$E=E^*\le 0$. Then the Weyl functions
\[
  M(\lambda)\varphi=E \varphi+ \lambda G^*(I-\lambda A_0^{-1})^{-1}G
  \varphi, \quad
 M_0(\lambda)=\lambda G^*(I-\lambda A_0^{-1})^{-1}G, \quad
 \lambda\in\rho(A_0),
\]
are domain invariant inverse Stieltjes functions, while the Weyl
function $\wt M(\cdot)=-M_0(\cdot)^{-1}$ in~\eqref{M0inv} is a
Stieltjes function.
\end{corollary}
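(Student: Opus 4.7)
The plan is to combine the Nevanlinna/domain-invariance conclusions already furnished by Proposition \ref{propBtoE} with an explicit spectral-theoretic sign computation on $(-\infty,0)$ that exploits the assumption $A_0\ge 0$. First I would invoke Proposition \ref{propBtoE}(i): since $\{\cH,\Gamma_0,\Gamma_1\}$ is $S$-generalized, its Weyl function $M\in \cR^s(\cH)$ satisfies $\dom M(\lambda)=\dom E$ for all $\lambda\in\cmr$ by Theorem \ref{prop:C6B}, so $M$ is domain invariant. Likewise, by Proposition \ref{propBtoE}(iv) the transposed triple $\{\wt\Gamma_1,-\wt\Gamma_0\}$ is $B$-generalized with Weyl function $M_0$, hence $M_0\in \cR[\cH]$ and $\dom M_0(\lambda)=\cH$ trivially. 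Next I would note that $A_0\ge 0$ forces $\sigma(A_0)\subset[0,\infty)$, so $(-\infty,0)\subset\rho(A_0)$ and the explicit formulas define holomorphic continuations of $M(\cdot)$ and $M_0(\cdot)$ to the whole negative real axis (in the resolvent sense for $M$, since $M(x)=E+M_0(x)$ with $M_0(x)$ bounded).

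The key sign computation is the following. Fixing $x<0$ and using the spectral resolution $dE_{A_0}$ of the nonnegative selfadjoint relation $A_0$ (supported in $[0,\infty)$), functional calculus gives
\[
 x(I-xA_0^{-1})^{-1}=\int_{[0,\infty)}\frac{xt}{t-x}\,dE_{A_0}(t),
\]
and on $[0,\infty)$ the integrand $\tfrac{xt}{t-x}=\tfrac{xt}{t+|x|}\le 0$ since $x<0$. Hence $x(I-xA_0^{-1})^{-1}$ is a bounded nonpositive selfadjoint operator on $\sH$, and therefore
\[
 (M_0(x)\varphi,\varphi)_{\cH}=(x(I-xA_0^{-1})^{-1}G\varphi,G\varphi)_{\sH}\le 0,\quad \varphi\in\cH.
\]
This shows $M_0(x)\le 0$ for every $x<0$, so $M_0$ is inverse Stieltjes. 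Since $E\le 0$ and $\dom M(x)=\dom E$, adding $E$ preserves the sign: $(M(x)\varphi,\varphi)_{\cH}=(E\varphi,\varphi)+(M_0(x)\varphi,\varphi)\le 0$ for $\varphi\in\dom E$, and $M(x)$ is selfadjoint as the (real) value of the Nevanlinna function $M$ extrapolated across $(-\infty,0)$, so $M$ is an inverse Stieltjes function with domain-invariance already established.

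Finally, for $\wt M=-M_0^{-1}$, which belongs to $\wt\cR(\cH)$ by Proposition \ref{propBtoE}(iv), I use the elementary relational fact that nonpositivity is preserved under inversion: if $\{g,h\}\in M_0(x)^{-1}$ then $\{h,g\}\in M_0(x)$, whence $(g,h)=(M_0(x)h,h)\le 0$. Consequently $M_0(x)^{-1}\le 0$ and $\wt M(x)=-M_0(x)^{-1}\ge 0$ for every $x<0$, and $\wt M$ also extrapolates holomorphically (in the resolvent sense) to $(-\infty,0)$ by the same argument applied to the formula $\wt M(\lambda)=-M_0(\lambda)^{-1}$. This shows $\wt M$ is a Stieltjes family and completes the proof. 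The only delicate point is handling the (possibly unbounded) $A_0^{-1}$ via functional calculus when $0\in\sigma(A_0)$, but the spectral integral above is valid regardless, since $\tfrac{xt}{t-x}$ remains uniformly bounded on $[0,\infty)$ for each fixed $x<0$; no genuine obstacle arises.
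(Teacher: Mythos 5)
Your proof is correct and follows essentially the same route as the paper: both establish $M(x)=M(x)^*\le 0$ for $x<0$ from $A_0\ge 0$ and $E\le 0$ (you just make the sign of $x(I-xA_0^{-1})^{-1}$ explicit via the spectral integral, which the paper asserts without computation) and then obtain the Stieltjes property of $\wt M=-M_0^{-1}$ by inverting the nonpositive selfadjoint values. The one point you leave slightly implicit — that $\wt M$ really is holomorphic in the resolvent sense on $(-\infty,0)$ — is handled in the paper by the explicit identity $(\wt M(\lambda)-\mu I)^{-1}=-(I+\mu M_0(\lambda))^{-1}M_0(\lambda)$, which you may wish to record to close that step cleanly.
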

\begin{proof}
Since $A_0$ is a nonnegative selfadjoint operator with $\ker
A_0=\{0\}$ and $E=E^*\le 0$, the Weyl function $M(\lambda)=E +
\lambda G^*(I-\lambda A_0^{-1})^{-1}G$ admits a holomorphic
continuation to the negative real line and, moreover,
\[
 M(x)=E + x G^*(I-x A_0^{-1})^{-1}G=M(x)^*\leq 0 \quad
 \text{for all }
 x < 0.
\]
Hence, $M(\cdot)$ and, in particular, $M_0(\cdot)$ are an inverse
Stieltjes functions with $\ker M(x)=\{0\}$ and $\ker M_0(x)=\{0\}$,
since $\ker G=\{0\}$. In view of $(\wt M(\lambda)-\mu
I)^{-1}=-(I+\mu M_0(\lambda))^{-1}M_0(\lambda)$ ($\mu\in\cmr$) also
the function $\wt M(\cdot)=-M_0(\cdot)^{-1}$ admits a holomorphic
continuation to the negative real line with nonnegative selfadjoint
values therein, i.e., it is a Stieltjes function.
\end{proof}

Let us also mention that analogously the function
\[
 \wh M(\lambda)=-M(1/\lambda)= G^*(A_0^{-1}-\lambda)^{-1}G-E
\]
admits a holomorphic continuation to the negative real line and
\begin{equation}\label{domPertur}
 \wh M(x)=G^*(A_0^{-1}-x)^{-1}G-E=M(x)^*\geq 0 \quad
 \text{for all }
 x < 0
\end{equation}
with $\ker \wh M(x)=\{0\}$.  Hence, $\wh M(\cdot)$ is a Stieltjes
function and the transposed function $\wh M^\top(\cdot)=-\wh
M(\cdot)^{-1}$ is an inverse Stieltjes function. Observe, that $\wh
M(\cdot)$ is the Weyl function corresponding to the boundary triple
$\{\cH,\wh\Gamma_0,\wh\Gamma_1\}$ given by
\[
    \wh\Gamma_0 \widehat f=\varphi,\quad
 \wh\Gamma_1 \widehat f=-G^*f'-E \varphi; \quad
  \widehat f=\{f',A_0^{-1}f'+G\varphi\}\in T^{-1}
\]
with $\ker \wh\Gamma_0=A_0^{-1}$ and $\ker \wh\Gamma_1=A_1^{-1}$.

%

We now assume that $0\in\rho(A_0)$ and make explicit the
renormalization for the $ES$-generalized boundary triple
$\{\cH,\wt\Gamma_0,\wt\Gamma_1\}$ in Proposition~\ref{propBtoE}~(v).
This also yields a representation for the form domain invariant Weyl
function $\wt M(\cdot)$ in~\eqref{M0inv}. To state the result
decompose the bounded inverse $A_0^{-1}$ according to $\sH=\cran
G\oplus (\ran G)^\perp$ as $A_0^{-1}=(A^-_{ij})_{i,j=1}^2$. This
generates the following expression for an associated Schur
complement of the resolvent $(A_0^{-1}-1/\lambda)^{-1}$,
\begin{equation}\label{schur0}
 S_0(\lambda)=A^-_{11}-1/\lambda I-(A^-_{21})^*(A^-_{22}-1/\lambda I)^{-1}A^-_{21},
 \quad \lambda\in\rho(A_0).
\end{equation}

\begin{theorem}\label{propBetE2}
Let the notations and assumptions be as in
Proposition~\ref{propBtoE}, let $A_0$ be a selfadjoint operator with
$0\in\rho(A_0)$ and assume that $\ran G$ is not closed, so that the
$ES$-generalized boundary triple $\{\cH,\wt\Gamma_0,\wt\Gamma_1\}$
is not an ordinary boundary triple. Then:
\begin{enumerate}\def\labelenumi {\textit{(\roman{enumi})}}
\item the closure of the $\gamma$-field $\wt\gamma$ satisfies $\dom\overline{\wt\gamma(\lambda)}=\ran G^*$,
$\lambda\in\rho(A_0)$;

\item the renormalized boundary triple $\{\cran G,\Gamma_{0,r},\Gamma_{1,r}\}$
is an ordinary boundary triple for $\ZA^*=A_0\hplus (\cran
G\times\{0\})$ and is determined by
\begin{equation}\label{Etrans00}
    \begin{pmatrix}
    \Gamma_{0,r} \\ \Gamma_{1,r}
    \end{pmatrix}(f+h)
    =\begin{pmatrix}
     P_GA_0f \\ -h
    \end{pmatrix},
\quad f\in \dom A_0,\quad h\in \cran G,
\end{equation}
where $P_G$ denotes the orthogonal projection onto $\ker \ZA^*=\cran
G$;

\item the Weyl function $M_r(\cdot)$ of the renormalized boundary triple coincides
with the Schur complement in~\eqref{schur0},
\[
  M_r(\lambda)= S_0(\lambda), \quad \lambda\in \rho(A_0)
\]
and the form domain invariant Weyl function $\wt M(\cdot)$ in
~\eqref{M0inv} has the form
\begin{equation}\label{Mrenorm2}
  \wt M(\lambda)=G^{-1}S_0(\lambda)G^{-(*)}, \quad \lambda\in
  \rho(A_0),
\end{equation}
where $G^{(*)}$ is the adjoint when $G$ is treated as an operator
from $\cH$ into $\cran G$.
\end{enumerate}
\end{theorem}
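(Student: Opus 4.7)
The plan is to establish (i)--(iii) in order, combining the general structure results for $ES$-generalized boundary triples from Section~\ref{sec5} with the standard Schur complement calculus applied to the bounded selfadjoint operator $A_0^{-1}$ in the decomposition $\sH=\cran G\oplus\ker G^*$. The common algebraic engine will be the identity $P_G(A_0^{-1}-\lambda^{-1}I)^{-1}\uphar\cran G=S_0(\lambda)^{-1}$ together with the elementary rewriting $(I-\lambda A_0^{-1})^{-1}=-\lambda^{-1}(A_0^{-1}-\lambda^{-1}I)^{-1}$ for $\lambda\in\rho(A_0)\setminus\{0\}$.

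For (i) I would invoke Lemma~\ref{prop:C111}(iii), which for the $ES$-generalized triple $\{\cH,\wt\Gamma_0,\wt\Gamma_1\}$ established in Proposition~\ref{propBtoE}(v) yields $\dom\overline{\wt\gamma(\lambda)}=\ran\overline{\wt\Gamma_0}$ for $\lambda\in\cmr$; Remark~\ref{rem:Mderiv} extends this to all real regular points of $\overline{A_0}$. Since $0\in\rho(A_0)$ forces $\ran A_0=\sH$, the explicit formula~\eqref{Etrans} immediately gives $\ran\wt\Gamma_0=G^*\sH=\ran G^*$. Boundedness of $G^*$ precludes any enlargement under closure: convergence of $\wh f_n=\{A_0^{-1}f_n'+G\varphi_n,f_n'\}$ in $\sH^2$ forces $f_n'\to f'$ in $\sH$ and hence $G^*f_n'\to G^*f'\in\ran G^*$, so $\ran\overline{\wt\Gamma_0}=\ran G^*$.

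For (ii) I would first identify $\ZA^*$ by closing $\dom\wt\Gamma$. Each $\wh f\in\dom\wt\Gamma$ has the form $\wh f=\{f+h,A_0 f\}$ with $f=A_0^{-1}f'\in\dom A_0$ and $h=G\varphi\in\ran G$, so $\dom\wt\Gamma=A_0\hplus(\ran G\times\{0\})$; the sum is direct because $\ker A_0=\{0\}$. Boundedness of $A_0^{-1}$ preserves this splitting under closure: if $(f_n+h_n,A_0 f_n)$ converges in $\sH^2$, then $f_n=A_0^{-1}(A_0 f_n)$ converges in $\dom A_0$ and $h_n\to h\in\cran G$, yielding $\ZA^*=A_0\hplus(\cran G\times\{0\})$ with a unique decomposition $\wh f=(f+h,A_0 f)$. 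Thus the formulas in~\eqref{Etrans00} define $\Gamma_{0,r},\Gamma_{1,r}$ unambiguously on $\ZA^*$. Green's identity then follows because selfadjointness of $A_0$ cancels $(A_0 f_1,f_2)-(f_1,A_0 f_2)$ and leaves $(A_0 f_1,h_2)-(h_1,A_0 f_2)=(P_G A_0 f_1,h_2)-(h_1,P_G A_0 f_2)$, which equals $(\Gamma_{1,r}\wh f_1,\Gamma_{0,r}\wh f_2)-(\Gamma_{0,r}\wh f_1,\Gamma_{1,r}\wh f_2)$. Surjectivity onto $\cran G\oplus\cran G$ is immediate: given $(u,v)$, take $f=A_0^{-1}u$ and $h=-v$. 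These properties characterize $\{\cran G,\Gamma_{0,r},\Gamma_{1,r}\}$ as an ordinary boundary triple for $\ZA^*$.

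For (iii) I would compute the Weyl function directly. For $\wh f_\lambda=\{f+h,\lambda(f+h)\}\in\wh\sN_\lambda(\ZA^*)$ the equation $A_0 f=\lambda(f+h)$ gives $f=\lambda(A_0-\lambda)^{-1}h$, hence $\Gamma_{1,r}\wh f_\lambda=-h$ and
\[
 \Gamma_{0,r}\wh f_\lambda=P_G A_0 f=\bigl(\lambda I+\lambda^2 P_G(A_0-\lambda)^{-1}\bigr)h, \quad h\in\cran G.
\]
The bracketed operator on $\cran G$ equals $-\lambda P_G(I-\lambda A_0^{-1})^{-1}\uphar\cran G=-P_G(A_0^{-1}-\lambda^{-1}I)^{-1}\uphar\cran G$ by the algebraic identity, and hence equals $-S_0(\lambda)^{-1}$ by the standard block inversion formula for $2\times 2$ operator matrices. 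Therefore $\Gamma_{0,r}\wh f_\lambda=-S_0(\lambda)^{-1}h$, so $\Gamma_{1,r}\wh f_\lambda=-h=S_0(\lambda)\Gamma_{0,r}\wh f_\lambda$ and $M_r(\lambda)=S_0(\lambda)$. The same Schur identity, applied to $M_0(\lambda)=\lambda G^*(I-\lambda A_0^{-1})^{-1}G$ after factoring $G^*=G^{(*)}P_G$ and viewing $G$ as an operator $\cH\to\cran G$, gives $M_0(\lambda)=-G^{(*)}S_0(\lambda)^{-1}G$; inverting yields~\eqref{Mrenorm2}, i.e.\ $\wt M(\lambda)=-M_0(\lambda)^{-1}=G^{-1}S_0(\lambda)G^{-(*)}$. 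The main technical obstacle will be careful bookkeeping of $P_G$, the inclusion $\cran G\hookrightarrow\sH$, and the densely defined unbounded inverses $G^{-1}$ and $G^{-(*)}$; once the Schur identity $P_G(A_0^{-1}-\lambda^{-1}I)^{-1}\uphar\cran G=S_0(\lambda)^{-1}$ is in hand, both formulas of (iii) follow algebraically.
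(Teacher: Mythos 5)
Your argument is correct and establishes all three assertions, but it travels a genuinely different road from the paper's proof. For (i) you combine the abstract identity $\dom\overline{\wt\gamma(\lambda)}=\ran\overline{\wt\Gamma}_0$ of Lemma~\ref{prop:C111}\,(iii) with the observations that $\ran\wt\Gamma_0=G^*(\ran A_0)=\ran G^*$ and that boundedness of $G^*$ prevents the range from growing under closure; the paper instead writes out $\wt\gamma(\lambda)=\overline{\gamma(\lambda)}M_0(\lambda)^{-1}$ explicitly and reads the domain of its closure off the block formula~\eqref{gamtilde}. Your route is shorter, but strictly speaking it covers $\lambda\in\cmr$ (and real points lying in a gap of $\overline{\wt A_0}$, via Remark~\ref{rem:Mderiv}), whereas the paper's explicit representation works for every $\lambda\in\rho(A_0)$; for real $\lambda\in\rho(A_0)$ outside $\rho(\overline{\wt A_0})$ you would still need the explicit formula or an analytic-continuation remark. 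For (ii) the paper obtains \eqref{Etrans00} by feeding the operator $G^*:\cran G\to\cH$ into the renormalization machinery of Theorem~\ref{essThm2} and closing up \eqref{ERenorm}; you instead verify Green's identity and the surjectivity of $\Gamma_{0,r}\times\Gamma_{1,r}$ by hand on the decomposition $\ZA^*=A_0\hplus(\cran G\times\{0\})$. This is more self-contained and makes the ordinariness of the triple transparent, at the price of not exhibiting it as the renormalization \eqref{ess01} of $\wt\Gamma$ --- the link the word ``renormalized'' is meant to carry, which your part (iii) restores only at the level of Weyl functions via $\wt M(\lambda)=G^{-1}M_r(\lambda)G^{-(*)}$. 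For (iii) the paper takes the closure of $G\wt M(\lambda)G^*\uphar\cran G$ using \eqref{MRenorm}, while you compute $M_r$ from scratch on the defect elements $f=\lambda(A_0-\lambda)^{-1}h$; both hinge on the same Schur identity $P_G(A_0^{-1}-\lambda^{-1}I)^{-1}\uphar\cran G=S_0(\lambda)^{-1}$.

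One bookkeeping slip in (iii): since $(I-\lambda A_0^{-1})^{-1}=I+\lambda(A_0-\lambda)^{-1}$, the bracketed operator equals $+\lambda P_G(I-\lambda A_0^{-1})^{-1}\uphar\cran G$, and since $I-\lambda A_0^{-1}=-\lambda(A_0^{-1}-\lambda^{-1}I)$ this in turn equals $-P_G(A_0^{-1}-\lambda^{-1}I)^{-1}\uphar\cran G$. Your intermediate line carries two compensating sign errors, so the end result $\Gamma_{0,r}\wh f_\lambda=-S_0(\lambda)^{-1}h$, and hence $M_r(\lambda)=S_0(\lambda)$ and \eqref{Mrenorm2}, is nevertheless correct.
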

\begin{proof}
(i) By Proposition~\ref{propBtoE}
$\wt\gamma(\lambda)=\overline{\gamma(\lambda)}(M_0(\lambda))^{-1}$.
Using the expressions for $M_0(\lambda)$ in~\eqref{M0inv} and
$S_0(\lambda)$ in~\eqref{schur0} one obtains
\begin{equation}\label{MRenorm}
 \wt M(\lambda) = G^{-1}S_0(\lambda)G^{-(*)}, \quad
 \wt\gamma(\lambda) = -(I-\lambda A_0^{-1})^{-1} I_{\ran G}
 S_0(\lambda) G^{-(*)},
\end{equation}
where $G^{-(*)}$ stands for the inverse of $G^*$, when $G^*$ is
treated as an injective mapping from $\cran G$ to $\cH$. Since
$(I-\lambda A_0^{-1})^{-1}$, $I_{\ran G}$, and $S_0(\lambda)$ are
bounded with bounded inverse for $\lambda\in\rho(A_0)$, we conclude
that the form domain of $\wt M(\lambda)$ is equal to $\ran G^*$ and
that the closure of the $\gamma$-field is given by
\begin{equation}\label{gamtilde}
  \overline{\wt\gamma(\lambda)}=\frac{1}{\lambda}\,(A_0^{-1}-\frac{1}{\lambda}\, I)^{-1} P_GS_0(\lambda) G^{-(*)}
  =\frac{1}{\lambda}\, \begin{pmatrix} I \\ -(A^-_{22}-\frac{1}{\lambda}\,I)^{-1}A^-_{21}\end{pmatrix}G^{-(*)},
  \quad \lambda\in\rho(A_0).
\end{equation}
Here the last identity uses the standard block formula for the
inverse $(A_0^{-1}-1/\lambda)^{-1}$.

(ii) The assumption $0\in\rho(A_0)$ implies that the closure $\ZA^*$
of $T$ is $\ZA^*=A_0\hplus (\cran G\times\{0\})$. In view of (i) one
can use $G^*:\cran G\to \cH$ as the renormalizing operator in
Theorem \ref{essThm2}. Since $A_0$ is an operator one can rewrite
the renormalization of the boundary triple~\eqref{Etrans} in the
form $\{\cran G,\Gamma_{0,r},\Gamma_{1,r}\}$, where
\begin{equation}\label{ERenorm}
    \begin{pmatrix}
    \Gamma_{0,r} \\ \Gamma_{1,r}
    \end{pmatrix}\wh f
    =\begin{pmatrix}
     P_G A_0f \\ -G\varphi
    \end{pmatrix},
\quad \wh f\in \{\{f+G\varphi,A_0f\}:\, f\in \dom A_0,\, \varphi\in
\cH\}.
\end{equation}
The final expression for the renormalized boundary triple is
obtained by taking closure in~\eqref{ERenorm}; this leads to
~\eqref{Etrans00}, since $0\in \rho(A_0)$. Now it is clear that $\dom
\Gamma_r=\ZA^*$ and $\ran \Gamma_r=\cran G\times \cran G$, i.e.,
$\{\cran G,\Gamma_{0,r},\Gamma_{1,r}\}$ is an ordinary boundary
triple for $\ZA^*$.

(iii) This is obtained from~\eqref{MRenorm}. In particular, the
equality $M_r(\lambda)= S_0(\lambda)$ follows by taking closure of
$G\wt M(\lambda)G^*\uphar \cran G$.
\end{proof}

According to Theorem \ref{propBetE2} $A_{0,r}=\ker\Gamma_{0,r}$ is
selfadjoint. Clearly, $A_{0,r}$ coincides with the closure of $\wt
A_0=\ker \wt \Gamma_0$ in Proposition \ref{propBtoE}; see
~\eqref{tildeA0}. If, in particular, $A_0$ is strictly positive, then
$A_{0,r}=\ker\Gamma_{0,r}$ is the \emph{Kre\u{\i}n-von Neumann
extension} $\ZA_K$ of $\ZA$ and we have the following identities
\begin{equation}\label{KvNRenorm}
    \overline{\ker \wt \Gamma_0}=\overline{\wt A_0}= A_{0,r}=\ker \Gamma_{0,r}= S \hplus (\cran G\times\{0\})=\ZA_K,
\end{equation}
where $\ZA$ is the range restriction of $A_0$: $\ZA=\{\{f,A_0f\}: f\in
\dom A_0, \, G^*A_0f=0\,\}$. Observe, that $\ZA$ is densely defined if
and only if $\ZA^*$ is an operator, i.e.,
\[
 \cdom \ZA=\sH \quad \Leftrightarrow\quad \cran G\cap \dom A_0=\{0\}.
\]
By \eqref{Mrenorm2} $\wt M(\cdot)$ is domain invariant if and only
if the dense set $S_0(\lambda)^{-1}(\ran G)$ does not depend on
$\lambda$; in the particular case $\ker G^*=\{0\}$ this also leads
to Corollary~\ref{cor6.4}.

In Proposition \ref{propBtoE} we regularized the $S$-generalized
boundary triple $\{\cH,\Gamma_0,\Gamma_1\}$ via the transform
$\{\Gamma_0,\Gamma_1-E\Gamma_0\}$ before transposing the mappings
and closing up. In fact, the closure of this regularized boundary
triple \index{Boundary triple!regularized}
$\clos\{\Gamma_0,\Gamma_1-E\Gamma_0\}$ is of the same form
as $\Gamma$ in~\eqref{eq:5.14} with $E=0$ and it is $B$-generalized;
see item (iv) in Proposition \ref{propBtoE}.

The next example shows what happens for the boundary triple
$\{\cH,\Gamma_0,\Gamma_1\}$ in Proposition \ref{propBtoE} if it is
transposed without the indicated regularization of the mapping
$\Gamma_1$.

\begin{example}\label{example6.5}
Let $\{\cH,\Gamma_0,\Gamma_1\}$ be the boundary triple as defined
in~\eqref{eq:5.14}. Then
\[
    A_1=\{\{A_0^{-1}f'+G\varphi,f'\}:\, G^*f'+E \varphi=0, \, f'\in \ran A_0,\, \varphi\in \dom E\}
\]
and $\ZA=T^*=\ker \Gamma$ is given by
\[
    \ZA=\{\{A_0^{-1}f',f'\}:\, f'\in \ker G^* \}=\{\{f,f'\}\in A_0:\,  f'\in \ker G^* \}.
\]
If, in particular, $A_0$ is an operator then $\ZA$ is a standard range
restriction of $A_0$ to $\ker G^*$. The defect numbers of $\ZA$ are
equal to $\dim (\ran G)$.

Now, assume that $\ker E=\{0\}$ and $\ran G^*\cap \ran E=\{0\}$.
Then the identity $G^*f'+E \varphi=0$ implies that
$G^*f'=E\varphi=0$ and, consequently, $\varphi=0$ and this means
that $A_1=\ZA$. This means that $A_1$ is not essentially selfadjoint
and thus the transposed boundary triple $\{\cH,\Gamma_1,-\Gamma_0\}$
is not $ES$-generalized. The corresponding Weyl function is given by
\[
 M^\top(\lambda)= -(E + \lambda G^*(I-\lambda A_0^{-1})^{-1}G)^{-1}
\]
and according to Theorem \ref{essThm1} it cannot be form domain
invariant.

If, in addition, $\ker G^*=\{0\}$, then
\[
 \dom M^\top(\lambda)\cap \dom M^\top(\mu)=\{0\}, \quad \text{for  all }
 \lambda\neq \mu,\quad \lambda,\mu\in \rho(A_0).
\]
To see this assume that $g=(E + \lambda G^*(I-\lambda
A_0^{-1})^{-1}G)f_1=(E + \mu G^*(I-\mu A_0^{-1})^{-1}G)f_2$ holds
for some $g,f_1,f_2\in\cH$. Then
\begin{equation}\label{eq:domMtop}
 E(f_2-f_1)=G^*[\lambda (I-\lambda A_0^{-1})^{-1}Gf_1-\mu (I-\mu
A_0^{-1})^{-1}Gf_2]
\end{equation}
and the assumptions $\ran G^*\cap \ran E=\{0\}$ and $\ker E=\{0\}$
imply $f_1=f_2$. Now $\ker G^*=\{0\}$ and an application of the
resolvent identity on the righthand side of~\eqref{eq:domMtop}
yields $g=0$.

If, in particular, $A_0$ is a nonnegative selfadjoint operator with
$\ker A_0=\{0\}$ and $E=E^*\le 0$, then the function $M(\cdot)$ is
an inverse Stieltjes function and the transposed function
$M^\top(\cdot)=-M(\cdot)^{-1}$ is a Stieltjes function, which need
not be form domain invariant; cf. Corollary \ref{cor6.5}.
Analogously the function
\[
 -M(1/\lambda)= G^*(A_0^{-1}-\lambda)^{-1}G-E
\]
is a Stieltjes function and the transposed function $\wt
M(1/\lambda)^{-1}$ is an inverse Stieltjes function, which need not
be form domain invariant.
\end{example}

Finally, it should be mentioned that later, in Section \ref{sec7},
it is shown how the standard Dirichlet and Neumann trace operators
on smooth, as well as on Lipschitz, domains can be included in the
abstract boundary triple framework constructed in Proposition
\ref{propBtoE}; hence all the previous results on them will have
immediate applications in concrete PDE setting.

\subsection{$ES$-generalized boundary triples and graph continuity of a component mapping}
\label{sec6.2}

It is known that for a boundary triple $\{\cH,\Gamma_0,\Gamma_1\}$
(as well as for a boundary pair $\{\cH,\Gamma\}$) to be an ordinary
boundary triple it is necessary and sufficient that both boundary
mappings $\Gamma_0$ and $\Gamma_1$ are continuous on $\ZA^*$  (with
the graph norm on $\dom \ZA^*$ in case $\ZA$ is densely defined). In
general the mappings $\Gamma_0$ and $\Gamma_1$ both can be unbounded
when $\dim\cH=\infty$. In this section we establish analytic
criteria for $\Gamma_0$ or $\Gamma_1$ to be continuous with the aid
of the associated Weyl function. Recall that the kernels
$A_0=\ker\Gamma_0$ and $A_1=\ker \Gamma_1$ are always symmetric and
it is possible that $A_0=\ZA$ or $A_1=\ZA$; see e.g. Example
\ref{example6.5}.

The next result characterizes boundedness of the mapping $\Gamma_1$
for an $ES$-generalized boundary triple.

\begin{proposition}\label{essProp3}
For a unitary boundary triple $\Pi=\{\cH,\Gamma_0,\Gamma_1\}$ with
$A_*=\dom \Gamma$ the following conditions are equivalent:
\begin{enumerate}\def\labelenumi {\textit{(\roman{enumi})}}
\item $A_0=\ker \Gamma_0$ is essentially selfadjoint and $\Gamma_1$ is a bounded operator (w.r.t. the graph norm) on $A_*$;
\item $A_0$ is selfadjoint and the restriction $\Gamma_{1}\uphar \wh\sN_\lambda(A_*)$ is a bounded operator for some
(equivalently for every) $\lambda\in\cmr$;
\item the form associated with $\IM(-M^{-1}(\lambda))$ has a positive lower bound for some
(equivalently for every) $\lambda\in\cmr$.
\end{enumerate}
If one of these conditions is satisfied, then the triple
$\Pi=\{\cH,\Gamma_0,\Gamma_1\}$ is $B$-generalized.
\end{proposition}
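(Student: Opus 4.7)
The plan is to prove the three equivalences cyclically as (i) $\Rightarrow$ (ii) $\Rightarrow$ (iii) $\Rightarrow$ (i), extracting the concluding $B$-generalized claim from the last implication. The ``for some/for every $\lambda\in\cmr$'' assertions in (ii) and (iii) will then follow automatically, since (i) is stated in a $\lambda$-free form.

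For (i) $\Rightarrow$ (ii) I would simply restrict $\Gamma_1$ from $A_*$ to $A_0\subset A_*$: the resulting bounded restriction $\Gamma_1\uphar A_0$ forces $A_0$ to be closed by Corollary~\ref{cor:C3}, and combined with essential selfadjointness this gives $A_0=A_0^*$; boundedness of $\Gamma_1\uphar\wh\sN_\lambda(A_*)$ is inherited from $\wh\sN_\lambda(A_*)\subset A_*$. For (ii) $\Rightarrow$ (iii), note that for $\wh f_\lambda=\{f_\lambda,\lambda f_\lambda\}\in\wh\sN_\lambda(A_*)$ with $h=\Gamma_0\wh f_\lambda\in\dom M(\lambda)$ one has $\Gamma_1\wh f_\lambda=M(\lambda)h$, $f_\lambda=\gamma(\lambda)h$, and $\|\wh f_\lambda\|=(1+|\lambda|^2)^{1/2}\|\gamma(\lambda)h\|$, so boundedness of $\Gamma_1\uphar\wh\sN_\lambda(A_*)$ reduces to the pointwise estimate $\|M(\lambda)h\|\le C\|\gamma(\lambda)h\|$ on $\dom M(\lambda)$. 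Substituting $k=M(\lambda)h$ and using $\IM\lambda\,\|\gamma(\lambda)h\|^2=\IM(M(\lambda)h,h)$ yields
\[
 \|k\|^2\le\frac{C^2}{\IM\lambda}\,\IM(-M^{-1}(\lambda)k,k)\quad\text{for all}\ k\in\ran M(\lambda),
\]
which is precisely the positive lower bound in (iii).

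For (iii) $\Rightarrow$ (i) I would reverse this computation to recover $\|M(\lambda)h\|\le C'\|\gamma(\lambda)h\|$ on $\dom M(\lambda)$, and then combine it with $\|\gamma(\lambda)h\|^2=(\IM\lambda)^{-1}\IM(M(\lambda)h,h)\le(\IM\lambda)^{-1}\|M(\lambda)h\|\,\|h\|$ to conclude $\|M(\lambda)h\|\le(C'/\IM\lambda)\|h\|$; that is, $M(\lambda)$ is bounded on its domain. Since $\Pi$ is unitary, $M(\lambda)$ is maximal dissipative for $\lambda\in\dC_+$, and a bounded maximal dissipative operator is necessarily everywhere defined, so $\dom M(\lambda)=\cH$. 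Hence $\ran\Gamma_0\supset\dom M(\lambda)=\cH$, and Corollary~\ref{cor:C5} gives that $\Pi$ is a $B$-generalized boundary triple with $A_0=A_0^*$; this yields the concluding claim of the proposition. To finish (i), decompose $A_*=A_0\hplus\wh\sN_\lambda(A_*)$ (valid by Theorem~\ref{prop:C6} since $A_0$ is selfadjoint): the pieces $\Gamma_1\uphar A_0$ (bounded by Corollary~\ref{cor:C3}) and $\Gamma_1\uphar\wh\sN_\lambda(A_*)$ (bounded by the estimate just recovered) combine to a bounded $\Gamma_1$ on all of $A_*$ because the two summands meet at a positive angle.

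I expect the main obstacle to be the passage from $M(\lambda)$ bounded on its domain to full domain $\dom M(\lambda)=\cH$: it is the strict positivity in (iii), rather than mere closability of the imaginary-part form (which would yield only the weaker $ES$-generalized situation of Theorem~\ref{essThm1}), that forces $M(\lambda)$ to be bounded and hence, via maximal dissipativity, everywhere defined. All other steps are routine form manipulations together with already-established machinery.
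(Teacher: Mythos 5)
Your proof is correct and follows essentially the same route as the paper's: the cyclic scheme (i)$\Rightarrow$(ii)$\Rightarrow$(iii)$\Rightarrow$(i), the use of Corollary~\ref{cor:C3} for $\Gamma_1\uphar A_0$, the translation between boundedness of $\Gamma_{1}\uphar \wh\sN_\lambda(A_*)$ and the lower bound for $\IM(-M^{-1}(\lambda))$ (which you carry out by direct computation where the paper invokes the transposed $\gamma$-field and~\eqref{Green3U}), the deduction that $\|M(\lambda)\|\le c^{-1}$, and the positive-angle decomposition $A_*=A_0\hplus\wh\sN_\lambda(A_*)$. The only cosmetic difference is that you reach $A_0=A_0^*$ via $\dom M(\lambda)=\cH$ and Corollary~\ref{cor:C5}, whereas the paper cites Theorem~\ref{prop:C6B}; both rest on the same facts already established in the text.
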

\begin{proof}
(i) $\Rightarrow$ (ii)   If $\Gamma_1$ is bounded, the also the
restrictions $\Gamma_{1}\uphar A_0$ and $\Gamma_{1}\uphar
\wh\sN_\lambda(A_*)$ are bounded. Now by Corollary \ref{cor:C3}
$A_0$ is closed and, therefore, $A_0=A_0^*$.

(ii) $\Rightarrow$ (iii) Observe that $(\Gamma_{1}\uphar
\wh\sN_\lambda(A_*))^{-1}=\wh\gamma^\top(\lambda)$ is the
$\gamma$-field of the transposed boundary triple
$\Pi^\top=\{\cH,\Gamma_1,-\Gamma_0\}$. Hence the condition that
$\Gamma_{1}\uphar \wh\sN_\lambda(A_*)$ is bounded means that
$(\gamma^\top(\lambda))^*\gamma^\top(\lambda)$ has a positive lower
bound or, equivalently, that the form corresponding to
$\IM(-M^{-1}(\lambda))$ has a positive lower bound (cf.
~\eqref{Green3U} and Definition~\ref{DefFormdomain}).

(iii) $\Rightarrow$ (i) As shown in the previous implication, the
assumption on $\IM(-M^{-1}(\lambda))$ means that the restriction
$\Gamma_{1}\uphar \wh\sN_\lambda(A_*)$ is bounded. On the other
hand, if the form corresponding to $\IM(-M^{-1}(\lambda))$ has a
positive lower bound, say $c>0$, then
\[
 \| M^{-1}(\lambda) f\|_\cH \|f\|_\cH \geq |(M^{-1}(\lambda) f,f)|
 \geq \IM (-(M^{-1}(\lambda) f,f)) \geq c \|f\|^2_\cH.
\]
Consequently, $\|M(\lambda)\|\leq c^{-1}$, i.e., $M(\cdot)$ is a
bounded Nevanlinna function. Now by Theorem~\ref{prop:C6B} $A_0$ is
selfadjoint and hence according to Corollary \ref{cor:C3} the
restriction $\Gamma_1\uphar A_0$ is bounded. Moreover, by
selfadjointness of $A_0$, one has the decomposition $A_*=A_0 \hplus
\wh\sN_\lambda(A_*)$. Since the angle between $A_0$ and
$\wh\sN_\lambda(A_*)$ is positive, one concludes that $\Gamma_1$ is
bounded on $A_*$. This completes the proof of the implication.

Finally, if one of the equivalent conditions (i)--(iii) holds then,
as shown above, $M(\cdot)$ is a bounded Nevanlinna function. This is
a necessary and sufficient condition for the boundary triple $\Pi$
to be $B$-generalized.
\end{proof}

By passing to the transposed boundary triple gives the following
analog of Proposition \ref{essProp3}.

\begin{proposition}\label{essProp3B}
For a unitary boundary triple $\Pi=\{\cH,\Gamma_0,\Gamma_1\}$ with
$A_*=\dom \Gamma$ the following conditions are equivalent:
\begin{enumerate}\def\labelenumi {\textit{(\roman{enumi})}}
\item $A_1=\ker \Gamma_1$ is essentially selfadjoint and $\Gamma_0$ is a bounded operator (w.r.t. the graph norm) on $A_*$;
\item $A_1$ is selfadjoint and the restriction $\Gamma_{0}\uphar \wh\sN_\lambda(A_*)$ is a bounded operator for some
(equivalently for every) $\lambda\in\cmr$;
\item the form associated with $\IM M(\lambda)$ has a positive lower bound for some
(equivalently for every) $\lambda\in\cmr$.
\end{enumerate}
If one of these conditions is satisfied, then the transposed
boundary triple $\Pi^\top=\{\cH,\Gamma_1,-\Gamma_0\}$ is
$B$-generalized.
\end{proposition}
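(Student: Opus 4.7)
The plan is to deduce Proposition~\ref{essProp3B} from Proposition~\ref{essProp3} by passing to the transposed boundary triple $\Pi^\top=\{\cH,\Gamma_1,-\Gamma_0\}$. As recalled in Section~\ref{sec3.2}, $\Pi^\top$ is itself a unitary boundary triple with the same domain $A_*=\dom\Gamma=\dom\Gamma^\top$, and its Weyl function equals $M^\top(\lambda)=-M(\lambda)^{-1}$. Consequently, ``bounded on $A_*$'' has the same meaning for $\Gamma_0$ and for the second component $-\Gamma_0$ of $\Gamma^\top$, and the kernels transform according to
\[
 \ker \Gamma^\top_0=\ker \Gamma_1=A_1,\qquad \ker \Gamma^\top_1=\ker(-\Gamma_0)=A_0.
\]

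First I would apply Proposition~\ref{essProp3} directly to $\Pi^\top$. Its condition (i) for $\Pi^\top$ says that $\ker\Gamma^\top_0=A_1$ is essentially selfadjoint and $\Gamma^\top_1=-\Gamma_0$ is bounded on $A_*$, which is our (i). Condition (ii) for $\Pi^\top$ reads: $A_1$ is selfadjoint and $\Gamma^\top_1\uphar\wh\sN_\lambda(A_*)=-\Gamma_0\uphar\wh\sN_\lambda(A_*)$ is bounded, which is our (ii) (the sign clearly being irrelevant for boundedness). Condition (iii) for $\Pi^\top$ is that the form associated with $\IM(-(M^\top)^{-1}(\lambda))$ has a positive lower bound; since
\[
 -(M^\top(\lambda))^{-1}=-(-M(\lambda)^{-1})^{-1}=M(\lambda),
\]
this is precisely our (iii).

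The final assertion of Proposition~\ref{essProp3} applied to $\Pi^\top$ gives that $\Pi^\top$ itself is $B$-generalized whenever one of (i)--(iii) holds, which is exactly the concluding statement of Proposition~\ref{essProp3B}.

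The only point requiring a brief check is that $\Pi^\top$ satisfies the standing hypothesis of Proposition~\ref{essProp3}, namely that it is a unitary boundary triple in the sense of Definition~\ref{unitBT} (so that the $\gamma$-field/Weyl function machinery used in the proof of Proposition~\ref{essProp3} is available); this is immediate from the $J_\cH$-unitarity of the block rotation $\begin{pmatrix}0&I\\-I&0\end{pmatrix}$ acting on $(\cH^2,J_\cH)$, as noted at the beginning of Section~\ref{sec6.1}. I do not foresee any genuine obstacle: the whole argument is a routine transcription through the transposition $\Pi\leftrightarrow\Pi^\top$, which interchanges the roles of $(\Gamma_0,A_0,M)$ with $(\Gamma_1,A_1,-M^{-1})$ in an involutive manner.
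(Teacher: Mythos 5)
Your proposal is correct and is exactly the paper's own argument: the paper introduces Proposition~\ref{essProp3B} with the single remark ``by passing to the transposed boundary triple,'' and you have simply written out that reduction in full, with the correct identifications $\ker\Gamma_0^\top=A_1$, $\Gamma_1^\top=-\Gamma_0$, and $-(M^\top)^{-1}=M$. Nothing is missing.
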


\begin{remark}\label{rem:7.22}
(i) For infinite direct sums of ordinary boundary triples the
extensions $A_j=\ker \Gamma_j$, $j=1,2$, are automatically
essentially selfadjoint; see \cite[Theorem~3.2]{KosMMM}. If, in
addition, $\Gamma_1$ is bounded, then
$\Pi=\{\cH,\Gamma_0,\Gamma_1\}$ is a $B$-generalized boundary triple
for $\ZA^*$ by Proposition \ref{essProp3}; this implication was
proved in another way in \cite[Proposition~3.6]{KosMMM}; see also
Corollary \ref{newCor1} below.

(ii) Note that $\Pi=\{\cH,\Gamma_0,\Gamma_1\}$ is a $B$-generalized
boundary triple if and only if the composition
$\Gamma_1\wh\gamma(\lambda)$ ($=M(\lambda)$) is bounded for some
(equivalently for all) $\lambda\in\cmr$. In particular, if
$\Gamma_1\wh\gamma(\lambda)$ is bounded, then also the
$\gamma$-field $\gamma(\lambda)$ itself is bounded (see
~\eqref{Green3U}), $A_0=A_0^*$ (by Theorem~\ref{prop:C6B}) and the
restriction $\Gamma_{1}\uphar A_0$ is also bounded (by
Corollary~\ref{cor:C3}). However, in this case $\Gamma_1$ need not
be bounded. Therefore, the conditions in Proposition \ref{essProp3}
are sufficient, but not necessary, for $\Pi$ to be a $B$-generalized
boundary triple. An example is any $B$-generalized boundary triple
$\Pi$, which is not an ordinary boundary triple, such that also the
transposed boundary triple $\Pi^\top$ is $B$-generalized, since then
$\Pi^\top$ cannot be an ordinary boundary triple. Then the second
condition in (iii) of Proposition \ref{essProp3} is not satisfied.
For an explicit example of such a $B$-generalized boundary triple,
see for instance the direct sum of Dirac operators treated below in
Proposition~\ref{prop3.5direcsum}.
\end{remark}

The boundedness of the component mappings $\Gamma_0$ and $\Gamma_1$
can be used to derive the following new characterization of ordinary
boundary triples.

\begin{proposition}\label{charOBT}
For a unitary boundary triple $\Pi=\{\cH,\Gamma_0,\Gamma_1\}$ with
$A_*=\dom \Gamma$ the following conditions are equivalent:
\begin{enumerate}\def\labelenumi {\textit{(\roman{enumi})}}
\item $\Gamma_0$ is bounded and $\ran \Gamma_0=\cH$;
\item $\Gamma_1$ is bounded and $\ran \Gamma_1=\cH$;
\item $\Pi=\{\cH,\Gamma_0,\Gamma_1\}$ is an ordinary boundary
triple.
\end{enumerate}
\end{proposition}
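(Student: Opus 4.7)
The plan is to establish the equivalences by the loop (iii)$\Rightarrow$(i),(ii) followed by (i)$\Rightarrow$(iii), with (ii)$\Rightarrow$(iii) reduced to (i)$\Rightarrow$(iii) by passing to the transposed triple $\Pi^\top=\{\cH,\Gamma_1,-\Gamma_0\}$, which is unitary together with $\Pi$ and becomes ordinary exactly when $\Pi$ does.

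\textbf{Proof of (iii)$\Rightarrow$(i),(ii).} By Definition \ref{een} the operator $\Gamma:A^*\to\cH^2$ is surjective, so $\ran\Gamma_0=\ran\Gamma_1=\cH$. Since $\Pi$ is an ordinary boundary triple, it is unitary and single-valued, hence $\Gamma^{-1}=\Gamma^{[*]}$ is a closed operator from $\cH^2$ to the Hilbert space $A^*$ (equipped with the graph norm). The closed graph theorem applied to $\Gamma:A^*\to\cH^2$, which is everywhere defined, single-valued and has closed graph, yields boundedness of $\Gamma$. Both components $\Gamma_0$ and $\Gamma_1$ are therefore bounded.

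\textbf{Proof of (i)$\Rightarrow$(iii).} Suppose $\Gamma_0$ is bounded with $\ran\Gamma_0=\cH$. By Corollary~\ref{cor:C5} the assumption $\ran\Gamma_0=\cH$ alone already forces $A_0=A_0^*$ and makes $\Pi$ a $B$-generalized boundary triple; consequently $M(\cdot)\in\cR^s[\cH]$ and $\gamma(\lambda)\in \cB(\cH,\sH)$ with $\dom\gamma(\lambda)=\cH$ for every $\lambda\in\cmr$. The key step is now to exploit boundedness of $\Gamma_0$: since $\wh\gamma(\lambda):\cH\to\wh\sN_\lambda(A_*)$ is the inverse of $\Gamma_0\uphar\wh\sN_\lambda(A_*)$, boundedness of the latter together with boundedness of $\gamma(\lambda)$ yields the two-sided estimate
\[
 c\|h\|_\cH\le \|\gamma(\lambda)h\|_\sH\le C\|h\|_\cH, \qquad h\in\cH,
\]
so $\gamma(\lambda)$ is a topological isomorphism of $\cH$ onto its (now closed) range $\sN_\lambda(A_*)$. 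Consequently $\wh\sN_\lambda(A_*)$ is closed in $\sH^2$. Combining this with $A_*=A_0\hplus\wh\sN_\lambda(A_*)$ (Theorem~\ref{prop:C6B}(ii)) and the positive angle between $A_0$ and $\wh\sN_\lambda(A^*)$ supplied by the von Neumann decomposition $A^*=A_0\hplus\wh\sN_\lambda(A^*)$, the sum $A_0\hplus\wh\sN_\lambda(A_*)$ is closed. Since $A_*$ is dense in $A^*$ this forces $A_*=A^*$, and in particular $\sN_\lambda(A_*)=\sN_\lambda(A^*)$.

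It remains to check $\ran\Gamma=\cH^2$. Given $\{h,h'\}\in\cH^2$, pick $\wh f_\lambda:=\wh\gamma(\lambda)h\in\wh\sN_\lambda(A^*)$, so $\Gamma\wh f_\lambda=\{h,M(\lambda)h\}$. We need $\wh g\in A_0$ with $\Gamma_1\wh g=h'-M(\lambda)h$, i.e.\ surjectivity of $\Gamma_1\uphar A_0$. By Proposition~\ref{prop:C3} one has $\Gamma_1H(\lambda)=\gamma(\bar\lambda)^*$, and since $A_0=A_0^*$ gives $\ran(A_0-\lambda)=\sH$, the map $H(\lambda):\sH\to A_0$ is a topological isomorphism. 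Boundedness from below of $\gamma(\bar\lambda)$ (the same two-sided estimate as above applied to $\bar\lambda$) forces surjectivity of its adjoint $\gamma(\bar\lambda)^*:\sH\to\cH$, whence $\ran(\Gamma_1\uphar A_0)=\ran\gamma(\bar\lambda)^*=\cH$. This yields $\ran\Gamma=\cH^2$, so $\Pi$ is an ordinary boundary triple.

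\textbf{Main obstacle.} The delicate point is passing from boundedness of $\Gamma_0$ on the \emph{possibly nonclosed} subspace $A_*$ to the conclusion $A_*=A^*$. The mechanism is that boundedness of $\Gamma_0$ forces $\gamma(\lambda)$ to be bounded below, which is exactly what makes $\wh\sN_\lambda(A_*)$ closed; without this step one could have $A_*\subsetneq A^*$, and then the two-sided estimate for $\gamma(\lambda)$ together with the positive-angle property of the von Neumann decomposition is what closes up $A_*$ inside $A^*$. Once $A_*=A^*$ and $\gamma(\bar\lambda)$ is bounded below, the surjectivity of $\Gamma_1\uphar A_0$ is a direct consequence of Proposition~\ref{prop:C3}.
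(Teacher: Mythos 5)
Your proof is correct, and the core mechanism for (i)$\Rightarrow$(iii) is the same as the paper's: Corollary~\ref{cor:C5} yields $A_0=A_0^*$ and $M(\cdot)\in\cR^s[\cH]$, and boundedness of $\Gamma_0$ on $\wh\sN_\lambda(A_*)$ is converted into a lower bound for $\gamma(\lambda)$. Where you diverge is in the final step: the paper simply observes that the two-sided estimate for $\gamma(\lambda)$ means $0\in\rho(\IM M(\lambda))$, so that $M(\cdot)\in\cR^u[\cH]$, and then invokes the known characterization of ordinary boundary triples among unitary ones by uniform strictness of the Weyl function (Theorem~\ref{thm:WF_Ord_BT}, resp.\ Proposition~2.18 of the cited DHMS06). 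You instead re-derive that characterization by hand: closedness of $\wh\sN_\lambda(A_*)$ from the two-sided estimate, hence $A_*=A^*$ via the positive angle in the von Neumann decomposition, and then surjectivity of $\Gamma$ via $\Gamma_1 H(\lambda)=\gamma(\bar\lambda)^*$ together with the closed range theorem. Your route is longer but self-contained and makes visible exactly why the triple closes up to an ordinary one; the paper's is a one-line reduction to an already-available realization theorem. For (iii)$\Rightarrow$(i),(ii) the paper just records that $\Gamma$ is bounded and surjective; your closed graph argument is the standard justification of that claim, and the reduction of (ii) to (i) by transposition is identical in both.
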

\begin{proof}
(i) $\Rightarrow$ (iii) By Corollary \ref{cor:C5} $\ran
\Gamma_0=\cH$ implies that $A_0=A_0^*$ and $\Pi$ is a
$B$-generalized boundary triple. In particular, the Weyl function
$M(\cdot)$ of $\Pi$ belongs to the class $R^s[\cH]$ of bounded
strict Nevanlinna functions. On the other hand,
$(\Gamma_{0}\uphar\wh\sN_\lambda(A_*))^{-1}=\wh\gamma(\lambda)$,
$\lambda\in\cmr$. Now $\Gamma_0\uphar \wh\sN_\lambda(A_*)$ is
bounded and this means that $\gamma(\lambda)^*\gamma(\lambda)$ has a
positive positive lower bound or, equivalently, that
$0\in\rho(\IM(M(\lambda))$. Hence, $M(\cdot)\in R^u[\cH]$ and $\Pi$
is an ordinary boundary triple; see \cite[Proposition~2.18]{DHMS06}.

(ii) $\Rightarrow$ (iii) Apply the previous implication to the
transposed boundary triple.

(iii) $\Rightarrow$ (i), (ii) This is clear, since for ordinary
boundary triple $\Gamma:A_*\to \cH^2$ is bounded and surjective.
\end{proof}


\subsection{Real regular points and analytic extrapolation principle for Weyl functions}
\label{sec6.3}

The main result here contains an analytic extrapolation principle
for Weyl functions in the case when the underlying minimal operator
$A$ admits a regular type point on the real line $\dR$. The main
tool for getting this result relies on the the main transform of
boundary relations (called here boundary pairs) that was introduced
in \cite{DHMS06}. The result is partially motivated by the analytic
criterion for an isometric boundary triple to be unitary which can
be found in \cite[Proposition~3.6]{DHMS06} and
\cite[Theorem~7.51]{DHMS12}.

The main transform \index{Main transform} makes a connection between subspaces of the
Hilbert space $(\sH\oplus\cH)^2$ and linear relations from the
Kre\u{\i}n space $(\sH^2,J_\sH)$ to the Kre\u{\i}n space
$(\cH^2,J_\cH)$. It is a linear mapping $\cJ$ from
$\sH^2\times\cH^2$ to $(\sH \oplus \cH)^2$ defined by the formula
\[
 \cJ:\left\{ \begin{pmatrix} f \\ f' \end{pmatrix},
         \begin{pmatrix} h \\ h' \end{pmatrix}
     \right\}
 \mapsto
     \left\{ \begin{pmatrix} f \\ h \end{pmatrix},
         \begin{pmatrix} f' \\ -h' \end{pmatrix}
     \right\},
\quad f,f'\in\sH,\,\, h,h'\in\cH.
\]
The mapping $\cJ$ establishes a one-to-one correspondence between
the (closed) linear relations $\Gamma:\sH^2 \to \cH^2$ and the
(closed) linear relations $\wt A$ in $\wt \sH=\sH\oplus\cH$ via
\begin{equation}\label{awig}
 \Gamma\mapsto\wt A :=\cJ(\Gamma)=
 \left\{\,
 \left\{ \begin{pmatrix} f \\ h \end{pmatrix},
         \begin{pmatrix} f' \\ -h' \end{pmatrix}
 \right\} :\,
 \left\{ \begin{pmatrix} f \\ f' \end{pmatrix},
         \begin{pmatrix} h \\ h' \end{pmatrix}
 \right\}
 \in \Gamma
 \,\right\}.
\end{equation}
According to \cite[Proposition~2.10]{DHMS06} the main transform
$\cJ$ establishes a one-to-one correspondence between the sets of
contractive, isometric, and unitary  relations $\Gamma$ from
$(\sH^2,J_\sH)$ to $(\cH^2,J_\cH)$ and the sets of dissipative,
symmetric, and selfadjoint relations $\wt A$ in $\sH \oplus \cH$,
respectively. Recall that a boundary pair $\{\cH,\Gamma\}$ is called
\textit{minimal}, if \index{Boundary pair!unitary!minimal}
\[
 \sH=\sH_{min}:=\cspan\{\,\sN_\lambda(A_*):\,\lambda\in\dC_+\cup\dC_-\,\}.
\]

The next result shows usefulness of the main transform for analytic
extrapolation of Weyl functions $M(\cdot)$ from a single real point
$x\in\dR$ to the complex plane, when $x$ is a regular type point of
the minimal operator $A$. In the special case when the analytic
extrapolation of $M(x)$ is a uniformly strict Nevanlinna function
the extrapolation principle formulated for Weyl functions in the
next theorem, yields a solution to the following general inverse
problem: given a pair $\{\Gamma_0,\Gamma_1\}$ of boundary mappings
from $A^*$ to $\cH$ determine the selfadjoint extension $A_\Theta$
of $A$ (up to unitary equivalence) when the boundary condition
$\Gamma_1\wh f=\Theta \Gamma_0 \wh f$ is fixed by some operator
$\Theta$ acting on $\cH$. It is emphasized that this result arises
basically from the stated regularity claim for $M(x)$ at the single
point $x\in \hat\rho(A)$.

\begin{theorem}\label{MRealreg}
Let $\{\Gamma,\cH\}$ be an isometric boundary pair for $A^*$ with
domain $A_*=\dom \Gamma$, $\clos A_*=A^*$, (i.e. Green's identity
~\eqref{Greendef1} holds for $\wh f, \wh g\in A_*$), let $\wt
A=\cJ(\Gamma)$ be the main transform~\eqref{awig} of $\Gamma$.
Assume that there exists a selfadjoint extension $H\subset A_*=\dom
\Gamma$ of $A$ with $x\in \rho(H)\cap \dR$ and let the mapping
$M(x)$ at this point $x$ be defined by~\eqref{Weylreal}. Then the
following assertions hold:
\begin{enumerate}\def\labelenumi {\textit{(\roman{enumi})}}
\item The following two conditions are equivalent:
\begin{enumerate}
\def\labelenumi{\textit{(\roman{enumi})}}
\item $M(x)$ is selfadjoint in $\cH$ and $0\in \rho(M(x)+xI)$;
\item $x\in\rho(\wt A)\cap \dR$.
\end{enumerate}

\item If the conditions (a), (b) in (i) hold then $\{\Gamma,\cH\}$ is a unitary boundary
pair for $A^*$ and $M(x)$ admits an analytic extrapolation \index{Analytic extrapolation of the Weyl family}from the
point $x$ to the half-planes $\dC_\pm$ as the Weyl family $M(\cdot)$
which necessarily belongs to the class $\wt\cR(\cH)$ of Nevanlinna
families.

\item If the boundary pair $\{\Gamma,\cH\}$ is minimal then all the
intermediate extensions $A_\Theta$ of $A$ given by~\eqref{eq:SA_ext}
are, up to unitary equivalence, uniquely determined by $M(\cdot)$.
\end{enumerate}
\end{theorem}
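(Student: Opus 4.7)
The plan is to exploit the main transform $\cJ$ systematically: it converts the isometric mapping $\Gamma$ into a symmetric relation $\wt A$ in $\sH\oplus\cH$, and $\Gamma$ is unitary iff $\wt A$ is selfadjoint. The central observation, which drives part (i), is that for any symmetric relation $\wt A$ the surjectivity $\ran(\wt A-x)=\sH\oplus\cH$ at a single real $x$ already forces $\wt A$ to be selfadjoint with $x\in\rho(\wt A)$ (a short argument via $\ker(\wt A^{*}-x)=\ran(\wt A-x)^{\perp}$). Thus the equivalence (a)$\Leftrightarrow$(b) reduces to a bijectivity question for $\wt A-x$, which I will pull back, via the explicit action of $\cJ$, to the invertibility of $M(x)+xI$.

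For the implication (a)$\Rightarrow$(b), I would use the von Neumann-type splitting $A_{*}=H\hplus\wh\sN_{x}(A_{*})$ (available because $H\subset A_{*}$ is selfadjoint with $x\in\rho(H)$, combined with $\clos A_{*}=A^{*}$). Given $\binom{y}{k}\in\sH\oplus\cH$, write $\wh f=\wh u+\wh f_{x}$ with $\wh u=\{u,xu+y\}\in H$ determined by $u=(H-x)^{-1}y$ and $\wh f_{x}=\{f_{x},xf_{x}\}\in\wh\sN_{x}(A_{*})$. Setting $\{p,q\}\in\Gamma\wh u$ and $\{h,h'\}=\Gamma\wh f_{x}\in M(x)$, the condition $(\wt A-x)\binom{f}{h}\ni\binom{y}{k}$ reduces after cancellations to solving $(M(x)+xI)h=-(q+xp+k)$, which is uniquely solvable by hypothesis (a). This yields $\ran(\wt A-x)=\sH\oplus\cH$. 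Conversely, for (b)$\Rightarrow$(a), isolating the case $y=0$ identifies $\ker(\wt A-x)$ and $\ran(\wt A-x)$ with the kernel and range of $M(x)+xI$ acting on $\dom M(x)$; bijectivity of $\wt A-x$ together with the symmetry of $M(x)$ inherited from Green's identity~\eqref{Green1} forces $M(x)$ to be selfadjoint with $0\in\rho(M(x)+xI)$.

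For (ii), once (b) holds, Proposition~2.10 of~\cite{DHMS06} (quoted in Section~\ref{sec6.3}) implies $\Gamma$ is a $(J_{\sH},J_{\cH})$-unitary relation, so $\{\cH,\Gamma\}$ is a unitary boundary pair for $A^{*}$; Theorem~\ref{GBTNP} then places the corresponding Weyl family $M(\cdot)$ in $\wt\cR(\cH)$. To identify the value at $x$ defined by~\eqref{Weylreal} with the analytic extrapolation from $\dC_{\pm}$, I would apply the Kre\u{\i}n-type resolvent formula of Theorem~\ref{Kreinformula2} relating $(\wt A-\lambda)^{-1}$ to $(H-\lambda)^{-1}$ through $\gamma(\lambda)$ and $(\Theta-M(\lambda))^{-1}$ (with $\Theta$ encoding $H$); joint holomorphy of the two resolvents in a neighbourhood of $x$ (available since $x\in\rho(\wt A)\cap\rho(H)$) transfers to holomorphy of the inverse $(M(\lambda)+xI)^{-1}$ and ultimately of $M(\cdot)$ itself at $x$, whose boundary value agrees with~\eqref{Weylreal} by construction of $\wh\sN_{x}(A_{*})$.

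Statement (iii) is a direct consequence of the uniqueness clause in Theorem~\ref{GBTNP}: a minimal unitary boundary pair is uniquely determined up to unitary equivalence by its Weyl family $M(\cdot)$; since by~\eqref{eq:SA_ext} every intermediate extension $A_{\Theta}$ is $\Gamma^{-1}(\Theta)$, the pair $(M(\cdot),\Theta)$ determines $A_{\Theta}$ up to unitary equivalence. The main obstacle I anticipate lies in part (ii): verifying that the analytically continued Weyl family, a priori defined only via the resolvent formula near $x$, coincides graph-theoretically with the set $\Gamma(\wh\sN_{x}(A_{*}))$ in~\eqref{Weylreal}, particularly when $\Gamma$ is multivalued so that $M(x)$ may carry a nontrivial multivalued part that must be tracked consistently through both the main transform and the Kre\u{\i}n formula.
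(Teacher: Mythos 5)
Your proposal follows the paper's proof essentially verbatim: the equivalence in (i) is obtained exactly as in the paper, by noting that for the symmetric main transform $\wt A$ surjectivity of $\wt A-x$ at the single real point $x$ already forces selfadjointness, and by translating that surjectivity into surjectivity of $M(x)+xI$ through the separate contributions of $H$ (giving $\ran(H-x)=\sH$) and of $\wh\sN_x(A_*)$ (giving $\{0\}\times\cH$), while (ii) and (iii) are read off from Theorem~\ref{GBTNP} exactly as in the text. The only divergence is your proposed detour through the Kre\u{\i}n resolvent formula in part (ii), which the paper does not need: the identification of $M(x)$ with the boundary value of the Weyl family is immediate there, since both are instances of the single formula $M(\lambda)=\Gamma(\wh\sN_\lambda(A_*))$ and holomorphy in the resolvent sense is part of the conclusion $M\in\wt\cR(\cH)$.
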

\begin{proof}
(i) To prove the equivalence of (a) and (b) consider the main
transform $\wt A$ of $\Gamma$ in~\eqref{awig}. The range of $\wt
A-x$ is given by
\begin{equation}
\label{awig1}
 \ran(\wt A -xI)=
 \left\{\,
 \begin{pmatrix} f'-x f \\ -h'-x h  \end{pmatrix}
         :\,
 \left\{ \begin{pmatrix} f \\ f' \end{pmatrix},
         \begin{pmatrix} h \\ h' \end{pmatrix}
 \right\}
 \in \Gamma
 \,\right\}.
\end{equation}

(a) $\Rightarrow$ (b) For $\wh f_x\in \wh\sN_{x}(\ZA_*)$ one has
$f_x'=xf_x$ and $\{h,h'\}\in M(x)$ by the definition in
~\eqref{Weylreal}. Since $-x\in \rho(M(x))$ and $-h'-xh \in\ran
(-M(x)-xI)=\cH$ it follows from~\eqref{awig1} that
\begin{equation}
\label{awig2}
 \begin{pmatrix} 0 \\ \cH \end{pmatrix}\subset \ran(\wt A -xI).
\end{equation}
Since $H\subset \dom \Gamma$ and $x\in \rho(H)\cap \dR$ one has
$\ran (H-x)=\sH$ which combined with~\eqref{awig1} and~\eqref{awig2}
shows that $\ran(\wt A -xI)=\sH\oplus\cH$. This implies that $\wt A$
is a selfadjoint relation in $\sH\oplus\cH$, since $\wt A$ is
symmetric by isometry of $\Gamma$; cf.
\cite[Proposition~2.10]{DHMS06}. In particular, $x\in\rho(\wt A)\cap
\dR$.

(b) $\Rightarrow$ (a) If $x\in\rho(\wt A)\cap \dR$ then $\ran(\wt A
-xI)=\sH\oplus\cH$ and, in particular,~\eqref{awig2} is satisfied.
In view of~\eqref{awig} and~\eqref{Weylreal} this means that
$\{f,f'\}\in \wh\sN_{x}(\ZA_*)$ and $\{h,h'\}\in M(x)$ and therefore
$\ran (-M(x)-xI)=\cH$. On the other hand, it follows from
~\eqref{Weylreal} and Green's identity~\eqref{Greendef1} that $M(x)$
is symmetric, i.e., $(h',h)=(h,h')$ for all $\{h,h'\}\in M(x)$.
Therefore, $M(x)$ is selfadjoint and $-x\in \rho(M(x))$.

(ii) The proof of (i) shows that if (a) or, equivalently, (b) holds
then $\wt A$ is a selfadjoint relation in $\sH\oplus\cH$. Then,
equivalently, its (inverse) main transform $\{\Gamma,\cH\}$ is a
unitary boundary pair for $A^*$. By the main realization result
stated in Theorem~\ref{GBTNP} we conclude that $M\in\wt \cR(\cH)$.

(iii) To prove this assertion first recall that according to the
main realization result in \cite[Theorem 3.9]{DHMS06} (cf.
Theorem~\ref{GBTNP}) the Weyl function uniquely determines the
$\Gamma$, as well as $\wt A$, by the minimality of $\Gamma$.
Uniqueness of $\Gamma$ here means that if there exists another
minimal boundary pair $\{\cH,\wt\Gamma\}$ associated with the
symmetric operator $\wh A=\ker\Gamma$ in some Hilbert space $\wh\sH$
having the same Weyl function $M(\cdot)$, then there exists a
standard unitary operator $U:\sH\to \wh\sH$ such that
\begin{equation}
\label{UNIT1} \wh\Gamma=\left\{ \left\{\begin{pmatrix}Uf \\
Uf'\end{pmatrix},
\begin{pmatrix}h \\ h'\end{pmatrix}\right\}:
\left\{\begin{pmatrix}f \\ f'\end{pmatrix},
\begin{pmatrix}h\\h'\end{pmatrix}\right\}
\in\Gamma \right\}.
\end{equation}
Hence, if the extension $A_\Theta$ of $A$ in the Hilbert space $\sH$
and the extension $\wh A_\Theta$ of $\wh A$ in the Hilbert space
$\wh\sH$ are associated with the same ``boundary condition''
$\Theta$ via~\eqref{eq:SA_ext} then~\eqref{UNIT1} implies that
\[
 \wh A_\Theta = \{\{Uf,Uf'\}:\, \{f,f'\} \in A_\Theta \}= U A_\Theta U^{-1}.
\]
This means that the linear relations $A_\Theta$ and $\wh A_\Theta$
are unitarily equivalent via the same unitary operator $U$ for every
linear relation $\Theta$ in $\cH$.
\end{proof}

\begin{remark}
The proof of item (i) in Theorem~\ref{MRealreg} shows that (b)
$\Rightarrow$ (a) without the assumption on the existence of a
selfadjoint extension $H\subset A_*$ with $x\in\rho(H)$.

As to items (ii) and (iii) of Theorem~\ref{MRealreg} it should be
mentioned that if the analytic extrapolation $M(\cdot)$ belongs to
the class $\cR^u[\cH]$, then the spectrum $\sigma(A_\Theta)$ of
every selfadjoint extension $A_\Theta$ ($\Theta=\Theta^*$) of $A$
can be completely characterized by the spectral function of the
corresponding Weyl function $M_\Theta(\cdot)\in \cR[\cH]$ which is
obtained via a fractional linear transforms from the function
$M(\cdot)$. For details we refer to \cite{DM91,DM95,DHMS1,DHMS09}.
Some further developments concerning uniqueness of boundary triples
and connections between $\sigma(A_\Theta)$ and the spectral
functions $\Sigma(t)$ of the associated Weyl functions can be found
in \cite{HMM2013}.
\end{remark}

Theorem~\ref{MRealreg} offers also a useful analytic tool to check
whether an isometric boundary triple (or boundary pair) is actually
unitary or, equivalently, if the Weyl function of some isometric
boundary triple is in fact from the class $\cR(\cH)$ of Nevanlinna
functions. We use this result to construct a unitary boundary pair
for Laplacians defined on rough domains in Section~\ref{example8.3}
and to associate unitary boundary triples with boundary pairs of
nonnegative forms in the next subsection.



\subsection{Boundary pairs of nonnegative operators and unitary boundary
triples}\label{sec6.4}

The notion of boundary pairs involves initially only one boundary
map associated with a closed nonnegative form $\sh$ or a pair of
nonnegative selfadjoint operators. The purpose in this section is to
show that, after introducing a second boundary map $\Gamma_1$ (via
the first Green's identity), the boundary pair $(\cH,\wt\Gamma_0)$
generates a unitary boundary triple $\{\cH,\Gamma_0,\Gamma_1\}$.
Furthermore, various special cases of boundary pairs are connected
to specific classes of unitary boundary triples.
%
%
In applications to PDE's $\sh$ is often the Neumann form and in
abstract setting e.g. the form $\sh_K$ associated to the Kre\u{\i}n's
extension $A_K$, which is the smallest nonnegative selfadjoint
extension of $A$.
The notion of a boundary pair can be seen to arise from the works of
Kre\u{\i}n and Birman (also Vishik?) and has been treated in later papers
by G. Grubb (PDE setting) and Yu. M. Arlinskii (abstract setting).

A (basic) positive boundary pair\index{Positive boundary pair} $\{\cH,\wt\Gamma_0\}$ involving the
form domain of the Kre\u{\i}n extension was introduced in
\cite{Arlin96}. This notions leads to positive boundary triples
$\{\cH,\wt\Gamma_0,\Gamma_1\}$, where $\ker \wt\Gamma_0=A_F$ and
$A_K=\ker \Gamma_1$ are the Friedrichs and the Kre\u{\i}n extension
of a nonnegative operator $A$; see \cite{Koc79}, \cite{Arlin88} and
also \cite[Chapter~3]{HdSSz2012} for some further details and
literature. Boundary pairs which lead to $B$-generalized boundary
triples appear in \cite{ArHa15}. A more general class of boundary
pairs $(\cH,\wt\Gamma_0)$ has been studied recently by O. Post
\cite{Post16}; who relaxed the surjectivity condition on
$\wt\Gamma_0$ and replaced it by the weaker requirement that $\ran
\wt\Gamma_0$ is dense in $\cH$. We recall the definition more
explicitly here (using present notations):

\begin{definition}[\cite{Post16}]\label{BP}
Let $\sh$ be a closed nonnegative form on a Hilbert space $\sH$ and
let $\wt\Gamma_0$ be a bounded linear map from $\sH^1:=(\dom
\sh,\|\cdot\|_1)$, where $\|f\|_1^2=\sh(f)+\|f\|^2$, into another
Hilbert space $\cH$. Then $(\cH,\wt\Gamma_0)$ is said to be a
\emph{boundary pair associated with the form $\sh$}, if:

(a) $(\sH^{1,D}:=)\, \ker\wt\Gamma_0$ is dense in $\sH$;

(b) $(\cH^{1/2}:=)\, \ran \wt\Gamma_0$ is dense in $\cH$.

\noindent A pair $(\cH,\wt\Gamma_0)$ is said to be \emph{bounded}\index{Positive boundary pair!bounded} if
$\ran \wt\Gamma_0=\cH$, otherwise it is said to be unbounded.
\end{definition}

Since $\wt\Gamma_0$ is bounded its kernel defines a closed
restriction of the form $\sh$, which we denote here by
$\sh_0(f)=\sh(f)$, $f\in\ker \wt\Gamma_0$. By assumption (a) the
forms $\sh_0$ and $\sh$ are densely defined in $\sH$ and we denoted
by $H_0$ and $H$ the selfadjoint operators associated with the
closed forms $\sh_0$ and $\sh$, respectively. Next we associate a
symmetric operator and its adjoint with the boundary pair
$(\cH,\wt\Gamma_0)$ via
\[
 A:=H_0\cap H, \quad A^*=\clos\left(H_0\hplus H\right).
\]
In general, $A$ need  not be densely defined, in which case $A^*$ is
multivalued; in what follows we assume that $A$ is densely defined.
By definition $H_0$ and $H$ are disjoint selfadjoint extensions of
$A$. Recall that $\dom A^*=\dom H_0\dot{+}\ker(A^*-\lambda I)$,
$\lambda\in\rho(H_0)$, and there is similar decomposition with $H$.
Since $\sh_0\subset \sh$, one has $H\leq H_0$ or, equivalently,
$(H+a)^{-1}\geq (H_0+a)^{-1}$ for all $a>0$. Hence, see
\cite[Lemma~2.2]{HSSW07}, one can write
\[
 \dom H^{1/2}=\dom H_0^{1/2}+ \ran((H+a)^{-1}-(H_0+a)^{-1})^{1/2},
\]
and since clearly $\ran((H+a)^{-1}-(H_0+a)^{-1})\subset \ker
(A^*+a)$, one obtains
\[
 \dom\sh=\dom \sh_0 \,{+}\, (\sN_{-a}\cap \dom \sh), \quad a>0.
\]
This sum is not in general direct, since $\sN_{-a}\cap \dom \sh_0$
is nontrivial, whenever $H_0\neq A_F$; see
\cite[Proposition~2.4]{HSSW07}. This sum can be made direct with an
additional restriction on $\sN_\lambda$. As shown in
\cite[Propositions~2.9]{Post16} the set of so-called weak solutions
with a fixed $\lambda\in\dC$ defined by
\begin{equation}\label{Post00}
 \sN^1_\lambda:=\left\{ f\in \sH^1:\, \sh(f,g)-\lambda (f,g)_\sH=0,\, \forall \, g\in \dom \sh_0  \right\}
\end{equation}
leads to the following direct sum decomposition for every
$\lambda\in \rho(H_0)$:
\begin{equation}\label{Post0}
 \dom\sh=\dom \sh_0 \,\dot{+}\, \sN^1_\lambda.
\end{equation}
Here $\sN^1_\lambda\,(\subset \sN_\lambda\cap \dom \sh)$ is closed
in $\sH^1$, $\sN^1_\lambda$ is dense in $\ker(A^*-\lambda)$, and
$\sN^1_\lambda\cap \dom \sh_0=\{0\}$. The restriction
$\wt\Gamma_0\uphar\sN^1_\lambda$ is a bounded operator from
$\sN^1_\lambda$ into $\cH$ and the decomposition~\eqref{Post0}
implies that it is injective and its range is equal to $\ran
\wt\Gamma_0$. The inverse operator
\[
 S(\lambda):=(\wt\Gamma_0\uphar\sN^1_\lambda)^{-1}: \cH^{1/2}\to \sN^1_\lambda
\]
is closed as an operator from $\cH$ to $\sH^1$ with domain
$\cH^{1/2}=\ran\wt\Gamma_0$.

\begin{definition}[\cite{Post16}]\label{DefPost}
The boundary pair $(\cH,\wt\Gamma_0)$ associated with the form $\sh$\index{Positive boundary pair!elliptically regular}
is said to be \emph{elliptically regular}, if the operator
$S:=S(-1)$ is bounded as an operator from $\cH$ to $\sH$, i.e.
$\|Sh\|_\sH\leq C\|h\|_\cH$ for all $h\in\cH^{1/2}$ and some $C\geq
0$. Moreover, the boundary pair $(\cH,\wt\Gamma_0)$ is said to be\index{Positive boundary pair! uniformly positive}
\emph{(uniformly) positive}, if there is a constant $c>0$, such that
$\|Sh\|_\sH\geq c\|h\|_\cH$ for all $h\in\cH^{1/2}$.
\end{definition}

Let $\lambda=-1$ and define the form $\sfl[h,k]$ on $\cH$ by
\[
 \sfl[h,k]=(Sh,Sk)_{\sH^1}, \quad h,k\in \cH^{1/2}.
\]
The form $\sfl$ is closed in $\cH$, since $S:\cH\to\sH^1$ is a
closed operator. Hence, associated with $\sfl$ there is a unique
selfadjoint operator $\Lambda$ in $\cH$ characterized by the
equality
\[
 \sfl[h,k]=(\Lambda h,k)_\cH,\qquad h\in \dom \Lambda, \quad k\in \dom
 \sfl=\cH^{1/2}.
\]
It is clear that $\Lambda=S^*S$, where $S^*:\sH^1\to\cH$ is the
usual Hilbert space adjoint. The operator $\Lambda$ is called the
\emph{Dirichlet-to-Neumann operator} at the point $\lambda=-1$
associated with the boundary pair $(\cH,\wt\Gamma_0)$. The
\emph{(strong) Dirichlet-to-Neumann operator} at a point $\lambda\in
\rho(H_0)$ is defined as follows (\cite[Section 2.4]{Post16}):
\begin{equation}\label{defLambda}
 \dom \Lambda(\lambda):= \left\{ \varphi\in \cH^{1/2}:\,
 \exists \psi\in\cH \text{ such that }
 (\sh-\lambda)(S(\lambda)\varphi,S\eta)=(\psi,\eta)_\cH, \quad \forall \eta\in\cH^{1/2} \, \right\}
\end{equation}
and then $\Lambda(\lambda)\varphi:=\psi$. The
operator $\Lambda(\lambda)$ is closed in $\cH$ and it has bounded
inverse operator $\Lambda(\lambda)^{-1}\in \cB(\cH)$ for all
$\lambda\in\rho(H_0)$; see \cite[Proposition~2.17]{Post16}.

Next consider the restriction of $A^*$ to the form domain of
$\sh$ 
\begin{equation}\label{Post1}
 \sH_0^1:=\left\{ f\in \sH^1\cap \dom A^*:\, \sh(f,g)=(A^*f,g)_\sH,\, \forall \, g\in \dom \sh_0
 \right\}
\end{equation}
be equipped with the norm defined by
$\|f\|^2_{\sH_0^1}=\sh(f)+\|f\|^2+\|A^*f\|^2$, which makes $\sH_0^1$
a Hilbert space. Now using the rigged Hilbert space
$\cH^{1/2}\subset\cH\subset\cH^{-1/2}$ introduce a bounded operator
$\check{\Gamma}_1:\sH_0^1\to \cH^{-1/2}$ such that
\begin{equation}\label{Post2}
 (\check{\Gamma}_1f,\wt\Gamma_0g)_{-1/2,1/2}=(A^*f,g)_\sH-\sh(f,g)
\end{equation}
holds for all $f\in \sH_0^1$ and $g\in\sH^1$; this map is well
defined by the formulas~\eqref{Post1},~\eqref{Post2}. Finally, we
introduce the restriction $A_*$ of $A^*$ by
\[
 \dom A_*:=\left\{ g\in \sH_0^1:\, \check{\Gamma}_1 g
 \in\cH \right\}
\]
and denote $\Gamma_0=\wt\Gamma_0\uphar \dom A_*$,
$\Gamma_1=\check{\Gamma}_1\uphar \dom A_*$. By definition (the first
Green's identity)
\begin{equation}\label{GreenEq1}
 \sh(f,g)=(A^*f,g)_\sH-(\Gamma_1f,\wt\Gamma_0g)_\cH
\end{equation}
holds for all $f\in\dom A_*$ and $g\in\sH^1$. In what follows the
triple $\{\cH,\Gamma_0,\Gamma_1\}$ with the domain $\dom A_*=\dom
\Gamma_0\cap\dom \Gamma_1$ is called a boundary triple generated by
the boundary pair $(\cH,\wt\Gamma_0)$. The next result characterizes
the central properties of the boundary pair $(\cH,\wt\Gamma_0)$ by
means of the boundary triple $\{\cH,\Gamma_0,\Gamma_1\}$. In
particular, it shows that the notion of boundary pair in Definition
\ref{BP} can be included in the framework of unitary boundary
triples whose Weyl function are Nevanlinna functions from the class
$\cR^s(\cH)$.

\begin{theorem}\label{ThmFpair}
Let $(\cH,\wt\Gamma_0)$ be a boundary pair for the closed
nonnegative form $\sh$ in $\sH$ and let $\{\cH,\Gamma_0,\Gamma_1\}$
be the corresponding triple as defined above. Then:
\begin{enumerate}\def\labelenumi {\textit{(\roman{enumi})}}

\item $\{\cH,\Gamma_0,\Gamma_1\}$ is a unitary boundary triple for
$A^*$;

\item $A_0:=A^*\uphar\ker \Gamma_0$ is a symmetric restriction of
$H_0$, while $A_1:=A^*\uphar\ker \Gamma_1$ is selfadjoint and it is
equal to $H$;

\item the $\gamma$-field and the Weyl function $M(\cdot)$ of the boundary triple $\{\cH,\Gamma_0,\Gamma_1\}$
are given by
\[
 \gamma(\lambda)=S(\lambda)\uphar\dom \Lambda(\lambda), \quad
 M(\lambda)=-\Lambda(\lambda), \quad \lambda\in \rho(H_0);
\]

\item the transposed triple $\{\cH,\Gamma_1,-\Gamma_0\}$ is
a $B$-generalized boundary triple for $A^*$;

\item $\{\cH,\Gamma_0,\Gamma_1\}$ is $ES$-generalized, i.e., $\clos A_0=H_0$ if and only if
$S(\lambda)$ is closable when treated as an operator from
$\cH\to\sH$ for some (equivalently for all) $\lambda\in\rho(H_0)$;

\item $(\wt\Gamma_0,\cH)$ is elliptically regular if and only if
$\{\cH,\Gamma_0,\Gamma_1\}$ is an $S$-generalized boundary triple;

\item $(\wt\Gamma_0,\cH)$ is uniformly positive if and only if
$\Gamma_0:A_*\to \cH$ is a bounded operator (w.r.t. the graph norm
on $A_*$) or, equivalently, the form $\st_{M(\lambda)}$ has a
positive lower bound for some (equivalently for every)
$\lambda\in\cmr$;

\item $(\wt\Gamma_0,\cH)$ is bounded if and only if $\{\cH,\Gamma_0,\Gamma_1\}$ is a $B$-generalized boundary
triple;

\item $(\wt\Gamma_0,\cH)$ is bounded and uniformly positive if and only if
$\{\cH,\Gamma_0,\Gamma_1\}$ is an ordinary boundary triple.
\end{enumerate}
\end{theorem}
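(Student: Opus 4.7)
The plan is to establish the items in an order reflecting their logical dependence: first (ii) and the isometric character of $\Gamma = \{\Gamma_0,\Gamma_1\}$ by directly applying the first Green's identity (\ref{GreenEq1}); then an explicit computation of the Weyl function at the single real regular point $x = -1$, yielding $M(-1) = -\Lambda(-1)$, followed by an appeal to Theorem \ref{MRealreg} which simultaneously delivers (i) and (iii); finally, the remaining items (iv)--(ix) translate into equivalent statements about the explicit Weyl function $M(\lambda) = -\Lambda(\lambda)$ via the characterization results of Sections \ref{sec4}--\ref{sec5}.

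To prove (ii) I would argue: if $f \in \dom A_*$ satisfies $\Gamma_1 f = 0$ then (\ref{GreenEq1}) reduces to $\sh(f,g) = (A^*f, g)_\sH$ for every $g \in \sH^1$, so $f \in \dom H$ and $Hf = A^*f$ by the first representation theorem; conversely for $f \in \dom H$ density of $\ran \wt\Gamma_0$ in $\cH$ forces $\check\Gamma_1 f = 0 \in \cH$, hence $A_1 = H$. Restricting (\ref{GreenEq1}) to $g \in \ker\wt\Gamma_0 = \dom \sh_0$ gives $A_0 \subset H_0$, in general as a proper restriction since $f \in \dom H_0$ need not satisfy the additional constraint $\check\Gamma_1 f \in \cH$. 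The abstract second Green identity (\ref{Greendef1}) for $\Gamma$ then emerges from applying (\ref{GreenEq1}) twice to $f,g \in \dom A_*$ and subtracting, the Hermitian symmetry $\overline{\sh(g,f)} = \sh(f,g)$ cancelling the form contributions.

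For the Weyl function I would test with $f_\lambda = S(\lambda)\varphi$, $\varphi \in \dom \Lambda(\lambda)$, and decompose an arbitrary $g \in \sH^1$ via (\ref{Post0}) as $g_0 + S\eta$ with $g_0 \in \dom \sh_0$, $\eta = \wt\Gamma_0 g \in \cH^{1/2}$. The $g_0$--contribution of $(\sh - \lambda)(f_\lambda, g)$ vanishes because $f_\lambda \in \sN^1_\lambda$, while the $S\eta$--contribution equals $(\Lambda(\lambda)\varphi,\eta)_\cH$ by the definition (\ref{defLambda}); comparing with (\ref{Post2}) yields $\check\Gamma_1 f_\lambda = -\Lambda(\lambda)\varphi \in \cH$, so $f_\lambda \in \dom A_*$, $\gamma(\lambda) = S(\lambda)\uphar \dom \Lambda(\lambda)$, and $M(\lambda) = -\Lambda(\lambda)$, which is (iii). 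At $x = -1$ the function $M(-1) = -S^*S$ is selfadjoint and $M(-1) - I \leq -I$ is boundedly invertible; combined with $H = A_1 \subset \dom \Gamma$ and $-1 \in \rho(H)$, Theorem \ref{MRealreg} upgrades the isometric triple to a unitary one and delivers (i). The required density $\cdom \Gamma = A^*$ is obtained by combining $H \subset \dom \Gamma$ with the inclusion $S(\lambda)(\dom \Lambda(\lambda)) \subset \dom \Gamma \cap \sN_\lambda$ and the density of weak solutions inside $\sN_\lambda$ from \cite[Proposition~2.9]{Post16}.

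The residual items (iv)--(ix) then follow by translation. For (iv), $M^\top(\lambda) = -M(\lambda)^{-1} = \Lambda(\lambda)^{-1} \in \cB(\cH)$ by \cite[Proposition~2.17]{Post16}, so the transposed Weyl function lies in $\cR[\cH]$ and Theorem \ref{thm:WF_BG_BT} identifies the transposed triple as $B$--generalized. For (v)--(vii), Theorem \ref{essThm1}, Theorem \ref{prop:C6B}(iv), and Proposition \ref{essProp3B}(iii) reduce respectively the $ES$--generalized property, the $S$--generalized property, and boundedness of $\Gamma_0$ to closability, boundedness, and bounded--belowness of the $\gamma$--field; each is transferred to the corresponding property of $S$ via standard resolvent identities linking $S(\lambda)$ and $S = S(-1)$ through $(H_0 - \lambda)^{-1}$. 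For (viii), boundedness of the pair gives $\cH^{1/2} = \cH$ and hence $\Lambda(\lambda) \in \cB(\cH)$ by the closed graph theorem, placing $M \in \cR^s[\cH]$ and identifying the triple as $B$--generalized through Theorem \ref{thm:WF_BG_BT}; (ix) is the combination of (vi) and (viii) via Proposition \ref{charOBT}. The two delicate points on which the plan hinges are the verification of $\cdom \Gamma = A^*$ when $A_0$ is a proper restriction of $H_0$, and the passage in (v)--(vii) between properties of $S(\lambda)$ on its form domain $\cH^{1/2}$ and those of its restriction $\gamma(\lambda) = S(\lambda)\uphar \dom \Lambda(\lambda)$; both rest on the density of $\dom \Lambda(\lambda)$ in $\cH^{1/2}$ in the appropriate topology, following the elliptic--regularity arguments in \cite{Post16}.
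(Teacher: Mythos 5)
Your proposal is correct and follows essentially the same route as the paper: Green's first identity yields the isometric property and $A_1=H$, the computation at the real regular point $x=-1$ combined with Theorem \ref{MRealreg} gives unitarity, and items (iv)--(ix) are reduced to the same characterization theorems (\ref{thm:WF_BG_BT}, \ref{essThm1}, \ref{prop:C6}, \ref{essProp3B}, \ref{charOBT}). The only step stated slightly too quickly is the equality $M(\lambda)=-\Lambda(\lambda)$ for nonreal $\lambda$: your test-function argument gives only the inclusion $-\Lambda(\lambda)\subset M(\lambda)$, and the reverse inclusion needs the maximality argument via $\Lambda(\lambda)^{-1}\in\cB(\cH)$ and $\ker M(\lambda)=\{0\}$, exactly as the paper does.
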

\begin{proof}
(i) First observe that the first Green's identity~\eqref{GreenEq1}
applied to $h[f,g]$ and $\overline{h[g,f]}$ with $f,g\in\dom A_*$
leads to the second Green's identity~\eqref{Greendef1} by symmetry
of the form $\sh$. The second Green's identity~\eqref{Greendef1}
implies that the restrictions $A_0=A^*\uphar\ker \Gamma_0$ and
$A_1=A^*\uphar\ker \Gamma_1$ are symmetric operators extending $A$.

Next we prove that the (graph) closure of $A_*$ is $A^*$ and that
$\{\cH,\Gamma_0,\Gamma_1\}$ is a unitary boundary triple for $A^*$.
It is clear from~\eqref{Post1} that the set of weak solutions
$\sN^1_\lambda$ belongs to $\sH_0^1$. Since $H_0$ is the selfadjoint
operator associated with the form $\sh_0$ by the first
representation theorem of Kato and $\sh_0\subset \sh$, we conclude
from~\eqref{Post1} that $\dom H_0\subset\sH_0^1$. Similarly $H$ is
the selfadjoint operator associated with the form $\sh$ and, hence
also $\dom H\subset\sH_0^1$. Now applying~\eqref{Post2} with
$f\in\dom H$ and $g\in\sH^1$ taking into account that $\ran\Gamma_0$
is dense in $\cH$ by assumption (b) in Definition~\ref{BP} we
conclude that $\check{\Gamma}_1f=0$. Hence, $\dom H\subset \dom A_*$
and $\Gamma_1(\dom H)=\{0\}$. Thus, $H\subset A_1$ and since $A_1$
is symmetric this implies that $A_1=H$ is selfadjoint. Now consider
the operator $\Lambda=S^*S$. Since $\dom\Lambda$ is a core for the
form $\sfl$ it is also a core for the operator $S$. This implies
that $S(\dom \Lambda)$ is dense in $\sN_{-1}^1$ w.r.t. the topology
in $\sH^1$, since $S$ has bounded inverse. We claim that $S(\dom
\Lambda)\subset \dom A_*$. To see this we consider the form
\begin{equation}\label{s1}
 \sh(f,g)-(A^*f,g)_\sH, \qquad f\in \sH_0^1, \quad
 g\in\sH^1.
\end{equation}
Notice that $\sN^1_\lambda\subset \sH_0^1$, see~\eqref{Post00},
~\eqref{Post1}, and that the decomposition~\eqref{Post0} for
$\lambda=-1$ is orthogonal in $\sH^1$. Hence, one can write
$g=g_0+g_1\in \dom \sh_0 \oplus_1 \sN^1_{-1}$, $g\in\sH^1=\dom \sh$.
Now for $h\in\dom \Lambda$ one has $Sh\in \sN^1_{-1}$ and for all
$g=g_0\in \dom \sh_0=\ker\Gamma_0$,
\[
 \sh(Sh,g_0)-(A^*Sh,g_0)_\sH=\sh(Sh,g_0)+(Sh,g)_\sH=(Sh,g_0)_{\sH^1}=0.
\]
On the other hand, when $g=g_1\in \sN^1_{-1}$, then
$k=\Gamma_0g_1\in \cH^{1/2}$ satisfies $g_1=Sk$. This leads to
\[
 \sh(Sh,g_1)-(A^*Sh,Sk)_\sH=\sh(Sh,Sk)+(Sh,Sk)_\sH=(Sh,Sk)_{\sH^1}=(\Lambda h,k)_{\sH}=(\Lambda h,\Gamma_0g_1)_{\sH}.
\]
We conclude that for $f=Sh$, $h\in\dom \Lambda$, and all $g\in
\sH^1$ the form~\eqref{s1} can be rewritten as follows
\[
 \sh(Sh,g)-(A^*Sh,g)_\sH=(\Lambda h,\Gamma_0g)_{\sH}.
\]
Comparing this formula with~\eqref{Post2} we conclude that
$\check{\Gamma}_1Sh=\Gamma_1Sh=-\Lambda h\in \cH$, which proves the
claim $S(\dom \Lambda)\subset \dom A_*$.

Since $S(\dom \Lambda)$ is dense in $\sN_{-1}^1$ and $\dom H\subset
\dom A_*$, the closure of $A_*$ is equal to the closure of
$H+\wh\sN_{-1}^1$, which coincides with $A^*$. Hence, the domain of
$\{\Gamma_0,\Gamma_1\}$ is dense in $\dom A^*$ w.r.t. the graph
topology. As was shown above $\Gamma_1Sh=-\Lambda h$ for all
$h\in\dom \Lambda$ and, in addition, $\Gamma_0Sh=h$. Since
$S(\dom\Lambda)\subset \sN_{-1}(A_*)$ this implies that for the
regular point $\lambda=-1\in \rho(H)$ one has
\[
 -\Lambda \subset M(-1).
\]
Here equality $M(-1)=-\Lambda $ prevails, since $M(-1)$ is
necessarily symmetric by Green's identity~\eqref{Greendef1}.
Clearly, $M(-1)-I= -\Lambda-I\le -I$ and thus $0\in\rho(M(-1)-I)$.
Therefore, we can apply Theorem~\ref{MRealreg} to conclude that
$\{\cH,\Gamma_0,\Gamma_1\}$ is a unitary boundary triple for $A^*$
with dense domain $A_*$.

(ii) The equality $A_1=H$ was already proved in item (i). Next we
prove the inclusion $A_0\subset H_0$. The first Green's identity
~\eqref{GreenEq1} shows that
\begin{equation}\label{GreenEq1B}
 \sh(f,g)=(A^*f,g)_\sH, \quad \text{for all } f\in \dom A_*,\quad g\in
\ker\wt\Gamma_0=\dom\sh_0.
\end{equation}
If, in particular, $f\in\dom A_0$ i.e. $\Gamma_0f=0$, then $f\in
\dom\sh_0$ and~\eqref{GreenEq1B} can be rewritten as
\[
 \sh_0(f,g)=(A_0f,g)_\sH, \quad \text{for all } g\in \dom\sh_0.
\]
Now by the first representation theorem (see \cite{Kato}) one
concludes that $f\in\dom H_0$ and $A_0f=H_0f$. Therefore,
$A_0\subset H_0$.

(iii) It was shown in part (i) that $\ran
S(\lambda)=\sN^1_\lambda\subset \sH_0^1$ for each
$\lambda\in\rho(H_0)$. Now assume in addition that $h\in\dom
\Lambda(\lambda)$ and let $g\in\sH^1$. Then the definition of
$\Lambda(\lambda)$ shows that
\[
 \sh(S(\lambda)h,g)-(A^*S(\lambda)h,g)_\sH=(\sh-\lambda I)[S(\lambda)h,g]
 =(\Lambda(\lambda) h,\Gamma_0g)_{\sH}.
\]
Comparing this formula with~\eqref{Post2} we conclude that
$\check{\Gamma}_1S(\lambda)h=\Gamma_1S(\lambda)h=-\Lambda(\lambda)h\in
\cH$, which shows that $S(\dom \Lambda(\lambda))\subset \dom A_*$
and, moreover, that $M(\lambda)h=-\Lambda(\lambda)h$. Therefore,
\[
 -\Lambda(\lambda)\subset M(\lambda), \quad \lambda\in\rho(H_0).
\]
Equivalently, $\Lambda(\lambda)^{-1}\subset -M(\lambda)^{-1}$ and
since $M(\cdot)$ is the Weyl function of a single valued unitary
boundary triple, $M(\cdot)\in\cR^s(\cH)$, in particular, $\ker
M(\lambda)=\{0\}$; see~\eqref{stric-unb}. On the other hand,
$\Lambda(\lambda)^{-1}\in\cB(\cH)$ and, hence, the equality
$\Lambda(\lambda)^{-1}= -M(\lambda)^{-1}$ follows. The equality
$\gamma(\lambda)=S(\lambda)\uphar \dom M(\lambda)$ is clear, and the
formulas for $\gamma(\lambda)$ and $M(\lambda)$ are proven.

(iv) Since $\Lambda(\cdot)^{-1}\in \cB(\cH)$ and
$-M(\lambda)^{-1}=\Lambda(\cdot)^{-1}$ by part (iii) the transposed
boundary triple is $B$-generalized; see Theorem~\ref{thm:WF_BG_BT}.

(v)  By definition $\{\cH,\Gamma_0,\Gamma_1\}$ is $ES$-generalized
if and only if $A_0$ is essentially selfadjoint, which in view of
(ii) means that $\clos A_0=H_0$. On the other hand, by
Theorem~\ref{essThm1} and Remark~\ref{rem:Mderiv}
$\{\cH,\Gamma_0,\Gamma_1\}$ is $ES$-generalized if and only if
$\gamma(\lambda)$ is closable for some (equivalently for all)
$\lambda\in\rho(H_0)$. 

Since $\gamma(\lambda)\subset S(\lambda)$, it is clear that if
$S(\lambda)$ is closable then also $\gamma(\lambda)$ is closable. On
the other hand, it follows from \cite[Theorems~2.11,
Proposition~2.17]{Post16} that $\dom \Lambda(\lambda)$ is dense
w.r.t. the $\cH^{1/2}$-topology on $\cH^{1/2}$ and that
\[
 \overline{S(\lambda)\uphar\dom\Lambda(\lambda)}^{\,\cH^{1/2}\to\sH^1}=S(\lambda),
\]
since $S(\lambda):\cH^{1/2}\to\sN^1_\lambda$ is a topological
isomorphism. Since the topologies on $\cH^{1/2}$ and $\sH^1$ are
stronger than the topologies on $\cH$ and $\sH$ it follows that if
$\gamma(z):\cH\to\sH$ is closable, then also $S(z):\cH\to\sH$ is
closable and
\[
 \overline{\gamma(\lambda)}^{\,\cH\to\sH}
 =\overline{S(\lambda)}^{\,\cH\to\sH}.
\]

(vi) When $(\wt\Gamma_0,\cH)$ is elliptically regular, then
$S:\sH^1\to \cH$ is a bounded operator. Then equivalently the
$\gamma$-field  $\gamma(\lambda)$ is bounded for all
$\lambda\in\rho(H_0)$, cf. \cite[Theorem~2.11]{Post16}, and the
statement is obtained from Theorem~\ref{prop:C6}.

(vii) If $(\wt\Gamma_0,\cH)$ is (uniformly) positive then
$S(\lambda)$, $\lambda\in\rho(H_0)$ is bounded from below; cf.
\cite[Theorem~2.11]{Post16}. In view of~\eqref{Green3U} this means
that the form $\st_{M(\lambda)}$ has a positive lower bound. Now the
statement follows from Proposition~\ref{essProp3B}, since $A_1=H$ is
selfadjoint by part (iii).

(viii) If $(\wt\Gamma_0,\cH)$ is bounded, i.e.,
$\ran\wt\Gamma_0=\cH^{1/2}=\cH$, then $S:\cH\to\sH^1$ is closed (as
an inverse of a bounded operator $\wt\Gamma_0\uphar\sN^1_{-1}$),
everywhere defined, and bounded by the closed graph theorem. In
particular, $\{\cH,\Gamma_0,\Gamma_1\}$ is a $B$-generalized
boundary triple. On the other hand, we conclude that the form
$(\sh+1)(Sh,Sk)$ is closed and defined everywhere on $\cH$. Now it
follows from~\eqref{defLambda} that $\dom \Lambda(-1)=\cH$. This
implies that $M(\cdot)\in\cR^s[\cH]$; see e.g.~\eqref{eq:M_S_gen} in
Theorem~\ref{prop:C6}. Therefore, $\{\cH,\Gamma_0,\Gamma_1\}$ is a
$B$-generalized boundary triple by Theorem~\ref{thm:WF_BG_BT}.

The converse statement is clear, since $\ran\Gamma_0=\cH$ implies
that also $\ran\wt\Gamma_0=\cH$.

(ix) This follows directly e.g. from Proposition~\ref{charOBT}.
Alternatively, by (vi) and (vii) the conditions mean that
$M(\cdot)\in\cR^u[\cH]$, and then the result follows from
Theorem~\ref{thm:WF_Ord_BT}.
\end{proof}

\begin{remark}
(a) Characterizations (viii) and (ix) have been announced (without
proofs) in \cite[Theorem~1.8]{Post16}. Moreover, elliptic regularity
has been characterized in \cite[Theorem~1.8]{Post16} using
equivalence to quasi boundary triples. However, as indicated the
conditions defining a quasi boundary triple are not sufficient to
guarantee that the corresponding Weyl function belongs to the class
of Nevanlinna functions. In this sense the characterization of
elliptic regularity presented in (vi) is more precise and complete.
As to (vii) a characterization of positive boundary pairs via
uniform positivity of the form valued function $z\to-\sfl_z$ appears
in \cite[Theorem~3.13]{Post16}, while the other characterization
that $\Gamma_0:A_*\to \cH$ is a bounded operator, as well as the
statements (i) -- (v) in Theorem~\ref{ThmFpair} are obviously new.

(b) Since $H_0$  and $H$ are nonnegative selfadjoint operators, the
Weyl functions $M(\cdot)$ and $-M(\cdot)^{-1}$ admit analytic
continuations (in the resolvent sense) to the negative real line. In
fact, $M(\cdot)$ belongs to the class of operator valued (in general
unbounded) \emph{inverse Stieltjes functions}, while
$-M(\cdot)^{-1}$ belongs to the class of operator valued
\emph{Stieltjes functions}. Essentially these facts follow from the
following formula:
\[
 (M(x)h,h)=(\sh-x)[(H_0+1)(H_0-x)^{-1}h,h] \leq 0, \quad h\in\dom M(x),\quad
 x<0.
\]
\end{remark}

\section{Applications to Laplace operators}\label{sec7}

In this section the applicability of the abstract theory developed
in the preceding sections is demonstrated for the analysis of some
classes of differential operators. First we consider the most
standard case of elliptic PDE by treating Laplacians in smooth\index{Laplacian}\index{Laplacian!on smooth domain}
bounded domains; in this case many of the abstract results
take a rather explicit form.

\subsection{The Kre\u{\i}n - von Neumann Laplacian}\label{sec7.1}
\index{Kre\u{\i}n - von Neumann Laplacian}
Let $\Omega$ be a bounded domain in $\dR^d$ $(d\ge 2)$ with a smooth
boundary $\partial\Omega$. Consider the differential expression
$\ell:=-\Delta$, where $\Delta$ is a Laplacian operator in  $\Omega$
and denote by $\ZA_{\min}$ and $\ZA_{\max}$ the minimal and the maximal
differential operators generated in $H^0(\Omega):=L^2(\Omega)$ by
the differential expression $\ell$. Let $\gamma_D$ and $\gamma_N$ be
the Dirichlet and Neumann trace operators \index{Trace operators} defined for any $f\in
 H^2(\Omega)$  by
  \begin{equation}\label{eq:3.24}
\gamma_D :\left.\,f\mapsto f \right|_{\partial\Omega}
,\quad  \gamma_N :\,\left.f\mapsto \frac{\partial f}{\partial n}\right
|_{\partial\Omega} = \sum_{j=1}^d n_j \gamma_D\frac{\partial f}{\partial x_j}
   \end{equation}
where $n = (n_1, \ldots, n_d)$ is the outward unit normal  to the boundary
$\partial\Omega$.
Then the mapping 
\begin{equation}\label{eq:Trace_Thm}
    \left(\begin{array}{cc}
\gamma_D \\ \gamma_N \\
\end{array}
\right):\,f\in H^2(\Omega) \mapsto \left(\begin{array}{cc}
\gamma_D f\\ \gamma_N f \\
\end{array}
\right)\in \left(\begin{array}{cc}
H^{3/2}(\partial\Omega)\\ H^{1/2}(\partial\Omega)
\end{array}
\right)
\quad\textup{is bounded and onto}
\end{equation}
(see 
\cite[Thm 1.8.3]{LionsMag72}). It is
known (see, for instance, \cite{Ber}) that  $\ZA_{\max} =
\ZA_{\min}^*\,(=\ZA^*)$ and
\[
\dom \ZA_{\min} = \{f\in H^{2}_0(\Omega):\,\gamma_D f=\gamma_N f=0\,\}.
\]
%
%
Clearly, $\dom \ZA_{\max} \supset  H^{2}(\Omega)$. However, an
explicit description of $\dom \ZA^*$ is unknown while Lions and
Magenes  \cite{LionsMag72} have shown that the mappings $\gamma_D$
and $\gamma_N$ defined on $H^{2}(\Omega)$ extend to continuous
mappings from the domain of the maximal operator,
\begin{equation}\label{eq:TraceThm0}
\gamma_D : \dom \ZA^*   \to H^{-1/2}(\partial\Omega), \quad \gamma_N :
\dom \ZA^*   \to H^{-3/2}(\partial\Omega)
\end{equation}
and these mappings are surjective.

The differential expression $\ell$ admits two classical selfadjoint realizations, the
Dirichlet Laplacian $-\Delta_D$ \index{Dirichlet Laplacian}and the Neumann Laplacian \index{Neumann Laplacian } $-\Delta_N$, given by $\ell$
on the domains
\begin{equation}\label{eq:dom_Delta_D}
    \dom \Delta_D = \{f\in H^{2}(\Omega):\,\gamma_D f=0\,\} \quad \text{and}
\quad \dom \Delta_N = \{f\in H^{2}(\Omega):\,\gamma_N f=0\,\},
\end{equation}
respectively.

General, not necessarily local, boundary value problems for elliptic operators have been
studied in the pioneering works of Vi\v{s}ik~\cite{Vi52} and Grubb~\cite{Gr68} (see
also~\cite{Gr09}, \cite{MMM10}, \cite{GeMi11}, \cite{BeMi14} for further developments
and applications).

Denote by $H_\Delta^{s}(\Omega)$ the following space
%
  \begin{equation}\label{eq:S*}
H_\Delta^{s}(\Omega):=  H^{s}(\Omega)\cap \dom \ZA_{\max} =
\left\{f\in H^{s}(\Omega):\Delta f\in L^2(\Omega)\right\},\quad 0\le
s \le 2,  
   \end{equation}
%
and equip it with the graph norm
$\|f\|_{H_\Delta^{s}(\Omega)}=(\|f\|^2_{H^{s}}+\|\ZA_{\max}f\|^2_{L^2(\Omega)})^{1/2}$
of $-\Delta$ on $H^{s}(\Omega)$.

According to the Lions-Magenes result (\cite[Theorem 2.7.3]{LionsMag72})  the trace
operators $\gamma_D$ and $\gamma_N$ admit continuous extensions to the operators
%
  \begin{equation}\label{eq:3.24A}
 \gamma^s_D :\,H_\Delta^{s}(\Omega) \to H^{s-1/2}(\partial\Omega),\quad
 \gamma^s_N :\,H_\Delta^{s}(\Omega) \to H^{s-3/2}(\partial\Omega),\quad 0< s \le 2, 
   \end{equation}
which are surjective. It is emphasized that the values $s= 1/2$  and
$s= 3/2$ are not excluded here. At the same time the traces
$\gamma^s_D :\,H^{s}(\Omega) \to H^{s-1/2}(\partial\Omega)$ and
$\gamma^s_N :\,H^{s}(\Omega) \to H^{s-3/2}(\partial\Omega)$  are
continuous mappings if and only if  $s> 1/2$ and $s > 3/2$,
respectively, (see (\cite[Theorems 1.9.4, 1.9.5]{LionsMag72} and
\cite{Agran2015}). In the latter case both mappings
in~\eqref{eq:3.24A} are surjective. Moreover, for $s>3/2$ the
mapping $\gamma^s_D\times \gamma^s_N :\,H^{s}(\Omega) \to
H^{s-1/2}(\partial\Omega)\times H^{s- 3/2}(\partial\Omega)$ is also
surjective.

When treating the traces $\gamma^s_D$ and $\gamma^s_N$ as mappings
into $L^2(\partial\Omega)$ a natural choice for the index is
$s=3/2$; see Remark~\ref{remBTs} below.
The restriction of $\ZA^*$ to the domain
\begin{equation}\label{eq:wtS*0}
 \dom \ZA_*=H_\Delta^{3/2}(\Omega)
\end{equation}
is called a pre-maximal operator and is denoted by $\ZA_*$.

The Dirichlet Laplacian $-\Delta_D$
is an invertible selfadjoint operator in $L^2(\Omega)$ with a
discrete spectrum $\sigma_p(-\Delta_D)$. Define a solution operator
$\cP(z):\, L^2(\partial\Omega)\to H^{1/2}(\Omega)$ for
$z\in\dC\setminus\sigma_p(-\Delta_D)$. Let $\varphi \in
L^2(\partial\Omega)$ and let $f_z\in\dom \ZA_{\max}$ be the unique
solution of the Dirichlet  problem
  \begin{equation}\label{eq:Pz}
-\Delta f_z-zf_z=0,\quad \gamma_Df = \varphi  
  \end{equation}
Then the operator $\cP(z):\,  \varphi \mapsto f_z$  is continuous as
an operator from $L^2(\partial\Omega)$ to $H^{1/2}(\Omega)$ and it
maps $H^1(\partial\Omega)$ into $H^{3/2}(\Omega)$; see \cite{Gr68}.
Hence the Poincar\'e-Steklov operator $\Lambda(z)$ defined by\index{Poincar\'e-Steklov operator}
\begin{equation}\label{eq:PSt}
    \Lambda (z)\varphi := \gamma_N\cP(z) \varphi,
\end{equation}
maps $H^1(\partial\Omega)$ into $L^2(\partial\Omega)$ with
continuous extension from $H^{-1/2}(\partial\Omega)$ to
$H^{-3/2}(\partial\Omega)$. Moreover, the Dirichlet-to-Neumann map
\index{Dirichlet-to-Neumann map} $\Lambda:=\Lambda(0)$ treated as an
operator   in $L^2(\partial\Omega)$ is selfadjoint on the domain
$\dom \Lambda=H^1(\partial\Omega)$; see \cite{MMM10}.

Following \cite{Vi52} and~\cite{Gr68} we introduce the regularized
trace operators \index{Trace operators!regularized} as follows
  \begin{equation}\label{eq:GBTL}
\wt\Gamma_{0,\Omega}f = (\gamma_N - \Lambda(0)\gamma_D)f, \qquad \wt
\Gamma_{1,\Omega}f=\gamma_D f,\quad f\in \dom S_{*}.
 \end{equation}
%
%
It is proved in \cite{Gr68} that the mappings $\wt \Gamma_{0,\Omega}$ and $\wt
\Gamma_{1,\Omega}$ are well defined and
\begin{equation}\label{ranRegDN}
\wt \Gamma_{0,\Omega}:\dom \ZA_{\max}\to
H^{1/2}(\partial\Omega),\qquad \wt \Gamma_{1,\Omega}:\dom \ZA_{\max}\to
H^{-1/2}(\partial\Omega) .
\end{equation}
In fact, the effect of regularization appearing in $\wt
\Gamma_{0,\Omega}$ follows from the decomposition $\dom \ZA_{\max} =
\dom \Delta_D \dotplus \ker \ZA^*$ ($0\in\rho(-\Delta_D))$: $u\in \dom
\ZA_{\max}$ admits a decomposition $u = u_D + u_0$ with $u_D \in \dom
\Delta_D \subset H^{2}(\Omega)$ and $u_0\in \ker \ZA^*$. Now an
application of~\eqref{eq:PSt} and~\eqref{eq:3.24A} gives $\wt
\Gamma_{0,\Omega}u = \gamma_Nu_D\in H^{1/2}(\partial\Omega)$ which
yields~\eqref{ranRegDN}. Since $\gamma^{2}_D\times \gamma^{2}_N
:\,H_\Delta^{2}(\Omega) \to H^{3/2}(\partial\Omega)\times
H^{1/2}(\partial\Omega)$ is surjective, one has $\{0\}\times
H^{1/2}(\partial\Omega)\subset \ran \gamma^{2}_D\times
\gamma^{2}_N$, which shows that $\wt \Gamma_{0,\Omega}$ in
~\eqref{ranRegDN} is also surjective. In addition, it is also closed
when $\dom \ZA_{\max}=H_\Delta^{0}(\Omega)$ is equipped with the
$L^2$-graph norm. Indeed, by the continuity properties of the traces
$\gamma_D$, $\gamma_N$, and the Poincar\'e-Steklov operator
$\Lambda(z)$ (see~\eqref{eq:TraceThm0},~\eqref{eq:PSt})
$\wt\Gamma_{0,\Omega} = (\gamma_N - \Lambda(0)\gamma_D)$ is a
continuous mapping from $\dom \ZA_{\max}$ into
$H^{-3/2}(\partial\Omega)$. Now, if $f_n\in \dom
\wt\Gamma_{0,\Omega}$ and $f_n\to f$ in $H_\Delta^{0}(\Omega)$ and
$g_n=\wt\Gamma_{0,\Omega}f_n\to g$ in $H^{1/2}(\partial\Omega)$,
then $g_n\to g$ also in $H^{-3/2}(\partial\Omega)$, $f\in \dom
\wt\Gamma_{0,\Omega}$ and $g=\wt\Gamma_{0,\Omega}f$ by the
$H^{-3/2}(\partial\Omega)$-continuity of $\wt\Gamma_{0,\Omega}$.
Hence, $\wt\Gamma_{0,\Omega}:\dom \ZA_{\max}\to
H^{1/2}(\partial\Omega)$ is closed. Finally, by the closed graph
theorem $\wt \Gamma_{0,\Omega}$ in~\eqref{ranRegDN} is bounded; cf.
\cite[Theorem III.1.2]{Gr68} where these results on $\wt
\Gamma_{0,\Omega}$ are derived in a more general elliptic setting.

With these preliminaries we are ready to give first applications of
the abstract results for Laplacians on smooth bounded domains.

Let $\wt \ZA_*$ be a restriction of $\ZA_{\max}$ to the domain
\begin{equation}\label{eq:wtS*}
\dom \wt \ZA_*=\{f\in\dom \ZA_{\max}:\,\gamma_D f\in L^2(\partial\Omega)\}.
\end{equation}
%

\begin{proposition}\label{prop:Laplacian}
Let the operators $\gamma_N$, $\gamma_D$, $\cP(z)$, $\Lambda(z)$,
$\ZA_*$ and $\wt \ZA_*$ be defined by \eqref{eq:TraceThm0},
\eqref{eq:Pz}, \eqref{eq:PSt},\eqref{eq:wtS*0}, and~\eqref{eq:wtS*}.
Then:
\begin{enumerate}\def\labelenumi {\textit{(\roman{enumi})}}
  \item $\{L^2(\partial\Omega), \gamma_D\uphar{\dom \ZA_*}, -\gamma_N\uphar{\dom \ZA_*}\}$ is an $S$-generalized boundary triple for $\ZA^*$,
  and the corresponding Weyl function $M(\cdot)$ coincides with $-\Lambda(\cdot)$;
  \item $\{L^2(\partial\Omega), \wt\Gamma_{0,\Omega}\uphar{\dom \wt \ZA_*}, \wt\Gamma_{1,\Omega}\uphar{\dom \wt \ZA_*}\}$
  is an $ES$-generalized boundary triple for $\ZA^*$, the corresponding Weyl function is the $L^2(\partial\Omega)$-closure
\[
 \wt M(z)=\clos(\Lambda(z)-\Lambda(0))^{-1};
\]
  \item the extension $\wt A_0 := \wt S_{*}\uphar{\ker \wt\Gamma_{0, \Omega}}$ is essentially selfadjoint
and its closure coincides with the Kre\u{\i}n - von Neumann
extension of the operator $\ZA_{\min}$.
\end{enumerate}
\end{proposition}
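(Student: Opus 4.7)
The plan is to prove (i)--(iii) by combining Lions--Magenes trace theory on $H^{3/2}_\Delta(\Omega) = \dom S_*$ with the abstract results on $S$- and $ES$-generalized boundary triples, chiefly the analytic extrapolation principle of Proposition~\ref{Mrealreg0} and the abstract construction of Proposition~\ref{propBtoE} with $A_0 = -\Delta_D$, $G = \cP(0)$. For part~(i) I first extend the classical second Green formula from $H^2(\Omega)$ to $H^{3/2}_\Delta(\Omega)$ via the continuity~\eqref{eq:3.24A} of $\gamma_D^{3/2}, \gamma_N^{3/2}$ into $L^2(\partial\Omega)$ and the density of $H^2(\Omega)$ in $H^{3/2}_\Delta(\Omega)$ in the graph norm, giving Green's identity for $\{\gamma_D, -\gamma_N\}$. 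For $\wh f_\lambda \in \wh\sN_\lambda(S_*)$ the defect element is $f_\lambda = \cP(\lambda)\gamma_D f_\lambda$, so $-\gamma_N f_\lambda = -\Lambda(\lambda)\gamma_D f_\lambda$ identifies $M(\lambda) = -\Lambda(\lambda)$ on $\ran\gamma_D^{3/2} = H^1(\partial\Omega)$. By Dirichlet regularity, $A_0 = \ker\gamma_D \cap H^{3/2}_\Delta(\Omega) = \dom(-\Delta_D)$ is selfadjoint. To upgrade the isometric triple to unitary I apply Proposition~\ref{Mrealreg0} at $x < 0$ with $H = -\Delta_D$: $\Lambda(x) \geq 0$ is selfadjoint on $H^1(\partial\Omega)$, so $M(x) = -\Lambda(x)$ is selfadjoint and $M(x) + xI \leq xI < 0$ gives $0 \in \rho(M(x) + xI)$. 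Combined with $A_0 = A_0^*$, this yields the $S$-generalized property.

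For part~(ii), the triple $\{\wt\Gamma_{0,\Omega}, \wt\Gamma_{1,\Omega}\}$ is the Kre\u{\i}n-space isometric transform
\[
\begin{pmatrix} \wt\Gamma_{0,\Omega} \\ \wt\Gamma_{1,\Omega} \end{pmatrix} = \begin{pmatrix} -\Lambda(0) & -1 \\ 1 & 0 \end{pmatrix}\begin{pmatrix} \gamma_D \\ -\gamma_N \end{pmatrix}
\]
of $\{\gamma_D, -\gamma_N\}$, namely a transposition combined with an additive shift by the selfadjoint operator $-\Lambda(0)$ on $H^1(\partial\Omega)$. Green's identity on $\dom\wt S_*$ follows by decomposing $f = u_D^f + \cP(0)\gamma_D f$ with $u_D^f \in \dom(-\Delta_D)$, which makes $\wt\Gamma_{0,\Omega} f = \gamma_N u_D^f$ and $\wt\Gamma_{1,\Omega} f = \gamma_D f$, reducing the identity to classical Green for $H^2$ functions applied to $u_D^f, u_D^g$ and to the harmonic extensions. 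The Weyl function is computed directly: $\wt\Gamma_{0,\Omega}\wh f_\lambda = (\Lambda(\lambda) - \Lambda(0))\gamma_D f_\lambda$ and $\wt\Gamma_{1,\Omega}\wh f_\lambda = \gamma_D f_\lambda$ give $\wt M(\lambda) = (\Lambda(\lambda) - \Lambda(0))^{-1}$ on its natural range; the $L^2(\partial\Omega)$-closure yields the stated formula. Unitarity follows again from Proposition~\ref{Mrealreg0} applied at $x < 0$ with $H = -\Delta_D \subset \dom\wt S_*$, using $\Lambda(x) - \Lambda(0) \leq 0$ so that $\wt M(x) \leq 0$ and $\wt M(x) + xI < 0$. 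Finally, the $ES$-generalized character follows from the essential selfadjointness of $\ker\wt\Gamma_{0,\Omega}$ established in~(iii).

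For part~(iii), the kernel $\wt A_0 = \ker\wt\Gamma_{0,\Omega}|_{\dom\wt S_*}$ consists of $f$ satisfying $\gamma_N f = \Lambda(0)\gamma_D f$; substituting $f = u_D + \cP(0)\varphi$ with $\varphi = \gamma_D f \in L^2(\partial\Omega)$ reduces this to $\gamma_N u_D = 0$, so $u_D \in H^2_0(\Omega) \cap \ker\gamma_N = \dom A_{\min}$. Hence $\wt A_0 = A_{\min} \hplus (\cP(0)(L^2(\partial\Omega)) \times \{0\})$. Its closure is $A_{\min} \hplus (\ker A^* \times \{0\})$, since $A_{\min}$ is already closed and $\overline{\cP(0)(L^2(\partial\Omega))} = \ker A^*$ holds by the density of $L^2(\partial\Omega)$ in $H^{-1/2}(\partial\Omega)$ combined with elliptic regularity for $\partial\Omega$ smooth. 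This closure is exactly the Kre\u{\i}n--von Neumann extension of the nonnegative operator $A_{\min}$, in agreement with formula~\eqref{KvNRenorm} of Theorem~\ref{propBetE2} applied with $A_0 = -\Delta_D$ and $G = \cP(0)$.

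The main obstacle is the clean verification of unitarity via Proposition~\ref{Mrealreg0}: one must justify that $M(x) = -\Lambda(x)$ and $\wt M(x) = (\Lambda(x) - \Lambda(0))^{-1}$ are selfadjoint on their natural (unbounded) $L^2$-domains at $x < 0$, and that the shifts $M(x) + xI$, $\wt M(x) + xI$ are boundedly invertible. A secondary technical point is the density $\overline{\cP(0)(L^2(\partial\Omega))} = \ker A^*$ underlying~(iii).
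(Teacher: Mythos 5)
Your architecture for (i) and (iii) is essentially sound. In (i) you take a genuinely different route to unitarity than the paper: the paper obtains it by observing that $\{\gamma_D,-\gamma_N\}$ is the transposed triple of the $B$-generalized triple $\{\gamma_N,\gamma_D\}$ (and, in a second proof, by matching it with the abstract construction of Proposition~\ref{propBtoE} with $A_0=-\Delta_D$, $G=\cP(0)$, $E=-\Lambda(0)$), whereas you invoke the real-point extrapolation principle of Proposition~\ref{Mrealreg0} at $x<0$. Your verification there is correct: $M(x)=-\Lambda(x)\le 0$ is selfadjoint, so $M(x)+x\le x<0$ and $0\in\rho(M(x)+x)$. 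Your computation of $\wt A_0$ and of its closure in (iii) is the paper's argument made slightly more explicit, and is fine modulo the routine density and closedness remarks you flag yourself.

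The genuine gap is in part (ii), in your verification of the hypotheses of Proposition~\ref{Mrealreg0} for the regularized triple. The inequality $\Lambda(x)-\Lambda(0)\le 0$ for $x<0$ has the wrong sign: since $M(\cdot)=-\Lambda(\cdot)$ is a Nevanlinna function holomorphic on $(-\infty,0]$ it is nondecreasing there, so $\Lambda(x)\ge\Lambda(0)$ for $x<0$ (equivalently, by Proposition~\ref{Prop:LaplaceWeyl}, $-\wt M(\cdot)^{-1}=\clos(\Lambda(0)-\Lambda(\cdot))$ is an \emph{inverse} Stieltjes function, nonpositive on the negative half-line, and $\wt M(\cdot)$ is a Stieltjes function). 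Hence $\wt M(x)=(\Lambda(x)-\Lambda(0))^{-1}\ge 0$, and your conclusion ``$\wt M(x)+xI<0$'' collapses: for a nonnegative unbounded selfadjoint operator $T$ and $x<0$, invertibility of $T+x$ is equivalent to $|x|\notin\sigma(T)$, which is not automatic. The step can be repaired quantitatively: from $M(x)-M(0)=x\,\gamma(0)^{*}\gamma(x)$ and $\gamma(x)=(-\Delta_D)(-\Delta_D-x)^{-1}\gamma(0)$ one gets $0\le \Lambda(x)-\Lambda(0)\le |x|\,\|\cP(0)\|^{2}$, so $\sigma(\wt M(x))\subset\bigl[(|x|\,\|\cP(0)\|^{2})^{-1},\infty\bigr)$ and $0\in\rho(\wt M(x)+x)$ once $x^{2}<\|\cP(0)\|^{-2}$; but as written the argument fails. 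The paper sidesteps this entirely via Proposition~\ref{propBtoE}(iv): the transposed of the transform $\{\Gamma_1-E\Gamma_0,-\Gamma_0\}$ is again of the form \eqref{eq:5.14} with $E=0$, hence $B$-generalized, so its closure is unitary; that route also delivers the identification of the closed transform with $\wt\Gamma_\Omega\uphar\dom\wt\ZA_*$ (the surjectivity and kernel comparison following \eqref{ranRegDN}), a point your sketch treats only implicitly.
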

\begin{proof}
(i) The triple $\{L^2(\partial\Omega), \gamma_N\uphar{\dom \ZA_*},
\gamma_D\uphar{\dom \ZA_*} \}$ is a $B$-generalized boundary triple
for the operator $\ZA^*$ (see~\cite{DHMS12}), since Green's identity
holds, the mapping $\gamma^s_N$ with $s=3/2$ is surjective by
~\eqref{eq:3.24A}, and by the descriptions~\eqref{eq:dom_Delta_D},
\[
 \ran(\gamma_N\uphar{\dom \ZA_*})=  H^0(\partial\Omega),\qquad
 \ker(\gamma_N\uphar{\dom \ZA_*}) \supset \ker(\gamma_N\uphar{\sH^2(\Omega)})=\dom \Delta_N.
\]
Here the inclusion in the second relation holds as an equality,
since for any isometric boundary triple $\{\cH,\Gamma_0,\Gamma_1\}$
the kernels $\ker \Gamma_0$ and $\ker \Gamma_1$ determine symmetric
restrictions of $\ZA_{\max}$ (by Green's identity); cf.
\cite[Proposition~2.13]{DHMS06}.

Clearly, the boundary triple $\Pi = \{H^0(\partial\Omega),
\gamma_D\uphar{\dom \ZA_*}, -\gamma_N\uphar{\dom \ZA_*}\}$ is
unitary, since it is transposed to the $B$-generalized boundary
triple $\{H^0(\partial\Omega), \gamma_N\uphar{\dom
\ZA_*},\gamma_D\uphar{\dom \ZA_*}\}$. Moreover, as above from
~\eqref{eq:dom_Delta_D} one concludes that $\ker(\gamma_D\uphar{\dom
\ZA_*})=\dom \Delta_D$. Since $-\Delta_D = -\Delta_D^*$, the triple
$\Pi$ is an $S$-generalized boundary triple for $\ZA^*$. By
definition, the corresponding Weyl function coincides with
$-\Lambda(z)$.

Another proof of the statement (i) can be extracted  from
Proposition~\ref{propBtoE}. Indeed, take $A_0=-\Delta_D$ and fix the
mappings $G := \cP(0)$ and $E := -\Lambda(0)$; see Remark
\ref{RemBtoA}. By definition $\gamma_D G \varphi = \varphi$ and
$\gamma_D A_0^{-1}f = 0$ for all $\varphi \in  H^0(\partial\Omega)$
and $f \in L^2(\Omega)$. Moreover, a direct calculation (see e.g.
\cite{Ryzhov2009} with smooth functions $f$) leads to
\[
 G^*f = -\gamma_N A_0^{-1}f, \quad f \in L^2(\Omega),
\]
cf.~\eqref{Hlambda},~\eqref{ABGG1}. Therefore, the abstract boundary
mappings $\Gamma_0$ and $\Gamma_1$ defined in~\eqref{eq:5.14}
coincide with the trace operators $\gamma_D\uphar{\dom \ZA_*}$ and
$-\gamma_N\uphar{\dom \ZA_*}$, respectively.

Moreover,
the mappings $\Gamma_0=\gamma_D\uphar{\dom \ZA_*}$ and
$\Gamma_1=-\gamma_N\uphar{\dom \ZA_*}$ in~\eqref{eq:3.24A} with
$s=3/2$ are surjective. By Proposition~\ref{propBtoE}
$\{H^0(\partial\Omega),\gamma_D\uphar{\dom
\ZA_*},-\gamma_N\uphar{\dom \ZA_*}\}$ is an $S$-generalized boundary
triple and since $E=-\Lambda(0)$ is unbounded, this triple is not
$B$-generalized.


(ii)  Next we apply Proposition \ref{propBtoE} to the closure  $\wt
\Gamma$ of the transformed boundary triple as defined in
~\eqref{Etrans}. By definition $\wt\Gamma:\dom \ZA^*\supset \ZA_* \to
(L^{2}(\partial\Omega))^2$ is closed and in view of~\eqref{Etrans}
$\wt \Gamma_1$ maps onto $L^{2}(\partial\Omega)$. It is clear that
$\wt \Gamma_0$ and $\wt \Gamma_1$ coincide (up to inessential change
of signs) with the regularized trace operators given
by~\eqref{eq:GBTL}, $\wt \Gamma_0 = \wt\Gamma_{0, \Omega}$ and $\wt
\Gamma_1 = \wt\Gamma_{1, \Omega}$ on the initial domain
$H^{3/2}(\Omega)$. On the other hand,
$\wt\Gamma_{0,\Omega}\times\wt\Gamma_{1,\Omega}:\sH^{1/2}_\Delta(\Omega)\to
(L^{2}(\partial\Omega))^2$ and hence
$\wt\Gamma_{\Omega}\uphar\sH^{1/2}_\Delta(\Omega)\subseteq
\wt\Gamma_{\Omega}\uphar\dom \wt \ZA_*$; here equality holds, since
$\wt\Gamma_{1,\Omega}:\sH^{1/2}_\Delta(\Omega)\to
L^{2}(\partial\Omega)$ and $\wt\Gamma_{1,\Omega}:\dom \wt \ZA_*\to
L^{2}(\partial\Omega)$ both are surjective, see~\eqref{eq:TraceThm0}
and~\eqref{eq:wtS*}, and their kernels are equal to $\dom \Delta_D$.
Moreover, $\wt\Gamma_{\Omega}\uphar\sH^{1/2}_\Delta(\Omega)$ is
closed in $\dom \ZA^*\times (L^{2}(\partial\Omega))^2$. Hence $\wt
\Gamma\subset\wt\Gamma_{\Omega}\uphar\sH^{1/2}_\Delta(\Omega)$ and
here equality holds, since by Proposition \ref{propBtoE} $\wt
\Gamma_1$ is surjective and has kernel $\dom \Delta_D$.

By Proposition \ref{propBtoE} $\{\cH,\wt\Gamma_0,\wt\Gamma_1\}$ is
an $ES$-generalized boundary triple for $\ZA_{\max}$. Since $\ran G
\subset H^{1/2}(\Omega)$, it is not closed in $H^0(\Omega)$, hence
the triple $\{\cH,\wt\Gamma_0,\wt\Gamma_1\}$ is not $S$-generalized.
The  statement concerning the Weyl function is obtained
from~\eqref{M0inv}.

(iii) Since $\{\cH,\wt\Gamma_0,\wt\Gamma_1\}$ is not
$S$-generalized, $\wt A_0$ is not selfadjoint. It follows from~\eqref{eq:GBTL} that 
\begin{equation}\label{eq:KNext}
    \dom \wt A_0=\{f\in H^{3/2}_\Delta(\Omega):\, (\gamma_N-\Lambda(0)\gamma_D)f=0\}
\end{equation}
contains the set
  \[
  \dom S\dotplus\cP(0)(H^1(\partial\Omega))(\subset H_\Delta^{3/2}(\Omega)),
  \]
which is dense in the domain of the Kre\u{\i}n-von Neumann extension $S_{K}$,
  \begin{equation}\label{eq:Krein_7.11}
 \dom \ZA_K =\dom S\dotplus \ker \ZA^*,
     \end{equation}
equipped with the graph norm.  This completes the proof.
\end{proof}

\begin{remark}\label{remBTs}
Using the above mentioned properties of the traces $\gamma^s_D$ and
$\gamma^s_N$, it is easy to see that for the values $3/2\le s\le 2$
the boundary triple $\{L^2(\partial\Omega), \gamma^s_D\uphar{\dom
\ZA_*}, -\gamma^s_N\uphar{\dom \ZA_*}\}$ well as the transposed
boundary triple $\{L^2(\partial\Omega), \gamma^s_N\uphar{\dom
\ZA_*}, \gamma^s_D\uphar{\dom \ZA_*}\}$ are quasi boundary triples
(compare \cite[Theorem~6.11]{BeLa2012}) and hence, in particular,
$AB$-generalized boundary triples. Indeed, since Green's identity
holds for $s=3/2$ (by Proposition~\ref{prop:Laplacian}), it holds
also for $3/2< s\le 2$. This combined with~\eqref{eq:3.24A} leads to
$\dom \Delta_D=\ker\gamma^s_D$, $\dom \Delta_N=\ker\gamma^s_N$, and
by the surjectivity of $\gamma^s_D\times \gamma^s_N :\,H^{s}(\Omega)
\to H^{s-1/2}(\partial\Omega)\times H^{s-3/2}(\partial\Omega)$ the
range of $\gamma^s_D\times \gamma^s_N$ is dense in
$L^2(\partial\Omega)$.
\end{remark}

More precisely for every $3/2\leq s\leq 2$ all the quasi boundary
triples in Remark \ref{remBTs} are in fact essentially unitary; the
choice $s=3/2$ in Proposition~\ref{prop:Laplacian} is also motivated
by the next corollary.

\begin{corollary}\label{CorLaplace}
For every $3/2\leq s\leq 2$ the closure of $\gamma^s_D\times
\gamma^s_N$ in $(\dom \ZA_{\max})\times (L^2(\partial\Omega))^2$
coincides with $\gamma^{3/2}_D\times \gamma^{3/2}_N$, where $\dom
\ZA_{\max}$ is equipped with the $L^2$-graph norm of $\dom
\ZA_{\max}$. The closure is an $S$-generalized boundary triple for
$\ZA_{\max}$.

\end{corollary}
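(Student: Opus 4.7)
The plan is to establish the corollary by showing three things: first, inclusion of the $s$-graph in the $3/2$-graph; second, closedness of the $3/2$-graph in the stated topology (which is inherited from the unitary boundary triple structure of Proposition~\ref{prop:Laplacian}); and third, density of $H^s_\Delta(\Omega)$ in $H^{3/2}_\Delta(\Omega)$ in the relevant joint topology. Once these are assembled, the closure of $\gamma^s_D\times\gamma^s_N$ must equal $\gamma^{3/2}_D\times\gamma^{3/2}_N$, and the second assertion follows from Proposition~\ref{prop:Laplacian}~(i) (modulo the trivial sign change in the second component, which preserves the $S$-generalized property).

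First I would note that $H^s(\Omega)\subset H^{3/2}(\Omega)$ for $s\ge 3/2$, so $H^s_\Delta(\Omega)\subset H^{3/2}_\Delta(\Omega)$, and by the compatibility of the Lions--Magenes extensions~\eqref{eq:3.24A} the maps $\gamma^s_D,\gamma^s_N$ are restrictions of $\gamma^{3/2}_D,\gamma^{3/2}_N$. Hence $\gamma^s_D\times\gamma^s_N\subset \gamma^{3/2}_D\times\gamma^{3/2}_N$. By Proposition~\ref{prop:Laplacian}~(i) the triple $\{L^2(\partial\Omega),\gamma^{3/2}_D\uphar\dom \ZA_*,-\gamma^{3/2}_N\uphar\dom \ZA_*\}$ is an $S$-generalized, hence unitary, boundary triple; as such its graph is closed in $(\dom\ZA_{\max})\times (L^2(\partial\Omega))^2$ with the $L^2$-graph norm on $\dom\ZA_{\max}$. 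The sign in the second component is irrelevant for closedness, so $\gamma^{3/2}_D\times \gamma^{3/2}_N$ is itself closed.

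The core of the argument is the approximation step. For $f\in H^{3/2}_\Delta(\Omega)$, the $S$-generalized decomposition $\dom \ZA_*=\dom\Delta_D\hplus \wh{\mathfrak N}_0(\ZA_*)$ (see Theorem~\ref{prop:C6B}~(ii) and Proposition~\ref{prop:Laplacian}~(i), using $0\in\rho(-\Delta_D)$) gives $f=f_D+\cP(0)\varphi$ with $f_D\in\dom\Delta_D\subset H^2(\Omega)\subset H^s_\Delta(\Omega)$ and $\varphi=\gamma^{3/2}_D f\in H^1(\partial\Omega)=\dom\Lambda(0)$. Since $f_D$ already lies in $H^s_\Delta(\Omega)$, only the harmonic part $f_0:=\cP(0)\varphi$ needs approximation. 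Select smooth functions $\varphi_n\in C^\infty(\partial\Omega)$ with $\varphi_n\to\varphi$ in $H^1(\partial\Omega)$, and set $f_{0,n}:=\cP(0)\varphi_n$. By elliptic regularity of the Dirichlet problem $f_{0,n}\in C^\infty(\overline\Omega)\subset H^s_\Delta(\Omega)$ and $\Delta f_{0,n}=0=\Delta f_0$, so graph convergence in $\dom\ZA_{\max}$ reduces to the $L^2(\Omega)$-convergence $f_{0,n}\to f_0$, which follows from continuity $\cP(0):L^2(\partial\Omega)\to L^2(\Omega)$ combined with $\varphi_n\to\varphi$ in $L^2(\partial\Omega)$. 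For the boundary traces, $\gamma^s_D f_{0,n}=\varphi_n\to\varphi=\gamma^{3/2}_D f_0$ in $L^2(\partial\Omega)$, and $\gamma^s_N f_{0,n}=\Lambda(0)\varphi_n\to\Lambda(0)\varphi=\gamma^{3/2}_N f_0$ in $L^2(\partial\Omega)$. The last convergence is the main technical point; I would invoke the fact that $\Lambda(0)$ is a first-order (elliptic, selfadjoint) pseudodifferential operator on $\partial\Omega$, so that $\Lambda(0):H^1(\partial\Omega)\to L^2(\partial\Omega)$ is bounded, giving the desired convergence $\Lambda(0)\varphi_n\to\Lambda(0)\varphi$ in $L^2(\partial\Omega)$.

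Combining these steps, the sequence $f_n:=f_D+f_{0,n}\in H^s_\Delta(\Omega)$ converges to $f$ in the $L^2$-graph topology of $\dom\ZA_{\max}$, and $(\gamma^s_D f_n,\gamma^s_N f_n)\to(\gamma^{3/2}_D f,\gamma^{3/2}_N f)$ in $L^2(\partial\Omega)^2$. This shows $\gamma^{3/2}_D\times\gamma^{3/2}_N\subset \overline{\gamma^s_D\times\gamma^s_N}$, and together with closedness of the former and the reverse inclusion, we obtain equality of the closures. The second assertion of the corollary is then immediate from Proposition~\ref{prop:Laplacian}~(i), up to the sign convention in the second component. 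The main obstacle is the $L^2(\partial\Omega)$-convergence of the regularized Neumann traces, which rests on the fact that the Dirichlet-to-Neumann map is a first-order pseudodifferential operator; if one wished to avoid pseudodifferential calculus, one could instead use that $\Lambda(0)=\gamma_N\cP(0)$ can be realized on a chart via the half-space formula and directly estimated, but invoking the pseudodifferential structure is by far the cleanest route.
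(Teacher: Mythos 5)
Your proof is correct, but it takes a genuinely more constructive route than the paper's. The paper argues in two lines: since $\gamma^s_D\times\gamma^s_N$ is continuous from $H^s_\Delta(\Omega)$ into $(L^2(\partial\Omega))^2$ by~\eqref{eq:3.24A} and $H^s(\Omega)$ is dense in $H^{3/2}(\Omega)$, the closure of $\gamma^s_D\times\gamma^s_N$ taken already in the stronger $H^{3/2}_\Delta(\Omega)$-topology equals $\gamma^{3/2}_D\times\gamma^{3/2}_N$; since the $L^2$-graph norm of $\ZA_{\max}$ is majorized by the $H^{3/2}_\Delta$-norm, the closure in the weaker topology contains $\gamma^{3/2}_D\times\gamma^{3/2}_N$, and equality then follows from closedness of the latter — exactly your step two, i.e.\ unitarity from Proposition~\ref{prop:Laplacian}~(i). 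You instead build an explicit approximating sequence through the decomposition $f=f_D+\cP(0)\varphi$ and smooth boundary data $\varphi_n\to\varphi$ in $H^1(\partial\Omega)$. This buys you something concrete: the paper's density step implicitly needs density of $H^s_\Delta(\Omega)$ in $H^{3/2}_\Delta(\Omega)$ with respect to the $H^{3/2}_\Delta$-graph norm (not merely density of $H^s(\Omega)$ in $H^{3/2}(\Omega)$), and your construction verifies the required approximation by hand instead of asserting it. Two small remarks: your appeal to the pseudodifferential structure of $\Lambda(0)$ is unnecessary, since the paper already records that $\Lambda(0)$ is selfadjoint in $L^2(\partial\Omega)$ with $\dom\Lambda(0)=H^1(\partial\Omega)$, so boundedness of $\Lambda(0):H^1(\partial\Omega)\to L^2(\partial\Omega)$ is immediate from the closed graph theorem; and in the decomposition $f=f_D+\cP(0)\varphi$ one should note explicitly that $f_D=(-\Delta_D)^{-1}(-\Delta f)\in H^2(\Omega)$ by elliptic regularity, so that $f_D$ indeed lies in $H^s_\Delta(\Omega)$ for all $s\le 2$ — which you use but do not quite spell out.
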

\begin{proof}
Since for $3/2\leq s\leq 2$ the mapping $\gamma^s_D\times
\gamma^s_N:H^s_\Delta(\Omega)\to (L^2(\partial\Omega))^2$ is
continuous by~\eqref{eq:3.24A} and the inclusions
$H^s(\Omega)\subset H^{3/2}(\Omega)$ are dense, it follows that the
closure of $\gamma^s_D\times \gamma^s_N$ in
$H^{3/2}_\Delta(\Omega)\times (L^2(\partial\Omega))^2$ coincides
with $\gamma^{3/2}_D\times \gamma^{3/2}_N$. Since the $L^2$-graph
norm of $\ZA_{\max}$ is majorized by the $H^s_\Delta(\Omega)$-norm,
the closure of $\gamma^s_D\times \gamma^s_N$ in $(\dom
\ZA_{\max})\times (L^2(\partial\Omega))^2$ contains
$\gamma^{3/2}_D\times \gamma^{3/2}_N$. However, by
Proposition~\ref{prop:Laplacian} (i) $\gamma^{3/2}_D\times
\gamma^{3/2}_N$ defines an $S$-generalized boundary triple for $\dom
\ZA_{\max}$, which is unitary in the Kre\u{\i}n space sense (see
Definitions~\ref{GBT},~\ref{SgenBT}). Therefore,
$\gamma^{3/2}_D\times \gamma^{3/2}_N$ is also closed in $(\dom
\ZA_{\max})\times L^2(\partial\Omega)$, i.e., the closures coincide.
\end{proof}

When applying form methods, it is often convenient to consider the
above traces on $H^1(\Omega)$. In this case $\gamma^s_N$ maps onto
$H^{-1/2}(\partial\Omega)$ and one needs (Sobolev) dual parings of
the boundary spaces for Green's identity. Of course, if one
restricts such boundary mappings on the side of the range to
$L^2(\partial\Omega)\times L^2(\partial\Omega)$ one gets again the
mapping $\gamma^{3/2}_D\times \gamma^{3/2}_N$ by continuity and
surjectivity of $\gamma^s_N$ onto $H^{s-3/2}(\partial\Omega)$,
$0<s<3/2$; see~\eqref{eq:3.24A}.

The results concerning the $L^2$-closure of the $\gamma$-field in
Section~\ref{sec5} (see in particular Proposition~\ref{gammaclos},
Lemma~\ref{prop:C111}, Theorem~\ref{essThm1}) are now specialized to
the $ES$-generalized boundary triple appearing in part (ii) of
Proposition~\ref{prop:Laplacian}.

\begin{proposition}\label{Prop:Laplacegamma}
Let $\{L^2(\partial\Omega), \wt\Gamma_{0,\Omega}\uphar{\dom \wt
\ZA_*}, \wt\Gamma_{1,\Omega}\uphar{\dom \wt \ZA_*}\}$ be the
$ES$-generalized boundary triple for $\ZA^*$ in
Proposition~\ref{prop:Laplacian} and let $\wt M(\cdot)$ and
$\wt\gamma(\cdot)$ be the corresponding Weyl function and the
$\gamma$-field. Then:
\begin{enumerate}\def\labelenumi {\textit{(\roman{enumi})}}
\item the closure of the boundary mapping
$\wt\Gamma_{0,\Omega}\uphar{\dom\wt \ZA_*}$ coincides with
$\wt\Gamma_{0,\Omega}$ in~\eqref{ranRegDN},
\[
 \wt\Gamma_{0,\Omega}:\dom \ZA_{\max}\to H^{1/2}(\partial\Omega),
\]
it maps bijectively and continuously
$\sN_z(\ZA_{\max})=\ker(\ZA_{\max}-zI)$, $z\in\rho(\ZA_K)$, onto
$H^{1/2}(\partial\Omega)$ and $\ker\wt\Gamma_{0,\Omega}=\dom \ZA_K$,
where $\ZA_K$ is the Kre\u{\i}n-von Neumann extension of $\ZA_{\min}$;

\item the closure of the $\gamma$-field is given by
\[
 \overline{\wt\gamma(z)}=(\wt\Gamma_{0,\Omega}\uphar\sN_z(\ZA_{\max}))^{-1},
\]
it is an unbounded and domain invariant operator with $\dom
\overline{\wt\gamma(z)}=H^{1/2}(\partial\Omega)$ and, furthermore,
$\ran \overline{\wt\gamma(z)}=\sN_z(\ZA_{\max})$, $z\in\cmr$;

\item the domain decomposition~\eqref{ceq1a}
for the closures in Lemma~\ref{prop:C111} reads as
\[
 (\dom \ZA_{\max}=)\, \dom \wt\Gamma_{0,\Omega}=\dom \ZA_K \dot{+} \ran
 \overline{\wt\gamma(z)}, \quad z\in\cmr,
\]
i.e., here~\eqref{ceq1a} reduces to
the second von Neumann formula for $\dom \ZA_{\max}$.
\end{enumerate}
\end{proposition}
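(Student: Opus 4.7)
The plan is to deduce all three items from Proposition~\ref{prop:Laplacian}(ii) by specializing the abstract identities for the $\gamma$-field closure (Proposition~\ref{gammaclos}) and the decomposition of $\dom\overline{\Gamma}_0$ (Lemma~\ref{prop:C111}) to the present concrete situation. The analytic ingredient I would invoke throughout is the Lions--Magenes/Grubb mapping property recalled in~\eqref{ranRegDN}, namely that $\wt\Gamma_{0,\Omega}=\gamma_N-\Lambda(0)\gamma_D$ is a bounded surjection from $\dom\ZA_{\max}$, equipped with the $L^2$-graph norm, onto $H^{1/2}(\partial\Omega)$; together with the fact that $\dom\wt\ZA_*=H^{3/2}_\Delta(\Omega)$ is $L^2$-graph dense in $\dom\ZA_{\max}$ (e.g.\ by density of $C^\infty(\overline{\Omega})$ in $\dom\ZA_{\max}$).

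For (i), I would first combine these two facts with the closed graph theorem to see that $\wt\Gamma_{0,\Omega}$, viewed as an operator into $L^2(\partial\Omega)$, is closed, and that its restriction to the dense subspace $\dom\wt\ZA_*$ closes up precisely to the everywhere defined extension $\wt\Gamma_{0,\Omega}\colon\dom\ZA_{\max}\to H^{1/2}(\partial\Omega)$. For the kernel I would exploit the direct decomposition $\dom\ZA_{\max}=\dom\Delta_D\dotplus\ker\ZA^*$ which was recorded just after~\eqref{ranRegDN}; for $u=u_D+u_0$ in this splitting the definition $\Lambda(0)=\gamma_N\cP(0)$ together with $\cP(0)\gamma_D u_0=u_0$ (understood in the extended Lions--Magenes sense) gives $\Lambda(0)\gamma_D u_0=\gamma_N u_0$, so that $\wt\Gamma_{0,\Omega}u=\gamma_N u_D$. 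Thus $\wt\Gamma_{0,\Omega}u=0$ is equivalent to $u_D\in\dom\Delta_D\cap\ker\gamma_N=\dom\ZA_{\min}$, giving $\ker\wt\Gamma_{0,\Omega}=\dom\ZA_{\min}\dotplus\ker\ZA^*=\dom\ZA_K$ by~\eqref{eq:Krein_7.11}. Since $\ZA_K$ is selfadjoint, the first von Neumann formula provides $\dom\ZA_{\max}=\dom\ZA_K\dotplus\sN_z(\ZA_{\max})$ for every $z\in\rho(\ZA_K)$, and combining this with $\ker\wt\Gamma_{0,\Omega}=\dom\ZA_K$ and $\ran\wt\Gamma_{0,\Omega}=H^{1/2}(\partial\Omega)$ yields at once the announced bijectivity and the continuity of $\wt\Gamma_{0,\Omega}\uphar\sN_z(\ZA_{\max})$ onto $H^{1/2}(\partial\Omega)$.

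For (ii), I would apply Proposition~\ref{gammaclos} to the $ES$-generalized boundary triple of Proposition~\ref{prop:Laplacian}(ii): this gives $\overline{\wt\gamma(z)}$ as the first component of $(\overline{\wt\Gamma}_{0,\Omega}\uphar\wh\sN_z(\ZA^*))^{-1}$ for $z\in\cmr$, and step (i) identifies this with the inverse of the bijection $\wt\Gamma_{0,\Omega}\uphar\sN_z(\ZA_{\max})\to H^{1/2}(\partial\Omega)$. This immediately produces the formula, the equalities $\dom\overline{\wt\gamma(z)}=H^{1/2}(\partial\Omega)$ and $\ran\overline{\wt\gamma(z)}=\sN_z(\ZA_{\max})$, as well as domain invariance; unboundedness follows from Proposition~\ref{propBtoE}(v) because $\ran\cP(0)$ is not closed in $L^2(\Omega)$. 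For (iii), I would specialize~\eqref{ceq1a} of Lemma~\ref{prop:C111} to the present triple: with $\overline{A_0}=\ZA_K$ (see Proposition~\ref{prop:Laplacian}(iii)) and $\ran\overline{\wt{\wh\gamma}(z)}\subset\wh\sN_z(\ZA^*)$ from (ii), the abstract identity reduces to $\dom\ZA_{\max}=\dom\ZA_K\dotplus\sN_z(\ZA_{\max})$ for $z\in\rho(\ZA_K)$, which is the classical second von Neumann formula for the selfadjoint extension $\ZA_K$ of $\ZA_{\min}$. The main obstacle I anticipate is the first step: making sure the $L^2$-graph closure of $\wt\Gamma_{0,\Omega}\uphar\dom\wt\ZA_*$ reaches all of $\dom\ZA_{\max}$ and not merely some intermediate domain; this hinges on the nontrivial Lions--Magenes/Grubb regularity result underlying~\eqref{ranRegDN} together with the density of $H^{3/2}_\Delta(\Omega)$ in $\dom\ZA_{\max}$, both of which are indispensable.
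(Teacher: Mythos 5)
Your proposal is correct and follows essentially the same route as the paper: closedness and surjectivity of $\wt\Gamma_{0,\Omega}$ onto $H^{1/2}(\partial\Omega)$ from \eqref{ranRegDN} plus graph-density of $H^{3/2}_\Delta(\Omega)$ for (i), the abstract identity of Proposition~\ref{gammaclos} for (ii), and Lemma~\ref{prop:C111} for (iii). The only cosmetic difference is that you re-derive $\ker\wt\Gamma_{0,\Omega}=\dom\ZA_K$ directly from the splitting $u=u_D+u_0$ and \eqref{eq:Krein_7.11}, whereas the paper simply cites Proposition~\ref{prop:Laplacian}(iii) together with Lemma~\ref{prop:C111}(i); both rest on the same Vi\v{s}ik--Grubb regularization fact.
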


\begin{proof}
(i) Recall that $\wt\Gamma_{0,\Omega}:\dom \ZA_{\max}\to
H^{1/2}(\partial\Omega)$ is defined everywhere on $\dom \ZA_{\max}$
and that it is continuous and surjective; see discussion following
~\eqref{ranRegDN}. Therefore, $\wt\Gamma_{0,\Omega}:\dom \ZA_{\max}\to
L^{2}(\partial\Omega)$ is also bounded and closed. By
Lemma~\ref{prop:C111} $\clos \wt
A_0=\ZA_{\max}\uphar\ker\wt\Gamma_{0,\Omega}$ is selfadjoint and
coincides with the Kre\u{\i}n - von Neumann extension $\ZA_K$ of
$\ZA_{\min}$; see Proposition~\ref{prop:Laplacian}. Consequently, the
mapping $\wt\Gamma_{0,\Omega}:\sN_z(\ZA_{\max})\to
H^{1/2}(\partial\Omega)$ is bijective and continuous for all
$z\in\rho(\ZA_K)$.

(ii) The formula for the closure of the $\gamma$-field is obtained
by combining (i) with its abstract description~\eqref{ceq0} in
Proposition~\ref{gammaclos}. Moreover, part (i) shows that
$\overline{\wt\gamma(z)}:H^{1/2}(\partial\Omega)\to \sN_z(\ZA_{\max})$
is bijective and continuous w.r.t. $H^{1/2}(\partial\Omega)$
topology, and closed and unbounded as an operator from
$L^2(\partial\Omega)$ onto $\sN_z(\ZA_{\max})$ with bounded inverse.

(iii) This follows immediately from Lemma~\ref{prop:C111} and the
descriptions in items (i) and (ii).
\end{proof}

     \begin{remark}\label{remBTs2}
The description of the Kre\u{\i}n - von Neumann Laplacian in part
(iii) of Proposition~\ref{Prop:Laplacegamma} by means of trace
operators essentially goes back to \cite{Vi52}; see
also~\cite[Section~12.3]{Maurin65}. For Lipschitz domains a similar
description of the Kre\u{\i}n- von Neumann Laplacian in terms of
extended trace operators was recently given in~\cite{BeGeMM15}; see
also Section~\ref{sec7.3} below for another construction.
  \end{remark}

The next result characterizes the Weyl functions of various boundary
triples appearing in Proposition~\ref{prop:Laplacian} more precisely.

\begin{proposition}\label{Prop:LaplaceWeyl}
Let the notations and assumptions be as in
Proposition~\ref{prop:Laplacian}.
\begin{enumerate}\def\labelenumi {\textit{(\roman{enumi})}}
\item the Weyl function $M(z)=-\Lambda(z)$ of the triple
  $\{L^2(\partial\Omega), \gamma_D\uphar{\dom \ZA_*},  \gamma_N\uphar{\dom \ZA_*}\}$ is domain invariant with $\dom M(z)=H^1(\partial\Omega)$
  and belongs to the class of inverse Stieltjes functions of unbounded operators while the inverse $\Lambda(z)^{-1}$
  belongs to the class of Stieltjes functions of compact operators;
\item the Weyl function $\wt M(\cdot)$ of the boundary triple
 $\{L^2(\partial\Omega), \wt\Gamma_{0,\Omega}\uphar{\dom \wt \ZA_*}, \wt\Gamma_{1,\Omega}\uphar{\dom \wt \ZA_*}\}$
 is form domain invariant with form domain $H^{1/2}(\partial\Omega)$ and belongs to the class of Stieltjes functions of unbounded operators
 while the inverse $-\wt M(\cdot)^{-1}=\clos(\Lambda(0)-\Lambda(\cdot))$ belongs to the class of inverse Stieltjes functions of bounded operators.
\end{enumerate}
\end{proposition}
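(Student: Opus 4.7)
The plan is to derive both parts by recognizing that the two boundary triples fit the abstract framework of Proposition~\ref{propBtoE} with the concrete choices $A_0=-\Delta_D$, $G=\cP(0)$, and $E=-\Lambda(0)$, as already used in the proof of Proposition~\ref{prop:Laplacian}. In particular, $A_0$ is a strictly positive selfadjoint operator on $L^2(\Omega)$ (so $0\in\rho(A_0)$), $G\in \cB(L^2(\partial\Omega),L^2(\Omega))$ with $\ker G=\{0\}$, and $G^*=-\gamma_N(-\Delta_D)^{-1}\in\cB(L^2(\Omega),L^2(\partial\Omega))$ by elliptic regularity on the smooth domain. The sign of $E$ is controlled by the first Green's identity applied to the harmonic extension $u=\cP(0)\varphi$: $(\Lambda(0)\varphi,\varphi)=\|\nabla u\|^2_{L^2(\Omega)}\geq 0$, so $E=-\Lambda(0)\leq 0$. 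Hence all the hypotheses of Corollary~\ref{cor6.5} are satisfied.

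For (i), I would apply Proposition~\ref{propBtoE}(i) to identify $M(z)=-\Lambda(z)=E+zG^*(I-zA_0^{-1})^{-1}G$ with domain $\dom M(z)=\dom E=\dom \Lambda(0)=H^1(\partial\Omega)$, independent of $z\in\rho(A_0)$; this gives the domain invariance. Corollary~\ref{cor6.5} then yields directly that $M(\cdot)=-\Lambda(\cdot)$ is a domain invariant inverse Stieltjes function and, since $E=-\Lambda(0)$ is unbounded, $M(\cdot)$ takes values in the set of unbounded operators. For the Stieltjes class of $\Lambda(z)^{-1}$, the same corollary shows that the transposed function $-M(\cdot)^{-1}=\Lambda(\cdot)^{-1}$ is a Stieltjes function. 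Compactness is obtained from the Neumann-to-Dirichlet realization: given $\psi\in L^2(\partial\Omega)$, the unique solution of $-\Delta u-zu=0$ with $\gamma_N u=\psi$ lies in $H^{3/2}_\Delta(\Omega)$ by \eqref{eq:3.24A} with $s=3/2$, hence $\Lambda(z)^{-1}\psi=\gamma_D u\in H^1(\partial\Omega)$; combining this mapping property with the Rellich embedding $H^1(\partial\Omega)\hookrightarrow L^2(\partial\Omega)$ yields compactness on $L^2(\partial\Omega)$.

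For (ii), I would first invoke Proposition~\ref{prop:Laplacian}(ii) to conclude that the triple is $ES$-generalized, so form domain invariance follows from Theorem~\ref{essThm1}; the form domain is identified via the last equality in \eqref{EssformDom}, combined with Proposition~\ref{Prop:Laplacegamma}(i), giving $\dom\overline{\st_{\wt M(z)}}=\ran\overline{\Gamma}_0=H^{1/2}(\partial\Omega)$. By Proposition~\ref{propBtoE}(iv) the Weyl function takes the form $\wt M(\lambda)=-M_0(\lambda)^{-1}$ with $M_0(\lambda)=\lambda G^*(I-\lambda A_0^{-1})^{-1}G$; boundedness of $G$ and $G^*$ makes $M_0(\lambda)$ bounded, so $-\wt M(\lambda)^{-1}=M_0(\lambda)=\clos(\Lambda(0)-\Lambda(\lambda))$ is a bounded operator on $L^2(\partial\Omega)$. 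Corollary~\ref{cor6.5} then gives that $M_0(\cdot)$ is inverse Stieltjes (of bounded operators) and $\wt M=-M_0^{-1}$ is Stieltjes; that $\wt M(\cdot)$ is unbounded is a consequence of Proposition~\ref{propBtoE}(v) together with the fact that $\ran\cP(0)\subset H^{1/2}(\Omega)$ is not closed in $L^2(\Omega)$, so the triple is $ES$- but not $S$-generalized and thus not an ordinary one.

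The main obstacle is the careful verification of the Stieltjes/inverse Stieltjes signs, i.e., that $E\leq 0$ and that $\Lambda(x)\geq 0$ for real $x\leq 0$; this reduces to the first Green's identity applied to generalized harmonic extensions and the coercivity of $-\Delta-x$ for $x\leq 0$. The other nontrivial point is compactness of $\Lambda(z)^{-1}$, which genuinely uses smoothness of $\partial\Omega$ through \eqref{eq:3.24A} with $s=3/2$ and the Rellich embedding; without this regularity one only obtains boundedness from $L^2$ into a weaker Sobolev space on the boundary.
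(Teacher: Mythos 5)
Your proposal is correct and follows essentially the same route as the paper: domain invariance from the selfadjointness of $-\Delta_D$ and the trace mapping properties, the sign $\Lambda(0)\ge 0$ from Green's first identity, the Stieltjes/inverse Stieltjes classification from nonpositivity at $x\le 0$ together with holomorphy and monotonicity (which is exactly how Corollary~\ref{cor6.5} is proved, and the paper itself sets up the identification $A_0=-\Delta_D$, $G=\cP(0)$, $E=-\Lambda(0)$), and compactness of $\Lambda(z)^{-1}$ from its boundedness into $H^1(\partial\Omega)$ plus the compact embedding into $L^2(\partial\Omega)$. The only cosmetic difference is that you route the Stieltjes conclusions through Corollary~\ref{cor6.5} and Proposition~\ref{propBtoE}, whereas the paper argues directly from $M(0)\le 0$ and $\wt M(0)^{-1}=0$ by monotonicity on $(-\infty,0]$; the form-domain identification in (ii) via Theorem~\ref{essThm1} and Proposition~\ref{Prop:Laplacegamma} coincides with the paper's.
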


\begin{proof}
(i) Since $\ker(\gamma_D\uphar{\dom \ZA_*})=\dom \Delta_D$ and
$-\Delta_D = -\Delta_D^*$, $M(\lambda)$ is domain invariant and
moreover $\dom M(\lambda)=\gamma_D({\dom \ZA_*})$; see
Lemma~\ref{domM}, Theorem~\ref{ABGthm}~(v). Now~\eqref{eq:wtS*0} and
surjectivity of $\gamma_D^{3/2}$ in~\eqref{eq:3.24A} gives
$\gamma_D({\dom \ZA_*})=H^1(\partial\Omega)$. The transposed
boundary triple $\{L^2(\partial\Omega), \gamma_N\uphar{\dom
\ZA_*},\gamma_D\uphar{\dom \ZA_*}\}$ is $B$-generalized and
$M(\lambda)^{-1}:L^2(\partial\Omega)\to H^1(\partial\Omega)$,
$\lambda\in\rho(M)$, is bounded. Since the embedding
$H^1(\partial\Omega)\to L^2(\partial\Omega)$ is compact,
$M(\lambda)^{-1}$ is a compact operator in $L^2(\partial\Omega)$. To
get the statements concerning Stieltjes and inverse Stieltjes
classes we apply Green's first identity:
\[
  \int_\Omega (\Delta u) \overline{v} \,dx+\int_\Omega \nabla u\cdot \overline{\nabla v} \,dx=(\gamma_N u, \gamma_D
  v)_{L^2(\partial\Omega)}, \quad u,v\in C^2(\overline\Omega).
\]
With $u,v\in \ker \Delta$ this leads to $(\Lambda(0) \gamma_Du,
\gamma_D v)_{L^2(\partial\Omega)}=(\gamma_N u, \gamma_D
v)_{L^2(\partial\Omega)}\ge 0$ and by denseness of $u,v$ in $\dom
\gamma_D$ one concludes that $\Lambda(0)\geq 0$. 
Equivalently, $M(0)=-\Lambda(0)\leq 0$ and, since $M(\cdot)$ belongs
to the class $\cR^s(\cH)$ of Nevanlinna functions by
Theorem~\ref{prop:C6B}~(iv) and moreover, $0\in\rho(\Delta_D)$,
$M(\cdot)$ is holomorphic and monotone on the interval
$(-\infty,0]$. Therefore, $M(x)\leq 0$ for all $x\leq 0$ and thus
$M(\cdot)$ is an inverse Stieltjes function. Consequently, the
inverse function $-M(\cdot)^{-1}$ is a Stieltjes function.

(ii) It was shown in the proof of Proposition~\ref{prop:Laplacian}
that $\wt\Gamma_{\Omega}\uphar{\dom \wt \ZA_*}$ coincides with
$\wt\Gamma_{0,\Omega}\times\wt\Gamma_{1,\Omega}:H^{1/2}_\Delta(\Omega)\to
(L^{2}(\partial\Omega))^2$. In view of~\eqref{eq:3.24A} (with
$s=1/2$) $\wt\Gamma_{1,\Omega}$ is surjective and, moreover,
$\ker\wt\Gamma_{1,\Omega}=\dom \Delta_D$. Hence, again the
transposed boundary triple
\[
\{L^2(\partial\Omega), \wt\Gamma_{1,\Omega}\uphar{\dom \wt \ZA_*},
-\wt\Gamma_{0,\Omega}\uphar{\dom \wt \ZA_*}\}
\]
is $B$-generalized and the values $-\wt M(z)^{-1}$,
$z\in\rho(\Lambda)$, are bounded operators; cf.
Proposition~\ref{propBtoE}~(iv). By Proposition~\ref{prop:Laplacian}
$\wt M(0)^{-1}=0$ and since $-\wt M(\cdot)^{-1}$ is a Nevanlinna
function and holomorphic on $(-\infty,0]$, one concludes by
monotonicity that $-\wt M(x)^{-1}\le 0$ for all $x<0$. Hence, $-\wt
M(\cdot)^{-1}$ is an inverse Stieltjes function and its inverse $\wt
M(\cdot)$ is a Stieltjes function.

As to the form domain invariance of $\wt M(\cdot)$ notice that
\begin{equation}\label{Lformdom}
 \dom \overline{\st_{\wt M(\lambda)}}
 =\dom\overline{\wt\gamma(\lambda)}=H^{1/2}(\partial\Omega),
 \quad \lambda\in \dC\setminus\dR,
\end{equation}
by Theorem~\ref{essThm1} and Proposition \ref{Prop:Laplacegamma}.
\end{proof}

Notice that by Proposition~\ref{Prop:LaplaceWeyl} and
Corollary~\ref{CorLaplace} the closure of the Weyl function of the
quasi boundary triple $\{L^2(\partial\Omega), \gamma^s_D\uphar{\dom
\ZA_*}, -\gamma^s_N\uphar{\dom \ZA_*}\}$ for all $3/2\le s\le 2$ is
just the Weyl function $M(\cdot)$ of the boundary triple
$\{L^2(\partial\Omega),\gamma^{3/2}_D\uphar{\dom \ZA_*},
-\gamma^{3/2}_N\uphar{\dom \ZA_*}\}$.

Finally, the renormalization result in Theorem~\ref{essThm2} is
spelialized to the case of the $ES$-generalized boundary triple
appearing in Proposition~\ref{prop:Laplacian}. For this purpose we
need a bounded operator $G$ in $L^2(\partial\Omega)$ with $\ran
G=H^{1/2}(\partial\Omega)$; see~\eqref{Lformdom}. Let
$\Delta_{\partial\Omega}$ be the Laplace-Beltrami operator on
$L^2(\partial\Omega)$. Then $-\Delta_{\partial\Omega}+I\ge 0$ and
the operator
\[
 G:=(-\Delta_{\partial\Omega}+I)^{-1/4}
\]
is a nonnegative contraction in $L^2(\partial\Omega)$ with $\ran
G=H^{1/2}(\partial\Omega)$. With this choice of $G$
Theorem~\ref{essThm2} leads to the following result.

\begin{corollary}\label{Lapcor}
Let $\{L^2(\partial\Omega), \wt\Gamma_{0,\Omega}\uphar{\dom \wt
\ZA_*}, \wt\Gamma_{1,\Omega}\uphar{\dom \wt \ZA_*}\}$ be the
$ES$-generalized boundary triple in
Proposition~\ref{prop:Laplacian}~(ii). Then the renormalized
boundary triple $\{L^2(\partial\Omega),\Gamma_{0,G}, \Gamma_{1,G}\}$
defined by~\eqref{ess01} is an ordinary boundary triple for
$\ZA_{\max}$ given by
\[
 \Gamma_{0,G}=G^{-1}\wt\Gamma_{0,\Omega}=G^{-1}(\gamma_N -
 \Lambda(0)\gamma_D),\quad
 \Gamma_{1,G}=\overline{G\wt\Gamma_{1,\Omega}}=\overline{G\gamma_D},
\]
i.e., $\Gamma_{0,G}\times\Gamma_{1,G}:\dom \ZA_{\max}\to
(L^{2}(\partial\Omega))^2$ is surjective.

The corresponding Weyl function is given by
\[
 M_G(z)=\overline{G(\Lambda(z)-\Lambda(0))^{-1}G},
\]
it is a uniformly strict Nevanlinna function belonging also to the
class of Stieltjes functions.
\end{corollary}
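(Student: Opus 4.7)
The plan is to obtain this corollary by a direct application of Theorem~\ref{essThm2} to the $ES$-generalized boundary triple of Proposition~\ref{prop:Laplacian}(ii), using $G=(-\Delta_{\partial\Omega}+I)^{-1/4}$, and then to verify two additional items: (a) that the renormalized triple is actually \emph{ordinary} (not merely $AB$-generalized), and (b) that $M_G$ inherits the Stieltjes property from $\wt M$.

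First I would check that $G$ meets the hypothesis of Theorem~\ref{essThm2}. Since $-\Delta_{\partial\Omega}$ is a nonnegative selfadjoint operator on $L^2(\partial\Omega)$ with compact resolvent, $G$ is a bounded nonnegative injective operator with $\ker G=\ker G^*=\{0\}$ and $\ran G=\dom(-\Delta_{\partial\Omega}+I)^{1/4}=H^{1/2}(\partial\Omega)$. By Proposition~\ref{Prop:Laplacegamma}(ii), $\dom\overline{\wt\gamma(\lambda)}=H^{1/2}(\partial\Omega)$, hence $\ran G$ coincides with the form domain $\dom\overline{\st_{\wt M(\lambda)}}$ as required. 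Applying Theorem~\ref{essThm2} then yields the $AB$-generalized boundary pair whose boundary mappings are precisely the closures given in the statement, and whose Weyl function satisfies $\overline{G\wt M(\lambda)G}=\overline{G(\Lambda(\lambda)-\Lambda(0))^{-1}G}$ (using $G=G^*$ and the identity $\wt M(\lambda)=\clos(\Lambda(\lambda)-\Lambda(0))^{-1}$ from Proposition~\ref{prop:Laplacian}(ii)).

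The key step is to upgrade this $AB$-generalized triple first to a $B$-generalized, then to an ordinary boundary triple. Using the factorization $\gamma_G(\lambda)=\overline{\wt\gamma(\lambda)}\,G$, I would observe that $G$ is a bounded bijection from $L^2(\partial\Omega)$ onto $H^{1/2}(\partial\Omega)$ and, by Proposition~\ref{Prop:Laplacegamma}(i)--(ii), $\overline{\wt\gamma(\lambda)}$ is a continuous bijection from $H^{1/2}(\partial\Omega)$ onto $\sN_\lambda(\ZA_{\max})$ with bounded inverse (in the $H^{1/2}$-topology). Hence $\gamma_G(\lambda):L^2(\partial\Omega)\to L^2(\Omega)$ is bounded, injective, and has closed range equal to $\sN_\lambda(\ZA_{\max})$. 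From $\st_{M_G(\lambda)}[u,v]=(\gamma_G(\lambda)u,\gamma_G(\lambda)v)_{L^2(\Omega)}$ and the open mapping theorem, $\gamma_G(\lambda)^*\gamma_G(\lambda)$ is bounded and boundedly invertible on $L^2(\partial\Omega)$. Consequently $\IM M_G(\lambda)$ is bounded with bounded inverse, i.e.\ $0\in\rho(\IM M_G(i))$. Together with boundedness of $\RE M_G(i)$ (which follows from Corollary~\ref{QBRcor}(iii) once boundedness of $\IM M_G$ is in hand, or directly from the fact that $\wt M(\lambda)^{-1}$ is bounded by Proposition~\ref{Prop:LaplaceWeyl}(ii)), this yields $M_G\in\cR^u[L^2(\partial\Omega)]$, so by Theorem~\ref{thm:WF_Ord_BT} the renormalized triple is ordinary.

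Finally, to read off the Stieltjes property of $M_G$, I would use Proposition~\ref{Prop:LaplaceWeyl}(ii): $\wt M(\cdot)$ is a Stieltjes function, so it admits an analytic continuation to $(-\infty,0)$ with nonnegative selfadjoint values there. Since $G=G^*$ is a bounded everywhere defined operator, the sesquilinear transformation $\lambda\mapsto\overline{G\wt M(\lambda)G}=M_G(\lambda)$ preserves both the holomorphic continuation and the nonnegativity on $(-\infty,0)$, so $M_G$ is again Stieltjes. The main obstacle I anticipate is the careful bookkeeping of the various closures — particularly verifying that $\overline{G(\Lambda(\lambda)-\Lambda(0))^{-1}G}$ coincides with the Weyl function produced abstractly by Theorem~\ref{essThm2} (the inclusion goes through Theorem~\ref{essThm2}(3), and equality follows because $M_G$ turns out to be bounded and everywhere defined, forcing the closure of the restriction to coincide with the whole operator).
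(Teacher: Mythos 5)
Your overall strategy coincides with the paper's: the text offers no written argument beyond ``with this choice of $G$ Theorem~\ref{essThm2} leads to the following result,'' so the intended proof is exactly the specialization you carry out, and the extra work you do to upgrade the $AB$-generalized pair to an \emph{ordinary} triple and to transfer the Stieltjes property is precisely what the paper leaves implicit. Your verification that $G=(-\Delta_{\partial\Omega}+I)^{-1/4}$ satisfies the hypotheses of Theorem~\ref{essThm2}, and your lower bound for $\gamma_G(\lambda)=\overline{\wt\gamma(\lambda)}\,G$ via Proposition~\ref{Prop:Laplacegamma} (note: the factor $\overline{\wt\gamma(\lambda)}$ must be bounded below with respect to the $H^{1/2}$-norm on its domain, which is what the open mapping theorem gives, not with respect to the $L^2(\partial\Omega)$-norm, since $G$ itself is compact on $L^2(\partial\Omega)$), are both sound and give $0\in\rho(\IM M_G(i))$.

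The one step whose justification does not hold up is the boundedness of $\RE M_G(i)$, i.e.\ of the operator $E$ in the representation $M_G=E+M_0$. Corollary~\ref{QBRcor}(iii) is a \emph{characterization} of the $B$-generalized case in terms of $E$ being bounded; it cannot be used to derive that boundedness, so that route is circular. The alternative you offer --- that $\wt M(\lambda)^{-1}$ is bounded by Proposition~\ref{Prop:LaplaceWeyl}(ii) --- is a non sequitur: boundedness of $\wt M(\lambda)^{-1}$ gives no control on $\overline{G\wt M(\lambda)G}$ (indeed $\wt M(\lambda)$ itself is unbounded). The repair stays entirely inside the paper's toolkit: since $\gamma_N(\dom\Delta_D)=H^{1/2}(\partial\Omega)$ and $\dom\Delta_D\subset\dom\wt\ZA_*$, the mapping $\wt\Gamma_{0,\Omega}$ already maps $\dom\wt\ZA_*$ \emph{onto} $H^{1/2}(\partial\Omega)$, so $\ran\Gamma_{0,G}=G^{-1}(H^{1/2}(\partial\Omega))=L^2(\partial\Omega)$. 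By Theorem~\ref{ABGthm}(vi) the domain of the Weyl function of an $AB$-generalized pair equals $\ran\Gamma_{0,G}$, hence $\dom E=L^2(\partial\Omega)$; a closed, everywhere defined symmetric operator is bounded and selfadjoint, so $M_G\in\cR[L^2(\partial\Omega)]$ and the pair is unitary by Corollary~\ref{QBRcor}(i). Combined with your bound $0\in\rho(\IM M_G(i))$ this puts $M_G$ in $\cR^u[L^2(\partial\Omega)]$, and ordinariness follows (Theorem~\ref{thm:WF_Ord_BT}, or Proposition~\ref{charOBT} applied to the now-unitary triple). The Stieltjes argument at the end is fine once $M_G(x)$ is known to be bounded and selfadjoint for $x<0$.
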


It follows from Corollary \ref{Lapcor} for instance that
$\ZA^*\uphar\ker \Gamma_{0,G}=\ZA_K$ and $\ZA^*\uphar\ker
\Gamma_{0,G}=\ZA_F$ and the formula
\begin{equation}\label{bcTheta}
 \wt \ZA \to \Gamma_G(\dom \wt \ZA)=\{\{\Gamma_{G,0}f,\Gamma_{G,1}f\}:\, f\in \dom \wt \ZA\,\}=:\Theta
\end{equation}
establishes a one-to-one correspondence between selfadjoint
(nonnegative) realization of the Laplacian operator $-\Delta$ and
the selfadjoint (nonpositive) relations $\Theta$ in
$L^2(\partial\Omega)$ via boundary conditions as expressed in
~\eqref{bcTheta}.

For a general class of elliptic operator in bounded and unbounded
domains \cite[Proposition~3.5,~5.1]{MMM10} an ordinary boundary
triple for $\ZA_{\max}$ was constructed via the transposed boundary
triple $\{L^2(\partial\Omega), \wt\Gamma_{1,\Omega}\uphar{\dom \wt
\ZA_*}, -\wt\Gamma_{0,\Omega}\uphar{\dom \wt \ZA_*}\}$ which is
$B$-generalized. A similar regularization method for bounded domains
$\Omega$ appears already in \cite{Gr68} without a general formalism
of boundary triples. In \cite[Proposition~5.1]{MMM10} the
constructed boundary triple is the transposed boundary triple
$\{L^2(\partial\Omega),\Gamma_{1,G}, -\Gamma_{0,G}\}$ and resulted
in the Weyl function
\[
 -M_G(z)^{-1}=G^{-1}(\Lambda(0)-\Lambda(z))G^{-1};
\]
(for the sign change, notice that in \cite{MMM10} interior, instead
of exterior, normal derivatives to $\partial\Omega$ are being used).
It is emphasized that the construction here relies on the general
renormalization result proved for abstract operators in
Theorem~\ref{essThm2}.

As shown in \cite{MMM10} with a renormalized boundary triple which
is ordinary it is possible to carry out spectral analysis for the
selfadjoint realizations of $-\Delta$ with the aid of Kre\u{\i}n's
resolvent formula. Alternatively, one can apply in the study a
Kre\u{\i}n type resolvent formula for the more general classes of
boundary triples as in Theorem~\ref{Kreinformula} or
Theorem~\ref{Kreinformula2}; see \cite[Example~7.27]{DHMS12} for a
discussion which uses the renormalization in \cite{MMM10}, and for
various related contributions in the study of such nonlocal boundary
conditions, see e.g. \cite{AmrPear04,BeLa07,BeLa2012,GeMi11,Gr68}.

\subsection{Mixed boundary value problem for Laplacian.}\label{ex:Mixed}
Let $\Omega$ be a bounded open set in $\dR^d$ $(d\ge 2)$ with a
smooth boundary $\partial\Omega$. Let $\Sigma_+$ be a compact smooth
submanifold of $\partial\Omega$ and $\Sigma_-
:=\partial\Omega\setminus\Sigma_+^\circ$, so that
$\Sigma=\Sigma_+\cup\Sigma_-$. Here $\Sigma_+^\circ$ is the interior
of $\Sigma_+$. Let $-\Delta_Z$ be the Zaremba
Laplacian,\index{Zaremba Laplacian} i.e. the restriction of the
maximal operator $\ZA_{\max}$ to the set of functions, which satisfy
Dirichlet boundary condition on $\Sigma_-$ and Neumann boundary
condition on $\Sigma_+$.

Let  $H^1_{\Sigma_+}(\Omega) =\{u\in  H^1(\Omega): \supp \gamma_D
u\subset \Sigma_+\}.$ It is  known (see for instance \cite{Gr11}),
that the operator $-\Delta_Z$ is associated with the  nonnegative
closed quadratic form
$$
\frak a_{\Sigma_+}(u,v) = \int_\Omega \nabla u\cdot \nabla \overline{v}\, dx
,\quad \dom \frak a_{\Sigma_+} = H^1_{\Sigma_+}(\Omega),
$$
hence it is  selfadjoint in $H^0(\Omega)$. Clearly, its spectrum
$\sigma(-\Delta_Z)$ is discrete.

Here we construct  an $ES$-generalized boundary triple, associated
with the Zaremba Laplacian. 

Let $\ZA_{\min}$ and $\ZA_*$ be the minimal  and  pre-maximal operators,
respectively, associated with $-\Delta$,    $\dom(\ZA_*)=
H_\Delta^{3/2}(\Omega) = H^{3/2}(\Omega)\cap \dom \ZA_{\max}$ (see
Section~\ref{sec7.1}).
%
%
Let $S_{*,+}$ be a realization of $-\Delta$  given by
\begin{equation}\label{eq:S*+-}
\dom S_{*,+}=\{f\in H_\Delta^{3/2}(\Omega):(\gamma_Nf)|_{\Sigma_+}=0\}.
\end{equation}
Using  the Green formula and then applying the regularity result for the realization
$-\Delta_N$ we derive that  $S_{+}=(S_{*,+})^*$ is a symmetric realization of the
Laplacian $-\Delta$ on the domain
  \begin{equation}\label{eq:S_+}
\begin{split}
 \dom S_{+}&=\{f\in H_\Delta^{3/2}(\Omega):\, \gamma_Nf=(\gamma_Df)|_{\Sigma_{-}}=0\}\\
& =\{f\in H^{2}(\Omega):\, \gamma_Nf = (\gamma_Df)|_{\Sigma_{-}}=0\} \subset \dom
\Delta_N.
\end{split}
   \end{equation}
Hence
\[
\ZA_{\min}\subset S_{+}\subset S_{*,+}\subset (S_{+})^* \subset \ZA_{\max},
\]
in particular,  $S_{+}$  is an intermediate extension of $S=\ZA_{\min}$ in the sense of
\cite{DHMS09}. More precisely we have the following result.

%

\begin{proposition}\label{prop:Mixed}
Let the operator $S_{*,+}$  be defined by~\eqref{eq:S*+-} and let $S_{+}=(S_{*,+})^*$. Then:
\begin{enumerate}
  \item [(i)]  $\Pi^+=(L^2(\Sigma_{+}),P_{L^2(\Sigma_{+})}\gamma_N,P_{L^2(\Sigma_{+})}\gamma_D)$  is a
  $B$-generalized boundary triple for $(S_{+})^*$;
  \item [(ii)] the Weyl function corresponding to the boundary triple $\Pi^+$ equals to
  \begin{equation}\label{eq:M-}
    \Lambda_+(z)=P_{L^2(\Sigma_{+})}\Lambda(z)^{-1}\uphar{L^2(\Sigma_{+})},
  \end{equation}
  where $\Lambda(z)^{-1}$ is the Neumann-to-Dirichlet map;\index{Neumann-to-Dirichlet map}
  \item [(iii)]
   $(\Pi^+)^\top=(L^2(\Sigma_{+}),P_{L^2(\Sigma_{+})}\gamma_D,-P_{L^2(\Sigma_{+})}\gamma_N
   )$  is an $ES$-generalized boundary triple for $(S_{+})^*$.
\end{enumerate}
\end{proposition}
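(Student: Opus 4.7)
The approach is to derive $\Pi^+$ as a compression of the $B$-generalized boundary triple $\Pi=\{L^2(\partial\Omega), \gamma_N\uphar\dom \ZA_*, \gamma_D\uphar\dom \ZA_*\}$ from Proposition~\ref{prop:Laplacian}, using the orthogonal decomposition $L^2(\partial\Omega) = L^2(\Sigma_+) \oplus L^2(\Sigma_-)$ together with the passage from $\ZA_{\max}$ to $(S_+)^* = \overline{S_{*,+}}$. The Neumann-to-Dirichlet map $\Lambda(\cdot)^{-1}$ is the Weyl function of the transposed of $\Pi$ (see Proposition~\ref{prop:Laplacian}(i)), and the compressed Weyl function $\Lambda_+(\cdot)$ should arise by restriction to $L^2(\Sigma_+)$.

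For part (i), Green's identity for $\Pi^+$ on $\dom(S_+)^*$ would follow from Green's identity for $\ZA_{\max}$ by decomposing the boundary pairings as
\begin{equation*}
(\gamma_N f, \gamma_D g)_{L^2(\partial\Omega)} = (P_+ \gamma_N f, P_+ \gamma_D g)_{L^2(\Sigma_+)} + (P_- \gamma_N f, P_- \gamma_D g)_{L^2(\Sigma_-)},
\end{equation*}
and analogously for $(\gamma_D f, \gamma_N g)$, and then checking that on the relevant subdomain the $\Sigma_-$ contributions cancel, due to the defining constraint $(\gamma_N\cdot)|_{\Sigma_+}=0$ characterizing $\dom(S_+)^*$ and to the density of $\dom S_{*,+}$ in $\dom(S_+)^*$ in the $\ZA_{\max}$-graph norm. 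The surjectivity $\ran \Gamma_0^+ = L^2(\Sigma_+)$ is then obtained from the Lions--Magenes surjectivity of $\gamma_N^{3/2}\colon H_\Delta^{3/2}(\Omega) \to L^2(\partial\Omega)$ from \eqref{eq:3.24A} by a zero-extension lifting: for $\psi \in L^2(\Sigma_+)$, extend by zero to $\tilde\psi\in L^2(\partial\Omega)$ and lift to some $f \in H_\Delta^{3/2}(\Omega)$ with $\gamma_N f=\tilde\psi$, so that $f\in\dom S_{*,+}$ and $P_+\gamma_N f=\psi$. Selfadjointness of $A_0^+=\ker\Gamma_0^+$ is then a consequence of part (ii) via Theorem~\ref{thm:WF_BG_BT}: once the Weyl function is shown to be the bounded operator $\Lambda_+(z)\in\cR[L^2(\Sigma_+)]$, the abstract characterization identifies $\Pi^+$ as $B$-generalized.

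For part (ii), given $\wh f_z\in\wh\sN_z((S_+)^*)$, the identity $\gamma_D f_z=\Lambda(z)^{-1}\gamma_N f_z$ inherited from the ambient triple $\Pi^\top$ is projected onto $L^2(\Sigma_+)$, and the structural decomposition of $\gamma_N f_z$ across $\partial\Omega=\Sigma_+\cup\Sigma_-$ yields $P_+\gamma_D f_z = P_+\Lambda(z)^{-1}\uphar{L^2(\Sigma_+)}\cdot P_+\gamma_N f_z$, which by \eqref{eq:Mgen} identifies $M(z)=\Lambda_+(z)$. For part (iii), the transposed triple $(\Pi^+)^\top=\{L^2(\Sigma_+),P_+\gamma_D,-P_+\gamma_N\}$ has Weyl function $-\Lambda_+(\cdot)^{-1}$. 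Applying Proposition~\ref{propBtoE} with reference operator $-\Delta_N$ and solution/Robin operators adapted to $\Sigma_+$ places this transposed triple in the framework developed there, which yields the $ES$-generalized property and identifies the essentially selfadjoint kernel $A_0^\top$ with the Zaremba Laplacian $-\Delta_Z$ characterized by the form $\mathfrak{a}_{\Sigma_+}$ on $H^1_{\Sigma_+}(\Omega)$. An alternative route is to invoke Theorem~\ref{essThm1} through the form domain invariance of $\Lambda_+(\cdot)$, which holds since $-\Delta_Z$ is selfadjoint.

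The main obstacle lies in the low regularity of traces near the interface $\partial\Sigma_+\cap\partial\Sigma_-$: while $\gamma_N f$ and $\gamma_D f$ are well-defined in $L^2(\partial\Omega)$ for $f\in H_\Delta^{3/2}(\Omega)$, multiplication by $\chi_{\Sigma_+}$ (implicit in $P_+$) does not preserve the boundary Sobolev scale and produces singularities on $\partial\Sigma_+$, requiring passage to anisotropic spaces such as $\widetilde H^{s}(\Sigma_+)$ versus $H^{s}(\Sigma_+)$. The most delicate point is the identification in part (iii) of the closure of $\ker(P_+\gamma_D)$ with the Zaremba domain, since the $\ZA_{\max}$-graph closure introduces functions whose traces lack the smoothness needed to separate the $\Sigma_+$ and $\Sigma_-$ contributions pointwise; this is resolved by relying on the form-representation of $-\Delta_Z$ on $H^1_{\Sigma_+}(\Omega)$ together with the abstract characterization of essential selfadjointness via form domain invariance of the Weyl function given by Theorem~\ref{essThm1}.
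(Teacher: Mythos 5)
Your treatment of (i) and (ii) is essentially sound in outline: the paper likewise obtains $\Pi^+$ as the compression of the $B$-generalized triple $\{L^2(\partial\Omega),\gamma_N,\gamma_D\}$ to the intermediate extension $S_+$ (it cites the abstract compression result \cite[Proposition~4.1]{DHMS09} rather than re-verifying Green's identity and surjectivity by hand, but your direct verification would work). One logical slip there: you cannot deduce selfadjointness of $A_0^+=\ker\Gamma_0^+$ from part (ii) ``via Theorem~\ref{thm:WF_BG_BT}''. That theorem is a realization result -- every $M\in\cR^s[\cH]$ is the Weyl function of \emph{some} $B$-generalized triple -- not a criterion letting you conclude that a \emph{given} isometric triple with bounded Nevanlinna Weyl function has selfadjoint $\ker\Gamma_0$ (for that you would first need unitarity, cf.\ Corollary~\ref{cor:C5}). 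The fix is trivial and is what the paper does: $\ker\Gamma_0^+$ combined with the defining constraint on $\dom S_{*,+}$ forces $\gamma_N f=0$, so $A_{0,+}=-\Delta_N$, which is selfadjoint.

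The genuine gap is in (iii), which is the substantive claim. Neither of your two routes actually proves that $A_0^\top=\ker(P_{L^2(\Sigma_+)}\gamma_D\uphar\dom S_{*,+})$ is essentially selfadjoint. The route through Proposition~\ref{propBtoE} is not applicable as stated: that proposition requires a reference operator whose inverse is a (selfadjoint) \emph{operator} and, for the $ES$-conclusion in item (v), $0\in\rho(A_0)$ -- the Neumann Laplacian has $0$ as an eigenvalue -- and in any case you never exhibit the triangular $(G,E)$ structure of~\eqref{eq:5.14} for this triple. The alternative route is circular: Theorem~\ref{essThm1} converts essential selfadjointness of $A_0^\top$ into form domain invariance of $-\Lambda_+(\cdot)^{-1}$, but you justify the latter ``since $-\Delta_Z$ is selfadjoint'', which presupposes exactly the identification $\overline{A_0^\top}=-\Delta_Z$ that you acknowledge is the delicate point. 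The paper closes this by a concrete deficiency argument: if $g\perp\ran(A_{1,+}-\lambda_0)$ for a real $\lambda_0\notin\sigma_p(-\Delta_Z)$, then $g\in\ker(\ZA_{\max}-\lambda_0)$, and testing Green's second formula against the two dense families $\dom S_+$ and $\cL=\{f\in H^2(\Omega):(\gamma_Nf)|_{\Sigma_+}=\gamma_Df=0\}$, whose traces fill $H^{3/2}(\Sigma_+)$ and $H^{1/2}(\Sigma_-)$ respectively by~\eqref{eq:Trace_Thm}, yields $(\gamma_Ng)|_{\Sigma_+}=0$ and $(\gamma_Dg)|_{\Sigma_-}=0$; hence $g\in\ker(-\Delta_Z-\lambda_0)=\{0\}$. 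Some argument of this type -- producing the Zaremba boundary conditions for the orthogonal complement of $\ran(A_0^\top-\lambda_0)$ from trace surjectivity -- is unavoidable and is missing from your proposal.
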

\begin{proof}
(i)  As is proved in Proposition  \ref{prop:Laplacian}(i) the triple
$\Pi=(L^2(\partial\Omega),\gamma_N,\gamma_D)$ is a $B$-generalized
boundary triple  for $\ZA^*$.  Since $S_{+}$ is an intermediate
extension of $\ZA$,~\cite[Proposition~4.1]{DHMS09} implies that the
triple
$\Pi^+=(L^2(\Sigma_{+}),P_{L^2(\Sigma_{+})}\gamma_N,P_{L^2(\Sigma_{+})}\gamma_D)$
is a $B$-generalized boundary triple for $(S_{+})^*$. Notice that
\[
\ran(P_{L^2(\Sigma_{+})}\gamma_N)= L^2(\Sigma_{+}),
\]
and the operator $A_{0,+}$ defined as the restriction of $-\Delta$ to the domain
\[
\begin{split}
\dom A_{0,+}&=\ker\Gamma_0^+=\{f\in \dom(S_{*,+}): P_{L^2(\Sigma_{-})}\gamma_Nf=0\}\\
    &=\{f\in H_\Delta^{3/2}(\Omega): \gamma_Nf=0\} = \{f\in H^{2}(\Omega):\, \gamma_Nf =0\} = \dom (-\Delta_N).
\end{split}
\]
is selfadjoint, since it coincides with the Neumann Laplacian.

(ii) This statement is implied by the fact that the Weyl function of the operator $\ZA$,
corresponding to the boundary triple $\Pi=(L^2(\partial\Omega),\gamma_N,\gamma_D)$,
coincides with $\Lambda(z)^{-1}$; see \cite[Proposition~4.1]{DHMS09}.

(iii) Consider the operator $A_{1,+}$ defined as the restriction of $-\Delta$ to the domain
\[
\begin{split}
    \dom A_{1,+}&=\{f\in \dom(S_{*,+}):P_{L^2(\Sigma_{-})}\gamma_Df)=0\}\\
            &=\{f\in H_\Delta^{3/2}(\Omega): (\gamma_Nf)|_{\Sigma_+}=(\gamma_Df)|_{\Sigma_{-}}=0\}.
\end{split}
\]
Note that  $\dom(-\Delta_Z)\subset H^{3/2-\varepsilon}(\Omega)$ for each $\varepsilon
>0$ while $\dom(-\Delta_Z)\not\subset H^{3/2}(\Omega)$ for certain configurations of $\Sigma_{+}$
(see~\cite{Gr11}). Therefore, for such subsets $\Sigma_{+}$  the
operator $A_{1,+}$ is a proper symmetric restriction of Zaremba
Laplacian $-\Delta_Z$,  hence $A_{1,+}$ is not selfadjoint.

To prove the statement it suffices to show that the operator $A_{1,+}$ is essentially
selfadjoint. Assuming the contrary one finds $\lambda_0
=\bar\lambda_0 \not\in \sigma_p(-\Delta_Z)$ and a vector
$g\in L^2(\Omega)$ such that $g\perp\ran(A_{1,+}-\lambda_0)$, i.e.
%
\begin{equation}\label{eq:ort_g}
(g,(-\Delta - \lambda_0)f)_{L^2(\Omega)}=0\quad\mbox{for all}\quad
f\in\dom A_{1,+}.
  \end{equation}
This relation with $f\in \dom S$ implies  $g\in\dom(\ZA_{\max})$ and
$(-\Delta-\lambda_0)g=0$. Letting $f\in\dom S_{+}$  and  applying the  Green formula one
obtains from~\eqref{eq:S_+} and~\eqref{eq:ort_g}  that
%
%
\begin{equation}\label{eq:ort_g2}
    \begin{split}
        0&=(g,-\Delta f)_{L^2(\Omega)}-(\lambda_0 g,f)_{L^2(\Omega)} \\
        &=(g,-\Delta f)_{L^2(\Omega)}-(-\Delta g,f)_{L^2(\Omega)}\\
        &=\left\langle\gamma_D g,\gamma_N f\right\rangle_{-1/2,1/2}-\left\langle\gamma_N g,\gamma_D f\right\rangle_{-3/2,3/2}=\left\langle
        -(\gamma_N g)|_{\Sigma_{+}},(\gamma_Df)|_{\Sigma_{+}}\right\rangle_{-3/2,3/2},
    \end{split}
\end{equation}
Here $\left\langle\cdot,\cdot\right\rangle_{-s,s}$ denotes duality between
$H^{-s}(\partial\Omega)$ and $H^{s}(\partial\Omega)$ $(s\in\dR)$. It follows from~\eqref{eq:Trace_Thm} that
$\gamma_D(\dom \Delta_N)=H^{3/2}(\partial\Omega)$. Hence
$\gamma_D
(\dom S_{+})=H^{3/2}(\Sigma_+)$ and the latter
 implies  
\begin{equation}\label{eq:gamma_N+}
    (\gamma_Ng)|_{\Sigma_+}=0.
\end{equation}

Similarly, it follows
from~\eqref{eq:Trace_Thm} that $\gamma_N(\dom \Delta_D)=H^{1/2}(\partial\Omega)$.
For a subset $\cL$ of $\dom A_{1,+}$
\[
 \cL=\{f\in H^2(\Omega):\, (\gamma_Nf)|_{\Sigma_+}=\gamma_Df=0\}.
\]
one obtains
\begin{equation}\label{eq:gamma_NL}
    \gamma_N\cL=H^{1/2}(\Sigma_{-}).
\end{equation}
Let now $f\in\cL$. Then using the Green formula the equality~\eqref{eq:ort_g} can be
rewritten as
\begin{equation}\label{eq:ort_g2+}
0=(g,-\Delta f)_{L^2(\Omega)}-(\lambda_0 g,f)_{L^2(\Omega)} =
\left\langle\gamma_D g,\gamma_N f\right\rangle_{-1/2,1/2}
\end{equation}
and~\eqref{eq:gamma_NL},~\eqref{eq:ort_g2+} lead to
\begin{equation}\label{eq:gamma_D-}
(\gamma_Dg)|_{\Sigma_{-}}=0.
\end{equation}
Since $g\in\dom(\ZA_{\max})$, relations~\eqref{eq:gamma_N+} and~\eqref{eq:gamma_D-} mean
that $g\in\dom(-\Delta_Z)$. Thus  $g\in\ker(-\Delta_Z-\lambda_0)=\{0\}$, hence  $g=0$.
This completes the proof.
\end{proof}

\begin{remark}
As follows from Theorem~\ref{essThm1} the statement (iii) in
Proposition~\ref{prop:Mixed} is equivalent to the fact that the
$\gamma$-field $\gamma(\lambda)$ admits a single-valued closure for
all $\lambda\in\dC_+\cup \dC_-$ with constant domain and the
$M$-function $-\Lambda_+(z)^{-1}$ is form domain invariant. As was
mentioned in the proof the operator $A_{1,+}$ is essentially
selfadjoint while is not selfadjoint. By
Theorems~\ref{prop:C6B},~\ref{prop:C6}, this implies that the
operators $\gamma(\lambda)$ are not bounded; this fact was
apparently first mentioned in~\cite[Theorem~6.23]{Post16}. In
particular, the corresponding boundary triple $(\Pi^+)^\top$ is
neither $S$-generalized, nor an $AB$-generalized or a quasi boundary
triple in the sense of \cite{BeLa07}.
\end{remark}

\subsection{Laplacians on Lipschitz domains}\label{sec7.3}\index{Laplacian!on Lipschitz domain}

Here the smoothness properties on $\Omega$ are relaxed; it is
assumed that $\Omega$ is a bounded Lipschitz domain. In this case
the Dirichlet and Neumann traces $\gamma_D$ and $\gamma_N$
\[
 \gamma_D: H^s_\Delta(\Omega)\to H^{s-1/2}(\partial\Omega), \quad
 \gamma_N: H^s_\Delta(\Omega)\to H^{s-3/2}(\partial\Omega),
\]
are still continuous operators for all $1/2\leq s\leq 3/2$ and, in
addition, both are surjective when $s=1/2$ and $s=3/2$; see
\cite[Lemmas~3.1,~3.2]{GeMi11}. In this case the results, which are
analogous to those in Section~\ref{sec7.1}, will be derived directly
from the abstract setting treated in Section~\ref{sec6.1}.

The following analog of Proposition~\ref{prop:Laplacian} is obtained
from Proposition~\ref{propBtoE} using the $3/2$ regularity of the
selfadjoint extensions $-\Delta_D$ and $-\Delta_N$; see
\cite{JeKe81,JeKe95,GeMi11}. Since $0\in\rho(-\Delta_D)$ one can
decompose
\[
 \dom \ZA_{\max}=\dom \Delta_D \dot{+} \ker \ZA_{\max}.
\]

\begin{proposition}\label{prop:Laplacian2}
Let $\Omega\subset \dR^n$, $n\ge 2$, be a bounded Lipschitz domain.
Let the operators $\gamma_N$, $\gamma_D$, $\cP(z)$, $\Lambda(z)$ and
$\ZA_*$ be defined by~\eqref{eq:TraceThm0},~\eqref{eq:Pz},
~\eqref{eq:PSt}, and~\eqref{eq:wtS*0}. Then:
\begin{enumerate}\def\labelenumi {\textit{(\roman{enumi})}}
  \item $\{L^2(\partial\Omega), \gamma_D\uphar{\dom \ZA_*}, -\gamma_N\uphar{\dom \ZA_*}\}$
  is an $S$-generalized boundary triple for $\ZA^*$ with domain $\dom \ZA_*=H^{3/2}_\Delta(\Omega)$,
  the transposed boundary triple is $B$-generalized, moreover,
  the corresponding $\gamma$-field $\gamma(\cdot)$ is bounded and coincides with $\cP(z)$
  and the Weyl function $M(\cdot)$ coincides with $-\Lambda(\cdot)$;
  \item $\{L^2(\partial\Omega), \Gamma_{0,\Omega}, \Gamma_{1,\Omega}\}$, where
\begin{equation}\label{Etrans2}
    \begin{pmatrix}
    \Gamma_{0,\Omega} \\ \Gamma_{1,\Omega}
    \end{pmatrix}(f+\overline{\gamma(0)}h)
    =\begin{pmatrix}
     -\gamma(0)^*\Delta_D f \\ -h
    \end{pmatrix},
 \quad f\in \dom \Delta_D, \quad h \in L^2(\partial\Omega)\},
\end{equation}
defines an $ES$-generalized boundary triple for $\ZA_{\max}$ with
dense domain $\ZA_*=\dom\Delta_D+\ran\overline{\gamma(0)}\subset
\ZA^*$, the transposed boundary triple is $B$-generalized, and the
corresponding Weyl function is the $L^2(\partial\Omega)$-closure
\[
 \wt M(z)=\clos(\Lambda(z)-\Lambda(0))^{-1};
\]
  \item the extension $\wt A_0 := \ZA_{\max}\uphar{\ker \wt\Gamma_{0, \Omega}}$ is essentially selfadjoint
and its closure coincides with the Kre\u{\i}n - von Neumann
extension of the operator $\ZA_{\min}$.
\end{enumerate}
\end{proposition}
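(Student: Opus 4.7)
The plan is to derive all three statements from the abstract framework developed in Proposition \ref{propBtoE} (together with its refinement in Theorem \ref{propBetE2}), specialized to the concrete choices
\[
 A_0:=-\Delta_D,\qquad G:=\cP(0),\qquad E:=-\Lambda(0),
\]
acting between $\sH=L^2(\Omega)$ and $\cH=L^2(\partial\Omega)$. The only non-abstract ingredient that I would invoke is the Jerison–Kenig $3/2$-regularity result for Lipschitz domains, together with the continuity and surjectivity properties of $\gamma_D,\gamma_N\colon H^{3/2}_\Delta(\Omega)\to L^2(\partial\Omega)$ recorded just before the statement. Since $0\in\rho(-\Delta_D)$, the operator $A_0^{-1}$ is bounded selfadjoint on $L^2(\Omega)$ with $\ran A_0=L^2(\Omega)$; the operator $E=-\Lambda(0)$ is selfadjoint on $\dom\Lambda=H^1(\partial\Omega)$ by the $3/2$-regularity (so the classical Green identity extends to $H^{3/2}_\Delta(\Omega)$); and $G=\cP(0)$ is bounded $L^2(\partial\Omega)\to L^2(\Omega)$ with $\ker G=\{0\}$, $\ran G=\ker\Delta_{\max}$.

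For (i) I would identify the set $\ZA_*$ of Proposition \ref{propBtoE} with $H^{3/2}_\Delta(\Omega)$. The decomposition $H^{3/2}_\Delta(\Omega)=\dom\Delta_D\dotplus\ker \Delta_{\max}$ furnished by $3/2$-regularity coincides with $\{A_0^{-1}f'+G\varphi:f'\in\ran A_0,\varphi\in\dom E\}$ in \eqref{TplusG}; a standard computation using integration by parts gives $G^*f=-\gamma_N A_0^{-1}f$ (cf.\ the proof of Proposition \ref{prop:Laplacian}), so the abstract mappings in \eqref{eq:5.14} become $\Gamma_0=\gamma_D$ and $\Gamma_1=-\gamma_N+\Lambda(0)\gamma_D\cdot\mathrm{id}$; after switching to the transposed triple the second mapping reduces to $-\gamma_N$, and one checks that the Weyl function is $M(z)=E+\lambda G^*(I-\lambda A_0^{-1})^{-1}G=-\Lambda(\cdot)$, while the $\gamma$-field is $\gamma(z)=\cP(z)$, both bounded. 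Since $\ran G\not=L^2(\Omega)$ (the zero-harmonic subspace is not all of $L^2$), Proposition \ref{propBtoE}(ii) shows the transposed triple is $B$-generalized but not ordinary.

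For (ii) I would apply the transform \eqref{Etrans} of Proposition \ref{propBtoE}(iv) to the $S$-generalized triple of (i). Since $E=-\Lambda(0)$ is selfadjoint the closure is explicit, and formula \eqref{Etrans} yields precisely \eqref{Etrans2} on the dense domain $\dom\Delta_D+\ran\overline{\gamma(0)}$; the resulting Weyl function is $\wt M(z)=-M_0(z)^{-1}=\clos(\Lambda(z)-\Lambda(0))^{-1}$ as asserted. Because $0\in\rho(A_0)$, part (v) of Proposition \ref{propBtoE} gives that this triple is $ES$-generalized, and since $\ran G=\ker\Delta_{\max}$ is not closed in $L^2(\Omega)$ (again by non-triviality of $\ker\Delta_{\max}$ being an infinite-dimensional operator range that is not closed), the triple is not $S$-generalized.

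For (iii), the identity \eqref{KvNRenorm} established in the abstract renormalization of Theorem \ref{propBetE2} identifies $\overline{\wt A_0}=\ker\Gamma_{0,r}$ with the Kreĭn--von Neumann extension $\ZA_K=\ZA_{\min}\hplus(\overline{\ran G}\times\{0\})=S\hplus(\ker\Delta_{\max}\times\{0\})$; thus $\wt A_0$ is essentially selfadjoint with closure $\ZA_K$. The hard part I anticipate is the identification step in (i): verifying that the abstract domain $\ZA_*$ in \eqref{TplusG} matches $H^{3/2}_\Delta(\Omega)$ and that $G^*=-\gamma_N A_0^{-1}$ on $L^2(\Omega)$, because on Lipschitz domains one cannot appeal to the Lions–Magenes smooth trace theory and must instead rely on the Jerison–Kenig regularity to justify the two-sided Green identity and the explicit form of $\cP(0)^*$; once this identification is in place, everything else follows mechanically from the abstract machinery.
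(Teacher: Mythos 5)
Your overall architecture is sound and, for parts (ii) and (iii), coincides with the paper's: both apply Proposition~\ref{propBtoE} with $A_0=-\Delta_D$, $G=\cP(0)=\overline{\gamma(0)}$, $E=-\Lambda(0)$, and then invoke Theorem~\ref{propBetE2} and the identity~\eqref{KvNRenorm} to identify $\overline{\wt A_0}$ with the Kre\u{\i}n--von Neumann extension. For part (i), however, you take a genuinely different route. You verify the hypotheses of Proposition~\ref{propBtoE} directly and identify the abstract maps~\eqref{eq:5.14} with $\gamma_D$ and $-\gamma_N$ via the adjoint identity $G^*f=-\gamma_NA_0^{-1}f$; this is precisely the ``alternative proof'' the authors sketch for smooth domains in Proposition~\ref{prop:Laplacian}. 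The paper instead first shows the triple is $AB$-generalized (Green's identity from \cite{GeMi11}, selfadjointness of $-\Delta_D$, denseness of $\ran\gamma_D$), then uses the hard PDE input \cite[Theorem~5.7]{GeMi11} --- boundedness of $M(z):H^1(\partial\Omega)\to L^2(\partial\Omega)$ together with boundedness of $M(z)^{-1}$ --- and Corollary~\ref{QBRcor}~(i) to upgrade the $AB$-generalized triple to a unitary, hence $S$-generalized, one. The payoff of the paper's route is that the selfadjointness of $E=\RE M(\mu)$ comes out as a \emph{conclusion} of the abstract machinery; your route requires it as an \emph{input}.

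That is where your proposal has a real gap. You assert that $\Lambda(0)$ is selfadjoint on $H^1(\partial\Omega)$ ``by the $3/2$-regularity (so the classical Green identity extends to $H^{3/2}_\Delta(\Omega)$)''. The extended Green identity only yields \emph{symmetry} of $\Lambda(0)$; selfadjointness additionally requires a range condition (equivalently, that the corresponding boundary relation is unitary and not merely isometric), and on a Lipschitz domain this is exactly the nontrivial point that the Lions--Magenes theory no longer covers and that \cite[Theorem~5.7]{GeMi11} is invoked to settle. Without closing this step, Proposition~\ref{propBtoE}~(i) cannot be applied, since its hypothesis is $E=E^*$, not $E\subset E^*$. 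The fact is true and citable from \cite{GeMi11}, so the gap is repairable, but as written the hardest step of part (i) has been assumed rather than proved. Two smaller inaccuracies: $\ran\cP(0)$ is not equal to $\ker\ZA_{\max}$ but is the proper dense subspace $\ker\ZA_{\max}\cap H^{1/2}(\Omega)$ (this, and not infinite-dimensionality, is why $\ran G$ fails to be closed); and the claim that the transposed triple in (i) is $B$-generalized follows from surjectivity of $\gamma_N:H^{3/2}_\Delta(\Omega)\to L^2(\partial\Omega)$ or from boundedness of $-M(z)^{-1}$, not from Proposition~\ref{propBtoE}~(ii), which concerns boundedness of $E$.
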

\begin{proof}
(i) Green's identity holds: this can be obtained for instance from
the formula (3.21) in \cite{GeMi11} (cf. proof of Proposition
\ref{ABGrough} below). According to \cite{JeKe81,JeKe95,GeMi11}
\[
    \Delta_D = \Delta\uphar\{y\in H^{3/2}_\Delta(\Omega):\,\gamma_D y=0\,\} \quad \text{and}
\quad \Delta_N = \Delta\uphar\{y\in H^{3/2}_\Delta(\Omega):\,\gamma_N
y=0\,\},
\]
are selfadjoint operators in $L^2(\partial\Omega)$ and, in addition,
$0\in\rho(-\Delta_D)$. Hence, $\dom M(\cdot)=\ran
\gamma_D=H^{1}(\partial\Omega)$ and $\ran M(\cdot)=\ran
\gamma_N=H^{0}(\partial\Omega)$. Thus, $\{L^2(\partial\Omega),
\gamma_D\uphar{\dom \ZA_*}, -\gamma_N\uphar{\dom \ZA_*}\}$ is an
$AB$-generalized boundary triple. Moreover, according to
\cite[Theorem~5.7]{GeMi11} the corresponding Weyl function
$M(\cdot)$ is a bounded operator from $H^{1}(\partial\Omega)$ to
$L^2(\partial\Omega)$. Since $M(z)$, $z\in\rho(-\Delta_D)$, is
surjective, the inverse $M(z)^{-1}$, $z\in\rho(-\Delta_D)\cap
\rho(-\Delta_N)$, is bounded from $L^2(\partial\Omega)$ to
$H^{1}(\partial\Omega)$; in particular, $-M(z)^{-1}$ is bounded in
$L^2(\partial\Omega)$. From Corollary~\ref{QBRcor}~(i) one concludes
that the $AB$-generalized boundary triple $\{L^2(\partial\Omega),
\gamma_D\uphar{\dom \ZA_*}, -\gamma_N\uphar{\dom \ZA_*}\}$ is a
unitary, i.e., it is $S$-generalized. The assertion concerning the
$\gamma$-field is obtained from Theorem~\ref{prop:C6B}. The
transposed boundary triple is $B$-generalized, since $\gamma_N:
H^{3/2}_\Delta(\Omega)\to H^{0}(\partial\Omega)$ is surjective, or
since the corresponding Weyl function $-M(z)^{-1}$ is bounded.

(ii) This result is obtained directly from Proposition~\ref{propBtoE} with $A_0=-\Delta_D$, $G :=\overline{\gamma(0)}$ which is bounded by item (i)
and $E:= -\Lambda(0)$ which is selfadjoint, since $0\in\rho(-\Delta_D)$.

(iii) This follows from Proposition~\ref{propBtoE} and Theorem
\ref{propBetE2}; see~\eqref{KvNRenorm}.
\end{proof}

Next we apply the renormalization result in Theorem~\ref{propBetE2}
to the $ES$-generalized boundary triple in
Proposition~\ref{prop:Laplacian2}.

\begin{proposition}\label{propBetE3}
Let the notations and assumptions be as in
Proposition~\ref{prop:Laplacian2}. Moreover, let
$\{L^2(\partial\Omega), \Gamma_{0,\Omega}, \Gamma_{1,\Omega}\}$ be
the $ES$-generalized boundary triple with the Weyl function $\wt
M(\cdot)$ and let $P_0$ be the orthogonal projection onto
$\sN_0:=\ker \ZA_{\max}$. Then:
\begin{enumerate}\def\labelenumi {\textit{(\roman{enumi})}}
\item the Weyl function $\wt M(z)=\clos(\Lambda(z)-\Lambda(0))^{-1}$
is form domain invariant,
\[
 \dom \overline{\st_{\wt M(z)}}=\ran \gamma(0)^*, \quad z\in \rho(-\Delta_D);
\]

\item the renormalized boundary triple $\{\sN_0,\Gamma_{0,r},\Gamma_{1,r}\}$, where
\[
    \begin{pmatrix}
    \Gamma_{0,r} \\ \Gamma_{1,r}
    \end{pmatrix}(f+h)
    =\begin{pmatrix}
      -P_0\Delta_Df\\ -h
    \end{pmatrix},
\quad f\in \dom \Delta_D,\quad h\in \sN_0,
\]
is an ordinary boundary triple for $\ZA_{\max}$;

\item the corresponding Weyl function is given by
\[
 M_r(\lambda)=A^-_{11}-1/\lambda-(A^-_{21})^*(A^-_{22}-1/\lambda)^{-1}A^-_{21},
 \quad \lambda\in\rho(-\Delta_D).
\]
where $-\Delta_D^{-1}=(A^-_{ij})_{i,j=1}^2$ is decomposed according
to $\sH=\sN_0\oplus (\sN_0)^\perp$.
\end{enumerate}
\end{proposition}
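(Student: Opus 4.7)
The plan is to apply Theorem~\ref{propBetE2} directly to the concrete setting supplied by Proposition~\ref{prop:Laplacian2}. In that setting the data are identified as follows: the reference selfadjoint operator is $A_0=-\Delta_D$ with $0\in\rho(-\Delta_D)$, the bounded operator generating the extension is $G=\overline{\gamma(0)}=\overline{\cP(0)}\in\cB(L^2(\partial\Omega),L^2(\Omega))$ with $\ker G=\{0\}$, and $E=-\Lambda(0)$. Since $\ran\overline{\gamma(0)}\subseteq\sN_0=\ker\ZA_{\max}$ and $\ran\gamma(0)=\cP(0)(L^2(\partial\Omega))$ is dense in $\sN_0$ (for instance by the surjectivity of the Dirichlet trace $\gamma_D^{1/2}\colon H^{1/2}_\Delta(\Omega)\to H^0(\partial\Omega)$ together with the decomposition $\dom\ZA_{\max}=\dom\Delta_D\dot{+}\sN_0$), one has $\cran G=\sN_0$, so that the projection $P_G$ appearing in Theorem~\ref{propBetE2} coincides with the projection $P_0$ onto $\sN_0$ used in the statement.

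First I would verify that $\ran G=\ran\overline{\gamma(0)}$ is not closed, in order to be in the nontrivial case of Theorem~\ref{propBetE2}. This follows from the form-domain description proved below: if $\ran G$ were closed, Theorem~\ref{propBetE2}(v) would force the $ES$-generalized triple $\{L^2(\partial\Omega),\Gamma_{0,\Omega},\Gamma_{1,\Omega}\}$ to be ordinary, which would make $\wt M(z)=\clos(\Lambda(z)-\Lambda(0))^{-1}$ a bounded operator on $L^2(\partial\Omega)$; but $(\Lambda(z)-\Lambda(0))^{-1}$ maps $L^2(\partial\Omega)$ into $H^1(\partial\Omega)$ (cf.\ the mapping properties used in the proof of Proposition~\ref{prop:Laplacian2}(i)), so $\wt M(z)$ is genuinely unbounded.

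With the setup in place, item~(i) is immediate from Theorem~\ref{essThm1} combined with Theorem~\ref{propBetE2}(i): the $ES$-property of the triple gives $\dom\overline{\st_{\wt M(z)}}=\dom\overline{\wt\gamma(z)}$, and Theorem~\ref{propBetE2}(i) identifies this common domain with $\ran G^*=\ran\gamma(0)^*$. Item~(ii) follows by specializing formula~\eqref{Etrans00} of Theorem~\ref{propBetE2}: using $\cran G=\sN_0$, $P_G=P_0$, and $A_0=-\Delta_D$, the renormalized boundary triple takes exactly the form displayed in (ii); the closure argument in Theorem~\ref{propBetE2}(ii) uses $0\in\rho(A_0)$, which holds here, and yields the ordinary boundary triple $\{\sN_0,\Gamma_{0,r},\Gamma_{1,r}\}$ for $\ZA^{*}=\ZA_{\max}$. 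Item~(iii) is then obtained from Theorem~\ref{propBetE2}(iii): the Schur complement~\eqref{schur0} associated with the block decomposition $A_0^{-1}=(A^-_{ij})_{i,j=1}^{2}$ relative to $L^2(\Omega)=\sN_0\oplus\sN_0^{\perp}$ coincides, by construction, with the displayed expression for $M_r(\lambda)$.

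The only genuine (but small) obstacle is the identification $\cran\overline{\gamma(0)}=\sN_0$ and the verification that $\ran\overline{\gamma(0)}$ is not closed in $L^2(\Omega)$; both are handled by the mapping properties of $\gamma_D^{s}$ and $\cP(z)$ recalled in Section~\ref{sec7.1} and used in the proof of Proposition~\ref{prop:Laplacian2}, and by invoking the regularity result $(\Lambda(z)-\Lambda(0))^{-1}\colon L^2(\partial\Omega)\to H^1(\partial\Omega)$ as described above. Once these are in hand, (i)--(iii) follow by direct application of Theorem~\ref{propBetE2} without further calculation.
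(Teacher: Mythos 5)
Your proposal is correct and follows essentially the same route as the paper, whose entire proof consists of applying Theorem~\ref{propBetE2} to the setting of Proposition~\ref{prop:Laplacian2} with the choices $A_0=-\Delta_D$ and $G=\overline{\gamma(0)}$. The extra verifications you supply (that $\cran G=\sN_0$ so that $P_G=P_0$, and that $\ran G$ is not closed) are details the paper leaves implicit here, and they are handled correctly; only note that for the non-closedness of $\ran G$ the cleaner argument is the one the paper uses in the smooth case, namely $\ran G\subset H^{1/2}(\Omega)$ is dense in but not equal to $\sN_0$, rather than inferring unboundedness of $\wt M(z)$ from its range lying in $H^1(\partial\Omega)$.
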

\begin{proof}
The result is obtained by applying Theorem~\ref{propBetE2} to
Proposition \ref{prop:Laplacian2} with the choices $A_0=-\Delta_D$
and $G :=\overline{\gamma(0)}$.
\end{proof}

As a consequence one has the following result:

\begin{corollary}
The inverse of the regularized Dirichlet-to-Neumann map\index{Dirichlet-to-Neumann map!regularized}
$\clos(\Lambda(z)-\Lambda(0))$ has the form
\[
 \wt M(z)=\clos(\Lambda(z)-\Lambda(0))^{-1}=\gamma(0)^{(-1)}M_r(z)\gamma(0)^{-(*)}
\]
and, consequently, the Dirichlet-to-Neumann map has the
representation
\[
 \Lambda(z)=\Lambda(0)+\gamma(0)^{*}M_r(z)^{-1}\gamma(0), \quad z\in
 \rho(-\Delta_D).
\]
\end{corollary}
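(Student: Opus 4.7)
The plan is to recognize this corollary as an almost immediate algebraic consequence of Proposition \ref{propBetE3} combined with the abstract renormalization formula in Theorem \ref{propBetE2}(iii), once the abstract data are matched with the Laplacian setting of Proposition \ref{prop:Laplacian2}. First I would fix the identifications: take $A_0 = -\Delta_D$ (so $0 \in \rho(A_0)$), $G = \overline{\gamma(0)} = \gamma(0) = \cP(0)$, and $E = -\Lambda(0)$, noting that $\gamma(0)$ is already bounded with $\cran \gamma(0) = \sN_0 = \ker \ZA_{\max}$ by Proposition \ref{prop:Laplacian2}(i), hence closure is automatic. With these identifications Proposition \ref{propBetE3}(ii) presents the renormalized ordinary boundary triple $\{\sN_0,\Gamma_{0,r},\Gamma_{1,r}\}$ whose Weyl function $M_r(\lambda)$ coincides with the Schur complement $S_0(\lambda)$ in \eqref{schur0}.

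Next I would apply Theorem \ref{propBetE2}(iii) directly, reading off
\[
 \wt M(z) = G^{-1} S_0(z) G^{-(*)} = \gamma(0)^{(-1)} M_r(z) \gamma(0)^{-(*)},
\]
which is the first asserted equality. This step is essentially a citation; no computation is needed beyond checking that the abstract symbols $G^{-1}$, $G^{-(*)}$ translate to the notation $\gamma(0)^{(-1)}$, $\gamma(0)^{-(*)}$ used in the corollary (the former being the relation-inverse of the bounded injection $\gamma(0)\colon L^2(\partial\Omega)\to \sN_0$, the latter the inverse of $\gamma(0)^*\colon \sN_0\to L^2(\partial\Omega)$ restricted to its range).

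For the second formula I would invert both sides. Since the renormalized triple is ordinary, $M_r(z)^{-1}$ is a bounded everywhere defined operator on $\sN_0$ for $z \in \rho(-\Delta_D)\cap \rho(A_{0,r})$, and $\gamma(0)$ is bounded and injective. Thus taking the inverse of the identity just obtained yields
\[
 \wt M(z)^{-1} = \gamma(0)^{*} M_r(z)^{-1} \gamma(0).
\]
On the other hand, the description $\wt M(z) = \clos(\Lambda(z)-\Lambda(0))^{-1}$ from Proposition \ref{prop:Laplacian2}(ii) gives $\wt M(z)^{-1} = \Lambda(z) - \Lambda(0)$ (the closure disappears under inversion, the righthand side being the domain invariant operator $\Lambda(z)-\Lambda(0)$ defined on $H^{1}(\partial\Omega)$). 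Comparing the two expressions yields the announced representation of $\Lambda(z)$.

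The only real subtlety is bookkeeping with the various inverses and closures: one must check that the formal inversion $(\gamma(0)^{(-1)} M_r(z) \gamma(0)^{-(*)})^{-1} = \gamma(0)^{*} M_r(z)^{-1} \gamma(0)$ is valid as an identity of (possibly unbounded) operators on $L^2(\partial\Omega)$, and that this coincides with $\Lambda(z)-\Lambda(0)$ on the natural domain $H^1(\partial\Omega)$. Both points are immediate from the fact that $M_r(z)^{-1}\in \cB(\sN_0)$ and $\gamma(0)\colon L^2(\partial\Omega)\to \sN_0$ is bounded with dense range, so that the composition on the right hand side is automatically closed and defined precisely on $\ran \gamma(0)^{(-1)} = \gamma(0)(L^2(\partial\Omega)) = \dom \Lambda(z)$. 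No further computation is required.
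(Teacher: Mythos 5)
Your proposal is correct and follows exactly the route the paper intends: the corollary is stated immediately after Proposition \ref{propBetE3} as a direct consequence of it (equivalently, of Theorem \ref{propBetE2}(iii) applied with $A_0=-\Delta_D$ and $G=\overline{\gamma(0)}$), and your derivation --- quote the renormalization formula $\wt M(z)=G^{-1}M_r(z)G^{-(*)}$ and then invert, using $\wt M(z)^{-1}=\clos(\Lambda(z)-\Lambda(0))$ from Proposition \ref{prop:Laplacian2}(ii) --- is precisely that. Two small bookkeeping slips worth noting: boundedness of $M_r(z)^{-1}$ is governed by $\rho(A_{1,r})=\rho(-\Delta_D)$ rather than $\rho(A_{0,r})$ (at points of $\sigma(\ZA_K)\cap\rho(-\Delta_D)$, e.g.\ $z=0$, one must read $M_r(z)^{-1}$ as a relation, as the paper does with $M_r(0)^{-1}=0$), and the chain $\ran \gamma(0)^{(-1)}=\gamma(0)(L^2(\partial\Omega))=\dom\Lambda(z)$ in your final paragraph does not parse, since $\gamma(0)(L^2(\partial\Omega))$ lives in $L^2(\Omega)$ while $\dom\Lambda(z)$ lives in $L^2(\partial\Omega)$; the relevant domain statement is $\dom\wt M(z)\subset\ran\gamma(0)^{(*)}$.
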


Notice that here by definition $M_r(0)^{-1}=(\infty^{-1}=)\,0$.

Comparing Proposition~\ref{prop:Laplacian2}~(ii) with
Proposition~\ref{propBetE3}~(i) we get the following equality
\[
 \ran\Gamma_{0,\Omega}=\dom \overline{\st_{\wt M(z)}}=\ran \gamma(0)^*, \quad z\in
 \rho(-\Delta_D).
\]
Furthermore, it is clear from~\eqref{Etrans2} that
\[
 \ran \Gamma_{0,\Omega} \times \Gamma_{1,\Omega}
    =\ran \gamma(0)^*\times L^2(\partial\Omega).
\]
In particular, one can renormalize the regularized boundary mappings
$\Gamma_{0,\Omega}=\gamma_N-\Lambda(0)\gamma_D$,
$\Gamma_{1,\Omega}=\gamma_D$ also by any bounded operator $G$ acting
in the original boundary space $L^2(\partial\Omega)$ satisfying
$\ran G=\ran \gamma(0)^*$ and $\ker G= \{0\}$, and this leads to an
isomorphic copy of the results in Proposition~\ref{propBetE3}. In
this case the parametrization of all intermediate extensions of
$\ZA_{\min}$ can be expressed via boundary conditions involving
$G^{-1}(\gamma_N-\Lambda(0)\gamma_D)$ and $G^*\gamma_D$; cf.
Corollary~\ref{Lapcor}.

\subsection{Laplacian on rough domains}\label{example8.3}\index{Laplacian! on rough domain}
Let $\Omega$ be a bounded domain in $\dR^d$ $(d\ge 2)$ whose
boundary $\partial\Omega$ is equipped with a finite
$(d-1)$-dimensional Hausdorff measure \index{Hausdorff measure} $\sigma$,
$\sigma(\partial\Omega)<\infty$. To construct an analog for the
boundary triple appearing in Proposition~\ref{prop:Laplacian}~(i) in
nonsmooth domains $\Omega$ we make use of some results established
in \cite{Daners2000} and \cite{ArEl11,ArEl12,ArW03}. Following
\cite[Definition~3.1]{ArEl11} we first recall the notion of a trace
$\varphi\in L^2(\sigma)$ for a class of functions $u\in
H^1(\Omega)$.

\begin{definition} \label{def:trace}
  A function $\varphi\in L^2(d\sigma)$ is said to be a trace of $u\in H^1(\Omega)$, if there is a sequence $u_n\in H^1(\Omega)\cap C(\overline{\Omega})$, such that
  \[
  \lim_{n\to\infty}u_n=u\quad(\text{in }H^1(\Omega))\quad \text{ and } \quad
  \lim_{n\to\infty}u_n|_{\partial\Omega}=\varphi\quad(\text{in }L^2(\sigma)).
  \]
\end{definition}
Denote by $H_\sigma^1(\Omega)$ the set of elements of $H^1(\Omega)$ for which there exists a trace. In general, the trace is not uniquely defined. It is possible that $u\mid \Omega=0$
while its trace $\gamma_D u=u\mid\partial\Omega$ in $L^2(\sigma)$ is
nontrivial; for an example see e.g. \cite[Example 4.4]{ArEl11}.
Define the linear relation $\gamma_D$ by
\[
 \gamma_D:=\{\{u,\varphi\}:\, u\in H_\sigma^1(\Omega),\,\varphi\in L^2(\sigma),\,\varphi\textup{ is a trace of }u\}.
\]
Then $\gamma_D$ can be considered as a mapping from $H^1(\Omega)$ to
$L^2(\sigma)$, which is linear but in general multivalued on the
domain $H_\sigma^1(\Omega)$ and it has dense range in $L^2(\sigma)$;
cf. \cite{ArEl11}. If $u$ and $\varphi$ are as in
Definition~\ref{def:trace} we shall write
\[
\varphi\in\gamma_Du.
\]
The space $H_\sigma^1(\Omega)$ coincides with the closure of
$H^1(\Omega)\cap C(\overline{\Omega})$ in the norm
\begin{equation}\label{H1L2}
 \|u\|_{1,\sigma}^2= \|u\|^2_{H^1(\Omega)} + \int_{\partial\Omega} |u|^2\, d\sigma.
\end{equation}
Following \cite{ArEl11} denote by $\widetilde H^1(\Omega)$ the closure of $H^1(\Omega)\cap C(\overline{\Omega})$ in $H^1(\Omega)$.
In view of~\eqref{H1L2} $H_\sigma^1(\Omega)$ is a subset of $\widetilde H^1(\Omega)$. Without additional conditions on $\Omega$
the space $\widetilde H^1(\Omega)$ need not be dense in $H^1(\Omega)$.
Some sufficient conditions, like $\Omega$ being starshaped or having a continuous
boundary, can be found e.g. in \cite[Section~1.1.6]{Mazya}. Consequently,
$H_\sigma^1(\Omega)$ is not necessarily a dense subset of $H^1(\Omega)$.

For associating an appropriate boundary triple in this setting, we impose the following additional assumption.

\begin{assumption}\label{assumeT}
 $H_\sigma^1(\Omega)=\widetilde H^1(\Omega)$.
\end{assumption}

A list of conditions equivalent to Assumption~\ref{assumeT} is given in~\cite[Theorem~6.1]{ArEl11}.
Notice that the space $H_\cH^1(\Omega)$ appearing in \cite[Section 5]{ArEl11} has a norm
which is equivalent to norm of $H_\sigma^1(\Omega)$ defined in~\eqref{H1L2} due to
the following special case of Maz'ya inequality: there exists a
constant $c_M>0$ such that
\begin{equation}\label{eq:Friedr_Ineq}
 \int_\Omega |u|^2 \,dx \leq c_M\left( \int_\Omega |\nabla u|^2\,dx
 + \int_{\partial\Omega} |u|^2\, d\sigma \right)
\end{equation}
holds for all $u\in H^1(\Omega)\cap C(\overline{\Omega})$;
see \cite[Section 3.6]{Mazya}, \cite[eq. (5)]{ArEl11}.
The inequality~\eqref{eq:Friedr_Ineq} is a generalization of Friedrichs inequality to the case of rough domains.

In \cite[Definition 3.2]{ArEl11} the (weak) normal derivative is
defined implicitly via Green's (first) formula as follows: a
function $u\in H^1(\Omega)$ with $\Delta u\in L^2(\Omega)$ is said
to have a weak normal derivative in $L^2(\sigma)$ if there exists
$\psi\in L^2(\sigma)$ such that
\begin{equation}\label{eq:1stGreen}
   \int_\Omega (\Delta u) \overline{v}\,dx +\int_\Omega \nabla u\cdot \overline{\nabla v}\,dx=\int_{\partial\Omega} \psi \overline{v} \,d\sigma
\end{equation}
holds for all $v\in H^1(\Omega)\cap C(\overline\Omega)$, where $\Delta u$ denotes the Laplacian understood in distributional sense.
Since the functions $v\uphar \partial\Omega$, $v\in H^1(\Omega)\cap C(\overline\Omega)$,
form a dense set in $L^2(\sigma)$, the function $\psi\in L^2(\sigma)$ is uniquely
determined by $u$ and the mapping $u\to \psi$ is denoted by $\gamma_N$:
\[
 \gamma_Nu:=\psi,\quad u\in\dom\gamma_N\subset H^1(\Omega)\cap\dom A_{\rm max}.
\]

Assume that for some $ \varphi,\psi\in L^2(\sigma)$, $ u\in H^1(\Omega)$, and  $x\le 0$ one has
\begin{equation}\label{eq:DtoN_map}
  (-\Delta -xI)u=0,\quad \varphi\in\gamma_D u,  \quad \psi=\gamma_N u,\quad  x\le 0.
\end{equation}
The operator $\Lambda(x)$ which maps $\varphi$ to $\psi$ is called  the {\it Dirichlet-to-Neumann map}.
A slight modification of the proof of \cite[Theorem 3.3]{ArEl11} shows, that  $\Lambda(x)$ is a nonnegative selfadjoint operator on $L^2(\sigma)$ which is uniquely determined by the three properties listed in~\eqref{eq:DtoN_map}.



Now consider the differential expression
$-\Delta$, where $\Delta=\nabla\cdot\nabla$ is the (distributional) Laplacian operator
in $\Omega$.
Recall (see \cite[Example~3.1]{ArW03}) that for an open set $\Omega$ (without any regularity on the
boundary) the Dirichlet Laplacian $-\Delta_D$ is defined as the
selfadjoint operator associated with the closed (Dirichlet) form
\[
 \tau_D(f,g)=\int_\Omega \nabla f \cdot\overline{\nabla g} \,dx, \quad
 \dom \tau_D=H_0^1(\Omega).
\]
Similarly the Neumann Laplacian $-\Delta_N$ is defined as the
selfadjoint operator associated with the closed form (see~\cite[Example~3.2]{ArW03})
\begin{equation}\label{NeumanLap}
 \tau_N(f,g)=\int_\Omega \nabla f \cdot \overline{\nabla g}\,dx, \quad
  \dom \tau_N=\wt H^1(\Omega).
\end{equation}

\begin{proposition}\label{ABGrough}
Let $\Omega\subset \dR^d$, $d\geq 2$, whose boundary
$\partial\Omega$ is equipped with a finite $(d-1)$-dimensional
Hausdorff measure $\sigma$, let Assumption \ref{assumeT} be in force, and let the linear relation $\Gamma$ be defined by \begin{equation}\label{eq:Gamma_rough}
  \Gamma=\left\{\left\{f,\begin{pmatrix}
   \varphi \\ -\psi
   \end{pmatrix} \right\}:\,\begin{array}{ccc}
                        f \in \wt H^1(\Omega)\cap\dom \gamma_N, & \varphi,\psi\in L^2(\sigma), & \Delta f\in L^2(\Omega)  \\
                        \varphi\in\gamma_D f, & \psi=\gamma_N f &
                      \end{array}
  \right\}.
\end{equation}
Then: 
\begin{enumerate}\def\labelenumi {\textit{(\roman{enumi})}}
\item[(i)]
the pair $\{L^2(\sigma),\Gamma\}$ is a positive unitary boundary pair for $-\Delta$ on $\ZA_*:=\dom \Gamma$;
\item [(ii)]
for every $x<0$ the Weyl function $M(x)$ corresponding to the pair $\{L^2(\sigma),\Gamma\}$ coincides (up to the sign) with the Dirichlet-to-Neumann map $\Lambda(x)$:
\begin{equation}\label{eq:Weyl_rough}
M(x)=-\Lambda(x),\quad x<0,
\end{equation}
in particular, the function $M(\cdot)$ is an inverse Stieltjes function whose values
$M(z)$, $z\in \dC\setminus [0,\infty)$, are (unbounded) operators with $\ker M(z)=\mul \Gamma_0$;

\item [(iii)]
the operator  $A_1:=-\Delta\uphar {\ker \Gamma_1}$ coincides with the Neumann Laplacian $-\Delta_N$;
\item [(iv)]
the transposed pair $\{L^2(\sigma),\Gamma^\top\}$ is $S$-generalized
and the corresponding Weyl function $-M(\cdot)^{-1}$ is a
multivalued domain invariant Stieltjes function.
\end{enumerate}
\end{proposition}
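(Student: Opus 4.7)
My overall strategy is to apply the analytic extrapolation principle of Proposition~\ref{Mrealreg0} (equivalently Theorem~\ref{MRealreg}) with a real regular point $x<0$, taking the Neumann Laplacian $-\Delta_N$ as the selfadjoint extension $H$. First I will verify that $\Gamma$ in \eqref{eq:Gamma_rough} is single-valued as a Krein-space isometric mapping on $\ZA_{*}=\dom\Gamma$, identify $A_{1}:=\ker\Gamma_{1}$ with $-\Delta_N$, and show that $\dom(-\Delta_N)\subset\dom\Gamma$. Then I will compute $M(x)$ at $x<0$ from definition \eqref{Weylreal} and identify it with $-\Lambda(x)$, after which the selfadjointness and invertibility hypotheses \eqref{Mext} of Proposition~\ref{Mrealreg0} will follow from the nonnegativity of $\Lambda(x)$.

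\emph{Green's identity and identification of $A_{1}$.} Writing the first Green formula \eqref{eq:1stGreen} for $u$ with test function $\overline{v}$ and subtracting the analogous identity with the roles of $u$ and $v$ exchanged cancels the symmetric $\int\nabla u\!\cdot\!\overline{\nabla v}$ term and gives the second Green's identity
\[
\int_{\Omega}(\Delta u)\overline v\,dx-\int_{\Omega}u\overline{\Delta v}\,dx
=\int_{\partial\Omega}\bigl(\psi_u\overline{\varphi_v}-\varphi_u\overline{\psi_v}\bigr)d\sigma
\]
for all $u,v\in\dom\Gamma$ with corresponding data $\varphi=\gamma_Du,\psi=\gamma_Nu$ etc.; this is exactly \eqref{Green1} for the triple $\{L^2(\sigma),\gamma_D,-\gamma_N\}$, so $\Gamma$ is $(J_\sH,J_\cH)$-isometric. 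For (iii), $f\in\ker\Gamma_{1}$ means $f\in\wt H^{1}(\Omega)$, $\Delta f\in L^{2}(\Omega)$ and $\gamma_Nf=0$; via \eqref{eq:1stGreen} with $\psi=0$ this is exactly the variational characterization of $\dom(-\Delta_N)$ through the form $\tau_N$ in \eqref{NeumanLap} (using the density of $H^1(\Omega)\cap C(\overline\Omega)$ in $\wt H^{1}(\Omega)$). Conversely, every $f\in\dom(-\Delta_N)$ satisfies $\tau_N(f,v)=(-\Delta_Nf,v)$ on the dense set, so $\Delta f=-\Delta_Nf\in L^2(\Omega)$ and the weak normal derivative vanishes; by Assumption~\ref{assumeT} such $f\in\wt H^{1}(\Omega)=H^{1}_\sigma(\Omega)$ admits $\gamma_Df\in L^{2}(\sigma)$, so $\dom(-\Delta_N)\subset\dom\Gamma$ and $A_{1}=-\Delta_N$.

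\emph{Computation of $M(x)$ and application of the extrapolation principle.} Definition \eqref{eq:DtoN_map} of the Dirichlet-to-Neumann map $\Lambda(x)$ together with the definition \eqref{Weylreal} of $M(x)$ shows that $\{\varphi,-\psi\}\in M(x)$ precisely when there is $u\in\wh\sN_{x}(\ZA_{*})$ with $\varphi\in\gamma_Du$ and $\psi=\gamma_Nu=\Lambda(x)\varphi$; hence $M(x)=-\Lambda(x)$ as relations. Since $\Lambda(x)=\Lambda(x)^{*}\geq0$ (by the cited modification of \cite[Theorem~3.3]{ArEl11}), one has $M(x)=M(x)^{*}$, and for $x<0$ the inequality $M(x)+x=-\Lambda(x)+xI\leq xI<0$ yields $0\in\rho(M(x)+x)$. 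Thus both hypotheses \eqref{Mext} of Proposition~\ref{Mrealreg0} are satisfied at the real regular point $x\in\rho(-\Delta_N)\cap(-\infty,0)$, with $-\Delta_N\subset\dom\Gamma$. The conclusion of Proposition~\ref{Mrealreg0} then gives that $\{L^{2}(\sigma),\Gamma\}$ is a unitary boundary pair and $M(\cdot)$ is a Nevanlinna family that coincides with $-\Lambda(x)$ on $(-\infty,0)$; nonnegativity of $\Lambda(\cdot)$ on the negative axis identifies $M(\cdot)$ as an inverse Stieltjes family, and the identity $\ker M(z)=\mul\Gamma_{0}$ comes from item (iv) of Lemma~\ref{Weylmul}. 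This proves (i) and (ii); the word ``positive'' reflects that the kernel $\ZA=\ker\Gamma$ is a nonnegative symmetric restriction of $-\Delta_{N}$.

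\emph{The transposed pair (iv) and main difficulty.} For the transposed pair $\Gamma^{\top}=\{\Gamma_{1},-\Gamma_{0}\}$, the kernel of its first component equals $\ker\Gamma_{1}=\dom(-\Delta_{N})$, which is selfadjoint by (iii); hence $\Gamma^{\top}$ is $S$-generalized (Definition~\ref{SgenBP}), its Weyl family is $M^{\top}(\cdot)=-M(\cdot)^{-1}=\Lambda(\cdot)^{-1}$ (as relations), and the nonnegativity of $\Lambda(x)$ on $(-\infty,0)$ makes $-M(\cdot)^{-1}$ a Stieltjes family; multivaluedness originates from $\ker\Lambda(x)=\mul\Gamma_{0}$ (which is nontrivial precisely when the trace map $\gamma_{D}$ is multivalued on rough $\partial\Omega$), and domain invariance of $M^{\top}$ is then automatic by Theorem~\ref{prop:C6}. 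The main obstacle I expect in fleshing out this plan is the verification $\dom(-\Delta_{N})\subset\dom\Gamma$, where one must use Assumption~\ref{assumeT} together with the non-trivial fact that any $f\in\dom(-\Delta_{N})$ has an $L^{2}(\sigma)$-trace in the generalized sense of Definition~\ref{def:trace} and satisfies the weak-Neumann condition in the sense of~\eqref{eq:1stGreen}; once this inclusion is secured, everything else in the proof is an application of Proposition~\ref{Mrealreg0} and bookkeeping with the Krein-space adjoint $\Gamma^{[*]}$.
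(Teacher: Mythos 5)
Your proposal is correct and follows essentially the same route as the paper's own proof: derive the second Green identity from the first, use Assumption~\ref{assumeT} to show $\dom(-\Delta_N)\subset\dom\Gamma$ and $A_1=-\Delta_N$, identify $M(x)=-\Lambda(x)$ at a real point $x<0$, and invoke the extrapolation principle of Theorem~\ref{MRealreg} (with $H=-\Delta_N$) to obtain unitarity, Stieltjes/inverse Stieltjes character, and statement (iv). Two cosmetic remarks: the identity $\ker M(z)=\mul\Gamma_0$ follows from items (i)--(iii) of Lemma~\ref{Weylmul} combined with the observation $\mul\Gamma=\mul\Gamma_0\times\{0\}$ (not from item (iv)), and ``positive'' in the paper's sense means that the main transform $\wt A$ of $\Gamma$ is a nonnegative selfadjoint relation, which the paper reads off from the first Green identity rather than from the kernel $\ker\Gamma$.
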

\begin{proof}
(i)--(iii) If $f\in \dom \Gamma$, then the (first) Green's identity
~\eqref{eq:1stGreen} holds with $u=f$ and $v\in H^1(\Omega)\cap
C(\overline\Omega)$. Then in view of~\eqref{H1L2} this identity can
be extended to hold for all $v\in H_\sigma^1(\Omega)$. Thus, in
particular, it holds for all $g:=v\in \dom \Gamma$:
\begin{equation}\label{eq:1stGreen2}
   \int_\Omega (\Delta f) \overline{g}\,dx +\int_\Omega \nabla f\cdot \overline{\nabla g}\,dx=\int_{\partial\Omega} \psi \overline{\wt\varphi} \,d\sigma, \quad \psi=\gamma_Nf,\,\, \wt\varphi\in\gamma_Dg.
\end{equation}
Similarly, one gets from~\eqref{eq:1stGreen} with $u=g\in\dom \Gamma$ and $v=f\in \dom \Gamma$:
\[
   \int_\Omega (\Delta g) \overline{f}\,dx +\int_\Omega \nabla g\cdot \overline{\nabla f}\,dx =\int_{\partial\Omega} \wt\psi \overline{\varphi} \,d\sigma, \quad \wt\psi=\gamma_Ng,\,\, \varphi\in\gamma_Df.
\]
Taking conjugates in the last identity and subtracting the identity~\eqref{eq:1stGreen2}
from that leads to Green's (second) formula in~\eqref{Green1} for $-\Delta$ with $f,g\in \ZA_*=\dom \Gamma$.
This means that  $\{L^2(\sigma),\Gamma\}$ is an isometric boundary pair.

To prove that  $\{L^2(\sigma),\Gamma\}$ is a unitary boundary pair, we proceed by proving (ii) and (iii).
With $x<0$ it follows from~\eqref{eq:Gamma_rough} that $\varphi\in\dom M(x)$ and $M(x)\varphi=-\psi$ precisely when there exists $u\in\wt H^1(\Omega)\cap\dom\gamma_N$, such that
\[
-\Delta u-xu=0,\quad \varphi\in\gamma_D u, \quad \psi=\gamma_N u.
\]
In view of~\eqref{eq:DtoN_map} this means that the operator $-M(x)$
coincides with the Dirichlet-to-Neumann map $\Lambda(x)$, which is a
nonnegative selfadjoint operator in $L^2(\sigma)$. This proves
~\eqref{eq:Weyl_rough}. The definition of $\Gamma$ shows that $\mul
\Gamma=\mul\Gamma_0\times\{0\}$ and hence by Lemma~\ref{Weylmul}
$\ker M(z)=\mul \Gamma_0$ does not depend on $\lambda\in\cmr$. The
assertion that $M(\cdot)$ is an inverse Stieltjes function is a
consequence of $M(x)\leq 0$, $x<0$, (the nonnegativity of the main
transform $\wt A$, which is shown below, implies that $M(x)$ is also
holomorphic at $x<0$). This proves (ii).

By definition every $f\in \dom(-\Delta_N)$ belongs to $\wt H^1(\Omega)$. On the other hand, by Assumption~\ref{assumeT} $\wt H^1(\Omega)=H_\sigma^1(\Omega)=\dom \gamma_D$ and hence, in particular, for every
$f\in \dom(-\Delta_N)$ there exists a Dirichlet trace $\varphi\in\gamma_D f$. Next it is shown that
for every $f\in \dom(-\Delta_N)$ also the Neumann trace $\gamma_D u$ exists. Indeed,
by definition the Neumann Laplacian $-\Delta_N$ is the
selfadjoint operator associated with the closed form~\eqref{NeumanLap}.
Hence,~\eqref{NeumanLap} implies that for all $f\in \dom(-\Delta_N)$ and $g\in \wt H^1(\Omega)$ ,
\[
 \int_\Omega \nabla f \cdot \overline{\nabla g}\,dx
 =\int_\Omega (-\Delta f) \overline{g} \,dx.
\]
Comparing this identity with the definition of $\gamma_N$ it is seen
that the equality~\eqref{eq:1stGreen} is satisfied with the choice
$\psi=0$. Therefore, $f\in \dom \gamma_N$ and $\gamma_N f=0$. This
implies that $ \dom(-\Delta_N)\subset\dom A_*$ and, moreover, that
$\dom(-\Delta_N)\subset\ker\gamma_N=\dom A_1$. Since $-\Delta_N$ is
a selfadjoint operator in $L^2(\Omega)$ and $A_1$  is symmetric (see
Section \ref{sec3.3}), the equality $A_1=-\Delta_N$ follows. This
proves the assertion (iii).


Next we complete the proof of (i) by showing that $\{L^2(\sigma),\Gamma\}$ is a positive unitary boundary pair,
i.e., that the main transform $\wt A$ of $\Gamma$ given by
\begin{equation}
\label{awig0}
 \wt A :=
 \left\{\,
 \left\{ \begin{pmatrix} f \\ \varphi \end{pmatrix},
         \begin{pmatrix} -\Delta f \\ \psi \end{pmatrix}
 \right\} :\,
 \left\{ \begin{pmatrix} f \\ -\Delta f \end{pmatrix},
         \begin{pmatrix}  \varphi \\ -\psi \end{pmatrix}
 \right\}
 \in \Gamma
 \,\right\}
\end{equation}
is a nonnegative selfadjoint relation in $L^2(\Omega)\times
L^2(\sigma)$; see~\eqref{awig}. Nonnegativity of $\wt A$ follows
immediately from~\eqref{eq:1stGreen2}. On the other hand, by item
(ii) the Weyl function satisfies $-M(x)=\Lambda(x)\geq 0$, $x<0$,
and hence it is a nonpositive selfadjoint operator with
$-x\in\rho(M(x))$. Since $\dom(-\Delta_N)\subset \dom A_*$ and
$-\Delta_N\geq 0$ is selfadjoint it follows from
Theorem~\ref{MRealreg} that $x\in \rho(\wt A)$ and hence $\wt A=\wt
A^*\ge 0$, which proves the claim.

(iv) Since $A_1=-\Delta_N$ is selfadjoint, the transposed pair
$\{L^2(\sigma),\Gamma^\top\}$ is $S$-generalized; see Definition
\ref{SgenBT}. Moreover, the corresponding Weyl function
$-M(x)^{-1}\ge 0$ of $\{L^2(\sigma),\Gamma^\top\}$ is a nonnegative
selfadjoint relation in $L^2(\sigma)$ for every $x<0$. This implies
that $-M(\cdot)^{-1}$ is a (multivalued) Stieltjes family. It is
domain invariant by Theorem~\ref{prop:C6}.
\end{proof}

In this general setting, the multivalued part of $\Gamma$ can be
nontrivial, since the trace $\gamma_D$ need not be uniquely
determined. For unitary boundary pairs the multivalued part is
described in \cite[Lemma~4.1]{DHMS06}; see also Lemma~\ref{Weylmul}.
In the present setting a more explicit description of the
multivalued part can be given with the aid of a result of Daners in
\cite{Daners2000}; see also \cite{ArW03} for an other proof of
Daners result via capacity arguments.

\begin{corollary}
There exists a Borel set $B_0\subset \partial\Omega$, such that
\[
 \mul \gamma_D=L^2(B_0),\quad \mul \Gamma=\mul
 \gamma_D\times\{0\}.
\]
and, in particular, $\mul\gamma_D=\ker M(\lambda)$, $\lambda\in\dC\setminus[0,\infty)$.
\end{corollary}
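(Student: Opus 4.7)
The plan is to unpack the multivalued parts directly from the definition of $\Gamma$ in \eqref{eq:Gamma_rough} and then invoke the Daners/Arendt--Warma result on zero-traces.

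First I would analyze $\mul\gamma_D$. By Definition \ref{def:trace}, a function $\varphi\in L^2(\sigma)$ lies in $\mul\gamma_D$ precisely when $\varphi\in\gamma_D 0$, i.e. there exists a sequence $u_n\in H^1(\Omega)\cap C(\overline{\Omega})$ with $u_n\to 0$ in $H^1(\Omega)$ and $u_n|_{\partial\Omega}\to \varphi$ in $L^2(\sigma)$. This is exactly the subspace of $L^2(\sigma)$ consisting of ``admissible zero traces'', which is the object studied by Daners in \cite{Daners2000} (and reproved via capacity arguments by Arendt--Warma in \cite{ArW03}). The cited result provides a Borel set $B_0\subset\partial\Omega$ such that this subspace equals $L^2(B_0)$, realized as functions in $L^2(\sigma)$ vanishing $\sigma$-almost everywhere outside $B_0$. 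This gives the first identity $\mul\gamma_D=L^2(B_0)$; I would only quote Daners' theorem rather than reproduce it.

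Next I would compute $\mul\Gamma$ from \eqref{eq:Gamma_rough}. An element of $\mul\Gamma$ corresponds to taking $f=0$ in the description of $\Gamma$, subject to $0\in\wt H^1(\Omega)\cap\dom\gamma_N$, $\varphi\in\gamma_D 0$ and $\psi=\gamma_N 0$. Trivially $0\in\wt H^1(\Omega)$. The key observation is that the weak normal derivative $\gamma_N$ is \emph{single-valued}: by definition, $\psi=\gamma_N 0$ must satisfy \eqref{eq:1stGreen} with $u=0$ and $\Delta u=0$, so $\int_{\partial\Omega}\psi\overline{v}\,d\sigma=0$ for all $v\in H^1(\Omega)\cap C(\overline{\Omega})$; since the restrictions of such $v$ are dense in $L^2(\sigma)$, we get $\psi=0$. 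Therefore the only admissible second component is $-\psi=0$, and the first component $\varphi$ ranges exactly over $\gamma_D 0=\mul\gamma_D$. This gives $\mul\Gamma=\mul\gamma_D\times\{0\}$.

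Finally, the identity $\mul\gamma_D=\ker M(\lambda)$ for $\lambda\in\dC\setminus[0,\infty)$ is essentially already in the paper. Proposition \ref{ABGrough}(ii) asserts that $\ker M(z)=\mul\Gamma_0$ and is independent of $\lambda\in\cmr$; the holomorphic continuation of $M$ to $(-\infty,0)$ (also established in Proposition \ref{ABGrough}(ii) together with~\eqref{eq:Weyl_rough}) extends this identity to all $\lambda\in\dC\setminus[0,\infty)$. It remains to identify $\mul\Gamma_0$ with $\mul\gamma_D$: an element $\varphi\in\mul\Gamma_0$ means $\{0,\varphi\}\in\Gamma_0$, i.e., there exists $\psi$ with $\{0,\{\varphi,-\psi\}\}\in\Gamma$, which by the previous paragraph forces $\psi=0$ and $\varphi\in\gamma_D 0=\mul\gamma_D$; conversely any such $\varphi$ yields an element of $\mul\Gamma_0$. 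Hence $\mul\Gamma_0=\mul\gamma_D$, completing the proof.

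The only nontrivial ingredient is Daners' description of $L^2(\sigma)$-zero traces as $L^2(B_0)$ for some Borel $B_0$; I would simply cite \cite{Daners2000,ArW03} for that. The main (very mild) obstacle is making sure the chain $\mul\gamma_D=\mul\Gamma_0=\ker M(\lambda)$ is clean, which reduces to the single-valuedness of $\gamma_N$ that follows straight from the density of boundary traces of $H^1(\Omega)\cap C(\overline{\Omega})$ in $L^2(\sigma)$.
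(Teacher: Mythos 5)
Your proposal is correct and follows exactly the route the paper intends: the paper offers no detailed proof of this corollary, only the remark that the multivalued part is described via Daners' theorem on zero traces, and your argument fills in precisely those details (Daners/Arendt--Warma for $\mul\gamma_D=L^2(B_0)$, single-valuedness of $\gamma_N$ from density of boundary traces to get $\mul\Gamma=\mul\gamma_D\times\{0\}$, and Lemma~\ref{Weylmul} together with Proposition~\ref{ABGrough}(ii) for $\ker M(\lambda)=\mul\Gamma_0=\mul\gamma_D$ on all of $\dC\setminus[0,\infty)$). No gaps.
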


Hence, $\Gamma$ is single-valued if and only if $L^2(B_0)=\{0\}$,
i.e., $\sigma(B_0)=0$. The set $B_0$ is unique up to
$\sigma$-equivalence $\sigma(B_0\Delta \wt B_0)=0$. Since
$\mul\gamma_D\neq 0$ corresponds to $\sigma(B_0)>0$, $B_0$ can be
considered to represent an irregular part of the boundary.

\begin{remark}
In this general setting we do not know if the operator
$A_0:=-\Delta\uphar {\ker \Gamma_0}$ coincides with the Dirichlet
Laplacian $-\Delta_D$. In other words, we do not know if the Neumann
trace $\gamma_Nu$ exists for every $u\in \dom (-\Delta_D)$.
%
%
\end{remark}

\section{Applications to differential operators with local point interactions}\label{sec8}

\subsection{Abstract results on direct sums of boundary triples \index{Direct sum of boundary triples} and their Weyl functions}
\label{sec8.1}
%

A general class of unitary boundary triples, which are more general
than generalized boundary triples is obtained by considering an
infinite orthogonal sum of ordinary boundary triples. Here we mainly
follow the considerations in \cite{KosMMM}; see also the references
given therein.

Let $S_n$ be a densely defined symmetric operator with equal defect
numbers $n_+(S_n)=n_+(S_n)$ in the Hilbert space $\sH_n$, $n\in\dN$.
Consider the operator $A=\bigoplus_{n=1}^\infty S_n$ in the Hilbert
space $\sH:=\bigoplus_{n=1}^\infty \sH_n=\{\, \oplus_{n=1}^\infty
f_n:\, f_n\in\sH_n,\,\, \sum_{n=1}^\infty \|f_n\|^2<\infty\}$. Then
$A$ is symmetric with equal defect numbers and its adjoint $A^*$ is
given by
   \begin{equation}\label{eq:8.A^*}
 A^*=\bigoplus_{n=1}^\infty S_n^*, \quad
\dom A^*= \left\{\,{\displaystyle \bigoplus_{n=1}^\infty f_n}\in\sH :\, f_n\in\dom
S_n^*,\,\, \sum_{n=1}^\infty \|S_n^*f_n\|^2<\infty\right\}.
\end{equation}
Now let $\Pi_n=\{\cH_n,\Gamma_0^{(n)},\Gamma_1^{(n)}\}$ be an ordinary boundary triple
for $S_n^*$, $n\in\dN$. Let $\cH=\bigoplus_{n=1}^\infty \cH_n$, $\Gamma^{(n)} :=
\{\Gamma^{(n)}_0, \Gamma^{(n)}_1\}$ and let the mapping $\Gamma_0'$ and $\Gamma_1'$
be defined by
%
%
   \begin{equation}\label{III.1_02-second}
{\Gamma_j}'  := \bigoplus_{n=1}^{\infty} \Gamma^{(n)}_j,\quad \dom \Gamma_j' =
\bigl\{{\displaystyle \bigoplus_{n=1}^\infty f_n} \in\dom A^*: \ \sum_{n\in \N}\|\Gamma^{(n)}_j
f_n\|^2_{\cH_n} <\infty \bigr\},  \quad  j\in\{0,1\}.
   \end{equation}

We also put 
   \begin{equation}\label{III.1_02}
\Gamma=\begin{pmatrix}\Gamma_0\\\Gamma_1\end{pmatrix}:=\begin{pmatrix}\Gamma_0'\\\Gamma_1'\end{pmatrix} \upharpoonright \dom \Gamma, \quad\textup{where}\quad\dom \Gamma =  \dom
\Gamma_1'\cap\dom \Gamma_0'.
   \end{equation}
 Then
${\Gamma_j}' = \overline {\Gamma_j}$, $j=0,1$. \ Denote by $\sH_+$
the domain  $\dom A^*$  equipped with the graph norm of $A^*$.
Clearly,  $\dom \Gamma $  is dense in $\gH_+$. Define the operators
$S_{n,j} := S_{n}^*\upharpoonright \ker \Gamma^{(n)}_j$ and ${A}_j
:= \bigoplus^{\infty}_{n=1}S_{n,j}$, $j\in \{0,1\}$. Then ${A}_0$
and ${ A}_1$ are selfadjoint extensions of $A$. Note that ${A}_0$
and ${A}_1$ are disjoint but not necessarily transversal.

Finally, we  set
\begin{equation}\label{III.1_03}
A_* := A^*\upharpoonright\dom \Gamma\quad \text{and}\quad A_{*j} :=
A_*\upharpoonright\ker\Gamma_j,\quad  j\in\{0,1\}.
   \end{equation}
Clearly, $\overline{A_{*j}} = A_{j}$, hence  $A_{*j}$ is essentially
selfadjoint, $j\in \{0,1\}$.

The following result is contained in \cite[Theorem~3.2]{KosMMM} (and
stated here in the terminology of the present paper).
  \begin{theorem}[\cite{KosMMM}]\label{otriple}
Let $\Pi_n=\{\cH_n,\Gamma_0^{(n)},\Gamma_1^{(n)}\}$ be an ordinary
boundary triple for $S_n^*$, let $S_{n,j}=S_n^*\uphar\ker
\Gamma_j^{(n)}$, $j\in \{0,1\}$, and let $M_n(\cdot)$, $n\in\dN$, be
the corresponding Weyl function. Moreover, let the operators $A^*$, $\Gamma_j'$ and $\Gamma_j$, $j\in \{0,1\}$, be given by~\eqref{eq:8.A^*},
~\eqref{III.1_02-second} and~\eqref{III.1_02}.
Then:
\begin{enumerate}\def\labelenumi {\textit{(\roman{enumi})}}
\item $\Pi=\{\cH,\Gamma_0,\Gamma_1\}$ is a unitary boundary triple for $A^*$;
\item the corresponding Weyl function is the orthogonal sum $M(z)=\bigoplus_{n=1}^\infty M_n(z)$;
\item the mapping $\Gamma_j:\sH_+\to \cH$  is closable and $\overline{\Gamma_j}=\Gamma_j'$, $j\in
\{0,1\}$;
\item The operator $A_{*j}$ 
given by~\eqref{III.1_03}  is essentially selfadjoint and
$\overline{A_{*j}}=\bigoplus_{n=1}^\infty S_{n,j} = A_j$, $j\in
\{0,1\}$.
  \end{enumerate}
\end{theorem}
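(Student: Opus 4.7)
The plan is to verify the four items roughly in the order (i)–(iv), but with the bulk of the work concentrated on establishing that $\Pi=\{\cH,\Gamma_0,\Gamma_1\}$ is a \emph{unitary} (not merely isometric) boundary triple in the sense of Definition~\ref{unitBT}. First I would check the two obvious ingredients. Green's identity~\eqref{Green1} for $\Gamma$ follows by summing the corresponding identities on each $\Pi_n$: for $\wh f,\wh g\in\dom\Gamma$ both sides are absolutely convergent series whose $n$-th terms agree because $\Pi_n$ is ordinary. This shows that $\Gamma$ is $(J_\sH,J_\cH)$-isometric, so $\Gamma^{-1}\subset\Gamma^{[*]}$. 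For the density of $\dom\Gamma$ in $A^*$ in the graph topology of~\eqref{eq:8.A^*}, I would use the truncation $f\mapsto f^{(N)}:=f_1\oplus\cdots\oplus f_N\oplus 0\oplus\cdots$: for $f=\oplus f_n\in\dom A^*$ one has $f^{(N)}\in\dom\Gamma$ (finitely many components nonzero, so the summability in~\eqref{III.1_02-second} is trivial), and $f^{(N)}\to f$ in the graph norm because $\sum_{n>N}(\|f_n\|^2+\|S_n^*f_n\|^2)\to 0$.

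The main technical step is to show $\Gamma^{[*]}\subset\Gamma^{-1}$, i.e.\ the maximality of the isometry. I would take $\{\wh k,\wh g\}\in\Gamma^{[*]}$ with $\wh k=(k,k')\in\cH^2$, $\wh g=(g,g')\in\sH^2$, and test against finitely supported $\wh f\in\dom\Gamma$ having only its $n$-th slot nonzero, with $f_n\in\dom S_n^*$ arbitrary and $f_n'=S_n^*f_n$. The defining identity~\eqref{def:J_unit} collapses on the $n$-th component to
\[
(S_n^*f_n,g_n)_{\sH_n}-(f_n,g_n')_{\sH_n}
=(\Gamma_1^{(n)}f_n,k_n)_{\cH_n}-(\Gamma_0^{(n)}f_n,k_n')_{\cH_n}.
\]
Since $\Pi_n$ is ordinary, $\Gamma^{(n)}$ is $(J_{\sH_n},J_{\cH_n})$-unitary and the map $\Gamma^{(n)}:\dom S_n^*\to\cH_n^2$ is surjective; applying its Kre\u{\i}n-space adjoint identity in reverse forces $g_n\in\dom S_n^*$, $g_n'=S_n^*g_n$, and $\Gamma^{(n)}g_n=(k_n,k_n')$. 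Doing this for every $n$ gives $g,g'\in\sH$ with $g'=A^*g$ and $\Gamma_0^{(n)}g_n=k_n$, $\Gamma_1^{(n)}g_n=k_n'$; the summability $\sum\|k_n\|^2+\sum\|k_n'\|^2<\infty$ built into $\wh k\in\cH^2$ then yields $g\in\dom\Gamma$ and $\Gamma g=\wh k$, i.e.\ $\{\wh g,\wh k\}\in\Gamma$. This proves $\Gamma^{-1}=\Gamma^{[*]}$, establishing (i). I expect this orthogonality argument — converting an infinite-dimensional duality pairing into componentwise conditions by choosing finitely supported probes, and then recovering the required summability from the a priori membership $\wh k\in\cH^2$ — to be the delicate point; everything downstream is bookkeeping.

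For (ii) I would take $\wh f_\lambda\in\wh\sN_\lambda(A_*)$ with $\lambda\in\cmr$, note that $f_\lambda=\oplus f_{n,\lambda}$ with $f_{n,\lambda}\in\sN_\lambda(S_n^*)$, and read off from Definition~\ref{Weylfam} and the ordinary case that $\Gamma_1^{(n)}\wh f_{n,\lambda}=M_n(\lambda)\Gamma_0^{(n)}\wh f_{n,\lambda}$, so that $M(\lambda)=\bigoplus_n M_n(\lambda)$ on the natural $\ell^2$-type domain. For (iii), the direct-sum operator $\Gamma_j'$ in~\eqref{III.1_02-second} is closed (limits in $\sH\times\cH$ with componentwise control of $\Gamma_j^{(n)}$, which is bounded on each $\dom S_n^*$, force the summability to persist), hence $\Gamma_j\subset\Gamma_j'$ is closable; conversely, truncation of any $f\in\dom\Gamma_j'$ produces $f^{(N)}\in\dom\Gamma$ with $f^{(N)}\to f$ in the graph norm of $A^*$ and $\Gamma_j f^{(N)}\to\Gamma_j'f$ in $\cH$, giving $\overline{\Gamma_j}=\Gamma_j'$. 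Finally for (iv), $A_j=\bigoplus_n S_{n,j}$ is selfadjoint as a direct sum of selfadjoint operators; the same truncation argument applied to any $f\in\dom A_j$ (with $f^{(N)}\in\dom A_{*j}$ because each component lies in $\dom S_{n,j}=\ker\Gamma_j^{(n)}$) gives $f^{(N)}\to f$ in the graph norm of $A^*$, so $\overline{A_{*j}}=A_j$ and $A_{*j}$ is essentially selfadjoint.
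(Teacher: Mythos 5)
Your argument is correct and complete. Note, however, that the paper itself offers no proof of Theorem~\ref{otriple}: it is quoted verbatim from \cite[Theorem~3.2]{KosMMM}, so there is no in-paper proof to measure you against. Judged on its own terms, your proof is sound: Green's identity and the graph-density of $\dom\Gamma$ via truncation are routine, and the one genuinely delicate point --- maximality of the isometry --- is handled correctly. Testing a pair $\{\wh k,\wh g\}\in\Gamma^{[*]}$ against elements supported in a single slot does give exactly the statement that $\{(k_n,k_n'),(g_n,g_n')\}\in(\Gamma^{(n)})^{[*]}=(\Gamma^{(n)})^{-1}$, and the passage back to $\{\wh g,\wh k\}\in\Gamma$ is legitimate because the needed summability of $\{\Gamma_j^{(n)}g_n\}_n$ is exactly the hypothesis $\wh k\in\cH^2$, while $g'=\oplus S_n^*g_n\in\sH$ gives $g\in\dom A^*$. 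For comparison, the standard route in the cited source runs through the main transform of Section~\ref{sec6.3}: since $\cJ$ commutes with orthogonal sums, $\cJ(\bigoplus_n\Gamma^{(n)})=\bigoplus_n\cJ(\Gamma^{(n)})$ is an orthogonal sum of selfadjoint relations in $\sH_n\oplus\cH_n$, hence selfadjoint, hence $\Gamma$ is unitary by \cite[Proposition~2.10]{DHMS06}; that packaging avoids the explicit duality computation but is the same argument in disguise. Your treatment of (ii)--(iv) by truncation and closedness of the componentwise sum $\Gamma_j'$ (using boundedness of each $\Gamma_j^{(n)}$ in the graph norm of $S_n^*$) is exactly what is needed.
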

The following result characterizes selfadjointness of $A_j=\ker
\Gamma_j$, $j\in \{0,1\}$,  and completes Theorem~3.2 from
\cite{KosMMM}.
   \begin{proposition}\label{A01selfadj}
Let the assumptions be as in Theorem~\ref{otriple} and let $A_j=\ker
\Gamma_j$, $j\in \{0,1\}$. Then
  \begin{equation}\label{SA01crit}
A_j=\bigoplus_{n=1}^\infty S_{n,j} \quad \Longleftrightarrow \quad
\Gamma_{j'}\uphar A_j \quad \text{is bounded }\quad
(j'=1-j\in\{0,1\}).
\end{equation}
In particular, $A_0$ satisfies~\eqref{SA01crit} (i.e. $A_0=A_0^*$)
if and only if the corresponding Weyl function $M(\cdot)$ and the
$\gamma$-field $\gamma(\cdot)$ satisfy one of the equivalent
conditions in Theorem~\ref{prop:C6B}.

Similarly, $A_1$ satisfies~\eqref{SA01crit} if and only if the Weyl
function $-M^{-1}(\cdot)$ and $\gamma$-field
$\gamma(\cdot)M^{-1}(\cdot)$ corresponding to the (unitary)
transposed boundary triple $\Pi^\top = \{\cH,\Gamma_1,-\Gamma_0\}$
satisfy one of the equivalent conditions listed  in
Theorem~\ref{prop:C6B}.
     \end{proposition}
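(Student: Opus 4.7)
The plan is to reduce the proposition to the abstract criterion for $S$-generalized boundary triples via Corollary~\ref{cor:C3}, applied to the unitary boundary triple $\Pi$ from Theorem~\ref{otriple} and to its transposed triple.

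First I would handle the case $j=0$. By Theorem~\ref{otriple}~(iv), $\overline{A_0}=\bigoplus_{n=1}^\infty S_{n,0}$, which is selfadjoint in $\sH$. Hence the identity $A_0=\bigoplus_{n=1}^\infty S_{n,0}$ is equivalent to $A_0$ being closed. Since $\Pi=\{\cH,\Gamma_0,\Gamma_1\}$ is a unitary boundary triple by Theorem~\ref{otriple}~(i), Corollary~\ref{cor:C3} applies and gives the equivalence between closedness of $A_0$ and boundedness of $\Gamma_1\uphar A_0$. This establishes~\eqref{SA01crit} for $j=0$.

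For the case $j=1$ I would pass to the transposed boundary triple $\Pi^\top=\{\cH,\Gamma_1,-\Gamma_0\}$. As noted in Section~\ref{sec3}, the transposition is a standard $J_\cH$-unitary transform, so $\Pi^\top$ is again a unitary boundary triple for $A^*$ with $\ker(\Pi^\top)_0=\ker\Gamma_1=A_1$. Applying Corollary~\ref{cor:C3} to $\Pi^\top$ yields that $A_1$ is closed if and only if $(-\Gamma_0)\uphar A_1=-\Gamma_0\uphar A_1$ is bounded, i.e., if and only if $\Gamma_0\uphar A_1$ is bounded. Combined with $\overline{A_1}=\bigoplus_{n=1}^\infty S_{n,1}$ from Theorem~\ref{otriple}~(iv), this is precisely~\eqref{SA01crit} for $j=1$.

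The two remaining statements are now immediate corollaries. If $A_0=A_0^*$, i.e.~$\Pi$ is $S$-generalized, then Theorem~\ref{prop:C6B} provides the equivalence with conditions (i)--(vi) formulated in terms of $M(\cdot)$ and $\gamma(\cdot)$. For the analogous statement concerning $A_1$, I would apply Theorem~\ref{prop:C6B} directly to the transposed triple $\Pi^\top$, whose Weyl function is $-M(\cdot)^{-1}$ and whose $\gamma$-field is $\gamma(\cdot)M(\cdot)^{-1}$ (see Section~\ref{sec3.2} and formula for the transposed boundary triple in Section~\ref{sec5.3}). Since there is essentially nothing to prove once the first part is in place, the main (and only) obstacle is already addressed by invoking Corollary~\ref{cor:C3}; no further computation is needed.
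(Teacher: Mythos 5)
Your proof is correct and follows essentially the same route as the paper: the paper reduces \eqref{SA01crit} to Lemma~\ref{cor:GH}~(iii),~(v) together with Proposition~\ref{prop:C3}~(i), which is exactly the content of Corollary~\ref{cor:C3} that you invoke, and it likewise handles $j=1$ by passing to the transposed (unitary) boundary triple and then cites Theorem~\ref{prop:C6B} for the remaining equivalences. The only cosmetic difference is that you cite the packaged corollary while the paper cites its two ingredients directly.
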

\begin{proof}
The statements follow from Lemma~\ref{cor:GH} and
Proposition~\ref{prop:C3}. Indeed, by Proposition~\ref{prop:C3} (i)
$\Gamma_1H(\lambda)=\gamma(\bar\lambda)^*$ and hence
$\Gamma_1H(\lambda)$ is closed. Since $A_0$ is essentially
selfadjoint, the equivalence $A_0=A_0^*$ $\Longleftrightarrow$
$\Gamma_1\uphar A_0$ is bounded, is obtained from Lemma~\ref{cor:GH}
(iii), (v). All the other equivalent conditions for $A_0=A_0^*$ hold
by Theorem~\ref{prop:C6B}.

The criterion~\eqref{SA01crit} and the other equivalent statements
for $A_1=A_1^*$ are obtained by passing to the transposed boundary
triple $\{\cH,\Gamma_1,-\Gamma_0\}$.
\end{proof}

\begin{remark}\label{rem:7.20}
The criterion~\eqref{SA01crit} implies the sufficient conditions for
$A_0$ and $A_1$ to be selfadjoint as established in
\cite[Theorem~3.2]{KosMMM}. Namely, if $\Gamma_{1}$ or $\Gamma_{0}$
is bounded, then also the restriction $\Gamma_{1}\uphar A_0$ or
$\Gamma_{0}\uphar A_1$, respectively, is bounded. Moreover, if $A_0$
and $A_1$ are transversal, i.e. $\dom A_0 +\dom A_1=\dom A^*$, then
clearly $\Gamma_{j'}\uphar A_j$ is bounded $\Leftrightarrow$
$\Gamma_{j'}$ is bounded, since $\ker \Gamma_j=\dom A_j$
($j'=1-j\in\{0,1\}$).
\end{remark}

A criterion for  a  direct sum of ordinary boundary triples to form
also an ordinary boundary triple can be formulated in terms of the
corresponding  Weyl functions (see~\cite{MalNei12}, \cite{KosMMM},
\cite{CarMalPos13}).
  \begin{theorem}\label{th_criterion(bt)}
Let  $\Pi_n=\{\cH_n, \Gamma_0^{(n)}, \Gamma_1^{(n)} \}$ be a
boundary triple for $S_{n}^*$ and let $M_n(\cdot)$  be the
corresponding Weyl function, $n\in \dN $.

\begin{enumerate}\def\labelenumi {\textit{(\roman{enumi})}}
\item  The direct sum $\Pi=\oplus_{n=1}^{\infty}\Pi_n$ forms  an ordinary boundary triple for
the operator \linebreak $A^* =\oplus_{n=1}^{\infty}S_n^*$ if and only if
        \[
C_1= \sup_n\|M_n(i)\|_{\cH_n}  < \infty \quad\text{and}\quad C_2=
\sup_n\|(\IM M_n(i))^{-1}\|_{\cH_n} < \infty.
   \]
\item  The direct sum $\Pi=\oplus_{n=1}^{\infty}\Pi_n$ is a B-generalized boundary triple for
the operator \linebreak $A^* =\oplus_{n=1}^{\infty}S_n^*$ if and only if
$C_1<\infty$.

\item
If, in addition, the operators $\{S_{n,0}\}_{n\in \Bbb N}$ have a
common gap $(a-\varepsilon,a+\varepsilon)$,   then the direct sum
$\Pi= \bigoplus^{\infty}_{n=1} \Pi_n$ is a B-generalized boundary
triple for $A^*= \bigoplus^{\infty}_{n=1}S_n^*$ if and only if
    \begin{equation}\label{III.2.2_02}
C_3 := \sup_{n\in\N}\|M_n(a)\|_{\cH_n} < \infty \qquad \text{and}
\qquad C_4:= \sup_{n\in\N}\|M'_n(a)\|_{\cH_n} < \infty,
     \end{equation}
where  $M'_n(a):=({dM_n}(z)/{dz})|_{z=a}$.

\item   $\Pi= \bigoplus^{\infty}_{n=1} \Pi_n$ is an ordinary  boundary
triple for $A^*= \bigoplus^{\infty}_{n=1}S_n^*$ if and only if in
addition to~\eqref{III.2.2_02}  the following condition is fulfilled
    \begin{equation}\label{III.2.2_02NEW}
 C_5 := \sup_{n\in\N}\|\bigl(M'_n(a)\bigr)^{-1}\|_{\cH_n} < \infty.
     \end{equation}
\end{enumerate}
         \end{theorem}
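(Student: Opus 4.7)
The plan is to reduce each of parts (i)--(iv) to analytic properties of the direct-sum Weyl function. By Theorem~\ref{otriple}, $\Pi = \bigoplus_n \Pi_n$ is a unitary boundary triple for $A^* = \bigoplus_n S_n^*$ whose Weyl function and $\gamma$-field are the orthogonal sums $M(z) = \bigoplus_n M_n(z)$ and $\gamma(z) = \bigoplus_n \gamma_n(z)$. Since each $\Pi_n$ is ordinary, each $M_n\in\cR^u[\cH_n]$ has $\ker\IM M_n(i) = \{0\}$, so $\ker\IM M(i)=\{0\}$ and $M\in\cR^s(\cH)$ automatically. For a block-diagonal operator on $\cH = \bigoplus_n\cH_n$, boundedness of $M(z)$ is equivalent to $\sup_n\|M_n(z)\| <\infty$, and bounded invertibility of $\IM M(i)$ is equivalent to $\sup_n\|(\IM M_n(i))^{-1}\|<\infty$. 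Moreover, any $M\in\cR(\cH)$ that is bounded at a single point of $\cmr$ belongs to $\cR[\cH]$ (integral representation). Combined with Theorems~\ref{thm:WF_Ord_BT} and~\ref{thm:WF_BG_BT}, which characterize ordinary (resp.\ $B$-generalized) boundary triples by $M\in\cR^u[\cH]$ (resp.\ $M\in\cR[\cH]$), these observations immediately give (i) and (ii).

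For part (iii), the uniform common gap $(a-\varepsilon,a+\varepsilon)\subset\rho(S_{n,0})$ yields $\sup_n\|(S_{n,0}-x)^{-1}\|\le 1/\mathrm{dist}(x,\dR\setminus(a-\varepsilon,a+\varepsilon))$ for $x$ in this interval, so $(a-\varepsilon,a+\varepsilon)\subset\rho(A_0)$ and $M$ extends holomorphically to the gap, with $M(x)=M(x)^*$ and $M'(x)\ge 0$. For the direction $\Rightarrow$, assuming $\Pi$ is $B$-generalized, Schwarz reflection across the gap extends $M$ holomorphically to a complex disc $D_\delta(a)$, hence $M$ is bounded there, and Cauchy's integral formula then yields bounded $M(a)$ and $M'(a)$, whence $\sup_n\|M_n(a)\|<\infty$ and $\sup_n\|M'_n(a)\|<\infty$. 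For the direction $\Leftarrow$, I would use the identities $M'_n(a) = \gamma_n(a)^*\gamma_n(a)$ and $M_n(i)-M_n(a)=(i-a)\gamma_n(a)^*\gamma_n(i)$, together with $\gamma_n(i)=[I+(i-a)(S_{n,0}-i)^{-1}]\gamma_n(a)$ and the selfadjoint resolvent bound $\|(S_{n,0}-i)^{-1}\|\le 1$, to obtain
\[
 \|M_n(i)\|\le \|M_n(a)\| + |i-a|(1+|i-a|)\|M'_n(a)\|,
\]
so that $\sup_n\|M_n(i)\|<\infty$ and part (ii) applies.

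For part (iv), by (iii) it suffices to show that under \eqref{III.2.2_02} the added condition \eqref{III.2.2_02NEW} is equivalent to $\sup_n\|(\IM M_n(i))^{-1}\|<\infty$. Using $\gamma_n(a)=[I+(a-i)(S_{n,0}-a)^{-1}]\gamma_n(i)$ with the gap-based uniform bound $\|(S_{n,0}-a)^{-1}\|\le 1/\varepsilon$, together with the reverse identity from the proof of (iii), one obtains uniform constants $c_1,c_2>0$ with
\[
 c_1 M'_n(a) \le \IM M_n(i) \le c_2 M'_n(a)
\]
in the operator ordering, so the two uniform lower bounds are equivalent; combined with (i) and (iii) this yields (iv). The main obstacle I anticipate is the Schwarz-reflection/Cauchy-formula step in the $\Rightarrow$ direction of (iii): verifying that the holomorphic extension of $M$ across the common gap produces a bounded operator-valued function on a complex neighborhood of $a$, so that $M'(a)\in\cB(\cH)$ follows by standard contour integration. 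The remaining parts reduce to operator-theoretic bookkeeping with the $\gamma$-field resolvent identities.
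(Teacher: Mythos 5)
Your proposal is correct and follows the route the paper itself relies on (the paper imports this theorem from \cite{MalNei12,KosMMM,CarMalPos13}, but the ingredients are exactly those you use: Theorem~\ref{otriple} for unitarity of $\oplus_n\Pi_n$ and $M=\oplus_n M_n$, the characterizations of ordinary and $B$-generalized boundary triples via $\cR^u[\cH]$ and $\cR^s[\cH]$, and the resolvent identities \eqref{eq:gamma}, \eqref{eq:Q_fun} to transfer bounds between $z=i$ and the real point $a$; compare Proposition~\ref{OrdinBTprop} and Remark~\ref{rem:Mderiv}). Parts (i), (ii), the $\Leftarrow$ direction of (iii), and (iv) are all sound as written.

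The one step you flag as a potential obstacle --- Schwarz reflection plus Cauchy's formula in the $\Rightarrow$ direction of (iii) --- is both unnecessary and, as stated, slightly under-justified: to reflect you would first need to know that $M$ has continuous selfadjoint boundary values on the gap, which ultimately requires relating the measure in the integral representation of $M$ to $\sigma(A_0)$, i.e.\ the same input you are trying to avoid. The cleaner route is the mirror image of your own $\Leftarrow$ argument: if $\Pi$ is $B$-generalized then $A_0=\ker\Gamma_0$ is selfadjoint and, being closed, coincides with $\oplus_n S_{n,0}$, so the common gap gives $a\in\rho(A_0)$; then $\gamma(a)=[I+(a-i)(A_0-a)^{-1}]\gamma(i)$ is bounded (Theorem~\ref{ABGthm}), whence $M(a)=M(-i)+(a+i)\gamma(i)^*\gamma(a)$ and $M'(a)=\gamma(a)^*\gamma(a)$ are bounded operators, and $C_3\le\|M(a)\|$, $C_4\le\|M'(a)\|$ follow by restricting to the blocks. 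With that replacement the proof is complete.
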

The next statement contains analogous characterization for
S-generalized boundary triples.

\begin{proposition}\label{OrdinBTprop}
Assume the conditions of Theorem~\ref{otriple}. Then the direct sum
$\Pi=\bigoplus_{n=1}^{\infty}\Pi_n$ forms an S-generalized boundary
triple for $A^*= \bigoplus^{\infty}_{n=1}S_n^*$ if and only if
   \begin{equation}\label{SBT_criterion}
\sup_n\|\IM M_n(i)\|_{\cH_n}  < \infty.
   \end{equation}
Similarly, if the operators $(S_{n,0})$ have a common gap
$(a-\varepsilon,a+\varepsilon)$, then $\Pi$ forms  an S-generalized
boundary triple for
 $A^*$ if and only if $C_4 < \infty$ where $C_4$ is given by~\eqref{III.2.2_02}.
\end{proposition}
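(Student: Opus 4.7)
By Definition~\ref{SgenBT}, $\Pi$ is $S$-generalized precisely when $A_0=\ker\Gamma_0$ is selfadjoint, so the strategy is to apply Theorem~\ref{prop:C6B} to the direct sum Weyl function
\[
 M(\lambda)=\bigoplus_{n=1}^\infty M_n(\lambda),\qquad \gamma(\lambda)=\bigoplus_{n=1}^\infty\gamma_n(\lambda),
\]
which is the Weyl function/field of $\Pi$ by Theorem~\ref{otriple}~(ii). For each $n$ the triple $\Pi_n$ is ordinary, so $\IM M_n(i)\in\cB(\cH_n)$ with $\IM M_n(i)=\gamma_n(i)^*\gamma_n(i)\ge 0$ by~\eqref{eq:Q_fun} at $\lambda=\mu=i$. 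Consequently $\IM M(i)=\bigoplus_n\IM M_n(i)$, which is a densely defined (containing the algebraic direct sum) selfadjoint nonnegative operator that is bounded on $\cH$ if and only if $\sup_n\|\IM M_n(i)\|_{\cH_n}<\infty$. By the equivalence (i)$\Leftrightarrow$(v) in Theorem~\ref{prop:C6B} this is exactly the criterion for $A_0=A_0^*$, proving the first assertion~\eqref{SBT_criterion}.

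For the second assertion I would instead use the equivalent condition (iv) in Theorem~\ref{prop:C6B}: boundedness of $\gamma(i)$, which again factors through $\sup_n\|\gamma_n(i)\|_{\cH_n}<\infty$. Using~\eqref{eq:gamma} for each summand gives
\[
 \gamma_n(i)=[I+(i-a)(S_{n,0}-i)^{-1}]\gamma_n(a),\qquad \gamma_n(a)=[I+(a-i)(S_{n,0}-a)^{-1}]\gamma_n(i).
\]
Selfadjointness of $S_{n,0}$ yields $\|(S_{n,0}-i)^{-1}\|\le 1$, while the common gap hypothesis $(a-\varepsilon,a+\varepsilon)\subset\rho(S_{n,0})$ forces $\|(S_{n,0}-a)^{-1}\|\le \varepsilon^{-1}$ uniformly in $n$. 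Hence the two displayed prefactors are uniformly bounded in $n$, and $\sup_n\|\gamma_n(i)\|<\infty$ is equivalent to $\sup_n\|\gamma_n(a)\|<\infty$. Differentiating~\eqref{eq:Q_fun} at the real regular point $a$ (or setting $\mu=a=\bar a$ and taking $\lambda\to a$) gives the standard identity $M'_n(a)=\gamma_n(a)^*\gamma_n(a)$, so $\|M'_n(a)\|_{\cH_n}=\|\gamma_n(a)\|_{\cH_n}^2$. Combining these equivalences yields $\Pi$ is $S$-generalized iff $C_4=\sup_n\|M'_n(a)\|_{\cH_n}<\infty$.

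The routine part is the translation between the boundedness of the direct-sum operator and the uniform bound on the summands; the only real care is in the second statement, where one must ensure that the transfer between $\lambda=i$ and $\lambda=a$ is quantitatively uniform in $n$. This is precisely where the common-gap assumption is used: without a uniform lower bound $\varepsilon>0$ on the distance from $a$ to $\sigma(S_{n,0})$ the resolvent estimate $\|(S_{n,0}-a)^{-1}\|\le\varepsilon^{-1}$ would fail and the equivalence $\sup_n\|\gamma_n(i)\|<\infty\Leftrightarrow\sup_n\|\gamma_n(a)\|<\infty$ would collapse, so this is the one nontrivial ingredient of the argument.
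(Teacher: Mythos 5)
Your proof is correct and follows essentially the same route as the paper: the first assertion is exactly the paper's appeal to Theorem~\ref{prop:C6B} (boundedness of $\IM M(i)$, equivalently of $\gamma(i)$, characterizes $A_0=A_0^*$), and your second part explicitly unpacks what the paper dispatches by citing Remark~\ref{rem:Mderiv}, namely the transfer of the uniform bound on $\gamma_n$ from $\lambda=i$ to the real point $a$ via~\eqref{eq:gamma} together with the identity $M_n'(a)=\gamma_n(a)^*\gamma_n(a)$. Your explicit use of the uniform resolvent estimate $\|(S_{n,0}-a)^{-1}\|\le\varepsilon^{-1}$ is precisely where the common-gap hypothesis enters, as you correctly observe.
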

   \begin{proof}
The condition~\eqref{SBT_criterion}   means that $\IM M(z)$
($\Longleftrightarrow$ the gamma-field $\gamma(z)$) is bounded for
some (equivalently for every) $z\in \C_{\pm}$. By
Theorem~\ref{prop:C6B}, this amounts  to saying that  $\Pi$  is  an
S-generalized boundary triple for $A^*$.

Similarly, in case of a common spectral gap
$(a-\varepsilon,a+\varepsilon)$ the condition~\eqref{SBT_criterion}
is equivalent to the condition $C_4<\infty$ in~\eqref{III.2.2_02} as
can be seen by the same argument that was used in
Remark~\ref{rem:Mderiv}.
   \end{proof}
The next result is immediate by combining
Propositions~\ref{essProp3}  in~\eqref{III.2.2_02}
with~\ref{essProp3B}.
  \begin{corollary}\label{newCor1}
Let $\Pi=\{\cH,\Gamma_0,\Gamma_1\}=\bigoplus_{n=1}^\infty \Pi_n$
 be a direct sum of ordinary boundary triples
$\Pi_n=\{\cH_n,\Gamma_0^{(n)},\Gamma_1^{(n)}\}$, let
$M(\cdot)=\bigoplus_{n=1}^\infty M_n(\cdot)$ be the corresponding
Weyl function, and let $A_*:=\dom \Gamma$. Then the following
conditions are equivalent:
\begin{enumerate}
\item [(i)] $\Gamma_0:\,A_*\to \cH$ is bounded;
\item [(ii)] the following condition is satisfied
\[
  C_2=\sup_n\|(\IM M_n(i))^{-1}\|_{\cH_n} < \infty.
\]
\end{enumerate}
In this case the transposed boundary triple
$\Pi^\top=\{\cH,\Gamma_1,-\Gamma_0\}$ is B-generalized.

Similarly, the following conditions are equivalent:
\begin{enumerate}
\item [(i)$'$] $\Gamma_1:\,A_*\to \cH$ is bounded;
\item [(ii)$'$] the following condition is satisfied
\[
 C^\top_2:=\sup_n\|(\IM (M^{-1}_n(i)))^{-1}\|_{\cH_n} < \infty.
\]
\end{enumerate}
In this case the triple  $\Pi$ is a B-generalized boundary triple.
\end{corollary}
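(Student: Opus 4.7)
The plan is to deduce the corollary as a direct specialization of Propositions~\ref{essProp3} and~\ref{essProp3B} to the direct-sum setting, using Theorem~\ref{otriple} to provide the prerequisites and then reducing the analytic condition on the whole Weyl function to a uniform condition on its blocks.

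First, I would invoke Theorem~\ref{otriple}: the orthogonal sum $\Pi=\bigoplus_{n=1}^\infty \Pi_n$ is a unitary boundary triple for $A^*=\bigoplus_{n=1}^\infty S_n^*$, its Weyl function is $M(\lambda)=\bigoplus_{n=1}^\infty M_n(\lambda)$, and moreover both $A_0=\ker\Gamma_0$ and $A_1=\ker\Gamma_1$ are essentially selfadjoint (being, respectively, the closures of $\bigoplus S_{n,0}$ and $\bigoplus S_{n,1}$). Hence in each of Propositions~\ref{essProp3} and~\ref{essProp3B} the essential-selfadjointness hypothesis built into condition~(i) is automatically met, so the remaining content of those propositions collapses to the statement that boundedness of $\Gamma_0$ (respectively $\Gamma_1$) on $A_*$ is equivalent to the form associated with $\IM M(\lambda)$ (respectively with $\IM(-M(\lambda)^{-1})$) having a positive lower bound for some (equivalently every) $\lambda\in\cmr$.

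Second, I would specialize these analytic conditions at $\lambda=i$. Since each $\Pi_n$ is ordinary, $M_n(i)\in\cB(\cH_n)$ and $0\in\rho(\IM M_n(i))$, so $\IM M_n(i)$ is a bounded positive selfadjoint operator with bounded inverse on each fiber $\cH_n$. On the sum, $\IM M(i)=\bigoplus_n \IM M_n(i)$ acts on the dense domain $\dom M(i)$. The form $\st_{M(i)}$ has a positive lower bound $c>0$ if and only if $\IM M_n(i)\ge c\, I_{\cH_n}$ for every $n$, which is in turn equivalent to
\[
 \sup_n \|(\IM M_n(i))^{-1}\|_{\cH_n} \le \frac{1}{c}<\infty.
\]
This yields the equivalence (i)$\Leftrightarrow$(ii) via Proposition~\ref{essProp3B}. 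The analogous argument applied to $-M^{-1}(i)=\bigoplus_n(-M_n(i)^{-1})$ (note $0\in\rho(\IM M_n(i))$ guarantees $M_n(i)$ is boundedly invertible) together with Proposition~\ref{essProp3} yields the equivalence (i)$'\Leftrightarrow$(ii)$'$.

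Finally, the last sentences in Propositions~\ref{essProp3} and~\ref{essProp3B} directly give the $B$-generalized conclusions: under (i)$'$ the triple $\Pi$ is $B$-generalized, while under (i) the transposed triple $\Pi^\top$ is $B$-generalized. The only point requiring care — and which I regard as the main (minor) obstacle — is the passage from the abstract ``form positive lower bound'' condition for the unbounded direct-sum operator $\IM M(i)$ to the fiberwise uniform bound $\sup_n\|(\IM M_n(i))^{-1}\|<\infty$; this needs the observation that on a direct sum of bounded positive selfadjoint operators with bounded inverses, a uniform lower form bound is equivalent to a uniform operator upper bound on the inverses, which is a routine spectral-theoretic fact once the fiberwise boundedness coming from the ordinariness of each $\Pi_n$ is in hand.
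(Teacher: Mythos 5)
Your proposal is correct and follows essentially the same route as the paper: the paper's proof simply invokes Theorem~\ref{otriple} to get that $\Pi$ is a unitary boundary triple with $A_0$ and $A_1$ essentially selfadjoint, and then cites Propositions~\ref{essProp3} and~\ref{essProp3B}. The only thing you add is the (correct, routine) verification that for the block-diagonal operator $\IM M(i)=\bigoplus_n\IM M_n(i)$ a uniform positive lower form bound is equivalent to $\sup_n\|(\IM M_n(i))^{-1}\|<\infty$, a step the paper leaves implicit.
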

  \begin{proof}
By Theorem \ref{otriple} (see \cite[Theorem 3.2]{KosMMM}) $\Pi$ is a
unitary boundary triple such that $A_0=\ker \Gamma_0$ and $A_1=\ker
\Gamma_1$ are essentially selfadjoint. Now the first part of the
statement follows easily from Proposition \ref{essProp3B}, while the
second part is implied by Proposition \ref{essProp3}.
\end{proof}

\subsection{Momentum operators\index{Momentum operator} with local point interactions}\label{sec8.2}
$X=\{x_n\}^{\infty}_1$ be a strictly increasing sequence of positive
numbers satisfying $\lim_{n\to\infty}x_n = \infty$   and let $d_n$,
$d_*$ and $d^*$ be defined by~\eqref{dn**}.
Define a symmetric differential operator $D_n$ in $\cH_n :=
L^2([x_{n-1},x_n])$ by
   \be\la{r3.6New}
D_n= -i\frac{d}{dx}, \qquad \dom D_n=W^{1,2}_0([x_{n-1},x_n]),\qquad
n\in \N.
   \ee
In quantum mechanics this operator in 1-D case  appears in the form
$-i\hslash \frac{d}{dx}$, where $\hslash=h/2\pi$ is the reduced
Planck constant
and whose eigenvalues are measuring the momentum of a particle.

The adjoint of the operator $D_n$ is given by $D_n^*=-i\frac{d}{dx}$
with $\dom D_n^* = W^{1,2}([x_{n-1},x_n])$, $n \in \dN$. Following
\cite{MalNei12} associate with $D_n^*$ a boundary triple
$\Pi_n=\{\C,\Gamma^{(n)}_{0},\Gamma^{(n)}_{1}\}$ by setting
  \be\la{r3.8Anew}
\Gamma^{(n)}_{0}f_n := i\frac{f_n(x_n-0) -
f_n(x_{n-1}+0)}{\sqrt{2}},\quad \Gamma^{(n)}_{1}f_n :=
\frac{f_n(x_n-0) + f_n(x_{n-1}+0)}{\sqrt{2}}.
  \ee
The Weyl function $M_{n}(\cdot)$ corresponding to the triple
$\Pi_n$ is given by
    \begin{equation}\label{3.10new}
M_n(z)=-i\frac{e^{izx_n} + e^{izx_{n-1}}}{e^{izx_n} - e^{izx_{n-1}}}
= -\ctg({2^{-1}zd_n}),\qquad z\in\Bbb C_{\pm}.
\end{equation}
Let $D_X:=\bigoplus_{1}^\infty D_n$. Then $D^*_X =
\bigoplus_{1}^\infty D^*_n$ and
\begin{equation}\label{space_W-1,2}
   \dom D^*_X =  W^{1,2}(\R_+\setminus X) = \bigoplus^\infty_{n=1}W^{1,2}([x_{n-1},x_n]).
\end{equation}
%
%

Next we describe the main properties of a boundary triple $\Pi :=
\bigoplus^\infty_{n=1}\Pi_n$  assuming that $d_* = 0$ partially
treated in \cite{MalNei12}.
%
To this end  we first recall a complete trace characterization  of
the space $W^{1,2}(\R_+\setminus X)$ (see \cite[Proposition
3.5]{CarMalPos13}).
Due to the embedding theorem, the trace mappings 
     \begin{equation}\label{3.31}
\pi_{\pm}:  W^{1,2}(\R_+\setminus X) \to l^2(\N), \qquad
\pi_+(f)=\{f(x_{n-1}+)\}^{\infty}_1, \quad
\pi_-(f)=\{f(x_n-)\}^{\infty}_1,
     \end{equation}
are well defined for functions with compact supports, i.e. for $f\in
\bigoplus_1^N W^{1,2}[x_{n-1},x_n],$  $N\in \Bbb N.$ We  assume
$\pi_{\pm}$ to be defined on its maximal domain $\dom(\pi_{\pm}):=
\{f\in W^{1,2}(\R_+\setminus X):\ \pi_{\pm}f\in l^2(\N)\}$. Clearly,
$\dom(\pi_{\pm})$ is dense in  $W^{1,2}(\R_+\setminus X)$ although,
in general, $\dom(\pi_{\pm}) \not =  W^{1,2}(\R_+\setminus X).$
   \begin{lemma}[\cite{CarMalPos13}] \label{prop3.6_CMPos}
Let $X =\{x_{n}\}_{n=1}^\infty$ be as above with $x_{0}=0$ and
$X\subset\overline\R_+$. Then:
\begin{enumerate}\def\labelenumi {\textit{(\roman{enumi})}}
\item For any pair of sequences $a^{\pm} = \{a^{\pm}_n\}_1^{\infty}$ satisfying
    \begin{equation}\label{3.36}
a^{\pm} = \{a^{\pm}_n\}^{\infty}_1  \in l^2({\Bbb N};\{d_n\}) \qquad
\text{and} \qquad \{a^+_n-a^-_n\}^{\infty}_1\in l^2({\Bbb
N};\{d^{-1}_n\}),
   \end{equation}
there exists a (non-unique) function $f\in W^{1,2}(\R_+\setminus X)$
such that $\pi_{\pm}(f) = a^{\pm}$.  Moreover,  the mapping
$\pi_+-\pi_-: W^{1,2}(\R_+\setminus X) \to l^2({\Bbb
N};\{d^{-1}_n\})$ is surjective and contractive, i.e.
    \begin{equation}\label{3.30AA}
\sum_{n\in\N} d_n^{-1}{|f(x_n-) - f(x_{n-1}+)|^2} \le
\|f\|^2_{W^{1,2}({\R}_+\setminus X)},\qquad f\in
W^{1,2}(\R_+\setminus X).
    \end{equation}
\item Assume in addition, that $d^{*}<\infty.$ Then the mapping $\pi_{\pm}$
can be extended  to a bounded surjective  mapping from
$W^{1,2}(\R_+\setminus X)$ onto $l^2({\mathbb N};\{d_n\})$.
Moreover,  the following estimate holds
   \begin{equation}\label{3.30AB}
\sum_{n\in\N}d_n \left( |f(x_{n-1}+)|^2 + |f(x_n-)|^2\right)\le
4\left((d^{*})^2 \|f'\|^2_{L^{2}({\R}_+)} +
\|f\|^2_{L^{2}({\R}_+)}\right) \le
C_1\|f\|^2_{W^{1,2}({\R}_+\setminus X)},
    \end{equation}
for any $f\in W^{1,2}(\R_+\setminus X)$  where  $C_1 :=
4\max\{(d^{*})^2, 1\}$. Besides, the traces $a^{\pm} :=
\pi_{\pm}(f)$ of each $f\in W^{1,2}({\R}_+\setminus X)$ satisfy
conditions~\eqref{3.36}.  Moreover, the assumption $d^{*}<\infty$ is
necessary  for the inequality~\eqref{3.30AB} to hold with some
$C_1>0$.
\end{enumerate}
  \end{lemma}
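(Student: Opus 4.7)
The proof separates into four independent pieces: the norm inequality \eqref{3.30AA} together with the preimage construction in (i), the trace estimate \eqref{3.30AB} under $d^*<\infty$, the surjectivity of $\pi_\pm$ in (ii), and the necessity of $d^*<\infty$. First I would establish \eqref{3.30AA} by a single application of Cauchy--Schwarz on each interval:
\[
 |f(x_n-)-f(x_{n-1}+)|^2 = \Bigl|\int_{x_{n-1}}^{x_n} f'(t)\,dt\Bigr|^2 \le d_n\,\|f'\|^2_{L^2([x_{n-1},x_n])}.
\]
Dividing by $d_n$ and summing over $n$ gives \eqref{3.30AA} and shows that $\pi_+-\pi_-\colon W^{1,2}(\R_+\setminus X)\to l^2(\dN;\{d_n^{-1}\})$ is well-defined, bounded, and contractive. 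For the existence claim in (i), given $(a^+,a^-)$ satisfying \eqref{3.36}, I define $f$ on each $[x_{n-1},x_n]$ as the affine interpolant between $a_n^+$ and $a_n^-$; a direct calculation yields $\|f'\|^2_{L^2([x_{n-1},x_n])}=d_n^{-1}|a_n^+-a_n^-|^2$ and $\|f\|^2_{L^2([x_{n-1},x_n])}\le d_n(|a_n^+|^2+|a_n^-|^2)$, and the two summability assumptions of \eqref{3.36} immediately place $f$ in $W^{1,2}(\R_+\setminus X)$ with $\pi_\pm f = a^\pm$. Surjectivity of $\pi_+-\pi_-$ onto $l^2(\dN;\{d_n^{-1}\})$ then follows by producing, for each $c$ in that weighted space, a decomposition $c_n=a_n^+-a_n^-$ with both $a^\pm\in l^2(\dN;\{d_n\})$ and invoking the existence step, linear interpolation handling intervals of bounded length and a tent concentrated near a single endpoint handling large $d_n$, the concentration width being tuned to $d_n$.

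For part (ii) assuming $d^*<\infty$, the key tool is the one-dimensional trace inequality
\[
 |g(a)|^2+|g(b)|^2 \le \tfrac{4}{b-a}\|g\|^2_{L^2([a,b])}+4(b-a)\|g'\|^2_{L^2([a,b])}, \qquad g\in W^{1,2}([a,b]),
\]
proved by writing $g(a)=g(x)-\int_a^x g'\,dt$, squaring, averaging over $x\in[a,b]$, and applying Cauchy--Schwarz. Multiplying by $d_n$ on $[x_{n-1},x_n]$, using $d_n\le d^*$ on the derivative term, and summing over $n$ yields \eqref{3.30AB} with $C_1=4\max((d^*)^2,1)$; in particular $\pi_\pm$ extends to a bounded map from $W^{1,2}(\R_+\setminus X)$ into $l^2(\dN;\{d_n\})$. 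Surjectivity is then immediate: given $a\in l^2(\dN;\{d_n\})$, the piecewise constant function $f(x)=a_n$ for $x\in[x_{n-1},x_n]$ satisfies $\|f\|^2_{W^{1,2}}=\sum d_n|a_n|^2<\infty$ and $\pi_\pm f = a$.

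For the necessity of $d^*<\infty$ in \eqref{3.30AB}, I assume $d_{n_k}\to\infty$ along a subsequence and choose, for each $k$, a smooth bump $\phi_k$ supported in $[x_{n_k-1},x_{n_k-1}+1]$ with $\phi_k(x_{n_k-1}+)=1$ and $\|\phi_k\|^2_{W^{1,2}(\R_+\setminus X)}$ equal to a constant $C$ independent of $k$; extending by zero outside, the left-hand side of \eqref{3.30AB} contains the single term $d_{n_k}|\phi_k(x_{n_k-1}+)|^2=d_{n_k}\to\infty$ while the right-hand side stays bounded by $C_1 C$, precluding any universal constant $C_1$. The principal obstacle will be the surjectivity of $\pi_+-\pi_-$ in (i): without an upper bound on $d_n$, any rigid choice of preimage (linear interpolation, or a tent of fixed width) trades one divergent weighted sum for another, so the construction must adaptively split each jump $c_n$ between the two endpoints of $[x_{n-1},x_n]$ using concentration widths tailored to $d_n$ and $|c_n|$ so that both weighted norms in \eqref{3.36} remain finite.
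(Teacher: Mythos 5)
The paper itself does not prove this lemma (it is imported from \cite{CarMalPos13}); its closest in-text analogue is the $W^{2,2}$ version, Lemma~\ref{Prop_W2,2_Traces}, whose proof uses exactly your toolkit: Cauchy--Schwarz on each interval for the jump estimate, an averaged one-dimensional trace inequality for \eqref{3.30AB}, and an explicit interpolant for the existence step. Your proofs of \eqref{3.30AA}, of the existence claim in (i) via the affine interpolant, of \eqref{3.30AB} with $C_1=4\max\{(d^*)^2,1\}$, of the surjectivity of $\pi_\pm$ via piecewise constants, and of the necessity of $d^*<\infty$ via translated bumps are all correct.

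The genuine gap is exactly where you flagged ``the principal obstacle'': the surjectivity of $\pi_+-\pi_-$ onto $l^2(\dN;\{d_n^{-1}\})$ when $d^*=\infty$. No adaptive splitting or tuning of concentration widths can repair this, because a jump of size $|c|$ across an interval of length $d$ costs a fixed amount of $W^{1,2}$-norm \emph{independently of $d$}. Indeed, integrating $(g(t)e^{-t})'=(g'-g)e^{-t}$ and $(g(t)e^{t-d})'=(g'+g)e^{t-d}$ over $[0,d]$ and using $\|e^{-t}\|_{L^2(0,d)},\,\|e^{t-d}\|_{L^2(0,d)}\le 2^{-1/2}$ gives $|g(d)e^{-d}-g(0)|\le 2^{-1/2}(\|g\|_{L^2}+\|g'\|_{L^2})$ and $|g(d)-g(0)e^{-d}|\le 2^{-1/2}(\|g\|_{L^2}+\|g'\|_{L^2})$; adding, $|g(d)-g(0)|(1+e^{-d})\le\sqrt2\,(\|g\|_{L^2}+\|g'\|_{L^2})$, hence $\|g\|^2_{W^{1,2}(0,d)}\ge |g(d)-g(0)|^2/4$. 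Summing over the intervals $\Delta_n$ shows that the range of $\pi_+-\pi_-$ is contained in the \emph{unweighted} $l^2(\dN)$; taking $d_n=n^2$ and $c_n\equiv 1$ produces $c\in l^2(\dN;\{d_n^{-1}\})\setminus l^2(\dN)$ with no preimage, so the surjectivity claim is false as stated once $d^*=\infty$. Under $d^*<\infty$ your argument closes immediately and without any tent construction: set $a_n^+=c_n$, $a_n^-=0$, note $\sum_n d_n|c_n|^2\le (d^*)^2\sum_n d_n^{-1}|c_n|^2<\infty$, and invoke the existence step; this covers every application of the lemma in the present paper except the remark extending Proposition~\ref{prop3.5sum2} to $d^*=\infty$. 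You should therefore either add the hypothesis $d^*<\infty$ to the surjectivity assertion or replace the codomain by $l^2(\dN;\{d_n^{-1}\})\cap l^2(\dN)$.
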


Now we are ready to state and prove the main result of this
subsection.
   \begin{proposition}\label{MN2012direcsum}
Let $X$ be as above, let $d_{*}=0$ and $d^{*} < \infty$, let
${\Pi}^{(n)}=\big\{\C,{\Gamma}_{0}^{(n)}, {\Gamma}_{1}^{(n)}\big\}$
be the boundary triple for the operator $D^*_n$ defined by
~\eqref{r3.8Anew}. Let
 ${\Pi}=\bigoplus_{n=1}^\infty {\Pi}^{(n)}
=:\big\{\cH,{\Gamma}_{0},{\Gamma}_{1}\big\}$,
 where $\cH  = l^{2}(\N)$,
$D_X := \oplus_{n=1}^\infty
D_n$,
$D_{X,*} = D_{X}^* | \dom \Gamma$ and let the operators $\Gamma_j'$ and $\Gamma_j$, $j\in \{0,1\}$, be given by~\eqref{III.1_02-second} and~\eqref{III.1_02}, respectively.
Then:
\begin{enumerate}\def\labelenumi {\textit{(\roman{enumi})}}
\item   The mapping $\Gamma_0' \times \Gamma_1'$ can be extended to the mapping
  \begin{equation}
\Gamma_0'' \times \Gamma_1'':   W^{1,2}(\Bbb R_+\setminus X) \ \mapsto
l^2\bigl(\Bbb N;\{d_n^{-1}\}\bigr)\times l^2\bigl(\Bbb
N;\{d_n\}\bigr),
  \end{equation}
which is well defined  and surjective. Besides,  $\ker(\Gamma_0'' \times
\Gamma_1'') = W^{1,2}_0(\Bbb R_+\setminus X).$

\item   The mapping
  \begin{equation}
\Gamma_0 \times \Gamma_1:   \dom D_{X,*} = \dom \Gamma \ \mapsto
l^2\bigl(\Bbb N;\{d_n^{-1}\}\bigr)\times l^2\bigl(\Bbb N\bigr)
 (\subset l^2\bigl(\Bbb N\bigr)\otimes\Bbb C^2),
    \end{equation}
%
%
%
%
is well defined  and surjective.  Moreover, $\Gamma_0$ boundedly
maps $\dom D_{X,*}$ in $l^2(\Bbb N)$;


\item
The Weyl function $M(\cdot)$ is domain invariant and its domain is
given by
  \begin{equation}
\dom  M(z) = l^2\bigl(\Bbb N;\{d_n^{-2}\}\bigr) \bigl(\subsetneq
\ran\Gamma_0 = \Gamma_0(\dom A_{*}) = l^2\bigl(\Bbb
N;\{d_n^{-1}\}\bigr)\bigr), \quad z\in \Bbb C_{\pm}.
   \end{equation}
\item The domain of the form $\st_{M(z)}$ associated with  the imaginary part $\IM M(z)$ is given by
\[
 \dom \st_{M(z)}=\left\{ \{a_n\}_{n=1}^\infty \in
{l}^2(\dN)\otimes \dC^2:\, \{a_n\}_{n=1}^\infty\in
{l}^2(\dN;\{d_n^{-1}\})\right\}, \quad z\in \C_{\pm}.
\]
\item  The triple $\Pi$ is an ES-generalized boundary triple for
$D^*_X$ and  $A_0\not =A_0^*$.  Moreover,  the imaginary part $\IM
M(\cdot)$ of the Weyl function $M(\cdot)$ takes values in
 $\cC(\cH)\setminus \cB(\cH)$.
%
%
%
%
\item  The  transposed triple $\Pi^\top$ is B-generalized not an ordinary
boundary triple for the operator $D^*_X$.  In particular, the Weyl
function $-M(\cdot)^{-1}$ takes values in $\cB(\cH)$, and $A_1
=A_1^*.$

%
\end{enumerate}
\end{proposition}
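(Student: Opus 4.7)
\textbf{Proof plan for Proposition~\ref{MN2012direcsum}.} The strategy is to translate everything into the traces $\pi_\pm$ of Lemma~\ref{prop3.6_CMPos}, then exploit the explicit Weyl functions $M_n(z)=-\ctg(zd_n/2)$ and their small-$d_n$ asymptotics, and finally apply the abstract results of Section~\ref{sec8.1}. Concretely, from the definitions~\eqref{r3.8Anew} one has
\[
\sqrt{2}\,\Gamma_1^{(n)}f_n=\pi_-(f)_n+\pi_+(f)_n,\qquad -i\sqrt{2}\,\Gamma_0^{(n)}f_n=\pi_-(f)_n-\pi_+(f)_n,
\]
so that Lemma~\ref{prop3.6_CMPos} identifies the two components of $\Gamma'':=\Gamma_0'\times\Gamma_1'$ with the combinations $i(\pi_--\pi_+)/\sqrt{2}$ and $(\pi_-+\pi_+)/\sqrt{2}$. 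The lemma's item (i) gives $(\Gamma_0')(f)\in l^2(\N;\{d_n^{-1}\})$ for every $f\in W^{1,2}(\R_+\setminus X)$, while item (ii) (which uses $d^*<\infty$) gives $(\Gamma_1')(f)\in l^2(\N;\{d_n\})$; the quoted surjectivity statements and the kernel description $W^{1,2}_0(\R_+\setminus X)$ then yield (i). Item (ii) follows by the same identification applied to $\dom \Gamma=\dom \Gamma_0'\cap \dom \Gamma_1'$: the two components of $\Gamma\wh f$ land in $l^2(\N;\{d_n^{-1}\})$ and $l^2(\N)$ respectively. Since $d^*<\infty$ yields $d_n^{-1}\ge 1/d^*$, the inclusion $l^2(\N;\{d_n^{-1}\})\subset l^2(\N)$ is continuous, so $\Gamma_0$ is bounded into $l^2(\N)$ with respect to the graph norm.

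For (iii) and (iv), Theorem~\ref{otriple} gives $M(z)=\bigoplus_n M_n(z)$ with $M_n(z)=-\ctg(zd_n/2)$. For any fixed $z\in\dC_\pm$ and $d_n\to 0$ one has
\[
M_n(z)=-\ctg(zd_n/2)=-\frac{2}{zd_n}+O(d_n),\qquad \IM M_n(z)=\IM \Bigl(-\frac{2}{zd_n}\Bigr)+O(d_n)=\frac{2\,\IM z}{|z|^2\,d_n}+O(d_n),
\]
while both $M_n(z)$ and $\IM M_n(z)$ remain bounded on any subsequence where $d_n$ is bounded away from $0$. From the first estimate, $\{a_n\}\in \dom M(z)$ (equivalently $\sum|M_n(z)a_n|^2<\infty$) is equivalent to $\{a_n\}\in l^2(\N;\{d_n^{-2}\})$, and this does not depend on $z\in\dC_\pm$; combined with $\ran\Gamma_0=l^2(\N;\{d_n^{-1}\})$ from (ii), this proves (iii), and the strict inclusion follows from the hypothesis $d_*=0$ (pick a subsequence $d_{n_k}\downarrow 0$ and $a_{n_k}=d_{n_k}^{1/2}\varepsilon_k$ with suitable $\varepsilon_k$). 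The second estimate shows that the form $\st_{M(z)}[\{a_n\}]=\sum \IM M_n(z)|a_n|^2$ has domain $l^2(\N;\{d_n^{-1}\})$, independent of $z$, proving (iv) and in particular that $\dom \st_{M(z)}=\ran \Gamma_0$.

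For (v), essential selfadjointness of $A_0=\ker\Gamma_0$ (and of $A_1=\ker\Gamma_1$) is given by Theorem~\ref{otriple}(iv). The failure of $A_0=A_0^*$ is a consequence of Proposition~\ref{OrdinBTprop}: condition~\eqref{SBT_criterion} requires $\sup_n \|\IM M_n(i)\|=\sup_n |\ctg(id_n/2)|\cdot|\IM|=\sup_n \coth(d_n/2)=\infty$ along the subsequence with $d_n\to 0$, so $\Pi$ is not $S$-generalized; combined with (iv) this also shows that $\IM M(z)$ is an unbounded selfadjoint operator (its form is unbounded on its form domain). For (vi), the transposed triple $\Pi^\top$ has Weyl function $M^\top_n(z)=-M_n(z)^{-1}=\tan(zd_n/2)$. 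Since $|zd_n/2|\le |z|d^*/2$, this family is uniformly bounded in $n$ for each fixed $z\in\dC_\pm$, so by Theorem~\ref{th_criterion(bt)}(ii) $\Pi^\top$ is $B$-generalized (hence in particular $A_1=A_1^*$). However $\IM M_n^\top(i)=\tanh(d_n/2)\to 0$ along the subsequence $d_n\to 0$, so $\sup_n|(\IM M_n^\top(i))^{-1}|=\infty$ and Theorem~\ref{th_criterion(bt)}(i) (applied to $\Pi^\top$) rules out the ordinary boundary triple property.

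The main obstacle is bookkeeping the weighted $l^2$ spaces: one must be careful that under $d_*=0,\ d^*<\infty$ the chain $l^2(\N;\{d_n^{-2}\})\subsetneq l^2(\N;\{d_n^{-1}\})\subsetneq l^2(\N)\subsetneq l^2(\N;\{d_n\})$ correctly matches the domain, range and form-domain structures derived above, and that the strictness in (iii) is established by exhibiting a sequence in $l^2(\N;\{d_n^{-1}\})\setminus l^2(\N;\{d_n^{-2}\})$ using $d_*=0$. The deeper ingredient behind (i) and (ii) is the nontrivial trace theorem, Lemma~\ref{prop3.6_CMPos}, for which both the surjectivity onto the weighted $l^2$ spaces and the continuity estimates~\eqref{3.30AA},~\eqref{3.30AB} are essential.
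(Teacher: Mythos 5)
Your proposal is correct and follows essentially the same route as the paper: items (i)–(ii) via the identification of $\Gamma_0^{(n)},\Gamma_1^{(n)}$ with the combinations $i(\pi_--\pi_+)/\sqrt2$ and $(\pi_-+\pi_+)/\sqrt2$ together with Lemma~\ref{prop3.6_CMPos} and the embeddings forced by $d^*<\infty$; items (iii)–(iv) from the asymptotics $\cot(zd_n/2)\sim 2/(zd_n)$; item (v) from Theorem~\ref{otriple}(iv) and the unboundedness of $\IM M_n(i)=\coth(d_n/2)$ (the paper's primary argument for $A_0\neq A_0^*$ is the strict inclusion $\dom M(z)\subsetneq\ran\Gamma_0$, but it explicitly records your $\IM M_n(i)$ route as an alternative proof); and item (vi) from the uniform boundedness of $\tan(zd_n/2)$ together with $\tanh(d_n/2)\to 0$. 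No gaps.
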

\begin{proof}
(i)
The proof is immediate from Lemma \ref{prop3.6_CMPos}(1).

(ii) Since $d^*<\infty$, the space  $l^2(\Bbb N)$ is  (continuously)
embedded in $l^2\bigl(\Bbb N;\{d_n\}\bigr)$. Therefore the
surjectivity is immediate from $(i)$.
By Lemma \ref{prop3.6_CMPos}(i), the mapping  $\Gamma_0:\dom D_{X,*} 
\mapsto l^2\bigl(\Bbb N;\{d_n\}^{-1}\bigr)$ is bounded. To prove the
boundedness of $\Gamma_0:\ \dom D_{X,*} =\dom \Gamma \mapsto
l^2(\Bbb N)$ it remains to note that the embedding $l^2\bigl(\Bbb
N;\{d_n\}^{-1}\bigr) \hookrightarrow l^2(\Bbb N)$ is continuous
since $d^*<\infty$.

(iii) In accordance with~\eqref{3.10new} $M_n(z)= - \cot(2^{-1}d_n
z)$. Therefore the description of $\dom M(\cdot)$ follows from the
obvious relation
    \begin{equation}\label{asymp_cot}
\cot(2^{-1}z d_n)\sim 2z^{-1} d^{-1}_n \qquad \text{as}\qquad d_n
\to 0,\qquad z\in\Bbb C_{\pm}.
    \end{equation}

(iv)   Notice that $\{a_n\}_{n=1}^{\infty}\in\dom \st_{M(z)}$ if and
only if $ \sum^{\infty}_{n=1} \left(\IM M_n(z)a_n,a_n\right)
<\infty$.
It follows from~\eqref{3.10new}  and~\eqref{asymp_cot} that $\IM
M_n(x+iy) \sim \frac{2y}{x^2+y^2}d_n^{-1}$ as $n\to \infty$.
Therefore
\begin{equation}\label{Mformdom_momentum_op}
 \sum^{\infty}_{n=1} \left(\IM M_n(z)a_n,a_n\right)<\infty  \Longleftrightarrow  \sum^{\infty}_{n=1} |a_n|^2d_n^{-1}  <\infty.
\end{equation}

(v)  Being a direct sum of ordinary boundary triples,  the triple
${\Pi}=\bigoplus_{n=1}^\infty {\Pi}^{(n)}$ is an ES-generalized
boundary triple in accordance with  Theorem \ref{otriple}(iv). The
relation $A_0\not =A_0^*$ is implied by item (iii) since the
inclusion $\dom  M(z) \subsetneq \ran\Gamma_0$  is strict.

Furthermore, relation~\eqref{3.10new} implies
$M_n(i)=i\,\textup{cth} (2^{-1}d_n)$. It follows that $\IM
M_n(i)=\textup{cth} (2^{-1}d_n)$, $n\in\dN$. Hence the values of
imaginary part $\IM M(\cdot)$ are unbounded,  $\IM M(\cdot)\in
\cC(\cH)\setminus \cB(\cH)$. Due to Theorem~\ref{prop:C6}(v) (see
also Theorem~~\ref{prop:C6B}) this last property gives another proof
for the fact that the triple $\Pi$ is not S-generalized.
%
%

(vi) It follows from~\eqref{3.10new} that $-M_n^{-1}(z)=
\tan(2^{-1}d_n z)$. Therefore the Weyl function $-M^{-1}(\cdot) =
\oplus_1^{\infty}(-M_n^{-1}(\cdot)) \in R^s[\cH]$. By Theorem
\ref{thm:WF_Ord_BT}  the transposed triple $\Pi^\top$ is
B-generalized.
\end{proof}

  \begin{remark}
(i)  Note that statements (iii)--(vi) remain valid for $d^*=\infty$.

(ii)  Assuming that $d^*<\infty$  it is shown in  \cite{MalNei12} that the triple $\Pi=
\oplus_{n\in \Bbb N}\Pi_n$ is an ordinary boundary triple for $D^*_X$ if and only if
$d_*>0$. This result remains true also in the case $d^*=\infty$.

(iii)  Let $G = \diag \{({\wt d_1})^{1/2}, \ldots, ({\wt d_n})^{1/2}, ...\}$ be the
diagonal operator defined on $\cH =l^{2}(\dN)$, with  $\wt d_n=\min\{1,d_n\}$,
$n\in\dN$. In accordance with Theorem \ref{MN2012direcsum}(iv)
%
\[
 \ran G= \dom G^{-1} = \dom \st_{M(i)}.
\]
Hence the renormalization in Theorem \ref{essThm2} is determined via the formulas $\wt
\Gamma_0=G^{-1}\Gamma_0$, $\wt \Gamma_1=G\Gamma_0$ and the corresponding Weyl function
is given by
\[
 M_G(z)=G^* M(z) G=-\bigoplus_{n=1}^\infty\, \wt d_n
 \ctg({2^{-1}zd_n}).
\]
Since $\wt d_n\IM M_n(i)\to 2$ as $d_n\to 0$, we conclude that (the closure of)
$M_G(\cdot)$ is a bounded uniformly strict Nevanlinna function, $M_G(\cdot)\in
R^u[\cH]$. Thus, the renormalization procedure in this case leads to  an ordinary
boundary triple for $D_X^*$. In the case $d^*<\infty$ this renormalization procedure was
firstly applied  in \cite{MalNei12} to construct  the above mentioned ordinary boundary
triple for $D_X^*$; see Examples 3.2, 3.8 and Theorem 3.6 in \cite{MalNei12}.
%
\end{remark}

\subsection{Schr\"odinger operators \index{Schr\"odinger operator with local point interactions} with local point interactions}\label{sec8.3} 

Let  $X=\{x_n\}_{n=0}^\infty\subset \dR_+$  be  a strictly increasing sequence
satisfying $\lim_{n\to\infty}x_n = \infty$.
Let also $\rH_n$  be a minimal operator associated with expression $-\frac{\rD^2}{\rD
x^2}$ in $L^{2}[x_{n-1},x_{n}]$.  Clearly, $\rH_n$ is a closed symmetric,
$n_{\pm}(\rH_n)=2$, and its domain is $\dom(\rH_{n})= W^{2,2}_0[x_{n-1},x_{n}].$

%

It is easily seen that a boundary  triple  $\Pi_n=\{\Bbb C^2,
\Gamma_0^{(n)},\Gamma_1^{(n)}\}$  for  $\rH_n^*$ can be chosen  as
      \begin{equation}\label{IV.1.1_05-new}
\Gamma_0^{(n)}f:= \left(\begin{array}{c}
    f'(x_{n-1}+)\\
    f'(x_{n}-)
\end{array}\right),
\quad  \Gamma_1^{(n)}f:= \left(\begin{array}{c}
    -f(x_{n-1}+)\\
    f(x_{n}-)
\end{array}\right), \qquad
f\in W_2^2[x_{n-1},x_n].
\end{equation}
The corresponding Weyl  function $M_n$ is given by
\begin{equation}\label{eq:M_n-new}
    M_n(z) = \dfrac{-1}{\sqrt{z}}
    \begin{pmatrix}
        \cot(\sqrt{z}d_n) & -\frac{1}{\sin(\sqrt{z}d_n)}\\
        -\frac{1}{\sin(\sqrt{z}d_n)} & \cot(\sqrt{z}d_n)
    \end{pmatrix}.
\end{equation}

Consider in $L^2(\dR_+)$  the direct sum of symmetric operators
$\rH_n$,
$\rH := {\rm H}_{\min}  =\oplus_{n=1}^{\infty} \rH_n,$  
$\dom(\rH_{\min})=W^{2,2}_0(\dR_+\setminus X) =
\bigoplus_{n=1}^{\infty} W^{2,2}_0[x_{n-1},x_{n}].$

Denoting by $\frak H_+$ the domain $\dom(\rH^*)$ equipping with the graph norm, we note
that $\dom \Gamma$ is dense in $\frak H_+$ while  in general it is narrower  than $\frak
H_+$. As was shown in~\cite{Koc_89}, the triple
${\Pi}=\oplus_{n\in\dN}{\Pi}_n:=\{\cH,\Gamma_0,\Gamma_1\}$ is  an ordinary boundary
triple for the operator $\rH_{\max}:= \rH_{\min}^*$ whenever
    \begin{equation}\label{IV.1.1_03}
0\ < \ d_*=\inf_{n\in\dN}  d_n\ \ \leq\ \ d^*=\sup_{n\in \dN} d_n\
<\ +\infty.
       \end{equation}
%
%
The converse statement is also true  (see~\cite{KosMMM}): the condition $d_* >0$ is
necessary for the direct sum ${\Pi}=\oplus_{n\in\dN}{\Pi}_n$ to form  a boundary triple
for $\rm H_{\max}:= \rm H_{\min}^*$.

Such type  triples  have naturally arisen in investigation of
spectral properties of the Hamiltonian $\rH_{X,\gA}$  associated in
$L^2(\Bbb R_+)$ with  a formal differential expression
   \begin{equation}\label{I_01}
\ell_{X,\alpha}:=-\frac{\rD^2}{\rD x^2}+\sum_{x_{n}\in X}\gA_n\delta(x-x_n), \qquad
\alpha=(\alpha_n)_{n=0}^\infty\subset \Bbb R,
  \end{equation}
when treating $\rH_{X,\gA}$  as an extension of   $\rH_{\min}$ (see~\cite{Koc_89},
\cite{KosMMM}, and Remark \ref{rem8.15}  below).

Next we present extended and completed version  
of Theorem~\ref{th_criterion(bt)Ex}.
 Assertion (iii) of Theorem~\ref{th_criterion(bt)Ex} will be proved after
a preparatory lemma.

  \begin{theorem}\label{th_Scrod_criter}
Let $\Pi_n$, $n\in \dN,$ be the boundary triple  given
by~\eqref{IV.1.1_05-new}, let $M_n(\cdot)$ be the corresponding Weyl
function, $\cH={l}^2(\dN)\otimes \dC^2$, and let $\Pi := \bigoplus_{n=1}^{\infty} \Pi_n = \{\cH,
\Gamma_0,\Gamma_1 \}$ be the direct sum of triples $\Pi_n$ given by~\eqref{III.1_02-second} and~\eqref{III.1_02}. Assume
also that   $d_*=0$  and   $d^* \le \infty$. Then the following
statements hold:
\begin{enumerate}\def\labelenumi {\textit{(\roman{enumi})}}
\item  The triple $\Pi$ is an ES-generalized boundary triple for $H_{\min}^*$ such that $A_0\not =A_0^*$.
\item 
The Weyl function $M(\cdot)$ is domain invariant and its domain is
given by
  \begin{equation}\label{dom_M(z)_Schrod}
\dom M(z)= \left\{ \left\{\binom{a_n}{b_n}\right\}_{n=1}^\infty \in
{l}^2(\dN)\otimes \dC^2:\, \{a_n- b_n\}_{n=1}^\infty\in
{l}^2(\dN;\{d_n^{-2}\})\right\}, \quad z\in \C_{\pm}.
  \end{equation}
\item
Let in addition $d^*<\infty$. Then the  range of $\Gamma_0$ is given
by
\begin{equation}\label{7.69_ran_of_G0}
 \ran\Gamma_0
 = \left\{ \binom{a_n}{b_n}_{n=1}^\infty \in {l}^2(\dN) \otimes
\dC^2:\, \{a_n - b_n\}_{n=1}^\infty\in
 {l}^2(\dN;\{d_n^{-1}\})\right\} \supsetneqq \dom M(\pm i).
  \end{equation}
\item The domain of the form $\st_{M(z)}$ generated by the imaginary part $\IM M(z)$ is given by
\[
 \dom \st_{M(z)}=\left\{ \left\{\binom{a_n}{b_n}\right\}_{n=1}^\infty \in
{l}^2(\dN)\otimes \dC^2:\, \{a_n- b_n\}_{n=1}^\infty\in
{l}^2(\dN;\{d_n^{-1}\})\right\}, \quad z\in \C_{\pm}.
\]
In particular, if $d^*<\infty$, then $\dom \st_{M(z)} = \ran\Gamma_0$.

\item  The  transposed triple $\Pi^\top$  is an S-generalized boundary triple for $H_{\min}^*$, i.e. $A_1 =A_1^*$. However, it is not
      a B-generalized boundary triple for $H_{\min}^*$.
\item
The Weyl function $M^{\top}(\cdot) =
 -M(\cdot)^{-1}$ corresponding to the transposed triple ${\Pi}^\top$
is domain invariant and its domain is given by
\begin{equation*}
 \dom M^{\top}(z)=
\left\{ \left\{\binom{a_n}{b_n}\right\}_{n=1}^\infty \in
{l}^2(\dN)\otimes \dC^2:\, \{a_n + b_n\}_{n=1}^\infty\in
{l}^2(\dN;\{d_n^{-2}\})\right\}, \quad z\in \C_{\pm}.
\end{equation*}
\item The domain of the form $\st_{M^{\top}(z)}$ generated by the imaginary part $\IM M^\top(z)$ is given by
\[
 \dom \st_{M^{\top}(z)}=\left\{ \left\{\binom{a_n}{b_n}\right\}_{n=1}^\infty \in
{l}^2(\dN)\otimes \dC^2:\, \{a_n + b_n\}_{n=1}^\infty\in
{l}^2(\dN;\{d_n^{-1}\})\right\}, \quad z\in \C_{\pm}.
\]
  \end{enumerate}
%
  \end{theorem}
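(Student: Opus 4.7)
The strategy is to combine the direct-sum decomposition with the general theory of $S$-generalized boundary triples, and to compute the form values through explicit Dirichlet solutions on each subinterval. By (v) the transposed triple $\Pi^\top$ is $S$-generalized, so Theorem~\ref{essThm1} yields
\[
\dom \overline{\st_{M^\top(\lambda)}} \;=\; \dom \overline{\gamma^\top(\lambda)} \;=\; \ran \overline{\Gamma}_1, \qquad \overline{\st_{M^\top(\lambda)}}[\xi,\xi] \;=\; \|\overline{\gamma^\top(\lambda)}\xi\|_\sH^2,
\]
and the problem reduces to computing $\ran\overline{\Gamma}_1$ concretely. By Theorem~\ref{otriple} the closure $\overline{\Gamma}_1$ equals the full direct sum $\bigoplus_n \Gamma_1^{(n)}$, so $(a_n,b_n)\in\ran\overline{\Gamma}_1$ iff there exists $f=\bigoplus_n f_n$ with $f_n\in W^{2,2}[x_{n-1},x_n]$, $f_n(x_{n-1}+)=-a_n$, $f_n(x_n-)=b_n$, and $\sum_n(\|f_n\|_{L^2}^2+\|f_n''\|_{L^2}^2)<\infty$.

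Next, I would diagonalize $M_n^\top(z)=-M_n(z)^{-1}$ in the symmetric/antisymmetric basis $P_\pm = \tfrac12(I\pm J)$ of $\dC^2$ (with $J(a,b)=(b,a)$). Using the identities $\cot\alpha\pm\csc\alpha\in\{\cot(\alpha/2),-\tan(\alpha/2)\}$ applied to~\eqref{eq:M_n-new} together with $\det M_n(z)=-1/z$, one obtains
\[
M_n^\top(z) \;=\; -\sqrt{z}\cot(\sqrt{z}d_n/2)\,P_+ \;+\; \sqrt{z}\tan(\sqrt{z}d_n/2)\,P_-.
\]
Since $\|P_\pm(a_n,b_n)\|^2 = |a_n\pm b_n|^2/2$, the singular symmetric-direction eigenvalue (unbounded as $d_n\to 0$) pairs with $|a_n+b_n|^2$, mirroring the role of $|a_n-b_n|^2$ in the proof of (iv) under the transposition $\Gamma\mapsto\Gamma^\top$. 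The $\gamma^\top$-field is represented explicitly by the Dirichlet solutions $f_{\lambda,n}(x)=-a_n\cos(\sqrt\lambda(x-x_{n-1}))+B_n\sin(\sqrt\lambda(x-x_{n-1}))$ with coefficient $B_n=(b_n+a_n\cos(\sqrt\lambda d_n))/\sin(\sqrt\lambda d_n)$; the singularity $B_n\sim(a_n+b_n)/(\sqrt\lambda d_n)$ as $d_n\to 0$ encodes the unbounded symmetric eigenvalue above.

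Asymptotic analysis in the small-$d_n$ regime combined with uniform control in the large-$d_n$ regime (where $M_n^\top(z)\to i\sqrt{z}\,I$ and the form stays bounded) then yields the stated characterization $\{a_n+b_n\}\in\ell^2(\dN;\{d_n^{-1}\})$ after summation, since the antisymmetric contribution $\sqrt{z}\tan(\sqrt z d_n/2)\sim zd_n/2$ produces only an $\ell^2$-controlled term under $d^*<\infty$. The main obstacle is the precise extraction of the $d_n^{-1}$ weight: the leading real divergence $-2/d_n$ of the symmetric eigenvalue does not enter $\IM M_n^\top$ directly, so either the $\gamma^\top$-field identification via~\eqref{Green3U} (which turns the form into a Hilbert-space norm squared) or the structural decomposition $M_n^\top = E_n + M_{0,n}$ with $E_n$ selfadjoint carrying the divergent real part and $M_{0,n}$ bounded Nevanlinna (as in Theorem~\ref{prop:C6B}) must be invoked. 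The correct weight then emerges from the $E_n$ component combined with careful bookkeeping of $\|f_{\lambda,n}\|_{L^2[x_{n-1},x_n]}^2$, and the duality with (iv) via the symmetric/antisymmetric interchange provides a structural guide throughout.
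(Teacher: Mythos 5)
Your proposal addresses only item (vii) of a seven-item theorem. In the paper, items (i), (ii), (iv), (v), (vi) are obtained from direct asymptotic analysis of the matrices $M_n(z)$ and $M_n^\top(z)=-M_n(z)^{-1}$ as $d_n\to 0$ together with Theorem~\ref{otriple} and Proposition~\ref{OrdinBTprop}, and item (iii) rests on a separate, nontrivial trace lemma for $W^{2,2}(\dR_+\setminus X)$ (Lemma~\ref{Prop_W2,2_Traces}) describing $\ran\Gamma_0$. None of this appears in your write-up, and you take (v) as an input rather than proving it; so most of the statement is simply not addressed.

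More seriously, the argument you sketch for (vii) does not close, and your own computations show why. You correctly diagonalize $M_n^\top(z)$, with eigenvalues $-\sqrt z\cot(\sqrt zd_n/2)$ on the vector $(1,1)$ and $\sqrt z\tan(\sqrt zd_n/2)$ on $(1,-1)$, and you correctly observe that the divergent part $-2/d_n$ of the symmetric eigenvalue is \emph{real}. But the form $\st_{M^\top(z)}$ sees only the imaginary part -- equivalently, by \eqref{Green3U}, $\st_{M^\top(z)}[h]=\IM z\,\|\gamma^\top(z)h\|^2_\sH/\IM z$ up to normalization -- and the symmetric real part cancels identically in $(M^\top u,v)-(u,M^\top v)$; no decomposition $M_n^\top=E_n+M_{0,n}$ can make a divergent real part ``emerge'' in this form. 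If you finish your own estimates you find $\IM$ of both eigenvalues is $O(d_n)$ as $d_n\to 0$ (consistent with \eqref{limM'(-1)tr}) and $\|f_{z,n}\|^2_{L^2[x_{n-1},x_n]}=O(d_n|h_n|^2)$, hence $\sum_n(\IM M_n^\top(z)h_n,h_n)\lesssim d^*\|h\|^2$ for \emph{every} $h\in l^2(\dN)\otimes\dC^2$ -- which is also what your reduction $\dom\overline{\st_{M^\top(z)}}=\ran\overline{\Gamma}_1$ gives when combined with \eqref{ranG1cl}. That is not the set appearing in item (vii). The mechanism of item (iv), where the divergent coefficient $-1/(zd_n)$ of $K$ contributes $\frac{y}{x^2+y^2}\frac{|a_n-b_n|^2}{d_n}$ to the imaginary part precisely because $\IM(-1/z)\neq 0$, genuinely does not transfer to (vii), where the divergent coefficient $-1/d_n$ of the rank-one matrix is real. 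As written, your last paragraph asserts that the $d_n^{-1}$ weight will emerge rather than deriving it; you must either exhibit an error in the above asymptotics or confront the resulting discrepancy with the stated formula head-on.
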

\begin{proof}

(i) By Theorem~\ref{otriple}(iv), the  triple $\Pi$ is an
ES-generalized boundary triple for $\rm H_{\min}^*$.
Fix  $z\in\cmr$.
It follows from~\eqref{eq:M_n-new}  that
\begin{equation}\label{M(0)}
 \lim_{d_n\to 0} d_n M_n(z) = \dfrac{-1}{z}
    \begin{pmatrix}
        1 & -1\\
         -1 & 1
    \end{pmatrix},
\qquad
 \lim_{d_n\to 0} d_n \IM M_n(i) =
    \begin{pmatrix}
        1 & -1\\
         -1 & 1
    \end{pmatrix}.
\end{equation}
Since $d_*=0$,  the last relation yields  $\sup_n\|\IM M_n(i)\| = \infty$. Therefore
Proposition~\ref{OrdinBTprop} implies $A_0\ne A_0^*$.

(ii)  By Theorem \ref{otriple}(ii), the Weyl function of $\Pi =
\bigoplus \Pi_n$ is $M(\cdot)= \bigoplus_{n=1}^\infty M_n(\cdot)$,
where $M_n(\cdot)$ is given by ~\eqref{eq:M_n-new}.  By definition,
$\{h_n\}_{n=1}^\infty\in\dom M(z)$ if and only if
\begin{equation}\label{Mdom}
 \sum^{\infty}_{n=1} \|M_n(z)h_n\|^2 <\infty; \quad
 \{h_n\}_{n=1}^\infty =\left\{\binom{a_n}{b_n}\right\}_{n=1}^\infty \in {l}^2(\dN)\otimes \dC^2.
\end{equation}
It follows from~\eqref{eq:M_n-new} that  $\|M_n(z)\|$ as a function of $d_n$ is bounded
on the intervals $[\delta,\infty)$, $\delta > 0$.

Combining this fact with the first relation in~\eqref{M(0)}  and noting that $d_* = 0$
and $\frac{\sin(\sqrt{z}d_n)}{\sqrt{z}d_n} \sim 1$ as $d_n \to 0$, one concludes that
the convergence of the series in~\eqref{Mdom} is equivalent to
    \begin{equation}\label{7.73_sum-ineq}
\sum^{\infty}_{n=1}\frac{|a_n-b_n|^2}{d^2_n}<\infty,
    \end{equation}
i.e. to the inclusion $\{a_n- b_n\}_{n=1}^\infty\in
{l}^2(\dN;\{d_n^{-2}\})$.
%
%
%
%

(iii) The proof is postponed after Lemma \ref{Prop_W2,2_Traces}.

(iv) The proof is similar to that of the item (ii). First notice that
$\{h_n\}_{n=1}^{\infty}\in\dom \st_{M(z)}$ if and only if
\begin{equation}\label{Mformdom}
 \sum^{\infty}_{n=1} \left(\IM M_n(z)h_n,h_n\right) <\infty, \quad
 \{h_n\}_{n=1}^\infty =\left\{\binom{a_n}{b_n}\right\}_{n=1}^\infty \in {l}^2(\dN)\otimes \dC^2.
\end{equation}
Note that $\IM M_n(z)$ as a function of $d_n$ is bounded on the intervals
$[\delta,\infty)$, $\delta > 0$. Rewriting  the first of relations in~\eqref{M(0)} as
%
%
\begin{equation}\label{Mdom0}
 \lim_{d_n\to 0} \left( M_n(z)+\frac{1}{d_n z} K\right)=0, \quad \text{where}
 \quad K =
   \begin{pmatrix}
        1 & -1\\
         -1 & 1
    \end{pmatrix},
\end{equation}
%
%
we derive  that the convergence of the series in~\eqref{Mformdom} is equivalent to
\[
\sum^{\infty}_{n=1} \IM\left(\frac{\left(K h_n,h_n\right)}{zd_n}\right) 
= \frac{y}{x^2+y^2} \sum^{\infty}_{n=1}\frac{|a_n-b_n|^2}{d_n}<\infty, \quad z=x +iy \in
\dC_+.
\]
This proves  the statement.
%

 (v) The Weyl function  $M^\top(\cdot)$ corresponding to the transposed boundary triple $\Pi^\top$ is
$M^\top(\cdot) = \oplus M^\top_n(\cdot)$, where $M^\top_n(\cdot) =
-M_n^{-1}(\cdot)$ is given by
\begin{equation}\label{eq:M_ntrans}
    M^\top_n(z) = -\sqrt{z}
    \begin{pmatrix}
        \cot(\sqrt{z}d_n) & \frac{1}{\sin(\sqrt{z}d_n)}\\
        \frac{1}{\sin(\sqrt{z}d_n)} & \cot(\sqrt{z}d_n)
    \end{pmatrix}.
\end{equation}
It follows that
\begin{equation}\label{eq:M_n(-1)trans}
   \lim_{d_n\to \infty}  M^\top_n(z)= \pm i \sqrt{z}I_2,
\qquad
   \lim_{d_n\to 0} d_n M^\top_n(z) =
   - \begin{pmatrix}
        1 & 1\\
        1 & 1
    \end{pmatrix}, \quad
   z\in\dC_\pm.
\end{equation}
Since $d_*=0$, the last relation  shows that the Weyl function $M^\top(\cdot)$ takes
unbounded  values.

On the other hand, using the Laurent series expansions for $\cot z$ and $(\sin z)^{-1}$
at $0$ gives
\begin{equation}\label{limM'(-1)tr}
    \lim_{d_n\to 0}d_n^{-1} \IM M^\top_n(z) =(\IM z)\begin{pmatrix}
        \frac{1}{3} & -\frac{1}{6}\\
        -\frac{1}{6} & \frac{1}{3}
    \end{pmatrix},
    \quad z\in\dC_\pm.
\end{equation}
Hence, $\IM M^\top_n(z)$ is uniformly bounded as a function of $d_n\in(0,\infty)$ for
every $z\in\cmr$. Therefore  Proposition~\ref{OrdinBTprop} ensures that  the transposed
boundary triple $\Pi^\top$ is S-generalized. At the same time $\Pi^\top$ is  not
B-generalized, since $M^\top(\cdot)$ takes values in $\cC(\cH)\setminus \cB(\cH).$

(vi) The proof is similar to that of the statement (ii). One should only use relations
~\eqref{eq:M_n(-1)trans}  instead of~\eqref{M(0)}.

(vii) The proof is similar to that of (iv).
  \end{proof}
 \begin{remark}
Here we show that the triple
${\Pi}=\oplus_{n\in\dN}{\Pi}_n:=\{\cH,\Gamma_0,\Gamma_1\}$ is an
ordinary boundary triple for the operator $\rH_{\max}:=
\rH_{\min}^*$  if and only if $d_*>0$. This statement extends the
corresponding results  from~\cite{Koc_89}, \cite{KosMMM}, to the
case $d^*=\infty$.

Denote by $\sqrt{z}$ the branch of the multifunction defined in $\dC$ with the cut along
the non-negative semiaxes $\dR_+$ and fixed by the condition $\sqrt{1} =1$. It is easily
seen that $\sqrt{\cdot}:\  \dC \mapsto \dC_+$.

Consider the behavior of $M_n(z)$ as $d_n\to \infty$. The functions $\cot(\sqrt{z}d)$
and $(\sin(\sqrt{z}d))^{-1}$ depend continuously on $d\in (0,\infty)$  and $\lim_{d\to
\infty}\cot(\sqrt{z}d) = - i$ and $\lim_{d\to \infty}(\sin(\sqrt{z}d))^{-1} = 0.$
Therefore for any  fixed $z\in\cmr$ the matrix function $M_n(z)$ in~\eqref{eq:M_n-new}
is continuous and bounded in  $d_n\in [\delta,\infty)$ for every $\delta>0$.

Further, 
clearly,  $\lim_{d_n\to \infty} \IM M_n(z)=\pm I_2$  for $z\in \dC_\pm$
and this implies that for every fixed $z\in\dC_+$ there exists $c_\delta(z)>0$ such that
\[
 \IM M_n(z) \geq c_\delta(z) I_2, \quad d_n\in [\delta,d^*], \quad \delta>0.
\]
Thus, by Theorem \ref{th_criterion(bt)}~(i) $\Pi$ is an ordinary
boundary triple for the operator $H_{\min}^*$, whenever $d_*>0$ and,
in particular, $A_0=A_0^*$ and $A_1=A_1^*$ are transversal
extensions of $H_{\min}$ in this case.
  \end{remark}
%
%

It remains to prove the assertion (iii) of Theorem~\ref{th_Scrod_criter}. It is more
involved and to this end we describe traces of functions  $f\in W^{2,2}(\Bbb R_+
\setminus X)$ as well as traces of their first derivatives  and prove  an analog of
Lemma \ref{prop3.6_CMPos}.
   \begin{lemma}\label{Prop_W2,2_Traces}
Let $X =\{x_{n}\}_{n=1}^\infty$ be as above and let $0\leq d_*\leq d^* < \infty$. Then
the mapping $\Gamma_0'': W^{2,2}(\Bbb R_+ \setminus X)\otimes\Bbb C^2\to {l}^2(\dN;\{d_n^3\})$
defined by
\[
\Gamma_0'': f\to \left\{\left(\begin{array}{c}
    f'(x_{n-1}+)\\
    f'(x_{n}-)
\end{array}\right)\right\}_{n=1}^{\infty}
\]
is well defined and bounded and its range $\ran \Gamma_0''$ is given by
\begin{equation}\label{ran_0_on_W2,2}
\Gamma_0''\bigl(W^{2,2}(\Bbb R_+ \setminus X)\bigr)  = \left\{
\binom{a_n}{b_n}_{n=1}^\infty \in {l}^2(\dN;\{d_n^3\}) \otimes \dC^2:\, \{a_n-
b_n\}_{n=1}^\infty\in {l}^2(\dN;\{d_n^{-1}\})\right\}.
\end{equation}
   \end{lemma}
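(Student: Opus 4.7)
The plan is to treat the direct and inverse directions separately, using Lemma~\ref{prop3.6_CMPos} applied to the derivative $f'$ for the direct direction and an explicit piecewise construction for the surjectivity.

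\textbf{Boundedness of $\Gamma_0''$.} First I would observe that if $f\in W^{2,2}(\mathbb R_+\setminus X)$, then $f'\in W^{1,2}(\mathbb R_+\setminus X)$ and $\|f'\|_{W^{1,2}(\mathbb R_+\setminus X)}\le\|f\|_{W^{2,2}(\mathbb R_+\setminus X)}$. Applying Lemma~\ref{prop3.6_CMPos}(ii) (which is available since $d^*<\infty$) to $f'$ in place of $f$ yields
\[
\sum_{n\in\mathbb N}d_n\bigl(|f'(x_{n-1}+)|^2+|f'(x_n-)|^2\bigr)
\le C_1\|f'\|^2_{W^{1,2}(\mathbb R_+\setminus X)}
\le C_1\|f\|^2_{W^{2,2}(\mathbb R_+\setminus X)},
\]
so that $\{(f'(x_{n-1}+),f'(x_n-))^\top\}\in l^2(\mathbb N;\{d_n\})\otimes\mathbb C^2$. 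Since $d^*<\infty$ implies $d_n^3\le(d^*)^2d_n$, this sequence lies in $l^2(\mathbb N;\{d_n^3\})\otimes\mathbb C^2$ and the mapping $\Gamma_0''$ is bounded.

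\textbf{Inclusion $\Gamma_0''(W^{2,2})\subset\{\dots\}$.} Next I would apply Lemma~\ref{prop3.6_CMPos}(i), more precisely the estimate~\eqref{3.30AA}, to $f'\in W^{1,2}(\mathbb R_+\setminus X)$ to get
\[
\sum_{n\in\mathbb N}d_n^{-1}|f'(x_n-)-f'(x_{n-1}+)|^2
\le\|f'\|^2_{W^{1,2}(\mathbb R_+\setminus X)}
\le\|f\|^2_{W^{2,2}(\mathbb R_+\setminus X)},
\]
which gives $\{a_n-b_n\}\in l^2(\mathbb N;\{d_n^{-1}\})$ with $a_n=f'(x_{n-1}+)$, $b_n=f'(x_n-)$; combined with the previous paragraph this proves one inclusion in~\eqref{ran_0_on_W2,2}.

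\textbf{Surjectivity.} For the reverse inclusion I would argue by explicit construction. Given a pair of sequences $\{a_n\},\{b_n\}$ with $\{(a_n,b_n)^\top\}\in l^2(\mathbb N;\{d_n^3\})\otimes\mathbb C^2$ and $\{a_n-b_n\}\in l^2(\mathbb N;\{d_n^{-1}\})$, define on each subinterval $[x_{n-1},x_n]$ the affine function
\[
g_n(x):=a_n+(b_n-a_n)\frac{x-x_{n-1}}{d_n},\qquad
f_n(x):=\int_{x_{n-1}}^x g_n(t)\,dt,
\]
so that $f_n\in W^{2,2}[x_{n-1},x_n]$ with $f_n'(x_{n-1}+)=a_n$ and $f_n'(x_n-)=b_n$. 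A direct computation gives
\[
\|f_n''\|^2_{L^2}=\frac{|a_n-b_n|^2}{d_n},\qquad
\|f_n'\|^2_{L^2}\le C d_n(|a_n|^2+|b_n|^2),\qquad
\|f_n\|^2_{L^2}\le C d_n^3(|a_n|^2+|b_n|^2),
\]
the last estimate obtained by $|f_n(x)|\le d_n^{1/2}\|f_n'\|_{L^2}$ and integration. Summing over $n$ and using $d^*<\infty$ gives $f:=\bigoplus_{n=1}^\infty f_n\in W^{2,2}(\mathbb R_+\setminus X)$ with $\Gamma_0''f$ equal to the prescribed sequence.

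The only mildly delicate point is the careful bookkeeping of scaling factors in the three seminorms $\|f_n''\|_{L^2}$, $\|f_n'\|_{L^2}$, $\|f_n\|_{L^2}$, which is where the weights $d_n^{-1}$ and $d_n^3$ appear with opposite signs; otherwise everything is an immediate transcription of Lemma~\ref{prop3.6_CMPos} from $W^{1,2}$ to $W^{2,2}$ via $f\mapsto f'$.
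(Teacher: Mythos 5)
There is a genuine gap, and it sits exactly at the point you yourself flag as ``an immediate transcription of Lemma~\ref{prop3.6_CMPos} from $W^{1,2}$ to $W^{2,2}$ via $f\mapsto f'$''. The paper explicitly warns against this reduction in the remark following the lemma: the embedding $W^{2,2}(\R_+\setminus X)\hookrightarrow W^{1,2}(\R_+\setminus X)$ holds if and only if $d_*>0$, while the lemma is needed precisely in the case $d_*=0$. The quantity $\|f'\|^2_{L^2(\R_+)}$ is not controlled by $\|f\|^2_{W^{2,2}(\R_+\setminus X)}$ (which the paper's own estimates \eqref{7.70_bound_est-tes} and the surjectivity step treat as built from $\|f\|_{L^2}$ and $\|f''\|_{L^2}$): on a single interval the interpolation inequality $\|f'\|^2_{L^2[x_{n-1},x_n]}\le c_1d_n^2\|f''\|^2_{L^2[x_{n-1},x_n]}+c_0d_n^{-2}\|f\|^2_{L^2[x_{n-1},x_n]}$ carries the factor $d_n^{-2}$, which is not summable against $\|f_n\|^2_{L^2}$ as $d_n\to0$. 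So your first step, ``$f'\in W^{1,2}(\R_+\setminus X)$ with $\|f'\|_{W^{1,2}}\le\|f\|_{W^{2,2}}$'', is unjustified.

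Moreover, the two halves of your argument contradict each other. If the boundedness step were valid it would place $\ran\Gamma_0''$ inside $l^2(\dN;\{d_n\})\otimes\dC^2$, which is strictly smaller than the right-hand side of \eqref{ran_0_on_W2,2}: take $d_n=1/n$ and $a_n=b_n$ with $|a_n|^2=n^2/\log^2(n+1)$; then $\{a_n-b_n\}=0\in l^2(\dN;\{d_n^{-1}\})$ and $\sum_nd_n^3|a_n|^2=\sum_n1/(n\log^2(n+1))<\infty$, so this datum belongs to the claimed range, yet $\sum_nd_n|a_n|^2=\sum_nn/\log^2(n+1)=\infty$. Your surjectivity construction applied to this datum gives $f_n'\equiv a_n$, hence $\sum_n\|f_n'\|^2_{L^2}=\sum_nd_n|a_n|^2=\infty$; the sum you assert converges does not, and the ``bookkeeping'' you defer is exactly where the proof breaks. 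The paper avoids both problems by arguing interval by interval: the trace inequality $d_n|f'(x)|^2\le2\bigl(\|f'\|^2_{L^2[x_{n-1},x_n]}+d_n^2\|f''\|^2_{L^2[x_{n-1},x_n]}\bigr)$ combined with the interpolation inequality above gives $d_n^2|f'(x)|^2\le C_1d_n^3\|f''\|^2_{L^2[x_{n-1},x_n]}+C_0d_n^{-1}\|f\|^2_{L^2[x_{n-1},x_n]}$, which sums to $C_3\|f\|^2_{W^{2,2}(\R_+\setminus X)}$ and produces the weight $d_n^3$ directly, never requiring $\sum_n\|f_n'\|^2<\infty$; and in the surjectivity step only $g,g''\in L^2(\R_+)$ is verified.
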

  \begin{proof}
Denote temporarily the right-hand side of~\eqref{ran_0_on_W2,2}  by $\cR(\Gamma_0'')$. First we
prove the inclusion $\ran \Gamma_0'' = \Gamma_0''(W^{2,2}(\R_+\setminus X))\subset \cR(\Gamma_0'')$.
 Let  $f\in W^{2,2}(\R_+\setminus X)$. This  inclusion implies  $f\in W^{2,2}[x_{n-1}, x_n]$
for each $n\in \Bbb N$ and, it is easy to check that
\begin{equation}\label{weq01}
 d_n|f(x)|^2 \leq 2\left(\|f\|^2_{L^2(\Delta_n)}+d_n^2
 \|f'\|^2_{L^2(\Delta_n)}\right),\qquad  x\in\Delta_n:=[x_{n-1},x_n], \quad  n\in
 \N.
\end{equation}
Moreover, by the Sobolev embedding theorem (cf. e.g.
\cite{Agran2015}, \cite[p. 192]{Kato}) there are exist  constants
$c_0,c_1>0$ not depending on $f$ and $n\in \N$ such that
   \begin{equation}\label{weq02}
 \|f'\|^2_{L^2(\Delta_n)} \leq  c_1 d_n^2
 \|f''\|^2_{L^2(\Delta_n)}+c_0 d_n^{-2}\|f\|^2_{L^2(\Delta_n)}, \qquad  x\in\Delta_n, \quad  n\in
 \N.
\end{equation}
By applying~\eqref{weq01} to $f'$ and combining the result with
~\eqref{weq02} shows that
\[
d^2_n|f'(x)|^2\le C_1 d^3_n\|f''\|^2_{L^2(\Delta_n)}   +
 C_0 d^{-1}_n\|f\|^2_{L^2(\Delta_n)},\qquad  x\in\Delta_n, \quad  n\in \N,
\]
where $C_0$ and $C_1$ do not depend on $f$ and $n\in \N$. Therefore,
  \begin{eqnarray}\label{7.70_bound_est-tes}
\sum_n d^3_n\bigl(|f'(x_n-)|^2+|f'(x_{n-1}+)|^2\bigr)\le  2C_1
\sum_n d^4_n\|f''\|^2_{L^2(\Delta_n)}
+ 2C_0 \sum_n \|f\|^2_{L^2(\Delta_n)}\nonumber   \\
\le 2C_1(d^*)^4\|f''\|^2_{L^2(\Bbb R_+)} + 2C_0\|f\|^2_{L^2(\Bbb
R_+)} \le C_3 \|f\|^2_{W^{2,2}(\R_+\setminus X)},
\end{eqnarray}
where $C_3 = 2\max\{C_0,C_1(d^*)^4\}$. Hence, the mapping $\Gamma_0''$ is bounded.

Furthermore, since $f\in W^{2,2}[x_{n-1}, x_n]$,  $n\in \Bbb N$, and
$f''\in L^{2}(\R_+)$, one gets
    \begin{eqnarray}\label{7.71_New}
\sum_{n\in\N} \frac{ |f'(x_n-) - f'(x_{n-1}+)|^2}{d_n} =
\sum_{n\in\N}\frac{1}{d_n} \left|\int_{x_{n-1}}^{x_n} f''(x)\,dx
\right|^2 \le \sum_{n\in\N} \int_{x_{n-1}}^{x_n}
|f''(x)|^2\,dx \nonumber   \\
= \int_{\Bbb R_+} |f''(x)|^2\,dx \le
\|f\|^2_{W^{2,2}({\R}_+\setminus X)}.
    \end{eqnarray}
Combining~\eqref{7.70_bound_est-tes} with~\eqref{7.71_New} yields the inclusion $\ran
\Gamma_0'' = \Gamma_0''(W^{2,2}(\R_+\setminus X))\subset \cR(\Gamma_0'')$.

To prove the reverse inclusion we choose any vector
$\{\binom{a_n}{b_n}\}_{n\in\Bbb N}\in {l}^2(\dN;\{d_n^3\})
\otimes\Bbb C^2$ satisfying $\{a_n-
 b_n\}_{n=1}^\infty\in {l}^2(\dN;\{d_n^{-1}\})$.  Setting
    \begin{equation}\label{7.72_test_func}
g_n(x) = a_n(x-x_{n-1}) + 2^{-1}d^{-1}_n (x-x_{n-1})^2(b_n -
a_n),\quad x\in[x_{n-1},x_n]
    \end{equation}
and $g :=\oplus^{\infty}_1 g_n$ one easily checks that
$$
\|g_n\|^2_{L^2(\Delta_n)} \le d_n^3 \left[ \frac{2}{3}|a_n|^2 +
\frac{1}{10}|b_n -a_n|^2 \right]  \le d_n^3 \left( |a_n|^2 + |b_n|^2
\right),
$$
hence $g=\oplus^{\infty}_1 g_n \in L^{2}(\Bbb R_+)$. Moreover, the
condition  $\{a_n- b_n\}_{n=1}^\infty\in {l}^2(\dN;\{d_n^{-1}\})$
yields the inclusion $g''\in L^{2}(\Bbb R_+)$.   Thus $g\in
W^{2,2}(\Bbb R_+ \setminus X)$. To complete the proof it remains to
note that
\begin{equation}\label{gDerivates}
 g'_n(x_{n-1}+) = a_n, \quad   g'_n(x_n-) = a_n +(b_n - a_n)= b_n,
\end{equation}
i.e. $\Gamma_0'' g   = \left\{\binom{a_n}{b_n}\right\}_{n=1}^\infty$.
  \end{proof}
  \begin{remark}
Notice that the relation~\eqref{ran_0_on_W2,2} cannot be extracted
from Proposition \ref{prop3.6_CMPos}(i) applied to the derivative
$f'$, since the embedding $W^{2,2}(\Bbb R_+ \setminus X) \to
W^{1,2}(\Bbb R_+ \setminus X)$ holds if and only if $d_*>0$ (see
\cite{KosMMM_2013}).
    \end{remark}
We are now ready to prove the  assertion (iii) in Theorem~\ref{th_Scrod_criter}, i.e. to
prove relation~\eqref{7.69_ran_of_G0}.
%
%
%
\begin{proof}[Proof of item (iii) in Theorem~\ref{th_Scrod_criter}]
Let the righthand side of \eqref{7.69_ran_of_G0} be denoted
temporarily by $\cR_0(\Gamma_0)$. The inclusion $\ran(\Gamma_0) =
\Gamma_0(\dom \rH_*)\subset \cR_0(\Gamma_0)$ is immediate from Lemma
\ref{Prop_W2,2_Traces}.
%

To prove the reverse inclusion we choose any vector
$\{\binom{a_n}{b_n}\}_{n\in\Bbb N}\in l^2(\Bbb N)\otimes\Bbb C^2$
that satisfies $\{a_n- b_n\}_{n=1}^\infty\in
{l}^2(\dN;\{d_n^{-1}\})$ and consider the functions $g_n$
and $g=\oplus^{\infty}_1 g_n$ as defined in~\eqref{7.72_test_func}.
As shown in Lemma \ref{Prop_W2,2_Traces} $g\in W^{2,2}(\Bbb R_+
\setminus X)$ and $g'$ satisfies the equalities~\eqref{gDerivates}.
Besides,
$$
g_n(x_{n-1}+)=0 \quad \text{and} \quad g_n(x_n -)= a_n d_n +
2^{-1}d_n(b_n - a_n) = 2^{-1}(a_n + b_n)d_n\in l^2(\Bbb N).
$$
Note that the latter inclusion holds since $d^*<\infty$.  Summing up
we get
%
%
\[
\Gamma_0 g  = \left(\begin{array}{c}
    g'(x_{n-1}+)\\
    g'(x_{n}-)
 \end{array}\right) = \left\{\binom{a_n}{b_n}\right\}_{n=1}^\infty \quad \text{and}\quad
 \Gamma_1 g=  \left\{\binom{0}{2^{-1}(a_n + b_n)d_n}\right\}_{n=1}^\infty \in l^2(\Bbb N)\otimes\Bbb C^2.
\]
Thus, $g\in \dom\Gamma_0'\cap \dom\Gamma_1'= \dom \rH_*$ and this
completes the proof.
%
\end{proof}

One gets from Lemma~\ref{Prop_W2,2_Traces} a description for the
ranges of the closures of $\Gamma_0$ and $\Gamma_1$.

\begin{corollary}
Assume the conditions of  Theorem~\ref{th_Scrod_criter} and $d^* < \infty$. Then the
range of the closure of $\Gamma_0$ is
  \begin{equation}\label{ranG0cl}
 \ran\overline{\Gamma_0} = \left\{ \binom{a_n}{b_n}_{n=1}^\infty \in {l}^2(\dN) \otimes
\dC^2:\, \{a_n - b_n\}_{n=1}^\infty\in
 {l}^2(\dN;\{d_n^{-1}\})\right\} = \ran{\Gamma_0}.
  \end{equation}
and
  \begin{equation}\label{ranG1cl}
 \ran\overline{\Gamma_1} = {l}^2(\dN) \otimes
\dC^2.
  \end{equation}
\end{corollary}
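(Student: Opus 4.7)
The plan is to use the identifications $\overline{\Gamma_j}=\Gamma_j'=\bigoplus_{n=1}^{\infty}\Gamma_j^{(n)}$ on the maximal domain $\dom\Gamma_j'$ from Theorem~\ref{otriple}~(iii), and then argue by explicit trace estimates and explicit lifting constructions.

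For the equality \eqref{ranG0cl}, the inclusion ``$\supset$'' follows immediately from Theorem~\ref{th_Scrod_criter}~(iii), since $\Gamma_0\subset \overline{\Gamma_0}$. For the converse inclusion, take any $f=\oplus f_n\in\dom\overline{\Gamma_0}=\dom\Gamma_0'$, and set $a_n=f_n'(x_{n-1}+)$, $b_n=f_n'(x_n-)$. By definition of $\dom\Gamma_0'$ one has $\{a_n\}_{n=1}^\infty,\{b_n\}_{n=1}^\infty\in l^2(\dN)$. Moreover, since $f_n\in W^{2,2}[x_{n-1},x_n]$ and $\rH^*f=\oplus(-f_n'')\in L^2(\R_+)$, the Cauchy--Schwarz argument used in \eqref{7.71_New} yields
\[
 \sum_{n\in\dN}\frac{|a_n-b_n|^2}{d_n}
 =\sum_{n\in\dN}\frac{1}{d_n}\left|\int_{x_{n-1}}^{x_n}f_n''(x)\,dx\right|^2
 \leq \sum_{n\in\dN}\int_{x_{n-1}}^{x_n}|f_n''(x)|^2\,dx
 =\|\rH^*f\|_{L^2(\R_+)}^2<\infty.
\]
Hence $\{a_n-b_n\}_{n=1}^\infty\in l^2(\dN;\{d_n^{-1}\})$, so $\overline{\Gamma_0}f$ belongs to the right-hand set in \eqref{ranG0cl}. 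Combined with Theorem~\ref{th_Scrod_criter}~(iii) this proves \eqref{ranG0cl}.

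For the equality \eqref{ranG1cl}, given an arbitrary $\{\binom{c_n}{d_n}\}_{n=1}^\infty\in l^2(\dN)\otimes\dC^2$, I would construct an explicit lifting by piecewise affine functions: define
\[
 f_n(x):=-c_n+(d_n+c_n)\,\frac{x-x_{n-1}}{d_n},\qquad x\in[x_{n-1},x_n],
\]
so that $-f_n(x_{n-1}+)=c_n$, $f_n(x_n-)=d_n$ and $f_n''\equiv 0$. Then $f=\oplus f_n\in\dom\rH^*$ with $\rH^*f=0$, and using $d^*<\infty$ one estimates
\[
 \sum_{n\in\dN}\|f_n\|_{L^2[x_{n-1},x_n]}^2
 \leq d^*\sum_{n\in\dN}\bigl(|c_n|^2+|d_n|^2\bigr)<\infty,
\]
so $f\in L^2(\R_+)$. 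Furthermore $\sum_n\|\Gamma_1^{(n)}f_n\|^2=\sum_n(|c_n|^2+|d_n|^2)<\infty$, hence $f\in\dom\Gamma_1'=\dom\overline{\Gamma_1}$ and $\overline{\Gamma_1}f=\{\binom{c_n}{d_n}\}_{n=1}^\infty$. This gives \eqref{ranG1cl}.

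The only delicate point in this plan is the estimate in the first paragraph, where the key mechanism is that the smallness of $d_n$ is compensated precisely by the elementary Cauchy--Schwarz bound for $\int_{x_{n-1}}^{x_n}f_n''$, which produces the weight $d_n^{-1}$ matching \eqref{dom_M(z)_Schrod} and \eqref{7.69_ran_of_G0}. The rest is bookkeeping, where the assumption $d^*<\infty$ enters only in the second paragraph to ensure that the affine liftings are square-integrable without any additional weight.
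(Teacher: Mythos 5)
Your proof is correct and runs along essentially the same lines as the paper's: the paper sandwiches $\ran\overline{\Gamma}_0$ between $\ran\Gamma_0$ and $\ran\Gamma_0''\cap(l^2(\dN)\otimes\dC^2)$ and invokes Lemma~\ref{Prop_W2,2_Traces} together with Theorem~\ref{th_Scrod_criter}(iii), whereas you apply the same Cauchy--Schwarz estimate \eqref{7.71_New} directly to elements of $\dom\overline{\Gamma_0}=\dom\Gamma_0'$ (using only $f,f''\in L^2$, so no detour through $W^{2,2}(\R_+\setminus X)$ is needed) and make the ``proved similarly'' part explicit with an affine lifting, where $d^*<\infty$ is correctly the only place the geometry enters. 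One cosmetic fix: in the proof of \eqref{ranG1cl} you denote the target vector by $\binom{c_n}{d_n}$, which collides with the interval lengths $d_n=x_n-x_{n-1}$ appearing in the denominator of your lifting formula; rename the second component (say $e_n$) to avoid the clash.
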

\begin{proof}
Recall that, by definition,  $\dom \Gamma_0 = \dom \Gamma_1 = \dom \rH_*$.   Clearly,
$\Gamma_0 = \Gamma_0'' \upharpoonright \dom \rH_*$ and
  \begin{equation}\label{8.41}
\ran\Gamma_0\subseteq\ran\overline{\Gamma}_0\subseteq\ran \Gamma_0''\cap\bigl(l^2(\Bbb
N)\otimes\Bbb C^2\bigr).
  \end{equation}
On the other hand, it follows from Lemma \ref{Prop_W2,2_Traces}  and Theorem
\ref{th_Scrod_criter}(iii)  that
\begin{equation*}
\ran\Gamma_0  = \ran\Gamma_0''\cap\bigl(l^2(\Bbb N)\otimes\Bbb C^2\bigr).
\end{equation*}
Combining this relation with~\eqref{8.41} and applying Theorem
\ref{th_Scrod_criter}(iii) yields~\eqref{ranG0cl}.

The second relation  is proved similarly.
\end{proof}

\begin{remark}\label{rem8.15}
(i) Recall that according to Theorem~\ref{prop:C6} the condition
\begin{equation}\label{ranInv}
  \ran\Gamma_0=\dom M(z),\quad z\in\dC\setminus\dR,
\end{equation}
ensures selfadjointness of $A_0=\ker \Gamma_0$. Theorem~\ref{th_Scrod_criter}~(iii)
gives an explicit example showing that condition~\eqref{ranInv} cannot be replaced by
the weaker domain invariance condition
\[
 \dom M(z)=\dom M(i)\,(\subsetneqq  \ran \Gamma_0), \quad z\in\cmr.
\]
In other words, domain invariance property does not imply the property of a boundary
triple to be S-generalized (see also Example~\ref{example5.1}). Such Weyl functions
cannot be written in the form~\eqref{eq:M_S_gen} without a renormalization of the
boundary triple as in Theorem \ref{essThm2}.

(ii) In the case  $d_*=0$ and $d^*<\infty$ an abstract  regularization procedure
from~\cite{MalNei12,KosMMM} has first  been applied in~\cite{KosMMM} to the direct sum
$\Pi = \oplus_{n=1}^{\infty} \Pi_n =\{\cH, \Gamma_0,\Gamma_1\}$ of triples
~\eqref{IV.1.1_05-new} for  $\rH_n^*$   to obtain a (regularized) ordinary boundary
triple $\Pi^{r} =\{\cH, \Gamma^{r}_0,\Gamma^{r}_1\}$ satisfying $\ker \Gamma_0 = \ker
\Gamma^{r}_0$.  A special construction  of
a regularized triple $\Pi^{r}$  in~\cite{KosMMM} has been motivated by the following
circumstance: the boundary operator $B_{X,\alpha}$ corresponding to the Hamiltonian
$\rH_{X,\gA}$ of the form~\eqref{I_01}, i.e. operator satisfying $\dom(\rH_{X,\gA}) =
\ker (\Gamma^{r}_1 - B_{X,\alpha}\Gamma^{r}_0)$, is a Jacobi matrix. It is shown in
\cite{KosMMM}  that certain spectral properties of $\rH_{X,\gA}$ strictly correlate with
that of $B_{X,\alpha}$.
  \end{remark}
%

Next we apply the renormalization result in Theorem~\ref{propBetE2}
to the ES-generalized boundary triple $\Pi$ in
Theorem~\ref{th_Scrod_criter}. The transposed boundary triple
$\Pi^\top$ can be renormalized by a suitable modification of
Theorems~\ref{QBTthm},~\ref{thm:2.1} using a regular point (here
$z=-1$) on the real line; cf. Proposition~\ref{propBtoE}.

  \begin{proposition}\label{PIcor_second}
Let $\Pi_n$ be the boundary triple for  $\rH_n^*$ given by~\eqref{IV.1.1_05-new}, let
$M_n(\cdot)$, $n\in \Bbb N$, be the corresponding Weyl function given by
~\eqref{eq:M_n-new}, and let $\wt d_n=\min\{d_n,1\}$. Then:

\begin{enumerate}\def\labelenumi {\textit{(\roman{enumi})}}
\item  The orthogonal sum $\wt\Pi = \oplus_{n=1}^{\infty} \wt\Pi_n$ of boundary triples $\wt\Pi_n=\{\Bbb
C^2, \Gamma_0^{(n)},\Gamma_1^{(n)}\}$  with  
the mappings  $\wt\Gamma_j^{(n)}:W_2^2[x_{n-1},x_n]\to \dC^2$, $n\in\dN$, $j\in
\{0,1\}$, given by
\[
 \wt\Gamma_0^{(n)}f:= \left(\begin{array}{c}
    {\wt d_n}^{-1/2} f'(x_{n-1}+)\\
    {\wt d_n}^{-1/2} f'(x_{n}-)
\end{array}\right),
\quad  \wt\Gamma_1^{(n)}f:= \left(\begin{array}{c}
    -{\wt d_n}^{1/2}f(x_{n-1}+)\\
    {\wt d_n}^{1/2}f(x_{n}-)
\end{array}\right),
\]
forms  a B-generalized boundary triple for $H_{\min}^*$. Moreover, $\wt\Pi$ is  an
ordinary boundary triple if and only if $d_*>0$.
\item  The orthogonal sum $\Pi^{(r)}= \oplus_{n=1}^{\infty} \Pi^{(r)}_n$ of the boundary triples
$\Pi^r_n=\{\Bbb C^2, \Gamma_0^{(r,n)},\Gamma_1^{(r,n)}\}$ with the
mappings $\Gamma_j^{(r,n)}:W_2^2[x_{n-1},x_n]\to \dC^2$, $n\in\dN$,
$j\in \{0,1\}$, given by
\[
 \Gamma_0^{(r,n)}f:= \left(\begin{array}{c}
    {\wt d_n}^{1/2}f(x_{n-1}+)\\
    -{\wt d_n}^{1/2}f(x_{n}-)
\end{array}\right), \quad
 \Gamma_1^{(r,n)}f:= \left(\begin{array}{c}
    {\wt d_n}^{-1/2} f'(x_{n-1}+)+{\wt d_n}^{-3/2}(f(x_{n-1}+)-f(x_{n}-)\\
    {\wt d_n}^{-1/2} f'(x_{n}-)+{\wt d_n}^{-3/2}(f(x_{n-1}+)-f(x_{n}-)
\end{array}\right),
\]
is an ordinary boundary triple for $H_{\min}^*$.
\end{enumerate}
   \end{proposition}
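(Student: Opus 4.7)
The plan is to realize both $\wt\Pi_n$ and $\Pi^{(r)}_n$ as $J$-unitary transforms of the finite-dimensional ordinary boundary triple $\Pi_n$ from~\eqref{IV.1.1_05-new}, so that each is itself an ordinary boundary triple for $\rH_n^*$, and then apply Theorem~\ref{th_criterion(bt)} to the direct sums. The identification for $\wt\Pi_n$ is immediate from Lemma~\ref{isomtrans} with $G=\wt d_n^{1/2}I_2$ and $E=0$, which yields $\wt M_n(z)=\wt d_n M_n(z)$. For $\Pi^{(r)}_n$ a direct inspection of the formulas shows $\Gamma^{(r,n)}_0=-\wt\Gamma^{(n)}_1$ and $\Gamma^{(r,n)}_1=\wt\Gamma^{(n)}_0+E_n\wt\Gamma^{(n)}_1$ with the symmetric matrix $E_n=-\wt d_n^{-2}\begin{pmatrix}1&1\\1&1\end{pmatrix}$; hence $\Pi^{(r)}_n$ is obtained from $\wt\Pi_n$ via the $J$-unitary block transfer matrix $\begin{pmatrix}0&-I\\I&E_n\end{pmatrix}$, and the standard M\"obius formula gives
\[
  M^{(r)}_n(z)=-\wt M_n(z)^{-1}-E_n=\wt d_n^{-1}M_n^{\top}(z)+\wt d_n^{-2}\begin{pmatrix}1&1\\1&1\end{pmatrix},
\]
with $M_n^{\top}$ as in~\eqref{eq:M_ntrans}.

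For assertion (i) I work at $z=i$. The relation~\eqref{M(0)} gives $\wt d_n M_n(i)\to i\begin{pmatrix}1&-1\\-1&1\end{pmatrix}$ as $d_n\to 0$; for $d_n\to\infty$ one has $M_n(i)\to e^{i\pi/4}I$ (since $\cot(\sqrt{i}\,d_n)\to-i$ and $1/\sin(\sqrt{i}\,d_n)\to 0$ exponentially); and on each compact subset of $(0,\infty)$ the map $d_n\mapsto M_n(i)$ is continuous. Together these bounds give $\sup_n\|\wt M_n(i)\|<\infty$, so Theorem~\ref{th_criterion(bt)}(ii) shows that $\wt\Pi$ is a $B$-generalized boundary triple. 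For the ordinary triple criterion of Theorem~\ref{th_criterion(bt)}(i) one additionally needs uniform invertibility of $\IM\wt M_n(i)$. If $d_*>0$ then $\wt d_n\geq\min(d_*,1)>0$ and $\IM M_n(i)$ is uniformly positive definite (being continuous on $[d_*,\infty)$ with limit $I/\sqrt{2}$ as $d_n\to\infty$), so this holds. Conversely, if $d_*=0$ then along any subsequence with $d_n\to 0$ the limit $\IM\wt M_n(i)\to\begin{pmatrix}1&-1\\-1&1\end{pmatrix}$ is singular, so $\sup_n\|(\IM\wt M_n(i))^{-1}\|=\infty$ and $\wt\Pi$ fails to be an ordinary boundary triple.

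For assertion (ii) I apply Theorem~\ref{th_criterion(bt)}(iv) at the real point $a=-1$, which lies in the common resolvent set of all operators $\rH_n^*\uphar\ker\Gamma^{(r,n)}_0$ (these are Dirichlet Laplacians on $[x_{n-1},x_n]$ whose spectra are contained in $[(\pi/d_n)^2,\infty)\subset[0,\infty)$). Using $\cot(id_n)=-i\coth d_n$ and $1/\sin(id_n)=-i/\sinh d_n$, both $M^{(r)}_n(-1)$ and $(M^{(r)}_n)'(-1)$ admit closed expressions in the three quantities $\coth d_n$, $1/\sinh d_n$, $d_n/\sinh^2 d_n$, divided by $\wt d_n$. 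In the regime $d_n\to 0$ (where $\wt d_n=d_n$) the Laurent expansions $\coth d=d^{-1}+d/3+O(d^3)$ and $1/\sinh d=d^{-1}-d/6+O(d^3)$ show that the order-$d_n^{-2}$ singularities of $\wt d_n^{-1}M_n^{\top}(-1)$ are exactly cancelled by $\wt d_n^{-2}\begin{pmatrix}1&1\\1&1\end{pmatrix}$, leaving the finite limits
\[
  M^{(r)}_n(-1)\to-\begin{pmatrix}1/3 & -1/6\\ -1/6 & 1/3\end{pmatrix},\qquad
  (M^{(r)}_n)'(-1)\to\begin{pmatrix}1/3 & -1/6\\ -1/6 & 1/3\end{pmatrix},
\]
the latter with eigenvalues $1/2$ and $1/6$. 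In the regime $d_n\to\infty$ the exponential decay of $1/\sinh d_n$ yields $M^{(r)}_n(-1)\to\begin{pmatrix}0&1\\1&0\end{pmatrix}$ and $(M^{(r)}_n)'(-1)\to I/2$. Moreover, for every fixed $n$, $(M^{(r)}_n)'(-1)=\gamma^{(r)}_n(-1)^*\gamma^{(r)}_n(-1)$ is positive definite since $\Pi^{(r)}_n$ is an ordinary boundary triple and its $\gamma$-field at a real regular point is injective. Combining continuity of these families in $d_n\in(0,\infty)$ with the positive definite endpoint limits (compactness of $[0,\infty]$ after continuous extension) then gives finiteness of all three constants $C_3, C_4, C_5$ in~\eqref{III.2.2_02}--\eqref{III.2.2_02NEW}; Theorem~\ref{th_criterion(bt)}(iv) therefore implies that $\Pi^{(r)}$ is an ordinary boundary triple for $\rH_{\min}^*$.

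The principal obstacle is the delicate cancellation in part (ii) in the small-$d_n$ regime: the terms $\wt d_n^{-1}M_n^{\top}(-1)$ and $-E_n$ are each individually singular of order $d_n^{-2}$, and the M\"obius correction $-E_n$ is precisely the matrix that eliminates the leading singularity, so that it is the bounded subleading Laurent coefficient matrix $\begin{pmatrix}1/3&-1/6\\ -1/6&1/3\end{pmatrix}$ which provides both the finite limit (giving $C_3<\infty$) and, crucially, the strictly positive definite limit derivative (giving $C_5<\infty$); this strict positivity is what distinguishes the ordinary boundary triple property from the weaker $B$-generalized one in part (ii).
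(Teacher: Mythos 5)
Your proof is correct and follows exactly the route the authors indicate (the paper omits the proof, referring to the method of Theorem~\ref{th_Scrod_criter}): realize $\wt\Pi_n$ and $\Pi^{(r)}_n$ as bounded $J_\cH$-unitary transforms of $\Pi_n$, compute the transformed Weyl functions $\wt M_n=\wt d_nM_n$ and $M^{(r)}_n=-\wt M_n^{-1}-E_n$, and verify the uniform bounds of Theorem~\ref{th_criterion(bt)} from the asymptotics as $d_n\to 0$ and $d_n\to\infty$. In particular, the cancellation of the $d_n^{-2}$ singularity at $a=-1$ and the limits $(M^{(r)}_n)'(-1)\to\begin{pmatrix}1/3&-1/6\\-1/6&1/3\end{pmatrix}$ (eigenvalues $1/2$, $1/6$) and $\to I/2$ are computed correctly, so $C_3,C_4,C_5<\infty$ as required.
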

The proof is similar to that of Theorem~\ref{th_Scrod_criter} and is
omitted.

\subsection{Dirac operators with local point interactions}\index{Dirac operator with local point interactions}

Let $D$ be a  differential expression
\begin{equation}\label{1.2}
D=-i\,c\,\frac{d}{dx}\otimes\sigma_{1}+\frac{c^{2}}{2}\otimes\sigma_{3}=\left(\begin{array}{cc}{c^{2}}/{2}
& -i\,c\,\frac{d}{dx} \\-i\,c\,\frac{d}{dx} &
-{c^{2}}/{2}\end{array}\right)
\end{equation}
acting on $\C^2$-valued functions of a real variable. Here
\begin{equation}\label{pauli}
\sigma_{1}=\left(\begin{array}{cc}0 & 1 \\1 &
0\end{array}\right),\quad\sigma_{2}=\left(\begin{array}{cc}0 & -i
\\i & 0\end{array}\right), \quad\sigma_{3}=\left(\begin{array}{cc}1
& 0 \\0 & -1\end{array}\right),
\end{equation}
are the Pauli matrices in $\C^{2}$ and $c>0$ denotes the velocity of
light.

Further, let $D_{n}$ be the minimal operator generated in
$L^{2}[x_{n-1},x_{n}]\otimes\C^{2}$ by the differential
expression (\ref{1.2}) 
  \begin{equation}\label{3.6A}
D_{n}= D \upharpoonright \dom(D_n),
\qquad\dom(D_{n})=W^{1,2}_{0}[x_{n-1},x_{n}] \otimes\C^{2}.
\end{equation}
Denote $d_n:=x_{n}-x_{n-1}>0$. Recall that $D_{n}$ is a symmetric
operator with deficiency indices $n_{\pm}(D_{n})=2$ and its adjoint
$D_{n}^*$ is given by
\[
D_{n}^*=D\upharpoonright\dom(D_{n}^*),
 \qquad \dom(D_{n}^*)=W^{1,2}[x_{n-1},x_{n}]\otimes\C^{2}.
\]

Next following \cite{CarMalPos13} we recall the construction of a
boundary triple for $D_n^*$ and compute the corresponding Weyl
function.  Namely, the triple ${\Pi}^{(n)}=\big\{\C^{2},
{\Gamma}_{0}^{(n)},{\Gamma}_{1}^{(n)}\big\}$, where
  \begin{equation}\label{triple2}
{\Gamma}_{0}^{(n)}f:= {\Gamma}_{0}^{(n)}
   \begin{pmatrix}f_{1}\\f_{2}\end{pmatrix}=
\left(\begin{array}{c}
                 f_{1}(x_{n-1}+)\\
                 i\,c\, f_{2}(x_{n}-)
                       \end{array}\right), \quad 
 {\Gamma}_{1}^{(n)}f:=
 {\Gamma}_{1}^{(n)}\begin{pmatrix}f_{1}\\f_{2}\end{pmatrix}
 =\left(\begin{array}{c}
  i\,c\,f_{2}(x_{n-1}+)\\
  f_{1}(x_{n}-)
  \end{array}\right)\,,
 \end{equation}
forms a boundary triple  for $D^{*}_n$. Clearly,
${D}_{n,0}:=D_n^{*}\upharpoonright\ker {\Gamma}^{(n)}_{0}=
{D}_{n,0}^*$ and
    \begin{equation}\label{3.8}
\dom(D_{n,0}) = 
\{\{f_1, f_2\}^{\tau}\in W^{1,2}[x_{n-1},x_{n}]\otimes\C^{2}:
f_{1}(x_{n-1}+)= f_{2}(x_{n}-)=0\}.
     \end{equation}
Moreover, the spectrum of the operator ${D}_{n,0}$ is discrete,
\begin{equation}\label{spetcom} \sigma({D}_{n,0})=
\sigma_d({D}_{n,0})= \left\{\pm\sqrt{\frac{
c^2\pi^2}{d^2_n}\,\left(j+\frac12\right)^{2}+\left(\frac{c^{2}}{2}\right)^{2}}\,,\quad
j\in \Bbb N \right\}.
\end{equation}
%
%
The defect subspace $\mathfrak N_z :=\text{\rm ker}(D_{n}^*-z)$ is
spanned by the vector functions $f_n^\pm(\cdot, z)$,
  \begin{equation}\label{3.7}
f_n^{\pm}(x, z) := \begin{pmatrix}e^{\pm i\, k(z)\,x}\\\pm
k_{1}(z)e^{\pm i\,k(z)\,x}\end{pmatrix}.
  \end{equation}
Moreover, the Weyl function ${M}_{n}(\cdot)$ corresponding to the
triple $\Pi^{(n)}$ is (cf. \cite{CarMalPos13})
\begin{equation}\label{IV.1.1_09.2}
  {M}_{n}(z)=\frac{1}{\cos(d_n\,k(z))}\begin{pmatrix}c\,k_1(z)\,\sin(d_n\,k(z))
   & 1 \\1 & (c\,k_1(z))^{-1}\sin(d_n\,k(z))\end{pmatrix},\quad z\in\rho(D_{n,0}),
\end{equation}
where
\begin{equation}\label{1.5}
 k(z):=c^{-1}\sqrt{z^{2}-\left({c^{2}}/{2}\right)^{2}},\quad z\in\C,
\end{equation}
and
  \begin{equation}\label{1.6}
k_{1}(z):=\frac{c\,k(z)}{z+{c^{2}}/{2}}
=\sqrt{\frac{z-{c^{2}}/{2}}{z+{c^{2}}/{2}}},\quad z\in\C.
\end{equation}

Next we construct a boundary triple for the operator $D_X^*
:=\bigoplus_{n=1}^\infty D_n^*$ in the general case $0 \leq
d_{*}<d^{*} \leq \infty$. It appears that the result in the case
$d^*=\infty$ remains analogous to what was obtained in
\cite{CarMalPos13} for the case $d^*<\infty$.

Define $D_X:= \bigoplus_{1}^\infty D_n,$
  $$
\dom(D_X^*)=  W^{1,2}(\Bbb R_+\setminus X)\otimes\C^{2} =
\bigoplus_{1}^\infty W^{1,2}[x_{n-1},x_{n}]\otimes\C^{2}.
  $$

Next following \cite{CarMalPos13} we collect certain properties
of the direct sum ${\Pi} := \bigoplus_{n=1}^\infty
 {\Pi}^{(n)}$   of boundary triples ${\Pi}^{(n)}$  given by~\eqref{triple2}.

%
   \begin{proposition}\label{prop3.5direcsum}
Let $X$ be as above, let $0 \leq d_{*}<d^{*} \leq \infty$, and let $
{\Pi}^{(n)}=\big\{\C^{2},
 {\Gamma}_{0}^{(n)}, {\Gamma}_{1}^{(n)}\big\}$ be
the boundary triple for the operator $D_n^*$ defined in
~\eqref{triple2}. Let
$\cH = l^{2}(\N)\otimes\C^{2}$ and
 ${\Pi} := \bigoplus_{n=1}^\infty
 {\Pi}^{(n)} = \big\{\cH, {\Gamma}_{0},
 {\Gamma}_{1}\big\}$, where
the operators  $\Gamma_j$, $j\in \{0,1\}$ are given by~\eqref{III.1_02},
i.e.
\begin{equation}\label{triple2NEW}
 {\Gamma}_{0} \begin{pmatrix}f_{1}\\f_{2}\end{pmatrix}=
\left\{\left(\begin{array}{c}
                 f_{1}(x_{n-1}+)\\
                 i\,c\, f_{2}(x_{n}-)
                       \end{array}\right)\right\}_{n\in \N}, \quad 
 {\Gamma}_{1}\begin{pmatrix}f_{1}\\f_{2}\end{pmatrix}
=\left\{\left(\begin{array}{c}
 i\,c\,f_{2}(x_{n-1}+)\\
 f_{1}(x_{n}-)
 \end{array}\right)\right\}_{n\in \N}, 
\end{equation}
where  $f= \binom{f_1}{f_2}\in \dom D_{X,*}:= \dom\Gamma$  and
$D_{X,*}:= D_{X}^* \upharpoonright \dom D_{X,*}$.
Then:
\begin{enumerate}\def\labelenumi {\textit{(\roman{enumi})}}

\item The domain $\Gamma$ is given by
$\dom D_{X,*}:= \dom\Gamma 
(=\dom\Gamma_0 = \dom\Gamma_1).$
%
%

\item The direct sum ${\Pi} := \bigoplus_{n=1}^\infty
 {\Pi}^{(n)}$ 
 forms a $B$-generalized boundary triple  for $D^*_X$.

\item The transposed triple $ {\Pi}^\top =
\{\cH, {\Gamma}^\top_0, {\Gamma}^\top_1\}:=\{\cH, {\Gamma}_1,-
{\Gamma}_0\}$ also forms a $B$-generalized boundary triple for
$D_X^*$.

\item The triple $ {\Pi}$ (equivalently the triple $ {\Pi}^\top $)
is an ordinary boundary triple for the operator $D_X^* =
\bigoplus_{n=1}^\infty D_{n}^{*}$ if and only if $d_{*}>0$ (with
$d^{*}\leq \infty$).
\end{enumerate}
  \end{proposition}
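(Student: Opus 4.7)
The first assertion is essentially definitional: by~\eqref{III.1_02}, $\Gamma_j := \Gamma_j' \upharpoonright \dom \Gamma$ with $\dom \Gamma = \dom \Gamma_0' \cap \dom \Gamma_1'$, so automatically $\dom \Gamma_0 = \dom \Gamma = \dom \Gamma_1$, and I name this common set $\dom D_{X,*}$.

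The remaining three items I establish by invoking the quantitative criteria of Theorem~\ref{th_criterion(bt)} applied to the explicit Weyl function \eqref{IV.1.1_09.2}. All the relevant bounds reduce to values at $z = i$, via the preparatory calculation that $k(i) = i\kappa$ with $\kappa := c^{-1}\sqrt{1 + c^4/4}$, so $d_n k(i) = i\mu_n$ for $\mu_n := d_n \kappa > 0$, whence
\[
\sin(d_n k(i)) = i\sinh\mu_n, \quad \cos(d_n k(i)) = \cosh\mu_n, \quad \tan(d_n k(i)) = i\tanh\mu_n.
\]
An elementary check further shows $|k_1(i)|^2 = |(i - c^2/2)/(i + c^2/2)| = 1$, so $k_1(i) = e^{i\theta_c}$ with $\theta_c := \arctan(c^2/2)$.

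For (ii) the plan is to verify $C_1 = \sup_n \|M_n(i)\| < \infty$: substituting into \eqref{IV.1.1_09.2}, the off-diagonal entries of $M_n(i)$ are $1/\cosh\mu_n \in (0, 1]$, while the diagonal entries $c k_1(i)\cdot i\tanh \mu_n$ and $(ck_1(i))^{-1}\cdot i\tanh \mu_n$ have modulus at most $\max\{c, c^{-1}\}$. Theorem~\ref{th_criterion(bt)}(ii) then gives (ii). For (iii) the same criterion applies to $\Pi^\top$, whose Weyl function is $-M_n^{-1}$: the algebraic identity
\[
\det M_n(z) = \sec^2(d_n k(z))\bigl[\sin^2(d_n k(z)) - 1\bigr] = -1, \quad z \in \rho(D_{n,0}),
\]
shows that $M_n(i)^{-1}$ shares the magnitude bounds of $M_n(i)$, so $\sup_n \|M_n(i)^{-1}\| < \infty$ and $\Pi^\top$ is $B$-generalized.

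For (iv), Theorem~\ref{th_criterion(bt)}(i) combined with $C_1 < \infty$ shows that $\Pi$ is ordinary if and only if $C_2 = \sup_n \|(\IM M_n(i))^{-1}\| < \infty$. Since the off-diagonals $1/\cosh\mu_n$ are real, $\IM M_n(i)$ is diagonal; from $k_1(i) = e^{i\theta_c}$ one computes $\IM(i c k_1(i)) = c\cos\theta_c = 2c/\sqrt{4+c^4}$ and analogously for the other diagonal entry, yielding $\IM M_n(i) = (\tanh\mu_n)\, D_c$ with $D_c$ a fixed positive diagonal matrix depending only on $c$. Hence $C_2$ is finite if and only if $\inf_n \tanh(d_n \kappa) > 0$, i.e.\ if and only if $d_* > 0$. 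The parallel statement for $\Pi^\top$ is immediate from $\det M_n(i) = -1$, which forces $-M_n^{-1}(i)$ to inherit the real-off-diagonal and purely-imaginary-diagonal pattern of $M_n(i)$ with the $(1,1)$ and $(2,2)$ entries interchanged; thus $\IM(-M_n^{-1}(i))$ is again a positive diagonal matrix proportional to $\tanh\mu_n$, and the same dichotomy holds. The only non-trivial technical step will be the explicit identification $k_1(i) = e^{i\arctan(c^2/2)}$ and the attendant sign computation $\cos\theta_c > 0$, which is precisely what ties the equivalence in (iv) to the single geometric condition $d_* > 0$.
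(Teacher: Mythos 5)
Your proof is correct, but it takes a genuinely different route from the paper's. The paper evaluates the criteria of Theorem~\ref{th_criterion(bt)} at the real point $z=0$, which by \eqref{spetcom} lies in the common spectral gap $(-c^2/2,c^2/2)$ of all the operators $D_{n,0}$: there $k(0)=ic/2$ and $k_1(0)=i$, so $M_n(0)$ is a real symmetric matrix with $\det M_n(0)=-1$, and $M'_n(0)=\diag\bigl(\tfrac{2}{c}\tanh(d_nc/2),\,\tfrac{2}{c^3}\tanh(d_nc/2)\bigr)\ge 0$; items (ii)--(iii) then follow from the gap criterion $C_3,C_4<\infty$ of Theorem~\ref{th_criterion(bt)}(iii) together with the identity $-(M_n^{-1})'(0)=M_n^{-1}(0)M_n'(0)M_n^{-1}(0)$, and item (iv) from the uniform positivity criterion $C_5=\sup_n\|(M_n'(0))^{-1}\|<\infty$, which visibly fails exactly when $d_*=0$. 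You instead work at $z=i$ with parts (i)--(ii) of the same theorem, replacing the derivative $M_n'(0)$ by the imaginary part $\IM M_n(i)=\tanh(d_n\kappa)\,D_c$; the extra work this costs is the phase computation $k_1(i)=e^{i\arctan(c^2/2)}$ with $\cos(\arctan(c^2/2))=2/\sqrt{4+c^4}>0$, which you carry out correctly. Both arguments hinge on the same structural fact $\det M_n(z)\equiv -1$ (so $\|M_n(i)^{-1}\|=\|M_n(i)\|$) to pass to the transposed triple. What each buys: the paper's real-point computation is entirely real/hyperbolic, needs no branch bookkeeping for $k_1$, and the diagonal derivative makes the $d_*>0$ dichotomy immediate; your version dispenses with the common-gap hypothesis and with computing $M_n'$ altogether, using only the values at $z=i$ that the general criteria (i)--(ii) are stated for. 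Both are valid for the full range $d^*\le\infty$.
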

\begin{proof}
(i), (ii) The Weyl function of the boundary triple $\{\cH, \Gamma_0,
\Gamma_1\}$ is the orthogonal sum $  M=\oplus   M_n$ of the Weyl
functions defined by~\eqref{IV.1.1_09.2}. It follows from
~\eqref{1.5} and~\eqref{1.6} that  $k(0)=i\,c/2$ and $k_1(0)=i$ and
hence
  \begin{equation}\label{for-la_Mn(0)}
M_n(0) =
\frac{1}{\textup{ch}(d_n\,c/2)}\begin{pmatrix}-c\,\textup{sh}(d_n\,c/2)
   & 1 \\ 1 & c^{-1}\textup{sh}(d_n\,c/2)\end{pmatrix}.
\end{equation}
Hence
%
%
  \begin{equation}\label{limM(0)}
   M_n(0)\to \begin{pmatrix}0 & 1 \\ 1 & 0\end{pmatrix} \quad\text{as}\quad d_n\to 0
 \quad\text{and}\quad
    M_n(0)\to \begin{pmatrix}-c & 0 \\ 0 & c^{-1}\end{pmatrix} \quad \text{as}\quad d_n\to
 \infty.
\end{equation}
It follows that the sequence  $\{M_n(0)\}_{n\in \Bbb N}$ is  bounded. 

Furthermore, one gets from~\eqref{1.5}  and~\eqref{1.6} that
$k'(0)=0$, $k_1'(0)=-i\,2/c^2$, and
    \begin{equation}\label{for-la_for_M'n(0)}
 M_n'(0)=\begin{pmatrix}\frac{2}{c}\,\textup{th}(d_n\,c/2)
 & 0 \\ 0 & \frac{2}{c^3}\,\textup{th}(d_n\,c/2)\end{pmatrix} \geq 0, \qquad n\in \Bbb N.
   \end{equation}
This description implies that
  \begin{equation}\label{lim_M'n(0)}
M_n'(0)\to \begin{pmatrix} 0 & 0 \\ 0 & 0\end{pmatrix} \quad
\text{as}\quad d_n\to 0
 \quad\text{and}\quad
M_n'(0)\to \begin{pmatrix} \frac{2}{c} & 0 \\ 0 &
\frac{2}{c^3}\end{pmatrix} \quad\text{as}\quad d_n\to  \infty.
  \end{equation}
Thus, the sequence  $\{M_n'(0)\}_{n\in \Bbb N}$ is  bounded too.
Combining formulas~\eqref{limM(0)} with~\eqref{lim_M'n(0)} and
applying Theorem \ref{th_criterion(bt)}(iii) one concludes that $\Pi
= \{\cH, \Gamma_0, \Gamma_1\}$ is a $B$-generalized boundary triple
for $D^*_X$.

(iii)  It follows from~\eqref{for-la_Mn(0)} that  $\det( M_n(0))=
-1$, hence  the sequence of inverses $\{M_n(0)^{-1}\}_{n\in \Bbb N}$
is bounded alongside the sequence $\{M_n(0)\}_{n\in \Bbb N}$.
Combining this fact with boundedness of the sequence
$\{M_n'(0)\}_{n\in \Bbb N}$ of the derivatives
and using the identities
\[
-(  M_n^{-1})'(0)=M_n^{-1}(0)\,M_n'(0)\, M_n^{-1}(0),\quad n\in \Bbb
N,
\]
we obtain that the sequence   $\{( M_n^{-1})'(0)\}_{n\in \Bbb N}$ is
bounded too. It remains to apply
Theorem \ref{th_criterion(bt)}(iii).

(iv) 
It follows from~\eqref{for-la_for_M'n(0)} that the sequence
$\{M_n'(0)\}_{n\in \Bbb N}$ of the derivatives  is uniformly
positive  if and only if $d_*>0$.  One completes the proof by
combining  Theorem \ref{th_criterion(bt)}(iv) with the above proved
items (ii), (iii).
\end{proof}
 \begin{remark}
Note that if $d^*=\infty$ then in view of~\eqref{spetcom} $\pm
\frac{c^{2}}{2}\in \sigma(D_0)$, while
$(-\frac{c^{2}}{2},\frac{c^{2}}{2})\subset \rho(D_0)$. Therefore as
distinguished from the considerations in \cite{CarMalPos13} treating
the case $d^* < \infty$, here we consider the behavior of the Weyl
function at $z=0\in \rho(D_0)$.
  \end{remark}

We now apply a modification of  Theorem~\ref{thmBtoE} and Proposition \ref{propBtoE} to
produce an $ES$-generalized boundary triple for $D^*_X$ from the $B$-generalized
boundary triple $ {\Pi} := \bigoplus_{n=1}^\infty
 {\Pi}^{(n)}=\big\{\cH, {\Gamma}_{0},
 {\Gamma}_{1}\big\}$. In this modification we subtract from
the Weyl function $  M_n$ the limit value $\lim_{d_n\to 0} M_n(0)$,
instead of the value $  M_n(0)$, to get a transform of boundary
mappings in a simple form.
  \begin{proposition}\label{prop3.5sum2}
Let $X$ be as above, let $0 \leq d_{*}<d^{*} \leq \infty$, let $\wt{\Pi}^{(n)}=\big\{\C^{2},
 \wt{\Gamma}_{0}^{(n)}, \wt{\Gamma}_{1}^{(n)}\big\}$ be
the boundary triple for the operator $D_n^*$ defined by
\[
 \wt{\Gamma}^{(n)}_{0} \begin{pmatrix}f_{1}\\f_{2}\end{pmatrix}
 =\left(\begin{array}{c}
  i\,c\,(f_{2}(x_{n}-)-f_{2}(x_{n-1}+))\\
  f_{1}(x_{n-1}+)-f_{1}(x_{n}-)
 \end{array}\right),
 \quad 
 \wt{\Gamma}^{(n)}_{1}\begin{pmatrix}f_{1}\\f_{2}\end{pmatrix}
 =\left(\begin{array}{c}
                 f_{1}(x_{n-1}+)\\
                 i\,c\, f_{2}(x_{n}-)
                       \end{array}\right),\quad n\in\dN,
\]
let $\wt{\Gamma}_{j}' =\bigoplus_{n=1}^\infty \wt{\Gamma}^{(n)}_{j},$
$j\in\{0,1\},$ and
let $ \wt\Pi = \bigoplus_{n=1}^\infty \wt\Pi_n =
\big\{\cH, \wt{\Gamma}_{0}, \wt{\Gamma}_{1}\big\}$ be the boundary triple for $D^*_X$,  where
\[
\wt{\Gamma}_{j} :=\wt{\Gamma}_{j}' \upharpoonright  \dom(D_{X,*}),\quad \dom(D_{X,*}):=  \dom {\wt\Gamma} :=
\dom \wt\Gamma_0' \cap \dom \wt\Gamma_1'.
\]
\begin{enumerate}\def\labelenumi {\textit{(\roman{enumi})}}
\item
The mapping  $\wt\Gamma_0 \times \wt\Gamma_1$ is naturally extended to the mapping $\wt\Gamma_0'' \times \wt\Gamma_1''$ defined by the same formulas on $W^{1,2}(\Bbb R_+\setminus X)\otimes\Bbb
C^2$. Moreover, the mapping
  \begin{equation}\label{ran_Gamma_on_Sob_space}
\wt\Gamma_0''\times \wt\Gamma_1'':   W^{1,2}(\Bbb R_+\setminus X)\otimes\Bbb C^2 \ \to (l^2\bigl(\Bbb
N;\{d_n^{-1}\}\bigr)\otimes\Bbb C^2)\times (l^2\bigl(\Bbb N;\{d_n\}\bigr)\otimes\Bbb
C^2)
  \end{equation}
is well defined  and surjective.

\item The mapping
  \begin{equation}\label{ran_Gamma_on_subdomain}
\wt\Gamma_0 \times \wt\Gamma_1:   \dom D_{X,*} \ \to (l^2\bigl(\Bbb
N;\{d_n^{-1}\}\bigr)\otimes\Bbb C^2)\times (l^2\bigl(\Bbb
N\bigr)\otimes\Bbb C^2)
 (\subset l^2\bigl(\Bbb N\bigr)\otimes\Bbb C^4),
    \end{equation}
%
%
%
is well defined  and surjective.  Moreover,  
$\dom D_{X,*} =\dom {\wt\Gamma} = \dom \overline {{\wt\Gamma}_1}$, while  $\dom
\overline {{\wt\Gamma}_0} = \dom D_{X}^* = W^{1,2}(\Bbb R_+\setminus X)\otimes\Bbb C^2$.

\item  The Weyl function is of the form $\wt M(\cdot)=\bigoplus_{n=1}^\infty
\wt M_n(\cdot)$, where
   \begin{equation}\label{W-F2_for_Dirac}
 \wt M_n(z)= - 2^{-1}
 \begin{pmatrix}\dfrac{\sin(d_nk(z))}{ck_1(z)(1- \cos(d_nk(z)))} & -1 \\
 -1 & \dfrac{ck_1(z)\sin(d_nk(z))}{1- \cos(d_nk(z))}\end{pmatrix}
   \end{equation}
 and it is domain invariant with
  \begin{equation}\label{dom_M(z)}
\dom \wt M(z) = l^2\bigl(\Bbb N;\{d_n^{-2}\}\bigr)\otimes\Bbb
C^2\subseteq \wt\Gamma_0(\dom D_{X,*}) = l^2\bigl(\Bbb
N;\{d_n^{-1}\}\bigr)\otimes\Bbb C^2 \quad \text{for}\quad z\in \Bbb
C_{\pm}.
   \end{equation}
Here the strict inclusion  $\dom M (z) \subsetneqq \wt\Gamma_0(\dom
D_{X,*})$ holds if and only if $d_{*}=0$.

\item  The Weyl function $\wt M(\cdot)$ is also form-domain invariant with
  \begin{equation}\label{form_dom_M(z)}
 \dom \st_{\wt M(z)} = l^2\bigl(\Bbb N;\{d_n^{-1}\}\bigr)\otimes\Bbb
C^2 = \wt\Gamma_0(\dom D_{X,*}) \quad \text{for}\quad z\in \Bbb
C_{\pm}.
   \end{equation}

\item $\wt\Pi=\big\{\cH, \wt{\Gamma}_{0},  \wt{\Gamma}_{1}\big\}$ forms an $ES$-generalized
boundary triple for $D^*_X$. Moreover, $\wt\Pi$  is an
$S$-generalized boundary triple for $D^*_X$ if and only if $d_{*} >
0$ and in this case $\wt\Pi$ is in fact and ordinary boundary triple
for $D^*_X$.

\item The transposed triple $ {\Pi}^\top = \{\cH, \wt{\Gamma}^\top_0,
\wt{\Gamma}^\top_1\}:= \{\cH, \wt{\Gamma}_1,- \wt{\Gamma}_0\}$ is  a
$B$-generalized boundary triple for $D_X^*$. In particular, $A_1 =
D_X^*\upharpoonright \ker \wt{\Gamma}_1$ is selfadjoint.
\end{enumerate}
   \end{proposition}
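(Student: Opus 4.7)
The plan is to realize $\wt\Pi$ as an orthogonal sum of ordinary boundary triples for the component operators $D_n^*$ and then apply Theorem~\ref{otriple} together with a careful asymptotic analysis of the resulting Weyl function. The key observation is that the component triples are related to those of Proposition~\ref{prop3.5direcsum} by the block-matrix transformation
\[
\begin{pmatrix} \wt\Gamma_0^{(n)} \\ \wt\Gamma_1^{(n)} \end{pmatrix} = V \begin{pmatrix} \Gamma_0^{(n)} \\ \Gamma_1^{(n)} \end{pmatrix}, \qquad V := \begin{pmatrix} \sigma_1 & -I_{\C^2} \\ I_{\C^2} & 0 \end{pmatrix},
\]
and that $V$ is $J_\cH$-unitary in the sense of Definition~\ref{def2.5}, as is seen from a direct $4\times 4$ matrix computation using $\sigma_1^* = \sigma_1$. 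Hence each $\wt\Pi^{(n)}$ is again an ordinary boundary triple for $D_n^*$, with Weyl function $\wt M_n(z) = -(M_n(z)-\sigma_1)^{-1}$, and a direct inversion of the matrix in~\eqref{IV.1.1_09.2}, combined with the identity $\sin w/(1-\cos w) = \cot(w/2)$, yields the explicit formula~\eqref{W-F2_for_Dirac}. Theorem~\ref{otriple} then shows that $\wt\Pi$ is a unitary boundary triple for $D_X^*$ with Weyl function $\bigoplus_n \wt M_n$ and with both $\ker \wt\Gamma_0$ and $\ker \wt\Gamma_1$ essentially selfadjoint, which already proves the first part of~(v).

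For the domain descriptions (iii)--(iv) I would use the expansion $\cot(d_n k(z)/2) = 2/(d_n k(z)) + O(d_n)$ as $d_n \to 0$, which shows that the diagonal entries of $\wt M_n(z)$ blow up like $1/d_n$ while the off-diagonal entries stay at $\mp 1/2$, and similarly that the diagonal of $\IM \wt M_n(z)$ is of order $1/d_n$. On the other hand, for $d_n$ bounded away from $0$ (including the regime $d_n \to \infty$, where $\cot(d_n k(z)/2) \to \mp i$ since $\IM k(z)\ne 0$), the entries of $\wt M_n(z)$ remain uniformly bounded. Writing $h_n = \binom{a_n}{b_n}$, summability of $\|\wt M_n(z) h_n\|^2$ is therefore equivalent to $\sum_n (|a_n|^2+|b_n|^2)/d_n^2 < \infty$, yielding~\eqref{dom_M(z)}, and summability of $(\IM\wt M_n(z) h_n, h_n)$ is equivalent to $\sum_n (|a_n|^2+|b_n|^2)/d_n<\infty$, yielding~\eqref{form_dom_M(z)}. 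For (i) I would apply Lemma~\ref{prop3.6_CMPos} componentwise to $f_1$ and $f_2$: the differences $f_j(x_n-)-f_j(x_{n-1}+)$ always belong to $\ell^2(\N;\{d_n^{-1}\})$, the traces $\pi_\pm f_j$ belong to $\ell^2(\N;\{d_n\})$ under $d^*<\infty$, and the required surjectivity onto~\eqref{ran_Gamma_on_Sob_space} follows from the explicit construction in the proof of that lemma. Assertion (ii) is then obtained by additionally requiring $\wt\Gamma_1 f \in \ell^2(\N)\otimes\C^2$; this gives the equalities $\dom D_{X,*} = \dom \wt\Gamma = \dom \overline{\wt\Gamma_1}$, while $\dom\overline{\wt\Gamma_0} = \dom D_X^*$ since under $d^*<\infty$ the embedding $\ell^2(\N;\{d_n^{-1}\})\hookrightarrow\ell^2(\N)$ makes the summability condition arising from $\wt\Gamma_0$ automatic on all of $W^{1,2}(\R_+\setminus X)\otimes\C^2$.

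To complete (v) observe that by Proposition~\ref{OrdinBTprop} the triple $\wt\Pi$ is $S$-generalized iff $\sup_n\|\IM \wt M_n(i)\|_{\C^2}<\infty$, which by the $1/d_n$ growth above fails exactly when $d_*=0$; if instead $d_*>0$, the uniform bounds on $\wt M_n(i)$ together with a uniform lower bound on $\IM \wt M_n(i)$ allow Theorem~\ref{th_criterion(bt)}(i) to be applied and give that $\wt\Pi$ is in fact ordinary. For (vi) I would use that $-\wt M(z)^{-1} = M(z)-\sigma_1$, where $M(z) = \bigoplus_n M_n(z)$ is the bounded Weyl function of the $B$-generalized triple $\Pi$ of Proposition~\ref{prop3.5direcsum}(ii); since $\IM(M(z)-\sigma_1) = \IM M(z)$ is bounded with trivial kernel, one has $-\wt M^{-1}\in\cR^s[\cH]$, and Theorem~\ref{thm:WF_BG_BT} then identifies $\wt\Pi^\top$ as a $B$-generalized boundary triple for $D_X^*$, so that $A_1 = D_X^*\uphar \ker\wt\Gamma_1$ is selfadjoint. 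The main technical obstacle is the careful bookkeeping between the three weighted spaces $\ell^2(\N;\{d_n^{-2}\})\subsetneqq \ell^2(\N;\{d_n^{-1}\})\subsetneqq \ell^2(\N;\{d_n\})$ (strict when $d_*=0$ and $d^*<\infty$) appearing as $\dom \wt M(z)$, as $\dom \st_{\wt M(z)} = \ran\wt\Gamma_0$, and as the natural range of $\wt\Gamma_1''$; verifying the strict inclusion $\dom \wt M(z)\subsetneqq \ran \wt\Gamma_0$ when $d_*=0$ is precisely what distinguishes this $ES$-generalized boundary triple from an $S$-generalized one.
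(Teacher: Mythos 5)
Your proof is correct and follows essentially the same route as the paper: realize $\wt\Pi$ as a direct sum of the ordinary component triples, identify $\wt M_n(z)=(\sigma_1-M_n(z))^{-1}$ explicitly, and read off all assertions from the $d_n\to 0$ asymptotics together with Lemma~\ref{prop3.6_CMPos} and Theorems~\ref{otriple} and~\ref{th_criterion(bt)}. Your explicit $J_\cH$-unitary block transformation $V$ (which the paper uses only implicitly through the relation $-\wt M_n^{-1}=M_n-\sigma_1$) and your use of Proposition~\ref{OrdinBTprop} in part (v) in place of the paper's strict-domain-inclusion argument via Theorem~\ref{prop:C6} are minor variations, not a different method.
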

   \begin{proof}
(i) The proof is immediate from Lemma \ref{prop3.6_CMPos}.

(ii) Due to $d^{*} < \infty$ one has the following chain of
continuous embeddings
  \begin{equation}\label{l-2-embeddings}
 l^2\bigl(\Bbb N;\{d_n^{-1}\}\bigr)\otimes\Bbb C^2 \subset l^2\bigl(\Bbb
N\bigr)\otimes\Bbb C^2  \subset  l^2\bigl(\Bbb
N;\{d_n\}\bigr)\otimes\Bbb C^2.
  \end{equation}
Since $l^2\bigl(\Bbb N\bigr)\otimes\Bbb C^2$ is a part of $l^2\bigl(\Bbb
N;\{d_n\}\bigr)\otimes\Bbb C^2$, the surjectivity of the mapping $\wt\Gamma = (\wt\Gamma_0''\times \wt\Gamma_1'')\upharpoonright \dom D_{X,*}$
is immediate from (i).  The inclusion in~\eqref{ran_Gamma_on_subdomain} as well as the
relation $\dom{\wt\Gamma} = \dom \overline {{\wt\Gamma}_1}$   is implied by the first
inclusion in~\eqref{l-2-embeddings}.
%
%
%
%

(iii) The Weyl function  corresponding to $\wt\Pi$  is the direct
sum $\wt M(\cdot) =\bigoplus_{n=1}^\infty \wt M_n(\cdot)$, where
\[
 \wt M_n(z):= \left(\begin{pmatrix}0 & 1 \\ 1 &
 0\end{pmatrix} - M_n(z)\right)^{-1}, \quad z\in \rho(M_n).
\]
This immediately  leads to  formula~\eqref{W-F2_for_Dirac} for $\wt M_n(z)$.
Using~\eqref{1.5},~\eqref{1.6}, and the Taylor series expansions for $\sin(z)$ and
$\cos(z)$  we easily derive
  \begin{equation}\label{wtMexp}
\wt M_n(z)
 +\frac{1}{d_n}\begin{pmatrix} (z-c^2/2)^{-1} & 0 \\ 0 & c^2(z+c^2/2)^{-1} \end{pmatrix}
 \to \frac{1}{2}\begin{pmatrix}0 & 1 \\ 1 & 0 \end{pmatrix}
\quad\text{as} \quad d_n\to 0, \quad z\in \Bbb C_{\pm}.
 \end{equation}
This formula shows that $\wt M(z)$, as well as $\IM\wt M(z)$, is bounded if and only if
$d_*>0$, $z\in \Bbb C_{\pm}$. Moreover, it follows from~\eqref{wtMexp} that
 $\{\binom{a_n}{b_n}\}_{n\in \dN}\in\dom \wt M(z)$, $z\in
\Bbb C_{\pm}$,  precisely when
    \begin{equation*}
\sum^{\infty}_{n=1}\frac{|a_n|^2+|b_n|^2}{d^2_n} < \infty.
    \end{equation*}
The inclusion (in fact the continuous embedding) in~\eqref{dom_M(z)}
follows from the estimate
    \begin{equation*}
\sum^{\infty}_{n=1}\frac{|a_n|^2 + |b_n|^2}{d_n}  \le
d^*\sum^{\infty}_{n=1}\frac{|a_n|^2 + |b_n|^2}{d^2_n}.
    \end{equation*}
Note that the converse inequality holds if and only if $d_*>0$. Indeed, writing down the
reverse inequality and inserting  here $\{a_n\}=\{\delta_{jn}\}_{n\in\Bbb N}$ and
 $\{b_n\}=\{0\}_{n\in\Bbb N}$, one arrives at the inequalities
$$
1\le c d_j, \qquad j\in\Bbb N,
$$
showing that  $d_* \ge 1/c >0$.

(iv) By definition, $\{h_n\}_{n=1}^{\infty}\in\dom \st_{\wt M(z)}$ if and only if
\begin{equation}\label{wtMformdom}
 \sum^{\infty}_{n=1} \left(\IM \wt M_n(z)h_n,h_n\right) <\infty; \quad
 \{h_n\}_{n=1}^\infty =\left\{\binom{a_n}{b_n}\right\}_{n=1}^\infty \in {l}^2(\dN)\otimes \dC^2.
\end{equation}
As a function of $d_n$ the imaginary part $\IM \wt M_n(z)$ is
bounded on the intervals $[\delta,\infty)$, $\delta > 0$, and hence
it follows from~\eqref{wtMexp} that the convergence of the series in
~\eqref{wtMformdom} is equivalent to
\[
 \sum^{\infty}_{n=1} \frac{\left(\IM K(z) h_n,h_n\right)}{d_n} <\infty; \quad
 \{h_n\}_{n=1}^\infty =\left\{\binom{a_n}{b_n}\right\}_{n=1}^\infty \in {l}^2(\dN)\otimes
 \dC^2,
\]
where $K(z)$ denotes the diagonal matrix function in the left-hand side of
~\eqref{wtMexp}. Clearly, $\IM K(z)$ is bounded with bounded inverse for each
$z\in\dC_\pm$ and this yields the stated description of $\dom \st_{\wt M(z)}$.

(v) By Theorem  \ref{otriple}(iv), the triple  $\wt\Pi$ being  a
direct sum of ordinary boundary triples,  is an $ES$-generalized
boundary triple. On the other hand, by (iii) the strict inclusion
$\dom M(z) \subsetneqq \wt\Gamma_0(\dom D_{X,*})$ is equivalent to
$d_{*}=0$. Therefore, Theorem \ref{prop:C6}  applies and ensures
that in the latter case  $\wt\Pi$ is not an $S$-generalized boundary
triple.

(vi) The Weyl function corresponding to the transposed boundary
triple ${\Pi}^\top$ is $-\wt M(\cdot)^{-1} = \bigoplus_1^\infty
(-\wt M_n(\cdot)^{-1})$. In particular, one gets from~\eqref{wtMexp}
(or from~\eqref{IV.1.1_09.2}) that
 \begin{equation*}
- \wt M_n(z)^{-1} =  M_n(z) - \begin{pmatrix}0 & 1 \\ 1 & 0
\end{pmatrix}  \sim  d_n
\begin{pmatrix}
z-c^2/2  & 0\\
0  & c^{-2} (z+c^2/2)
\end{pmatrix}\quad \text{as}\quad d_n \to 0.
   \end{equation*}
%
%
This shows that $-\wt M(\cdot)^{-1}\in \cR^s[\cH]$, which is
equivalent to the required  statement: $\Pi^\top$ is a
$B$-generalized boundary triple (see \cite[Chapter 5]{DM95}).
   \end{proof}

\begin{remark}
Apart from statements (ii) and the formula for $\wt\Gamma_0(\dom
D_{X,*})$ in statement (iii) the results in
Proposition~\ref{prop3.5sum2} remain valid for $d^* = \infty$.
Indeed, statement (i) is still immediate from Proposition
\ref{prop3.6_CMPos}(i)  which holds  in this case, too. All the
other statements can easily be extracted from the fact that the
limit value of the Weyl function $\wt M_n(z)$ as well as its inverse
$\wt M_n(z)^{-1}$ remain bounded when $d_n\to \infty$.
\end{remark}


Let $\alpha=\{\alpha_n\}_{n\in\dN}$ be a sequence from $\dR$.
Gesztesy-\v{S}eba realization of Dirac operator \index{Gesztesy-\v{S}eba realization of Dirac operator}(see~\cite{GeSe87})
is defined by $D_{X,\alpha}=D|_{\dom{D_{X,\alpha}}}$, where
\begin{equation}\label{eq:GS_real}
             \dom{D_{X,\alpha}}=
            \left\{
                \begin{array}{c}
                    f\in W_{comp}(\dR_+\setminus X)\otimes\dC^2:\,f_1\in AC_{loc}(\dR_+), f_2\in AC_{loc}(\dR_+\setminus X)\\
                    f_2(a+)=0,\, f_2(x_n+)-f_2(x_{n}-)=-\frac{i\alpha_n}{c}f_1(x_n),\, n\in\dN
                \end{array}
            \right\}.
\end{equation}

As was shown in~\cite{GeSe87}, ~\cite{CarMalPos13} the
Gesztesy-\v{S}eba realization $D_{X,\alpha}$ is always selfadjoint.
The operators $D_{X,\alpha}$ are parametrized in the boundary triple
$\Pi=\{\cH,\wt\Gamma_0,\wt \Gamma_1\}$ via selfadjoint
three-diagonal matrices
\[
    J_\alpha=  \begin{pmatrix}
             0  & -1 &          &    &           &    & \\
             -1 & 0 & 1         &    &           &    &\\
                & 1 & \alpha_1  & -1 &           &    &\\
                &   &-1         &  0 & 1         &    &\\
                &   &           & 1  & \alpha_2  & -1  &\\
                &   &           &    & \ddots    & \ddots & \ddots\\
           \end{pmatrix}
\]
\begin{proposition}
  Let $\alpha=\{\alpha_n\}_{n\in\dN}\subset\dR$, let $D_{X,\alpha}$ be the Gesztesy-\v{S}eba realization of the Dirac operator given by~\eqref{eq:GS_real}, let 
  $\Pi=\{\cH,\Gamma_0,\Gamma_1\}$  be the boundary triple defined by~\eqref{triple2NEW} and let
\[
M(\lambda)= \bigoplus_{n=1}^{\infty}M_n(\lambda),  
\quad \gamma(\lambda)= \bigoplus_{n=1}^{\infty}\gamma_n(\lambda),\quad
Q=\bigoplus_{n=1}^{\infty} Q_n,\quad
Q_n=\begin{pmatrix}
                     0 & 1 \\
                     1 & 0
                   \end{pmatrix},\quad{n\in\dN},
\]
with $M_n(\lambda)$ and $\gamma_n(\lambda)$ given
by~\eqref{IV.1.1_09.2} and~\cite[(3.11)]{CarMalPos13}, respectively.
Then:
\begin{equation}\label{eq:GS_real_Gamma}
  \textup{dom}\, D_{X,\alpha} 
=\ker(\Gamma_1-(B_\alpha+Q)\Gamma_0).
\end{equation}
Moreover,
\begin{equation}\label{eq:sigma_p}
  \lambda\not\in\sigma_p(D_{X,\alpha})\Longleftrightarrow 0\not\in\sigma_p(B_\alpha+Q-M(\lambda)),
\end{equation}
and the following Kre\u{\i}n-type formula  holds
\begin{equation}\label{eq:Krein_formula_D}
(D_{X,\alpha}-\lambda)^{-1}=(D_0-\lambda)^{-1}
+\gamma(\lambda)\bigl(B_\alpha+Q-M(\lambda)\bigr)^{-1}\gamma(\bar\lambda)^*,\qquad
\lambda\in\rho(D_{X,\alpha})\cap\rho(D_0).
\end{equation}
\end{proposition}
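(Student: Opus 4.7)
The plan has three stages.

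First, I verify~\eqref{eq:GS_real_Gamma} by direct computation on the block structure. Using the explicit form of $\Gamma_0 f$ and $\Gamma_1 f$ from~\eqref{triple2NEW} and the block-diagonal form of $Q$, the equation $\Gamma_1 f = (B_\alpha+Q)\Gamma_0 f$ is rewritten as $\Gamma_1 f - Q\Gamma_0 f = B_\alpha \Gamma_0 f$, whose $n$-th block equals $(ic f_2(x_{n-1}+) - ic f_2(x_n-),\, f_1(x_n-) - f_1(x_{n-1}+))^\top$. Matching this against the three-diagonal action of $B_\alpha$ on the block-indexed sequence $\{\Gamma_0 f|_n\}_{n\in\dN}$ produces linear relations between neighbouring blocks which turn out to be exactly: continuity of $f_1$ at each $x_n$ (equating $(\Gamma_1 f|_n)_2$ with $(\Gamma_0 f|_{n+1})_1$), the endpoint condition $f_2(0+) = 0$ (vanishing of $(\Gamma_1 f|_1)_1$), and the jump relations $ic f_2(x_n+) - ic f_2(x_n-) = \alpha_n f_1(x_n)$. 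Since $D_{X,\alpha}$ is known to be selfadjoint (see~\cite{GeSe87},~\cite{CarMalPos13}) and $B_\alpha + Q = J_\alpha$ is the selfadjoint Jacobi matrix, this also matches the abstract parametrization of selfadjoint extensions.

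Second, the Krein-type formula~\eqref{eq:Krein_formula_D} is obtained from Theorem~\ref{Kreinformula}. By Proposition~\ref{prop3.5direcsum}(ii), $\Pi=\{\cH,\Gamma_0,\Gamma_1\}$ is a $B$-generalized, and hence in particular an $AB$-generalized, boundary triple for $D_X^*$ with $A_0 = \ker\Gamma_0 = D_0$ selfadjoint. Step~1 identifies $D_{X,\alpha}$ with $A_\Theta$ for $\Theta = B_\alpha + Q$ and guarantees $D_{X,\alpha} \subset \dom\Gamma$, so Theorem~\ref{Kreinformula} applies directly and yields~\eqref{eq:Krein_formula_D} for all $\lambda\in\rho(D_0)$. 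Restricting further to $\lambda\in\rho(D_{X,\alpha})\cap\rho(D_0)$ ensures that both sides are bounded, everywhere defined operators on $L^2(\dR_+)\otimes\dC^2$.

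Third, the spectral equivalence~\eqref{eq:sigma_p} follows from the corollary immediately after Theorem~\ref{Kreinformula} (the statement about point spectra), which gives $\ker(A_\Theta-\lambda) = \gamma(\lambda)\ker(\Theta - M(\lambda))$. Since $\Pi$ is a $B$-generalized boundary triple, Lemma~\ref{Weylmul} together with Theorem~\ref{ABGthm}(iii) yields $\ker\gamma(\lambda) = \mul\Gamma_0 = \{0\}$, so $\gamma(\lambda)$ restricted to $\ker(B_\alpha + Q - M(\lambda))$ is injective; thus $\lambda\in\sigma_p(D_{X,\alpha})$ precisely when $0\in\sigma_p(B_\alpha+Q-M(\lambda))$.

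The principal obstacle is Step~1: the algebraic/combinatorial verification that the particular three-diagonal operator $B_\alpha + Q = J_\alpha$, acting on the block-indexed sequence $\Gamma_0 f\in\ell^2(\dN)\otimes\dC^2$, encodes exactly the Gesztesy--\v{S}eba boundary conditions listed in~\eqref{eq:GS_real}. This requires careful tracking of the block indexing of $\cH$ and the sign and $ic$ conventions in~\eqref{triple2NEW}; once it is carried out, the resolvent formula and the point-spectrum identification follow from the abstract machinery of Section~\ref{sec4.2} with no further work.
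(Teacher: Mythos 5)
Your proposal is correct and follows essentially the same route as the paper, whose proof simply states that \eqref{eq:GS_real_Gamma} is implied by \eqref{eq:GS_real} and \eqref{triple2NEW}, and that \eqref{eq:sigma_p} and \eqref{eq:Krein_formula_D} follow from Theorem~\ref{Kreinformula} (or Theorem~\ref{Kreinformula2}) together with its corollary on point spectra. Your Step~1 merely spells out the block computation the paper leaves implicit, and Steps~2--3 invoke exactly the same abstract machinery.
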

\begin{proof}
The equality~\eqref{eq:GS_real_Gamma} is implied
by~\eqref{eq:GS_real} and~\eqref{triple2NEW}. The
formulas~\eqref{eq:sigma_p} and~\eqref{eq:Krein_formula_D} follow
from Theorem~\ref{Kreinformula} or Theorem~\ref{Kreinformula2}.
\end{proof}

\printindex

\end{document}